\documentclass[letterpaper,11pt,leqno,twoside]{amsart}
\usepackage[letterpaper,textwidth=4.7in,textheight=8in,top=1.567in,centering,headheight=10pt,headsep=13pt]{geometry}
\usepackage{amsmath,amssymb,mathtools,mathrsfs}
\makeatletter
\@ifundefined{subjclassname@2020}{%
  \@namedef{subjclassname@2020}{\textup{2020} Mathematics Subject Classification}%
}{}
\makeatother
\usepackage{enumitem,microtype,needspace}
\usepackage[hidelinks]{hyperref}
\allowdisplaybreaks[1]
\numberwithin{equation}{section}
\usepackage{fancyhdr,etoolbox}
\microtypesetup{expansion=false}
\renewcommand{\S}{\ensuremath{\mathsection}}
\newtheoremstyle{growthplain}{10pt plus2pt minus2pt}{10pt plus2pt minus2pt}{\itshape}{12pt}{\bfseries}{.}{.5em}{\thmname{#1}\thmnumber{ #2}\thmnote{ {\normalfont(#3)}}}
\newtheoremstyle{growthdefinition}{10pt plus2pt minus2pt}{10pt plus2pt minus2pt}{\normalfont}{12pt}{\bfseries}{.}{.5em}{\thmname{#1}\thmnumber{ #2}\thmnote{ {\normalfont(#3)}}}

\makeatletter
\def\@settitle{\begingroup\centering\normalfont\normalsize\bfseries\MakeUppercase{\@title}\par\endgroup}
\def\@setauthors{\vskip14pt\begingroup\centering\normalfont\normalsize\scshape\authors\par\endgroup}
\renewenvironment{abstract}{\global\setbox\abstractbox=\vtop\bgroup\normalfont\fontsize{10}{12}\selectfont\leftskip27pt\rightskip27pt\parindent12pt {\centering\normalfont\normalsize\bfseries\abstractname\par}\vskip8pt\noindent\hspace*{12pt}\ignorespaces}{\par\egroup}
\def\@setabstracta{\ifvoid\abstractbox\else\vskip22pt\box\abstractbox\prevdepth\z@\fi}
\def\@maketitle{\normalfont\normalsize\@adminfootnotes\@mkboth{\@nx\shortauthors}{\@nx\shorttitle}\global\topskip28pt\relax\@settitle\ifx\@empty\authors\else\@setauthors\fi\@setabstract\normalsize\vskip16pt}
\def\section{\@startsection{section}{1}{\z@}{24pt plus4pt minus3pt}{10pt plus2pt minus2pt}{\normalfont\bfseries\centering}}
\def\subsection{\@startsection{subsection}{2}{\z@}{16pt plus3pt minus2pt}{7pt plus2pt minus1pt}{\normalfont\bfseries\raggedright}}
\def\subsubsection{\@startsection{subsubsection}{3}{\z@}{12pt plus2pt minus2pt}{-.6em}{\normalfont\bfseries}}
\def\part{\@startsection{part}{0}{\z@}{26pt plus4pt minus3pt}{12pt plus2pt minus2pt}{\normalfont\bfseries\centering}}

\def\tableofcontents{\begingroup\fontsize{10}{10.5}\selectfont\@starttoc{toc}\contentsname\endgroup}
\def\l@part{\@tocline{-1}{4pt}{0pt}{}{\normalfont}}
\def\l@section{\@tocline{1}{0pt}{0pt}{}{\normalfont}}
\apptocmd{\thebibliography}{\raggedright}{}{}

\def\@biblabel#1{[#1]}

\renewenvironment{proof}[1][\proofname]{\par\pushQED{\qed}\normalfont\topsep8pt plus2pt minus2pt\trivlist\item[\hskip\labelsep\hskip12pt\itshape #1\@addpunct{:}]\ignorespaces}{\popQED\endtrivlist\@endpefalse}
\def\@setaddresses{\par\nobreak\begingroup\normalfont\small\interlinepenalty\@M\def\author##1{}\def\address##1##2{\par\addvspace{18pt}\noindent\hspace*{12pt}\begin{minipage}{\dimexpr\textwidth-12pt\relax}\scshape##2\end{minipage}\par}\def\email##1##2{\par\vskip6pt\noindent\hspace*{12pt}\href{mailto:##2}{\normalfont##2}\par}\addresses\endgroup}
\makeatother
\fancypagestyle{growth}{\fancyhf{}\fancyhead[CE]{\normalfont\fontsize{8}{10}\selectfont\scshape Guoyi Xu}\fancyhead[CO]{\normalfont\fontsize{8}{10}\selectfont\scshape Sharp scalar-integral bounds}\fancyhead[LE,RO]{\normalfont\fontsize{8}{10}\selectfont\thepage}}
\fancypagestyle{plain}{\fancyhf{}\fancyfoot[C]{\normalfont\fontsize{8}{10}\selectfont\thepage}}
\AtBeginDocument{\pagestyle{growth}}
\fancypagestyle{firstpage}{\fancyhf{}\fancyfoot[C]{\normalfont\fontsize{8}{10}\selectfont\thepage}}
\renewcommand{\footnoterule}{\kern-3pt\hrule width.18\textwidth\kern2.6pt}

\theoremstyle{growthplain}
\newtheorem{theorem}{Theorem}[section]
\newtheorem{introtheorem}{Theorem}

\newtheorem{notation}[theorem]{Notation}
\newtheorem{proposition}[theorem]{Proposition}
\newtheorem{lemma}[theorem]{Lemma}
\newtheorem{corollary}[theorem]{Corollary}

\newtheorem{externaltheorem}{Theorem}

\theoremstyle{growthdefinition}

\theoremstyle{growthdefinition}
\newtheorem{remark}[theorem]{Remark}
\newcommand{\R}{\mathbb R}
\newcommand{\Sc}{\operatorname{R}}
\newcommand{\Ric}{\operatorname{Ric}}
\newcommand{\Hess}{\nabla^2}
\newcommand{\AVR}{\operatorname{AVR}}
\newcommand{\Vol}{\operatorname{Vol}}
\newcommand{\Area}{\operatorname{Area}}

\DeclareMathOperator{\diver}{div}
\DeclareMathOperator{\tr}{tr}
\DeclareMathOperator{\supp}{supp}
\DeclareMathOperator{\Rm}{Rm}

\newcommand{\Id}{\operatorname{Id}}

\hypersetup{pdfauthor={Guoyi Xu},pdftitle={Sharp bounds for asymptotic scalar-curvature integrals}}
\providecommand{\pf}{\par\indent\textsl{\proofname.}\enspace\ignorespaces}
\providecommand{\qed}{\unskip\nobreak\hfill\ensuremath{\square}}

\usepackage{graphicx}
\makeatletter
\newdimen\JDG@natural
\newdimen\JDG@target
\newdimen\JDG@reserved
\def\JDG@scale{1}
\def\JDG@fitfield{%
  \ifdim\JDG@scale\p@<\p@
    \setbox\z@\hbox{\scalebox{\JDG@scale}{\box\z@}}%
  \fi}
\def\align@preamble{%
   &\hfil\strut@
    \setboxz@h{\@lign$\m@th\displaystyle{##}$}%
    \JDG@fitfield
    \ifmeasuring@\savefieldlength@\fi
    \set@field\tabskip\z@skip
   &\setboxz@h{\@lign$\m@th\displaystyle{{}##}$}%
    \JDG@fitfield
    \ifmeasuring@\savefieldlength@\fi
    \set@field\hfil\tabskip\alignsep@}
\let\JDG@measure\measure@
\def\measure@#1{%
  \def\JDG@scale{1}%
  \JDG@measure{#1}%
  \ifdim\totwidth@>\displaywidth
    \JDG@reserved\z@
    \ifnum\xatlevel@>\z@
      \@tempcnta\maxfields@\divide\@tempcnta\tw@
      \advance\@tempcnta\m@ne
      \JDG@reserved\minalignsep\relax
      \multiply\JDG@reserved\@tempcnta
    \fi
    \if@fleqn\advance\JDG@reserved\@mathmargin\fi
    \JDG@natural\totwidth@\advance\JDG@natural-\JDG@reserved
    \JDG@target\displaywidth\advance\JDG@target-\JDG@reserved
    \advance\JDG@target-.5pt
    \ifdim\JDG@target>\z@
      \Gscale@div\JDG@scale\JDG@target\JDG@natural
      \JDG@measure{#1}%
    \fi
  \fi}
\makeatother
\AtBeginDocument{\tolerance=9999\emergencystretch=.5\textwidth\relax}
\begin{document}
\title[Sharp Asymptotic Bounds and Rigidity]{Integral of Scalar Curvature: Sharp Asymptotic Bounds and Rigidity}
\author{Guoyi Xu}
\address{Guoyi Xu\\Department of Mathematical Sciences\\Tsinghua University, Beijing\\P. R. China}
\email{guoyixu@tsinghua.edu.cn}
\subjclass[2020]{Primary 53C21; Secondary 53C20}
\keywords{Sectional curvature, scalar curvature, asymptotic volume ratio, sharp integral bounds, rigidity}
\date{}

\begin{abstract}
For any complete noncompact manifolds $(M^n, g)$ of nonnegative sectional curvature, with $3\le n\le6$, we obtain
$\displaystyle \lim_{r\to\infty}r^{2-n}\int_{B_p(r)}\Sc_g\,dV_g\le8\pi\omega_{n-2}(1-\AVR(M^n,g))$ and the corresponding rigidity. The latter bound and its equality classification hold in $(M^n, g)$ with positive-dimensional souls. With a pole, $\displaystyle \lim_{r\to\infty}r^{2-n}\int_{B_p(r)}\Sc_g\,dV_g\le4\pi\omega_{n-2}(1-\AVR(M^n,g))$ follows from an independent
distance-sphere proof. We also prove the sharp total scalar-integral bound for closed manifolds whose curvature
operator is at least the identity. 
\end{abstract}

\thanks{Guoyi Xu was partially supported by NSFC 12141103.}
\maketitle
\tableofcontents

\section*{Introduction}

The starting point is the relation between total curvature, topology, and area growth on a complete surface. Let $(\Sigma,h)$ be a complete, connected, noncompact, oriented Riemannian surface without boundary, let $K_h$ be its Gaussian curvature, and write $K_h^-=\max\{-K_h,0\}$. For a surface of finite total curvature, Cohn--Vossen's inequality \cite{CohnVossen1935} gives
\begin{align}
&\int_\Sigma K_h\,dA_h\le2\pi\chi(\Sigma). \nonumber
\end{align}

Huber \cite{Huber1957} showed that the assumption $\int_\Sigma K_h^-\,dA_h<\infty$ implies that $\Sigma$ is conformally equivalent to a compact Riemann surface with finitely many points removed.

For $(M^n, g)$ with $Rc\geq 0$, we define the asymptotic volume ratio as:
\begin{align}
&\mathrm{AVR}(M^n, g)=\lim_{r\to\infty}\frac{V(B_p(r))}{\omega_n r^n}. \nonumber
\end{align}
Here and below $\omega_n$ denotes the volume of the Euclidean unit ball in dimension $n$. 

Hartman \cite{Hartman1964} identified the difference in the Cohn--Vossen inequality with the quadratic area growth: under the same integrability assumption, the scalar-curvature convention $\Sc_h=2K_h$ gives
\begin{align}
&\int_\Sigma\Sc_h\,dA_h=4\pi\bigl(\chi(\Sigma)- \mathrm{AVR}(\Sigma, h)\bigr). \nonumber
\end{align}
Thus the two-dimensional theory provides both a topological upper bound and an exact contribution from the geometry at infinity.

The expression $r^{2-n}\int_{B_p(r)}\Sc_g\,dV_g$ is unchanged by simultaneously rescaling the metric and the radius. Motivated by the surface inequality, Yau \cite[Problem~9]{Yau1992} asked for higher-dimensional bounds for normalized integrals of the elementary symmetric functions of the Ricci tensor. 

The scalar-curvature case asks whether every complete noncompact manifold with $\Ric_g\ge0$ satisfies
\begin{align}
&\limsup_{r\to\infty}r^{2-n}\int_{B_p(r)}\Sc_g\,dV_g<\infty. \nonumber
\end{align}

On a complete noncompact three-manifold with $\Ric_g\ge0$ that is nonparabolic, meaning that it admits a minimal positive Green function $G(p,\cdot)$ for $-\Delta_g$, we \cite[Theorem~1.2]{XuNonparabolic2020} proved a weighted estimate. With $G$ normalized by $-\Delta_gG(p,\cdot)=\delta_p$ and $b=(4\pi G(p,\cdot))^{-1}$, it is
\begin{align}
&\limsup_{r\to\infty}\frac1r\int_{\{b\le r\}}\Sc_g|\nabla b|\,dV_g\le8\pi(1-\AVR(M^3,g)). \nonumber
\end{align}

For a three-manifold with a pole, that is, a point $p$ for which $\exp_p:T_pM\to M$ is a diffeomorphism, we \cite[Theorem~1.4]{XuPole2024} obtained the exact unweighted formula
\begin{align}
&\lim_{r\to\infty}\frac1r\int_{B_x(r)}\Sc_g\,dV_g=8\pi(1-\AVR(M^3,g))\qquad(x\in M) \nonumber
\end{align}
under $\Ric_g\ge0$. Munteanu and Wang \cite[Theorem~1.2]{MunteanuWang2025} proved the asymptotic upper bound $8\pi$ in the one-ended three-dimensional setting when the scalar curvature is bounded above and below by positive constants.

There are also developments in settings with complex or conformal structure. Gang Liu \cite[Theorems 2--3]{Liu2024} proved that, on a complete noncompact K\"ahler manifold with nonnegative holomorphic bisectional curvature, the limit
\begin{align}
&\lim_{r\to\infty}\frac{r^2}{\Vol_gB_p(r)}\int_{B_p(r)}\Sc_g\,dV_g \nonumber
\end{align}
exists, with $+\infty$ allowed, and obtained sharp relations with volume growth and polynomial-growth holomorphic functions in the Euclidean-volume-growth case. 

Jialong Deng \cite[Theorem~B(i)]{Deng2026} established, for complete, connected, oriented, nonflat three-manifolds with $\Ric_g\ge0$, the bound $8\pi(1-\AVR(M^3,g))$ when $\AVR(M^3,g)>0$ and the bound zero when $\AVR(M^3,g)=0$, assuming $\Sc_g(x)\le C_0d_g(p,x)^{-2}$ outside a compact set. Shiguang Ma \cite[Theorem~1.3]{Ma2026} proved existence and explicit values of the finite normalized scalar-curvature limit for complete noncompact locally conformally flat manifolds with $\Ric_g\ge0$ in every dimension at least three. 

The unrestricted Ricci-nonnegative boundedness assertion is false. We \cite{XuCollapse2026} constructed such a metric for which the normalized integral tends to infinity, with zero asymptotic volume ratio (also see independent works by Hao and Zhu \cite{HaoZhu2026} and Haoxuan Cheng \cite{Cheng2026}).  

Also Haoxuan Cheng \cite{Cheng2026} constructed smooth complete metrics on $\R^n$, $n\ge4$, with nonnegative Ricci curvature and a pole for which the normalized scalar-curvature integral tends to $+\infty$ at every fixed center. Thus the pole assumption alone does not give a finite normalized scalar-curvature integral under $\Ric_g\ge0$ in these dimensions. 

These results make it necessary to distinguish nonnegative Ricci curvature from the nonnegative sectional-curvature hypothesis of the present paper. Our objective is to establish actual limits, determine sharp constants where the required curvature identities can be controlled, and explain the role of the soul and of a pole. 

Throughout the noncompact statements below, unless otherwise mentioned, $(M^n,g)$ is a complete noncompact Riemannian manifold with $n\ge3$ and sectional curvature $K_g\ge0$. Define
\begin{align}
&L(M,g):=\lim_{r\to\infty}r^{2-n}\int_{B_p(r)}\Sc_g\,dV_g \nonumber
\end{align}
By the soul theorem \cite{CG}, $M$ contains a compact, totally convex, totally geodesic submanifold $\mathcal S$ without boundary, called a soul, and is diffeomorphic to its normal bundle. In particular, $M$ has finite homotopy type, and $\chi(M)=\sum_j(-1)^j\dim H_j(M;\mathbb Q)$ is defined and equals $\chi(\mathcal S)$.

Two parts of the sharp theory hold in every dimension.
\begin{introtheorem}\label{intro:thm:pole}
If $(M^n, g)$ has a pole and $K_g\geq 0$, then
\begin{align}
&L(M,g)\le4\pi\omega_{n-2}(1-\AVR(M^n,g)). \nonumber
\end{align}
\end{introtheorem}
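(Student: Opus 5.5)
The plan is to work entirely on the distance spheres $S_t=\partial B_p(t)$ centered at the pole $p$. Since $\exp_p$ is a diffeomorphism, $\rho=d_g(p,\cdot)$ is smooth on $M\setminus\{p\}$ with $|\nabla\rho|=1$. Each $S_t$ is therefore a smooth embedded sphere, and the coarea formula gives
\begin{align}
&\int_{B_p(r)}\Sc_g\,dV_g=\int_0^r\int_{S_t}\Sc_g\,dA\,dt. \nonumber
\end{align}
Let $N=\nabla\rho$, let $A=\Hess\rho|_{TS_t}$ be the second fundamental form, $H=\tr A$, and $a(t)=\Area(S_t)$, so that $a'(t)=\int_{S_t}H$.

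First I split $\Sc_g=2\Ric(N,N)+2\sum_{i<j}K(e_i,e_j)$, with $e_i$ tangent to $S_t$. I then use two identities. The Gauss equation gives $2\sum_{i<j}K(e_i,e_j)=\Sc_{S_t}-(H^2-|A|^2)$. The Riccati equation along radial geodesics gives $\partial_tH+|A|^2+\Ric(N,N)=0$. Using $\frac{d}{dt}\int_{S_t}H=\int_{S_t}(\partial_tH+H^2)$, these combine to
\begin{align}
&\int_{S_t}\Sc_g=-2a''(t)+\int_{S_t}\Sc_{S_t}+\int_{S_t}(H^2-|A|^2). \nonumber
\end{align}
Next I use $K_g\ge0$ through the Gauss equation once more: $H^2-|A|^2=\Sc_{S_t}-2\sum_{i<j}K(e_i,e_j)\le\Sc_{S_t}$. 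Hence $\int_{S_t}\Sc_g\le-2a''(t)+2\int_{S_t}\Sc_{S_t}$. Integrating in $t$, and using $a'(0^+)=0$ (near the pole $a'(t)\sim(n-1)n\omega_nt^{n-2}$, which vanishes as $t\to0$ for $n\ge3$), gives
\begin{align}
&\int_{B_p(r)}\Sc_g\le-2a'(r)+2\int_0^r\int_{S_t}\Sc_{S_t}\,dA\,dt. \nonumber
\end{align}

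For the boundary term, Bishop--Gromov together with the Hessian comparison $A\le t^{-1}\Id$ shows that $r^{2-n}a'(r)$ is controlled from below by $\AVR$. Indeed $a(r)/(n\omega_nr^{n-1})\downarrow\AVR$, and I will argue that $r^{2-n}a'(r)\to(n-1)n\omega_n\AVR$ along the limit. Since it is only the lower bound on $a'$ that is needed, I would extract it from the monotone ratio by a standard averaging argument in $r$. This produces the term $-2(n-1)n\omega_n\AVR=-4\pi(n-1)\omega_{n-2}\AVR$, using $n\omega_n=2\pi\omega_{n-2}$.

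The main obstacle is bounding the intrinsic term: the goal is $\limsup_{t\to\infty}t^{3-n}\int_{S_t}\Sc_{S_t}\le(n-1)(n-2)n\omega_n$, the round-sphere value. My first attempt would be to write $\Sc_{S_t}=2\sum K_{\rm tan}+H^2-|A|^2$. Here $H^2-|A|^2\le\frac{n-2}{n-1}H^2$ and $H\le(n-1)/t$ handle the extrinsic part. The tangential-curvature part I would handle by reversing the splitting: $2\sum K_{\rm tan}=\Sc_g-2\Ric(N,N)$. This leads to an absorption or differential-inequality argument for $F(r)=\int_{B_p(r)}\Sc_g$ with a factor $2$ loss. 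That loss is the reason I expect the constant $4\pi\omega_{n-2}$ rather than $8\pi\omega_{n-2}$. If this fails, the fallback is to pass to the asymptotic cone, a metric cone over an Alexandrov space with curvature $\ge1$. There I would invoke the convergence of rescaled $S_t$ to its cross section and bound the normalized total scalar curvature by the sharp round-sphere value. Combining this with the boundary term and dividing by $r^{n-2}$ should give $L(M,g)\le4\pi\omega_{n-2}(1-\AVR(M^n,g))$.
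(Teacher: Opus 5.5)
There is a genuine gap. Your opening computation is correct, but the step $H^2-|A|^2\le\Sc_{S_t}$ is fatal. It discards $2\int_{S_t}\sum_{i<j}K(e_i,e_j)\,dA=\int_{S_t}\bigl(\Sc_g-2\Ric(N,N)\bigr)dA$. After normalization this is asymptotically all of $L$, because $r^{2-n}\int_{B_p(r)}\Ric(N,N)\,dV\to0$. If you keep that term, your own identities give exactly
\[
\int_{B_p(r)}\Sc_g\,dV=-a'(r)+\int_0^r\!\!\int_{S_t}\Sc_{S_t}\,dA\,dt+\int_{B_p(r)}\Ric(N,N)\,dV .
\]
So your inequality is equivalent to $\int_{B_p(r)}\Sc_g\le2\int_{B_p(r)}\Sc_g-2\int_{B_p(r)}\Ric(N,N)$, which is asymptotically $L\le 2L$: whatever bound you prove for the right side, your chain doubles it. Concretely, your target $\limsup t^{3-n}\int_{S_t}\Sc_{S_t}\le(n-1)(n-2)n\omega_n$ together with your boundary term gives $L\le2(n-1)n\omega_n(1-\AVR)=4\pi(n-1)\omega_{n-2}(1-\AVR)$. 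Your last line silently drops the factor $n-1$. For $n=3$ this is $16\pi(1-\AVR)$, whereas the truth is $L=8\pi(1-\AVR)$. A loss in an upper bound cannot lower the constant from $8\pi\omega_{n-2}$ to $4\pi\omega_{n-2}$; in the paper that difference reflects $\chi(M)=1$ for a manifold with a pole.

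Even the exact identity does not rescue the plan for $n\ge4$. It gives $L=\frac{L_\Sigma}{n-2}-(n-1)n\omega_n\AVR$, with $L_\Sigma=\lim_{t\to\infty}t^{3-n}\int_{S_t}\Sc_{S_t}\,dA$. With the round-sphere value this only yields $L\le(n-1)n\omega_n(1-\AVR)$, which is weaker than the theorem. What is needed is the $\AVR$-dependent bound $L_\Sigma\le(n-2)n\omega_n\bigl(2+(n-3)\AVR\bigr)$, which is equivalent to the theorem. So the intrinsic estimate is the whole problem, not a lemma.

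Neither of your routes is an argument. Substituting $2\sum_{i<j}K_{ij}=\Sc_g-2\Ric(N,N)$ back in merely reproduces the Riccati identity. Total scalar curvature is not continuous under Gromov--Hausdorff convergence of rescaled spheres to an Alexandrov cross-section.

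The missing idea is a monotone quantity that couples the area term with exactly the tangential-curvature term you discarded. The paper uses the first two terms of Chern's transgression, with $S$ the shape operator:
\[
B(r)=\frac1{n\omega_n}\int_{S_r}\Bigl(\det S+\frac1{n-2}\sum_{i<j}K_{ij}\prod_{\ell\notin\{i,j\}}\kappa_\ell\Bigr)dA .
\]
Stokes' theorem gives $B'\le0$. The exterior derivative of the form pulls back to minus a positive multiple of $\sum_{i<j<k}P_2\bigl(\Rm_g|_{\operatorname{span}\{N,e_i,e_j,e_k\}}\bigr)\prod_{\ell\notin\{i,j,k\}}\kappa_\ell$. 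This is nonnegative by Milnor's four-dimensional inequality $P_2\ge0$ and $S\ge0$, and $B(0^+)=1$. The weights $r\kappa_\ell$ are then removed using $0\le rS\le\Id$ and errors integrable against $dr/r$. Those errors come from the monotonicity of $r^{1-n}a(r)$ and of $r^{3-n}\int_{S_r}\Sc_{S_r}\,dA$. This identifies $\lim B=\AVR+L/(2n\omega_n)\le1$.

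Your exact identity together with Gauss--Bonnet on $S_t$ does settle $n=3$, but not $n\ge4$.
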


\begin{remark}\label{remark-thm-with-pole}
{The starting point of Theorem \ref{intro:thm:pole} is: that the outward shape operator $S$ of a distance
sphere of radius $r$ about a pole satisfies
\begin{align}
0\le rS\le \operatorname{Id}.
\end{align}
Combining the first two terms of Chern's transgression construction yields a nonincreasing sphere integral, with monotonicity determined solely by
nonnegative quadratic curvature contractions on four-dimensional
subspaces. The main analytic difficulty is to remove the
principal-curvature weights and recover the unweighted scalar-curvature
integral. First variations of normalized area and total intrinsic scalar
curvature, together with the radial Riccati equation, provide error
estimates integrable with respect to $\frac{1}{r}dr$. These identify the limiting
sphere integral as
\begin{align}
\AVR(M^n,g)+\frac{L(M,g)}{4\pi\omega_{n-2}},
\end{align}
proving the sharp bound with an exact nonnegative remainder.
}
\end{remark}

One basic difficulty is that a general nonnegatively curved manifold need not have a pole. Its distance spheres need not be smooth, while the boundary formulas needed for sharp constants involve products of their principal curvatures.

Starting from the convex exhaustion obtained from rays in \cite{CG}, applying the approximation theorem of Greene and Wu \cite[Proposition~2.3]{GW}, we get the smooth function $F$ satisfying
\begin{align}
&\Hess F>-\frac{1}{32(1+\rho)^3}g,\qquad \lim_{R\to\infty}\sup_{\rho\ge R}\bigl||\nabla F|-1\bigr|=0,\qquad \lim_{R\to\infty}\sup_{\rho\ge R}|F/\rho-1|=0. \nonumber
\end{align}
where $\rho=d_g(p,\cdot)$.

One key observation is that the normalized intrinsic scalar-curvature
integral on suitable level sets $\Sigma_t= F^{-1}(t)$ becomes almost monotone
after a vanishing correction involving total mean curvature.  The cut
locus and the loss of convexity under smoothing are handled by testing
weak squared-distance Hessian comparison against a suitably corrected
nonnegative level tensor.  This yields a closed differential inequality with integrable errors, establishing a finite level limit without an a-priori scalar-curvature bound.  

The Gauss equation and averaged Hessian comparisons then identify the volume-growth contribution and show that the normalized normal-Ricci contribution vanishes, allowing passage to distance balls.  In dimension three, Gauss--Bonnet identifies the intrinsic contribution with $8\pi\chi(M)$, giving the  formula in the following result.

\begin{introtheorem}\label{intro:thm:existence}
For $(M^n, g)$ has $K_g\geq 0$, the limit $L(M,g)\in [0, \infty)$ exists. When $n= 3$,
\begin{align}
&L(M^3,g)=8\pi\bigl(\chi(M)-\AVR(M^3,g)\bigr). \nonumber
\end{align}
\end{introtheorem}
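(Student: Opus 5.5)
We will work on the level sets $\Sigma_t=F^{-1}(t)$ of the smoothed exhaustion $F$ from the introduction. For large $t$ these are smooth closed hypersurfaces, since $|\nabla F|\to1$ near infinity. They are nearly convex, since $\Hess F>-\frac{1}{32(1+\rho)^3}g$, and they are uniformly comparable to distance spheres, since $F/\rho\to1$. By coarea, with $|\nabla F|\to 1$,
\begin{align}
\int_{\{F\le r\}}\Sc_g\,dV_g=\int^{r}\int_{\Sigma_t}\frac{\Sc_g}{|\nabla F|}\,dA\,dt+O(1).\nonumber
\end{align}
Because $\Sc_g\ge0$ and $\{F\le (1-\epsilon)r\}\subset B_p(r)\subset\{F\le(1+\epsilon)r\}$ for large $r$, it suffices to show that $t^{3-n}\int_{\Sigma_t}\Sc_g\,dA$ has a finite limit $\ell$. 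An L'H\^opital-type argument then gives $L=\ell/(n-2)$.

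\textbf{Decomposition via the Gauss equation.} On $\Sigma_t$ with unit normal $\nu$ and shape operator $A$, we have
\begin{align}
\Sc_g=\Sc_{\Sigma_t}+2\Ric(\nu,\nu)-H^2+|A|^2.\nonumber
\end{align}
We treat the three groups of terms separately.

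\emph{Extrinsic term.} By Hessian comparison on average, $A\approx t^{-1}\mathrm{Id}$ in an $L^1$ sense. More precisely, $\int_{\Sigma_t}(H^2-|A|^2)$ is controlled by $\sigma_2(A)$, and its normalized integral tends to $(n-1)(n-2)\,n\omega_n\AVR$. This uses $|\Sigma_t|\sim n\omega_n\AVR\, t^{n-1}$, which follows from the volume growth through coarea, together with the fact that the total mean curvature matches the derivative of area.

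\emph{Normal-Ricci term.} We need $t^{3-n}\int_{\Sigma_t}\Ric(\nu,\nu)\to0$. The plan is to integrate the Riccati/Bochner identity $\Ric(\nabla F,\nabla F)=-\Delta|\nabla F|^2/2+\ldots$, or equivalently
\begin{align}
\operatorname{div}\bigl(\Hess F(\nabla F)\bigr)-\nabla F\cdot\nabla\Delta F-|\Hess F|^2=\Ric(\nabla F,\nabla F),\nonumber
\end{align}
over $\{F\le r\}$. The boundary terms are of order $r^{n-2}$, so the volume integral of $\Ric(\nu,\nu)$ is $O(r^{n-2})$. Monotonicity, obtained in the next step, then upgrades this $O(r^{n-2})$ to the statement that the normalized average contributes zero.

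\emph{Intrinsic term.} $\Sc_{\Sigma_t}\ge$ a small error, because $K_g\ge0$ and $A$ is almost nonnegative. We aim to show that
\begin{align}
\Phi(t)=t^{3-n}\int_{\Sigma_t}\Sc_{\Sigma_t}+c\,t^{2-n}\int_{\Sigma_t}H\nonumber
\end{align}
is almost monotone, i.e. $\Phi'(t)\le e(t)$ with $e\in L^1(dt)$. To get this, we compute the first variation of $\int_{\Sigma_t}\Sc_{\Sigma_t}$ along $\nabla F/|\nabla F|^2$. This variation is $\int(-\mathrm{Ric}_{\Sigma}\cdot 2A/|\nabla F| + \ldots)$, i.e. it involves the Einstein tensor of $\Sigma_t$ paired with $A$, plus divergence terms. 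The Gauss equation and $K_g\ge0$ then yield sign control up to errors that come from $\Hess F+\frac{1}{32(1+\rho)^3}g\ge0$. The errors $\int (1+\rho)^{-3}|\Sigma_t|$-type terms, normalized, are $O(t^{-2})$ and hence integrable.

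This almost-monotonicity is the \textbf{main obstacle}. Distance functions are not smooth past the cut locus, and $F$ is only approximately convex, so comparison must be used in weak (barrier/distributional) form. I would test the weak inequality $\Hess\rho^2\le 2g$, valid in the barrier sense since $K\ge0$, against the nonnegative tensor $T=\mathrm{Ric}_\Sigma$-type corrected by $+\epsilon(t)g$ so that it is positive. Integrating by parts on $\Sigma_t$ then bounds $\int\langle T,A\rangle$ by $t^{-1}\int\tr T$ plus errors. Once the limit $\Phi(\infty)$ exists, and is finite because $\Phi\ge -o(1)$, $L$ exists and is nonnegative since $\Sc_g\ge0$.

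\textbf{Case $n=3$.} Here $\Sigma_t$ is a closed surface and Gauss--Bonnet gives $\int_{\Sigma_t}\Sc_{\Sigma_t}=4\pi\chi(\Sigma_t)$. For large $t$, $\Sigma_t$ is the boundary of a tubular neighbourhood of the soul $\mathcal S$, diffeomorphic to the unit normal bundle, so $\chi(\Sigma_t)=2\chi(\mathcal S)=2\chi(M)$ when $\dim\mathcal S<3$. The case of a compact $M$ is excluded. This gives the intrinsic contribution $8\pi\chi(M)$. The extrinsic contribution is $-2\cdot 4\pi\AVR\cdot$(normalization), which equals $-8\pi\AVR$. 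Collecting terms with $n-2=1$ gives
\begin{align}
L=8\pi(\chi(M)-\AVR(M^3,g)).\nonumber
\end{align}
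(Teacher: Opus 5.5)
\textbf{The gap is in the normal-Ricci step, and the same gap sits under your extrinsic step.}

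Your integrated identity, $\Ric(\nabla F,\nabla F)=\operatorname{div}\bigl(\Hess F(\nabla F)-\Delta F\,\nabla F\bigr)+(\Delta F)^2-|\Hess F|^2$, is equivalent to the paper's $\operatorname{div}(H\nu-\nabla_\nu\nu)=2\sigma_2(S)-\Ric(\nu,\nu)$. Combined with $\Ric(\nu,\nu)\ge0$, it gives only two things: the a priori bound $O(r^{n-2})$, and a \emph{lower} bound on the averaged $\int\sigma_2(S)$.

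Through the volume asymptotics, vanishing of the normalized $\Ric(\nu,\nu)$ integral is equivalent to the matching \emph{upper} bound, for $\theta\ge0$:
$\limsup_{s\to\infty}2s^{2-n}\int_M\theta(F/s)\sigma_2(S)\,dV\le n(n-1)(n-2)\omega_n\AVR(M^n,g)\int_0^\infty\theta(t)t^{n-3}\,dt$.

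Nothing supplies this upper bound in your proposal:
\begin{itemize}
\item The almost-monotonicity of $\Phi$ concerns only $\Sc_{\Sigma_t}$ and says nothing about $\Ric(\nu,\nu)$, so the proposed ``upgrade'' has no mechanism.
\item ``$A\approx t^{-1}\Id$ by Hessian comparison on average'' simply assumes the bound. Comparison holds only distributionally for the nonsmooth $\rho^2/2$, not on the levels of $F$. Area growth and total mean curvature control only the first-order quantity $\int_{\Sigma_t}H$, not the quadratic $\int_{\Sigma_t}\sigma_2(S)$.
\end{itemize}

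The missing idea (Theorem~\ref{thm:direct-main}) is to pair $\Hess(\rho^2/2)\le g\,dV$ with the nonnegative Newton tensor $\theta(F/s)F^{-1}P_s$, where $P_s=(\Delta F+(n-1)s^{-2})g-\Hess F$ satisfies $\operatorname{div}P_s=-\Ric(\nabla F,\cdot)$.
\begin{itemize}
\item Integration by parts moves the comparison from $\rho^2/2$ to $F^2/2$ at cost $o(s^{n-2})$. This holds because $|\nabla(\rho^2/2-F^2/2)|=o(s)$ on the annulus and $\int\theta(F/s)\Sc\,dV=O(s^{n-2})$ by your a priori bound.
\item $\langle P_s,\Hess(F^2/2)\rangle$ contains $F\bigl((\Delta F)^2-|\Hess F|^2\bigr)$. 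The principal-minor estimate gives $(\Delta F)^2-|\Hess F|^2\ge2|\nabla F|^2\sigma_2(S)-2(n-1)s^{-2}(\Delta F+ns^{-2})$.
\item Squeezing between the two bounds yields the $\sigma_2$ limit and the vanishing of the $\Ric(\nu,\nu)$ term simultaneously.
\end{itemize}
Without this step, neither existence of $L$ nor the value $8\pi(\chi(M)-\AVR(M^3,g))$ follows. For $n=3$ the intrinsic term is constant by Gauss--Bonnet, so this step is the whole content.

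\textbf{Two further points.} First, your opening reduction aims at a false statement: $t^{3-n}\int_{\Sigma_t}\Sc_g\,dA$ need not converge. Take $dr^2+f(r)^2g_{\mathbb S^2}$ with $f''\le0$ and $f'$ decreasing to $a>0$ by drops of $2^{-k}$ across unit-width shells at $r=4^k$. Then $\sec\ge0$, but $\int_{\Sigma_t}\Sc_g\,dA\approx-16\pi ff''\sim2^k$ on levels inside those shells. Only averaged limits, over annuli or against $\theta(F/s)$, exist, which is how the paper proceeds.

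Second, your intrinsic step does follow the paper's strategy (Lemmas~\ref{lem-diff-ineq-of-J} and~\ref{lem:independent-level-limit}), but two details need fixing:
\begin{itemize}
\item The mean-curvature correction must decay. $t^{2-n}\int_{\Sigma_t}H$ is of order one with no pointwise limit, so the paper uses $C\eta(t)(t-c)^{3-n}N(t)=O(t^{-2})$.
\item A constant shift $\epsilon(t)g$ cannot make $\Sc_{\Sigma_t}g-2\Ric_{\Sigma_t}$ nonnegative, because products $\kappa_j\kappa_k$ can have $\kappa_j\approx-\eta$ and $\kappa_k$ unbounded. One needs the $H$-dependent shift $2(n-3)\eta[(H+(n-2)\eta)g-S]$. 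Its divergence $-2(n-3)\eta\Ric_g(\nu,\cdot)^T$ introduces $\int_{\Sigma_t}\Sc_g$, which is then eliminated through $\int_{\Sigma_t}\Sc_g\le8J+6I_--4N'$; this is why an $N$-correction appears at all.
\end{itemize}
Your identification $\chi(\Sigma_t)=2\chi(M)$ for $n=3$ is fine.
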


For manifolds with positive-dimensional soul, we have the following result.
\begin{introtheorem}
\label{intro:thm:subclasses}
For $(M^n, g)$ with $K_g\geq 0$ and a positive-dimensional soul, we have 
\begin{align}
&L(M,g)\le8\pi\omega_{n-2}, \nonumber
\end{align}
with equality exactly for $(\mathbb S^2,h)\times\R^{n-2}$ with $K_h\ge0$.
\end{introtheorem}

\begin{remark}\label{remark--positive-soul}
{The key observation is that the problem is effectively two-dimensional: if $L(M,g)>0$, a positive-dimensional soul must be a surface, and the normalized vertical and mixed curvature contributions vanish. This reduction follows from contracting the distributional Hessian comparison for the squared distance to the soul with $\frac12\operatorname{Sc}_g\,g-\operatorname{Ric}_g$, together with a covering argument. O'Neill's formula, weighted tube-volume comparison, and Gauss--Bonnet then yield the sharp bound. The main difficulty in rigidity is to exclude twisting of the normal bundle: for the two-sphere soul selected by equality, nontrivial normal holonomy would force $L(M,g)=0$. Trivial holonomy yields a global product, and equality in volume comparison forces its noncompact factor to be Euclidean.
}
\end{remark}

\begin{introtheorem}\label{intro:thm:sharp}
For $(M^n, g)$ with $K_g\geq 0$ and $3\le n\le6$, we have
\begin{align}
&L(M,g)\le 4\pi\omega_{n-2}\bigl(\chi(M)-\AVR(M^n,g)\bigr), \nonumber\\
&L(M,g)\le8\pi\omega_{n-2}(1-\AVR(M^n,g)). \nonumber
\end{align}
Furthermore,
\begin{align}
&L(M,g)=8\pi\omega_{n-2}(1-\AVR(M^n,g)) \nonumber\\
&\quad\Longleftrightarrow\quad (M,g)\cong\R^n\ \text{or}\ (M,g)\cong(\mathbb S^2,h)\times\R^{n-2},\qquad K_h\ge0. \nonumber
\end{align}
Here $\cong$ denotes a global Riemannian isometry. 
\end{introtheorem}

Use $P_2$ from Notation~\ref{notation:P2}, equation~\eqref{pt:eq:P2-definition}. The four-dimensional algebra and its subspace sums give the nonnegativity used in Section~\ref{pt:sec:five}. Exact quadratic variation identities, with the normalizations from \cite{Labbi2008,JulieBerti2020}, give the required integral control of $P_2$. The new comparison step is to combine these integral estimates with multilinear curvature--Hessian identities along the interpolation between $F^2/2$ and smooth approximations of $\rho^2/2$. This avoids requiring convergence of Hessians or simultaneous diagonalization of different Hessians.

The boundary quantity identified by this comparison is $Q_6(S)$ from equation~\eqref{uni:eq:common-boundary}, with the level notation fixed in Notation~\ref{notation:level-objects}.
The invariant contraction underlying this formula and its frame independence are proved in Lemma~\ref{unified:lem:normal-contraction}. Proposition~\ref{six:prop:boundary-limit} for $n=6$ gives
\begin{align}
&\lim_{r\to\infty}\frac1r\int_r^{2r}\int_{\Sigma_t}Q_6(S)\,dA\,dt=2L(M,g). \nonumber
\end{align}
The intermediate coarea identities contain powers of $|\nabla F|$. The proof removes them only after establishing a normalized bound for the integral of the absolute value of the boundary expression. Uniform convergence $|\nabla F|\to1$ alone would not justify this operation for a signed integral. Separately, if $S_+$ denotes the endomorphism obtained by replacing each negative eigenvalue of $S$ by zero, normal Jacobi-field comparison proves
\begin{align}
&\int_{\partial D}\det S_+\,dA\ge n\omega_n\AVR(M^n,g) \nonumber
\end{align}
for every nonempty compact smooth domain $D$, with $S$ its outward shape operator; see Lemma~\ref{uni:lem:determinant}, equation~\eqref{uni:eq:determinant-volume}. Together with the negative-part estimate for the fixed $F$, this supplies the volume contribution to the sharp bound without assuming injectivity of the normal exponential map.

Chern's intrinsic boundary formula \cite{Chern1944,ChernCurvatura1945} is used with all its curvature terms retained. Use $P_3$ and $W_2$ from Notations~\ref{notation-P3-E3} and~\ref{notation-W2-S}, equations~\eqref{audit:eq:P3-E3} and~\eqref{audit:eq:W2-definition}, with $n=6$.
Thus $P_3=1$ at sectional curvature one. Lemma~\ref{six:lem:CGB}, equation~\eqref{six:eq:CGB}, reads
\begin{align}
&\int_{\Sigma_t}\left(\det S+\frac14Q_6(S)\right)dA+\frac38\left(\int_{\Sigma_t}W_2(S)\,dA+5\int_{D_t}P_3\,dV\right)=\pi^3\chi(D_t). \nonumber
\end{align}
The sign of $P_3$ cannot be inferred from $\sec_g\ge0$, as the algebraic example of Geroch \cite{Geroch1976} demonstrates. Accordingly, we do not discard the bulk cubic integral or assert separate positivity of the terms in parentheses. Lemma~\ref{audit:lem:cubic-divergence}, equations~\eqref{audit:eq:general-J-divergence} and~\eqref{one:eq:Hessian-transfer}, and Lemma~\ref{one:lem:weak-convex} instead control the combined expression. The decisive algebraic simplification is the vanishing, in dimension six, of the tensor $E_3$ defined in Notation~\ref{notation-P3-E3}: its formula alternates over seven indices. 

The combined inequality is first tested with smooth cutoffs that majorize the characteristic function of $[r,2r]$. The limit $r\to\infty$ is taken with the cutoff width fixed, and only afterwards is that width sent to zero. The integrable negative part of the boundary polynomial justifies this last passage. Lemma~\ref{audit:lem:combined-boundary}, equation~\eqref{six:eq:averaged-boundary}, therefore gives the ordinary-interval upper bound for $\int_{\Sigma_t}(\det S+Q_6(S)/4)\,dA$. The boundary comparison and determinant estimate then give
\begin{align}
&\pi^3\AVR(M^6,g)+\frac12L(M^6,g)\le\pi^3\chi(M). \nonumber
\end{align}
This is the six-dimensional Euler estimate of Proposition~\ref{six:prop:Euler-bound}. The point is not a new Gauss--Bonnet formula, but the integrated estimate that makes its signed boundary and bulk terms usable under nonnegative sectional curvature alone.

The exact Euclidean-product formula is
\begin{align}
&\frac{L(M^n\times\R^k)}{\omega_{n+k-2}}=\frac{L(M^n)}{\omega_{n-2}},\qquad \AVR(M^n\times\R^k)=\AVR(M^n,g),\qquad k\ge0. \nonumber
\end{align}
Applying the six-dimensional estimate to $M^n\times\R^{6-n}$ proves the bounds for $3\le n\le5$, without restricting the soul.

 The final equality proof treats $3\le n\le6$ directly: a point soul gives $\chi(M)=1$ and the smaller coefficient $4\pi\omega_{n-2}$; a positive-dimensional soul gives $\AVR(M^n,g)=0$, so its equality classification is already supplied by Theorem~\ref{ext:thm:positive-soul}.

Finally, using the even-dimensional Euler identities together with the smooth cone realization supplied by Ricci expanders in \cite{Deruelle2016} and the covering and volume arguments specified in Section~\ref{cmp:sec:compact}, we prove the following compact result.
\begin{theorem}\label{thm--compact-integral}
{Let $(N^m,h)$ be smooth, closed, and connected, with $m\ge3$ and curvature operator $\mathscr R_h\ge\Id$. Put $d_N=|\pi_1(N)|<\infty$, we have
\begin{align}
&\int_N\Sc_h\,dV_h\le \frac{m(m-1)(m+1)\omega_{m+1}}{d_N}. \nonumber
\end{align}
Equality in the bound with $d_N$ occurs precisely for spherical space forms of sectional curvature one. 
}
\end{theorem}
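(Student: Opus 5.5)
The proof reduces the estimate to a sharp normalized scalar-curvature bound on a nonnegatively curved cone over the universal cover, smoothed by an expander.

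\emph{Reduction to the universal cover.} The hypothesis $\mathscr R_h\ge\Id$ gives $\Ric_h\ge(m-1)h$. By Myers' theorem the universal cover $(\tilde N,\tilde h)$ is closed and $d_N=|\pi_1(N)|<\infty$. Both the curvature hypothesis and the scalar curvature lift, so $\int_N\Sc_h\,dV_h=d_N^{-1}\int_{\tilde N}\Sc_{\tilde h}\,dV_{\tilde h}$. Since $(m+1)\omega_{m+1}=\Vol(\mathbb S^m)$, it suffices to prove the simply connected case
\begin{align}
&\int_{\tilde N}\Sc_{\tilde h}\,dV_{\tilde h}\le m(m-1)\Vol(\mathbb S^m), \nonumber
\end{align}
with equality only for the round sphere. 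On $N$, equality then forces $\tilde N$ round, so $N$ is a spherical space form. Bishop's comparison also gives $\Vol(\tilde N)\le\Vol(\mathbb S^m)$, which will be needed.

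\emph{Passage to a cone.} Let $C(\tilde N)=(0,\infty)\times\tilde N$ carry the metric $g=dr^2+r^2\tilde h$, of dimension $n=m+1$. The condition $\mathscr R_{\tilde h}\ge\Id$ is exactly what makes the cone curvature operator nonnegative. The key cone formulas are
\begin{align}
&\Sc_g=\frac{\Sc_{\tilde h}-m(m-1)}{r^2},\qquad r^{2-n}\int_{B_o(r)}\Sc_g\,dV_g=\frac{1}{m-1}\Bigl(\int_{\tilde N}\Sc_{\tilde h}\,dV_{\tilde h}-m(m-1)\Vol(\tilde N)\Bigr), \nonumber
\end{align}
together with $\AVR=\Vol(\tilde N)/\Vol(\mathbb S^m)$. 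The target inequality is then equivalent to the cone estimate
\begin{align}
&L(C(\tilde N))\le n(n-1)\omega_n\bigl(1-\AVR\bigr)=2\pi(n-1)\omega_{n-2}\bigl(1-\AVR\bigr). \nonumber
\end{align}
This holds with equality for the flat cone $\R^n$.

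\emph{Smoothing and the Euler identity.} The cone is singular at the vertex. Following the smooth cone realization of \cite{Deruelle2016}, take the Ricci expander $(M^n,g_E)$ asymptotic to $C(\tilde N)$. It is diffeomorphic to $\R^n$, has nonnegative curvature operator, and has the same $L$ and $\AVR$ as the cone, because both are scale-invariant asymptotic quantities. Theorem~\ref{intro:thm:pole} carries the constant $4\pi\omega_{n-2}$, which matches $2\pi(n-1)\omega_{n-2}$ only when $n=3$, so it is not enough in general.

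I would instead use Chern's boundary Gauss--Bonnet identity on sublevel domains $D_t$, as in Lemma~\ref{six:lem:CGB}, now in general even dimension. If $n=m+1$ is odd, I would first pass to a product with a Euclidean factor, using the exact Euclidean-product formula stated above. On $D_t$ one has $\chi(D_t)=1$. The boundary term containing $\det S$ yields the $\AVR$ contribution through Lemma~\ref{uni:lem:determinant}. The quadratic boundary term yields $L$. Nonnegativity of the curvature operator, which is stronger than $K_g\ge0$, makes every higher bulk Lipschitz--Killing integrand $P_k$ nonnegative, so those terms can be discarded with the correct sign. The output is the sharp cone estimate above.

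\emph{Equality and main obstacle.} Equality in the chain forces $\int_{\tilde N}P_k=0$ for the discarded terms and equality in the determinant/volume comparison. Hence $\Vol(\tilde N)=\Vol(\mathbb S^m)$, and Bishop rigidity makes $\tilde N$ round. The main obstacle is the Euler-identity step: one must verify that, for nonnegative curvature operator, all the intermediate curvature--shape mixed terms have the right sign or vanish asymptotically, so that exactly the constant $n(n-1)\omega_n$ emerges rather than a weaker constant. I would first check this on the exact cone, where $S=r^{-1}\Id$ and every term is explicit, and only then transfer it to the expander via the asymptotic-cone convergence.
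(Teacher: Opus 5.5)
Your reduction to the universal cover and your cone formulas are correct: $\Sc_g=(\Sc_{\tilde h}-m(m-1))r^{-2}$, $L=\frac{J-m(m-1)V}{m-1}$, and $\AVR=V/\Vol(\mathbb S^m)$, where $J=\int_{\tilde N}\Sc_{\tilde h}\,dV_{\tilde h}$ and $V=\Vol(\tilde N)$. An expander plus boundary Chern--Gauss--Bonnet argument can be made to work. However, the step you flag as the main obstacle is wrong as described.

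\emph{Your constants are inverted.} Since $n\omega_n=2\pi\omega_{n-2}$, the constant $4\pi\omega_{n-2}=2n\omega_n$ in Theorem~\ref{intro:thm:pole} is \emph{smaller} than $n(n-1)\omega_n$ for $n\ge3$. That theorem would therefore be more than enough; the only obstruction is that the expander need not have a pole. Likewise, Chern's identity on a disk does not output $n(n-1)\omega_n$. Its first two boundary terms are $\int_{\partial D}(\det S+\frac1{n-2}Q_n(S))\,dA$, measured against $n\omega_n\chi(D)$ (Lemma~\ref{pole:lem:transgression}). On the expander they converge to $V+\frac{J-m(m-1)V}{2(m-1)}$, and discarding the remaining terms gives
\[
J-(m-1)(m-2)V\le 2(m-1)(m+1)\omega_{m+1},\qquad\text{i.e.}\qquad L\le4\pi\omega_{n-2}(1-\AVR).
\]
This is \eqref{cmp:eq:compact-bound} with $d_N=1$. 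The total bound then requires adding $c_m=(m-1)(m-2)>0$ times Bishop's inequality $V\le(m+1)\omega_{m+1}$. That is also where rigidity comes from: equality forces $V=\Vol(\mathbb S^m)$, and Theorem~\ref{ext:thm:Bishop-rigidity} applies. Your equality argument via ``equality in the determinant/volume comparison'' yields nothing, because on an asymptotically conical expander $\int_{\partial D_T}\det S\,dA\to V=n\omega_n\AVR$ automatically.

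\emph{The higher Chern terms do not vanish asymptotically.} On the cone, $\int_{\{r\}\times\tilde N}\nu^*\Phi_j$ is an $r$-independent positive multiple of $\int_{\tilde N}P_j(\Rm_{\tilde h}-G_{\tilde h})\,dV_{\tilde h}$, so these terms must be discarded by sign. Their integrand is a positive multiple of $\sum_{|E|=2j}P_j(\Rm_G|_E)\prod_{\ell\notin E}\kappa_\ell$. Positivity of the bulk Pfaffian is therefore not enough; you need $S\ge0$, which your proposal never secures. The missing idea is to use $D_T=\{f\le T\}$ for the potential of the expander $(X,G,f)$ of Lemma~\ref{cmp:lem:expander}. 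Then $\Hess f=\Ric_G+\frac12G>0$ makes $\partial D_T$ strictly convex, and $D_T$ is a disk. The restriction of the operator-nonnegative $\Rm_G$ to $\Lambda^2E$ is operator-nonnegative, so Lemma~\ref{cmp:lem:Euler-algebra} signs every term. The two surviving limits follow directly from \eqref{cmp:eq:AC-estimates}, since $S=r^{-1}\Id+O(r^{-3})$ and $\Rm_G-\Rm_{G_C}=O(r^{-4})$ on $\{f=T\}=\{r=2\sqrt T\}$. Neither Lemma~\ref{uni:lem:determinant} nor the six-dimensional machinery is needed.

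\emph{Drop the Euclidean-product step for odd $n$.} Your stated constant $2\pi(n-1)\omega_{n-2}$ is not compatible with Lemma~\ref{low:lem:Euclidean-products}: it comes back as $2\pi n\omega_{n-2}$ in dimension $n$. Also, $X\times\R$ is asymptotic to a cone over a singular suspension. The step is unnecessary anyway. For odd $n$, telescoping $d\Phi_j=\Psi_j-\frac{n-2j-1}{2(j+1)}\Psi_{j+1}$ makes Chern's combination $\Pi$ closed, and Poincar\'e--Hopf gives $\int_{\partial D}\nu^*\Pi=\chi(D)$ with no bulk term.

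\emph{Strict versus non-strict hypothesis.} Deruelle's theorem needs $\mathscr R_{\tilde h}>\Id$. Apply it to $(1-i^{-1})^2\tilde h$ and let $i\to\infty$; this suffices because rigidity comes from Bishop.

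\emph{Comparison with the paper.} Once repaired, your route proves \eqref{cmp:eq:compact-bound} uniformly in both parities on the noncompact expander. The paper instead uses the closed identity on $N$ for even $m$ (Proposition~\ref{cmp:prop:even-identity}). For odd $m$ it conformally compactifies the expander, glues in the spherical suspension, and doubles to obtain metrics on $\mathbb S^{m+1}$ with $\mathscr R\ge\Id$ (Lemma~\ref{cmp:lem:conformal-cap}). Your version avoids the gluing and tip estimates. The paper's version additionally yields finer equality criteria for $J-c_mV$, which the total bound does not need.
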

The displayed compact inequalities and the rigidity of the total scalar-integral maximum are also established independently by Jian Ge, Chuanhuan Li and Ronggang Li \cite{GeLiLi2026}.

After completing this work, we became aware of the preprints of Pak-Yeung Chan, Man-Chun Lee and Mingxiang Li \cite{ChanLeeLi2026}, which obtain some related work in $3$-dim. The present work was carried out independently of their paper.

\section{A sharp intrinsic estimate in the presence of a pole}
\label{sec:intrinsic-pole}

For a symmetric covariant $2$-tensor $A$, $A\ge0$ means
$A(V,V)\ge0$ for every tangent vector $V$. For two such tensors set
\begin{align}
&\langle A,B\rangle_g=g^{ia}g^{jb}A_{ij}B_{ab}. \nonumber
\end{align}
When tensors are regarded as endomorphisms by raising one index, their
contraction is written $\tr(A\circ B)$. For a self-adjoint endomorphism $A$ define
\begin{align}
&|A|_{\mathrm{op}}=\sup\{|AX|:|X|=1\},\qquad \sigma_k(x_1,\ldots,x_N) =\sum_{1\le i_1<\cdots<i_k\le N}x_{i_1}\cdots x_{i_k} \quad(1\le k\le N), \nonumber\\
&\sigma_0(x_1,\ldots,x_N)=1,\qquad \sigma_k(x_1,\ldots,x_N)=0\quad(k>N). \nonumber
\end{align}
The notation $\sigma_k(A)$ means this polynomial in the eigenvalues
of $A$, counted with multiplicity.

Write $\mathbb F_2=\mathbb Z/2\mathbb Z$. For a space $Y$ of the homotopy
type of a finite complex, define its ordinary Euler characteristic by
\begin{align}
&\chi(Y)=\sum_{j\in\mathbb Z_{\ge0}}(-1)^j \dim_{\mathbb F_2}H_j(Y;\mathbb F_2), \label{eq:Euler-definition}
\end{align}
where $H_j$ is singular homology with the indicated coefficients
\cite[Sections~2.1--2.2]{Hatcher}.

The following cited theorems record the external results with the hypotheses
used below. Their Roman numbering is independent of the numbered results
proved in the manuscript.
\begin{externaltheorem}\label{ext:thm:comparison}
Let $(M^n,g)$ be a smooth connected complete Riemannian manifold without
boundary.
If $\sec_g\ge0$, if $\rho=d_g(x,\cdot)$ and $\Psi=\rho^2/2$, then
\begin{align}
&\Hess\Psi\le g\,dV \label{eq:distance-square-hessian-comparison}
\end{align}
in the distributional sense.
\end{externaltheorem}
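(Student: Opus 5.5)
The plan is to reduce the distributional inequality to a pointwise inequality on the smooth part of $\Psi$, and then check that the cut locus contributes only a nonpositive singular part. Off $\{x\}\cup\mathrm{Cut}(x)$, the function $\rho$ is smooth, and at $x$ itself $\Psi$ is smooth with $\Hess\Psi(x)=g$. Take $y$ with $\rho(y)=r$ in this smooth set and let $\gamma$ be the minimal unit-speed geodesic from $x$ to $y$. For $v\perp\dot\gamma(r)$ with $|v|=1$, I would compare the Hessian with the index form of the variation field $V(t)=(t/r)E(t)$, where $E$ is the parallel extension of $v$. Since $\gamma$ has no conjugate points before $r$, Jacobi fields minimize the index form, so
\[
\Hess\rho(v,v)\le \int_0^r\Bigl(|V'|^2-\langle R(V,\dot\gamma)\dot\gamma,V\rangle\Bigr)dt\le \int_0^r\frac1{r^2}\,dt=\frac1r ,
\]
where the curvature term is dropped because $\sec_g\ge0$. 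Combined with $\Hess\rho(\dot\gamma,\cdot)=0$ and $|\nabla\rho|=1$, the identity $\Hess\Psi=\rho\Hess\rho+d\rho\otimes d\rho$ gives $\Hess\Psi\le g$ pointwise on the smooth set.

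To pass to the distributional statement, I would use support functions. At any $y$ and any minimal geodesic from $x$ to $y$, take a point $\gamma(\epsilon)$ near $x$ and set $\rho_\epsilon(z)=\epsilon+d(\gamma(\epsilon),z)$. Then $\rho_\epsilon$ is smooth near $y$, satisfies $\rho_\epsilon\ge\rho$, and $\rho_\epsilon(y)=\rho(y)$. Running the same computation for $\rho_\epsilon^2/2$ gives $\Hess(\rho_\epsilon^2/2)\le (1+O(\epsilon))g$ at $y$, since $\rho_\epsilon/(\rho_\epsilon-\epsilon)\to1$. Hence $\Psi$ satisfies $\Hess\Psi\le g$ in the barrier (Calabi) sense at every point. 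Consequently $\Psi-\frac12 d(y_0,\cdot)^2$ is locally semiconcave in normal coordinates. More directly, in a chart one can say $\Psi-\frac{1+\delta}{2}|\cdot|^2_{g}$-type comparison functions show that $\Psi$ is locally semiconcave.

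The main technical point is the standard fact that a barrier inequality implies the distributional one. For a semiconcave function the distributional Hessian is a matrix-valued Radon measure whose absolutely continuous part equals the Alexandrov Hessian almost everywhere, and whose singular part is nonpositive. Since the cut locus has measure zero, the absolutely continuous part is $\le g$ by the pointwise bound. I would make this concrete by testing against a nonnegative compactly supported symmetric tensor $\phi$: we need
\[
\int\Psi\,\nabla_j\nabla_i\phi^{ij}\,dV\le\int\tr_g\phi\,dV .
\]
This follows by mollifying in charts, or equivalently by the Greene--Wu Riemannian convolution, which preserves barrier Hessian upper bounds up to errors vanishing with the smoothing parameter, and then integrating by parts and letting the parameter tend to zero. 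This passage to the limit, keeping the curvature-dependent error of the Riemannian smoothing uniformly controlled on the support of $\phi$, is where I expect the only real care is needed. It is a classical result, so citing Greene--Wu or the standard Laplacian/Hessian comparison literature suffices.
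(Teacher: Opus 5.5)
Your argument is correct. The index-form bound with the field $(t/r)E$ on the smooth locus, the upper supports $\epsilon+d_g(\gamma(\epsilon),\cdot)$ giving local semiconcavity, and the splitting of the Hessian measure prove the statement: off the null set $\operatorname{Cut}(x)$ the measure is absolutely continuous with density $\le g$, and its remaining singular part is nonpositive, so the final mollification/Greene--Wu step is optional. The paper imports this result without proof, but it writes out exactly this scheme for the soul-distance analogue in Lemma~\ref{soul:lem:hessian}, so your route is the one the paper relies on.
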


\begin{externaltheorem}\label{ext:thm:Greene-Wu}
Let $U$ be an open subset of a smooth Riemannian manifold $(M,g)$.
Suppose $u:U\to\R$ is convex along every affinely parametrized geodesic
segment contained in $U$ and is locally $1$-Lipschitz. For any continuous
functions $e,\gamma:U\to(0,\infty)$ and any $\delta>0$, there exists
$v\in C^\infty(U)$ such that, at every point of $U$,
\begin{align}
&|v-u|<e,\qquad |\nabla v|<1+\delta,\qquad \Hess v>-\gamma g. \label{eq:smoothing-import}
\end{align}
\end{externaltheorem}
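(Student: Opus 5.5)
The plan is to build $v$ by \emph{Riemannian convolution} with a mollification radius that varies slowly in space. This follows the strategy of Greene and Wu, adapted so that no separate gluing of locally defined smoothings is needed.

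\emph{Local smoothing.} Fix a nonnegative radial mollifier $\varphi\in C_c^\infty(\R^n)$ with $\int\varphi=1$ and $\supp\varphi\subset\{|w|\le1\}$. For a positive function $\varepsilon$ on $U$ that is small compared with the injectivity and convexity radii and with $d_g(\cdot,\partial U)$, set
\begin{align}
&u_\varepsilon(x)=\int_{T_xM}u\bigl(\exp_x(\varepsilon w)\bigr)\varphi(|w|)\,dw, \nonumber
\end{align}
where $dw$ is the Lebesgue measure of an orthonormal frame at $x$; this is frame independent because $\varphi$ is radial. For constant $\varepsilon$ on a compact set $K\subset U$, the three properties are checked as follows.
\begin{itemize}
\item Since $u$ is locally $1$-Lipschitz, $|u_\varepsilon-u|\le\varepsilon$.
\item To bound the gradient, differentiate in $x$ by parallel translating $w$. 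The map $x\mapsto\exp_x(\varepsilon P w)$ has differential $\Id+O(\varepsilon^2\sup_K|\Rm|)$, by the Jacobi field expansion. Hence $|\nabla u_\varepsilon|\le1+C_K\varepsilon^2$.
\item For the Hessian, let $\sigma$ be a unit-speed geodesic through $x$. The curves $s\mapsto\exp_{\sigma(s)}(\varepsilon P_s w)$ are geodesics up to an error whose second $s$-derivative is $O(C_K\varepsilon^2)$, again by Jacobi field estimates. Convexity of $u$ along geodesics together with the Lipschitz bound then gives $\frac{d^2}{ds^2}u_\varepsilon(\sigma(s))\ge-C_K\varepsilon$.
\end{itemize}
For the last point I would verify the inequality first in the weak (difference-quotient) sense, and then pass to the smooth function $u_\varepsilon$.

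\emph{Globalization.} Instead of a partition of unity, I would choose $\varepsilon\in C^\infty(U)$ positive, with $\varepsilon$, $|\nabla\varepsilon|/\varepsilon$ and $|\Hess\varepsilon|$ all dominated by a small multiple of $\min\{e,\gamma,\delta\}$ and of the local curvature and injectivity data. Then $v(x)=u_{\varepsilon(x)}(x)$. Differentiating produces the constant-$\varepsilon$ terms above plus terms containing $\partial_\varepsilon u_\varepsilon$ and $\partial^2_\varepsilon u_\varepsilon$. The first is bounded by the Lipschitz constant, and it enters multiplied by $|\nabla\varepsilon|$ or $|\Hess\varepsilon|$, so it is absorbed. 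The genuinely delicate term is $|\nabla\varepsilon|^2\partial^2_\varepsilon u_\varepsilon$, and the crossed term $\nabla\varepsilon\otimes\nabla\partial_\varepsilon u_\varepsilon$.

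\emph{Main obstacle.} I expect this second-order interaction between the variable radius and the nonsmooth $u$ to be the main difficulty. A naive partition-of-unity gluing $v=\sum\psi_iu_{\varepsilon_i}$ meets the same issue: it produces $\sum_i\nabla\psi_i\otimes\nabla u_{\varepsilon_i}$, which is not small because $\nabla u_{\varepsilon}$ does not converge uniformly when $u$ is not $C^1$.

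I would try first to estimate $\partial_\varepsilon u_\varepsilon$ and $\partial^2_\varepsilon u_\varepsilon$ by rescaling: $\partial_\varepsilon u_\varepsilon=\varepsilon^{-1}\int u(\exp_x(\varepsilon w))\,\tilde\varphi(w)\,dw$ with $\int\tilde\varphi=0$. Subtracting $u(x)+\langle\xi,\varepsilon w\rangle$ for a subgradient $\xi$ bounds this by the Lipschitz constant. A second differentiation costs another $\varepsilon^{-1}$, which is then compensated by requiring $|\nabla\varepsilon|\le\eta\,\varepsilon$ with $\eta$ small. Convexity enters again to make the surviving term one-signed or $O(\eta)$.

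If this fails, I would fall back on the original Greene--Wu device. Glue inductively on a locally finite exhaustion, using the fact that convexity is stable under small $C^2$ perturbations supported where the previous smoothing is already smooth. Near the overlaps, exploit $\sum_i\nabla\psi_i=0$ to rewrite the crossed term as $\sum_i\nabla\psi_i\otimes\nabla(u_{\varepsilon_i}-u_{\varepsilon_j})$. Each later $\varepsilon_i$ is then chosen so small that $u_{\varepsilon_i}$ is $C^2$-close to the already smooth earlier approximant on the overlap. Finally, shrinking all constants gives the three inequalities in \eqref{eq:smoothing-import}.
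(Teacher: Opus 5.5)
Your constant-radius step is the standard Riemannian-convolution estimate and is fine. One small correction: for constant radius the curves $s\mapsto\exp_{\sigma(s)}(\varepsilon P_sw)$ have acceleration of order $\varepsilon|\Rm|$, not $\varepsilon^2$ (think of latitude circles near the equator of $\mathbb S^2$), but this still gives $-C_K\varepsilon$.

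The gap is in the globalization, exactly where you flag it, and neither of your mechanisms closes it. In the variable-radius route you estimate the crossed term $2\,d\varepsilon\odot\partial_\varepsilon\nabla u_\varepsilon$ separately from $\Hess_x u_\varepsilon$. That term genuinely has size $|\nabla\varepsilon|/\varepsilon$: for $u=|x_1|$ on $\R^n$ one gets $u_\varepsilon=\varepsilon\Phi(x_1/\varepsilon)$ and $\partial_{x_1}\partial_\varepsilon u_\varepsilon=-t\Phi''(t)/\varepsilon$ with $t=x_1/\varepsilon$. This is why you are forced to require $|\nabla\log\varepsilon|\le c\min\{e,\gamma,\delta\}$.

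In general no positive $\varepsilon$ satisfies this together with your other constraints. The condition $|\nabla\log\varepsilon|\le c\min\{e,\gamma,\delta\}$ bounds the oscillation of $\log\varepsilon$ along a path by $c\int\min\{e,\gamma,\delta\}$. On the other hand, $\varepsilon\le d_g(\cdot,\partial U)$ and $\varepsilon<e$ force $\log\varepsilon\to-\infty$ along a path of finite length ending on $\partial U$, or along a ray on which $e$ decays integrably. Already $U=(0,1)\subset\R$ with $e\equiv\gamma\equiv\delta\equiv1$ admits no such $\varepsilon$.

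The fallback fails as well. The convolutions $u_{\varepsilon_i}$ of the fixed non-$C^1$ function $u$ converge to $u$ only in $C^0$. So they can never be made $C^1$-close, let alone $C^2$-close, to a smooth function on an overlap, and $\nabla\psi_i\otimes\nabla(u_{\varepsilon_i}-u_{\varepsilon_j})$ stays of order $|\nabla\psi_i|$ near kinks of $u$. If you instead convolve the previous approximant, the outer edge of each new cutoff still lies where the current function is nonsmooth, and the same term reappears there.

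The missing idea, which rescues your first route, is \emph{not} to split the second variation into $x$- and $\varepsilon$-derivatives at all. For $v(x)=u_{\varepsilon(x)}(x)$ and a unit-speed geodesic $\sigma$, write $v(\sigma(s))=\int u(c_w(s))\varphi(|w|)\,dw$ with the single curve $c_w(s)=\exp_{\sigma(s)}\bigl(\varepsilon(\sigma(s))P_sw\bigr)$. The Jacobi-field computation you already use shows that, for small $\varepsilon$, this curve has acceleration at most $|\Hess\varepsilon|+C_K\varepsilon(1+|\nabla\varepsilon|)^2$ and speed at most $1+C_K\varepsilon^2+2|\nabla\varepsilon|$. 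Convexity of $u$ along the tangent geodesics and the Lipschitz bound then give $\Hess v\ge-\bigl(|\Hess\varepsilon|+C_K\varepsilon(1+|\nabla\varepsilon|)^2\bigr)g$ and $|\nabla v|\le1+C_K\varepsilon^2+2|\nabla\varepsilon|$. No condition on $|\nabla\varepsilon|/\varepsilon$ is needed.

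In $\R^n$ this is just the joint convexity of $(x,\varepsilon)\mapsto\int u(x+\varepsilon w)\varphi(|w|)\,dw$, which absorbs your $\partial_\varepsilon^2$ and crossed terms into a nonnegative quadratic form. In the example above, the second variation in the $x_1$-direction is $\varepsilon^{-1}\Phi''(t)\bigl(1-t\,\partial_{x_1}\varepsilon\bigr)^2+\bigl(\Phi(t)-t\Phi'(t)\bigr)\partial_{x_1}^2\varepsilon$, and its first term is nonnegative. A radius $\varepsilon=\Lambda\circ\tau$, with $\tau$ a smooth proper exhaustion of $U$ and $\Lambda>0$ decaying fast together with $\Lambda',\Lambda''$, then meets all the remaining smallness requirements.

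For comparison, the paper proves none of this. It only checks, on small normal balls, the local strict-convexity and Lipschitz hypotheses of Greene--Wu, and imports their Proposition~2.3, which carries out the global patching.
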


\begin{proof}
For each $x\in U$, choose a normal ball $B\Subset U$ such that
\begin{align}
&\Hess\frac{d_g(x,\cdot)^2}{2}>0\quad\text{on }B,\qquad u+\frac{\gamma(x)}4d_g(x,\cdot)^2\text{ is convex on }B, \nonumber\\
&\operatorname{Lip}_{\mathrm{loc}}u\le1<1+\delta. \nonumber
\end{align}
These are the local strict Hessian and Lipschitz conditions of \cite[p.~60]{GW}. Their simultaneous approximation in \cite[Proposition~2.3, p.~62]{GW} gives
\begin{align}
&\exists v\in C^\infty(U)\quad\forall y\in U:\quad |v(y)-u(y)|<e(y),\quad |\nabla v(y)|<1+\delta,\quad \Hess v(y)>-\gamma(y)g_y. \nonumber
\end{align}
\end{proof}

\begin{externaltheorem}\label{ext:thm:retraction}
Let $(M,g)$ be smooth, complete, connected, noncompact, and without
boundary, with $\sec_g\ge0$, and let $\mathcal S$ be a soul.
The Sharafutdinov retraction $P:M\to\mathcal S$ is a smooth Riemannian
submersion and satisfies
\begin{align}
&P(\exp_y w)=y\qquad(y\in\mathcal S,\ w\in\nu_y\mathcal S). \label{ext:eq:normal-retraction}
\end{align}
Vertical vectors are tangent to the fibers,
horizontal vectors are orthogonal to the fibers, and $dP$ restricts to
an isometry on the horizontal spaces. Horizontal curves preserve
$d_g(\mathcal S,\cdot)$. Along $\gamma(t)=\exp_y(t\xi)$ with
$\xi\in\nu_y\mathcal S$, the normal Jacobi field with
$J(0)=X\in T_y\mathcal S$ and $J'(0)=0$ satisfies
\begin{align}
&\nabla_{\dot\gamma}J=0,\qquad J(t)\in\mathcal H_{\gamma(t)}. \label{ext:eq:parallel-horizontal}
\end{align}
\end{externaltheorem}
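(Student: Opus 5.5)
The statement collects several known facts, so the plan is to assemble them from the literature in a definite order and to derive the last two properties from Perelman's flat-strip lemma. First, I would recall from \cite{CG} and Sharafutdinov's construction that $P:M\to\mathcal S$ is a distance-nonincreasing retraction. Perelman's proof of the soul conjecture shows that $P$ is a $C^1$ Riemannian submersion and that $P^{-1}(y)=\exp_y(\nu_y\mathcal S)$; the latter is exactly \eqref{ext:eq:normal-retraction}. Smoothness of $P$ is not in Perelman's paper. For it I would cite Wilking's theorem that the Sharafutdinov retraction is $C^\infty$, via the dual foliation, or Guijarro's earlier partial result. In particular the fibers are smooth submanifolds, so the vertical/horizontal splitting is well defined, and $dP|_{\mathcal H}$ is an isometry by the Riemannian submersion property.

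The core step is Perelman's flat-strip lemma. Let $c:[0,\ell]\to\mathcal S$ be a geodesic, and let $\xi(s)$ be the normal-parallel transport of $\xi\in\nu_{c(0)}\mathcal S$ along $c$ with respect to the normal connection. Then
\begin{align}
&\Phi(s,t)=\exp_{c(s)}(t\,\xi(s)) \nonumber
\end{align}
is a flat, totally geodesic immersed strip. Perelman derives this from the Rauch comparison with $\sec_g\ge0$ together with the total convexity of $\mathcal S$ and the convexity of the Cheeger--Gromoll sublevel sets. Along $\gamma(t)=\Phi(0,t)=\exp_y(t\xi)$, the field $J=\partial_s\Phi(0,t)$ is a normal Jacobi field with $J(0)=X=\dot c(0)$. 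Because $\xi(s)$ is normal-parallel, its covariant derivative $\nabla_s\xi$ is tangent to $\mathcal S$ at $t=0$; combined with total geodesicity of $\mathcal S$ and flatness of the strip, this should give $J'(0)=\nabla_s\xi(0)=0$.

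Flatness of $\Phi$ then gives $\nabla_{\dot\gamma}J=0$. By \eqref{ext:eq:normal-retraction} we have $P\circ\Phi(s,t)=c(s)$, so $dP(J(t))=X$ and $|J(t)|=|X|$. Since $dP$ is length-nonincreasing, with equality exactly on horizontal vectors, $J(t)\in\mathcal H_{\gamma(t)}$, which proves \eqref{ext:eq:parallel-horizontal}. For a general $X\in T_y\mathcal S$, choose $c$ with $\dot c(0)=X$; uniqueness of Jacobi fields with the given initial data identifies the two fields.

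For the statement that horizontal curves preserve $d_g(\mathcal S,\cdot)$, observe that the curves $s\mapsto\Phi(s,t)$ are horizontal with constant distance $t|\xi|$ to $\mathcal S$. Since the strips through a point span the whole horizontal space, $\nabla d_g(\mathcal S,\cdot)$, which equals $\dot\gamma/|\dot\gamma|$ and is vertical, is orthogonal to $\mathcal H$. Hence the derivative of $d_g(\mathcal S,\cdot)$ along any horizontal curve vanishes away from $\mathcal S$, and on $\mathcal S$ the distance is $0$ anyway.

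The main obstacle is the smoothness of $P$, since the flat-strip lemma yields only $C^1$-type regularity, and that step I would take directly from Wilking rather than reprove. A secondary subtlety is justifying $J'(0)=0$, which requires the normal-parallel choice of $\xi(s)$ and the total geodesicity of $\mathcal S$.
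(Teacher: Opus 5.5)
Your route differs from the paper's. The paper proves this statement by citation alone: smoothness, the fiber description, and the parallel horizontal $\mathcal S$-Jacobi fields are all taken from Wilking \cite{Wilking2006} (Corollaries~4 and~6, Theorem~4.1), where they come out of the dual-foliation machinery. You use Wilking only for $C^\infty$ regularity and derive the geometric clauses from Perelman's $C^1$ theorem and flat-strip lemma.

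That part works:
\begin{itemize}
\item $J=\partial_s\Phi(0,\cdot)$ has $J(0)=X$ and $J'(0)=\nabla_s\xi(0)=0$. Total geodesy alone suffices here, since a normal-parallel field along a curve in a totally geodesic submanifold is ambient-parallel; flatness is not needed.
\item Flatness and total geodesy of the strip give $\nabla_{\dot\gamma}J=0$.
\item Then $|dP\,J(t)|=|X|=|J(t)|$ forces $J(t)\in\mathcal H_{\gamma(t)}$.
\end{itemize}
This makes transparent why these fields are parallel and horizontal, which the bare citation hides.

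The distance clause does not go through as written, for two reasons.
\begin{itemize}
\item The curves $s\mapsto\Phi(s,t)$ stay at distance $t|\xi|$ from $\mathcal S$ only while $t|\xi|$ is below the normal cut time. The paper explicitly allows that time to be finite.
\item A vertical gradient only controls $d_g(\mathcal S,\cdot)$ where it is differentiable. The set of points joined to $\mathcal S$ by two minimizing segments is invariant under horizontal transport, because transport conjugates the normal exponential maps of the fibers. So a horizontal curve starting at such a point never reaches a differentiability point, and your argument says nothing along it.
\end{itemize}

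Replace the gradient by a one-sided upper support. For $x\notin\mathcal S$, every segment realizing $d_g(\mathcal S,x)$ is normal at its foot $y'$. Then \eqref{ext:eq:normal-retraction} gives $y'=P(x)$ and places the whole segment in the fiber, so its terminal unit velocity $u$ is vertical. For an interior point $q$ of that segment, $d_g(\mathcal S,\cdot)\le d_g(y',q)+d_g(q,\cdot)$, with equality at $x$. The right side is smooth near $x$ with gradient $u$.

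Along a $C^1$ horizontal curve $\sigma$ this yields $d_g(\mathcal S,\sigma(s+h))\le d_g(\mathcal S,\sigma(s))+o(|h|)$ for both signs of $h$. Hence the Lipschitz function $s\mapsto d_g(\mathcal S,\sigma(s))$ has zero derivative wherever it is differentiable. This also holds at parameters with $\sigma(s)\in\mathcal S$, where the function attains its minimum. So it is constant.
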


\pf
{\cite[Corollaries~4 and~6, Theorem~4.1,
proof of Corollary~4, ]{Wilking2006}.
}
\qed

\begin{lemma}\label{var:lem:metric-variation}
Let $(h_t)_{t\in I}$ be a smooth family of Riemannian metrics on a
closed smooth manifold, where $I\subset\mathbb R$ is an open interval.
Put $k=\partial_t h_t$ and use the Levi--Civita connection of
$h_t$. Then
\begin{align}
&(\partial_t\Gamma)^a_{ij} =\tfrac12h_t^{ab}(\nabla_i k_{jb}+\nabla_jk_{ib}-\nabla_bk_{ij}), \nonumber\\
&\partial_t\Sc_{h_t} =-\langle k,\Ric_{h_t}\rangle+\nabla^i\nabla^jk_{ij}-\Delta\tr k, \qquad \partial_t dA=\tfrac12\tr k\,dA. \label{eq:metric-variation}
\end{align}
\end{lemma}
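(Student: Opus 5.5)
This is the standard first-variation computation; I will derive it in local coordinates at a point, working in normal coordinates for $h_t$ at a fixed time.

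\emph{Christoffel symbols.} The difference of two connections is a tensor, so $\partial_t\Gamma$ is a tensor and may be computed at a point where $\Gamma=0$ (normal coordinates for $h_t$). Differentiate
\[
\Gamma^a_{ij}=\tfrac12h^{ab}(\partial_ih_{jb}+\partial_jh_{ib}-\partial_bh_{ij})
\]
in $t$. The term involving $\partial_th^{ab}$ is multiplied by first derivatives of $h$, and these vanish at the chosen point. At that point coordinate derivatives of $k$ coincide with covariant derivatives. This gives the stated formula, and tensoriality extends it to all points.

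\emph{Scalar curvature.} Use the Palatini identity
\[
\partial_t\Ric_{ij}=\nabla_a(\partial_t\Gamma)^a_{ij}-\nabla_j(\partial_t\Gamma)^a_{ai},
\]
which again follows in normal coordinates from $\Ric_{ij}=\partial_a\Gamma^a_{ij}-\partial_j\Gamma^a_{ai}+\Gamma\Gamma$. Substituting the first formula gives
\[
(\partial_t\Gamma)^a_{ai}=\tfrac12\nabla_i\tr k,\qquad
h^{ij}\nabla_a(\partial_t\Gamma)^a_{ij}=\nabla^a\nabla^bk_{ab}-\tfrac12\Delta\tr k,
\]
so $h^{ij}\partial_t\Ric_{ij}=\nabla^i\nabla^jk_{ij}-\Delta\tr k$. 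Since $\Sc=h^{ij}\Ric_{ij}$ and $\partial_th^{ij}=-h^{ia}h^{jb}k_{ab}$, the remaining term is $-\langle k,\Ric\rangle$. This yields the second formula.

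\emph{Volume form.} In coordinates $dA=\sqrt{\det h}\,dx$, and
\[
\partial_t\sqrt{\det h}=\tfrac12\sqrt{\det h}\,h^{ij}k_{ij},
\]
by Jacobi's formula for the derivative of a determinant. This gives $\partial_t dA=\tfrac12\tr k\,dA$.

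There is no real obstacle. The one point to watch is index symmetry and the sign conventions, namely $\Delta=\nabla^i\nabla_i$ and $\tr k=h^{ij}k_{ij}$. Compactness of the manifold is not needed for these pointwise identities; it only matters later, when they are integrated.
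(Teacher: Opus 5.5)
Your proposal is correct and follows the same route as the paper's proof. Both vary the inverse metric and the Christoffel symbols, apply the Palatini identity, contract with $(\partial_t\Gamma)^a_{ia}=\tfrac12\nabla_i\tr k$ and $h^{ij}(\partial_t\Gamma)^a_{ij}=\nabla^ik_i{}^a-\tfrac12\nabla^a\tr k$, and use Jacobi's formula for $\sqrt{\det h_t}$; the only difference is that you spell out the normal-coordinate and tensoriality justification, which the paper leaves implicit.
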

\begin{proof}
Differentiating the inverse metric and the coordinate connection gives
\begin{align}
&\partial_t h_t^{ij}=-h_t^{ia}h_t^{jb}k_{ab}=-k^{ij},\qquad \Gamma^a_{ij}=\tfrac12h_t^{ab}(\partial_i h_{jb}+\partial_jh_{ib}-\partial_bh_{ij}), \nonumber\\
&(\partial_t\Gamma)^a_{ij}=\tfrac12h_t^{ab}(\nabla_i k_{jb}+\nabla_jk_{ib}-\nabla_bk_{ij}), \nonumber\\
&\partial_t\Ric_{ij}=\nabla_a(\partial_t\Gamma)^a_{ij}-\nabla_j(\partial_t\Gamma)^a_{ia}, \nonumber\\
&h_t^{ij}(\partial_t\Gamma)^a_{ij}=\nabla^ik_i{}^a-\tfrac12\nabla^a\tr k,\qquad (\partial_t\Gamma)^a_{ia}=\tfrac12\nabla_i\tr k, \nonumber\\
&\partial_t\Sc_{h_t}=-k^{ij}\Ric_{ij}+h_t^{ij}\partial_t\Ric_{ij}=-\langle k,\Ric\rangle+\nabla^i\nabla^jk_{ij}-\Delta\tr k, \nonumber\\
&\partial_t\sqrt{\det h_t}=\tfrac12\sqrt{\det h_t}\,\tr(h_t^{-1}k),\qquad \partial_t dA=\tfrac12\tr k\,dA. \nonumber
\end{align}
\end{proof}

%\subsection{Objects, conventions and proof dependencies}
Throughout this section $(M^n,g)$ has $K_g\geq 0$,
$n\ge3$, and $p$ is a pole. Set
\begin{align}
N=\nabla\rho,\qquad \Sigma_r=\{\rho=r\}\quad(r>0). \label{pole:eq:data}
\end{align}

Every $\Sigma_r$, $r>0$, is a smooth compact sphere. Its induced metric,
area measure and outward shape operator are $h_r,dA,S$, respectively,
where $S(X)=\nabla_XN$ for $X\in T\Sigma_r$.
We identify symmetric bilinear forms and endomorphisms by the metric.
Write $\mathcal R_{ijkl}=\langle\Rm(e_i,e_j)e_l,e_k\rangle$.

\begin{notation}\label{notation:P2}
In an orthonormal frame, write
$R^{ab}{}_{cd}=\langle\Rm(e_c,e_d)e_b,e_a\rangle$; at sectional
curvature one, $R^{ab}{}_{cd}=\delta^a_c\delta^b_d-\delta^a_d\delta^b_c$.
For a subspace $E$, $\mathcal R\!\restriction_E$ denotes the
restriction of all four arguments of $\mathcal R$ to $E$.
For an algebraic curvature tensor $\mathcal R$, define
\begin{align}
&P_2(\mathcal R):=\frac{|\mathcal R|^2-4|\Ric_{\mathcal R}|^2 +\Sc_{\mathcal R}^2}{24} =\frac1{96}\delta^{ijkl}_{pqrs} \mathcal R^{pq}_{ij}\mathcal R^{rs}_{kl}. \label{pt:eq:P2-definition}
\end{align}
Write $P_2=P_2(\Rm_g)$; for a restricted curvature tensor all
contractions are taken in the restricting subspace.
Here $\delta^{i_1\cdots i_k}_{j_1\cdots j_k}$ is the determinant
of the matrix $(\delta^{i_\alpha}_{j_\beta})_{\alpha,\beta=1}^k$
of ordinary Kronecker symbols.
\end{notation}

\begin{lemma}\label{six:lem:P2-positive}
Let $V$ be a four-dimensional Euclidean vector space and let $\Rm$ be an algebraic curvature tensor on $V$. If
\begin{align}
\sec_{\Rm}(\sigma)\ge0
\end{align}
for every two-plane $\sigma\subset V$, then
\begin{align}
P_2(\Rm)=\frac{1}{24}\bigl(|\Rm|^2-4|\Ric|^2+R^2\bigr)\ge0. \label{six:eq:P2-positive}
\end{align}
\end{lemma}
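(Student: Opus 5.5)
The plan is to use the classical fact, due to Milnor and Chern, that in dimension four the Gauss--Bonnet integrand is nonnegative when all sectional curvatures are nonnegative. By \eqref{pt:eq:P2-definition}, $P_2$ is exactly this integrand up to a positive constant. First, since $\dim V=4$, the determinant $\delta^{ijkl}_{pqrs}$ vanishes unless $\{i,j,k,l\}=\{p,q,r,s\}=\{1,2,3,4\}$. Hence $96P_2$ is a finite sum of products $R^{pq}_{ij}R^{rs}_{kl}$ with all four indices distinct in each slot, weighted by signs of permutations. Counting symmetries should give
\begin{align}
&P_2(\Rm)=\sum_{\{\{a,b\},\{c,d\}\}}\bigl(R_{abab}R_{cdcd}-R_{abac}R_{dbdc}\text{-type terms}\bigr)+\bigl(R_{1234}^2+R_{1342}^2+R_{1423}^2\bigr)\cdot c_0, \nonumber
\end{align}
where the first sum runs over the three partitions of $\{1,2,3,4\}$ into pairs and $c_0>0$. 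I would verify this bookkeeping directly from the Kronecker-determinant formula rather than from the $|\Rm|^2-4|\Ric|^2+\Sc^2$ form. The first form is manifestly a sum over pairings, so nothing else needs to be expanded.

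The second step is to choose a special orthonormal frame that kills the indefinite mixed terms, namely those $R_{ijkl}$ with exactly three distinct indices. This is Thorpe's normal form (Singer--Thorpe). There is an orthonormal basis $e_1,\dots,e_4$ with $R_{ijik}=0$ whenever $i,j,k$ are distinct. To produce it, I would pick the $2$-plane $\sigma=e_1\wedge e_2$ that maximizes sectional curvature on the Grassmannian of oriented $2$-planes. The first variation of $\sec$ at $\sigma$, in the directions $e_1\wedge e_k$ and $e_2\wedge e_k$ with $k\in\{3,4\}$, gives $R_{121k}=R_{212k}=0$. Then I would use the splitting $\Lambda^2V=\Lambda^+\oplus\Lambda^-$ under the Hodge star. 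The Grassmannian is $S^2\times S^2$, with $\sigma=(\sigma^+,\sigma^-)$ of unit self-dual and anti-self-dual parts. The plane $\star\sigma=e_3\wedge e_4$ corresponds to $(\sigma^+,-\sigma^-)$. The restriction of $\sec$ to the Grassmannian is a quadratic form in $(\sigma^+,\sigma^-)$ with cross term $2\langle B\sigma^-,\sigma^+\rangle$. I would diagonalize $B$ by its singular value decomposition and $A,C$ where possible, and choose the frame accordingly. The goal is to show that the criticality conditions at $\sigma$, together with those at $\star\sigma$ and at the orthogonally rotated pairs, force all three-distinct-index components to vanish. This is the known four-dimensional argument, and I would reproduce it in these terms.

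In Thorpe's frame the mixed terms drop out. What remains is $P_2$ equal to a positive multiple of
\begin{align}
&K_{12}K_{34}+K_{13}K_{24}+K_{14}K_{23}+R_{1234}^2+R_{1342}^2+R_{1423}^2, \nonumber
\end{align}
possibly with a positive combinatorial weight on the squares; the first Bianchi identity $R_{1234}+R_{1342}+R_{1423}=0$ does not affect the sign. Each $K_{ab}\ge0$ by hypothesis, so $P_2(\Rm)\ge0$. As a sanity check, at sectional curvature one the expression gives $3$, and the normalization of $P_2$ must match this.

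The main obstacle is the second step: the existence of a frame in which every curvature component with exactly three distinct indices vanishes. Maximization alone only kills the components involving the pair $\{1,2\}$. The full statement needs the $\Lambda^\pm$ structure, specifically the fact that the normalization can be made simultaneous for the three complementary pairs. If that simultaneous normalization proves delicate, my fallback is to write $P_2=\tfrac1{24}\bigl(|W|^2-2|\Ric_0|^2+\tfrac16\Sc^2\bigr)$ in the $A,B,C$ block form. I would then bound $|B|^2$ using nonnegativity of the sectional curvature quadratic form on $S^2\times S^2$, which gives $|b_i|\le$ combinations of the eigenvalues of $A$ and $C$ shifted by $\Sc/12$.
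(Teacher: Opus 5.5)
Your second step does not work: the frame it asks for does not exist in general. For a general algebraic curvature tensor on a four-dimensional space, even with $\sec>0$, there is no orthonormal frame with $R_{ijik}=0$ for all distinct $i,j,k$. The twelve components $R_{ijik}$ ($j\neq k$, $i\notin\{j,k\}$) are independent coordinates on the $20$-dimensional space of curvature tensors; the first Bianchi identity constrains only $R_{1234}+R_{1342}+R_{1423}$. Since $SO(4)$ has dimension six, the tensors admitting such a frame form a set of dimension at most $8+6=14$, and generic tensors with $\sec>0$ lie outside it.

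In your block notation $\mathcal R=\begin{pmatrix}A&B\\ B^{T}&C\end{pmatrix}$ on $\Lambda^+\oplus\Lambda^-$, the normal form requires $A$, $B$, $C$ to be simultaneously diagonal in matched bases $(\omega_i^+)$, $(\omega_i^-)$. Singer--Thorpe obtain this only because $B=0$ for Einstein tensors. The criticality route cannot produce it either. Criticality of $\sigma$ means $\mathcal R\sigma\in\operatorname{span}(\sigma,\star\sigma)$, and criticality of $\star\sigma$ means $\mathcal R(\star\sigma)\in\operatorname{span}(\sigma,\star\sigma)$. The first does not imply the second: take $R_{3413}\neq0$ and all other three-index components zero.

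What rescues the argument is a weaker normalization, which is your fallback made precise. With $\star=\operatorname{diag}(\Id,-\Id)$, the normalization \eqref{pt:eq:P2-definition} gives $P_2=\tfrac16\tr(\mathcal R\star\mathcal R\star)=\tfrac16\bigl(|A|^2+|C|^2-2|B|^2\bigr)$. Take singular bases with $By_i=b_ix_i$. Each $(x_i\pm y_i)/\sqrt2$ is a unit decomposable two-form, so $\sec\ge0$ gives $\alpha_i+\gamma_i\ge2|b_i|$. Here $\alpha_i=\langle Ax_i,x_i\rangle$ and $\gamma_i=\langle Cy_i,y_i\rangle$ are diagonal entries in these bases, not eigenvalues as you suggested. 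Then $\alpha_i^2+\gamma_i^2\ge\tfrac12(\alpha_i+\gamma_i)^2\ge2b_i^2$. Together with $\sum_i\alpha_i^2\le|A|^2$ and $\sum_i\gamma_i^2\le|C|^2$, this yields $P_2\ge0$.

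Equivalently, use the orthonormal frame adapted to these bases, allowing signed $b_i$ so that both bases are oriented. In that frame $B$ is diagonal and $6P_2=2\bigl(K_{12}K_{34}+K_{13}K_{24}+K_{14}K_{23}+R_{1234}^2+R_{1342}^2+R_{1423}^2\bigr)+2\sum_{i<j}\bigl(A_{ij}^2+C_{ij}^2\bigr)$. So the mixed terms from your first step do not vanish; they assemble into squares, which is all the sign argument needs.

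The paper does not prove the lemma itself; it cites Milnor's result via Chern and the pointwise proof of Bishop--Goldberg.
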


\begin{proof}
This is the classical four-dimensional algebraic curvature result due to Milnor, with a published proof by Chern; see \cite{Chern1955}. For an explicit pointwise proof, see \cite[Lemma~4.1, Corollary~4.1, and the proof of Theorem~1.1, pp.~512--513]{BishopGoldberg1964}.
\end{proof}

For an orthonormal basis $(e_i)$, grouping the four distinct
indices in this contraction gives
\begin{align}
&\frac{|\mathcal R|^2-4|\Ric_{\mathcal R}|^2+\Sc_{\mathcal R}^2}{24} \nonumber\\
&{}=\frac1{24}\sum_{i<j<k<l}\Bigl( \bigl|\mathcal R\!\restriction_{\operatorname{span}(e_i,e_j,e_k,e_l)}\bigr|^2 -4\bigl|\Ric_{\mathcal R\!\restriction_{\operatorname{span}(e_i,e_j,e_k,e_l)}}\bigr|^2 +\Sc_{\mathcal R\!\restriction_{\operatorname{span}(e_i,e_j,e_k,e_l)}}^2\Bigr). \label{eq:P2-four-plane-sum}
\end{align}
Each summand is nonnegative by Lemma~\ref{six:lem:P2-positive};
hence $P_2\ge0$ under nonnegative sectional curvature.

For a four-dimensional Euclidean subspace $W\subset T_xM$, let
$\Rm_W$ be the restriction of the curvature tensor to $W$ and let
$\Ric_W$ and $\Sc_W$ be its Ricci and scalar contractions taken
entirely in $W$. Lemma~\ref{six:lem:P2-positive},
equation~\eqref{six:eq:P2-positive}, gives
\begin{align}
&\frac{|\Rm_W|^2-4|\Ric_W|^2+\Sc_W^2}{24}\ge0 \nonumber
\end{align}
when the sectional curvatures of this restriction are nonnegative.
The expression equals one when the restriction has constant
sectional curvature one.

Choose an orientation, possible because $M$ is diffeomorphic to
$\R^n$.

For the following definitions, let $(M^n,g)$ be an oriented smooth
Riemannian manifold of dimension $n\ge2$, and put $STM=\{(x,u):u\in T_xM,\ |u|=1\}$.
In an oriented local orthonormal frame $(e_1,\ldots,e_n)$, use
connection and curvature forms with the conventions
\begin{align}
&\nabla e_j=\sum_i\omega_{ij}e_i,\qquad \Omega_{ij}=d\omega_{ij}+\sum_k\omega_{ik}\wedge\omega_{kj},\qquad \Omega_{ij}(X,Y)=\langle\Rm(X,Y)e_j,e_i\rangle. \label{pole:eq:connection-forms}
\end{align}
Pullback from $M$ to $STM$ is understood for these forms.
For $u=\sum_i u_i e_i$, define
$\theta_i=du_i+\sum_j\omega_{ij}u_j$.
Let $\mathfrak S_n$ be the permutations of $\{1,\ldots,n\}$ and
$\epsilon(\sigma)$ their signs. For integers $j\ge0$ with $2j\le n-1$ set
\begin{align}
&\Phi_j =\sum_{\sigma\in\mathfrak S_n}\epsilon(\sigma) u_{\sigma(1)} \theta_{\sigma(2)}\wedge\cdots\wedge\theta_{\sigma(n-2j)} \wedge\bigwedge_{b=1}^{j} \Omega_{\sigma(n-2j+2b-1),\sigma(n-2j+2b)}. \label{pole:eq:Phi}
\end{align}
For $j\in\mathbb Z_{\ge0}$ with $2j\le n$ set
\begin{align}
&\Psi_j =\sum_{\sigma\in\mathfrak S_n}\epsilon(\sigma) \theta_{\sigma(1)}\wedge\cdots\wedge\theta_{\sigma(n-2j)} \wedge\bigwedge_{b=1}^{j} \Omega_{\sigma(n-2j+2b-1),\sigma(n-2j+2b)}. \label{pole:eq:Psi}
\end{align}
Empty wedge products equal one. The $\Phi_j$ have degree $n-1$
and the $\Psi_j$ degree $n$. Alternation makes these expressions
invariant under oriented orthonormal changes of frame, so they define
global forms. Since $\sum_i u_i\theta_i=0$, one has $\Psi_0=0$.
When $n=3$, set $\Psi_2=0$.

We use Theorem~\ref{ext:thm:comparison},
Lemma~\ref{six:lem:P2-positive}, equation~\eqref{six:eq:P2-positive},
and Lemma~\ref{var:lem:metric-variation},
equation~\eqref{eq:metric-variation}. The existence of the scalar-integral
limit is proved directly below, independently of the general exhaustion
construction.
The forms $\Phi_0,\Phi_1$ in \eqref{pole:eq:Phi} are the first two terms of Chern's transgression
construction \cite[\S1, equations~(3)--(11), pp.~675--677]{ChernCurvatura1945}; we differentiate them explicitly.
Only the Euler polynomial on four-dimensional restrictions enters the
argument.

In dimension three, the pole identity under $\Ric_g\ge0$ is proved in \cite[Theorem~1.4]{XuPole2024}. In every dimension $n\ge4$, Cheng's examples \cite[Theorem~1.1, p.~2; Proposition~5.2, pp.~18--19]{Cheng2026} have a pole and nonnegative Ricci curvature but an infinite normalized scalar-curvature limit. Therefore $\sec_g\ge0$ in Theorem~\ref{cv:prop:pole} cannot be replaced by $\Ric_g\ge0$ in those dimensions.

%\subsection{The first two Chern forms}

\begin{lemma}\label{pole:lem:transgression}
Let $(M^n,g)$ be an oriented smooth Riemannian manifold, $n\ge3$.
Then
\begin{align}
&d\Phi_0=-\frac{n-1}{2}\Psi_1,\qquad d\Phi_1=\Psi_1-\frac{n-3}{4}\Psi_2. \nonumber
\end{align}
Consequently the $(n-1)$-form
\begin{align}
&\beta=\frac{\Phi_0}{(n-1)!n\omega_n} +\frac{\Phi_1}{2(n-2)!n\omega_n} \label{pole:eq:beta}
\end{align}
satisfies
\begin{align}
&d\beta=-\frac{\Psi_2}{8(n-2)(n-4)!n\omega_n}\quad(n\ge4), \qquad d\beta=0\quad(n=3). \label{pole:eq:dbeta}
\end{align}
\end{lemma}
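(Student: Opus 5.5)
The plan is to compute $d\Phi_j$ for general $j$ by the covariant-exterior-derivative method in Chern's transgression. I will prove
\begin{align}
d\Phi_j=\Psi_j-\frac{n-2j-1}{2(j+1)}\Psi_{j+1},\qquad j=0,1, \nonumber
\end{align}
with $\Psi_0=0$, and then read off the formula for $d\beta$ by linear algebra.

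\emph{Structure equations.} Differentiate $\theta_i=du_i+\sum_j\omega_{ij}u_j$. Use $\Omega_{ij}=d\omega_{ij}+\sum_k\omega_{ik}\wedge\omega_{kj}$ and substitute $du_j=\theta_j-\sum_k\omega_{jk}u_k$. The two quadratic $\omega\wedge\omega$ terms cancel, leaving
\begin{align}
du_i=\theta_i-\sum_j\omega_{ij}u_j,\qquad d\theta_i=\sum_j\Omega_{ij}u_j-\sum_j\omega_{ij}\wedge\theta_j, \nonumber
\end{align}
together with the Bianchi identity $d\Omega_{ij}=\sum_k(\Omega_{ik}\wedge\omega_{kj}-\omega_{ik}\wedge\Omega_{kj})$. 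Thus $u$, $\theta$ and $\Omega$ are tensorial, with covariant differentials $Du=\theta$, $D\theta=\Omega u$ and $D\Omega=0$.

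\emph{Dropping the connection terms.} $\Phi_j$ is a full contraction of these tensorial objects against the $\mathfrak S_n$-alternation, which is an $SO(n)$-invariant. So in $d\Phi_j$ all terms containing $\omega$ assemble into $\sum_{a}\omega_{ab}\,\partial/\partial(\text{index }a\to b)$ applied to an invariant. These cancel by the antisymmetry $\omega_{ab}=-\omega_{ba}$. Hence $d\Phi_j$ is obtained by replacing $d$ with $D$ factor by factor:
\begin{itemize}
\item $D\Omega=0$ kills the curvature factors;
\item $Du_{\sigma(1)}=\theta_{\sigma(1)}$ gives exactly $\Psi_j$, since the degree-one $\theta$ is moved into first position with no sign;
\item each $D\theta_{\sigma(k)}=\sum_l\Omega_{\sigma(k)l}u_l$, for $2\le k\le n-2j$, gives a term with $n-2j-2$ thetas, $j+1$ curvature forms, and a factor $u_{\sigma(1)}u_l$.
\end{itemize}

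\emph{Evaluating the last group.} Work at a point with a frame chosen so that $u=e_1$. Then $u_i=\delta_{i1}$ and $\theta_1=0$, because $\sum u_i\theta_i=0$. Since $\Omega$ is a $2$-form moved past an odd number of $1$-forms, the extra terms become
\begin{align}
-(n-2j-1)\sum_{\sigma(1)=1}\epsilon(\sigma)\,\theta_{\sigma(2)}\wedge\cdots\wedge\theta_{\sigma(n-2j-1)}\wedge\Omega_{\sigma(n-2j),1}\wedge\Omega^j. \nonumber
\end{align}
The factor $n-2j-1$ counts which $\theta$ was differentiated; the sign comes from commuting it to the end.

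On the other side, expand $\Psi_{j+1}$ at the same point. Since $\theta_1=0$, the index $1$ must sit in one of the $j+1$ curvature pairs. There are $j+1$ choices of pair and $2$ choices of slot, and the two slots give equal contributions by $\Omega_{ab}=-\Omega_{ba}$ together with the sign of the transposition. Relabelling to bring this pair to the front of the curvature block yields $2(j+1)$ copies of the sum above (without the prefactor $-(n-2j-1)$). Hence the extra terms equal $-\frac{n-2j-1}{2(j+1)}\Psi_{j+1}$.

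For $j=0$ this gives $d\Phi_0=-\frac{n-1}2\Psi_1$. For $j=1$ it gives $d\Phi_1=\Psi_1-\frac{n-3}{4}\Psi_2$. For $n=3$, $\Psi_2=0$ by convention, and the coefficient $\frac{n-3}{4}$ vanishes anyway.

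\emph{The form $\beta$.} The coefficient of $\Psi_1$ in $d\beta$ is
\begin{align}
-\frac{n-1}{2(n-1)!\,n\omega_n}+\frac{1}{2(n-2)!\,n\omega_n}=0. \nonumber
\end{align}
The coefficient of $\Psi_2$ is
\begin{align}
-\frac{n-3}{8(n-2)!\,n\omega_n}=-\frac{1}{8(n-2)(n-4)!\,n\omega_n}\qquad(n\ge4), \nonumber
\end{align}
which gives the stated formula for $d\beta$, and $d\beta=0$ when $n=3$.

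\emph{Main obstacle.} The only delicate step is the sign and multiplicity bookkeeping in the comparison with $\Psi_{j+1}$: the factor $2(j+1)$ and the sign from commuting $2$-forms past odd-degree blocks. I would first verify this count by hand for $n=4$, $j=0$ against the known value $-\frac32$, and then run the general count.
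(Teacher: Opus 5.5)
Your strategy is the paper's own: $Du=\theta$, $D\theta=\Omega u$, $D\Omega=0$ with fully contracted indices, evaluation at $u=e_1$ where $\theta_1=0$, and the count of $2(j+1)$ curvature slots in $\Psi_{j+1}$. Your final identities $d\Phi_j=\Psi_j-\frac{n-2j-1}{2(j+1)}\Psi_{j+1}$ and the coefficient arithmetic for $d\beta$ are correct. However, the sign bookkeeping, which you rightly single out as the delicate step, is wrong as written whenever $n$ is even. It contains two errors that happen to cancel.

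Let
\[
T=\sum_{\sigma(1)=1}\epsilon(\sigma)\,\theta_{\sigma(2)}\wedge\cdots\wedge\theta_{\sigma(n-2j-1)}\wedge\Omega_{\sigma(n-2j),1}\wedge\Omega_{\sigma(n-2j+1),\sigma(n-2j+2)}\wedge\cdots\wedge\Omega_{\sigma(n-1),\sigma(n)}
\]
be your sum.

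\emph{The extra terms in $d\Phi_j$.} Differentiating the $\theta$ in slot $k$, with $2\le k\le n-2j$, carries the Leibniz sign $(-1)^{k-2}$. The resulting $2$-form $\Omega_{\sigma(k),1}$ commutes with every form, so moving it produces no sign; your stated reason (a $2$-form moved past an odd number of $1$-forms) is not a source of sign. Relabelling so that $\sigma(k)$ lands in slot $n-2j$ is a cycle of length $n-2j-k+1$, with sign $(-1)^{n-2j-k}$. Hence each differentiated $\theta$ contributes $(-1)^nT$, and the extra terms total $(-1)^n(n-2j-1)T$, not $-(n-2j-1)T$.

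\emph{The expansion of $\Psi_{j+1}$.} The terms with the index $1$ in slot $n-2j$ match the summands of $T$ only after moving $1$ from slot $n-2j$ to slot $1$. This is an $(n-2j)$-cycle of sign $(-1)^{n-1}$, which you omitted. Therefore $\Psi_{j+1}=(-1)^{n-1}\,2(j+1)\,T$, not $2(j+1)T$.

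\emph{Consequence.} The common factor $(-1)^{n-1}$ drops out of the ratio, giving the coefficient $-\frac{n-2j-1}{2(j+1)}$ in every dimension. This is exactly the paper's remark that the Leibniz sign cancels the index-permutation sign, so each differentiated $\theta$ contributes $-\Psi_{j+1}/(2j+2)$.

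For odd $n$ your two intermediate claims are literally true. For even $n$ they fail: the check you proposed at $n=4$, $j=0$ gives extra terms equal to $3T$ and $\Psi_1=-2T$, rather than $-3T$ and $2T$. Repair those two steps with the correct signs rather than relying on the cancellation.
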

\begin{proof}
For exterior covariant differentiation $D$ in the pulled-back tangent bundle,
\begin{align}
&Du=\theta,\qquad D\theta=\Omega u,\qquad D\Omega=0. \nonumber
\end{align}
Fully contracted indices give $D=d$. At $u=e_1$,
\begin{align}
&\theta_1=0,\qquad \Omega_{i1}=-\Omega_{1i},\qquad \#\{\text{curvature slots in }\Psi_{j+1}\}=2(j+1). \nonumber
\end{align}
In the alternating sums \eqref{pole:eq:Phi}--\eqref{pole:eq:Psi}, the Leibniz sign cancels the index-permutation sign. Thus each differentiated $\theta$ contributes $-\Psi_{j+1}/(2j+2)$, whereas the differentiated $u$ contributes $\Psi_j$:
\begin{align}
&d\Phi_j=\Psi_j-\frac{n-2j-1}{2(j+1)}\Psi_{j+1}\qquad(j=0,1),\qquad \Psi_0=0, \nonumber\\
&d\Phi_0=-\frac{n-1}{2}\Psi_1,\qquad d\Phi_1=\Psi_1-\frac{n-3}{4}\Psi_2. \nonumber
\end{align}
The $\Psi_2$ term is absent for $n=3$. Substitution in \eqref{pole:eq:beta} gives
\begin{align}
&d\beta=\left(-\frac{n-1}{2(n-1)!n\omega_n}+\frac1{2(n-2)!n\omega_n}\right)\Psi_1-\frac{n-3}{8(n-2)!n\omega_n}\Psi_2 \nonumber\\
&{}=-\frac{\Psi_2}{8(n-2)(n-4)!n\omega_n}\quad(n\ge4),\qquad d\beta=0\quad(n=3). \nonumber
\end{align}
\end{proof}

At each $x\in\Sigma_r$, $r>0$, choose an orthonormal eigenbasis
$e_1,\ldots,e_d$ of $S$, with eigenvalues $\kappa_i$, and write
$K_{ij}=\sec_g(e_i\wedge e_j)$.

\begin{lemma}
\label{pole:lem:monotone}
On $(M^n, g)$, then the shape operator satisfies
\begin{align}
&0\le S\le r^{-1}\Id,\qquad S'+S^2+\mathcal R_N=0,\qquad \mathcal R_N(X)=\Rm(X,N)N. \label{cv:eq:pole-riccati}
\end{align}
Then
\begin{align}
&B(r):=\frac1{n\omega_n}\int_{\Sigma_r} \left(\prod_{i=1}^d\kappa_i +\frac1{d-1}\sum_{i<j}K_{ij} \prod_{\ell\notin\{i,j\}}\kappa_\ell\right)dA \label{pole:eq:B}
\end{align}
is independent of the eigenbasis and satisfies $0\le B(r)\le1$.
For $n\ge4$ put $W_{ijk}=\operatorname{span}\{N,e_i,e_j,e_k\}$
for $1\le i<j<k\le n-1$. For every four-dimensional $W\subset T_xM$,
\begin{align}
&\sum_{i<j<k}\frac{|\Rm_{W_{ijk}}|^2-4|\Ric_{W_{ijk}}|^2+\Sc_{W_{ijk}}^2}{24} \prod_{\ell\notin\{i,j,k\}}\kappa_\ell\ge0. \label{pole:eq:Q}
\end{align}
For $n\ge4$, the corresponding pullback identity is
\begin{align}
&N^*\Psi_2 =24(n-4)!\sum_{i<j<k} \frac{|\Rm_{W_{ijk}}|^2-4|\Ric_{W_{ijk}}|^2+\Sc_{W_{ijk}}^2}{24} \prod_{\ell\notin\{i,j,k\}}\kappa_\ell\,dr\wedge dA, \label{pole:eq:Psi2-pullback}\\
&B'(r)=-\frac{3}{(n-2)n\omega_n}\int_{\Sigma_r}\sum_{i<j<k} \frac{|\Rm_{W_{ijk}}|^2-4|\Ric_{W_{ijk}}|^2+\Sc_{W_{ijk}}^2}{24} \prod_{\ell\notin\{i,j,k\}}\kappa_\ell\,dA, \nonumber \\
&\lim_{r\downarrow0}B(r)=1. \label{pole:eq:B-derivative}
\end{align}
\end{lemma}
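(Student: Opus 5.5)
Here $d=n-1$ throughout. The plan is to obtain the Riccati and comparison facts from the pole geometry, identify $B(r)$ as $\int_{\Sigma_r} N^*\beta$, and then derive monotonicity from Stokes and the transgression formula of Lemma~\ref{pole:lem:transgression}.

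\textbf{Riccati equation and the bounds $0\le S\le r^{-1}\Id$.} Since $p$ is a pole, $\rho$ is smooth on $M\setminus\{p\}$ and $|\nabla\rho|=1$. Differentiating $\Hess\rho$ along the unit-speed integral curves of $N$ gives the standard Riccati identity $S'+S^2+\mathcal R_N=0$. For the upper bound $S\le r^{-1}\Id$, Theorem~\ref{ext:thm:comparison} gives $\Hess(\rho^2/2)\le g$; this holds pointwise because $\rho$ is smooth away from $p$. Restricted to $T\Sigma_r$ it reads $rS\le\Id$.

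For the lower bound $S\ge0$, I would compare $S$ with the solution of the Riccati equation for $\mathcal R_N\equiv0$. Concretely, with $\mathcal R_N\ge0$, the initial asymptotics $S\sim r^{-1}\Id$ as $r\downarrow0$, and no focal points (guaranteed by the pole), I would argue as follows. If an eigenvalue became negative at some radius, then $\kappa'\le-\kappa^2$ forces $\kappa\to-\infty$ in finite distance. That would produce a conjugate point, contradicting the diffeomorphism property of $\exp_p$.

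\textbf{Identification of $B(r)$ and its bounds.} Basis independence is immediate because the integrand equals $\sigma_d(S)+\frac1{d-1}\sum_{i<j}K_{ij}\prod_{\ell\ne i,j}\kappa_\ell$. This is a frame-invariant contraction of $\Rm$ restricted to $T\Sigma_r$ with $\sigma_{d-2}$-type cofactors of $S$.

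Next I would evaluate the pullbacks $N^*\Phi_0$ and $N^*\Phi_1$ under the section $N:\Sigma_r\to STM$, using a frame with $e_n=N$. There $\theta_i=\sum_j\kappa_j\delta_{ij}\,e^j$ on $\Sigma_r$. One gets $N^*\Phi_0=(n-1)!\,\sigma_d(S)\,dA$, and $N^*\Phi_1$ equals $2(n-3)!\cdot(\text{sum over }i<j\text{ of }\Omega_{ij}\text{ terms})$, which yields the $K_{ij}$ products; the constants are tracked through $\beta$. This gives $B(r)=\int_{\Sigma_r}N^*\beta$.

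The bound $0\le B\le1$ then follows from pointwise estimates. Nonnegativity holds because $\kappa_i\ge0$ and $K_{ij}\ge0$. For the upper bound, I would establish monotonicity below and combine it with $\lim_{r\downarrow0}B(r)=1$. That limit holds because near $p$ the spheres are almost round, with $\kappa_i\approx r^{-1}$ and area $\approx n\omega_nr^{d}$, while the curvature term is $O(r^2)$ relative to the leading term.

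\textbf{Monotonicity and sign of $B'$.} By Stokes applied to $N^*\beta$ on the annulus between $\Sigma_{r_1}$ and $\Sigma_{r_2}$, we have $B(r_2)-B(r_1)=\int N^*d\beta$. This vanishes for $n=3$. For $n\ge4$, Lemma~\ref{pole:lem:transgression} turns it into $-\int N^*\Psi_2/(8(n-2)(n-4)!n\omega_n)$.

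The main obstacle is the pullback identity \eqref{pole:eq:Psi2-pullback}. On the annulus, $\theta_n=0$ and $\theta_i=\kappa_i e^i$, and $\Psi_2$ contains $n-4$ factors $\theta$ and two factors $\Omega$. So the $n$ slots must consist of $dr$ (from $\Omega$ components involving $N$), $n-4$ eigendirections weighted by $\kappa_\ell$, and a four-dimensional block. That block is $W_{ijk}=\operatorname{span}\{N,e_i,e_j,e_k\}$, on which $\Omega\wedge\Omega$ alternated over four indices gives $\delta^{abcd}_{pqrs}R\,R$. By \eqref{pt:eq:P2-definition}, this equals $96\cdot\frac{|\Rm_W|^2-4|\Ric_W|^2+\Sc_W^2}{24}$ up to combinatorial factors from the $(n-4)!$ placements. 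I would verify the constant $24(n-4)!$ by checking on the unit sphere, where $P_2=1$.

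Substituting the pullback identity gives the stated formula for $B'$. Nonnegativity \eqref{pole:eq:Q} follows from Lemma~\ref{six:lem:P2-positive} applied to each $W_{ijk}$ together with $\kappa_\ell\ge0$. Hence $B'\le0$, and combined with $\lim_{r\downarrow0}B(r)=1$ this gives $B\le1$.
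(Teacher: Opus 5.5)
Your proposal is correct and follows the paper's proof essentially step by step. The shared steps are: Hessian comparison for $S\le r^{-1}\Id$; a Riccati blow-up argument for $S\ge0$; the identification $B(r)=\int_{\Sigma_r}N^*\beta$ via $N^*\Phi_0=d!\prod_i\kappa_i\,dA$ and $N^*\Phi_1=2(d-2)!\sum_{i<j}K_{ij}\prod_{\ell\notin\{i,j\}}\kappa_\ell\,dA$; Stokes with Lemma~\ref{pole:lem:transgression}; the four-dimensional Pfaffian identity on each $W_{ijk}$ with the $(n-4)!$ orderings of the $\theta$ slots; and the near-$p$ expansion giving $\lim_{r\downarrow0}B(r)=1$.

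The one point to tighten is the lower bound. Eigenvalues of $S$ need not be differentiable, so run the blow-up argument, as the paper does, for $b(r)=\langle S(r)X,X\rangle$ with $X$ a parallel unit field orthogonal to $N$. There $b'=-|SX|^2-\langle\mathcal R_NX,X\rangle\le-b^2$ by Cauchy--Schwarz.
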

\begin{proof}
Theorem~\ref{ext:thm:comparison}, \eqref{eq:distance-square-hessian-comparison}, and the Riccati equation give
\begin{align}
&S\le r^{-1}\Id,\qquad S'+S^2+\mathcal R_N=0. \nonumber
\end{align}
For a parallel unit field $X$ along a radial ray, put $b(r)=\langle S(r)X,X\rangle$. Then
\begin{align}
&b'=-|SX|^2-\langle\mathcal R_NX,X\rangle\le-b^2, \nonumber\\
&b(r_0)<0\ \Longrightarrow\ b(r)\le b(r_0)<0,\qquad \frac1{b(r)}\ge\frac1{b(r_0)}+r-r_0\quad(r\ge r_0), \nonumber\\
&r=r_0-b(r_0)^{-1}\ \Longrightarrow\ 0>\frac1{b(r)}\ge0. \nonumber
\end{align}
Smoothness along the entire ray therefore gives $S\ge0$.
In the oriented eigenframe $(N,e_1,\ldots,e_d)$, with dual coframe $(dr,e^1,\ldots,e^d)$,
\begin{align}
&\nabla_NN=0,\qquad N^*\theta_N=0,\qquad N^*\theta_i=\kappa_i e^i, \nonumber\\
&N^*\Phi_0|_{\Sigma_r}=d!\prod_i\kappa_i\,dA, \nonumber\\
&N^*\Phi_1|_{\Sigma_r}=2(d-2)!\sum_{i<j}K_{ij}\prod_{\ell\notin\{i,j\}}\kappa_\ell\,dA, \nonumber\\
&B(r)=\int_{\Sigma_r}N^*\beta\qquad\text{by \eqref{pole:eq:Phi} and Lemma~\ref{pole:lem:transgression}, \eqref{pole:eq:beta}}. \nonumber
\end{align}
The pullback is smooth, including at repeated eigenvalues.
For each oriented four-dimensional $W$, the three pairings and the first Bianchi identity give
\begin{align}
&\sum_{\tau\in\mathfrak S_4}\epsilon(\tau)\Omega_{\tau(1)\tau(2)}|_W\wedge\Omega_{\tau(3)\tau(4)}|_W \nonumber\\
&{}=\bigl(|\Rm_W|^2-4|\Ric_W|^2+\Sc_W^2\bigr)dV_W. \nonumber
\end{align}
The $n-4$ complementary tangential indices have $(n-4)!$ orders; hence \eqref{pole:eq:Psi2-pullback}. This smooth pullback also defines the sum at repeated eigenvalues. Lemma~\ref{six:lem:P2-positive}, \eqref{six:eq:P2-positive}, gives
\begin{align}
&\kappa_\ell\ge0,\qquad \frac{|\Rm_{W_{ijk}}|^2-4|\Ric_{W_{ijk}}|^2+\Sc_{W_{ijk}}^2}{24}\ge0. \nonumber
\end{align}
Stokes' theorem and Lemma~\ref{pole:lem:transgression}, \eqref{pole:eq:dbeta}, give, for $0<a<b$,
\begin{align}
&B(b)-B(a)=\int_{\{a\le\rho\le b\}}N^*(d\beta)\le0,\qquad B'=0\quad(n=3), \nonumber
\end{align}
and differentiation gives \eqref{pole:eq:B-derivative}.
For some $r_0>0$ and $C<\infty$ depending only on the metric near $p$,
\begin{align}
&\sup_{\rho=r}|S-r^{-1}\Id|\le Cr,\qquad \sup_{\xi\in ST_pM}\left|r^{-d}\frac{dA}{d\omega}(r,\xi)-1\right|\le Cr^2\quad(0<r<r_0), \nonumber\\
&|B(r)-1|\le Cr^2\quad(0<r<r_0),\qquad \lim_{r\downarrow0}B(r)=1,\qquad 0\le B(r)\le1, \nonumber
\end{align}
where $d\omega$ is unit-sphere area measure; the last two bounds follow from \eqref{pole:eq:B} and $B'\le0$.
\end{proof}

%\subsection{Removing the principal-curvature weights}
Put
\begin{align}
&C=\Id-rS,\qquad A(r)=\Area(\Sigma_r),\qquad E=\frac{\Sc_{\Sigma_r}}2h_r-\Ric_{\Sigma_r}, \nonumber\\
&Z(r)=\frac{A(r)}{n\omega_nr^d} +\frac{r^{2-d}}{(d-1)n\omega_n}\int_{\Sigma_r}(\Sc/2-\Ric(N,N))\,dA. \label{pole:eq:Z}
\end{align}
By the Gauss equation,
$\sec_{h_r}\ge0$, and hence
$E\ge0$ and $\tr E=(d-2)\Sc_{\Sigma_r}/2$.

\begin{lemma}
\label{pole:lem:weights}
On $(M^n, g)$, let $B(r)$ be defined in
Lemma~\ref{pole:lem:monotone}, equation~\eqref{pole:eq:B}, let $Z(r)$
be defined by \eqref{pole:eq:Z}. Then, for every $r>0$,
\begin{align}
&0\le Z(r)-B(r)\le\frac{r^{-d}}{n\omega_n}\int_{\Sigma_r}\tr C\,dA+\frac{r^{2-d}}{(d-1)n\omega_n}\int_{\Sigma_r}\langle E,C\rangle_{h_r}\,dA. \label{pole:eq:two-variations}
\end{align}
Moreover, for every fixed $a>0$,
\begin{align}
&\int_a^\infty|Z(r)-B(r)|\,\frac{dr}{r}<\infty, \qquad \int_a^\infty r^{2-d} \int_{\Sigma_r}\Ric(N,N)\,dA\,\frac{dr}{r}<\infty. \label{pole:eq:integrable-errors}
\end{align}
\end{lemma}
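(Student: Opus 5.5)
The plan is to express everything through $C=\Id-rS$. By Lemma~\ref{pole:lem:monotone}, equation~\eqref{cv:eq:pole-riccati}, $0\le C\le\Id$, and $C$ is diagonal in the eigenbasis $e_1,\dots,e_d$ of $S$, with eigenvalues $c_i=1-r\kappa_i\in[0,1]$. The two-sided bound \eqref{pole:eq:two-variations} will be an elementary pointwise comparison. The integrability \eqref{pole:eq:integrable-errors} will come from two monotone first-variation quantities, $A(r)r^{-d}$ and $r^{2-d}\int_{\Sigma_r}\Sc_{\Sigma_r}\,dA$.

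\emph{Pointwise bound.} First, $\prod_i\kappa_i=r^{-d}\prod_i(1-c_i)$. Second, in an eigenframe, $\Sc/2-\Ric(N,N)=\sum_{i<j}K_{ij}$, so $Z-B$ is the integral of
\begin{align}
&\frac{r^{-d}}{n\omega_n}\Bigl(1-\prod_i(1-c_i)\Bigr)+\frac{r^{2-d}}{(d-1)n\omega_n}\sum_{i<j}K_{ij}\Bigl(1-\prod_{\ell\ne i,j}(1-c_\ell)\Bigr). \nonumber
\end{align}
Since $c_\ell\in[0,1]$ and $K_{ij}\ge0$, both brackets lie in $[0,\sum c_\ell]$, which gives $Z\ge B$ and the first upper term $r^{-d}\int\tr C$. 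For the second term I bound it by $\sum_\ell c_\ell\sum_{i<j;\,i,j\ne\ell}K_{ij}$. Then I use the Gauss equation $K^{\Sigma}_{ij}=K_{ij}+\kappa_i\kappa_j\ge K_{ij}$, which holds because $\kappa\ge0$. Together with $E(e_\ell,e_\ell)=\sum_{i<j;\,i,j\ne\ell}K^\Sigma_{ij}$, this gives the bound $\langle E,C\rangle_{h_r}=\sum_\ell c_\ell E(e_\ell,e_\ell)$.

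\emph{First error term.} Since $A'=\int_{\Sigma_r} H\,dA$ and $H=(d-\tr C)/r$, we get
\begin{align}
&\frac{d}{dr}\frac{A(r)}{r^d}=-\frac{r^{-d}}{r}\int_{\Sigma_r}\tr C\,dA. \nonumber
\end{align}
Hence $A r^{-d}$ is nonincreasing and nonnegative, and $\int_a^\infty r^{-d}\int_{\Sigma_r}\tr C\,dA\,\frac{dr}{r}\le A(a)a^{-d}$.

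\emph{Second error term.} Apply Lemma~\ref{var:lem:metric-variation} along the flow $h_r$ with $k=2S$; the divergence terms integrate to zero on the closed sphere $\Sigma_r$. This gives
\begin{align}
&\frac{d}{dr}\int_{\Sigma_r}\Sc_{\Sigma_r}\,dA=2\int_{\Sigma_r}\langle E,S\rangle\,dA=\frac{d-2}{r}\int_{\Sigma_r}\Sc_{\Sigma_r}\,dA-\frac2r\int_{\Sigma_r}\langle E,C\rangle\,dA, \nonumber
\end{align}
so that
\begin{align}
&\frac{d}{dr}\Bigl(r^{2-d}\int_{\Sigma_r}\Sc_{\Sigma_r}\,dA\Bigr)=-\frac{2r^{2-d}}{r}\int_{\Sigma_r}\langle E,C\rangle\,dA\le0. \nonumber
\end{align}
The bracketed quantity is nonnegative because $\sec_{h_r}\ge0$, so integrating from $a$ to $\infty$ makes the $\langle E,C\rangle$ error integrable against $dr/r$. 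With the pointwise bound, this proves the first part of \eqref{pole:eq:integrable-errors}.

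\emph{The $\Ric(N,N)$ term.} This is the step needing most care, since the naive bound $\sigma_2(S)\lesssim r^{-2}$ is not integrable against $dr/r$. I take the trace of the Riccati equation, $H'+|S|^2+\Ric(N,N)=0$, and use $\frac{d}{dr}\int H\,dA=\int(H'+H^2)\,dA$. Substituting $H=(d-\tr C)/r$, $A'=\int H$ and $\sigma_2(S)=r^{-2}\bigl(\binom d2-(d-1)\tr C+\sigma_2(C)\bigr)$, the leading $A$ terms should cancel exactly, leaving
\begin{align}
&\int_{\Sigma_r}\Ric(N,N)\,dA=\frac{d}{dr}\Bigl(\frac1r\int_{\Sigma_r}\tr C\,dA\Bigr)+\frac1{r^2}\int_{\Sigma_r}\bigl((2-d)\tr C+2\sigma_2(C)\bigr)dA. \nonumber
\end{align}
Since $0\le\sigma_2(C)\le\frac{d-1}2\tr C$, the last term multiplied by $r^{2-d}$ is bounded by a multiple of $r^{-d}\int\tr C$, which is $dr/r$-integrable by the first error estimate. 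For the derivative term, set $X(r)=\int_{\Sigma_r}\tr C\,dA$ and integrate $r^{1-d}\frac{d}{dr}(X/r)$ by parts. The boundary terms are $r^{-d}X\le d\,A r^{-d}$, which is bounded. The remaining integral is $(d-1)\int r^{-d}X\,\frac{dr}{r}$, which is finite. Since $\Ric(N,N)\ge0$, this bounds the integral itself, which gives the second part of \eqref{pole:eq:integrable-errors}. I expect the main obstacle to be exactly this algebraic cancellation, together with checking the signs in the integration by parts.
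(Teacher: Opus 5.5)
Your proposal is correct and follows the paper's proof. The pointwise bound uses the same inequality $1-\prod\lambda_i\le\sum(1-\lambda_i)$ together with Gauss, and the integrability uses the same two monotone quantities $r^{-d}A(r)$ and $r^{2-d}\int_{\Sigma_r}\Sc_{\Sigma_r}\,dA$. For the $\Ric(N,N)$ term, your traced-Riccati identity $r^2\int_{\Sigma_r}\Ric(N,N)\,dA=rX'+\int_{\Sigma_r}\bigl((1-d)\tr C+2\sigma_2(C)\bigr)dA$ is exactly the paper's $rD'+\int_{\Sigma_r}\bigl(\tr(QC)-\tr Q\,\tr C\bigr)dA$ with $Q=\Id-C$, and the paper then makes the same integration by parts against $r^{-d-1}\,dr$.
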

\begin{proof}
Area variation and Lemma~\ref{var:lem:metric-variation}, \eqref{eq:metric-variation}, give
\begin{align}
&\partial_rh_r=2S,\qquad \partial_rdA=(\tr S)dA,\qquad \int_{\Sigma_r}\operatorname{div}_{h_r}X\,dA=0\quad(X\in C^1(T\Sigma_r)), \nonumber\\
&\frac{d}{dr}\bigl(r^{-d}A(r)\bigr)=-r^{-d-1}\int_{\Sigma_r}\tr C\,dA\le0, \nonumber\\
&\frac{d}{dr}\left(r^{2-d}\int_{\Sigma_r}\frac{\Sc_{\Sigma_r}}2\,dA\right)=-r^{1-d}\int_{\Sigma_r}\langle E,C\rangle_{h_r}\,dA\le0. \nonumber
\end{align}
Both differentiated quantities are nonnegative. Integration on $[a,T]$ and $T\to\infty$ yields
\begin{align}
&\int_a^\infty\left(r^{-d}\int_{\Sigma_r}\tr C\,dA+r^{2-d}\int_{\Sigma_r}\langle E,C\rangle_{h_r}\,dA\right)\frac{dr}{r}<\infty. \label{pole:eq:two-defects-integrable}
\end{align}
For $0\le\lambda_i\le1$ and $I\subset\{1,\ldots,d\}$,
\begin{align}
&0\le1-\prod_{i\in I}\lambda_i=\sum_{i\in I}(1-\lambda_i)\prod_{\substack{j\in I\\j<i}}\lambda_j\le\sum_{i\in I}(1-\lambda_i). \nonumber
\end{align}
With $\lambda_i=r\kappa_i$, the Gauss equation gives
\begin{align}
&E(e_\ell,e_\ell)=\sum_{i<j,\ i,j\ne\ell}(K_{ij}+\kappa_i\kappa_j)\ge\sum_{i<j,\ i,j\ne\ell}K_{ij}, \nonumber\\
&0\le Z(r)-B(r)\le\frac{r^{-d}}{n\omega_n}\int_{\Sigma_r}\tr C\,dA+\frac{r^{2-d}}{(d-1)n\omega_n}\int_{\Sigma_r}\langle E,C\rangle_{h_r}\,dA, \nonumber\\
&\int_a^\infty|Z(r)-B(r)|\,\frac{dr}{r}<\infty. \nonumber
\end{align}
For $n=3$, the curvature-weight product is empty and $E=0$.
Put $D(r)=\int_{\Sigma_r}\tr C\,dA$ and $Q=rS=\Id-C$. Lemma~\ref{pole:lem:monotone}, \eqref{cv:eq:pole-riccati}, gives
\begin{align}
&0\le Q,C\le\Id,\qquad r^2\mathcal R_N=QC+rC',\qquad \partial_rdA=\frac{\tr Q}{r}dA, \nonumber\\
&r^2\int_{\Sigma_r}\Ric(N,N)\,dA=rD'(r)+\int_{\Sigma_r}\bigl(\tr(QC)-(\tr Q)(\tr C)\bigr)dA\le rD'(r)+D(r), \nonumber\\
&0\le\int_a^T r^{2-d}\int_{\Sigma_r}\Ric(N,N)\,dA\,\frac{dr}{r}\le\frac{D(T)}{T^d}-\frac{D(a)}{a^d}+(d+1)\int_a^T\frac{D(r)}{r^{d+1}}\,dr, \nonumber\\
&0\le\frac{D(r)}{r^d}\le\frac{dA(r)}{r^d}\le dn\omega_n,\qquad \sup_{T>a}\int_a^T\frac{D(r)}{r^{d+1}}\,dr<\infty\quad\text{by \eqref{pole:eq:two-defects-integrable}}. \nonumber
\end{align}
The second assertion of \eqref{pole:eq:integrable-errors} follows by monotone convergence.
\end{proof}

%\subsection{The sharp bound and the exact remainder}
\begin{theorem}
\label{cv:prop:pole}
Let $(M^n,g)$ be a complete noncompact Riemannian manifold with $n\ge3$ and a pole, $\sec_g\ge0$. 
Then 
\begin{align}
&L(M,g):=\lim_{R\to\infty}R^{2-n} \int_{B_p(R)}\Sc_g\,dV_g \le4\pi\omega_{n-2}(1-\AVR(M^n,g)). \nonumber
\end{align}
\end{theorem}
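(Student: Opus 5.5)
The plan is to combine the monotone quantity $B(r)$ of Lemma~\ref{pole:lem:monotone} with the comparison $Z\approx B$ of Lemma~\ref{pole:lem:weights}. From these I aim to identify $\lim B(r)$ as $\AVR(M^n,g)+L(M,g)/(4\pi\omega_{n-2})$. The inequality $B\le\lim_{r\downarrow0}B=1$ then gives the bound. Throughout I use the identity $n\omega_n=\Area(\mathbb S^{n-1})=2\pi\omega_{n-2}$.

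\emph{Step 1: limits of the pieces of $Z$.} By Lemma~\ref{pole:lem:monotone}, $B$ is nonincreasing with $0\le B\le1$, so $B_\infty:=\lim_{r\to\infty}B(r)\in[0,1]$ exists. By Bishop--Gromov, $A(r)/(n\omega_nr^{d})\to\AVR(M^n,g)$. Put
\[
s(r)=\int_{\Sigma_r}\Sc_g\,dA,\qquad f(r)=r^{2-d}s(r),\qquad g(r)=r^{2-d}\int_{\Sigma_r}\Ric(N,N)\,dA\ge0 .
\]
Then \eqref{pole:eq:Z} reads
\[
Z(r)=\frac{A(r)}{n\omega_nr^d}+\frac{f(r)/2-g(r)}{(d-1)n\omega_n}.
\]
Define the error
\[
e(r):=\frac{f(r)}{2(d-1)n\omega_n}-\bigl(B_\infty-\AVR(M^n,g)\bigr).
\]
It decomposes into $Z-B$, plus $B-B_\infty$, plus $\AVR-A/(n\omega_nr^d)$, plus a multiple of $g$. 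By \eqref{pole:eq:integrable-errors}, the terms $Z-B$ and $g$ are integrable against $dr/r$ on $[a,\infty)$. The two monotone differences $B-B_\infty$ and $A/(n\omega_nr^d)-\AVR$ tend to zero. Hence $e$ is the sum of a function $e_1$ with $\int_a^\infty|e_1|\,dr/r<\infty$ and a function $e_2\to0$.

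\emph{Step 2: passing to balls.} By coarea along the pole,
\[
R^{2-n}\int_{B_p(R)}\Sc_g\,dV_g=R^{2-n}\int_0^R r^{n-3}f(r)\,dr .
\]
Insert $f=2(d-1)n\omega_n\bigl(B_\infty-\AVR+e\bigr)$ and use $R^{2-n}\int_0^R r^{n-3}\,dr=1/(n-2)=1/(d-1)$. The main term is $2n\omega_n(B_\infty-\AVR)=4\pi\omega_{n-2}(B_\infty-\AVR)$. The error is bounded by
\[
C\int_0^R\Bigl(\frac rR\Bigr)^{n-2}|e(r)|\,\frac{dr}{r}.
\]
The $e_2$ part tends to zero by an elementary Cesàro argument. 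For the $e_1$ part, split at $\sqrt R$. The piece over $[\sqrt R,R]$ is at most $\int_{\sqrt R}^\infty|e_1|\,dr/r\to0$. The piece over $[a,\sqrt R]$ carries the factor $R^{-(n-2)/2}$ times a finite constant. The portion $[0,a]$ is a fixed finite integral, killed by $R^{2-n}$. Hence the limit $L(M,g)$ exists and
\[
L(M,g)=4\pi\omega_{n-2}\bigl(B_\infty-\AVR(M^n,g)\bigr)\le4\pi\omega_{n-2}\bigl(1-\AVR(M^n,g)\bigr).
\]
The remainder $4\pi\omega_{n-2}(1-B_\infty)$ is exactly the integrated $P_2$-flux from \eqref{pole:eq:B-derivative}.

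\emph{Main obstacle.} The delicate point is Step~2. Pointwise control of $f$ is unavailable because $g$ need not be pointwise bounded; only $dr/r$-integrability is known. The argument must therefore use that the averaging weight $(r/R)^{n-2}\,dr/r$ is dominated by $dr/r$ on the tail, while being tiny on the initial segment. I would first check that $Z-B\ge0$ and the bounds in \eqref{pole:eq:two-variations} are really all that is needed there, so that no sign information on $f$ beyond $\Sc_g\ge0$ enters.
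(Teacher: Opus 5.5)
Your proof is correct, and it takes a shorter route than the paper's.

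\textbf{The paper's route.} The paper first proves that $Z(r)$ converges pointwise. To do so it:
\begin{itemize}
\item shows the area defect $r^{-d}\int_{\Sigma_r}\tr(\Id-rS)\,dA$ tends to zero, combining its $dr/r$-integrability with the Riccati lower bound on its derivative;
\item uses this and the $\sigma_2$ comparison \eqref{pole:eq:quadratic-shape-limit} to control $r^{2-d}\int_{\Sigma_r}\sigma_2(S)\,dA$;
\item uses the monotone limit of the normalized intrinsic total scalar curvature $J(r)$ to get $T(r)\to T_\infty$, where $T(r)=r^{2-d}\int_{\Sigma_r}(\Sc/2-\Ric(N,N))\,dA$.
\end{itemize}
It then obtains $L=2T_\infty/(d-1)$ by coarea, with dominated convergence handling the $\Ric(N,N)$ flux. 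Finally it concludes $\lim(Z-B)=0$: both limits exist, and $Z-B\ge0$ is $dr/r$-integrable.

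\textbf{Your route.} You never need $Z$ to converge. You use only three facts: $B$ converges, $A(r)/(n\omega_nr^d)\to\AVR(M^n,g)$, and both $Z-B$ and the $\Ric(N,N)$ flux are $dr/r$-integrable, which is exactly \eqref{pole:eq:integrable-errors}. The ball average against the weight $(r/R)^{n-2}\,dr/r$ then kills both vanishing errors and $dr/r$-integrable errors. Your splitting at $\sqrt R$ does this correctly; dominated convergence would also do it in one line. So you dispense with the area-defect decay, the $\sigma_2$ comparison, and the monotonicity and limit of $J$.

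\textbf{What each route buys.} The paper's argument gives pointwise information: $Z(r)$, $T(r)$ and $J(r)$ converge individually. This identifies $\lim B$ as the pointwise limit of the unweighted sphere quantity $Z$, as advertised in Remark~\ref{remark-thm-with-pole}. Your argument shows only that $Z\to B_\infty$ in a $dr/r$-averaged sense, which is all the theorem requires. Both routes yield the same exact remainder $4\pi\omega_{n-2}(1-B_\infty)$.

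\textbf{Minor point.} Your symbols $g$ and $e$ clash with the metric and with the paper's area defect $e(r)$; rename them.
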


\begin{proof}
Retain $d=n-1$, $A(r)=\Area(\Sigma_r)$, $C=\Id-rS$ and $Q=rS$. Define
\begin{align}
&D(r)=\int_{\Sigma_r}\tr C\,dA,\qquad e(r)=r^{-d}D(r),\qquad J(r)=r^{2-d}\int_{\Sigma_r}\frac{\Sc_{\Sigma_r}}2\,dA. \nonumber
\end{align}
Lemma~\ref{pole:lem:weights}, \eqref{pole:eq:two-variations}, gives
\begin{align}
&\frac{A(r)}{r^d}\ge0,\qquad J(r)\ge0,\qquad \left(\frac{A(r)}{r^d}\right)'\le0,\qquad J'(r)\le0, \nonumber\\
&\int_a^\infty e(r)\,\frac{dr}{r}<\infty\qquad(a>0). \label{pole:eq:area-defect-integrable}
\end{align}
Lemma~\ref{pole:lem:monotone}, \eqref{cv:eq:pole-riccati}, yields
\begin{align}
&rC'=r^2\mathcal R_N-QC,\qquad \mathcal R_N\ge0,\qquad 0\le Q,C\le\Id,\qquad \partial_rdA=(\tr S)dA, \nonumber\\
&D'(r)=\int_{\Sigma_r}\bigl(\tr C'+(\tr S)(\tr C)\bigr)dA\ge-r^{-1}D(r),\qquad e'(r)\ge-(d+1)r^{-1}e(r), \nonumber\\
&e(s)\ge e(r)(r/s)^{d+1}\quad(s\ge r>0), \nonumber\\
&0\le e(r)\le\frac{d+1}{1-2^{-(d+1)}}\int_r^{2r}e(s)\,\frac{ds}{s},\qquad \lim_{r\to\infty}e(r)=0. \label{pole:eq:area-defect-limit}
\end{align}
The limit in \eqref{pole:eq:area-defect-limit} follows from \eqref{pole:eq:area-defect-integrable}. Put $a_\infty=\lim_{r\to\infty}A(r)/r^d$. Substitution and dominated convergence give
\begin{align}
&\frac{\Vol B_p(R)}{R^{d+1}}=\int_0^1\tau^d\frac{A(R\tau)}{(R\tau)^d}\,d\tau,\qquad 0\le\frac{A(r)}{r^d}\le n\omega_n, \nonumber\\
&\omega_n\AVR(M^n,g)=\frac{a_\infty}{d+1},\qquad a_\infty=n\omega_n\AVR(M^n,g), \nonumber\\
&\lim_{r\to\infty}\frac{A(r)}{r^d}=n\omega_n\AVR(M^n,g). \label{cv:eq:area-asymptotic}
\end{align}
For $\sigma_2(S)=\sum_{i<j}\kappa_i\kappa_j$ and $\lambda_i=r\kappa_i$,
\begin{align}
&0\le\lambda_i\le1,\qquad 0\le1-\lambda_i\lambda_j\le(1-\lambda_i)+(1-\lambda_j), \nonumber\\
&0\le\binom d2\frac{A(r)}{r^d}-r^{2-d}\int_{\Sigma_r}\sigma_2(S)\,dA\le(d-1)e(r). \label{pole:eq:quadratic-shape-limit}
\end{align}
Set $j_\infty=\lim_{r\to\infty}J(r)\in[0,\infty)$. The scalar Gauss equation and \eqref{pole:eq:area-defect-limit}--\eqref{pole:eq:quadratic-shape-limit} give
\begin{align}
&T(r):=r^{2-d}\int_{\Sigma_r}\bigl(\Sc/2-\Ric(N,N)\bigr)dA=J(r)-r^{2-d}\int_{\Sigma_r}\sigma_2(S)\,dA\ge0, \nonumber\\
&\lim_{r\to\infty}T(r)=j_\infty-\binom d2n\omega_n\AVR(M^n,g)=:T_\infty\ge0. \label{pole:eq:tangential-scalar-limit}
\end{align}
For $a>0$, Lemma~\ref{pole:lem:weights}, \eqref{pole:eq:integrable-errors}, gives an integrable majorant in
\begin{align}
&0\le\mathbf1_{\{r\le R\}}(r/R)^{d-1}\le1,\qquad \lim_{R\to\infty}\mathbf1_{\{r\le R\}}(r/R)^{d-1}=0\quad(r\ge a), \nonumber\\
&R^{1-d}\int_a^R\int_{\Sigma_r}\Ric(N,N)\,dA\,dr \nonumber\\
&{}=\int_a^\infty\mathbf1_{\{r\le R\}}(r/R)^{d-1}r^{2-d}\int_{\Sigma_r}\Ric(N,N)\,dA\,\frac{dr}{r}, \nonumber\\
&\lim_{R\to\infty}R^{1-d}\int_a^R\int_{\Sigma_r}\Ric(N,N)\,dA\,dr=0. \label{pole:eq:radial-ball-vanishing}
\end{align}
The last limit is dominated convergence. For every $\varepsilon>0$, choose $b\ge a$ such that $\sup_{r\ge b}|T(r)-T_\infty|\le\varepsilon$. Then, for $R\ge b$,
\begin{align}
&\left|R^{1-d}\int_a^R r^{d-2}\bigl(T(r)-T_\infty\bigr)\,dr\right|\le R^{1-d}\int_a^b r^{d-2}|T(r)-T_\infty|\,dr+\frac{\varepsilon}{d-1}, \nonumber\\
&\lim_{R\to\infty}R^{1-d}\int_a^R r^{d-2}T(r)\,dr=\frac{T_\infty}{d-1},\qquad \lim_{R\to\infty}R^{1-d}\int_{B_p(a)}\Sc\,dV=0. \nonumber
\end{align}
Coarea and \eqref{pole:eq:radial-ball-vanishing} imply
\begin{align}
&L(M,g):=\lim_{R\to\infty}R^{2-n}\int_{B_p(R)}\Sc\,dV=\frac{2T_\infty}{d-1}<\infty. \label{pole:eq:independent-scalar-limit}
\end{align}
By \eqref{pole:eq:Z}, \eqref{cv:eq:area-asymptotic}, and \eqref{pole:eq:tangential-scalar-limit},
\begin{align}
&\lim_{r\to\infty}Z(r)=\AVR(M^n,g)+\frac{T_\infty}{(d-1)n\omega_n}=\AVR(M^n,g)+\frac{L(M,g)}{2n\omega_n}. \nonumber
\end{align}
Lemma~\ref{pole:lem:monotone}, \eqref{pole:eq:B-derivative}, gives existence of $\lim_{r\to\infty}B(r)\in[0,1]$. By Lemma~\ref{pole:lem:weights}, $Z(r)\ge B(r)$ for every $r>0$, and $Z-B$ is integrable with respect to $dr/r$ on $[a,\infty)$ for every $a>0$. Hence
\begin{align}
&Z-B\ge0,\qquad \int_a^\infty(Z-B)\,\frac{dr}{r}<\infty\ \Longrightarrow\ \lim_{r\to\infty}(Z(r)-B(r))=0, \nonumber\\
&\lim_{r\to\infty}B(r)=\AVR(M^n,g)+\frac{L(M,g)}{2n\omega_n}\le1. \label{pole:eq:B-limit}
\end{align}
Indeed, a positive limit would imply $\int_a^\infty(Z-B)\,dr/r=\infty$.
Now $2n\omega_n=4\pi\omega_{n-2}$ and \eqref{pole:eq:B-limit} give the conclusion. 
\end{proof}

%\subsection{Smooth models attaining every pole coefficient}

\section{Distance comparison and smooth approximation}\label{sec:common-geometry}

\begin{notation}\label{notation:b-func}
Fix $p\in M$, set
$ST_pM=\{\xi\in T_pM:|\xi|=1\}$,
$\gamma_\xi(t)=\exp_p(t\xi)$, and
\begin{align}
&E=\{\zeta\in ST_pM:\gamma_\zeta|_{[0,T]}\text{ is minimizing for every }T>0\}. \nonumber
\end{align}
For $\zeta\in E$, define $b_\zeta$, and then $\mathfrak{b}$, by
\begin{align}
&b_\zeta(x)=\lim_{T\to\infty}\bigl(T-d_g(x,\gamma_\zeta(T))\bigr), \mathfrak{b}(x)\vcentcolon=\sup_{\zeta\in E}b_\zeta(x). \label{eq:busemann-definition}
\end{align}
\end{notation}

\begin{lemma}\label{lem:geometry}
Let $n\ge3$ and let $(M^n,g)$ be a smooth, complete, connected,
noncompact Riemannian manifold without boundary, with $\sec_g\ge0$.

Then $\mathfrak{b}$ is a convex, proper,
$1$-Lipschitz function satisfying $\mathfrak{b}(p)=0$, $\mathfrak{b}\le\rho$, and
\begin{align}
&\rho(x)(1-d_0(\rho(x)))\le \mathfrak{b}(x),\qquad \lim_{r\to\infty}d_0(r)=0, \label{eq:convex-approximation}
\end{align}
where $d_0:[0,\infty)\to[0,2]$ is nonincreasing. Moreover,
\begin{align}
&|d\mathfrak{b}_x|=1\quad\text{at every differentiability point }x\text{ of }\mathfrak{b}. \label{six:eq:unit-gradient}
\end{align}
\end{lemma}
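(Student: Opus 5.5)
The plan is to establish the listed properties of $\mathfrak b$ one at a time from the Cheeger--Gromoll construction, with the lower bound \eqref{eq:convex-approximation} handled last by a compactness argument.

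\emph{Basic properties.} For each $\zeta\in E$ and $T>0$, the function $T-d_g(x,\gamma_\zeta(T))$ is $1$-Lipschitz in $x$. By the triangle inequality it is nondecreasing in $T$ and bounded above by $\rho(x)$. Hence $b_\zeta$ exists, is $1$-Lipschitz, satisfies $b_\zeta\le\rho$, and has $b_\zeta(p)=0$.

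\emph{Convexity.} This is the Cheeger--Gromoll fact that under $\sec_g\ge0$ each Busemann function $b_\zeta$ is convex. It follows from Theorem~\ref{ext:thm:comparison}: the function $\rho_T=d_g(\cdot,\gamma_\zeta(T))$ satisfies $\Hess\rho_T\le\rho_T^{-1}g$ distributionally and in the barrier sense. So $T-\rho_T$ has Hessian $\ge-\rho_T^{-1}g$, and this lower bound tends to $0$ locally uniformly as $T\to\infty$. A locally uniform limit of functions that are $\epsilon_T$-convex along geodesics, with $\epsilon_T\to0$, is convex.

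\emph{Supremum over $E$.} The supremum of a family of convex, $1$-Lipschitz functions, all bounded by $\rho$ and all vanishing at $p$, is again convex and $1$-Lipschitz, with $\mathfrak b(p)=0$ and $\mathfrak b\le\rho$. Note $E\neq\emptyset$, since $M$ is noncompact and so has a ray from $p$.

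\emph{Lower bound and properness.} The estimate \eqref{eq:convex-approximation} implies properness. Define
\[
d_0(r)=\sup_{\rho(x)\ge r}\bigl(1-\mathfrak b(x)/\rho(x)\bigr)\quad(r>0),
\]
suitably capped at $2$. This lies in $[0,2]$ because $\mathfrak b\ge-\rho$, and it is nonincreasing by construction. It remains to show $d_0(r)\to0$, i.e.\ $\liminf_{\rho(x)\to\infty}\mathfrak b(x)/\rho(x)\ge1$. I expect this to be the main obstacle.

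I would argue by contradiction. Suppose $x_k$ satisfies $\rho(x_k)\to\infty$ and $\mathfrak b(x_k)\le(1-\epsilon)\rho(x_k)$. Take minimal geodesics from $p$ to $x_k$ with initial directions $\xi_k$. After passing to a subsequence, $\xi_k\to\zeta$, and the limit $\gamma_\zeta$ is a ray, so $\zeta\in E$.

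The key step is to show $b_\zeta(x_k)\ge(1-\epsilon/2)\rho(x_k)$ for large $k$. For this I would use Toponogov comparison (angle comparison, valid for $\sec_g\ge0$) on the hinge at $p$ formed by the segment to $x_k$ and $\gamma_\zeta|_{[0,T]}$, with angle $\theta_k=\angle(\xi_k,\zeta)\to0$. It gives
\[
d_g(x_k,\gamma_\zeta(T))^2\le\rho_k^2+T^2-2\rho_kT\cos\theta_k .
\]
Letting $T\to\infty$ yields $b_\zeta(x_k)\ge\rho_k\cos\theta_k$. Hence $\mathfrak b(x_k)\ge\rho(x_k)\cos\theta_k$, which contradicts the assumption once $\cos\theta_k>1-\epsilon$. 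This is the standard argument that $\mathfrak b$ has linear growth.

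\emph{Unit gradient.} Let $x$ be a point where $\mathfrak b$ is differentiable. The $1$-Lipschitz property gives $|d\mathfrak b_x|\le1$. For the reverse inequality, the subtle point is that the supremum need not be attained nicely, so the approach is to handle this by compactness.

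Since $E$ is closed and $\zeta\mapsto b_\zeta(x)$ is upper semicontinuous, the supremum is attained at some $\zeta$. Choose asymptotic rays from $x$: take minimal segments from $x$ to $\gamma_\zeta(T)$, and let $\sigma$ be a subsequential limit, which is a unit-speed ray. Along $\sigma$, $b_\zeta(\sigma(s))=b_\zeta(x)+s$. Then
\[
\mathfrak b(\sigma(s))\ge b_\zeta(\sigma(s))=\mathfrak b(x)+s,
\]
so the directional derivative of $\mathfrak b$ at $x$ along $\dot\sigma(0)$ is at least $1$. Combined with $|d\mathfrak b_x|\le1$, this gives $|d\mathfrak b_x|=1$.
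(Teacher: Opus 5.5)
Your arguments for existence, the Lipschitz bounds, convexity, and the linear lower bound are sound. For the lower bound you define $d_0$ as the optimal deficiency $\sup_{\rho\ge r}(1-\mathfrak b/\rho)$ and prove $d_0\to0$ by contradiction, using limits of minimal segments and the hinge bound $b_\zeta(x)\ge\rho(x)\cos\theta$. The paper instead defines $d_0(r)$ as the angular distance from $E_r$ to $E$, and obtains $b_\zeta(x)\ge r-d_g(\exp_p(r\xi),\exp_p(r\zeta))\ge r(1-d_0(r))$ directly. The ingredients are the same, organized differently.

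\textbf{The gap} is in the unit-gradient step. You claim that $\zeta\mapsto b_\zeta(x)$ is upper semicontinuous on $E$, so that the supremum defining $\mathfrak b(x)$ is attained. Nothing in your argument supports this. For fixed $T$ the map $\zeta\mapsto T-d_g(x,\gamma_\zeta(T))$ is continuous, and $b_\zeta(x)=\sup_{T>0}\bigl(T-d_g(x,\gamma_\zeta(T))\bigr)$, since these functions are nondecreasing in $T$. A supremum of continuous functions is only \emph{lower} semicontinuous. Upper semicontinuity would require the convergence as $T\to\infty$ to be uniform in $\zeta\in E$, and you do not prove this. Hinge comparison does not help: it bounds $d_g(x,\gamma_\zeta(T))$ from above, which only produces further lower bounds on $b_\zeta(x)$. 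Without a maximizing $\zeta$, your asymptote argument has no starting point.

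\textbf{The fix.} Attainment is unnecessary; the paper argues as follows.
Take $\zeta_j\in E$ with $b_{\zeta_j}(x)\ge\mathfrak b(x)-j^{-1}$, and let $\sigma_j$ be asymptotes from $x$ to $\gamma_{\zeta_j}$. Then $\mathfrak b(x)+s\ge\mathfrak b(\sigma_j(s))\ge b_{\zeta_j}(x)+s\ge\mathfrak b(x)+s-j^{-1}$, where the first inequality holds because $\mathfrak b$ is $1$-Lipschitz. Pass to a subsequence with $\dot\sigma_j(0)\to v$. Continuity of $\mathfrak b$ and of $\exp_x$ then gives $\mathfrak b(\exp_x(sv))=\mathfrak b(x)+s$ for all $s\ge0$. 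At a differentiability point this yields $d\mathfrak b_x(v)=1$, hence $|d\mathfrak b_x|=1$.
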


\begin{proof}
For $r>0$, set
\begin{align}
&E_r=\{\xi\in ST_pM:d_g(p,\gamma_\xi(r))=r\},\qquad d_0(r)=\sup_{\xi\in E_r}\inf_{\zeta\in E}|\xi-\zeta|,\qquad d_0(0)=2, \nonumber\\
&R\ge r>0\ \Longrightarrow\ \varnothing\ne E_R\subset E_r\subset ST_pM,\qquad E_r\text{ compact},\qquad E=\bigcap_{r>0}E_r\ne\varnothing. \nonumber
\end{align}
Compactness gives
\begin{align}
&0\le d_0(R)\le d_0(r)\le2, \nonumber\\
&r_j\to\infty,\quad \xi_j\in E_{r_j},\quad \xi_j\to\xi\ \Longrightarrow\ \xi\in E_r\ (r>0)\ \Longrightarrow\ \xi\in E, \nonumber\\
&\lim_{r\to\infty}d_0(r)=0. \nonumber
\end{align}
For $\zeta\in E$ and $T_2\ge T_1\ge0$, the triangle inequality yields
\begin{align}
&{}-\rho(x)\le T_1-d_g(x,\gamma_\zeta(T_1))\le T_2-d_g(x,\gamma_\zeta(T_2))\le\rho(x), \nonumber\\
&|b_\zeta(x)-b_\zeta(y)|\le d_g(x,y),\qquad b_\zeta(p)=0,\qquad b_\zeta\le\rho. \nonumber
\end{align}
Thus the limit in \eqref{eq:busemann-definition} exists. Theorem~\ref{ext:thm:comparison}\textup{(ii)} gives, for every affinely parametrized geodesic segment $\gamma:[0,1]\to M$,
\begin{align}
&b_\zeta(\gamma(s))\le(1-s)b_\zeta(\gamma(0))+sb_\zeta(\gamma(1))\qquad(0\le s\le1), \nonumber\\
&\mathfrak{b}(\gamma(s))\le(1-s)\mathfrak{b}(\gamma(0))+s\mathfrak{b}(\gamma(1)),\qquad |\mathfrak{b}(x)-\mathfrak{b}(y)|\le d_g(x,y),\qquad \mathfrak{b}(p)=0. \nonumber
\end{align}
For $x=\exp_p(r\xi)$, $\xi\in E_r$, choose $\zeta\in E$ with $|\xi-\zeta|\le d_0(r)$. Hinge comparison gives
\begin{align}
&r\ge \mathfrak{b}(x)\ge b_\zeta(x)\ge b_\zeta(\gamma_\zeta(r))-d_g(\exp_p(r\xi),\exp_p(r\zeta))\ge r(1-d_0(r)), \nonumber\\
&\forall a\in\mathbb R\quad\exists R<\infty:\quad \{\mathfrak{b}\le a\}\subset\overline B_p(R),\qquad \{\mathfrak{b}\le a\}\text{ compact}. \nonumber
\end{align}
For each $\zeta\in E$, take unit-speed minimizing segments $\sigma_{\zeta,T}$ from $x$ to $\gamma_\zeta(T)$. A subsequence converges on compact intervals to a unit-speed ray $\sigma_\zeta$. For fixed $s\ge0$ and sufficiently large $T$,
\begin{align}
&b_\zeta(\sigma_{\zeta,T}(s))\ge T-d_g(\sigma_{\zeta,T}(s),\gamma_\zeta(T))=T-d_g(x,\gamma_\zeta(T))+s, \nonumber\\
&b_\zeta(x)+s\ge b_\zeta(\sigma_\zeta(s))\ge b_\zeta(x)+s. \nonumber
\end{align}
Choose $\zeta_j\in E$ with $b_{\zeta_j}(x)\ge \mathfrak{b}(x)-j^{-1}$. A subsequence of $\dot\sigma_{\zeta_j}(0)$ converges; its geodesic limit $\sigma$ is a unit-speed ray, and
\begin{align}
&\mathfrak{b}(x)+s\ge \mathfrak{b}(\sigma_{\zeta_j}(s))\ge b_{\zeta_j}(x)+s\ge \mathfrak{b}(x)+s-j^{-1}, \nonumber\\
&\mathfrak{b}(\sigma(s))=\mathfrak{b}(x)+s\qquad(s\ge0), \nonumber\\
&\mathfrak{b}\text{ differentiable at }x\ \Longrightarrow\ 1=d\mathfrak{b}_x(\dot\sigma(0))\le|d\mathfrak{b}_x|\le1. \nonumber
\end{align}
\end{proof}

\begin{lemma}\label{one:lem:compact-Hessians}
Let $K$ be a compact subset of a smooth Riemannian $n$-manifold,
$n\ge3$, and let $U$ be a fixed relatively compact open neighborhood
of $K$. There is a finite constant $C_{K,U}$, depending only on
$U$ and $g|_U$, such that
\begin{align}
&\int_K |\Hess f|+ \sum_{k=0}^{n-2}\sigma_k(\Hess f+g)\,dV\le C_{K,U} \label{one:eq:compact-Hessians}
\end{align}
for every $f\in C^\infty(U)$ satisfying
$|\nabla f|\le2$ and $\Hess f\ge-g$ on $U$. 
\end{lemma}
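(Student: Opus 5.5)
Set $A=\Hess f+g$. The hypothesis $\Hess f\ge-g$ says exactly that $A\ge0$ on $U$. All the integrands are then controlled by $\tr A$, and $\tr A$ integrates against a cutoff to a quantity bounded only in terms of $|\nabla f|\le2$.

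The plan is to reduce everything to a single bound $\int_K\tr A\,dV\le C$, using a divergence argument, and then conclude by elementary symmetric function inequalities for nonnegative matrices.

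\emph{Step 1 (algebra).} For a nonnegative symmetric endomorphism $A$ with eigenvalues $a_i\ge0$, we have
\begin{align}
&\sigma_k(A)\le\binom nk n^{-k}(\tr A)^k\le C_n(1+\tr A)^{n-2}\quad(0\le k\le n-2), \nonumber\\
&|\Hess f|=|A-g|\le|A|+\sqrt n\le\tr A+\sqrt n. \nonumber
\end{align}
A pure $L^1$ bound on $\tr A$ does not control $(\tr A)^k$ for $k\ge2$. So the polynomial terms need a divergence structure rather than Maclaurin's inequality alone, and I would handle them that way.

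\emph{Step 2 (the trace term).} Fix $\phi\in C_c^\infty(U)$ with $0\le\phi\le1$ and $\phi=1$ on $K$. Then
\begin{align}
&\int_K\tr A\le\int_U\phi(\Delta f+n)\,dV=-\int_U\langle\nabla\phi,\nabla f\rangle\,dV+n\int_U\phi\,dV\le 2\|\nabla\phi\|_{L^1}+n\Vol(U). \nonumber
\end{align}
This already gives the $|\Hess f|$ term and the terms $k=0,1$.

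\emph{Step 3 (higher $\sigma_k$, induction on $k$).} Use the Newton tensors $T_{k-1}(A)$, which satisfy $k\sigma_k(A)=\tr(T_{k-1}(A)A)$. They are nonnegative when $A\ge0$, and $|T_{k-1}(A)|\le C\sigma_{k-1}(A)$.

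Write $A=\Hess f+g$ and integrate by parts against a cutoff $\phi_k$ supported in $U$. Since $g$ is parallel, $\nabla_iT_{k-1}^{ij}(\Hess f+g)$ is not zero, but it equals curvature terms $\Rm*\nabla f*(\text{polynomial of degree }k-2\text{ in }A)$. This comes from commuting third derivatives of $f$: the divergence of the Newton tensor of a Hessian is a Ricci-type contraction. That gives
\begin{align}
&k\int\phi_k\sigma_k(A)\le\int\phi_k\tr T_{k-1}(A)+\int|\nabla\phi_k||T_{k-1}(A)||\nabla f|+C\int\phi_k|\Rm||\nabla f|\,|A|^{k-2}\cdots \nonumber
\end{align}
Every term on the right is bounded by $C\int_{\supp\phi_k}(1+\sigma_{k-1}(A)+\sigma_{k-2}(A))$. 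To make the induction close, I would use a nested sequence $K\subset\{\phi_{n-2}=1\}\subset\supp\phi_{n-2}\subset\{\phi_{n-3}=1\}\subset\cdots\subset U$. The constants then depend only on $U$, $g|_U$ (through $\sup_U|\Rm|$ and the cutoffs), and the bound $|\nabla f|\le2$.

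\emph{Main obstacle.} The key point is the divergence identity for $T_{k-1}(\Hess f+g)$ on a curved background. One must check that the third-derivative terms cancel by the symmetry of the generalized Kronecker delta, leaving only curvature contractions of order $k-2$ in $A$ times $\nabla f$. These terms are controlled by the inductive hypothesis because $|\nabla f|\le2$.

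I would write $\sigma_k(A)=\frac1{k!}\delta^{i_1\cdots i_k}_{j_1\cdots j_k}A^{j_1}_{i_1}\cdots A^{j_k}_{i_k}$. I would then differentiate one factor $\nabla_{i_1}\nabla^{j_1}f$, move $\nabla_{i_1}$ across, and use the antisymmetry of $\delta$ in $i_1,i_2$ to replace $\nabla_{i_1}\nabla_{i_2}\nabla^{j_2}f$ by $\tfrac12\Rm*\nabla f$.
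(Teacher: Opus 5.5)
Your argument is correct, but it takes a different route from the paper. The paper never uses Riemannian Newton tensors. It covers $K$ by finitely many coordinate balls and, in each one, replaces $f$ by the Euclidean convex function $z=f+C_1|x|^2/2$, which satisfies $D^2z\ge I$, $|Dz|\le C_2$ and $0\le\Hess f+g\le C_2D^2z$. The Euclidean Newton tensors $N_{k-1}(D^2z)$ are exactly divergence-free. Hence $k\int\zeta\sigma_k(D^2z)\,dx=-\int N_{k-1}(D^2z)(D\zeta,Dz)\,dx$ gives a one-step recursion in $\sigma_{k-1}$ with no curvature terms. The bounds are then transferred back to $\sigma_k(\Hess f+g)$ by eigenvalue monotonicity and the equivalence of the coordinate and Riemannian metrics.

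Your intrinsic approach avoids charts and the transfer step. It bounds $\sigma_k(\Hess f+g)$ directly on $U$ with a single nested family of cutoffs. The price is the Ricci-commutation identity for $\diver T_{k-1}(\Hess f+g)$ and a two-step recursion in $\sigma_{k-1}$ and $\sigma_{k-2}$. This is the same mechanism the paper itself uses later for $\diver V_1(H)$ and $\diver V_2(H)$ in Lemma~\ref{pt:lem:algebra}.

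One estimate has to be stated correctly for your induction to close. The curvature term cannot be bounded by $|\Rm|\,|\nabla f|\,|A|^{k-2}$. The quantity $|A|^{k-2}$ is not controlled by $1+\sigma_{k-1}(A)+\sigma_{k-2}(A)$: for $A$ of rank one with a large eigenvalue, $\sigma_j(A)=0$ for $j\ge2$. What is true, and what you need, is
$|\diver T_{k-1}(A)|\le C\sup_U|\Rm|\,|\nabla f|\,\sigma_{k-2}(A)$.
To see this, work in a $g$-orthonormal eigenframe of $A\ge0$. The alternating delta forces the $k-2$ copies of $A$ onto distinct eigenvalues. The contraction is therefore a sum, over distinct indices, of the nonnegative products $a_{i_2}\cdots a_{i_{k-1}}$ times bounded curvature components. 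With this bound in place of $|A|^{k-2}$, your nested-cutoff induction goes through as you describe.
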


\begin{proof}
Choose a finite coordinate cover of $K$ by balls compactly contained in larger coordinate balls. On each such pair,
\begin{align}
&\nabla_{ij}f=\partial_{ij}f-\Gamma^k_{ij}\partial_kf,\quad |\nabla f|\le2,\quad \Hess f\ge-g\ \Longrightarrow\ D^2f\ge-C_0I, \nonumber\\
&z=f+C_1|x|^2/2,\qquad D^2z\ge I,\qquad |Dz|\le C_2,\qquad 0\le\Hess f+g\le C_2D^2z. \nonumber
\end{align}
Here $C_0,C_1,C_2$ depend only on the fixed coordinate balls and $g$. Define
\begin{align}
&N_{k-1}(B)^i{}_j=\frac{\partial\sigma_k(B)}{\partial B^j{}_i}. \nonumber
\end{align}
In the derivative of the alternating formula, terms containing $\partial_{iab}z$ cancel in pairs by $\partial_{iab}z=\partial_{aib}z$. In a diagonal frame, $N_{k-1}$ has eigenvalues $\sigma_{k-1}$ of the complementary eigenvalues; homogeneity gives the last identity below. Thus
\begin{align}
&\partial_iN_{k-1}(D^2z)^i{}_j=0,\qquad N_{k-1}(D^2z)\ge0, \nonumber\\
&\tr N_{k-1}(D^2z)=(n-k+1)\sigma_{k-1}(D^2z),\qquad N_{k-1}(D^2z):D^2z=k\sigma_k(D^2z). \nonumber
\end{align}
For $0\le\zeta\in C_c^\infty$,
\begin{align}
&k\int\zeta\sigma_k(D^2z)\,dx=-\int N_{k-1}(D^2z)(D\zeta,Dz)\,dx\le C_\zeta\int_{\supp D\zeta}\sigma_{k-1}(D^2z)\,dx. \nonumber
\end{align}
Choose $n-2$ nested cutoffs between the two coordinate balls. Induction from $\sigma_0=1$, eigenvalue monotonicity, and equivalence of the fixed coordinate and Riemannian metrics give
\begin{align}
&\int_{\text{inner ball}}\sigma_k(D^2z)\,dx\le C\qquad(0\le k\le n-2), \nonumber\\
&\int_K\sum_{k=0}^{n-2}\sigma_k(\Hess f+g)\,dV\le C_{K,U}, \nonumber\\
&|\Hess f|\le\tr(\Hess f+g)+\sqrt n\ \Longrightarrow\ \int_K|\Hess f|\,dV\le C_{K,U}. \nonumber
\end{align}
\end{proof}

\begin{lemma}\label{one:lem:coefficient}
Put
$c_{\mathfrak{b}}=2-2\min_M\mathfrak{b}$ and
\begin{align}
&K(t)=1+\sup_{\overline B_p(4t+4)}\Sc_g,\qquad t\ge0. \label{eq:level-curvature-majorant}
\end{align}
Then there is a smooth positive function $h:M\to(0,\infty)$ such that
\begin{align}
&0<h(x)\le\min\left\{(1+\rho(x))^{-3},\frac{e^{-(2\mathfrak{b}(x)+c_{\mathfrak{b}})}}{4K(2\mathfrak{b}(x)+c_{\mathfrak{b}})^2}\right\}, \label{one:eq:h-choice}\\
&\int_M h(1+\Sc_g)\sum_{k=0}^{n-2}\sigma_k(\Hess f+g)\,dV\le2 \label{one:eq:weighted-Hessians}
\end{align}
for every $f\in C^\infty(M)$ with $|\nabla f|\le2$ and
$\Hess f\ge-g$. 
\end{lemma}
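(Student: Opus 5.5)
The plan is to build $h$ from a partition of unity subordinate to a locally finite cover by compact sets, and to choose the constants on each piece using the uniform local bound of Lemma~\ref{one:lem:compact-Hessians}.

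First note that the right-hand side of \eqref{one:eq:h-choice} is a positive continuous function. Indeed, $\mathfrak{b}$ is continuous by Lemma~\ref{lem:geometry}. The function $K$ is finite, nondecreasing and positive. Moreover $2\mathfrak{b}+c_{\mathfrak{b}}\ge2>0$, since $c_{\mathfrak{b}}=2-2\min_M\mathfrak{b}$; the minimum exists because $\mathfrak{b}$ is proper and bounded below by its value near a minimizer. Although $K$ may fail to be continuous, it is monotone. I will therefore replace it by a continuous majorant $\tilde K\ge K$, for instance $\tilde K(t)=K(t+1)$ smoothed, or simply bound $K$ on each compact piece by its supremum there. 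The result is a positive continuous lower envelope $m(x)\le\min\{\dots\}$.

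Next cover $M$ by the compact annuli $K_j=\overline B_p(j+1)\setminus B_p(j)$, $j\ge0$, with relatively compact neighborhoods $U_j=B_p(j+2)\setminus\overline B_p(j-1)$. Any $f\in C^\infty(M)$ with $|\nabla f|\le2$ and $\Hess f\ge-g$ restricts to $U_j$ with the same bounds. Lemma~\ref{one:lem:compact-Hessians} then gives
\[
\int_{K_j}\sum_{k=0}^{n-2}\sigma_k(\Hess f+g)\,dV\le C_j:=C_{K_j,U_j},
\]
uniformly in $f$. Let $\Lambda_j=1+\sup_{K_j}\Sc_g$ and $m_j=\min_{K_j}m>0$. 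Define positive constants
\[
\epsilon_j=\min\Bigl\{m_{j-1},m_j,m_{j+1},\ \frac{2^{-j}}{C_j\Lambda_j}\Bigr\},
\]
with $m_{-1}:=m_0$. Take a smooth partition of unity $\{\phi_j\}$ subordinate to $\{U_j'\}$, where $U_j'$ is a slightly enlarged open annulus around $K_j$ contained in $K_{j-1}\cup K_j\cup K_{j+1}$, and set $h=\sum_j\tfrac12\epsilon_j\phi_j$.

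This $h$ is smooth and positive, because at each point some $\phi_j>0$ and all $\epsilon_j>0$. At a point of $K_i$ only indices $j\in\{i-1,i,i+1\}$ contribute, each with $\epsilon_j\le m_i$ by construction. Hence $h\le\tfrac12 m_i\le m$ there, which gives \eqref{one:eq:h-choice}. For \eqref{one:eq:weighted-Hessians}, the integrand is nonnegative, since $\Hess f+g\ge0$ gives $\sigma_k\ge0$ and $\Sc_g\ge0$. Splitting $M=\bigcup_iK_i$ and using $h\le\tfrac12\sum_{|j-i|\le1}\epsilon_j$ on $K_i$ bounds the integral by a sum of terms $\tfrac12\epsilon_j\Lambda_iC_i$ with $|i-j|\le1$. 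To control these cross terms I will strengthen the definition to $\epsilon_j\le\min_{|i-j|\le1}2^{-i}/(3C_i\Lambda_i)$. Then the total is at most $\tfrac12\sum_i 3\cdot2^{-i}/3=\tfrac12\cdot 2=1\le2$.

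The main point requiring care is that the bound from Lemma~\ref{one:lem:compact-Hessians} is uniform over all admissible $f$. This is exactly what makes $h$ independent of $f$, and the rest is bookkeeping with neighboring annuli. The only other subtlety is the possible discontinuity of $K$, which is handled by taking minima over compact pieces instead of requiring a continuous envelope.
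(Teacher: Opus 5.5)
Your proposal is correct and follows essentially the paper's route. Like the paper, you use a locally finite relatively compact cover with a subordinate partition of unity, the $f$-independent local bounds from Lemma~\ref{one:lem:compact-Hessians}, and per-piece constants chosen below both half the infimum of the target function and a summable sequence such as $2^{-j}/(1+C_j)$. The paper avoids your neighbor-index bookkeeping by building $(1+\Sc_g)$ into $C_j$ as a bound over $\overline U_j$, so that $\int c_j\phi_j(\cdots)\le c_jC_j$ directly. It also shows that $K$ is locally Lipschitz, so your worry about its continuity is unnecessary, although your monotonicity workaround is valid.
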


\begin{proof}
Closed metric balls are compact. Comparison along a minimizing segment
from $p$ to a point of $\overline B_p(4t+4)$ gives
\begin{align}
&0\le K(t)-K(s)\le4(t-s)\sup_{\overline B_p(4t+4)}|\nabla\Sc_g|\qquad(0\le s\le t). \nonumber
\end{align}
Thus $K\ge1$ is finite, nondecreasing and locally Lipschitz. Moreover,
\begin{align}
&\min_M\mathfrak{b}> -\infty,\qquad 2\mathfrak{b}+c_{\mathfrak{b}}=2+2(\mathfrak{b}-\min_M\mathfrak{b})\ge2. \nonumber
\end{align}
Take a locally finite relatively compact coordinate cover $(U_j)_{j\ge1}$
and a subordinate nonnegative smooth partition $(\phi_j)_{j\ge1}$.
Lemma~\ref{one:lem:compact-Hessians},
\eqref{one:eq:compact-Hessians}, applied on fixed relatively compact
neighborhoods of $\overline U_j$, gives constants $C_j\ge1$, independent
of $f$, such that
\begin{align}
&\int_{\overline U_j}(1+\Sc_g)\sum_{k=0}^{n-2}\sigma_k(\Hess f+g)\,dV\le C_j. \nonumber
\end{align}
Choose constants
\begin{align}
&0<c_j\le\min\left\{\frac{2^{-j}}{1+C_j},\frac12\inf_{\overline U_j}\min\left\{(1+\rho)^{-3},\frac{e^{-(2\mathfrak{b}+c_{\mathfrak{b}})}}{4K(2\mathfrak{b}+c_{\mathfrak{b}})^2}\right\}\right\},\qquad h=\sum_{j=1}^{\infty}c_j\phi_j. \nonumber
\end{align}
The infima are positive by continuity and compactness. Local finiteness
and $\sum_j\phi_j=1$ give $h\in C^\infty(M)$, $h>0$, and
\eqref{one:eq:h-choice}. Nonnegativity and Tonelli's theorem give
\begin{align}
&\int_Mh(1+\Sc_g)\sum_{k=0}^{n-2}\sigma_k(\Hess f+g)\,dV\le\sum_{j=1}^{\infty}c_jC_j\le\sum_{j=1}^{\infty}\frac{2^{-j}C_j}{1+C_j}\le1\le2. \nonumber
\end{align}
All choices precede and are independent of $f$.
\end{proof}

\begin{lemma}\label{one:lem:fixed-F}
Assume $(M^n, g)$ is a complete non-compact Riemannian manifold with $K_g\geq 0$. Let $h$ be a
coefficient supplied by Lemma~\ref{one:lem:coefficient}. Then there
exists an exhaustion $F\in C^\infty(M)$
with
\begin{align}
&0\le F-\mathfrak{b}\le\frac14,\qquad |\nabla F|\le\frac98,\qquad \Hess F>-\frac h{32}g.\qquad \lim_{R\to\infty}\sup_{\rho\ge R}\bigl||\nabla F|-1\bigr|=0, \label{one:eq:fixed-F}\\
&\lim_{R\to\infty}\operatorname*{ess\,sup}_{\rho\ge R} |\nabla F-\nabla\rho|=0. \label{eq:gradient-limits}
\end{align}
With $c=F(p)+1/2$, one also has
\begin{align}
&\rho\langle\nabla F,\nabla\rho\rangle\ge F-c \quad\hbox{almost everywhere}. \label{eq:level-radial-support}
\end{align}
\end{lemma}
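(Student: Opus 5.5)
The plan is to obtain $F$ from the Greene--Wu approximation (Theorem~\ref{ext:thm:Greene-Wu}) applied to a shift of $\mathfrak b$. All remaining properties then follow from one-dimensional Taylor estimates along well-chosen geodesics. These use only the lower Hessian bound $\Hess F>-\frac h{32}g$, the bound $h\le(1+\rho)^{-3}$ from \eqref{one:eq:h-choice}, and the properties of $\mathfrak b$ in Lemma~\ref{lem:geometry}.

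\emph{Construction.} I apply Theorem~\ref{ext:thm:Greene-Wu} on $U=M$ to $u=\mathfrak b+\frac18$, which is convex and $1$-Lipschitz by Lemma~\ref{lem:geometry}. I take $e\equiv\frac18$, $\delta=\frac18$ and $\gamma=h/32$. This gives $F$ with $0\le F-\mathfrak b\le\frac14$, $|\nabla F|\le\frac98$ and $\Hess F>-\frac h{32}g$. Since $\mathfrak b$ is proper, so is $F$, hence $F$ is an exhaustion. The key one-variable fact is this: for a unit-speed geodesic $\sigma$ with $\sigma(0)=x$, the function $f=F\circ\sigma$ satisfies
\begin{align}
&f(s)-f(0)-sf'(0)=\int_0^s(s-u)f''(u)\,du\ge-\frac1{32}\int_0^s(s-u)\,h(\sigma(u))\,du. \nonumber
\end{align}

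\emph{Upper gradient bound.} At $x$ with $\nabla F\ne0$, take $\sigma(s)=\exp_x(s\nabla F/|\nabla F|)$. Using $F\le\mathfrak b+\frac14$, $F\ge\mathfrak b$ and the $1$-Lipschitz property of $\mathfrak b$,
\begin{align}
&s|\nabla F|(x)\le F(\sigma(s))-F(x)+\tfrac{s^2}{64}\sup_{\sigma[0,s]}h\le s+\tfrac14+\tfrac{s^2}{64}\sup_{\sigma[0,s]}h. \nonumber
\end{align}
I choose $s=\sqrt{\rho(x)}$. Points of $\sigma[0,s]$ then have $\rho\ge\rho(x)-s$, so $h\le(1+\rho(x)-s)^{-3}$ there. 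This gives $|\nabla F|(x)\le1+o(1)$ as $\rho(x)\to\infty$.

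\emph{Lower bound along $\nabla\rho$.} At a differentiability point $x$ of $\rho$, let $\sigma$ be the unit-speed minimizing segment from $x$ back to $p$, so that $\sigma'(0)=-\nabla\rho$. Lemma~\ref{lem:geometry} gives $F(\sigma(s))\le\mathfrak b(\sigma(s))+\frac14\le\rho(x)-s+\frac14$ and $F(x)\ge\mathfrak b(x)\ge\rho(x)(1-d_0(\rho(x)))$. Combining these with the Taylor inequality yields
\begin{align}
&s\langle\nabla F,\nabla\rho\rangle(x)\ge s-\rho(x)d_0(\rho(x))-\tfrac14-\tfrac1{32}\int_0^s(s-u)(1+\rho(x)-u)^{-3}\,du. \nonumber
\end{align}
I take $s=\rho(x)\sqrt{d_0(\rho(x))}$, or $s=\sqrt{\rho(x)}$ if that is larger, always keeping $s\le\rho(x)$. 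With this choice $\langle\nabla F,\nabla\rho\rangle\ge1-o(1)$ almost everywhere as $\rho\to\infty$. Combined with the upper bound, this gives $|\nabla F|\to1$ and
\begin{align}
&|\nabla F-\nabla\rho|^2=|\nabla F|^2-2\langle\nabla F,\nabla\rho\rangle+1\to0, \nonumber
\end{align}
which proves \eqref{one:eq:fixed-F} and \eqref{eq:gradient-limits}.

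\emph{Radial support.} I use the same segment with $s=\rho(x)$, so that it ends at $p$. The point at parameter $u$ lies at distance $\rho(x)-u$ from $p$, so the remainder weight satisfies $(\rho(x)-u)h\le(1+\rho(x)-u)^{-2}$. The total error is therefore at most $\frac1{32}\int_0^\infty(1+v)^{-2}dv=\frac1{32}$. Hence $F(p)\ge F(x)-\rho\langle\nabla F,\nabla\rho\rangle-\frac1{32}$, which gives $\rho\langle\nabla F,\nabla\rho\rangle\ge F-c$ almost everywhere with $c=F(p)+\frac12$; this is \eqref{eq:level-radial-support}.

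The main obstacle is that Greene--Wu supplies only a constant $\delta$ and a $C^0$ error. Consequently $|\nabla F|\to1$ cannot be read off from the approximation and must be extracted from semiconvexity. The delicate step is the lower bound: there one must balance the length $s$ against the defect $\rho\,d_0(\rho)$ between $\mathfrak b$ and $\rho$ and against the Hessian error. This balance works only because $d_0\to0$ and $h$ decays like $(1+\rho)^{-3}$ along the whole segment, including the portion near $p$.
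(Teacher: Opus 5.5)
Your proposal is correct and follows essentially the paper's route: the same Greene--Wu construction with the same tolerances, then one-variable Taylor estimates along geodesics using $h\le(1+\rho)^{-3}$, with the radial support obtained, exactly as in the paper, by integrating along the full minimizing segment to $p$. The only simplification worth noting is that the ``balanced'' segment length in your lower bound is unnecessary: the paper reads the lower bound directly off the full-segment inequality, $\langle\nabla F,\nabla\rho\rangle\ge (F-c)/\rho\ge 1-d_0(\rho)-c/\rho$, which holds for every terminal minimizing direction and hence at every point, and which improves on your $\sqrt{d_0}$ rate.
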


\begin{proof}
\textup{(1)} Theorem~\ref{ext:thm:Greene-Wu}, equation~\eqref{eq:smoothing-import}, with error $1/8$, Lipschitz margin $1/8$, and Hessian tolerance $h/32$, gives
\begin{align}
&f\in C^\infty(M),\qquad |f-\mathfrak{b}|<\tfrac18,\qquad |\nabla f|<\tfrac98,\qquad \Hess f>-\tfrac h{32}g, \nonumber\\
&F=f+\tfrac18,\qquad 0<F-\mathfrak{b}<\tfrac14. \nonumber
\end{align}
This is \cite[Proposition~2.3, p.~62]{GW}; no positive lower bound for $h$ at infinity is required.

\textup{(2)} For large $r$, define
\begin{align}
&\delta(r)=\sup_{\rho\ge r}|F/\rho-1|,\qquad \lim_{r\to\infty}\delta(r)=0. \nonumber
\end{align}
If $\rho(x)=r\ge1$, $|\dot\gamma|=1$, and $\gamma(0)=x$, then
\begin{align}
&\rho(\gamma(s))\ge r/2,\quad (F\circ\gamma)''(s)\ge-8r^{-3}\qquad(0\le s\le r/2), \nonumber\\
&F(\gamma(r/2))\ge F(x)+\tfrac r2\langle\nabla F(x),\dot\gamma(0)\rangle-r^{-1},\qquad F(\gamma(r/2))\le\tfrac32r+C_0, \nonumber\\
&|\nabla F(x)|\le1+2\delta(r)+2|C_0|/r+2/r^2. \nonumber
\end{align}
For every unit-speed minimizing segment $\sigma:[0,r]\to M$ from $p$ to $x$,
\begin{align}
&r(F\circ\sigma)'(r)-F(x)+F(p)=\int_0^rs(F\circ\sigma)''(s)\,ds\ge-\int_0^\infty s(1+s)^{-3}\,ds=-\tfrac12, \nonumber\\
&|\nabla F(x)|\ge\langle\nabla F(x),\dot\sigma(r)\rangle\ge\frac{F(x)-c}{r}\ge1-\delta(r)-c/r. \nonumber
\end{align}
Thus \eqref{eq:level-radial-support} holds, including the lower bound for every terminal minimizing direction at cut points. At differentiability points of $\rho$,
\begin{align}
&|\nabla F-\nabla\rho|^2=|\nabla F|^2+1-2\langle\nabla F,\nabla\rho\rangle\le C\delta(r)+C/r, \nonumber\\
&\lim_{R\to\infty}\sup_{\rho\ge R}\bigl||\nabla F|-1\bigr|=0,\qquad \lim_{R\to\infty}\operatorname*{ess\,sup}_{\rho\ge R}|\nabla F-\nabla\rho|=0. \nonumber
\end{align}
Moreover,
\postdisplaypenalty=10000
\begin{align}
&\{F\le t\}\subset\{\mathfrak{b}\le t\}\text{ compact},\qquad \exists R:\ |\nabla F|\ge\tfrac12\text{ on }\{\rho\ge R\}, \nonumber\\
&F(M)\text{ an interval},\quad \sup_MF=\infty\ \Longrightarrow\ \{F=t\}\ne\varnothing\text{ for all sufficiently large }t. \nonumber
\end{align}
\end{proof}

Fix one exhaustion $F$ furnished by Lemma~\ref{one:lem:fixed-F} for the remainder of the paper.

\begin{notation}\label{notation:level-objects}
For $t\in\mathbb R$, set
\begin{align}
&D_t=\{F\le t\},\qquad \Sigma_t=\{F=t\}. \label{tp:eq:sublevels}
\end{align}
At every regular value $t$, write $g_t=g|_{T\Sigma_t}$ and define
\begin{align}
&\nu=\nabla F/|\nabla F|,\qquad S(X,Y)=\langle\nabla_X\nu,Y\rangle=\Hess F(X,Y)/|\nabla F|, \nonumber\\
&H=\tr_{g_t}S,\qquad \sigma_2(S)=(H^2-|S|^2)/2, \qquad X,Y\in T\Sigma_t. \label{eq:level-objects}
\end{align}
Use $dA$ for the induced area and
$\Sc_{\Sigma_t}$ for the scalar curvature of $\Sigma_t$. In an orthonormal eigenframe
$Se_i=\kappa_i e_i$, write $K_{ij}=\sec_g(e_i\wedge e_j)$.
\end{notation} 

\begin{lemma}\label{lem:annular-estimates}
Assume $(M^n, g)$ is a complete non-compact Riemannian manifold with $K_g\geq 0$.
Fix $0<a<b<\infty$. Then there are $s_0,C<\infty$ such that, for every
$s\ge s_0$,
\begin{align}
&\{as\le F\le bs\}\subset\{as/2\le\rho\le2bs\},\qquad \Vol\{as\le F\le bs\}\le Cs^n, \nonumber\\
&\int_{\{as\le F\le bs\}} (|\Hess F|+|\Delta F|+\Delta F+ns^{-2})\,dV\le Cs^{n-1}. \label{eq:nt-trace-bound}
\end{align}
Moreover, for every $t>0$,
\begin{align}
&\lim_{s\to\infty}s^{-n}\Vol\{F<st\}=\omega_n\AVR(M^n,g)t^n. \label{eq:nt-volume}
\end{align}
For every $\psi\in C_c^\infty((0,\infty))$,
\begin{align}
&\lim_{s\to\infty}s^{-n}\int_M\psi(F/s)\,dV =\lim_{s\to\infty}s^{-n}\int_M\psi(F/s)|\nabla F|^2\,dV \nonumber\\
&{}=n\omega_n\AVR(M^n,g)\int_0^\infty\psi(t)t^{n-1}\,dt, \label{eq:nt-volume-tests}\\
&\lim_{s\to\infty}s^{1-n}\int_{\{as\le F\le bs\}} |\Hess F(\nu,\nu)|\,dV=0. \label{eq:nt-normal-small}
\end{align}
\end{lemma}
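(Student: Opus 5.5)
The plan is to derive every assertion from three properties in Lemma~\ref{one:lem:fixed-F}: the comparison $0\le F-\mathfrak b\le\frac14$, $\rho(1-d_0(\rho))\le\mathfrak b\le\rho$ from Lemma~\ref{lem:geometry}, and the Hessian lower bound $\Hess F>-\frac h{32}g$ with $h\le(1+\rho)^{-3}$.

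\textbf{Annular inclusion and volume.} From the comparisons, $F/\rho\to1$ uniformly as $\rho\to\infty$. Hence for $s$ large, $as\le F\le bs$ forces $as/2\le\rho\le2bs$. Bishop--Gromov then gives $\Vol\{as\le F\le bs\}\le\omega_n(2bs)^n$.

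\textbf{Hessian bound.} Set $G=\Hess F+\frac h{32}g\ge0$, so that $|\Hess F|\le\tr G+\sqrt n\,h$. It therefore suffices to bound $\int\phi\,\Delta F$ over the annulus, with $\phi=\psi(F/s)$ and $\psi$ a cutoff equal to $1$ on $[a,b]$ and supported in $[a/2,2b]$. Integrating by parts,
\begin{align}
&\int\phi\,\Delta F\,dV=-s^{-1}\int\psi'(F/s)|\nabla F|^2\,dV\le Cs^{-1}\Vol\{as/2\le F\le 2bs\}\le Cs^{n-1}. \nonumber
\end{align}
The remaining terms are easy. The term $h\le(1+\rho)^{-3}$ contributes $O(s^{n-3})$, $ns^{-2}\Vol$ is $O(s^{n-2})$, and the bound on $|\Delta F|$ follows from $|\Delta F|\le\tr G+nh$. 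This establishes \eqref{eq:nt-trace-bound}.

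\textbf{Volume asymptotics.} Since $\{F<st\}$ is squeezed between the balls $B_p(st(1-\varepsilon))$ and $B_p(st(1+\varepsilon))$ for $s$ large, the definition of $\AVR$ gives \eqref{eq:nt-volume}. For the first equality in \eqref{eq:nt-volume-tests}, write $\int\psi(F/s)\,dV=-\int_0^\infty s^{-1}\psi'(\tau/s)\Vol\{F<\tau\}\,d\tau$ via the layer-cake formula. Rescaling and applying dominated convergence with \eqref{eq:nt-volume} yields $-\omega_n\AVR\int\psi'(t)t^n\,dt=n\omega_n\AVR\int\psi\,t^{n-1}\,dt$. The weighted version with $|\nabla F|^2$ follows because $|\nabla F|\to1$ uniformly on $\{\rho\ge R\}$ and the support of $\psi(F/s)$ lies where $\rho\gtrsim s$.

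\textbf{Smallness of the normal Hessian.} This step, \eqref{eq:nt-normal-small}, is the main obstacle. Since $\Hess F(\nu,\nu)\ge-h/32$, it suffices to show that $s^{1-n}\int\phi\,\Hess F(\nu,\nu)\,dV\to0$. I use
\begin{align}
&\Hess F(\nu,\nu)=\frac{\langle\nabla|\nabla F|^2,\nabla F\rangle}{2|\nabla F|^2}. \nonumber
\end{align}
Integrating by parts moves the derivative off $|\nabla F|^2-1$. Writing $w=|\nabla F|^2-1$, we get
\begin{align}
&\int\phi\,|\nabla F|^{-2}\langle\nabla w,\nabla F\rangle\,dV=-\int w\,\diver\bigl(\phi|\nabla F|^{-2}\nabla F\bigr)\,dV. \nonumber
\end{align}
The divergence is bounded by $C(|\Hess F|+|\Delta F|+s^{-1})$ on the support, which is where $|\nabla F|\ge\frac12$. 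By \eqref{eq:nt-trace-bound} its integral is $O(s^{n-1})$. Since $\sup|w|\to0$ on the support, the total is $o(s^{n-1})$.

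Applying this with $\phi$ majorizing the annulus, together with the lower bound $-h/32$ to control the negative part, gives \eqref{eq:nt-normal-small}. The one delicate point is that the positive part must be controlled through a signed integral. This is handled because the identity above makes the signed integral small while the negative part is $O(s^{n-3})$.
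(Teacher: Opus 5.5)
Your proposal is correct and follows the paper's proof essentially step for step: the same $F/\rho\to1$ inclusions combined with Bishop--Gromov and the definition of $\AVR$, the same layer-cake formula for \eqref{eq:nt-volume-tests}, the same cutoff integration by parts of $\Delta F$ using nonnegativity of $\Hess F$ plus a small multiple of $g$, and the same device of writing $\Hess F(\nu,\nu)$ as a derivative along $\nabla F$, integrating by parts, and controlling the negative part by the Hessian lower bound. The only cosmetic difference is in that last step: the paper uses $\Hess F(\nu,\nu)=\langle\nabla F,\nabla\log|\nabla F|\rangle$, which avoids the extra $|\nabla F|^{-2}$ weight, whereas you use $w=|\nabla F|^2-1$ and absorb the resulting extra divergence term with the already-established bound on $\int|\Hess F|$.
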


\begin{proof}
Lemma~\ref{lem:geometry}, equation~\eqref{eq:convex-approximation}, and Lemma~\ref{one:lem:fixed-F}, equation~\eqref{one:eq:fixed-F}, give
\begin{align}
&\lim_{R\to\infty}\sup_{\rho\ge R}|F/\rho-1|=0,\qquad \{as\le F\le bs\}\subset\{as/2\le\rho\le2bs\}\quad(s\gg1). \nonumber
\end{align}
For $t>0$ and $0<\varepsilon<1$, the corresponding ball inclusions and Bishop--Gromov volume comparison \cite{CE}, imply
\begin{align}
&\omega_n\AVR(M^n,g)\left(\frac{t}{1+\varepsilon}\right)^n\le\liminf_{s\to\infty}s^{-n}\Vol\{F<st\} \nonumber\\
&{}\le\limsup_{s\to\infty}s^{-n}\Vol\{F<st\}\le\omega_n\AVR(M^n,g)\left(\frac{t}{1-\varepsilon}\right)^n, \nonumber\\
&\lim_{s\to\infty}s^{-n}\Vol\{F<st\}=\omega_n\AVR(M^n,g)t^n,\qquad \Vol\{as\le F\le bs\}\le Cs^n. \nonumber
\end{align}
For $\psi\in C_c^\infty((0,\infty))$, integration by layers and dominated convergence on $\supp\psi'$ give
\begin{align}
&s^{-n}\int_M\psi(F/s)\,dV=-\int_0^\infty\psi'(t)s^{-n}\Vol\{F<st\}\,dt, \nonumber\\
&\lim_{s\to\infty}s^{-n}\int_M\psi(F/s)\,dV=-\omega_n\AVR(M^n,g)\int_0^\infty\psi'(t)t^n\,dt \nonumber\\
&{}=n\omega_n\AVR(M^n,g)\int_0^\infty\psi(t)t^{n-1}\,dt, \nonumber\\
&s^{-n}\left|\int_M\psi(F/s)(|\nabla F|^2-1)\,dV\right|\le C_\psi\sup_{\{F/s\in\supp\psi\}}||\nabla F|^2-1|\longrightarrow0\quad(s\to\infty). \nonumber
\end{align}
The last limit is Lemma~\ref{one:lem:fixed-F}, equation~\eqref{eq:gradient-limits}.

Choose $\zeta\in C_c^\infty((a/2,2b))$ with $0\le\zeta\le1$ and $\zeta=1$ near $[a,b]$. Lemmas~\ref{one:lem:fixed-F} and~\ref{one:lem:coefficient}, equations~\eqref{one:eq:fixed-F} and~\eqref{one:eq:h-choice}, give on $\supp\zeta(F/s)$, for $s\gg1$,
\begin{align}
&\Hess F+s^{-2}g\ge0,\qquad |\Hess F|\le\Delta F+(n+\sqrt n)s^{-2},\qquad |\Delta F|\le\Delta F+2ns^{-2}, \nonumber\\
&0\le\int_M\zeta(F/s)(\Delta F+ns^{-2})\,dV=-s^{-1}\int_M\zeta'(F/s)|\nabla F|^2\,dV+ns^{-2}\int_M\zeta(F/s)\,dV\le Cs^{n-1}. \nonumber
\end{align}
These imply \eqref{eq:nt-trace-bound}.
Finally, for $s\gg1$,
\begin{align}
&|\nabla F|\ge\tfrac12,\qquad \Hess F(\nu,\nu)=\langle\nabla F,\nabla\log |\nabla F|\rangle,\qquad |\Hess F(\nu,\nu)|\le\Hess F(\nu,\nu)+2s^{-2}, \nonumber\\
&\int_M\zeta(F/s)\Hess F(\nu,\nu)\,dV=-\int_M\zeta(F/s)\log |\nabla F|\,\Delta F\,dV-s^{-1}\int_M\zeta'(F/s)|\nabla F|^2\log |\nabla F|\,dV, \nonumber\\
&s^{1-n}\int_{\{as\le F\le bs\}}|\Hess F(\nu,\nu)|\,dV\le C\sup_{\supp\zeta(F/s)}|\log |\nabla F||+C/s\longrightarrow0\quad(s\to\infty). \nonumber
\end{align}
Here equations~\eqref{eq:nt-volume} and~\eqref{eq:nt-trace-bound} are applied on a fixed larger interval containing $\supp\zeta$.
\end{proof}

\begin{lemma}\label{tp:lem:level-Euler}
Let $M^n$ be a connected noncompact smooth manifold without boundary with $K\geq 0$. Suppose $t_0>F(p)$ and $\delta>0$
satisfy $|\nabla F|>0$ on $\{F>t_0-\delta\}$. Then, for every
$t\ge t_0$, 
\begin{align}
&\chi(D_t)=\chi(M),\qquad \chi(\Sigma_t)=2\chi(M)\quad\hbox{if $n$ is odd}. \label{tp:eq:level-topology}
\end{align}
If $n=3$, then
\begin{align}
&\chi(D_t)=\chi(M),\qquad \chi(\Sigma_t)=2\chi(M),\qquad \int_{\Sigma_t}\Sc_{\Sigma_t}\,dA=8\pi\chi(M). \nonumber
\end{align}
\end{lemma}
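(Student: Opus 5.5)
Since $|\nabla F|>0$ on $\{F>t_0-\delta\}$, the gradient flow of $F$ gives the topology; the curvature identity for $n=3$ then comes from Gauss--Bonnet.

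\emph{Step 1: $\chi(D_t)=\chi(M)$.} Every $t>t_0-\delta$ is a regular value, and every sublevel set $D_t$ is compact because $\{F\le t\}\subset\{\mathfrak b\le t\}$ and $\mathfrak b$ is proper (Lemma~\ref{lem:geometry}, Lemma~\ref{one:lem:fixed-F}). On the set $\{F>t_0-\delta\}$ I flow along the vector field $\nabla F/|\nabla F|^2$. Completeness of this flow on each band $\{t_1\le F\le t_2\}$ follows from compactness of the band. This gives a diffeomorphism $\{t\le F\le t'\}\cong\Sigma_t\times[t,t']$ for $t_0-\delta<t\le t'$. Consequently $D_{t'}$ deformation retracts onto $D_t$. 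Since the $D_{t'}$ exhaust $M$, $M$ deformation retracts onto $D_t$: concatenate the flows on $[t,t+1],[t+1,t+2],\dots$ with rescaled time, or equivalently write $M\setminus \operatorname{int}D_t\cong\Sigma_t\times[t,\infty)$. Hence $D_t\hookrightarrow M$ is a homotopy equivalence, and $\chi(D_t)=\chi(M)$ with $\mathbb F_2$ coefficients as in \eqref{eq:Euler-definition}; both sides are finite because $D_t$ is a compact manifold with boundary. Alternatively, one can argue via the soul theorem that $M$ has finite homotopy type, but the retraction argument above is already sufficient. The step requiring the most care is this one: making the infinite concatenation of flows into a genuine homotopy equivalence, where properness of $F$ ensures the flow lines exist for all levels.

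\emph{Step 2: $\chi(\Sigma_t)=2\chi(M)$ for $n$ odd.} I form the double $DD_t=D_t\cup_{\Sigma_t}D_t$, a closed $n$-manifold. Mayer--Vietoris with $\mathbb F_2$ coefficients gives
\[
\chi(DD_t)=2\chi(D_t)-\chi(\Sigma_t).
\]
Since $n$ is odd, $\chi(DD_t)=0$ by Poincaré duality over $\mathbb F_2$, which requires no orientability. Therefore $\chi(\Sigma_t)=2\chi(D_t)=2\chi(M)$.

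\emph{Step 3: the case $n=3$.} Here $\Sigma_t$ is a closed surface, possibly disconnected and possibly nonorientable. Its scalar curvature is $\Sc_{\Sigma_t}=2K_{\Sigma_t}$. Applying Gauss--Bonnet componentwise gives
\[
\int_{\Sigma_t}K\,dA=2\pi\chi(\Sigma_t),
\]
which is valid for nonorientable surfaces as well, for instance by passing to the orientation double cover. The Euler characteristic here agrees with the $\mathbb F_2$ one. It follows that
\[
\int_{\Sigma_t}\Sc_{\Sigma_t}\,dA=4\pi\chi(\Sigma_t)=8\pi\chi(M).
\]
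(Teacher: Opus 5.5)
Your proposal is correct and follows essentially the same route as the paper. In both, the flow of $\nabla F/|\nabla F|^2$ deformation retracts $M$ onto $D_t$; the paper writes this as the single explicit homotopy $\mathcal H_\tau(x)=\Phi_{-\tau(F(x)-t)}(x)$ on $\{F>t\}$, so no infinite concatenation of flows is needed. Then doubling $D_t$ with $\mathbb F_2$ Poincar\'e duality gives $\chi(\Sigma_t)=2\chi(M)$ for odd $n$, and Gauss--Bonnet handles $n=3$.
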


\begin{proof}
Compactness of sublevels and connectedness of $M$ imply
\begin{align}
&\sup_MF=\infty,\quad F(M)\text{ an interval},\quad F(p)<t_0\ \Longrightarrow\ \Sigma_t\ne\varnothing\qquad(t\ge t_0), \nonumber\\
&|\nabla F|>0\text{ on }\Sigma_t\ \Longrightarrow\ D_t\text{ compact and smooth},\qquad \partial D_t=\Sigma_t. \nonumber
\end{align}
Let $\Phi_\sigma$ be the flow of $\nabla F/|\nabla F|^2$ on $\{F>t_0-\delta\}$. Then
\begin{align}
&\frac{d}{d\sigma}F(\Phi_\sigma(x))=1,\qquad F(\Phi_\sigma(x))=F(x)+\sigma. \nonumber
\end{align}
Every finite interval of these values lies in a compact band where the vector field is smooth; hence the flow exists throughout that interval. Define
\begin{align}
&\mathcal H_\tau(x)=\begin{cases}x,&F(x)\le t,\\\Phi_{-\tau(F(x)-t)}(x),&F(x)>t,\end{cases}\qquad 0\le\tau\le1, \nonumber\\
&\mathcal H\in C^0([0,1]\times M,M),\qquad \mathcal H_0=\Id_M,\qquad \mathcal H_\tau|_{D_t}=\Id_{D_t},\qquad \mathcal H_1(M)=D_t, \nonumber\\
&\chi(D_t)=\chi(M). \nonumber
\end{align}
Continuity at $F=t$ follows from $\Phi_0(x)=x$ and smooth dependence of the flow. For odd $n$, the smooth double $\widehat D_t=D_t\cup_{\Sigma_t}D_t$, Poincar\'e duality, and additivity of Euler characteristic \cite{Hatcher} give
\begin{align}
&0=\chi(\widehat D_t)=2\chi(D_t)-\chi(\Sigma_t),\qquad \chi(\Sigma_t)=2\chi(M). \nonumber
\end{align}
For $n=3$, Gauss--Bonnet \cite[Section~4-5]{doCarmo} gives
\begin{align}
&\int_{\Sigma_t}\Sc_{\Sigma_t}\,dA=2\int_{\Sigma_t}K_{\Sigma_t}\,dA=4\pi\chi(\Sigma_t)=8\pi\chi(M). \nonumber
\end{align}
\end{proof}

\begin{corollary}\label{one:cor:gradient-u}
\begin{align}
&\lim_{R\to\infty}\operatorname*{ess\,sup}_{\rho\ge R} |\nabla F-\nabla \mathfrak{b}|=0. \label{one:eq:gradient-u}
\end{align}
\end{corollary}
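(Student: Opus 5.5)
The plan is to deduce the corollary from the two gradient facts already established: Lemma~\ref{one:lem:fixed-F}, equation~\eqref{eq:gradient-limits}, which says $\nabla F$ approaches $\nabla\rho$ at infinity, and Lemma~\ref{lem:geometry}, which gives $|d\mathfrak b|=1$ at differentiability points and $\rho(1-d_0(\rho))\le\mathfrak b\le\rho$. It therefore suffices to show that $\operatorname*{ess\,sup}_{\rho\ge R}|\nabla\mathfrak b-\nabla\rho|\to0$. A more direct route is to compare $\nabla F$ with $\nabla\mathfrak b$ itself, mimicking the argument of Lemma~\ref{one:lem:fixed-F}, and that is the route I would take.

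Let $x$ be a point with $\rho(x)=r$ large at which both $\mathfrak b$ and $F$ are differentiable. Such points have full measure, since $\mathfrak b$ is Lipschitz and Rademacher's theorem applies. By the proof of Lemma~\ref{lem:geometry} there is a unit-speed ray $\sigma$ from $x$ with $\mathfrak b(\sigma(s))=\mathfrak b(x)+s$ for $s\ge0$, so $d\mathfrak b_x(\dot\sigma(0))=1$. Combined with $|d\mathfrak b_x|=1$, this forces $\nabla\mathfrak b(x)=\dot\sigma(0)$.

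Next I would apply the convexity estimate along $\sigma$ on the interval $[0,r/2]$. There $\rho(\sigma(s))\ge r/2$, so $\Hess F\ge -\tfrac{h}{32}g\ge -C r^{-3}g$. This gives
\[
F(\sigma(r/2))\ge F(x)+\tfrac r2\langle\nabla F(x),\dot\sigma(0)\rangle - C r^{-1}.
\]
On the other hand, $0\le F-\mathfrak b\le 1/4$ yields
\[
F(\sigma(r/2))\le \mathfrak b(x)+\tfrac r2+\tfrac14
\qquad\text{and}\qquad
F(x)\ge\mathfrak b(x).
\]
Putting these together gives $\langle\nabla F(x),\nabla\mathfrak b(x)\rangle\le 1+O(r^{-1})$. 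That is only an upper bound, so a lower bound is still needed.

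For the lower bound I would run the same estimate backwards, using points where $\mathfrak b$ decreases. Since $\mathfrak b$ is convex, $\mathfrak b(\exp_x(-s\nabla\mathfrak b(x)))\ge\mathfrak b(x)-s$, which is the wrong direction. Instead, along the geodesic $\gamma(s)=\exp_x(-s\nabla\mathfrak b(x))$ for $s\in[0,r/2]$, convexity of $F$ up to $O(r^{-3})$ gives
\[
F(\gamma(s))\ge F(x)-s\langle\nabla F(x),\nabla\mathfrak b(x)\rangle-Cr^{-1}.
\]
The other ingredient is the upper bound $\mathfrak b(\gamma(s))\le \mathfrak b(x)-s+o(r)$. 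This is where Lemma~\ref{lem:geometry}'s asymptotics enter: $\mathfrak b\approx\rho$ with relative error $d_0(\rho)\to0$, while $\rho(\gamma(r/2))\le \rho(x)-r/2+o(r)$ because the ray direction $\nabla\mathfrak b(x)$ is asymptotically $\nabla\rho(x)$. Taking $s=r/2$ then gives
\[
\langle\nabla F(x),\nabla\mathfrak b(x)\rangle\ge 1-o(1).
\]
With $|\nabla F|\to1$ and $|\nabla\mathfrak b|=1$, we get
\[
|\nabla F-\nabla\mathfrak b|^2=|\nabla F|^2+1-2\langle\nabla F,\nabla\mathfrak b\rangle\to0,
\]
uniformly on $\{\rho\ge R\}$ up to a null set.

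The main obstacle is this lower bound, specifically justifying that the backward geodesic lowers $\mathfrak b$ at nearly unit rate. A cleaner alternative is to use the inequality $\rho\langle\nabla F,\nabla\rho\rangle\ge F-c$ from \eqref{eq:level-radial-support} directly. The analogous argument along a minimizing segment from $p$ gives $\langle\nabla F,\nabla\rho\rangle\to1$, and then $|\nabla\mathfrak b-\nabla\rho|\to0$ reduces to the condition $\langle\nabla\mathfrak b,\nabla\rho\rangle\to1$. I would try to obtain that from $\mathfrak b(x)\ge\rho(x)(1-d_0(\rho(x)))$ together with $\mathfrak b(p)=0$ and convexity of $\mathfrak b$ along the minimizing segment from $p$ to $x$. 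Indeed,
\[
\langle\nabla\mathfrak b(x),\nabla\rho(x)\rangle\ge \frac{\mathfrak b(x)-\mathfrak b(p)}{r}\ge 1-d_0(r),
\]
and this gives exactly the needed conclusion.
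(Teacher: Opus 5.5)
Your last paragraph is a complete proof, and it is exactly the paper's argument. At a common differentiability point $x$ of $\mathfrak b$ and $\rho$ (a full-measure set, by Rademacher), let $\sigma:[0,r]\to M$ be the minimizing segment from $p$, so $\dot\sigma(r)=\nabla\rho(x)$. The chord-slope inequality for the convex function $\mathfrak b\circ\sigma$, together with \eqref{eq:convex-approximation}, gives $\langle\nabla\mathfrak b,\nabla\rho\rangle(x)\ge \mathfrak b(x)/r\ge 1-d_0(r)$. Then \eqref{six:eq:unit-gradient} gives $|\nabla\mathfrak b-\nabla\rho|^2\le 2d_0(\rho)$. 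Finally, $|\nabla F-\nabla\mathfrak b|\le|\nabla F-\nabla\rho|+\sqrt{2d_0(\rho)}$ together with \eqref{eq:gradient-limits} and $d_0\to0$ finishes. You do not need to re-derive $\langle\nabla F,\nabla\rho\rangle\to1$ from \eqref{eq:level-radial-support}, since \eqref{eq:gradient-limits} already states the required convergence.

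The route you announce as the one you would take does not stand on its own and should be dropped.

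\begin{itemize}
\item \textbf{Forward estimate.} Along the ray it gives only $\langle\nabla F,\nabla\mathfrak b\rangle\le 1+O(r^{-1})$. This is irrelevant, because the identity $|\nabla F-\nabla\mathfrak b|^2=|\nabla F|^2+1-2\langle\nabla F,\nabla\mathfrak b\rangle$ needs a \emph{lower} bound on the inner product.
\item \textbf{Backward estimate.} Along $\gamma(s)=\exp_x(-s\nabla\mathfrak b(x))$, convexity yields only $\mathfrak b(\gamma(s))\ge\mathfrak b(x)-s$. Your justification of the reverse bound $\rho(\gamma(r/2))\le\rho(x)-r/2+o(r)$ assumes that $\nabla\mathfrak b(x)$ is asymptotically $\nabla\rho(x)$.
\end{itemize}

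That assumption is precisely $|\nabla\mathfrak b-\nabla\rho|\to0$, which combined with \eqref{eq:gradient-limits} and the triangle inequality already \emph{is} the corollary. So that step is circular. Once the alignment is available, the backward-geodesic comparison is superfluous.
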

\begin{proof}
At every common differentiability point $x$ of $\mathfrak{b}$ and $\rho$, convexity along a minimizing segment from $p$ yields
\begin{align}
&\langle\nabla \mathfrak{b},\nabla\rho\rangle(x)\ge\frac{\mathfrak{b}(x)-\mathfrak{b}(p)}{\rho(x)}\ge1-d_0(\rho(x)),\qquad |\nabla \mathfrak{b}|=|\nabla\rho|=1, \nonumber\\
&|\nabla \mathfrak{b}-\nabla\rho|^2\le2d_0(\rho),\qquad |\nabla F-\nabla \mathfrak{b}|\le|\nabla F-\nabla\rho|+\sqrt{2d_0(\rho)}, \nonumber\\
&\lim_{R\to\infty}\operatorname*{ess\,sup}_{\rho\ge R}|\nabla F-\nabla \mathfrak{b}|=0. \nonumber
\end{align}
\end{proof}

\section{The integral on level sets}\label{sec:intrinsic-limit}

\begin{notation}\label{notation:eta}
For $t>-1$, set
\begin{align}
&\eta(t)=\frac{16}{(1+t)^3}. \nonumber
\end{align}
\end{notation}

\begin{lemma}\label{lemma:divergence-nonnegative-combination}
Let $n\ge3$, let $(M^n,g)$ be a Riemannian manifold with $\sec_g\ge0$, let $t>-1$ be a regular value of $F$, and define
\begin{align}
&\mathcal E_t:=\Sc_{\Sigma_t}g_{\Sigma_t}-2\Ric_{\Sigma_t}+2(n-3)\eta\Bigl[\bigl(H+(n-2)\eta\bigr)g_{\Sigma_t}-S\Bigr]. \nonumber
\end{align}
Then
\begin{align}
&\diver_{\Sigma_t}\mathcal E_t=-2(n-3)\eta\,\Ric_g(\nu,\cdot)^T=-\frac{32(n-3)}{(1+t)^3}\Ric_g(\nu,\cdot)^T, \label{eq:level-divergence-identity}\\
&\left|\diver_{\Sigma_t}\mathcal E_t\right|\le \frac{32(n-3)}{(1+t)^3}\Sc_g. \label{eq:level-divergence-bound}
\end{align}
\end{lemma}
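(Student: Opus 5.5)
The plan is a direct tensor computation on the fixed regular level $\Sigma_t$, combining the contracted Bianchi identity for the induced metric with the contracted Codazzi equation. Since $t$ is fixed, the coefficient $\eta=\eta(t)$ is constant along $\Sigma_t$. The divergence therefore splits into two pieces:
\begin{align}
&\diver_{\Sigma_t}\mathcal E_t=\diver_{\Sigma_t}\bigl(\Sc_{\Sigma_t}g_{\Sigma_t}-2\Ric_{\Sigma_t}\bigr)+2(n-3)\eta\,\bigl(dH-\diver_{\Sigma_t}S\bigr). \nonumber
\end{align}
Here we use $\diver_{\Sigma_t}(c\,g_{\Sigma_t})=dc$, which vanishes for the constant $c=(n-2)\eta$ and equals $dH$ for $c=H$. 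The contracted second Bianchi identity on $(\Sigma_t,g_t)$ gives $\diver(\Ric_{\Sigma_t})=\tfrac12 d\Sc_{\Sigma_t}$, so the first piece vanishes identically.

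For the second piece, I would use the Codazzi equation for $S(X,Y)=\langle\nabla_X\nu,Y\rangle$ with the paper's convention $\mathcal R_{ijkl}=\langle\Rm(e_i,e_j)e_l,e_k\rangle$:
\begin{align}
&(\nabla_XS)(Y,Z)-(\nabla_YS)(X,Z)=\langle\Rm(X,Y)\nu,Z\rangle. \nonumber
\end{align}
Tracing over $X,Z$ in a tangential orthonormal frame yields
\begin{align}
&\diver_{\Sigma_t}S-dH=\Ric_g(\nu,\cdot)^T, \nonumber
\end{align}
because the normal term $\langle\Rm(\nu,Y)\nu,\nu\rangle$ vanishes. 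Substituting $dH-\diver_{\Sigma_t}S=-\Ric_g(\nu,\cdot)^T$ gives \eqref{eq:level-divergence-identity}, with $2\eta=32/(1+t)^3$.

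The only delicate point is the sign bookkeeping in this contracted Codazzi identity. I would check it against the round sphere in $\R^n$ (both sides vanish), and then against a direct frame computation using the paper's curvature convention, so that the sign comes out as $-\Ric_g(\nu,\cdot)^T$.

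For \eqref{eq:level-divergence-bound}, note that $\sec_g\ge0$ makes $\Ric_g$ positive semidefinite. For any unit tangent $X$, Cauchy--Schwarz for this semidefinite form gives
\begin{align}
&|\Ric_g(\nu,X)|\le\sqrt{\Ric_g(\nu,\nu)\Ric_g(X,X)}\le|\Ric_g|_{\mathrm{op}}\le\tr\Ric_g=\Sc_g. \nonumber
\end{align}
Hence $|\Ric_g(\nu,\cdot)^T|\le\Sc_g$. Multiplying by the nonnegative constant $32(n-3)/(1+t)^3$ (recall $n\ge3$ and $t>-1$) gives the bound.
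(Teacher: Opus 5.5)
Your proposal is correct and follows essentially the same route as the paper. Both arguments use the constancy of $\eta$ on $\Sigma_t$ and the contracted Bianchi identity to kill $\diver_{\Sigma_t}(\Sc_{\Sigma_t}g_{\Sigma_t}-2\Ric_{\Sigma_t})$, then the contracted Codazzi equation $\diver_{\Sigma_t}S-dH=\Ric_g(\nu,\cdot)^T$, and finally the bound $\|\Ric_g^\sharp\|_{\mathrm{op}}\le\Sc_g$ coming from $\Ric_g\ge0$; your Cauchy--Schwarz form of that last step is equivalent to the paper's $|\Ric_g^\sharp\nu|\le\|\Ric_g^\sharp\|_{\mathrm{op}}$.
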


\begin{proof}
Since $\eta=\eta(t)$ is constant on $\Sigma_t$, the contracted Bianchi identity gives
\begin{align}
&\diver_{\Sigma_t}\Ric_{\Sigma_t}=\frac12d_{\Sigma_t}\Sc_{\Sigma_t},\qquad \diver_{\Sigma_t}\bigl(\Sc_{\Sigma_t}g_{\Sigma_t}-2\Ric_{\Sigma_t}\bigr)=0, \nonumber\\
&\diver_{\Sigma_t}\Bigl[\bigl(H+(n-2)\eta\bigr)g_{\Sigma_t}-S\Bigr]=d_{\Sigma_t}H-\diver_{\Sigma_t}S. \nonumber
\end{align}
Fix $x\in\Sigma_t$ and choose an orthonormal tangent frame $e_1,\ldots,e_{n-1}$ satisfying $\nabla^{\Sigma_t}e_i|_x=0$. With the convention $S(X,Y)=\langle\nabla_X\nu,Y\rangle$, the Codazzi equation gives
\begin{align}
&(\nabla^{\Sigma_t}_{e_i}S)(e_j,e_k)-(\nabla^{\Sigma_t}_{e_j}S)(e_i,e_k)=\langle\Rm_g(e_i,e_j)\nu,e_k\rangle. \nonumber
\end{align}
Contracting the indices $i$ and $k$ yields
\begin{align}
&(\diver_{\Sigma_t}S)(e_j)-d_{\Sigma_t}H(e_j)=\sum_{i=1}^{n-1}\langle\Rm_g(e_i,e_j)\nu,e_i\rangle. \nonumber
\end{align}
By the first Bianchi identity and the curvature symmetries,
\begin{align}
&\sum_{i=1}^{n-1}\langle\Rm_g(e_i,e_j)\nu,e_i\rangle=\Ric_g(\nu,e_j), \nonumber
\end{align}
and hence
\begin{align}
&\diver_{\Sigma_t}S-d_{\Sigma_t}H=\Ric_g(\nu,\cdot)^T. \nonumber
\end{align}
Substitution into the definition of $\mathcal E_t$ gives
\begin{align}
&\diver_{\Sigma_t}\mathcal E_t=-2(n-3)\eta\,\Ric_g(\nu,\cdot)^T,
\end{align}
which is \eqref{eq:level-divergence-identity} because $\eta=16(1+t)^{-3}$.

Finally, $\sec_g\ge0$ implies that $\Ric_g^\sharp$ is nonnegative definite. If $\lambda_1,\ldots,\lambda_n$ are its eigenvalues, then
\begin{align}
&0\le\lambda_i\le\sum_{j=1}^n\lambda_j=\Sc_g,\qquad \|\Ric_g^\sharp\|_{\mathrm{op}}\le\Sc_g. \nonumber
\end{align}
Therefore
\begin{align}
&|\Ric_g(\nu,\cdot)^T|\le|\Ric_g^\sharp\nu|\le\|\Ric_g^\sharp\|_{\mathrm{op}}\le\Sc_g. \nonumber
\end{align}
Combining this with \eqref{eq:level-divergence-identity} proves \eqref{eq:level-divergence-bound}.
\end{proof}

\begin{lemma}\label{lem-diff-ineq-of-J}
Assume $(M^n, g)$ is a complete non-compact Riemannian manifold with $K_g\geq 0$. There exist $T,C<\infty$, such that for every $t\ge T$, use $\Sigma_t$ from
Notation~\ref{notation:level-objects} and define
\begin{align}
&J(t)=\int_{\Sigma_t}\Sc_{\Sigma_t}\,dA. \nonumber
\end{align}
Then
\begin{align}
&\left(t-F(p)-\frac12\right)J'(t)\le (n-3) \left(1+\frac{32t}{(1+t)^3}\right)J(t) \nonumber\\
&{}+ C\frac{16}{(1+t)^3} \left(1+\frac{16t}{(1+t)^3}\right) \int_{\Sigma_t} \left( \frac{\tr_{T\Sigma_t}\nabla^2F}{|\nabla F|} +\frac{16(n-1)}{(1+t)^3} \right)dA \nonumber\\
&{}+ C\frac{16t}{(1+t)^3} \int_{\Sigma_t}\Sc_g\,dA. \label{eq:level-linear-differential-inequality}
\end{align}
\end{lemma}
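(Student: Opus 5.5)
The plan is to differentiate $J(t)$ along the normalized gradient flow $X=\nabla F/|\nabla F|^2$ and then trade the resulting first-variation terms against the divergence-free combination $\mathcal E_t$ of Lemma~\ref{lemma:divergence-nonnegative-combination}. For $t\ge T$ every level is regular by Lemma~\ref{one:lem:fixed-F}, and the flow of $X$ maps $\Sigma_t$ to $\Sigma_{t+s}$. The induced metrics therefore vary with $k=\partial_tg_t=2S/|\nabla F|$.

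Lemma~\ref{var:lem:metric-variation} then gives
\begin{align}
J'(t)=\int_{\Sigma_t}\Bigl(-\bigl\langle k,\Ric_{\Sigma_t}\bigr\rangle+\tfrac12\Sc_{\Sigma_t}\tr k\Bigr)dA
=\int_{\Sigma_t}\frac{1}{|\nabla F|}\bigl\langle \Sc_{\Sigma_t}g_t-2\Ric_{\Sigma_t},S\bigr\rangle\,dA, \nonumber
\end{align}
because the divergence terms integrate to zero on the closed level. We write $\Sc_{\Sigma_t}g_t-2\Ric_{\Sigma_t}=\mathcal E_t-2(n-3)\eta[(H+(n-2)\eta)g_t-S]$. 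The key point is that $\mathcal E_t\ge0$ as a tensor once the level satisfies $S\ge-\eta/2$ roughly. This holds because $\Hess F>-\tfrac h{32}g$, $h\le(1+\rho)^{-3}$, $F\sim\rho$, and $|\nabla F|\ge1/2$ at large $t$. Indeed, the Gauss equation gives $\sec_{\Sigma_t}(e_i,e_j)=K_{ij}+\kappa_i\kappa_j$, and the negative parts of the products $\kappa_i\kappa_j$ are absorbed by the $2(n-3)\eta(H+(n-2)\eta)$ term. This is the reason for that correction.

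We now bound $\langle\mathcal E_t,S\rangle$ from above. Write $S=(S+\eta g_t)-\eta g_t$ and use $S+\eta g_t\ge0$ together with $\mathcal E_t\ge0$. This gives $\langle\mathcal E_t,S\rangle\le \tr\mathcal E_t\,|S+\eta g_t|_{\mathrm{op}}$, where the trace equals $(n-3)\Sc_{\Sigma_t}$ plus $\eta$-corrections. To obtain the factor $1/(t-F(p)-\tfrac12)$ instead of the operator norm of $S$, the next step is to test against the radial support inequality~\eqref{eq:level-radial-support}.

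For the radial test, we take $\rho^2/2$ and apply Theorem~\ref{ext:thm:comparison} in the weak sense, restricted to $\Sigma_t$. This gives $\Hess(\rho^2/2)\le g$, and, paired with the nonnegative tensor $\mathcal E_t$, $\int_{\Sigma_t}\langle\mathcal E_t,\Hess_{\Sigma_t}(\rho^2/2)\rangle\le\int\tr\mathcal E_t$. Integrating by parts on $\Sigma_t$ moves the Hessian onto $\diver\mathcal E_t$, which is controlled by~\eqref{eq:level-divergence-bound}; this produces the $\Sc_g$ error term multiplied by $\eta t\sim\rho$. The ambient-to-intrinsic Hessian conversion contributes $\rho\langle\nabla\rho,\nu\rangle\langle\mathcal E_t,S\rangle$. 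By~\eqref{eq:level-radial-support}, $\rho\langle\nabla\rho,\nu\rangle|\nabla F|\ge F-c=t-F(p)-\tfrac12$ almost everywhere, which yields the left-hand side $(t-F(p)-\tfrac12)J'$ after dividing through by $|\nabla F|$. The remaining pieces are as follows. The $\eta$-parts of $\tr\mathcal E_t$ and the $2(n-3)\eta[(H+(n-2)\eta)g-S]$ term paired with $S$ are bounded by $C\eta(1+\eta t)\int(H+(n-1)\eta)$, where $H=\tr_{T\Sigma_t}\Hess F/|\nabla F|$. The $(n-3)(1+32t/(1+t)^3)J$ term comes from $\tr\mathcal E_t$.

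The main obstacle is justifying the weak Hessian comparison for $\rho^2/2$ across the cut locus when it is restricted to a hypersurface and tested against $\mathcal E_t$. My first approach would be to approximate $\rho^2/2$ by smooth functions with Hessian upper bounds $\le(1+\epsilon)g$ on compact sets. These exist because the distributional bound is semiconcavity, so Greene--Wu-type smoothing of $-\rho^2/2$ applies via Theorem~\ref{ext:thm:Greene-Wu}. I would pass to the limit using the uniform $L^1$ bounds of Lemma~\ref{one:lem:compact-Hessians} on a neighborhood of $\Sigma_t$, with the gradient convergence~\eqref{eq:gradient-limits} controlling the normal component.
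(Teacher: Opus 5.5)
Your outline follows the paper's proof: the same first variation $J'=\int_{\Sigma_t}|\nabla F|^{-1}\langle\Sc_{\Sigma_t}g_t-2\Ric_{\Sigma_t},S\rangle\,dA$, the same splitting off of $\mathcal E_t$, the same test of the restricted Hessian comparison for $\rho^2/2$ against $\mathcal E_t$, and the same use of \eqref{eq:level-divergence-bound} and \eqref{eq:level-radial-support}. However, the step where the radial support inequality ``yields the left-hand side'' does not follow as written.

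Write $w=\langle\nabla(\rho^2/2),\nu\rangle$ and $c=F(p)+\tfrac12$. The comparison controls $\int_{\Sigma_t}w\langle\mathcal E_t,S\rangle\,dA$, and you then replace $w$ by the smaller quantity $(t-c)/|\nabla F|$. That replacement is valid only where $\langle\mathcal E_t,S\rangle\ge0$. This fails in general, because $S$ may have negative eigenvalues of size up to $\eta$. The missing ingredient is the lower bound
\[
\langle\mathcal E_t,S\rangle=\langle\mathcal E_t,S+\eta g_t\rangle-\eta\tr\mathcal E_t\ge-\eta\tr\mathcal E_t ,
\]
which is the same positivity you invoked for your upper bound. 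Combined with $0\le w-(t-c)/|\nabla F|\le\rho\le2t$, it gives
\[
\frac{t-c}{|\nabla F|}\langle\mathcal E_t,S\rangle\le w\langle\mathcal E_t,S\rangle+2t\eta\tr\mathcal E_t .
\]
This extra term is exactly where the factor $32t/(1+t)^3=2t\eta$ in the coefficient of $J$ comes from; its $\eta$-part goes into the $H$-term. Without it, your attribution of that coefficient to $\tr\mathcal E_t$ is unexplained.

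There are also two smaller points.
\begin{itemize}
\item The correction $-2(n-3)\eta\langle(H+(n-2)\eta)g_t-S,S\rangle$ is controlled only from one side. From $\sigma_2(S+\eta\Id)\ge0$ you get $2\sigma_2(S)+(n-2)\eta H\ge-(n-2)\eta(H+(n-1)\eta)$, which happens to be the needed direction, but it is not an absolute bound.
\item $\mathcal E_t\ge0$ as a tensor needs more than positive diagonal entries in the eigenframe. Split off the ambient part $\Sc_{\mathcal R^0}g_t-2\Ric_{\mathcal R^0}$, with $\mathcal R^0=\Rm_g|_{T\Sigma_t}$. It is nonnegative because on any unit $e$ it equals twice a sum of ambient sectional curvatures in $e^\perp\cap T\Sigma_t$. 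The remaining extrinsic part is diagonal in the eigenframe with entries $2\sum_{j<l;\,j,l\ne i}(\kappa_j+\eta)(\kappa_l+\eta)+(n-2)(n-3)\eta^2\ge0$.
\end{itemize}

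For the weak comparison, the paper argues directly. For a.e.\ $t$, the cut locus meets $\Sigma_t$ in an area-null set. The restriction $\rho^2/2|_{\Sigma_t}$ is semiconcave, so the singular part of its Hessian measure is $\le0$. Off the cut locus, $\Hess_{\Sigma_t}(\rho^2/2)\le g_t-wS$. The resulting inequality is then extended to every $t\ge T$ by continuity on regular bands.

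Your smoothing route also works, but not with the tools you name.
\begin{itemize}
\item Theorem~\ref{ext:thm:Greene-Wu} as stated requires convexity. Use Lemma~\ref{ub:lem:smoothing} instead, applied to $\rho^2/2$ divided by a multiple of $t$, and then rescale.
\item Neither Lemma~\ref{one:lem:compact-Hessians} nor \eqref{eq:gradient-limits} plays any role. After integrating by parts on the fixed compact level, only $\nabla\psi_j$ appears, paired with the smooth bounded tensors $\mathcal E_t$, $S$ and $\diver_{\Sigma_t}\mathcal E_t$.
\item So dominated convergence needs only a uniform gradient bound and $\nabla\psi_j\to\nabla(\rho^2/2)$ $dA$-a.e.\ on $\Sigma_t$. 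This convergence holds at every point where $\rho$ is differentiable, by the difference-quotient argument in the proof of Lemma~\ref{one:lem:weak-convex} (using the uniform upper Hessian bound).
\item Hence it holds whenever $\mathcal H^{n-1}(\Sigma_t\cap\operatorname{Cut}(p))=0$, which is a.e.\ $t$. You still need the continuity step to reach every $t\ge T$.
\end{itemize}
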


\begin{proof}
Use $|\nabla F|,S,H$ from Notation~\ref{notation:level-objects}, $\eta(t)=16(1+t)^{-3}$, and $\mathcal E_t$ from Lemma~\ref{lemma:divergence-nonnegative-combination} with $f=F$. For sufficiently large $t$,
\begin{align}
&\tfrac12\le |\nabla F|\le2,\qquad \tfrac t2\le\rho\le2t,\qquad S\ge-\eta g_t,\qquad 0<\frac{t-F(p)-\tfrac12}{|\nabla F|}\le2t. \nonumber
\end{align}
On each compact regular band, the flow $\partial_tX=\nabla F/|\nabla F|^2$ identifies the levels. For tangent indices $i,j$,
\begin{align}
&(\partial_tg_t)_{ij}=\langle\nabla_{e_i}(|\nabla F|^{-2}\nabla F),e_j\rangle+\langle e_i,\nabla_{e_j}(|\nabla F|^{-2}\nabla F)\rangle=2|\nabla F|^{-2}(\nabla^2F)_{ij}, \nonumber\\
&\partial_t\Gamma^a_{ij}=\tfrac12g_t^{ab}\bigl[(\nabla^{\Sigma_t}_i\partial_tg_t)_{jb}+(\nabla^{\Sigma_t}_j\partial_tg_t)_{ib}-(\nabla^{\Sigma_t}_b\partial_tg_t)_{ij}\bigr], \nonumber\\
&\partial_t(\Ric_{\Sigma_t})_{ij}=\nabla^{\Sigma_t}_a(\partial_t\Gamma^a_{ij})-\nabla^{\Sigma_t}_j(\partial_t\Gamma^a_{ia}), \nonumber\\
&\partial_t\Sc_{\Sigma_t}=-2|\nabla F|^{-2}(\Ric_{\Sigma_t})^{ij}(\nabla^2F)_{ij}+2\nabla_{\Sigma_t}^i\nabla_{\Sigma_t}^j\bigl(|\nabla F|^{-2}(\nabla^2F)_{ij}\bigr)-2\Delta_{\Sigma_t}(H/|\nabla F|), \nonumber\\
&\partial_t dA=(H/|\nabla F|)\,dA,\qquad \int_{\Sigma_t}\nabla_{\Sigma_t}^i\nabla_{\Sigma_t}^j\bigl(|\nabla F|^{-2}(\nabla^2F)_{ij}\bigr)\,dA=\int_{\Sigma_t}\Delta_{\Sigma_t}(H/|\nabla F|)\,dA=0, \nonumber\\
&J'(t)=\int_{\Sigma_t}\frac{\Sc_{\Sigma_t}H-2\langle\Ric_{\Sigma_t},S\rangle}{|\nabla F|}\,dA. \nonumber
\end{align}
In an eigenframe $Se_i=\kappa_i e_i$, the Gauss equation yields
\begin{align}
&\mathcal E_t(e_i,e_i)=2\sum_{\substack{j<\ell\\j,\ell\ne i}}K_{j\ell}+2\sum_{\substack{j<\ell\\j,\ell\ne i}}(\kappa_j+\eta)(\kappa_\ell+\eta)+(n-2)(n-3)\eta^2. \nonumber
\end{align}
The extrinsic summand is diagonal and nonnegative. For every unit tangent vector, the ambient summand is twice the sum of sectional curvatures in its tangent orthogonal complement. Therefore
\begin{align}
&\mathcal E_t\ge0,\qquad \tr\mathcal E_t=(n-3)\Sc_{\Sigma_t}+2(n-3)(n-2)\eta(H+(n-1)\eta), \nonumber\\
&0\le2\sigma_2(S+\eta I)=2\sigma_2(S)+2(n-2)\eta H+(n-1)(n-2)\eta^2, \nonumber\\
&2\sigma_2(S)+(n-2)\eta H\ge-(n-2)\eta(H+(n-1)\eta), \nonumber\\
&\langle\mathcal E_t,S\rangle-\bigl(\Sc_{\Sigma_t}H-2\langle\Ric_{\Sigma_t},S\rangle\bigr)=2(n-3)\eta\bigl(2\sigma_2(S)+(n-2)\eta H\bigr), \nonumber\\
&\frac{t-F(p)-\tfrac12}{|\nabla F|}\bigl(\Sc_{\Sigma_t}H-2\langle\Ric_{\Sigma_t},S\rangle\bigr)\le\frac{t-F(p)-\tfrac12}{|\nabla F|}\langle\mathcal E_t,S\rangle \nonumber\\
&{}+4(n-3)(n-2)t\eta^2(H+(n-1)\eta). \nonumber
\end{align}
Lemma~\ref{one:lem:fixed-F}, equation~\eqref{eq:level-radial-support}, gives almost everywhere
\begin{align}
&0\le\frac{\rho\langle\nabla\rho,\nabla F\rangle-(t-F(p)-\tfrac12)}{|\nabla F|}\le2t,\qquad \langle\mathcal E_t,S\rangle\ge-\eta\tr\mathcal E_t, \nonumber\\
&\frac{t-F(p)-\tfrac12}{|\nabla F|}\langle\mathcal E_t,S\rangle\le\frac{\rho\langle\nabla\rho,\nabla F\rangle}{|\nabla F|}\langle\mathcal E_t,S\rangle+2t\eta\tr\mathcal E_t. \nonumber
\end{align}

For almost every $t$, coarea gives $\mathcal H^{n-1}(\Sigma_t\cap\operatorname{Cut}(p))=0$. The restriction of $\rho^2/2$ to $\Sigma_t$ is locally semiconcave. Theorem~\ref{ext:thm:comparison}, equation~\eqref{eq:distance-square-hessian-comparison}, the restriction formula off the cut locus, and we get the intrinsic measure inequality
\begin{align}
&\Hess_{\Sigma_t}(\rho^2/2)\le\left(g_t-\frac{\rho\langle\nabla\rho,\nabla F\rangle}{|\nabla F|}S\right)dA. \nonumber
\end{align}
Pairing with $\mathcal E_t\ge0$ and integrating by parts on the closed level yields
\begin{align}
&\int_{\Sigma_t}\frac{\rho\langle\nabla\rho,\nabla F\rangle}{|\nabla F|}\langle\mathcal E_t,S\rangle\,dA\le\int_{\Sigma_t}\tr\mathcal E_t\,dA-\int_{\Sigma_t}\langle\mathcal E_t,\Hess_{\Sigma_t}(\rho^2/2)\rangle, \nonumber\\
&{}=\int_{\Sigma_t}\tr\mathcal E_t\,dA+\int_{\Sigma_t}\langle\diver_{\Sigma_t}\mathcal E_t,\nabla^{\Sigma_t}(\rho^2/2)\rangle\,dA. \nonumber
\end{align}
Lemma~\ref{lemma:divergence-nonnegative-combination}, equation~\eqref{eq:level-divergence-bound}, gives
\begin{align}
&|\diver_{\Sigma_t}\mathcal E_t|\le2(n-3)\eta\Sc_g,\qquad |\nabla^{\Sigma_t}(\rho^2/2)|\le2t, \nonumber\\
&(t-F(p)-\tfrac12)J'(t)\le(1+2t\eta)\int_{\Sigma_t}\tr\mathcal E_t\,dA+4(n-3)t\eta\int_{\Sigma_t}\Sc_g\,dA \nonumber\\
&{}+4(n-3)(n-2)t\eta^2\int_{\Sigma_t}(H+(n-1)\eta)\,dA, \nonumber\\
&{}\le(n-3)(1+2t\eta)J(t)+C\eta(1+t\eta)\int_{\Sigma_t}(H+(n-1)\eta)\,dA+Ct\eta\int_{\Sigma_t}\Sc_g\,dA. \nonumber
\end{align}
Substitute $H=\tr_{T\Sigma_t}\nabla^2F/|\nabla F|$ and $\eta=16(1+t)^{-3}$. Continuity on regular bands extends the inequality from almost every $t$ to every $t\ge T$, proving \eqref{eq:level-linear-differential-inequality}.
\end{proof}

\begin{lemma}\label{lem:independent-level-limit}
Assume $(M^n, g)$ is a complete non-compact Riemannian manifold with $K_g\geq 0$.  Set $\Psi=\rho^2/2$, and
$w=\langle\nabla\Psi,\nu\rangle$.
Then there is $T<\infty$ such that, for almost every $t\ge T$,
the following intrinsic tensor-measure inequality holds:
\begin{align}
&\Hess_{\Sigma_t}\Psi\le(g_t-wS)\,dA, \label{eq:level-distance-hessian}\\
&\lim_{t\to\infty}t^{3-n} \int_{\Sigma_t}\Sc_{\Sigma_t}\,dA=L_\Sigma\in[0,\infty). \label{eq:level-curvature-limit}
\end{align}
\end{lemma}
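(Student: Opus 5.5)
The plan has two parts. First I justify the intrinsic tensor-measure inequality \eqref{eq:level-distance-hessian}. Then I integrate the differential inequality of Lemma~\ref{lem-diff-ineq-of-J} to obtain the limit \eqref{eq:level-curvature-limit}.

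\emph{The inequality \eqref{eq:level-distance-hessian}.} Take $T$ so large that $\tfrac12\le|\nabla F|\le2$ on $\{F\ge T\}$. By coarea, for almost every $t\ge T$ the cut locus meets $\Sigma_t$ in an $\mathcal H^{n-1}$-null set. Off the cut locus $\Psi$ is smooth, and the restriction formula reads
\begin{align}
\Hess_{\Sigma_t}\Psi=(\Hess\Psi)|_{T\Sigma_t}-\langle\nabla\Psi,\nu\rangle S=(\Hess\Psi)|_{T\Sigma_t}-wS. \nonumber
\end{align}
Combined with Theorem~\ref{ext:thm:comparison}, this gives $\Hess_{\Sigma_t}\Psi\le g_t-wS$ pointwise there. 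To pass to the distributional statement, use that $\Psi$ is locally semiconcave on $M$, hence so is its restriction to $\Sigma_t$. The distributional Hessian of a semiconcave function is the absolutely continuous part plus a nonpositive singular part. The absolutely continuous part is the pointwise Hessian almost everywhere (Alexandrov), and it obeys the bound above off a null set. The singular part can only decrease the measure, which yields \eqref{eq:level-distance-hessian}.

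\emph{The limit.} Set $\tau=t-F(p)-\tfrac12\sim t$ and $\epsilon(t)=32t/(1+t)^3=O(t^{-2})$. Rewrite Lemma~\ref{lem-diff-ineq-of-J} as
\begin{align}
\frac{d}{dt}\Bigl(J(t)\exp\Bigl(-\int_T^t\frac{(n-3)(1+\epsilon)}{\tau}\Bigr)\Bigr)\le\frac{e(t)}{\tau}\exp(\cdots), \nonumber
\end{align}
where $e(t)$ collects the two error integrals. Because $\epsilon$ is integrable against $dt/\tau$, the integrating factor equals $c\,t^{3-n}(1+o(1))$ with a positive constant $c$. Also $\tau^{3-n}$ and $t^{3-n}$ differ by a factor tending to $1$. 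Hence $t^{3-n}J(t)$ plus a vanishing tail is nonincreasing, provided
\begin{align}
\int_T^\infty t^{2-n}e(t)\,dt<\infty. \nonumber
\end{align}
Combined with $J\ge0$ (Gauss equation: $\Sc_{\Sigma_t}\ge 2\sigma_2(S)$ fails in sign in general, so rather use $\mathcal E_t\ge0$, whose trace is $(n-3)\Sc_{\Sigma_t}$ plus an $O(\eta)$ mean-curvature term), this gives existence of a limit in $[0,\infty)$. For $n=3$ the prefactor vanishes and Lemma~\ref{tp:lem:level-Euler} gives $J\equiv8\pi\chi(M)$ directly.

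\emph{Integrability of the errors.} By coarea and \eqref{eq:nt-trace-bound} on dyadic bands $\{2^k\le F\le2^{k+1}\}$, the first error contributes
\begin{align}
\int t^{2-n}\eta\int_{\Sigma_t}(H+C\eta)\,dA\,dt\lesssim\sum_k 2^{k(2-n)}2^{-3k}2^{k(n-1)}<\infty. \nonumber
\end{align}
The second error is $\int t^{2-n}\,t\eta\int_{\Sigma_t}\Sc_g\,dA\,dt$. On the $k$-th band it is at most $C2^{-k(n)}\int_{\{F\le2^{k+1}\}}\Sc_g\,dV$. Here I expect the main obstacle: no a priori bound on $\int_{B(r)}\Sc_g$ is available. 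The plan is to use the weight $h$ built into $F$ through Lemma~\ref{one:lem:coefficient} and \eqref{one:eq:weighted-Hessians}. If that proves insufficient, the fallback is to bound $\Sc_g$ on a band via the Gauss equation, $\Sc_g=\Sc_{\Sigma_t}-2\sigma_2(S)+2\Ric(\nu,\nu)$, and absorb the $\Sc_{\Sigma_t}$ part into $J$ with coefficient $O(t^{-1})$, which is integrable against $dt/\tau$. By Gronwall it then only perturbs the integrating factor. The $\sigma_2(S)$ term is handled by \eqref{eq:nt-trace-bound}, since $|S|\lesssim\Delta F+O(t^{-3})$ on bands. The term $\Ric(\nu,\nu)$ is $\sec\ge0$ controlled through $-\nu(H)$-type Riccati identities, integrated by parts against \eqref{eq:nt-normal-small}. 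Once these integrals are summable, the almost-monotone quantity converges and \eqref{eq:level-curvature-limit} follows.
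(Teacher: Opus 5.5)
Your proof of \eqref{eq:level-distance-hessian} is the paper's argument: coarea makes the cut locus $\mathcal H^{n-1}$-null on almost every level, the restriction formula holds off it, and the semiconcave restriction has nonpositive singular Hessian part.

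The gap is in the limit, at exactly the step you flag: the error $Ct\eta\int_{\Sigma_t}\Sc_g\,dA$ from Lemma~\ref{lem-diff-ineq-of-J}. The weight $h$ cannot help. Estimate \eqref{one:eq:weighted-Hessians} only gives $\int_M h(1+\Sc_g)\,dV\le2$ with $h$ exponentially small, and this is compatible with $\int F^{-n}\Sc_g\,dV=\infty$. Your fallback is the right idea but lacks its first essential ingredient. With $N(t)=\int_{\Sigma_t}H\,dA$, the first variation of total mean curvature gives
\[
\int_{\Sigma_t}\frac{\Ric_g(\nu,\nu)}{|\nabla F|}\,dA=\int_{\Sigma_t}\frac{2\sigma_2(S)}{|\nabla F|}\,dA-N'(t),
\]
so a term $+2\sigma_2(S)$ appears whose \emph{positive} part must be controlled. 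Estimate \eqref{eq:nt-trace-bound} is only an $L^1$ bound on $\Hess F$. Together with $S\ge-\eta$ it controls $(\sigma_2(S))_-$, but not this quadratic term. What closes it is the pointwise inequality $2\sigma_2(S)\le\Sc_{\Sigma_t}$ (Gauss plus $\sec_g\ge0$). This feeds the term back into $J$ and gives $\int_{\Sigma_t}\Sc_g\,dA\le 8J+6I_{-}-4N'$, where $I_{-}=\int_{\Sigma_t}(\Sc_{\Sigma_t})_-\,dA$.

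The second missing ingredient concerns the $-N'$ term, which cannot be removed by band averages. Integrating it by parts in $t$ leaves $\eta(t)t^{3-n}N(t)$ at the running endpoint. So it must be built into the almost-monotone quantity $Z(t)=(t-c)^{3-n}J+C\eta(t)(t-c)^{3-n}N(t)$, with $c=F(p)+\tfrac12$, whose derivative cancels $N'$ exactly. You then need bounds on \emph{each} level, namely $A(t)=\Area(\Sigma_t)\le Ct^{n-1}$ and $Y(t)+|N(t)|\le Ct^{n-2}$ with $Y(t)=\int_{\Sigma_t}(H+(n-1)\eta)\,dA$. These give $J_{-}\le I_{-}\le Ct^{n-5}$, and they are what let you bound $Z$ below, make the remaining errors integrable, and pass from $\lim Z$ to $\lim t^{3-n}J$. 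Band-integrated bounds from \eqref{eq:nt-trace-bound} give these only on average, and by themselves yield at best convergence along a sequence.

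In the paper these level bounds come precisely from \eqref{eq:level-distance-hessian}. Taking its trace and integrating over the closed level gives $\int_{\Sigma_t}wH\,dA\le(n-1)A(t)$. Combining this with $w\ge(t-c)/2$, $H+(n-1)\eta\ge0$, and $\int_{\Sigma_t}w\,dA\le n\Vol(D_t)$ (from $\Delta\Psi\le n$ distributionally) yields them. You prove \eqref{eq:level-distance-hessian} but never use it, and that is the missing link.
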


\begin{proof}
\textup{(1)} Set $c=F(p)+1/2$. Lemma~\ref{one:lem:fixed-F}, equations~\eqref{one:eq:fixed-F}, \eqref{eq:gradient-limits}, and~\eqref{eq:level-radial-support}, permits $T\ge4$ such that, on $\Sigma_t$, $t\ge T$,
\begin{align}
&\frac12\le|\nabla F|\le2,\qquad \frac t2\le\rho\le2t,\qquad t-c\ge t/2, \nonumber\\
&S\ge-\eta(t)g_t,\qquad A(t)=\Area(\Sigma_t),\qquad N(t)=\int_{\Sigma_t}H\,dA,\qquad Y(t)=\int_{\Sigma_t}(H+m\eta(t))\,dA \nonumber\\
&\ge0. \nonumber
\end{align}
Retain $J$ from Lemma~\ref{lem-diff-ineq-of-J}. Almost everywhere on a large level,
\begin{align}
&\frac{t-c}{|\nabla F|}\le w=\langle\nabla(\rho^2/2),\nu\rangle\le2t,\qquad w\ge(t-c)/2. \label{eq:level-pairing-bounds}
\end{align}
Theorem~\ref{ext:thm:comparison}, equation~\eqref{eq:distance-square-hessian-comparison}, gives $\Delta\Psi\le n\,dV$. Choose $\phi_\varepsilon\in C^\infty(\mathbb R,[0,1])$ with $\phi_\varepsilon'=0$ outside $[t,t+\varepsilon]$, $-C/\varepsilon\le\phi_\varepsilon'\le0$, $\phi_\varepsilon=1$ on $(-\infty,t]$, and $\phi_\varepsilon=0$ on $[t+\varepsilon,\infty)$. Distributional integration by parts and coarea give
\begin{align}
&{}-\int_M\phi_\varepsilon'(F)\langle\nabla F,\nabla\Psi\rangle\,dV\le n\int_M\phi_\varepsilon(F)\,dV, \nonumber\\
&\int_{\Sigma_t}w\,dA\le n\Vol(D_t)\qquad\text{for a.e. }t\ge T. \nonumber
\end{align}
The one-dimensional differentiation theorem justifies the limit. Since $D_t\subset B_p(2t)$, Bishop--Gromov volume comparison \cite{CE} yields
\begin{align}
&\tfrac{t-c}{2}A(t)\le\int_{\Sigma_t}w\,dA\le n\omega_n(2t)^n,\qquad A(t)\le Ct^m. \nonumber
\end{align}
Continuity of regular-level area extends the last bound to every $t\ge T$.

Coarea and the semiconcavity of $\Psi|_{\Sigma_t}$ give, for almost every $t$,
\begin{align}
&\mathcal H^m(\operatorname{Cut}(p)\cap\Sigma_t)=0,\qquad (\Hess_{\Sigma_t}\Psi)^{\mathrm s}\le0, \nonumber\\
&\Hess_{\Sigma_t}\Psi=\Hess_M\Psi|_{T\Sigma_t}-wS\le g_t-wS\quad\text{off }\operatorname{Cut}(p), \nonumber\\
&\Hess_{\Sigma_t}\Psi\le(g_t-wS)\,dA,\qquad \Delta_{\Sigma_t}\Psi\le(m-wH)\,dA, \nonumber\\
&\int_{\Sigma_t}\Delta_{\Sigma_t}\Psi=\int_{\Sigma_t}\Psi\Delta_{\Sigma_t}1\,dA=0,\qquad \int_{\Sigma_t}wH\,dA\le mA(t). \nonumber
\end{align}
These are intrinsic measure inequalities. Using $H+m\eta\ge0$ before equation~\eqref{eq:level-pairing-bounds},
\begin{align}
&\frac{t-c}{2}Y(t)\le\int_{\Sigma_t}w(H+m\eta)\,dA=\int_{\Sigma_t}wH\,dA+m\eta\int_{\Sigma_t}w\,dA\le m(1+2t\eta)A(t), \nonumber\\
&Y(t)\le Ct^{m-1},\qquad |N(t)|\le Y(t)+m\eta A(t)\le Ct^{m-1}\qquad(t\ge T). \label{eq:level-mean-bound}
\end{align}
Again the final bounds extend by continuity.

On every compact regular band, the flow of $\nabla F/|\nabla F|^2$ identifies the levels and gives $\partial_tg_t=2S/|\nabla F|$. Lemma~\ref{var:lem:metric-variation}, equation~\eqref{eq:metric-variation}, applies.

\textup{(2)} Gauss and $S+\eta I\ge0$ imply
\begin{align}
&2\sigma_2(S)\ge-2(m-1)\eta(H+m\eta),\qquad \Sc_{\Sigma_t}\ge2\sigma_2(S), \nonumber\\
&I_-(t):=\int_{\Sigma_t}(\Sc_{\Sigma_t})_-\,dA\le2(m-1)\eta(t)Y(t). \label{eq:level-negative-mass}
\end{align}
For normal speed $|\nabla F|^{-1}$,
\begin{align}
&\partial_tH=-\Delta_{\Sigma_t}(|\nabla F|^{-1})-|\nabla F|^{-1}(|S|^2+\Ric_g(\nu,\nu)),\qquad \partial_tdA=(H/|\nabla F|)\,dA, \nonumber\\
&\int_{\Sigma_t}\Delta_{\Sigma_t}(|\nabla F|^{-1})\,dA=0,\qquad N'(t)=\int_{\Sigma_t}\frac{2\sigma_2(S)-\Ric_g(\nu,\nu)}{|\nabla F|}\,dA, \nonumber\\
&\Sc_{\Sigma_t}=\Sc_g-2\Ric_g(\nu,\nu)+2\sigma_2(S), \nonumber\\
&\int_{\Sigma_t}\frac{\Sc_g}{|\nabla F|}\,dA=\int_{\Sigma_t}\frac{\Sc_{\Sigma_t}+2\sigma_2(S)}{|\nabla F|}\,dA-2N'(t). \nonumber
\end{align}
Since $\sec_g\ge0$ and $1/2\le |\nabla F|\le2$,
\begin{align}
&2\sigma_2(S)\le\Sc_{\Sigma_t},\qquad \Sc_g\ge0,\qquad \int_{\Sigma_t}\frac{\Sc_{\Sigma_t}}{|\nabla F|}\,dA\le2(J+I_-)-\tfrac12I_-=2J+\tfrac32I_-, \nonumber\\
&0\le\int_{\Sigma_t}\Sc_g\,dA\le8J(t)+6I_-(t)-4N'(t). \label{eq:level-ambient-elimination}
\end{align}

\textup{(3)} For $n=3$, $\Ric_{\Sigma_t}=\tfrac12\Sc_{\Sigma_t}g_t$; scalar variation and equation~\eqref{eq:level-negative-mass} give
\begin{align}
&J'(t)=0,\qquad J(t)\ge-I_-(t)\ge-Ct^{-2},\qquad \lim_{t\to\infty}J(t)=J(T)\ge0. \nonumber
\end{align}
Assume henceforth $n\ge4$. Lemma~\ref{lem-diff-ineq-of-J}, equation~\eqref{eq:level-linear-differential-inequality}, and equation~\eqref{eq:level-ambient-elimination} give
\begin{align}
&(t-c)J'\le(n-3)(1+C\eta t)J+C\eta(1+\eta t)Y+C\eta t\int_{\Sigma_t}\Sc_g\,dA \nonumber\\
&{}\le(n-3)(1+C\eta t)J+C\eta(1+\eta t)Y+C\eta(t-c)\bigl(8J+6I_--4N'\bigr), \nonumber\\
&J_+\le J+I_-,\qquad I_-\le2(m-1)\eta Y,\qquad t/2\le t-c\le t, \nonumber\\
&(t-c)J'\le(n-3)(1+C\eta t)J+C\eta(1+\eta t)Y-C\eta(t-c)N'. \label{eq:level-final-J-differential}
\end{align}
The coefficient of $-\eta(t-c)N'$ in \eqref{eq:level-final-J-differential} is fixed and is the same coefficient in
\begin{align}
&Z(t)=(t-c)^{3-n}J(t)+C\eta(t)(t-c)^{3-n}N(t). \nonumber
\end{align}
Differentiation cancels the $N'$ terms exactly:
\begin{align}
&Z'\le C\eta t(t-c)^{2-n}J+C\eta(1+\eta t)(t-c)^{2-n}Y+CN[\eta(t-c)^{3-n}]', \nonumber\\
&{}\le\frac{C\eta t}{t-c}Z+\frac{C\eta(1+\eta t)}{(t-c)^{n-2}}Y+C|N|\left|[\eta(t-c)^{3-n}]'\right|+\frac{C\eta^2t}{(t-c)^{n-2}}|N|. \label{eq:level-Z-differential}
\end{align}
Equation~\eqref{eq:level-mean-bound}, $m=n-1$, and $\eta(t)=16(1+t)^{-3}$ give
\begin{align}
&Y(t)+|N(t)|\le Ct^{n-2},\qquad \left|[\eta(t)(t-c)^{3-n}]'\right|\le Ct^{-n-1}, \nonumber\\
&\frac{\eta(1+\eta t)}{(t-c)^{n-2}}Y\le Ct^{-3},\qquad |N|\left|[\eta(t-c)^{3-n}]'\right|\le Ct^{-3},\qquad \frac{\eta^2t}{(t-c)^{n-2}}|N|\le Ct^{-5}, \nonumber\\
&\int_T^\infty\frac{C\eta(t)t}{t-c}\,dt<\infty,\qquad \int_T^\infty\Bigg\{\frac{C\eta(1+\eta t)}{(t-c)^{n-2}}Y+C|N|\left|[\eta(t-c)^{3-n}]'\right|+\frac{C\eta^2t}{(t-c)^{n-2}}|N|\Bigg\}\,dt \nonumber\\
&{}<\infty. \label{eq:level-Z-integrable}
\end{align}
Equations~\eqref{eq:level-negative-mass} and~\eqref{eq:level-mean-bound} imply
\begin{align}
&J_-(t)\le I_-(t)\le Ct^{n-5}, \nonumber\\
&Z(t)\ge-(t-c)^{3-n}J_-(t)-C\eta(t)(t-c)^{3-n}|N(t)|\ge-Ct^{-2}. \label{eq:level-Z-lower}
\end{align}
Multiplying \eqref{eq:level-Z-differential} by the integrating factor yields
\begin{align}
&\left(e^{-\int_T^t\frac{C\eta(s)s}{s-c}\,ds}Z(t)\right)'\le e^{-\int_T^t\frac{C\eta(s)s}{s-c}\,ds}\Bigg\{\frac{C\eta(t)(1+\eta(t)t)}{(t-c)^{n-2}}Y(t) \nonumber\\
&{}+C|N(t)|\left|[\eta(t)(t-c)^{3-n}]'\right|+\frac{C\eta(t)^2t}{(t-c)^{n-2}}|N(t)|\Bigg\}. \nonumber
\end{align}
Therefore
\begin{align}
&\frac{d}{dt}\Bigg[e^{-\int_T^t\frac{C\eta(s)s}{s-c}\,ds}Z(t)-\int_T^t e^{-\int_T^\tau\frac{C\eta(s)s}{s-c}\,ds}\Bigg\{\frac{C\eta(\tau)(1+\eta(\tau)\tau)}{(\tau-c)^{n-2}}Y(\tau) \nonumber\\
&{}+C|N(\tau)|\left|[\eta(\tau)(\tau-c)^{3-n}]'\right|+\frac{C\eta(\tau)^2\tau}{(\tau-c)^{n-2}}|N(\tau)|\Bigg\}\,d\tau\Bigg]\le0. \nonumber
\end{align}
Equations~\eqref{eq:level-Z-integrable} and~\eqref{eq:level-Z-lower} bound this nonincreasing expression below. Hence there is $L_0\in\mathbb R$ such that
\begin{align}
&\lim_{t\to\infty}\Bigg[e^{-\int_T^t\frac{C\eta(s)s}{s-c}\,ds}Z(t)-\int_T^t e^{-\int_T^\tau\frac{C\eta(s)s}{s-c}\,ds}\Bigg\{\frac{C\eta(\tau)(1+\eta(\tau)\tau)}{(\tau-c)^{n-2}}Y(\tau) \nonumber\\
&{}+C|N(\tau)|\left|[\eta(\tau)(\tau-c)^{3-n}]'\right|+\frac{C\eta(\tau)^2\tau}{(\tau-c)^{n-2}}|N(\tau)|\Bigg\}\,d\tau\Bigg]=L_0, \nonumber\\
&0<\lim_{t\to\infty}e^{-\int_T^t\frac{C\eta(s)s}{s-c}\,ds}=e^{-\int_T^\infty\frac{C\eta(s)s}{s-c}\,ds}<\infty, \nonumber\\
&\lim_{t\to\infty}Z(t)=L_\Sigma\in[0,\infty). \nonumber
\end{align}
Finally, equation~\eqref{eq:level-mean-bound} gives
\postdisplaypenalty=10000
\begin{align}
&|C\eta(t)(t-c)^{3-n}N(t)|\le Ct^{-2},\qquad \lim_{t\to\infty}\frac{t-c}{t}=1, \nonumber\\
&\lim_{t\to\infty}(t-c)^{3-n}J(t)=\lim_{t\to\infty}t^{3-n}\int_{\Sigma_t}\Sc_{\Sigma_t}\,dA=L_\Sigma\in[0,\infty). \nonumber
\end{align}
\end{proof}

\section{The integral over distance balls}\label{sec:ambient-limit}

\begin{theorem}\label{thm:direct-main}\label{prop:ambient-limit}
Let $n\ge3$ and let $(M^n,g)$ be a smooth, complete, connected,
noncompact Riemannian manifold without boundary, with $\sec_g\ge0$. Then, for every $\theta\in C_c^\infty((0,\infty)), 0<a<b<\infty$,
\begin{align}
&\lim_{s\to\infty}2s^{2-n}\int_M\theta(F/s)\sigma_2(S)\,dV \nonumber\\
&{}=n(n-1)(n-2)\omega_n \AVR(M^n,g)\int_0^\infty\theta(t)t^{n-3}\,dt, \nonumber\\
&\lim_{s\to\infty}s^{2-n}\int_M\theta(F/s)\Ric(\nu,\nu)\,dV=0, \label{eq:nt-weighted-radial-Ricci-limit}\\
&\lim_{s\to\infty}s^{2-n}\int_M\theta(F/s)\Sc\,dV \nonumber\\
&{}=\bigl(L_\Sigma-n(n-1)(n-2)\omega_n \AVR(M^n,g)\bigr) \int_0^\infty\theta(t)t^{n-3}\,dt, \label{eq:nt-weighted-limit}\\
&\lim_{s\to\infty}s^{2-n}\int_{\{as<d_g(p,\cdot)<bs\}}\Sc\,dV \nonumber\\
&{}=\left(\frac{L_\Sigma}{n-2}-n(n-1)\omega_n \AVR(M^n,g)\right) (b^{n-2}-a^{n-2}), \label{eq:nt-annulus-limit}\\
&L(M, g)=\frac{L_\Sigma}{n-2}-n(n-1)\omega_n \AVR(M^n,g)\in[0,\infty)\label{eq:nt-ball-conclusion},\\
&\lim_{r\to\infty}r^{2-n}\int_{\{F\le r\}}\Sc\,dV=L(M,g). \label{eq:nt-F-sublevel-limit}
\end{align}
\end{theorem}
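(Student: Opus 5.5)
The plan is to compute every weighted integral by coarea over the levels $\Sigma_t$ of the fixed exhaustion $F$, using $|\nabla F|\to1$ from Lemma~\ref{one:lem:fixed-F} and the level quantities $J(t)$ and $N(t)=\int_{\Sigma_t}H\,dA$ from Lemmas~\ref{lem-diff-ineq-of-J} and~\ref{lem:independent-level-limit}. At the end we pass from $F$-levels to distance annuli and balls by monotone sandwiching, which works because $\Sc\ge0$ and $F/\rho\to1$. Throughout, $\theta\in C_c^\infty((0,\infty))$. One may take $\theta\ge0$ when needed, since general $\theta$ is a difference of such.

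\emph{Step 1: the averaged mean curvature.} By coarea, $\int_M\psi(F/s)|\nabla F|H\,dV=\int\psi(t/s)N(t)\,dt$. Also $|\nabla F|H=\Delta F-\Hess F(\nu,\nu)$. Integrating $\Delta F$ by parts and using \eqref{eq:nt-volume-tests} and \eqref{eq:nt-normal-small}, we get
\begin{align}
&\lim_{s\to\infty}s^{1-n}\int\psi(t/s)N(t)\,dt=n(n-1)\omega_n\AVR(M^n,g)\int_0^\infty\psi(t)t^{n-2}\,dt. \nonumber
\end{align}
The proof of Lemma~\ref{lem:independent-level-limit} gives $N'(t)=\int_{\Sigma_t}(2\sigma_2(S)-\Ric(\nu,\nu))|\nabla F|^{-1}dA$. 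Coarea and one integration by parts in $t$, with $\psi=-t\theta'$-type test functions, then give
\begin{align}
&\lim_{s\to\infty}s^{2-n}\int_M\theta(F/s)\bigl(2\sigma_2(S)-\Ric(\nu,\nu)\bigr)dV=n(n-1)(n-2)\omega_n\AVR\int_0^\infty\theta(t)t^{n-3}dt. \nonumber
\end{align}

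\emph{Step 2: separating $\sigma_2$ from $\Ric(\nu,\nu)$; this is the main obstacle.} Since $\Ric(\nu,\nu)\ge0$, it suffices to prove the matching upper bound for $2\sigma_2(S)$ alone. I would pair the Newton tensor $T_1=Hg_t-S$ with the level Hessian inequality \eqref{eq:level-distance-hessian}. Note that $T_1+(m-1)\eta g_t\ge0$ since $S\ge-\eta g_t$, so one adds this small correction, as was done with $\mathcal E_t$. Codazzi gives $\diver_{\Sigma_t}T_1=-\Ric(\nu,\cdot)^T$, hence
\begin{align}
&\int_{\Sigma_t}w\,2\sigma_2(S)\,dA\le(m-1)N(t)+\int_{\Sigma_t}|\Ric(\nu,\cdot)^T|\,|\nabla^{\Sigma_t}\Psi|\,dA+O(\eta t^{m}). \nonumber
\end{align}
By Corollary~\ref{one:cor:gradient-u} and \eqref{eq:gradient-limits}, $|\nabla^{\Sigma_t}\Psi|\le\rho|\nabla\rho-\nu|=o(t)$ almost everywhere. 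The averaged level integral of $\Sc_g$ is $O(t^{n-3})$: this follows from \eqref{eq:level-ambient-elimination}, the boundedness of $t^{3-n}J$ in \eqref{eq:level-curvature-limit}, the bounds on $I_-$, and the fact that $N'$ integrates by parts against $\theta$. Hence the Ricci error is $o(t^{n-2})$ after averaging. For the left side, use $w\ge(t-c)/|\nabla F|$ and $w\le2t$. The negative part of $\sigma_2$ is $O(\eta(H+m\eta))$, which is negligible, so the left side is $\ge(1-o(1))\,t\,2\sigma_2$ in the averaged sense. Dividing by $t$ and averaging against $\theta\ge0$ gives $\limsup s^{2-n}\int\theta(F/s)2\sigma_2\,dV$ bounded by $(n-2)$ times the Step~1 average of $N/t$, which is exactly the right side of Step~1. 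Combined with Step~1 and $\Ric(\nu,\nu)\ge0$, this forces \eqref{eq:nt-weighted-radial-Ricci-limit} and the $\sigma_2$ limit. What needs care is the bookkeeping of sign-indefinite terms: every such term must be estimated in absolute value before the powers of $|\nabla F|$ are replaced by $1$.

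\emph{Step 3: the scalar limit and passage to balls.} The Gauss equation gives $\Sc=\Sc_{\Sigma_t}+2\Ric(\nu,\nu)-2\sigma_2(S)$. By coarea, $\int\theta(F/s)\Sc_{\Sigma_t}dV=\int\theta(t/s)\int_{\Sigma_t}\Sc_{\Sigma_t}|\nabla F|^{-1}dA\,dt$. We can replace $|\nabla F|^{-1}$ by $1$ because $\int|\Sc_{\Sigma_t}|\le J+2I_-$ and $I_-=O(t^{n-5})$. Then \eqref{eq:level-curvature-limit} yields $L_\Sigma\int\theta t^{n-3}$. Steps 1--2 then give \eqref{eq:nt-weighted-limit}.

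For \eqref{eq:nt-annulus-limit}, take $0\le\theta_\pm$ approximating $\mathbf 1_{[a,b]}$ from above and below. Since $\Sc\ge0$ and $\{a(1+\varepsilon)s<F<b(1-\varepsilon)s\}\subset\{as<\rho<bs\}\subset\{a(1-\varepsilon)s<F<b(1+\varepsilon)s\}$ for $s$ large, the annulus integral is sandwiched. Letting $s\to\infty$ and then $\varepsilon\to0$, using $\int_a^bt^{n-3}dt=(b^{n-2}-a^{n-2})/(n-2)$, proves the claim. Nonnegativity of the constant follows from $\Sc\ge0$.

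Finally, for \eqref{eq:nt-ball-conclusion}, split $B_p(s)=B_p(as)\cup\{as\le\rho<s\}$. The contribution of $B_p(as)$ is bounded by a sum over dyadic annuli $\{2^{-k-1}as<\rho<2^{-k}as\}$, each controlled by \eqref{eq:nt-annulus-limit} uniformly for radii beyond a fixed compact set, together with a fixed compact remainder that is $o(s^{n-2})$. This gives $O(a^{n-2})$, and letting $a\to0$ yields the existence and value of $L(M,g)$. The same sandwich with $F$ in place of $\rho$ gives \eqref{eq:nt-F-sublevel-limit}.
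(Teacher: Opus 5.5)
Your proposal is correct. Outside Step~2 it follows the paper. Your Step~1 is the coarea form of the paper's identity $\diver(H\nu-\nabla_\nu\nu)=2\sigma_2(S)-\Ric(\nu,\nu)$; note that the test function produced by the integration by parts is $\psi=-\theta'$, not $-t\theta'$. Step~3, the dyadic passage to balls, and the $F$-sublevel sandwich are also what the paper does.

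The real difference is the upper bound for $\sigma_2(S)$.

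\emph{The paper's route (ambient).} It pairs the Newton tensor $P_s=(\Delta F+(n-1)s^{-2})g-\Hess F\ge0$, weighted by $\theta(F/s)/F$, with the distributional comparison $\Hess(\rho^2/2)\le g$. It transfers this to $\Hess(F^2/2)$ by integrating $\Hess\bigl((\rho^2-F^2)/2\bigr)$ by parts against $\theta(F/s)P_s/F$, using $\diver P_s=-\Ric(\nabla F,\cdot)$ and the preliminary bound \eqref{eq:nt-scalar-preliminary}. It then removes the normal--tangential block of $\Hess F$ with the $2\times2$ minor estimate \eqref{eq:nt-normal-minors}--\eqref{eq:nt-block-estimate}. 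Finally it evaluates the $\Delta F/F$ term separately via \eqref{eq:nt-trace-limit}.

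\emph{Your route (on level sets).} You pair the tangential Newton tensor $(H+m\eta)g_t-(S+\eta g_t)\ge0$ with the level inequality \eqref{eq:level-distance-hessian}. This pairing produces exactly $2\sigma_2(S)$ against $(m-1)H$. So no block estimate is needed, and the bound closes directly on the $N(t)$ asymptotics of your Step~1. The only error term is $\langle\Ric(\nu,\cdot)^T,\nabla^{\Sigma_t}\Psi\rangle$. You control it by $|(\nabla\rho)^T|\le|\nabla\rho-\nu|\to0$, together with the averaged level bound on $\Sc_g$ from \eqref{eq:level-ambient-elimination}, which plays the role of the paper's \eqref{eq:nt-scalar-preliminary}. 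I checked the constants: the inequality is sharp on Euclidean spheres, and the lower-order terms ($m(m-1)\eta A$, $\eta^2 tA$, the negative part of $\sigma_2$) are all negligible at scale $t^{n-2}$.

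\emph{What each buys.} Your route is shorter, given the level-set machinery already built for Lemma~\ref{lem-diff-ineq-of-J}. However, it depends on the almost-every-level semiconcavity and cut-locus argument behind \eqref{eq:level-distance-hessian}. The paper's ambient route needs only the distributional ambient comparison. Its ambient pairing of a nonnegative curvature or Newton-type contraction against a smoothed distance-squared Hessian is also the pattern the paper later extends to $\tr(E_1\circ C_r)$, $V_2$ and $V_3$ in the six-dimensional comparison.
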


\begin{proof}
Use Notation~\ref{notation:level-objects} and Lemma~\ref{lem:annular-estimates}, \eqref{eq:nt-volume}--\eqref{eq:nt-normal-small}.
\begin{align}
&0<a<b<\infty,\quad 0\le\theta\in C_c^\infty((a,b)),\quad s\ge s_0\ge1,\quad m=n-1, \nonumber\\
&K\Subset M\ \Longrightarrow\ \supp\theta(F/s)\cap K=\varnothing\quad(s\ge s_0(K,\theta)), \nonumber\\
&\lim_{s\to\infty}s^{2-n}\int_K\Sc\,dV=0,\qquad x\in\{as\le F\le bs\}:\quad S\ge-2s^{-2}\Id,\qquad P=S+2s^{-2}\Id\ge0. \nonumber\\
&\sigma_2(S) =\sigma_2(P)-2(m-1)s^{-2}\operatorname{tr}P +2m(m-1)s^{-4},\qquad (\sigma_2(S))_-\le2(m-1)s^{-2}\operatorname{tr}P. \nonumber\\
&|\nabla F|\tr P=\Delta F-\Hess F(\nu,\nu)+2ms^{-2}|\nabla F|\le\Delta F+(1+4m)s^{-2}, \nonumber\\
&\Delta F+ns^{-2}\ge0,\quad \tfrac12\le |\nabla F|\le2\ \Longrightarrow\ \tr P\le C(\Delta F+2ns^{-2}). \nonumber
\end{align}
 Lemma~\ref{lem:annular-estimates}, \eqref{eq:nt-trace-bound}:
\begin{align}
&0\le s^{2-n}\int_M\theta(F/s)(\sigma_2(S))_-\,dV \le C/s. \label{eq:nt-negative-sigma-integral}
\end{align}
Gauss, with $(e_1,\ldots,e_m)$ orthonormal in $T_x\Sigma_{F(x)}$:
\begin{align}
&\Sc_{\Sigma_{F(x)}} =2\sum_{1\le i<j\le m}\sec_g(e_i\wedge e_j)+2\sigma_2(S),\qquad (\Sc_{\Sigma_{F(x)}})_-\le2(\sigma_2(S))_-. \nonumber
\end{align}
Coarea, Lemma~\ref{lem:independent-level-limit}, \eqref{eq:level-curvature-limit}, and \eqref{eq:nt-negative-sigma-integral}:
\begin{align}
&\sup_{s\ge s_0}s^{3-n}\int_a^b\theta(t) \int_{\Sigma_{st}}|\Sc_{\Sigma_{st}}|\,dA\,dt<\infty, \nonumber\\
&\lim_{s\to\infty}s^{2-n}\int_M \theta(F/s)\Sc_{\Sigma_{F(x)}}\,dV =L_\Sigma\int_0^\infty\theta(t)t^{n-3}\,dt. \label{eq:nt-intrinsic-limit}
\end{align}
The coarea expression in \eqref{eq:nt-intrinsic-limit} is
\begin{align}
&s^{3-n}\int_a^b\theta(t)\int_{\Sigma_{st}} \frac{\Sc_{\Sigma_{st}}}{|\nabla F|}\,dA\,dt. \nonumber\\
&\left|s^{3-n}\int_a^b\theta(t)\int_{\Sigma_{st}}(|\nabla F|^{-1}-1)\Sc_{\Sigma_{st}}\,dA\,dt\right| \nonumber\\
&{}\le\sup_{\{as\le F\le bs\}}||\nabla F|^{-1}-1|\ s^{3-n}\int_a^b\theta(t)\int_{\Sigma_{st}}|\Sc_{\Sigma_{st}}|\,dA\,dt\longrightarrow0\quad(s\to\infty), \nonumber\\
&s^{3-n}\int_a^b\theta(t)\int_{\Sigma_{st}}(\Sc_{\Sigma_{st}})_-\,dA\,dt\le4s^{2-n}\int_M\theta(F/s)(\sigma_2(S))_-\,dV. \nonumber
\end{align}
 Lemma~\ref{lem:independent-level-limit}, \eqref{eq:level-curvature-limit}, dominated convergence, and Gauss yield
\begin{align}
&\sup_{s\ge s_0}s^{2-n}\int_M\theta(F/s)|\sigma_2(S)|\,dV<\infty. \label{eq:nt-sigma-absolute}\\
&|\nu|=1,\qquad \diver\nu=H,\qquad \langle\nabla_X\nu,\nu\rangle=0. \nonumber\\
&\operatorname{div}(\nabla_\nu\nu) =\nu(H)+|S|^2+\Ric(\nu,\nu),\qquad \operatorname{div}(H\nu-\nabla_\nu\nu) =2\sigma_2(S)-\Ric(\nu,\nu). \nonumber\\
&\diver(\nabla_\nu\nu)=\nu(\diver\nu)+(\nabla_i\nu^j)(\nabla_j\nu^i)+\Ric(\nu,\nu), \nonumber\\
&(\nabla_i\nu^j)(\nabla_j\nu^i)=|S|^2,\qquad \nabla\theta(F/s)=s^{-1}\theta^{\prime}(F/s)|\nabla F|\nu. \nonumber
\end{align}
 Integration by parts:
\begin{align}
&s^{2-n}\int_M\theta(F/s)\Ric(\nu,\nu)\,dV \nonumber\\
&{}=2s^{2-n}\int_M\theta(F/s)\sigma_2(S)\,dV +s^{1-n}\int_M\theta'(F/s) (\Delta F-\Hess F(\nu,\nu))\,dV\ge0, \nonumber\\
&\lim_{s\to\infty}s^{1-n}\int_M\theta'(F/s) (\Delta F-\Hess F(\nu,\nu))\,dV \nonumber\\
&{}=-n\omega_n \AVR(M^n,g)\int_0^\infty\theta''(t)t^{n-1}\,dt \nonumber\\
&{}=-n(n-1)(n-2)\omega_n \AVR(M^n,g)\int_0^\infty\theta(t)t^{n-3}\,dt. \label{eq:nt-lower-bound}
\end{align}
Lemma~\ref{lem:annular-estimates}, \eqref{eq:nt-volume-tests}, \eqref{eq:nt-normal-small}, and Gauss:
\begin{align}
&\supp\theta\Subset(a,b)\ \Longrightarrow\ [\theta(t)t^{n-2}]_0^\infty=[\theta^{\prime}(t)t^{n-1}]_0^\infty=0, \nonumber\\
&\Sc=\Sc_{\Sigma_{F(x)}}-2\sigma_2(S)+2\Ric(\nu,\nu). \nonumber\\
&s^{2-n}\int_M\theta(F/s)\Sc\,dV =s^{2-n}\int_M\theta(F/s)\Sc_{\Sigma_{F(x)}}\,dV +2s^{2-n}\int_M\theta(F/s)\sigma_2(S)\,dV \nonumber\\
&{}+2s^{1-n}\int_M\theta'(F/s) (\Delta F-\Hess F(\nu,\nu))\,dV,\qquad \sup_{s\ge s_0}s^{2-n}\int_M\theta(F/s)\Sc\,dV\le C_\theta<\infty. \label{eq:nt-scalar-preliminary}
\end{align}
On a neighborhood of $\supp\theta(F/s)$, set
\begin{align}
&P_s=(\Delta F+(n-1)s^{-2})g-\Hess F\ge0,\qquad \operatorname{div}P_s=-\Ric(\nabla F,\cdot),\qquad |P_s| \nonumber\\
&{}\le C(\Delta F+2ns^{-2}). \nonumber\\
&P_s=\tr(\Hess F+s^{-2}g)g-(\Hess F+s^{-2}g)\ge0, \nonumber\\
&\nabla^i(\Hess F)_{ij}=\nabla_j\Delta F+\Ric_{jk}\nabla^kF. \nonumber
\end{align}
 Set
\begin{align}
&\delta_s=s^{-1}\mathop{\mathrm{ess\,sup}}_{\{as\le F\le bs\}} \left|\nabla\bigl((\rho^2-F^2)/2\bigr)\right|, \qquad \lim_{s\to\infty}\delta_s=0. \nonumber\\
&\nabla\bigl((\rho^2-F^2)/2\bigr) =\rho(\nabla\rho-\nabla F)+(\rho-F)\nabla F \nonumber
\end{align}
Lemma~\ref{one:lem:fixed-F}, \eqref{eq:gradient-limits}, gives
\begin{align}
&\delta_s\le\sup_{\{as\le F\le bs\}}\frac\rho s\,\mathop{\mathrm{ess\,sup}}_{\{as\le F\le bs\}}|\nabla\rho-\nabla F|+2s^{-1}\sup_{\{as\le F\le bs\}}|\rho-F|\longrightarrow0\quad(s\to\infty), \nonumber\\
&\frac{\theta(F/s)}F P_s\in C_c^\infty(M;\operatorname{Sym}^2T^*M),\quad \frac{\theta(F/s)}F P_s\ge0,\quad \left|\nabla\frac{\theta(F/s)}F\right|\le Cs^{-2}. \nonumber
\end{align}
 Distributional integration by parts:
\begin{align}
&s^{2-n}\left|\left\langle \Hess\bigl((\rho^2-F^2)/2\bigr),\frac{\theta(F/s)}F P_s\right\rangle\right| \nonumber\\
&{}\le C\delta_s\left( s^{1-n}\int_{\{as\le F\le bs\}}(\Delta F+2ns^{-2})\,dV +s^{2-n}\int_M\theta(F/s)\Sc\,dV\right). \nonumber\\
&\operatorname{div}\left(\frac{\theta(F/s)}F P_s\right) =P_s\left(\nabla\frac{\theta(F/s)}F,\cdot\right) -\frac{\theta(F/s)}F\Ric(\nabla F,\cdot), \nonumber\\
&\|\Ric\|_{\mathrm{op}}\le\Sc,\qquad s^{1-n}\int_{\{as\le F\le bs\}}(\Delta F+2ns^{-2})\,dV+s^{2-n}\int_M\theta(F/s)\Sc\,dV\le C, \nonumber\\
&\lim_{s\to\infty}s^{2-n}\left|\left\langle\Hess\bigl((\rho^2-F^2)/2\bigr),\frac{\theta(F/s)}F P_s\right\rangle\right|=0. \nonumber
\end{align}
 Here use Lemma~\ref{lem:annular-estimates}, \eqref{eq:nt-trace-bound}, and \eqref{eq:nt-scalar-preliminary}. Theorem~\ref{ext:thm:comparison}, \eqref{eq:distance-square-hessian-comparison}, then gives
\begin{align}
&\limsup_{s\to\infty}s^{2-n} \int_M \frac{\theta(F/s)}F \left(\langle P_s,\Hess(F^2/2)\rangle_g-\operatorname{tr}P_s\right)\,dV\le0. \nonumber\\
&\langle P_s,\Hess(F^2/2)\rangle_g =F\bigl((\Delta F)^2-|\Hess F|^2\bigr) +|\nabla F|^2\bigl(\Delta F-\Hess F(\nu,\nu)\bigr) +(n-1)s^{-2}(F\Delta F+|\nabla F|^2), \nonumber\\
&\operatorname{tr}P_s=(n-1)\Delta F+n(n-1)s^{-2}. \nonumber
\end{align}
By Lemma~\ref{lem:annular-estimates}, \eqref{eq:nt-volume}, \eqref{eq:nt-trace-bound}, \eqref{eq:nt-normal-small}, and Lemma~\ref{one:lem:fixed-F}, \eqref{eq:gradient-limits},
\begin{align}
&s^{2-n}\int_M\frac{\theta(F/s)}F(n-1)s^{-2}\bigl(F|\Delta F|+|\nabla F|^2+n\bigr)\,dV\le C/s, \nonumber\\
&s^{2-n}\int_M\frac{\theta(F/s)}F |\nabla F|^2|\Hess F(\nu,\nu)|\,dV\longrightarrow0\quad(s\to\infty), \nonumber\\
&s^{2-n}\int_M\frac{\theta(F/s)}F||\nabla F|^2-1|\,|\Delta F|\,dV\le C\sup_{\{as\le F\le bs\}}||\nabla F|^2-1|\longrightarrow0\quad(s\to\infty). \nonumber\\
&\limsup_{s\to\infty}s^{2-n}\Bigg[ \int_M\theta(F/s)\bigl((\Delta F)^2-|\Hess F|^2\bigr)\,dV -(n-2)\int_M \frac{\theta(F/s)\Delta F}{F}\,dV \Bigg]\le0. \label{eq:nt-full-hessian-upper}
\end{align}
Integration by parts and Lemma~\ref{lem:annular-estimates}, \eqref{eq:nt-volume-tests}:
\begin{align}
&s^{2-n}\int_M \frac{\theta(F/s)\Delta F}{F}\,dV =s^{-n}\int_M\left( \frac{\theta(F/s)}{(F/s)^2}-\frac{\theta'(F/s)}{F/s} \right)|\nabla F|^2\,dV, \nonumber\\
&\lim_{s\to\infty}s^{2-n}\int_M \frac{\theta(F/s)\Delta F}{F}\,dV =n(n-1)\omega_n \AVR(M^n,g)\int_0^\infty\theta(t)t^{n-3}\,dt. \label{eq:nt-trace-limit}
\end{align}
In an orthonormal frame $(\nu,e_1,\ldots,e_m)$, $m=n-1$, the principal $2\times2$ minors of $\Hess F+s^{-2}g\ge0$ give
\begin{align}
&\sum_{j=1}^m\Hess F(\nu,e_j)^2 \le(\Hess F(\nu,\nu)+s^{-2}) \left(\sum_{j=1}^m\Hess F(e_j,e_j)+ms^{-2}\right), \nonumber\\
&\Hess F(\nu,\nu)\sum_{j=1}^m\Hess F(e_j,e_j) -\sum_{j=1}^m\Hess F(\nu,e_j)^2\ge-ms^{-2}(\Delta F+ns^{-2}). \label{eq:nt-normal-minors}\\
&\Hess F(\nu,\nu)\le\Delta F+ms^{-2},\qquad \Hess F|_{T\Sigma_{F(x)}\times T\Sigma_{F(x)}}=|\nabla F|S, \nonumber\\
&(\Delta F)^2-|\Hess F|^2=2|\nabla F|^2\sigma_2(S)+2\Hess F(\nu,\nu)\sum_{j=1}^m\Hess F(e_j,e_j)-2\sum_{j=1}^m\Hess F(\nu,e_j)^2. \nonumber\\
&(\Delta F)^2-|\Hess F|^2 \ge2|\nabla F|^2\sigma_2(S) -2(n-1)s^{-2}(\Delta F+ns^{-2}). \label{eq:nt-block-estimate}\\
&0\le s^{2-n}\int_M2(n-1)s^{-2}\theta(F/s)(\Delta F+ns^{-2})\,dV\le C/s. \nonumber
\end{align}
 By \eqref{eq:nt-sigma-absolute},
\begin{align}
&\lim_{s\to\infty}s^{2-n}\left|\int_M\theta(F/s) (|\nabla F|^2-1)\sigma_2(S)\,dV\right|=0. \label{eq:nt-gradient-removal}
\end{align}
Equations~\eqref{eq:nt-full-hessian-upper}, \eqref{eq:nt-trace-limit}, \eqref{eq:nt-normal-minors}, \eqref{eq:nt-block-estimate}, and \eqref{eq:nt-gradient-removal}:
\begin{align}
&\limsup_{s\to\infty}2s^{2-n}\int_M\theta(F/s)\sigma_2(S)\,dV \nonumber\\
&{}\le n(n-1)(n-2)\omega_n \AVR(M^n,g)\int_0^\infty\theta(t)t^{n-3}\,dt. \nonumber
\end{align}
Equation~\eqref{eq:nt-lower-bound} gives, also for $\AVR(M^n,g)=0$,
\begin{align}
&\liminf_{s\to\infty}2s^{2-n}\int_M\theta(F/s)\sigma_2(S)\,dV \nonumber\\
&{}\ge n(n-1)(n-2)\omega_n \AVR(M^n,g)\int_0^\infty\theta(t)t^{n-3}\,dt. \nonumber\\
&\lim_{s\to\infty}2s^{2-n}\int_M\theta(F/s)\sigma_2(S)\,dV \nonumber\\
&{}=n(n-1)(n-2)\omega_n \AVR(M^n,g)\int_0^\infty\theta(t)t^{n-3}\,dt, \nonumber\\
&\lim_{s\to\infty}s^{2-n}\int_M\theta(F/s)\Ric(\nu,\nu)\,dV=0,\qquad \lim_{s\to\infty}s^{2-n}\int_M\theta(F/s)\Sc\,dV \nonumber\\
&{}=\bigl(L_\Sigma-n(n-1)(n-2)\omega_n \AVR(M^n,g)\bigr) \int_0^\infty\theta(t)t^{n-3}\,dt. \nonumber
\end{align}
Equations~\eqref{eq:nt-intrinsic-limit}, \eqref{eq:nt-scalar-preliminary}, and $\Sc\ge0$ give
\begin{align}
&L_\Sigma\ge n(n-1)(n-2)\omega_n \AVR(M^n,g). \nonumber\\
&\theta\in C_c^\infty((0,\infty))\ \Longrightarrow\ \theta=\theta_1-\theta_2,\quad 0\le\theta_1,\theta_2\in C_c^\infty((0,\infty)). \nonumber
\end{align}
 For the annulus approximation, choose
\begin{align}
&0<\varepsilon<\min\{a/4,(b-a)/5\},\qquad 0\le\theta_\varepsilon^\pm\le1, \nonumber\\
&\theta_\varepsilon^-\in C_c^\infty((a+\varepsilon,b-\varepsilon)),\quad \theta_\varepsilon^-=1\ \text{on }[a+2\varepsilon,b-2\varepsilon], \nonumber\\
&\theta_\varepsilon^+\in C_c^\infty((a-2\varepsilon,b+2\varepsilon)),\quad \theta_\varepsilon^+=1\ \text{on }[a-\varepsilon,b+\varepsilon], \nonumber\\
&\lim_{s\to\infty}s^{-1}\sup_{\{as/2\le F\le2bs\}}|F-\rho|=0, \nonumber\\
&\theta_\varepsilon^-(F/s)\le\mathbf1_{\{as<\rho<bs\}}\le\theta_\varepsilon^+(F/s)\quad(s\ge s_0(\varepsilon)), \nonumber\\
&\lim_{\varepsilon\downarrow0}\int_0^\infty\theta_\varepsilon^\pm(t)t^{n-3}\,dt=\frac{b^{n-2}-a^{n-2}}{n-2}. \nonumber
\end{align}
 Equation~\eqref{eq:nt-weighted-limit}, first $s\to\infty$, then $\varepsilon\downarrow0$:
\begin{align}
&\lim_{s\to\infty}s^{2-n}\int_{\{as<\rho<bs\}}\Sc\,dV=\left(\frac{L_\Sigma}{n-2}-n(n-1)\omega_n \AVR(M^n,g)\right) (b^{n-2}-a^{n-2}). \nonumber\\
&\{\rho=r\}\subset\{\exp_p(r\xi):\xi\in ST_pM\},\qquad \dim ST_pM=n-1\ \Longrightarrow\ \Vol_g\{\rho=r\}=0, \nonumber\\
&L=\frac{L_\Sigma}{n-2}-n(n-1)\omega_n \AVR(M^n,g)\ge0. \nonumber
\end{align}
 Equation~\eqref{eq:nt-annulus-limit}, $(a,b)=(1,2)$: for every $\varepsilon>0$ there is $r_\varepsilon\ge1$ such that
\begin{align}
&\left|\int_{\{r<\rho<2r\}}\Sc\,dV -L(2^{n-2}-1)r^{n-2}\right| \le\varepsilon(2^{n-2}-1)r^{n-2} \quad(r\ge r_\varepsilon). \label{eq:nt-dyadic-error}\\
&s\ge2r_\varepsilon,\quad N\in\mathbb Z_{\ge1},\quad r_\varepsilon\le2^{-N}s<2r_\varepsilon, \nonumber\\
&(2^{n-2}-1)\sum_{j=1}^N(2^{-j}s)^{n-2}=s^{n-2}-(2^{-N}s)^{n-2}. \nonumber
\end{align}
 Summing \eqref{eq:nt-dyadic-error} over the disjoint annuli gives
\begin{align}
&\left|s^{2-n}\int_{B_p(s)}\Sc\,dV-L\right| \le\varepsilon+s^{2-n}\left( \int_{B_p(2r_\varepsilon)}\Sc\,dV +L(2r_\varepsilon)^{n-2}\right). \nonumber\\
&\int_{B_p(2r_\varepsilon)}\Sc\,dV<\infty,\qquad \limsup_{s\to\infty}\left|s^{2-n}\int_{B_p(s)}\Sc\,dV-L\right|\le\varepsilon\quad(\varepsilon>0), \nonumber\\
&x\in M,\quad D=d_g(p,x),\quad B_p(r-D)\subset B_x(r)\subset B_p(r+D)\quad(r>D), \nonumber\\
&\lim_{r\to\infty}r^{2-n}\int_{B_p(r\pm D)}\Sc\,dV=L\ \Longrightarrow\ \lim_{r\to\infty}r^{2-n}\int_{B_x(r)}\Sc\,dV=L. \nonumber
\end{align}
 For each $0<\varepsilon<1$, choose $K_\varepsilon\Subset M$ with
\begin{align}
&(1-\varepsilon)\rho\le F\le(1+\varepsilon)\rho\quad\text{on }M\setminus K_\varepsilon, \nonumber\\
&B_p\bigl(r/(1+\varepsilon)\bigr)\setminus K_\varepsilon\subset\{F\le r\}\setminus K_\varepsilon\subset\overline{B_p\bigl(r/(1-\varepsilon)\bigr)}, \nonumber\\
&(1+\varepsilon)^{2-n}L\le\liminf_{r\to\infty}r^{2-n}\int_{\{F\le r\}}\Sc\,dV\le\limsup_{r\to\infty}r^{2-n}\int_{\{F\le r\}}\Sc\,dV\le(1-\varepsilon)^{2-n}L. \nonumber\\
&\varepsilon\downarrow0\ \Longrightarrow\ \lim_{r\to\infty}r^{2-n}\int_{\{F\le r\}}\Sc\,dV=L. \nonumber
\end{align}
 These are \eqref{eq:nt-ball-conclusion} and \eqref{eq:nt-F-sublevel-limit}.
\end{proof}

\begin{theorem}\label{k:thm:all-avr}
Let $(M^3,g)$ be a complete connected noncompact smooth Riemannian
manifold without boundary, with $\sec_g\ge0$. Then
\begin{align}
&\boxed{\lim_{s\to\infty}\frac1s\int_{B_p(s)}\Sc_g\,dV_g =8\pi\bigl(\chi(M)-\AVR(M^3,g)\bigr).} \label{k:eq:all-avr-main}
\end{align}
\end{theorem}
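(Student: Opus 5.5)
The plan is to combine Theorem~\ref{thm:direct-main}, equation~\eqref{eq:nt-ball-conclusion}, in the case $n=3$, with the level-set Gauss--Bonnet identity of Lemma~\ref{tp:lem:level-Euler}. For $n=3$ we have $n-2=1$ and $n(n-1)\omega_n=6\cdot\tfrac43\pi=8\pi$. Hence \eqref{eq:nt-ball-conclusion} reads
\begin{align}
&L(M,g)=L_\Sigma-8\pi\AVR(M^3,g), \nonumber
\end{align}
where $L_\Sigma=\lim_{t\to\infty}\int_{\Sigma_t}\Sc_{\Sigma_t}\,dA$ is supplied by Lemma~\ref{lem:independent-level-limit}, equation~\eqref{eq:level-curvature-limit}; for $n=3$ the weight $t^{3-n}$ equals one. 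It therefore suffices to prove $L_\Sigma=8\pi\chi(M)$.

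To identify $L_\Sigma$, first choose $t_0>F(p)$ and $\delta>0$ such that $|\nabla F|>0$ on $\{F>t_0-\delta\}$. This is possible because Lemma~\ref{one:lem:fixed-F} gives $|\nabla F|\ge\tfrac12$ on $\{\rho\ge R\}$ for some $R$, and $\{F\le t\}\subset\{\mathfrak b\le t\}$ is compact. Consequently $\{F>t_0-\delta\}\subset\{\rho\ge R\}$ once $t_0$ is large: the compact set $\overline B_p(R)$ has $F$ bounded above there, so we may take $t_0-\delta$ larger than $\max_{\overline B_p(R)}F$. Lemma~\ref{tp:lem:level-Euler} then gives $\int_{\Sigma_t}\Sc_{\Sigma_t}\,dA=8\pi\chi(M)$ for every $t\ge t_0$. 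This quantity is constant in $t$, so $L_\Sigma=8\pi\chi(M)$.

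Substituting this value yields $L(M,g)=8\pi(\chi(M)-\AVR(M^3,g))$. The existence of the limit, independently of the center, is already part of Theorem~\ref{thm:direct-main}, since $L(M,g)$ is defined there as the limit over $B_p(s)$.

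The only point needing care is the compatibility of the Euler characteristic conventions. Lemma~\ref{tp:lem:level-Euler} uses $\chi$ as defined in \eqref{eq:Euler-definition} with $\mathbb F_2$ coefficients, while the introduction uses rational coefficients. For $M$, which by the soul theorem has the homotopy type of a finite complex, both conventions agree because $\chi$ is independent of the coefficient field. Beyond this check the argument is immediate, and I expect no serious obstacle: the analytic work, namely existence of $L_\Sigma$ and vanishing of the normal Ricci contribution, is already contained in the cited results.
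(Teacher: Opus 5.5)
Your proposal is correct and is the paper's argument: specialize \eqref{eq:nt-ball-conclusion} to $n=3$, where $n(n-1)\omega_n=8\pi$ gives $L=L_\Sigma-8\pi\AVR(M^3,g)$, and use Lemma~\ref{tp:lem:level-Euler} to identify the constant level integral as $L_\Sigma=8\pi\chi(M)$. Your remarks on choosing $t_0-\delta>\max_{\overline B_p(R)}F$ and on the independence of $\chi$ from the coefficient field make explicit two points that the paper leaves implicit.
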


\begin{proof}
Lemma~\ref{lem:independent-level-limit}, \eqref{eq:level-curvature-limit}, Lemma~\ref{tp:lem:level-Euler}, and Theorem~\ref{thm:direct-main}, \eqref{eq:nt-ball-conclusion}, give
\begin{align}
&L_\Sigma=8\pi\chi(M),\qquad n=3,\qquad\omega_3=4\pi/3, \nonumber\\
&\lim_{s\to\infty}\frac1s\int_{B_p(s)}\Sc_g\,dV_g=L_\Sigma-6\omega_3\AVR(M^3,g)=8\pi\bigl(\chi(M)-\AVR(M^3,g)\bigr), \nonumber
\end{align}
which is \eqref{k:eq:all-avr-main}.
\end{proof}

\begin{notation}\label{notation-E1}
{Regard the Ricci tensor as an endomorphism using $g$, and define
\begin{align}
&E_1=\frac{\Sc_g}{2}\Id-\Ric_g. \label{eq:first-curvature-contraction}
\end{align}
}
\end{notation}

\begin{lemma}\label{lem:first-curvature-contraction}
Let $(M^n,g)$ be a Riemannian manifold with $n\ge3$ and $\sec_g\ge0$, then
\begin{align}
&0\le E_1\le\frac{\Sc_g}{2}\Id,\qquad \tr E_1=\frac{n-2}{2}\Sc_g,\qquad \diver E_1=0. \label{pt:eq:E1-properties}
\end{align}
\end{lemma}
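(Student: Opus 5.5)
The three assertions are pointwise, so I will work at a fixed point $x$ with an orthonormal eigenframe $(e_1,\ldots,e_n)$ of $\Ric_g$ and write $K_{ij}=\sec_g(e_i\wedge e_j)$.

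\emph{Eigenvalue bounds.} For a unit vector $V$, complete it to an orthonormal frame $(V,f_2,\ldots,f_n)$. Then
\begin{align}
&\Ric(V,V)=\sum_{j\ge2}\sec_g(V\wedge f_j),\qquad \frac{\Sc_g}{2}=\sum_{j\ge2}\sec_g(V\wedge f_j)+\sum_{2\le i<j}\sec_g(f_i\wedge f_j). \nonumber
\end{align}
Subtracting gives
\begin{align}
&E_1(V,V)=\sum_{2\le i<j\le n}\sec_g(f_i\wedge f_j)\ge0. \nonumber
\end{align}
This is the sum of the sectional curvatures of the $2$-planes in $V^\perp$, i.e.\ one half of the scalar curvature of the restriction of $\Rm$ to $V^\perp$. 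It is the same computation already used for $\mathcal E_t(e_i,e_i)$ in the proof of Lemma~\ref{lem-diff-ineq-of-J}.

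The upper bound $E_1\le\frac{\Sc_g}{2}\Id$ is equivalent to $\Ric_g\ge0$. This holds because $\Ric(V,V)$ is a sum of nonnegative sectional curvatures.

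\emph{Trace.} The trace identity is direct:
\begin{align}
&\tr E_1=\frac{n}{2}\Sc_g-\Sc_g=\frac{n-2}{2}\Sc_g. \nonumber
\end{align}

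\emph{Divergence.} This is the contracted second Bianchi identity $\diver\Ric_g=\frac12 d\Sc_g$, together with $\diver(f\,\Id)=df$. These give
\begin{align}
&\diver E_1=\tfrac12 d\Sc_g-\tfrac12 d\Sc_g=0. \nonumber
\end{align}
This is the same mechanism as in the proof of Lemma~\ref{lemma:divergence-nonnegative-combination}, now applied to $M$ itself instead of to $\Sigma_t$.

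None of these steps is a genuine obstacle. The only point needing care is to state the lower bound for arbitrary unit $V$, not only for eigenvectors; completing $V$ to an orthonormal frame handles this. I would also note that $E_1\ge0$ uses only nonnegativity of the sectional curvatures of planes in $V^\perp$.
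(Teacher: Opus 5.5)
Your proof is correct and follows the paper's route: you write $E_1(e,e)$ as the sum of sectional curvatures of planes in $e^\perp$, which lies in $[0,\Sc_g/2]$, compute the trace directly, and get $\diver E_1=0$ from the contracted second Bianchi identity. The Ricci eigenframe you fix at the start is never used and can be dropped.
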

\begin{proof}
For a unit vector $e$, choose an orthonormal basis $(e_a)_{a=1}^{n-1}$ of $e^\perp$. Then
\begin{align}
&E_1(e,e)=\sum_{a<b}\sec_g(e_a\wedge e_b)\in[0,\Sc_g/2],\qquad \tr E_1=\frac{n-2}{2}\Sc_g, \nonumber\\
&\nabla^a(\Ric_g)_{ab}=\frac12\nabla_b\Sc_g\quad\Longrightarrow\quad\diver E_1=0. \nonumber
\end{align}
\end{proof}

\begin{lemma}\label{lem:scalar-radial-tests}
Let $(M^n,g)$ be a complete noncompact Riemannian manifold with $n\ge3$ and $\sec_g\ge0$. For every $\psi\in C_c((0,\infty))$ and $0<a<b<\infty$,
\begin{align}
&\lim_{r\to\infty}r^{2-n}\int_M\psi(\rho/r)\Sc\,dV =(n-2)L\int_0^\infty\psi(t)t^{n-3}\,dt, \label{eq:weighted-distance-scalar-tests}\\
&\lim_{r\to\infty}r^{2-n}\int_M\psi(F/r)\Sc\,dV =(n-2)L\int_0^\infty\psi(t)t^{n-3}\,dt, \label{eq:common-scalar-F-tests}\\
&\lim_{r\to\infty}r^{2-n} \int_{\{ar<F<br\}}\Ric(\nu,\nu)\,dV=0. \label{eq:common-radial-F-tests}
\end{align}
Here $L=L(M,g)$.
\end{lemma}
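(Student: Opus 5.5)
The plan is to reduce all three assertions to results already established in Theorem~\ref{thm:direct-main}. There we have the annulus limit \eqref{eq:nt-annulus-limit}, the ball limit \eqref{eq:nt-ball-conclusion}, the weighted radial-Ricci limit \eqref{eq:nt-weighted-radial-Ricci-limit}, and the weighted scalar limit \eqref{eq:nt-weighted-limit} for smooth $\theta$. We combine these with $\Sc_g\ge0$ and the sandwiching of indicator functions between continuous test functions. First observe that \eqref{eq:nt-ball-conclusion} gives $L_\Sigma-n(n-1)(n-2)\omega_n\AVR=(n-2)L$. So \eqref{eq:nt-weighted-limit} already reads
\[
\lim_{s\to\infty}s^{2-n}\int_M\theta(F/s)\Sc\,dV=(n-2)L\int_0^\infty\theta(t)t^{n-3}\,dt\qquad(\theta\in C_c^\infty((0,\infty))).
\]

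For \eqref{eq:common-scalar-F-tests} with merely continuous $\psi$, we split $\psi=\psi_+-\psi_-$ and treat $\psi\ge0$. Fix $\supp\psi\subset[a',b']\Subset(0,\infty)$. Given $\varepsilon>0$, uniform continuity lets us pick smooth $\theta^\pm\in C_c^\infty((a'/2,2b'))$ with $\theta^-\le\psi\le\theta^+$ and $\theta^+-\theta^-\le\varepsilon\,\chi$, where $\chi$ is a fixed smooth cutoff equal to one near $[a',b']$. Positivity of $\Sc$ then gives liminf/limsup bounds differing by at most $\varepsilon(n-2)L\int\chi t^{n-3}\,dt$, and letting $\varepsilon\downarrow0$ closes the argument.

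For \eqref{eq:weighted-distance-scalar-tests}, we first handle indicators. The annulus limit \eqref{eq:nt-annulus-limit} with \eqref{eq:nt-ball-conclusion} gives
\[
r^{2-n}\int_{\{ar<\rho<br\}}\Sc\,dV\to L(b^{n-2}-a^{n-2})=(n-2)L\int_a^bt^{n-3}\,dt,
\]
so the measures $\mu_r(E)=r^{2-n}\int_{\{\rho/r\in E\}}\Sc\,dV$ on $(0,\infty)$ converge on every interval to $(n-2)Lt^{n-3}dt$. The boundaries $\{\rho=ar\}$ carry zero volume, as noted in that proof. Next approximate continuous $\psi\ge0$ with compact support from above and below by step functions on fine partitions, with oscillation at most $\varepsilon$. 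Since the total masses $\mu_r([a'/2,2b'])$ are bounded, this yields the limit. Alternatively, we can derive it from the $F$-version using $|F/\rho-1|\to0$ outside compacts together with uniform continuity of $\psi$, since the error is bounded by $\sup|\psi(\rho/r)-\psi(F/r)|$ times a bounded mass. The step-function route is cleaner, so we adopt it.

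For \eqref{eq:common-radial-F-tests}, we use that $\Ric(\nu,\nu)\ge0$ because $\sec_g\ge0$. Choose $\theta\in C_c^\infty((a/2,2b))$ with $0\le\theta\le1$ and $\theta=1$ on $[a,b]$. Then
\[
0\le r^{2-n}\int_{\{ar<F<br\}}\Ric(\nu,\nu)\,dV\le r^{2-n}\int_M\theta(F/r)\Ric(\nu,\nu)\,dV,
\]
and the right side tends to zero by \eqref{eq:nt-weighted-radial-Ricci-limit}. Here $\nu$ is defined only where $\nabla F\ne0$, but for large $r$ we have $|\nabla F|\ge\tfrac12$ on the relevant band by Lemma~\ref{one:lem:fixed-F}.

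The main obstacle is minor: it is the bookkeeping that passes from smooth to continuous test functions and from $F$-annuli to $\rho$-annuli. This rests on nonnegativity of $\Sc$ and uniform mass bounds on compact ranges of $t$, both available from \eqref{eq:nt-scalar-preliminary} and Lemma~\ref{lem:annular-estimates}.
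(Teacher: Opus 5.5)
Your proposal is correct and follows the paper's proof. Both arguments get the $\rho$-version by approximating $\psi$ with step functions, using the annulus limit \eqref{eq:nt-annulus-limit} and a uniform mass bound, and both get the Ricci statement by dominating the indicator of $[a,b]$ with a smooth cutoff and applying \eqref{eq:nt-weighted-radial-Ricci-limit}. The only variation is in \eqref{eq:common-scalar-F-tests}. You read it directly off \eqref{eq:nt-weighted-limit}, using $(n-2)L=L_\Sigma-n(n-1)(n-2)\omega_n\AVR(M^n,g)$ from \eqref{eq:nt-ball-conclusion}, and then sandwich $\psi$ between smooth tests. The paper instead transfers it from the $\rho$-version via $F/\rho\to1$ and uniform continuity of $\psi$. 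Both routes are valid.
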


\begin{proof}
Fix $0<A<B<\infty$ with $\supp\psi\Subset(A,B)$. Choose step functions $\psi_j$ supported in $[A,B]$ with $\|\psi_j-\psi\|_\infty\to0$. Theorem~\ref{thm:direct-main}, equation~\eqref{eq:nt-annulus-limit}, gives
\begin{align}
&\lim_{r\to\infty}r^{2-n}\int_M\psi_j(\rho/r)\Sc\,dV=(n-2)L\int_A^B\psi_j(t)t^{n-3}\,dt, \nonumber\\
&\sup_{r\ge r_0}r^{2-n}\int_{\{Ar\le\rho\le Br\}}\Sc\,dV<\infty, \nonumber\\
&r^{2-n}\left|\int_M(\psi_j-\psi)(\rho/r)\Sc\,dV\right|\le\|\psi_j-\psi\|_\infty r^{2-n}\int_{\{Ar\le\rho\le Br\}}\Sc\,dV, \nonumber\\
&\left|\int_A^B(\psi_j-\psi)(t)t^{n-3}\,dt\right|\le\|\psi_j-\psi\|_\infty\int_A^Bt^{n-3}\,dt. \nonumber
\end{align}
Taking $r\to\infty$, then $j\to\infty$, proves \eqref{eq:weighted-distance-scalar-tests}. Since $\lim_{R\to\infty}\sup_{\rho\ge R}|F/\rho-1|=0$, for large $r$,
\begin{align}
&\supp\bigl(\psi(F/r)-\psi(\rho/r)\bigr)\subset\{Ar/2<\rho<2Br\}, \nonumber\\
&\lim_{r\to\infty}\sup_{\{Ar/2<\rho<2Br\}}|\psi(F/r)-\psi(\rho/r)|=0, \nonumber\\
&r^{2-n}\left|\int_M\bigl(\psi(F/r)-\psi(\rho/r)\bigr)\Sc\,dV\right|\le\sup_{\{Ar/2<\rho<2Br\}}|\psi(F/r) \nonumber\\
&{}-\psi(\rho/r)|\,r^{2-n}\int_{\{Ar/2<\rho<2Br\}}\Sc\,dV. \nonumber
\end{align}
This proves \eqref{eq:common-scalar-F-tests}. Choose $0\le\eta\in C_c^\infty((0,\infty))$, $\eta\ge1$ on $[a/2,2b]$. By Theorem~\ref{thm:direct-main}, equation~\eqref{eq:nt-weighted-radial-Ricci-limit},
\begin{align}
&0\le r^{2-n}\int_{\{ar<F<br\}}\Ric(\nu,\nu)\,dV\le r^{2-n}\int_M\eta(F/r)\Ric(\nu,\nu)\,dV, \nonumber\\
&\lim_{r\to\infty}r^{2-n}\int_M\eta(F/r)\Ric(\nu,\nu)\,dV=0. \nonumber\qedhere
\end{align}
\end{proof}

\begin{lemma}\label{unified:lem:distance-limit}
Let $(M^6,g)$ be a complete noncompact Riemannian manifold with $\sec_g\ge0$. And let $L=L(M,g)$, then
\begin{align}
&\lim_{r\to\infty}r^{-4}\int_{\{r<F<2r\}}(F/r)^{-3}|\nabla F|^2\left(\frac{\Sc}{2}-\Ric(\nu,\nu)\right)dV=2L. \label{unified:eq:distance-limit}
\end{align}
\end{lemma}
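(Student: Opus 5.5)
The integrand splits as $\frac{\Sc}{2}-\Ric(\nu,\nu)$ times the weight $(F/r)^{-3}|\nabla F|^2\mathbf 1_{\{r<F<2r\}}$. The plan is to remove the factor $|\nabla F|^2$, replace the indicator by smooth cutoffs, and then apply Lemma~\ref{lem:scalar-radial-tests} to both curvature pieces. Here $n=6$, so $r^{2-n}=r^{-4}$ and $t^{n-3}=t^3$.

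First I would remove $|\nabla F|^2$. By Lemma~\ref{one:lem:fixed-F}, $\sup_{\{r<F<2r\}}\bigl||\nabla F|^2-1\bigr|\to0$. The integrand without this factor has absolute value at most $\frac32\Sc$, because $0\le\Ric(\nu,\nu)\le\Sc$ under $\sec_g\ge0$. Also $(F/r)^{-3}\le1$ on the region. Hence the error is bounded by $o(1)\cdot r^{-4}\int_{\{r<F<2r\}}\Sc\,dV$. That last quantity is bounded uniformly in $r$ by Lemma~\ref{lem:scalar-radial-tests}, equation~\eqref{eq:common-scalar-F-tests}, applied with a nonnegative majorizing cutoff. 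So the error tends to zero.

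Second, the $\Ric(\nu,\nu)$ term drops out. The weight is bounded by one, and equation~\eqref{eq:common-radial-F-tests} with $(a,b)=(1,2)$ gives $r^{-4}\int_{\{r<F<2r\}}\Ric(\nu,\nu)\,dV\to0$.

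Third, the scalar term. Since $\Sc\ge0$, I would sandwich $\mathbf 1_{(1,2)}(t)t^{-3}$ between continuous compactly supported functions $\psi_\varepsilon^-\le\mathbf 1_{(1,2)}t^{-3}\le\psi_\varepsilon^+$. These differ from $\mathbf 1_{(1,2)}t^{-3}$ only on $\varepsilon$-neighbourhoods of $1$ and $2$. Applying \eqref{eq:common-scalar-F-tests} with $\psi=\psi_\varepsilon^\pm$ gives
\[
\lim_{r\to\infty}r^{-4}\int_M\psi_\varepsilon^\pm(F/r)\Sc\,dV=4L\int_0^\infty\psi_\varepsilon^\pm(t)t^{3}\,dt .
\]
Letting $\varepsilon\downarrow0$, the integrals on the right tend to $\int_1^2t^{-3}t^3\,dt=1$. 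Hence
\[
\lim_{r\to\infty}r^{-4}\int_{\{r<F<2r\}}(F/r)^{-3}\Sc\,dV=4L .
\]
Multiplying by $\tfrac12$ gives $2L$, which is \eqref{unified:eq:distance-limit}.

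The main point needing care is the order of limits in the sandwich: take $r\to\infty$ first with $\varepsilon$ fixed, and only then send $\varepsilon\downarrow0$. This is legitimate because positivity of $\Sc$ makes the two-sided bounds monotone. The other two steps are routine once the uniform annular bound on $\int\Sc$ is recorded.
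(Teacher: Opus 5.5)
Your proof is correct. It follows the paper in discarding the $|\nabla F|^2-1$ factor and the $\Ric(\nu,\nu)$ term, but evaluates the scalar term $r^{-4}\int_{\{r<F<2r\}}(F/r)^{-3}\Sc\,dV\to4L$ by a different mechanism.

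The paper sets $N(t)=\int_{\{F\le t\}}\Sc\,dV$ and uses the sublevel limit \eqref{eq:nt-F-sublevel-limit} in the uniform form $\sup_{1\le s\le2}|N(rs)/r^4-Ls^4|\to0$. It then integrates by parts in $t$, via coarea with $N'(t)=\int_{\Sigma_t}\Sc/|\nabla F|\,dA$ on regular bands. This gives the exact identity \eqref{interval:eq:scalar-IBP}, whose limit is $2L-L+3L=4L$.

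You instead insert continuous compactly supported minorants and majorants of $\mathbf 1_{(1,2)}(t)t^{-3}$ into the test-function limit \eqref{eq:common-scalar-F-tests}. You use $\Sc\ge0$ to sandwich, taking $r\to\infty$ before $\varepsilon\downarrow0$.

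Your route avoids the coarea and absolute-continuity bookkeeping for $N$, and the same sandwich would handle other piecewise continuous weights. It costs you the sign of $\Sc$ and the more elaborate lemma \eqref{eq:common-scalar-F-tests}, which is itself derived from the annulus limits by step-function approximation. The paper's route needs only the one-parameter asymptotic $N(t)\sim Lt^4$ and yields an exact formula at each fixed $r$ before the limit is taken.
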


\begin{proof}
Set
\begin{align}
&N(t)=\int_{\{F\le t\}}\Sc\,dV. \nonumber
\end{align}
By Theorem~\ref{thm:direct-main}, equation~\eqref{eq:nt-F-sublevel-limit},
\begin{align}
&\lim_{t\to\infty}\frac{N(t)}{t^4}=L,\qquad \lim_{r\to\infty}\sup_{1\le s\le2} \left|\frac{N(rs)}{r^4}-Ls^4\right|=0. \nonumber
\end{align}
since
\begin{align}
&\sup_{1\le s\le2} \left|r^{-4}N(rs)-Ls^4\right| \le16\sup_{t\ge r}\left|t^{-4}N(t)-L\right|. \nonumber
\end{align}
On each compact regular level band, $N\in W^{1,1}$ and coarea gives
\begin{align}
&N'(t)=\int_{\Sigma_t}\frac{\Sc}{|\nabla F|}\,dA \quad\text{for almost every }t. \nonumber
\end{align}
Integration by parts gives the exact formula
\begin{align}
&r^{-4}\int_{\{r<F<2r\}}(F/r)^{-3}\Sc\,dV =r^{-4}\int_r^{2r}(t/r)^{-3}N'(t)\,dt \nonumber\\
&{}=\frac18\frac{N(2r)}{r^4}-\frac{N(r)}{r^4} +3\int_1^2s^{-4}\frac{N(rs)}{r^4}\,ds. \label{interval:eq:scalar-IBP}
\end{align}
By the preceding uniform convergence, the limit in \eqref{interval:eq:scalar-IBP} equals
\begin{align}
&2L-L+3\int_1^2L\,ds=4L. \nonumber
\end{align}
By Theorem~\ref{thm:direct-main}, equation~\eqref{eq:nt-weighted-radial-Ricci-limit},
\begin{align}
&r^{-4}\left|\int_{\{r<F<2r\}}(F/r)^{-3}(|\nabla F|^2-1)\Sc\,dV\right|\le\sup_{\{r<F<2r\}}\bigl||\nabla F|^2-1\bigr|\,r^{-4}\int_{\{r<F<2r\}}\Sc\,dV, \nonumber\\
&\lim_{r\to\infty}r^{-4}\int_{\{r<F<2r\}}(F/r)^{-3}(|\nabla F|^2-1)\Sc\,dV=0, \nonumber\\
&0\le r^{-4}\int_{\{r<F<2r\}}(F/r)^{-3}|\nabla F|^2\Ric(\nu,\nu)\,dV\le4r^{-4}\int_{\{r<F<2r\}}\Ric(\nu,\nu)\,dV, \nonumber\\
&\lim_{r\to\infty}r^{-4}\int_{\{r<F<2r\}}\Ric(\nu,\nu)\,dV=0. \nonumber
\end{align}
\end{proof}

\begin{lemma}\label{low:lem:Euclidean-products}
Let $(X^m,k)$ be smooth, complete, connected and without boundary,
with $m\ge2$ and $\sec_k\ge0$. Let $q\in\mathbb Z_{\ge0}$, with
$m+q\ge3$, and suppose $X\times\mathbb R^q$ is noncompact. For $o\in X$, the following limits exist, are finite and are independent
of $o$:
\begin{align}
&\ell_X=\lim_{r\to\infty}r^{2-m}\int_{B_o^X(r)}\Sc_k\,dV_k,\qquad \AVR(X,k)=\lim_{r\to\infty}\frac{\Vol_k B_o^X(r)}{\omega_m r^m}. \nonumber
\end{align}
With the product metric understood,
\begin{align}
&\chi(X\times\mathbb R^q)=\chi(X),\qquad \AVR(X\times\mathbb R^q)=\AVR(X,k), \nonumber\\
&L(X\times\mathbb R^q)=\frac{\omega_{m+q-2}}{\omega_{m-2}}\ell_X. \label{ext:eq:Euclidean-product}
\end{align}
\end{lemma}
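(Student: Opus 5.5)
The plan has four pieces: show that both limits for $X$ exist, compute the Euler characteristic and the asymptotic volume ratio of the product directly, and then compute $L(X\times\R^q)$ by integrating the scalar curvature over product balls. Throughout put $n=m+q$ and $g=k\oplus g_{\R^q}$, and fix the base point $(o,0)$. Since $L(M,g)$ and $\AVR$ do not depend on the base point (Theorem~\ref{thm:direct-main} and Bishop--Gromov), this choice costs nothing.

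\emph{Existence of $\ell_X$ and $\AVR(X,k)$.} If $X$ is noncompact and $m\ge3$, Theorem~\ref{thm:direct-main}, \eqref{eq:nt-ball-conclusion}, gives the existence and finiteness of $\ell_X$, and independence of $o$ follows from the ball inclusions used at the end of that proof. If $X$ is noncompact and $m=2$, then $r^{0}\int_{B_o(r)}\Sc_k$ is monotone and, by Cohn--Vossen, bounded by $4\pi\chi(X)$, so the limit exists. If $X$ is compact, both integrals are eventually constant, so $\ell_X=0$ when $m\ge3$ and $\ell_X=\int_X\Sc_k$ when $m=2$. The limit $\AVR(X,k)$ exists by Bishop--Gromov; it is $0$ when $X$ is compact.

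\emph{Euler characteristic and volume ratio.} $X\times\R^q$ deformation retracts onto $X$, so $\chi(X\times\R^q)=\chi(X)$. For the volume, the product ball $B_{(o,0)}(r)$ is the set of pairs with $d_k(o,x)^2+|y|^2<r^2$. Fubini gives
\begin{align}
\Vol B(r)=\int_{|y|<r}\Vol_k B^X_o\bigl(\sqrt{r^2-|y|^2}\bigr)\,dy. \nonumber
\end{align}
Insert $\Vol_kB^X_o(s)=\omega_m s^m(\AVR(X,k)+o(1))$ and rescale $y=rz$. Dominated convergence, using the Bishop--Gromov bound $\Vol_k B_o^X(s)\le\omega_m s^m$, gives
\begin{align}
\frac{\Vol B(r)}{r^{n}}\to\AVR(X,k)\,\omega_m\int_{|z|<1}(1-|z|^2)^{m/2}dz=\AVR(X,k)\,\omega_{m+q}, \nonumber
\end{align}
where the last equality is the standard Beta-function identity for the volume of the unit $(m+q)$-ball.

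\emph{The scalar-curvature limit.} The product scalar curvature is $\Sc_k(x)$, so the same Fubini computation gives
\begin{align}
r^{2-n}\int_{B(r)}\Sc\,dV=r^{2-n}\int_{|y|<r}N\bigl(\sqrt{r^2-|y|^2}\bigr)\,dy,\qquad N(s):=\int_{B_o^X(s)}\Sc_k\,dV_k. \nonumber
\end{align}
Write $N(s)=s^{m-2}(\ell_X+o(1))$ and rescale; the right-hand side tends to
\begin{align}
\ell_X\int_{|z|<1}(1-|z|^2)^{(m-2)/2}dz=\ell_X\frac{\omega_{m+q-2}}{\omega_{m-2}}, \nonumber
\end{align}
where the integral is evaluated by the same ball-volume identity with $m-2$ in place of $m$. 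When $m=2$ the convention $\omega_0=1$ makes the formula consistent.

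\emph{Main obstacle.} The delicate step is justifying dominated convergence in this last limit, because of the $o(1)$ error near the boundary $|z|\to1$, where the radius $\sqrt{r^2-|y|^2}$ is small. I would handle it with the bound $0\le N(s)\le C(1+s)^{m-2}$, which holds for all $s$ because $N(s)s^{2-m}$ converges and $N$ is bounded on compact sets. This majorant is integrable in $z$ after rescaling, so dominated convergence applies. Pointwise convergence is needed only for $|z|<1$, where the radius tends to infinity.
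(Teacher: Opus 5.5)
Your proposal is correct and is essentially the paper's proof: Fubini over the product balls $\{d_k(o,x)^2+|y|^2<r^2\}$, the rescaling $y=rz$, dominated convergence with the slicing identity $\int_{|z|<1}(1-|z|^2)^{a/2}\,dz=\omega_{a+q}/\omega_a$ for $a=m-2$ and $a=m$, and the deformation retraction onto $X\times\{0\}$ for $\chi$. The only deviation is the case $m=2$. There you get finiteness of $\ell_X$ from Cohn--Vossen (applied on the orientation double cover if $X$ is nonorientable) and treat compact $X$ directly. The paper instead uses $B_o^X(r)\times(-r,r)\subset B^{X\times\R}_{(o,0)}(\sqrt2 r)$ and Theorem~\ref{thm:direct-main} for the three-manifold $X\times\R$ to bound $\int_{B_o^X(r)}\Sc_k\,dV_k$, which handles compact and noncompact $X$ at once.
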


\begin{proof}
Bishop--Gromov comparison \cite{CE} gives the volume limit.
For noncompact $X$ with $m\ge3$, Theorem~\ref{thm:direct-main}
gives $\ell_X$; for compact $X$ with $m\ge3$, $\ell_X=0$.
For $m=2$, the product-distance formula and $\Sc_{k+dt^2}=\Sc_k$ give
\begin{align}
&2r\int_{B_o^X(r)}\Sc_k\,dV_k\le\int_{B_{(o,0)}^{X\times\mathbb R}(\sqrt2r)}\Sc_{k+dt^2}\,dV_{k+dt^2}. \nonumber
\end{align}
Theorem~\ref{thm:direct-main} applied to $X\times\mathbb R$ bounds
the right side divided by $r$ as $r\to\infty$. Monotone convergence
therefore gives $\ell_X=\int_X\Sc_k\,dV_k<\infty$.
The asserted center independence follows in each case.
The case $q=0$ is immediate. For $q\ge1$, set
\begin{align}
&I_X(t)=\int_{B_o^X(t)}\Sc_k\,dV_k,\qquad V_X(t)=\Vol_k B_o^X(t),\qquad I_X(0)=V_X(0)=0. \nonumber
\end{align}
Smoothness near $o$ and the finite limit at infinity give a constant
$C=C(X,k,o)<\infty$ such that
\begin{align}
&0\le I_X(t)\le Ct^{m-2},\qquad 0\le V_X(t)\le\omega_m t^m\qquad(t>0). \nonumber
\end{align}
Fubini's theorem and the product-distance formula give
\begin{align}
&\int_{B_{(o,0)}^{X\times\mathbb R^q}(r)}\Sc\,dV=\int_{|z|<r}I_X\!\left(\sqrt{r^2-|z|^2}\right)\,dz, \nonumber\\
&\Vol B_{(o,0)}^{X\times\mathbb R^q}(r)=\int_{|z|<r}V_X\!\left(\sqrt{r^2-|z|^2}\right)\,dz. \nonumber
\end{align}
Change variables $z=rw$. The preceding bounds give integrable
dominating functions $C(1-|w|^2)^{(m-2)/2}$ and
$\omega_m(1-|w|^2)^{m/2}$ on the Euclidean unit $q$-ball.
Slicing a Euclidean unit ball gives
\begin{align}
&\int_{|w|<1}(1-|w|^2)^{a/2}\,dw=\frac{\omega_{a+q}}{\omega_a}\qquad(a\in\mathbb Z_{\ge0}). \nonumber
\end{align}
Dominated convergence, with $a=m-2$ and $a=m$, proves the two
limit formulas in \eqref{ext:eq:Euclidean-product}.
Finally, $(x,z)\mapsto(x,(1-t)z)$, $0\le t\le1$, is a deformation
retraction onto $X\times\{0\}$, proving the Euler-characteristic identity.
\end{proof}

\section{The soul-distance contraction}
\label{sec:soul-trace-products}

Let $\mathcal S\subset M$ be a soul, with induced metric $h$, and put
$s=\dim\mathcal S$. Use the retraction $P$ in Theorem~\ref{ext:thm:retraction} and
write
\begin{align}
&\mathcal V=\ker dP,\qquad \mathcal H=\mathcal V^\perp,\qquad g_{\mathcal V}(X,Y)=g(X^{\mathcal V},Y^{\mathcal V}),\qquad g_{\mathcal H}=g-g_{\mathcal V}, \nonumber\\
&r_{\mathcal S}=d_g(\mathcal S,\cdot),\qquad f_{\mathcal S}=\tfrac12r_{\mathcal S}^2,\qquad \Omega_r=\{r_{\mathcal S}<r\},\qquad \mathcal N_y=P^{-1}(y). \label{soul:eq:distance-data}
\end{align}

For a symmetric tensor $A$, define
$\operatorname{tr}_{\mathcal H}A=\sum_{i=1}^s A(X_i,X_i)$
for any horizontal orthonormal basis, and define
$\operatorname{tr}_{\mathcal V}A$ analogously; in particular,
$\langle A,g_{\mathcal H}\rangle_g=\operatorname{tr}_{\mathcal H}A$.
The measure on a zero-dimensional soul assigns mass one to its single
point. 

\begin{lemma}\label{soul:lem:hessian}
Then
\begin{align}
&\Hess f_{\mathcal S}\le g_{\mathcal V}\,dV \label{soul:eq:hessian}
\end{align}
as symmetric tensor-valued measures. For every $y\in\mathcal S$,
the fiber $\mathcal N_y$ is a smooth, connected, intrinsically complete
$(n-s)$-manifold without boundary. For every $y\in\mathcal S$ and
$x\in \mathcal N_y$, the first identity below holds; for every $y\in\mathcal S$
and $r>0$, the second holds:
\begin{align}
&d_{\mathcal N_y}(y,x)=r_{\mathcal S}(x),\qquad \mathcal N_y\cap\Omega_r=B_y^{\mathcal N_y}(r). \label{four:eq:fiber-distance}
\end{align}
For $y\in\mathcal S$ and a unit vector $\xi\in\nu_y\mathcal S$, put
\begin{align}
&\tau_{\mathcal S}(y,\xi)=\sup\{s>0:d_g(\mathcal S,\exp_y(s\xi))=s\}. \nonumber
\end{align}
If $0<t<\tau_{\mathcal S}(y,\xi)$,
$\gamma(s)=\exp_y(s\xi)$ for $0\le s\le t$, and
$X\in T_{\gamma(t)}M$ satisfies $X\perp\dot\gamma(t)$, write
$X=X_{\mathcal H}+X_{\mathcal V}$ and let $\mathcal P_{t,s}$ denote
parallel transport. With
\begin{align}
&Y(s)=\mathcal P_{t,s}X_{\mathcal H} +\frac{s}{t}\mathcal P_{t,s}X_{\mathcal V}, \nonumber
\end{align}
one has
\begin{align}
&\Hess r_{\mathcal S}(X,X) \le\int_0^t\bigl(|D_sY|^2 -\langle\Rm(Y,\dot\gamma)\dot\gamma,Y\rangle\bigr)\,ds \le\frac{|X_{\mathcal V}|^2}{t}. \label{soul:eq:normal-index}
\end{align}
For every $y\in\mathcal S$, every unit $\xi\in\nu_y\mathcal S$, and
$0<t<\tau_{\mathcal S}(y,\xi)$, let $J(t,y,\xi)$ be the normal volume
Jacobian relative to $dV_h(y)\,d\omega_y(\xi)\,dt$, where $d\omega_y$
is the area measure on the unit sphere of the normal space
$\nu_y\mathcal S$. Then
\begin{align}
&\lim_{t\downarrow0}\frac{J(t,y,\xi)}{t^{n-s-1}}=1, \label{soul:eq:jacobian-initial}\\
&\partial_t\log J\le\frac{n-s-1}{t},\qquad 0<J(t,y,\xi)\le t^{n-s-1} \quad(0<t<\tau_{\mathcal S}(y,\xi)). \label{tp:eq:normal-volume-comparison}
\end{align}
For every nonnegative continuous function $a$ on $\mathcal S$ and
$r>0$,
\begin{align}
&\int_{\Omega_r}a(P(x))\,dV(x) \le\omega_{n-s} r^{n-s}\int_{\mathcal S}a\,dV_h,\qquad \Vol(\Omega_r)\le\Vol_h(\mathcal S)\omega_{n-s} r^{n-s}. \label{tp:eq:tube-volume}
\end{align}
\end{lemma}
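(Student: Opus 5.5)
Everything will be organized around the structure supplied by Theorem~\ref{ext:thm:retraction}: the Sharafutdinov retraction $P$ is a smooth Riemannian submersion whose horizontal curves preserve $r_{\mathcal S}$, and normal Jacobi fields starting tangent to $\mathcal S$ are parallel and horizontal. The argument has four parts: fiber geometry, the index-form bound, the Hessian measure inequality, and the Jacobian and tube-volume comparison.

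\emph{Fiber geometry.} Since $P$ is a submersion, each fiber $\mathcal N_y$ is a smooth closed embedded submanifold, hence intrinsically complete. By \eqref{ext:eq:normal-retraction}, $\exp_y(\nu_y\mathcal S)\subset\mathcal N_y$. Conversely, a minimizing geodesic from $x$ to $\mathcal S$ leaves $\mathcal S$ normally at some $y'$, so $x=\exp_{y'}(w)$ and therefore $y'=P(x)$. Hence $\mathcal N_y=\exp_y(\nu_y\mathcal S)$, which is connected. That geodesic has length $r_{\mathcal S}(x)$ and lies in $\mathcal N_y$, giving $d_{\mathcal N_y}(y,x)\le r_{\mathcal S}(x)$. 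The reverse inequality is $d_{\mathcal N_y}\ge d_g\ge r_{\mathcal S}$. Together these give \eqref{four:eq:fiber-distance}.

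\emph{Index-form bound \eqref{soul:eq:normal-index}.} Before the focal time $\tau_{\mathcal S}$, $r_{\mathcal S}$ is smooth near $\gamma(t)$. The second variation of distance to a submanifold gives $\Hess r_{\mathcal S}(X,X)\le I(Y,Y)+\langle \mathrm{II}_{\mathcal S}(Y(0),Y(0)),\dot\gamma(0)\rangle$ for any $Y$ with $Y(t)=X$ and $Y(0)\in T_y\mathcal S$. The second fundamental form term vanishes because $\mathcal S$ is totally geodesic. For the field $Y$ in the statement: the horizontal part $\mathcal P_{t,s}X_{\mathcal H}$ is parallel, and by \eqref{ext:eq:parallel-horizontal} it is a Jacobi field with $J'=0$. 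So $\langle \Rm(Y_{\mathcal H},\dot\gamma)\dot\gamma,Y_{\mathcal H}\rangle=0$, since $J''=0$. Its value at $s=0$ is tangent to $\mathcal S$, because the horizontal space at $y$ is $T_y\mathcal S$. The vertical part vanishes at $s=0$. The cross curvature terms also vanish, because $\Rm(\cdot,\dot\gamma)\dot\gamma$ kills the Jacobi field direction by symmetry. Using $\sec\ge0$ to drop the remaining curvature term, we obtain $I\le\int_0^t |X_{\mathcal V}|^2/t^2\,ds=|X_{\mathcal V}|^2/t$.

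\emph{Hessian measure inequality \eqref{soul:eq:hessian}.} Off the cut locus of $\mathcal S$, we have $\Hess f_{\mathcal S}=dr_{\mathcal S}^2+r_{\mathcal S}\Hess r_{\mathcal S}$. The radial direction is vertical with eigenvalue $1$, because the normal geodesic lies in the fiber. Applying \eqref{soul:eq:normal-index} to $X\perp\dot\gamma$, together with a Cauchy--Schwarz polarization for mixed terms, gives $\Hess f_{\mathcal S}\le g_{\mathcal V}$ pointwise. To pass to measures, I would use that $r_{\mathcal S}$ is locally semiconcave: support functions built from the fields $Y$ give lower barriers, so the singular part of the distributional Hessian is $\le0$. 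The cut locus of $\mathcal S$ has measure zero. This is the standard Cheeger--Gromoll/Calabi argument, and it is the step I expect to need the most care. The key point is that the barrier comparison holds at cut points along every minimizing segment.

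\emph{Jacobian and tube volume.} For \eqref{soul:eq:jacobian-initial}, the Jacobian near $t=0$ is given by Jacobi fields: $s$ of them are parallel horizontal fields with unit length, and $n-s-1$ are vertical fields with $J(0)=0$, $J'(0)$ orthonormal. This yields $J\sim t^{n-s-1}$. For \eqref{tp:eq:normal-volume-comparison}, $\partial_t\log J$ equals $\Delta r_{\mathcal S}$ along $\gamma$. Tracing \eqref{soul:eq:normal-index} over an orthonormal basis of $\dot\gamma^\perp$, with horizontal trace zero and vertical trace $(n-s-1)/t$, gives $\partial_t\log J\le(n-s-1)/t$. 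Integrating from $0$ with the initial asymptotics gives $J\le t^{n-s-1}$. Finally, for \eqref{tp:eq:tube-volume}, write $\Omega_r$ up to a null set as the image of $\{(y,\xi,t):t<\min(r,\tau_{\mathcal S})\}$. Since $P(\exp_y t\xi)=y$, we get $\int_{\Omega_r}a\circ P\,dV\le\int_{\mathcal S}a\int_{S^{n-s-1}}\int_0^r t^{n-s-1}\,dt\,d\omega\,dV_h=\omega_{n-s}r^{n-s}\int_{\mathcal S}a\,dV_h$. Taking $a=1$ gives the volume bound.
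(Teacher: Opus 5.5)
Your proposal follows the paper's proof essentially step for step. It obtains the fiber geometry and \eqref{four:eq:fiber-distance} from the normal retraction, and the index bound from the parallel horizontal Jacobi fields plus the linear vertical field, with the curvature term dropped by $\sec_g\ge0$. It then uses local semiconcavity and a null cut locus for the measure inequality, and integrates the traced index bound from the initial asymptotics to get the Jacobian and tube-volume estimates.

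Only small wording fixes are needed:
\begin{itemize}
\item $\tau_{\mathcal S}$ is the cut time, not the focal time.
\item The support functions must touch $r_{\mathcal S}$ from above, not below; the paper uses $\varepsilon+d_g(\gamma(\varepsilon),\cdot)$.
\item The mixed terms vanish outright because $\Hess r_{\mathcal S}(\nabla r_{\mathcal S},\cdot)=0$, so no Cauchy--Schwarz step is needed.
\item Near $\mathcal S$ you should use that $f_{\mathcal S}$ is smooth in a tubular neighbourhood, since $r_{\mathcal S}$ itself is not semiconcave there.
\end{itemize}
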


\begin{proof}
For $x\in \mathcal N_y$, choose a shortest normal segment $\gamma:[0,t]\to M$ from $\mathcal S$ to $x$. Theorem~\ref{ext:thm:retraction}, \eqref{ext:eq:normal-retraction}, gives
\begin{align}
&P(\gamma(u))=\gamma(0)=P(x)=y,\qquad \gamma([0,t])\subset \mathcal N_y,\qquad t=r_{\mathcal S}(x), \nonumber\\
&r_{\mathcal S}(x)\le d_g(y,x)\le d_{\mathcal N_y}(y,x)\le\operatorname{Length}(\gamma)=r_{\mathcal S}(x). \nonumber
\end{align}
Thus \eqref{four:eq:fiber-distance} holds and $\mathcal N_y$ is connected. The submersion property gives a closed embedded $(n-s)$-manifold without boundary. In a submersion chart at $x$,
\begin{align}
&(x_j)\subset \mathcal N_y,\quad d_{\mathcal N_y}(x_i,x_j)\longrightarrow0\ (i,j\to\infty) \nonumber\\
&{}\Longrightarrow\quad d_g(x_i,x_j)\longrightarrow0\quad\Longrightarrow\quad x_j\longrightarrow x\in \mathcal N_y \nonumber\\
&{}\Longrightarrow\quad d_{\mathcal N_y}(x_j,x)\longrightarrow0. \nonumber
\end{align}
For $t<\tau_{\mathcal S}(y,\xi)$ and $X\perp\dot\gamma(t)$, Theorem~\ref{ext:thm:retraction}, \eqref{ext:eq:parallel-horizontal}, gives
\begin{align}
&Y(s)=\mathcal P_{t,s}X_{\mathcal H} +\frac{s}{t}\mathcal P_{t,s}X_{\mathcal V} \nonumber
\end{align}
\begin{align}
&Y(0)\in T_y\mathcal S,\qquad Y(t)=X,\qquad |D_sY|^2=t^{-2}|X_{\mathcal V}|^2. \nonumber
\end{align}
Theorem~\ref{ext:thm:comparison} and total geodesy of $\mathcal S$ give \eqref{soul:eq:normal-index}. On the smooth locus,
\begin{align}
&\nabla r_{\mathcal S}\in\mathcal V,\qquad \Hess r_{\mathcal S}(\nabla r_{\mathcal S},\cdot)=0, \nonumber\\
&\Hess f_{\mathcal S}=dr_{\mathcal S}\otimes dr_{\mathcal S}+r_{\mathcal S}\Hess r_{\mathcal S}\le g_{\mathcal V}. \nonumber
\end{align}
Retain
\begin{align}
&\tau_{\mathcal S}(y,\xi) =\sup\{t>0:d_g(\mathcal S,\exp_y(t\xi))=t\}. \nonumber
\end{align}
The sets $\{\tau_{\mathcal S}\ge t\}$ are closed. Fubini and the smooth normal exponential map give
\begin{align}
&(dV_h\,d\omega\,dt)\{(y,\xi,t):t=\tau_{\mathcal S}(y,\xi)<\infty\}=0, \nonumber\\
&\Vol_g\{\exp_y(\tau_{\mathcal S}(y,\xi)\xi):\tau_{\mathcal S}(y,\xi)<\infty\}=0. \nonumber
\end{align}
By the cut-point criterion in Theorem~\ref{ext:thm:comparison}, the second set is the normal cut locus. For $x=\gamma(t)\notin\mathcal S$ and $0<\varepsilon<t$, clause~\textup{(ii)} of that theorem gives a smooth local upper support
\begin{align}
&r_{\mathcal S}(z)\le\varepsilon+d_g(\gamma(\varepsilon),z),\qquad r_{\mathcal S}(x)=\varepsilon+d_g(\gamma(\varepsilon),x). \nonumber
\end{align}
Its Hessian is locally bounded above; hence $r_{\mathcal S}$ and $f_{\mathcal S}$ are locally semiconcave off $\mathcal S$. 

Smooth normal tubular coordinates near $\mathcal S$ yield
\begin{align}
&(\Hess f_{\mathcal S})_{\mathrm{sing}}\le0,\qquad (\Hess f_{\mathcal S})_{\mathrm{ac}}\le g_{\mathcal V}\,dV_g,\qquad \Hess f_{\mathcal S}\le g_{\mathcal V}\,dV_g. \nonumber
\end{align}
The initial normal Jacobi data are $(X,0)$ for tangent variations with parallel initial normal vector, and $(0,Z)$ for normal angular variations. By Theorem~\ref{ext:thm:retraction}, \eqref{ext:eq:parallel-horizontal}, the horizontal fields are parallel, mutually orthonormal, and orthogonal to the radial and vertical angular fields. Consequently $J$ is the intrinsic polar Jacobian of $\mathcal N_y$. By \eqref{four:eq:fiber-distance},
\begin{align}
&\exp_y^{\mathcal N_y}(t\xi)=\exp_y^M(t\xi),\qquad \tau_{\mathcal N_y}(y,\xi)=\tau_{\mathcal S}(y,\xi). \nonumber
\end{align}
For each $y$, multiple minimizing directions are nondifferentiability points of intrinsic distance, and conjugate endpoints are critical values of $\exp_y^{\mathcal N_y}$. Almost-everywhere differentiability and the area formula give $\Vol_{k_y}(\operatorname{Cut}_{\mathcal N_y}(y))=0$, without a curvature assumption on $\mathcal N_y$. The initial data and \eqref{soul:eq:normal-index} give
\begin{align}
&\lim_{t\downarrow0}t^{1-(n-s)}J(t,y,\xi)=1,\qquad \partial_t\log J=\Delta r_{\mathcal S}\le\frac{n-s-1}{t}, \nonumber\\
&\partial_t\log(t^{1-(n-s)}J)\le0\quad\Longrightarrow\quad 0<J(t,y,\xi)\le t^{n-s-1}\quad(0<t<\tau_{\mathcal S}(y,\xi)). \nonumber
\end{align}
For every $r>0$,
\begin{align}
&\{r_{\mathcal S}=r\}\subset\{\exp_y(r\xi):y\in\mathcal S,\ \xi\in S(\nu_y\mathcal S)\},\qquad \Vol_g\{r_{\mathcal S}=r\}=0. \nonumber
\end{align}
Polar integration gives
\begin{align}
&\int_{\Omega_r}a\circ P\,dV =\int_{\mathcal S}a(y) \int_{S(\nu_y\mathcal S)} \int_0^{\min\{r,\tau_{\mathcal S}(y,\xi)\}} J(t,y,\xi)\,dt\,d\omega_y(\xi)\,dV_h(y). \nonumber
\end{align}
\begin{align}
&\int_{\Omega_r}a\circ P\,dV\le\int_{\mathcal S}a\,dV_h\,(n-s)\omega_{n-s}\int_0^r t^{n-s-1}\,dt=\omega_{n-s} r^{n-s}\int_{\mathcal S}a\,dV_h, \nonumber\\
&B_p(r)\subset\Omega_{r+d_g(p,\mathcal S)},\qquad \Vol_g B_p(r)\le\Vol_h(\mathcal S)\omega_{n-s}(r+d_g(p,\mathcal S))^{n-s}, \nonumber\\
&s>0\quad\Longrightarrow\quad n-s<n\quad\Longrightarrow\quad \AVR(M^n,g)=0. \nonumber
\end{align}
\end{proof}

\begin{lemma}\label{ext:lem:horizontal-vanishing}
Let $g_{\mathcal H}$ be the horizontal
metric of $P$.
Then the distance and ball comparisons
\begin{align}
&d_{\mathcal S}\le\rho\le d_{\mathcal S}+\operatorname{diam}_h(\mathcal S), \qquad B_p(r)\subset\Omega_r\subset B_p(r+\operatorname{diam}_h(\mathcal S)) \label{ext:eq:soul-ball-inclusions}
\end{align}
hold for every $r>0$, and
\begin{align}
&\lim_{r\to\infty}r^{2-n} \int_{\Omega_r}\langle E_1,g_{\mathcal H}\rangle_g\,dV_g=0. \label{ext:eq:horizontal-vanishing}
\end{align}
\end{lemma}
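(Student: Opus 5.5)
We take the base point $p\in\mathcal S$; since $\mathcal S$ is compact, moving the center changes nothing in the normalized limits. The plan is to get the ball inclusions from the triangle inequality, and then to test the soul Hessian comparison \eqref{soul:eq:hessian} against the divergence-free tensor $E_1$ times a radial cutoff. The limit formulas of Lemma~\ref{lem:scalar-radial-tests} will make the two sides cancel exactly.

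\emph{Distance comparisons.} Since $p\in\mathcal S$, we have $r_{\mathcal S}\le\rho$. Now let $y\in\mathcal S$ be a nearest point to $x$. Ambient distance is at most intrinsic distance, so
\begin{align}
\rho(x)\le d_g(p,y)+d_g(y,x)\le\operatorname{diam}_h(\mathcal S)+r_{\mathcal S}(x).
\end{align}
This gives both inclusions in \eqref{ext:eq:soul-ball-inclusions}. Consequently, by Theorem~\ref{thm:direct-main}, $\int_{\Omega_r}\Sc\,dV\le Cr^{n-2}$.

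\emph{Testing the Hessian comparison.} Fix a nonincreasing Lipschitz $\varphi\ge0$ with $\varphi=1$ on $[0,1]$ and $\varphi=0$ on $[1+\varepsilon,\infty)$, and set $u=\varphi(r_{\mathcal S}/r)$. Pair \eqref{soul:eq:hessian} with the smooth, nonnegative tensor $uE_1$; this is admissible by Lemma~\ref{lem:first-curvature-contraction}. Use $\diver E_1=0$ and integrate by parts in the distributional sense, with $N=\nabla r_{\mathcal S}$ and $\nabla f_{\mathcal S}=r_{\mathcal S}N$. Writing $\operatorname{tr}_{\mathcal V}E_1=\operatorname{tr}E_1-\operatorname{tr}_{\mathcal H}E_1$, we obtain
\begin{align}
\int u\,\operatorname{tr}_{\mathcal H}E_1\,dV\le\frac{n-2}{2}\int u\,\Sc\,dV+\int\varphi'(r_{\mathcal S}/r)\frac{r_{\mathcal S}}{r}\Bigl(\frac{\Sc}{2}-\Ric(N,N)\bigr)dV.
\end{align}
Multiply by $r^{2-n}$ and let $r\to\infty$. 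By \eqref{ext:eq:soul-ball-inclusions}, $r_{\mathcal S}/\rho\to1$ uniformly at infinity. Hence Lemma~\ref{lem:scalar-radial-tests}, applied after approximating $\varphi$ on the annular part, gives the following. The first term tends to $\frac{(n-2)^2}{2}L\int\varphi\,t^{n-3}dt$; the ball near $\mathcal S$ contributes $o(r^{n-2})$. The $\Sc/2$ part of the second term tends to
\begin{align}
\frac{(n-2)L}{2}\int\varphi'(t)\,t^{n-2}\,dt=-\frac{(n-2)^2L}{2}\int\varphi\,t^{n-3}\,dt.
\end{align}
These two limits cancel. Since $\varphi'\le0$ and $\Ric\ge0$, the $\Ric(N,N)$ part of the second term contributes a nonnegative quantity; we bound it by $C r^{2-n}\int_{\{r\le r_{\mathcal S}\le(1+\varepsilon)r\}}\Ric(N,N)\,dV$ and must show it vanishes.

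\emph{Main obstacle: the radial Ricci term.} Lemma~\ref{lem:scalar-radial-tests} controls $\Ric(\nu,\nu)$ with $\nu=\nabla F/|\nabla F|$, not $\Ric(N,N)$. Since $\Ric\ge0$,
\begin{align}
\Ric(N,N)\le2\Ric(\nu,\nu)+2|N-\nu|^2\,\Sc.
\end{align}
It therefore suffices to show $\operatorname*{ess\,sup}_{\rho\ge R}|\nabla r_{\mathcal S}-\nabla\rho|\to0$; combined with \eqref{eq:gradient-limits}, this gives $|N-\nu|\to0$. I would prove it by the angle argument of Corollary~\ref{one:cor:gradient-u}. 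Along a minimizing segment from $\mathcal S$ to $x$, the first-variation inequality gives $\langle\nabla\rho,N\rangle\ge(\rho-\operatorname{diam}_h\mathcal S)/r_{\mathcal S}$, and this tends to $1$ because $\mathcal S$ is compact; the Toponogov hinge comparison of the paper could be used instead. The term $2|N-\nu|^2\Sc$ then contributes $o(1)\cdot r^{2-n}\int\Sc=o(1)$.

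Putting the three limits together gives $\limsup_{r\to\infty} r^{2-n}\int u\operatorname{tr}_{\mathcal H}E_1\le0$. Since $\operatorname{tr}_{\mathcal H}E_1=\langle E_1,g_{\mathcal H}\rangle\ge0$ and $u\ge\mathbf 1_{\Omega_r}$, this proves \eqref{ext:eq:horizontal-vanishing}. The delicate points are justifying the distributional pairing across the normal cut locus, where $\varphi(r_{\mathcal S}/r)$ is only Lipschitz, and the gradient comparison at nondifferentiability points of $r_{\mathcal S}$.
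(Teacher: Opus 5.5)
Your architecture is sound and essentially the paper's. You pair $\Hess f_{\mathcal S}\le g_{\mathcal V}\,dV$ with a radial cutoff times the divergence-free tensor $E_1\ge0$. The two scalar-curvature limits cancel via $\int_0^\infty\varphi' t^{n-2}\,dt=-(n-2)\int_0^\infty\varphi t^{n-3}\,dt$, and the radial Ricci term must vanish. The paper instead uses the smooth annular cutoff $\beta(F/r)$, so the test tensor is smooth. It replaces $\nabla f_{\mathcal S}$ by $F\nabla F$, so the radial term is literally $\Ric(\nu,\nu)$, and it recovers $\Omega_r$ by dyadic summation. Your ball cutoff in $r_{\mathcal S}$, with the transfer $\Ric(N,N)\le 2\Ric(\nu,\nu)+2|N-\nu|^2\Sc$, is an acceptable variant. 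Note, though, that the part of $\Omega_r$ near the soul contributes about $L\delta^{n-2}r^{n-2}$, not $o(r^{n-2})$, so you need a $\delta\downarrow0$ or layer-cake step. The genuine gap is the step you single out as the main obstacle, the gradient comparison $\operatorname*{ess\,sup}_{\rho\ge R}|\nabla r_{\mathcal S}-\nabla\rho|\to0$: your justification for it does not work.

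Let $\gamma$ be the shortest segment from $y\in\mathcal S$ to $x$. The fundamental theorem of calculus only says that $(\rho\circ\gamma)'$ has average at least $(\rho(x)-\operatorname{diam}_h\mathcal S)/r_{\mathcal S}(x)$ on $[0,r_{\mathcal S}(x)]$. Bounding its endpoint value, $\langle\nabla\rho,N\rangle$, below by that average requires $\rho\circ\gamma$ to be convex, which fails in general. Indeed $\rho$ has concave kinks on the cut locus of $p$, and $\sec_g\ge0$ gives $\Hess(\rho^2/2)\le g$, which bounds the endpoint derivative from \emph{above}.

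Toponogov does not help either. Under $\sec_g\ge0$ the angle at $x$ of the triangle $(x,p,y)$ is at least its Euclidean comparison angle, so small $d_g(p,y)$ gives no upper bound on it. The hinge comparison of Lemma~\ref{lem:geometry} goes from angles to distances, the wrong direction. The argument of Corollary~\ref{one:cor:gradient-u} works only because $\mathfrak b$ is convex, and convexity is the ingredient to keep.

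Since $p\in\mathcal S$, $\rho(\gamma(u))\ge u$, so $(F\circ\gamma)''(u)\ge-(1+u)^{-3}$. Integrating $u(F\circ\gamma)''(u)$ over $[0,r_{\mathcal S}(x)]$ gives $r_{\mathcal S}(x)\langle\nabla F(x),N\rangle\ge F(x)-\sup_{\mathcal S}F-\tfrac12$ for every shortest normal segment. With $F/r_{\mathcal S}\to1$ and $|\nabla F|\to1$ this yields $\operatorname*{ess\,sup}_{r_{\mathcal S}\ge R}|\nabla F-\nabla r_{\mathcal S}|\to0$, hence $|N-\nu|\to0$. This is exactly what the paper establishes before testing; equivalently, use convexity of $\mathfrak b$ along $\gamma$ together with Corollary~\ref{one:cor:gradient-u}. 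With this replacement your quadratic transfer of the Ricci term, and hence your proof, goes through.
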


\begin{proof}
For $s=0$, $d_{\mathcal S}=\rho$ and $g_{\mathcal H}=0$. Let $s>0$. The triangle inequality gives
\begin{align}
&d_{\mathcal S}\le\rho\le d_{\mathcal S}+\operatorname{diam}_h(\mathcal S),\qquad B_p(r)\subset\Omega_r\subset B_p(r+\operatorname{diam}_h(\mathcal S)). \nonumber
\end{align}
Hence
\begin{align}
&\lim_{R\to\infty}\sup_{d_{\mathcal S}\ge R} |F/d_{\mathcal S}-1|=0. \nonumber
\end{align}
For each shortest normal $\gamma:[0,t]\to M$ ending at $x$, $t=d_{\mathcal S}(x)$,
\begin{align}
&\rho(\gamma(u))\ge u,\qquad (F\circ\gamma)''(u)\ge-(1+u)^{-3},\qquad \int_0^\infty\frac{u}{(1+u)^3}\,du=\frac12. \nonumber
\end{align}
Integration gives
\begin{align}
&t\langle\nabla F(x),\dot\gamma(t)\rangle -F(x)+F(\gamma(0)) =\int_0^t u(F\circ\gamma)''(u)\,du\ge-\tfrac12. \nonumber
\end{align}
Since $\sup_{\mathcal S}|F|<\infty$,
\begin{align}
&\lim_{R\to\infty}\sup_{d_{\mathcal S}(x)\ge R} \left(1-\langle\nabla F(x),\dot\gamma(d_{\mathcal S}(x))\rangle\right)_+=0, \nonumber
\end{align}
where $z_+=\max\{z,0\}$; the supremum includes all shortest normal segments. At differentiability points, $\dot\gamma(t)=\nabla d_{\mathcal S}$, and
\begin{align}
&|\nabla F-\nabla d_{\mathcal S}|^2=|\nabla F|^2-1+2(1-\langle\nabla F,\nabla d_{\mathcal S}\rangle). \nonumber
\end{align}
Lemma~\ref{one:lem:fixed-F}, \eqref{eq:gradient-limits}, therefore gives
\begin{align}
&\lim_{R\to\infty}\operatorname*{ess\,sup}_{d_{\mathcal S}\ge R} |\nabla F-\nabla d_{\mathcal S}|=0. \nonumber
\end{align}
For $0<a<b<\infty$,
\begin{align}
&|\nabla f_{\mathcal S}-F\nabla F|\le d_{\mathcal S}|\nabla d_{\mathcal S}-\nabla F|+|d_{\mathcal S}-F|\,|\nabla F|, \nonumber
\end{align}
and thus
\begin{align}
&\lim_{r\to\infty}\frac1r \operatorname*{ess\,sup}_{\{ar<F<br\}} |\nabla f_{\mathcal S}-F\nabla F|=0. \label{ext:eq:soul-gradient-comparison}
\end{align}
Fix $L=L(M,g)$ and $0\le\beta\in C_c^\infty((a,b))$. For every smooth compactly supported symmetric tensor $A$,
\begin{align}
&\langle g\,dV_g,A\rangle=\int_M\tr_gA\,dV_g. \nonumber
\end{align}
Lemma~\ref{lem:first-curvature-contraction}, \eqref{eq:first-curvature-contraction}, \eqref{pt:eq:E1-properties}, gives
\begin{align}
&\langle g\,dV_g-\Hess f_{\mathcal S},\beta(F/r)E_1\rangle =\frac{n-2}{2}\int_M\beta(F/r)\Sc_g\,dV_g +\int_M E_1\bigl(\nabla(\beta(F/r)),\nabla f_{\mathcal S}\bigr)\,dV_g. \nonumber
\end{align}
Lemma~\ref{lem:scalar-radial-tests}, \eqref{eq:common-scalar-F-tests}, gives
\begin{align}
&\lim_{r\to\infty}\frac{(n-2)r^{2-n}}2\int_M\beta(F/r)\Sc_g\,dV_g =\frac{(n-2)^2L}2\int_0^\infty\beta(t)t^{n-3}\,dt. \nonumber
\end{align}
Since $0\le E_1\le\Sc_g g/2$ and $|\nabla F|<2$,
\begin{align}
&r^{2-n}\left|\int_M E_1\bigl(\nabla(\beta(F/r)),\nabla f_{\mathcal S}-F\nabla F\bigr)\,dV_g\right| \nonumber\\
&{}\le\|\beta'\|_\infty\left(r^{-1}\operatorname*{ess\,sup}_{\{ar<F<br\}}|\nabla f_{\mathcal S}-F\nabla F|\right)\left(r^{2-n}\int_{\{ar<F<br\}}\Sc_g\,dV_g\right). \nonumber
\end{align}
By \eqref{ext:eq:soul-gradient-comparison} and Lemma~\ref{lem:scalar-radial-tests}, \eqref{eq:common-scalar-F-tests}, with a smooth test equal to one on $[a,b]$,
\begin{align}
&\sup_{r\ge r_0}r^{2-n}\int_{\{ar<F<br\}}\Sc_g\,dV_g<\infty, \nonumber\\
&\lim_{r\to\infty}r^{2-n}\left|\int_M E_1(\nabla(\beta(F/r)),\nabla f_{\mathcal S}-F\nabla F)\,dV_g\right|=0. \nonumber
\end{align}
Moreover,
\begin{align}
&\int_M E_1\bigl(\nabla(\beta(F/r)),F\nabla F\bigr)\,dV_g =\int_M\beta'(F/r)\frac Fr |\nabla F|^2\left(\frac{\Sc_g}{2}-\Ric_g(\nu,\nu)\right)\,dV_g. \nonumber
\end{align}
Lemma~\ref{lem:scalar-radial-tests}, \eqref{eq:common-radial-F-tests}, gives
\begin{align}
&\lim_{r\to\infty}r^{2-n}\int_M\beta'(F/r)\frac Fr|\nabla F|^2\Ric_g(\nu,\nu)\,dV_g=0. \nonumber
\end{align}
The error from replacing $|\nabla F|^2$ by one in the scalar term is at most
\begin{align}
&\frac{b\|\beta'\|_\infty}{2}\sup_{\{ar<F<br\}}||\nabla F|^2-1|\left(r^{2-n}\int_{\{ar<F<br\}}\Sc_g\,dV_g\right), \nonumber
\end{align}
and tends to zero by Lemma~\ref{one:lem:fixed-F}, \eqref{eq:gradient-limits}. Lemma~\ref{lem:scalar-radial-tests}, \eqref{eq:common-scalar-F-tests}, applied to $t\beta'(t)$, yields
\begin{align}
&\lim_{r\to\infty}r^{2-n}\int_M E_1\bigl(\nabla(\beta(F/r)),\nabla f_{\mathcal S}\bigr)\,dV_g =\frac{(n-2)L}2\int_0^\infty\beta'(t)t^{n-2}\,dt. \nonumber
\end{align}
Since $[\beta(t)t^{n-2}]_0^\infty=0$, integration by parts gives
\begin{align}
&\lim_{r\to\infty}r^{2-n}\langle g\,dV_g-\Hess f_{\mathcal S},\beta(F/r)E_1\rangle \nonumber\\
&{}=\frac{(n-2)L}2\left((n-2)\int_0^\infty\beta(t)t^{n-3}\,dt+\int_0^\infty\beta'(t)t^{n-2}\,dt\right)=0. \nonumber
\end{align}
Lemma~\ref{soul:lem:hessian}, \eqref{soul:eq:hessian}, and $E_1\ge0$ give
\begin{align}
&0\le\int_M\beta(F/r)\langle E_1,g_{\mathcal H}\rangle_g\,dV_g \le\langle g\,dV_g-\Hess f_{\mathcal S},\beta(F/r)E_1\rangle. \nonumber
\end{align}
Choose $\beta\ge0$ with $\beta=1$ on a fixed compact interval containing $(F/r)(\Omega_r\setminus\Omega_{r/2})$ for all sufficiently large $r$. Then
\begin{align}
&\lim_{r\to\infty}r^{2-n} \int_{\Omega_r\setminus\Omega_{r/2}}\langle E_1,g_{\mathcal H}\rangle_g\,dV_g=0. \nonumber
\end{align}
For every $\varepsilon>0$, choose $R_\varepsilon$ so that
\begin{align}
&\int_{\Omega_t\setminus\Omega_{t/2}}\langle E_1,g_{\mathcal H}\rangle_g\,dV_g\le\varepsilon t^{n-2}\qquad(t\ge R_\varepsilon). \nonumber
\end{align}
Summing at $t=r, r/2, r/4,\ldots$ gives
\begin{align}
&\int_{\Omega_r}\langle E_1,g_{\mathcal H}\rangle_g\,dV_g \le\int_{\Omega_{2R_\varepsilon}}\langle E_1,g_{\mathcal H}\rangle_g\,dV_g +\frac{\varepsilon r^{n-2}}{1-2^{2-n}}. \nonumber
\end{align}
Compactness of $\overline\Omega_{2R_\varepsilon}$ implies
\begin{align}
&0\le\limsup_{r\to\infty}r^{2-n}\int_{\Omega_r}\langle E_1,g_{\mathcal H}\rangle_g\,dV_g\le\frac{\varepsilon}{1-2^{2-n}}\qquad(\varepsilon>0), \nonumber\\
&\lim_{r\to\infty}r^{2-n}\int_{\Omega_r}\langle E_1,g_{\mathcal H}\rangle_g\,dV_g=0. \nonumber
\end{align}
\end{proof}

\begin{corollary}\label{ext:cor:soul-traces}
Set
\begin{align}
&\tr_{\mathcal V}\Ric_g =\sum_{a=1}^{n-s}\Ric_g(U_a,U_a), \nonumber
\end{align}
where $(U_a)$ is any vertical orthonormal basis. 
Then
\begin{align}
&\lim_{r\to\infty}r^{2-n}\int_{\Omega_r} \tr_{\mathcal V}\Ric_g\,dV_g =\frac{2-s}{2}L(M,g) \label{low:eq:vertical-trace-limit}.
\end{align}
\end{corollary}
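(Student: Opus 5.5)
The plan is to express $\tr_{\mathcal V}\Ric_g$ through the scalar curvature and the horizontal contraction $\langle E_1,g_{\mathcal H}\rangle_g$, and then invoke Lemma~\ref{ext:lem:horizontal-vanishing} together with the scalar limit on the tubes $\Omega_r$. Pointwise, with $E_1=\frac{\Sc_g}{2}g-\Ric_g$ (Notation~\ref{notation-E1}), a horizontal orthonormal basis of size $s$ gives
\begin{align}
&\langle E_1,g_{\mathcal H}\rangle_g=\tr_{\mathcal H}E_1=\frac{s}{2}\Sc_g-\tr_{\mathcal H}\Ric_g. \nonumber
\end{align}
Since $\tr_{\mathcal H}\Ric_g+\tr_{\mathcal V}\Ric_g=\Sc_g$, this yields the identity
\begin{align}
&\tr_{\mathcal V}\Ric_g=\frac{2-s}{2}\Sc_g+\langle E_1,g_{\mathcal H}\rangle_g. \nonumber
\end{align}
This holds wherever the splitting $\mathcal V\oplus\mathcal H$ is defined, which is everywhere, because $P$ is a smooth Riemannian submersion by Theorem~\ref{ext:thm:retraction}. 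The identity is purely algebraic and needs no frame adapted to the curvature.

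Next I integrate over $\Omega_r$ and multiply by $r^{2-n}$. Lemma~\ref{ext:lem:horizontal-vanishing}, equation~\eqref{ext:eq:horizontal-vanishing}, kills the second term in the limit. It remains to show that $\lim_{r\to\infty}r^{2-n}\int_{\Omega_r}\Sc_g\,dV_g=L(M,g)$. For this I use the inclusions~\eqref{ext:eq:soul-ball-inclusions}: $B_p(r)\subset\Omega_r\subset B_p(r+\operatorname{diam}_h(\mathcal S))$. Because $\Sc_g\ge0$ (from $\sec_g\ge0$), the integral over $\Omega_r$ is squeezed between the ball integrals at radii $r$ and $r+\operatorname{diam}_h(\mathcal S)$. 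Both normalized ball integrals tend to $L$ by Theorem~\ref{thm:direct-main}, equation~\eqref{eq:nt-ball-conclusion}, since $(r+c)^{n-2}/r^{n-2}\to1$. When $s=0$ we have $\Omega_r=B_p(r)$ with $p$ the soul point (or we use the same inclusions with diameter zero, up to a bounded shift of center, which Theorem~\ref{thm:direct-main} also handles), and $g_{\mathcal H}=0$, so the formula reads $\tr_{\mathcal V}\Ric_g=\Sc_g$, consistent with the coefficient $1$.

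The only potential obstacle is making sure the base point $p$ used in defining $L$ and the balls in~\eqref{ext:eq:soul-ball-inclusions} are compatible. That lemma is stated with $p$ implicitly chosen on $\mathcal S$. If $p\notin\mathcal S$, I would instead use the center-independence of the limit proved at the end of Theorem~\ref{thm:direct-main}, namely $B_x(r)$ with $x\in\mathcal S$. Since $d_g(p,x)$ is fixed, this changes nothing in the limit. Combining the three facts gives~\eqref{low:eq:vertical-trace-limit}.
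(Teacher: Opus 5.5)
Your proposal is correct and is essentially the paper's proof: the same pointwise identity $\tr_{\mathcal V}\Ric_g=\frac{2-s}{2}\Sc_g+\langle E_1,g_{\mathcal H}\rangle_g$, the vanishing~\eqref{ext:eq:horizontal-vanishing}, and the squeeze of $\int_{\Omega_r}\Sc_g\,dV_g$ between ball integrals via~\eqref{ext:eq:soul-ball-inclusions} and $\Sc_g\ge0$. Your remark that those inclusions tacitly require $p\in\mathcal S$, which center independence of $L$ repairs harmlessly, addresses a point the paper leaves unstated.
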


\begin{proof}
Pointwise,
\begin{align}
&\langle E_1,g_{\mathcal H}\rangle_g =\frac{s}{2}\Sc_g-\tr_{\mathcal H}\Ric_g =\frac{s-2}{2}\Sc_g+\tr_{\mathcal V}\Ric_g \label{ext:eq:horizontal-trace}
\end{align}
Lemma~\ref{ext:lem:horizontal-vanishing}, \eqref{ext:eq:soul-ball-inclusions}, and $\Sc_g\ge0$ give, including $s=0$,
\begin{align}
&\int_{B_p(r)}\Sc_g\,dV_g\le\int_{\Omega_r}\Sc_g\,dV_g\le\int_{B_p(r+D)}\Sc_g\,dV_g, \nonumber
\end{align}
whence
\begin{align}
&\lim_{r\to\infty}r^{2-n}\int_{\Omega_r}\Sc_g\,dV_g=L(M,g). \nonumber
\end{align}
Integrating \eqref{ext:eq:horizontal-trace} and using Lemma~\ref{ext:lem:horizontal-vanishing}, \eqref{ext:eq:horizontal-vanishing},
\begin{align}
&0\le\lim_{r\to\infty}r^{2-n}\int_{\Omega_r}\tr_{\mathcal V}\Ric_g\,dV_g=\frac{2-s}{2}L(M,g). \nonumber
\end{align}
\end{proof}

\begin{lemma}\label{low:lem:Euler-topology}
Assume $L(M,g)>0$. Then $s\le2$. Set
$b_i=\dim_{\mathbb F_2}H_i(\mathcal S;\mathbb F_2)$ for
$i\in\mathbb Z_{\ge0}$. Then
\begin{align}
&b_0=1,\qquad b_i=0\ (i>2),\qquad b_2\in\{0,1\},\qquad \chi(M)=1-b_1+b_2\le2, \label{low:eq:Betti-data}\\
&2(1-\AVR(M^n,g))-(\chi(M)-\AVR(M^n,g)) \ge0. \label{low:eq:Euler-volume-comparison}
\end{align}
\end{lemma}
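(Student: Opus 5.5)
The argument goes in three steps. First, Corollary~\ref{ext:cor:soul-traces} gives $s\le2$ when $L>0$. Next we establish the Betti data. Then \eqref{low:eq:Euler-volume-comparison} is a short algebraic consequence.

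\emph{Step 1: $s\le 2$.} By Corollary~\ref{ext:cor:soul-traces}, \eqref{low:eq:vertical-trace-limit}, the limit $\frac{2-s}{2}L(M,g)$ is a limit of integrals of $\tr_{\mathcal V}\Ric_g$. Since $\sec_g\ge0$ implies $\Ric_g\ge0$, these integrals are nonnegative, so the limit is $\ge0$. With $L(M,g)>0$ this forces $s\le2$.

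\emph{Step 2: Betti data.} Since $\mathcal S$ is a compact connected manifold without boundary of dimension $s\le2$, we have $b_0=1$ and $b_i=0$ for $i>s$, in particular for $i>2$. Poincar\'e duality with $\mathbb F_2$ coefficients, which needs no orientability, gives $b_s=1$; hence $b_2\in\{0,1\}$, with $b_2=1$ exactly when $s=2$. The low cases are direct: for $s=0$ (a point) $\chi=1$, and for $s=1$ (a circle) $b_1=1$ and $\chi=0$. In every case $\chi(\mathcal S)=1-b_1+b_2$. The soul theorem gives $\chi(M)=\chi(\mathcal S)$ via the homotopy equivalence $M\simeq\mathcal S$. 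Since \eqref{eq:Euler-definition} uses $\mathbb F_2$ Betti numbers, this agrees with the rational Euler characteristic. Finally $\chi(M)\le 1+b_2\le2$.

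\emph{Step 3: inequality \eqref{low:eq:Euler-volume-comparison}.} Expanding, the left side equals $2-\chi(M)-\AVR(M^n,g)$. If $s>0$, Lemma~\ref{soul:lem:hessian} (the tube-volume bound \eqref{tp:eq:tube-volume}) gives $\AVR=0$, and $2-\chi(M)\ge0$ by Step 2. If $s=0$, then $\chi(M)=1$, and Bishop--Gromov gives $\AVR\le1$, so the left side is $1-\AVR\ge0$.

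The only delicate point is the sign argument in Step 1, which needs nonnegativity of the vertical Ricci trace pointwise and the exact constant $\frac{2-s}{2}$. Everything else is topological bookkeeping.
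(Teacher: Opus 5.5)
Your proof is correct and follows the paper's argument. Both use $s\le2$ from Corollary~\ref{ext:cor:soul-traces}, the Betti data from $\mathbb F_2$ Poincar\'e duality on the soul, and $\AVR(M^n,g)=0$ for $s>0$ from the tube-volume bound \eqref{tp:eq:tube-volume}. The only cosmetic difference is at the end: you split into the cases $s=0$ and $s>0$, whereas the paper uses $b_2\AVR(M^n,g)=0$ to rewrite the left side as $b_1+(1-b_2)(1-\AVR(M^n,g))\ge0$.
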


\begin{proof}
Corollary~\ref{ext:cor:soul-traces} and $L(M,g)>0$ give
\begin{align}
&s\le2. \nonumber
\end{align}

Poincar\'e duality over $\mathbb F_2$ \cite[Theorem~3.30]{Hatcher} gives
\begin{align}
&b_0=1,\qquad b_i=0\ (i>2),\qquad b_2=\begin{cases}1,&s=2,\\0,&s<2,\end{cases}\qquad\chi(M)=1-b_1+b_2\le2. \nonumber
\end{align}

Lemma~\ref{soul:lem:hessian}, \eqref{tp:eq:tube-volume}, gives
\begin{align}
&b_2\ne0\Longrightarrow s=2\Longrightarrow b_2\AVR(M^n,g)=0. \label{b2AVR=0}
\end{align}

With $0\le \AVR(M^n,g)\le1$ and \ref{b2AVR=0},
\begin{align}
&2(1-\AVR(M^n,g))-(\chi(M)-\AVR(M^n,g)) \nonumber\\
&{}=1+b_1-b_2-\AVR(M^n,g) \nonumber\\
&{}=b_1+(1-b_2)(1-\AVR(M^n,g))\ge0. \nonumber
\end{align}
\end{proof}

\section{Positive-dimensional souls and rigidity}
\label{ext:sec:higher}

\begin{lemma}\label{ext:lem:surface-soul-traces}
Suppose $s=2$. Let $(X_1,X_2)$ and $(U_a)_{a=1}^{n-s}$
be local orthonormal frames of $\mathcal H$ and $\mathcal V$.
Then
\begin{align}
&\lim_{r\to\infty}r^{2-n}\int_{\Omega_r} \left(\sum_{1\le a<b\le n-s}K_g(U_a,U_b) +\sum_{i=1}^2\sum_{a=1}^{n-s}K_g(X_i,U_a)\right)\,dV_g=0. \label{ext:eq:vertical-mixed-vanishing}
\end{align}
\end{lemma}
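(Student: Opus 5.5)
The plan is to express the integrand algebraically through traces already controlled in Section~\ref{sec:soul-trace-products}, namely $\tr_{\mathcal V}\Ric_g$ and $\langle E_1,g_{\mathcal H}\rangle_g$, and then quote their limits.

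\emph{Algebraic identity.} Write $V=\sum_{a<b}K_g(U_a,U_b)$ and $X=\sum_{i,a}K_g(X_i,U_a)$. In the adapted frame $(X_1,X_2,U_1,\ldots,U_{n-2})$,
\begin{align}
&\tr_{\mathcal V}\Ric_g=2V+X,\qquad \tr_{\mathcal H}\Ric_g=2K_g(X_1,X_2)+X. \nonumber
\end{align}
With $s=2$, identity~\eqref{ext:eq:horizontal-trace} reads $\langle E_1,g_{\mathcal H}\rangle_g=\tr_{\mathcal V}\Ric_g$. Alternatively, directly from the proof of Lemma~\ref{lem:first-curvature-contraction},
\begin{align}
&E_1(X_i,X_i)=\sum_{\text{pairs in }X_i^\perp}K_g, \nonumber
\end{align}
and summing over $i=1,2$ gives $2V+X$. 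Hence
\begin{align}
&V+X\le 2V+X=\langle E_1,g_{\mathcal H}\rangle_g. \nonumber
\end{align}
Since $\sec_g\ge0$, both $V\ge0$ and $X\ge0$, so
\begin{align}
&0\le V+X\le\langle E_1,g_{\mathcal H}\rangle_g \nonumber
\end{align}
pointwise. This bound is frame independent, because $\langle E_1,g_{\mathcal H}\rangle_g$ is.

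\emph{Passage to the limit.} Integrate the two-sided bound over $\Omega_r$, multiply by $r^{2-n}$, and apply Lemma~\ref{ext:lem:horizontal-vanishing}, equation~\eqref{ext:eq:horizontal-vanishing}. The upper bound then tends to zero, and squeezing gives \eqref{ext:eq:vertical-mixed-vanishing}. As a consistency check, Corollary~\ref{ext:cor:soul-traces} with $s=2$ gives a vertical trace limit of $0$, which is the same statement.

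\emph{Remaining technical point.} The only thing still to verify is that the integrand is a well-defined measurable function even though the local frames are not global. This follows because $V$ and $X$ can be rewritten invariantly: $X+2V=\tr_{\mathcal V}\Ric_g$, and $V$ is the scalar curvature of $\Rm$ restricted to $\mathcal V$, divided by two. Both are smooth, since $P$ is a smooth Riemannian submersion by Theorem~\ref{ext:thm:retraction}. I do not expect any serious obstacle.
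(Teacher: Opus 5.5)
Your proof is correct and is essentially the paper's argument: the paper bounds the integrand by $\tr_{\mathcal V}\Ric_g=2V+X$ and invokes Corollary~\ref{ext:cor:soul-traces} with $s=2$, which by \eqref{ext:eq:horizontal-trace} is exactly the limit \eqref{ext:eq:horizontal-vanishing} that you quote directly. Your frame computation $\langle E_1,g_{\mathcal H}\rangle_g=2V+X$ and your frame-independence remark are both accurate.
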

\begin{proof}
\begin{align}
&\tr_{\mathcal V}\Ric_g =\sum_{a=1}^{n-s}\Ric_g(U_a,U_a) =2\sum_{1\le a<b\le n-s}K_g(U_a,U_b) +\sum_{i=1}^2\sum_{a=1}^{n-s}K_g(X_i,U_a). \nonumber
\end{align}
By Corollary~\ref{ext:cor:soul-traces}, \eqref{low:eq:vertical-trace-limit}, with $s=2$,
\begin{align}
&0\le r^{2-n}\int_{\Omega_r}\left(\sum_{a<b}K_g(U_a,U_b)+\sum_{i,a}K_g(X_i,U_a)\right)\,dV_g \nonumber\\
&{}\le r^{2-n}\int_{\Omega_r}\tr_{\mathcal V}\Ric_g\,dV_g\longrightarrow0\qquad(r\to\infty). \nonumber
\end{align}
\end{proof}

\begin{lemma}
\label{ext:cor:possible-souls}
If $L(M,g)>0$, then every soul $\mathcal S\subset M$ is either a point, a surface diffeomorphic to $\mathbb S^2$, or a surface diffeomorphic to $\mathbb {RP}^2$.
\end{lemma}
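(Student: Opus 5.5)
By Lemma~\ref{low:lem:Euler-topology}, the hypothesis $L(M,g)>0$ forces $s=\dim\mathcal S\le2$. The case $s=0$ is the point case, so I would exclude $s=1$ and then show that a two-dimensional soul has positive Euler characteristic. Note that $\mathcal S$ is totally geodesic. By the Gauss equation, its intrinsic sectional curvature is therefore that of $M$, so $K_h\ge0$ on $\mathcal S$.

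\emph{Case $s=2$.} Let $(X_1,X_2)$ be a local horizontal orthonormal frame. Decomposing $\Sc_g$ into horizontal, mixed and vertical sectional curvatures gives
\begin{align}
&\Sc_g=2K_g(X_1,X_2)+2\sum_{i,a}K_g(X_i,U_a)+2\sum_{a<b}K_g(U_a,U_b). \nonumber
\end{align}
By Lemma~\ref{ext:lem:surface-soul-traces}, \eqref{ext:eq:vertical-mixed-vanishing}, the last two groups contribute nothing to $r^{2-n}\int_{\Omega_r}$ in the limit. For the horizontal term I would use O'Neill's formula for the Riemannian submersion $P$ of Theorem~\ref{ext:thm:retraction}:
\begin{align}
&K_g(X_1,X_2)=K_h(dPX_1,dPX_2)-\tfrac34|A_{X_1}X_2|^2\le K_h\circ P. \nonumber
\end{align}
Applying \eqref{tp:eq:tube-volume} with the nonnegative continuous function $a=K_h$ gives
\begin{align}
&\int_{\Omega_r}2K_g(X_1,X_2)\,dV_g\le2\omega_{n-2}r^{n-2}\int_{\mathcal S}K_h\,dV_h=4\pi\omega_{n-2}\chi(\mathcal S)\,r^{n-2}, \nonumber
\end{align}
where the equality is Gauss--Bonnet. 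Since $r^{2-n}\int_{\Omega_r}\Sc_g\to L(M,g)$ (proof of Corollary~\ref{ext:cor:soul-traces}), we get $0<L(M,g)\le4\pi\omega_{n-2}\chi(\mathcal S)$. Hence $\chi(\mathcal S)>0$. Among closed surfaces, only $\mathbb S^2$ and $\mathbb{RP}^2$ have positive Euler characteristic.

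\emph{Case $s=1$.} Here $\mathcal S$ is a closed geodesic of length $\ell$. I would pass to the infinite cyclic cover $\widetilde M$ determined by $\pi_1(\mathcal S)\cong\pi_1(M)\cong\mathbb Z$. In $\widetilde M$ the soul lifts to a line, so the Cheeger--Gromoll splitting theorem gives an isometric product $\widetilde M=\R\times N^{n-1}$. Here $N$ has $\sec\ge0$, and $N$ is the lift of a single fiber, so its soul is a point. The deck generator acts as $(t,x)\mapsto(t+\ell,\phi(x))$ with $\phi$ an isometry of $N$. Since $\Sc_{\widetilde g}(t,x)=\Sc_N(x)$, a fundamental domain $[0,\ell)\times N$ gives the exact identity
\begin{align}
&\int_{\Omega_r}\Sc_g\,dV_g=\ell\int_{B^N(r)}\Sc_N\,dV_N, \nonumber
\end{align}
because the distance to the lifted soul is $d_N(o,\cdot)$ for the soul point $o$ of $N$.

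If $n-1\ge3$, Theorem~\ref{thm:direct-main} applied to $N$ bounds the right side by $Cr^{n-3}$. If $n-1=2$, the argument for $m=2$ in Lemma~\ref{low:lem:Euclidean-products} shows the right side is bounded. In either case $r^{2-n}\int_{\Omega_r}\Sc_g\to0$, so $L(M,g)=0$, contradicting the hypothesis. Thus $s\ne1$.

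The step I expect to need the most care is the $s=1$ splitting. One must check that the lifted soul really is a line, i.e.\ minimizing in $\widetilde M$ (it is the lift of the closed geodesic generating $\pi_1$, totally convex in $M$). One must also check that $\Omega_r$ lifts to $\R\times B^N(r)$ modulo the deck action. If this proves delicate, a fallback is a $k$-fold covering argument: it gives $L(M_k)=kL(M)$ exactly via $\Omega_r$, and an upper bound on $L(M_k)$ uniform in $k$ would force $L(M)=0$. Such a uniform bound could come from Corollary~\ref{ext:cor:soul-traces} combined with an $E_1$-contraction on vertical directions.
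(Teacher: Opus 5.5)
Your proof is correct, but it takes a different route from the paper's. After $s\le2$, the paper does not treat $s=1$ and $s=2$ separately. Instead it shows once and for all that $L(M,g)>0$ forces $\pi_1(M)$ to be finite. If $\pi_1(M)$ were infinite, one picks $j$ distinct lifts $p_1,\dots,p_j$ of $p$ in the universal cover $\widetilde M$. Lifting minimizing segments shows that every $x\in B_p(r)$ has at least $j$ preimages in $B^{\widetilde M}_{p_1}(r+D_j)$. Hence $jL(M,g)\le L(\widetilde M,\widetilde g)<\infty$ for every $j$ by Theorem~\ref{thm:direct-main}, so $L(M,g)=0$. Since $\pi_1(\mathcal S)\cong\pi_1(M)$, this excludes the circle and every closed surface other than $\mathbb S^2$ and $\mathbb{RP}^2$ in one stroke.

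Your argument splits into two cases.
\begin{itemize}
\item For $s=2$ you obtain the quantitative bound $0<L(M,g)\le4\pi\omega_{n-2}\chi(\mathcal S)$ from O'Neill, Lemma~\ref{ext:lem:surface-soul-traces}, the weighted tube-volume estimate and Gauss--Bonnet. This is the same chain the paper uses later in the proof of Theorem~\ref{ext:thm:positive-soul}. It is not circular, since none of those ingredients depends on the present lemma.
\item For $s=1$ you split the universal cover and get the exact formula $\int_{\Omega_r}\Sc_g\,dV_g=\ell\int_{B^N_o(r)}\Sc_N\,dV_N=O(r^{n-3})$.
\end{itemize}
Your route uses heavier tools (submersion curvature identities and the splitting theorem), but it gives more: an explicit Euler-characteristic bound, and the structure $M=(\R\times N)/\langle\tau\rangle$ when $s=1$. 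The paper's counting argument is shorter and uses only Theorem~\ref{thm:direct-main} plus covering theory. It also proves the soul-independent fact that $L(M,g)>0$ implies $|\pi_1(M)|<\infty$. Your fallback via $k$-fold covers is close in spirit, but the paper avoids needing a bound uniform in $k$ by counting sheets inside a single fixed cover.

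The steps you flagged in the $s=1$ case do go through.
\begin{itemize}
\item Since $\pi_1(\mathcal S)\to\pi_1(M)$ is an isomorphism, $\pi^{-1}(\mathcal S)$ is connected and is a single complete geodesic. It is totally convex because $\mathcal S$ is, so a minimizing segment between two of its points runs along it; hence it is a line.
\item The deck generator translates this line by $\ell$, so it shifts the line's Busemann function by $\ell$. It therefore preserves the $\R$-factor.
\item You only need $N$ to be noncompact, not that its soul is a point. This holds because otherwise $M=(\R\times N)/\langle\tau\rangle$ would be compact.
\end{itemize}
Only the sign of the O'Neill correction matters in the $s=2$ case. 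With the paper's convention $\mathcal A_XY=(\nabla_XY)^{\mathcal V}$ the coefficient is $3$, not $\tfrac34$.
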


\begin{proof}
Let $\mathcal S$ be a soul and set $s=\dim\mathcal S$. Corollary~\ref{ext:cor:soul-traces}, \eqref{low:eq:vertical-trace-limit}, and $L(M,g)>0$ give
\begin{align}
&s\le2. \nonumber
\end{align}

We first show that $\pi_1(M)$ is finite. Suppose instead that $\pi_1(M)$ is infinite, and let
\begin{align}
&\pi:(\widetilde M,\widetilde g)\longrightarrow(M,g) \nonumber
\end{align}
be the universal Riemannian covering. The covering is a local isometry, hence $\sec_{\widetilde g}\ge0$; completeness of $g$ implies completeness of $\widetilde g$, and $\widetilde M$ is noncompact because it maps surjectively onto the noncompact manifold $M$. Fix $p\in M$. Since $\pi_1(M)$ is infinite, the fiber $\pi^{-1}(p)$ is infinite. For each $j\ge1$, choose distinct points
\begin{align}
&p_1,\ldots,p_j\in\pi^{-1}(p),\qquad D_j=\max_{1\le i\le j}d_{\widetilde g}(p_1,p_i). \nonumber
\end{align}
For every $x\in B_p(r)$, lift a minimizing geodesic from $p$ to $x$ starting at each $p_i$. The lifted endpoints are distinct, by uniqueness of path lifting applied to the reversed geodesic, and all lie in $B_{p_1}^{\widetilde M}(r+D_j)$. Hence
\begin{align}
&\#\bigl(\pi^{-1}(x)\cap B_{p_1}^{\widetilde M}(r+D_j)\bigr)\ge j\qquad(x\in B_p(r)). \nonumber
\end{align}
Since $\Sc_{\widetilde g}=\Sc_g\circ\pi$ and $\Sc_g\ge0$, the area formula gives
\begin{align}
&j\int_{B_p(r)}\Sc_g\,dV_g\le\int_{B_{p_1}^{\widetilde M}(r+D_j)}\Sc_{\widetilde g}\,dV_{\widetilde g}. \nonumber
\end{align}
For fixed $j$, Theorem~\ref{thm:direct-main}, \eqref{eq:nt-ball-conclusion}, therefore yields
\begin{align}
&jL(M,g)\le L(\widetilde M,\widetilde g)<\infty. \nonumber
\end{align}
Since this holds for every $j\ge1$, it follows that
\begin{align}
&L(M,g)=0, \nonumber
\end{align}
contrary to the hypothesis. Thus
\begin{align}
&\#\pi_1(M)<\infty. \nonumber
\end{align}

The soul retraction is a homotopy equivalence, so
\begin{align}
&\pi_1(M)\cong\pi_1(\mathcal S). \nonumber
\end{align}
If $s=1$, then the compact connected one-dimensional manifold $\mathcal S$ is diffeomorphic to $\mathbb S^1$, whose fundamental group is infinite, a contradiction. Hence $s\ne1$. Since $s\le2$, either $s=0$ or $s=2$. If $s=0$, then $\mathcal S$ is a point. If $s=2$, then $\mathcal S$ is a closed connected surface with finite fundamental group, and the compact-surface classification gives
\begin{align}
&\mathcal S\text{ is diffeomorphic to }\mathbb S^2\text{ or }\mathbb {RP}^2. \nonumber
\end{align}
\end{proof}

%\subsection{Riemannian submersion identities}
\begin{lemma}\label{nine:lem:oneill}
Let $n\ge1$, $0\le s\le n$, and let $P:(M^n,g)\to(B^s,h)$ be a smooth surjection of smooth Riemannian manifolds such that $dP$ maps $(\ker dP)^\perp$ isometrically onto $TB$. Let $\mathcal V=\ker dP$, $\mathcal H=\mathcal V^\perp$, and use superscripts $\mathcal V,\mathcal H$ for orthogonal projection. For horizontal fields $X,Y$ and a vertical field $U$, define
\begin{align}
&\mathcal A_XY=(\nabla_XY)^{\mathcal V},\qquad \mathcal A_XU=(\nabla_XU)^{\mathcal H}. \nonumber
\end{align}
For horizontal lifts of fields on $B$,
\begin{align}
&[X,Y]^{\mathcal V}=2\mathcal A_XY,\qquad \mathcal A_YX=-\mathcal A_XY,\qquad \langle\mathcal A_XU,Y\rangle=-\langle U,\mathcal A_XY\rangle. \label{nine:eq:oneill-bracket}
\end{align}
Moreover, for orthonormal basic horizontal fields $X_i,X_j$,
\begin{align}
&K_M(X_i,X_j)=K_B(dPX_i,dPX_j)-3|\mathcal A_{X_i}X_j|^2. \label{nine:eq:oneill-horizontal}
\end{align}
\end{lemma}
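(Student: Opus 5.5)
The plan is to verify the O'Neill identities directly from the definitions, working with basic horizontal fields, i.e.\ horizontal lifts $X,Y$ of vector fields $\bar X,\bar Y$ on $B$. Two standard facts will be used throughout. First, for basic $X$ and vertical $U$, the bracket $[X,U]$ is vertical, because $dP[X,U]=[\bar X,0]=0$ on $P$-related fields. Second, $\langle X,Y\rangle=\langle\bar X,\bar Y\rangle\circ P$ is constant along fibers.

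For the bracket identity in \eqref{nine:eq:oneill-bracket}, I will compare the Koszul formula for $\langle\nabla_XY,U\rangle$ with that for $\langle\nabla_YX,U\rangle$. Subtracting gives $\langle[X,Y],U\rangle$. Adding them, the terms $U\langle X,Y\rangle$, $\langle[U,X],Y\rangle$ and $\langle[U,Y],X\rangle$ all vanish: the first because $\langle X,Y\rangle$ is fiberwise constant, the other two because those brackets are vertical. Hence $\mathcal A_XY+\mathcal A_YX=0$, and therefore $[X,Y]^{\mathcal V}=\mathcal A_XY-\mathcal A_YX=2\mathcal A_XY$. Since $\mathcal A_XY$ is tensorial in $X$ and $Y$, antisymmetry extends from basic fields to all horizontal fields. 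The identity $\langle\mathcal A_XU,Y\rangle=-\langle U,\mathcal A_XY\rangle$ follows by differentiating $\langle U,Y\rangle=0$ along $X$ and applying metric compatibility.

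For \eqref{nine:eq:oneill-horizontal}, the first step is to note that $(\nabla_XY)^{\mathcal H}$ is the horizontal lift of $\nabla^B_{\bar X}\bar Y$. This holds because the Koszul formula with a third basic field $Z$ involves only inner products of basic fields, which are functions pulled back from $B$, and horizontal parts of brackets, which are lifts of $[\bar X,\bar Y]$. Thus $\nabla_XY=\widetilde{\nabla^B_{\bar X}\bar Y}+\mathcal A_XY$.

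Next I will compute
\[
\langle R(X,Y)Y,X\rangle=\langle\nabla_X\nabla_YY-\nabla_Y\nabla_XY-\nabla_{[X,Y]}Y,X\rangle .
\]
\begin{itemize}
\item Substituting the decomposition into the first two terms, the lifted parts reproduce the base expression.
\item The vertical pieces contribute $\langle\nabla_X(\mathcal A_YY),X\rangle=0$, since $\mathcal A_YY=0$.
\item The term $-\langle\nabla_Y(\mathcal A_XY),X\rangle=\langle\mathcal A_XY,\mathcal A_YX\rangle=-|\mathcal A_XY|^2$ comes from metric compatibility applied to $\langle\mathcal A_XY,X\rangle=0$.
\item The bracket term splits into a horizontal part, which is the lift of the base bracket term, and a vertical part $-\langle\nabla_{2\mathcal A_XY}Y,X\rangle$.
\end{itemize}

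For that last vertical part, take $V$ vertical. Using $[V,Y]$ vertical gives $\nabla_VY=\nabla_YV+[V,Y]$, so $\langle\nabla_VY,X\rangle=\langle\nabla_YV,X\rangle=\langle\mathcal A_YV,X\rangle=-\langle V,\mathcal A_YX\rangle=\langle V,\mathcal A_XY\rangle$. With $V=2\mathcal A_XY$ this term equals $-2|\mathcal A_XY|^2$.

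Summing the contributions gives $K_M=K_B-3|\mathcal A_XY|^2$ for orthonormal $X_i,X_j$.

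The main obstacle is the sign and bookkeeping in the curvature computation, in particular keeping the paper's convention $\mathcal R_{ijkl}=\langle\Rm(e_i,e_j)e_l,e_k\rangle$ consistent with the sign of $K$. I will fix the convention first by checking it against the unit sphere.
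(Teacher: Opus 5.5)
Your derivation is correct. The paper, by contrast, gives no derivation at all: it records these as O'Neill's classical submersion identities and cites O'Neill's 1966 paper and Besse's book.

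The steps of your argument check out:
\begin{itemize}
\item The Koszul formula with vertical $U$ reduces to $2\langle\nabla_XY,U\rangle=\langle[X,Y],U\rangle$, because $\langle X,Y\rangle$ is constant along fibers and $[X,U]$, $[Y,U]$ are vertical.
\item The identification $(\nabla_XY)^{\mathcal H}=\widetilde{\nabla^B_{\bar X}\bar Y}$ is right.
\item The three non-base contributions $0$, $-|\mathcal A_XY|^2$ and $-2|\mathcal A_XY|^2$ to $\langle\Rm(X,Y)Y,X\rangle$ have the correct signs.
\item You use the convention $\Rm(X,Y)=\nabla_X\nabla_Y-\nabla_Y\nabla_X-\nabla_{[X,Y]}$. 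This is the one the paper uses, as its Riccati equation $S'+S^2+\mathcal R_N=0$ with $\mathcal R_N(X)=\Rm(X,N)N\ge0$ confirms.
\end{itemize}

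Your computation is essentially O'Neill's original argument written out. What it buys is self-containedness: it shows the only input is that basic fields are $P$-related to fields on $B$. That single fact makes $[X,U]$ and $[V,Y]$ vertical and makes inner products of basic fields pull back from $B$. The citation buys brevity.

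One small simplification: a single Koszul formula for $\langle\nabla_XY,U\rangle$ already gives $\mathcal A_XY=\tfrac12[X,Y]^{\mathcal V}$ directly. You do not need to add and subtract the formula for $\nabla_YX$.
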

\begin{proof}
These are the classical curvature identities for Riemannian submersions due to O'Neill; see \cite[Corollary~1, p.~465]{ONeill1966} for the horizontal sectional-curvature identity. For the tensor, sectional, Ricci, and scalar-curvature formulas in a standard reference, see \cite[pp.~239--244, especially Proposition~9.24, Theorem~9.28, Corollary~9.29, Proposition~9.36, and Corollary~9.37]{Besse1987}.
\end{proof}

%\subsection{Two-sphere souls and equality}
\label{ext:subsec:sphere-soul}
Suppose $\mathcal S$ is diffeomorphic to $\mathbb S^2$, so $s=2$.
Put $D=\operatorname{diam}_h\mathcal S$ and fix
\begin{align}
&y_0\in\mathcal S,\qquad N=P^{-1}(y_0). \nonumber
\end{align}
Here $P$ is the smooth Riemannian submersion in Theorem~\ref{ext:thm:retraction}.
The normal connection is
$\nabla_X^\perp v=(\nabla_Xv)^{\perp}$ for $X\in T\mathcal S$
and a normal field $v$, where $\perp$ denotes projection onto
$\nu\mathcal S$. For a piecewise smooth path
$\gamma:[0,1]\to\mathcal S$, denote normal parallel transport by $A_\gamma$
and transport between fibers by horizontal lifting by $\Phi_\gamma$.
The normal connection has trivial holonomy at $y_0$ if
\begin{align}
&A_\gamma=\Id_{\nu_{y_0}\mathcal S} \quad\text{for every piecewise smooth loop $\gamma$ based at $y_0$}. \nonumber
\end{align}
Each $A_\gamma$ is orthogonal since the connection preserves the metric.

\begin{externaltheorem}
\label{ext:thm:transport-exponential}
Let $(M,g)$ be complete with $\sec_g\ge0$, let $\mathcal S$ be a soul,
and let $P:M\to\mathcal S$ be the Sharafutdinov retraction. For every
piecewise smooth path $\gamma:[0,1]\to\mathcal S$, let $A_\gamma$ be
normal parallel transport and let $\Phi_\gamma$ be transport between
fibers by horizontal lifting. Then
\begin{align}
&\Phi_\gamma(\exp_{\gamma(0)}v) =\exp_{\gamma(1)}(A_\gamma v), \qquad v\in\nu_{\gamma(0)}\mathcal S. \label{ext:eq:transport-exponential}
\end{align}
No injectivity of the normal exponential map is required.
\end{externaltheorem}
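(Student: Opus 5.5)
The plan is to identify the horizontal lift of $\gamma$ explicitly as the curve traced by normal exponentials of a normal-parallel field, and then to conclude by uniqueness of horizontal lifts. By concatenation it suffices to treat a smooth path $\gamma:[0,1]\to\mathcal S$. Fix $v\in\nu_{\gamma(0)}\mathcal S$ and let $V(u)=A_{\gamma|_{[0,u]}}v$ be the normal-parallel field along $\gamma$, so that $\nabla^\perp_{\dot\gamma}V=0$ and $|V|\equiv|v|$. Define
\begin{align}
&c(u)=\exp_{\gamma(u)}(V(u)),\qquad 0\le u\le1. \nonumber
\end{align}
If $v=0$, then $c=\gamma$. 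Since $dP|_{T\mathcal S}=\Id$ by \eqref{ext:eq:normal-retraction}, the curve $\gamma$ is the horizontal lift of itself, and \eqref{ext:eq:transport-exponential} is immediate. Assume from now on that $v\ne0$.

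First, by \eqref{ext:eq:normal-retraction} in Theorem~\ref{ext:thm:retraction}, $P(c(u))=\gamma(u)$ for every $u$, so $c$ lies over $\gamma$.

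Second, we show that $\dot c(u)$ is horizontal. Fix $u_0$ and consider the geodesic variation $(s,u)\mapsto\exp_{\gamma(u)}(sV(u))$. Its variation field along $\sigma(s)=\exp_{\gamma(u_0)}(sV(u_0))$ is the Jacobi field $J$ with
\begin{align}
&J(0)=\dot\gamma(u_0),\qquad J'(0)=\nabla_{\dot\gamma}V(u_0). \nonumber
\end{align}
Because $\mathcal S$ is totally geodesic, the tangential part of $\nabla_{\dot\gamma}V$ is $-\langle\mathrm{II}(\dot\gamma,\cdot),V\rangle^\sharp=0$, while its normal part vanishes by the choice of $V$; hence $J'(0)=0$. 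Reparametrize by arclength: set $\xi=V(u_0)/|v|$, so that $\sigma(s)=\gamma_\xi(s|v|)$. The rescaled field $\tilde J(t)=J(t/|v|)$ is then a normal Jacobi field along $\gamma_\xi$ with $\tilde J(0)=\dot\gamma(u_0)\in T\mathcal S$ and $\tilde J'(0)=0$. By \eqref{ext:eq:parallel-horizontal}, $\tilde J$ is parallel and lies in $\mathcal H$ at every time. Therefore $\dot c(u_0)=J(1)=\tilde J(|v|)\in\mathcal H_{c(u_0)}$. This holds for arbitrary $|v|$, since the retraction theorem places no restriction on $t$; no injectivity of the normal exponential map is used, because we work only with the smooth map $(u,s)\mapsto\exp_{\gamma(u)}(sV(u))$.

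Third, we conclude. The curve $c$ is horizontal, projects to $\gamma$, and starts at $\exp_{\gamma(0)}v$. Horizontal lifts through a given initial point are unique: in a submersion chart the lift is the solution of an ODE, since $dP|_{\mathcal H}$ is an isomorphism. Hence $c=\Phi_{\gamma|_{[0,u]}}(\exp_{\gamma(0)}v)$ for every $u$. Evaluating at $u=1$ gives \eqref{ext:eq:transport-exponential}. For a piecewise smooth $\gamma$, both $\Phi$ and $A$ compose along the pieces, so the identity follows by applying the smooth case to each piece in turn.

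The main obstacle is the second step. One has to check carefully that the initial covariant derivative $J'(0)$ vanishes, which is exactly where total geodesy of $\mathcal S$ combines with normal-parallelity of $V$. One also has to check that the rescaling to unit speed puts us precisely in the hypotheses of \eqref{ext:eq:parallel-horizontal}, namely $J(0)=X\in T_y\mathcal S$ and $J'(0)=0$ along $\gamma_\xi$ with $\xi\in\nu_y\mathcal S$ a unit vector.
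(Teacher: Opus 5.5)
Your argument is correct, and it takes a genuinely different route from the paper. The paper gives no derivation. It imports \eqref{ext:eq:transport-exponential} from Wilking's duality theorem (Theorem~4.1(c) of the cited work), after checking that the soul satisfies the leaf hypothesis there. Because that statement concerns broken geodesics in $\mathcal S$, the paper then passes to piecewise smooth paths by $C^1$ approximation and continuous dependence of the two transport equations. It also argues separately that horizontal lifts exist over the whole path, since their speed equals the base speed.

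You instead show that the identity is a formal consequence of the Jacobi-field clause \eqref{ext:eq:parallel-horizontal} of Theorem~\ref{ext:thm:retraction}, which the paper already records. The explicit curve $c(u)=\exp_{\gamma(u)}(V(u))$ lies over $\gamma$ by \eqref{ext:eq:normal-retraction}. Its velocity is the time-one value of a Jacobi field with initial data $(\dot\gamma(u_0),0)$, by total geodesy together with $\nabla^\perp_{\dot\gamma}V=0$, so it is horizontal. ODE uniqueness then identifies $c$ with the horizontal lift.

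This buys three things. It treats piecewise smooth paths directly, with no approximation step. It produces the lift explicitly, so global existence of the lift is automatic. And it makes transparent why injectivity of the normal exponential map never enters. The only price is that all the geometric content (Perelman's flat-strip rigidity) is carried by the validity of \eqref{ext:eq:parallel-horizontal} for every $t$, with no cut-time restriction; you correctly note that the imported statement imposes none.

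Two small points. First, normality of $\tilde J$ is automatic from $\tilde J(0)\perp\xi$ and $\tilde J'(0)=0$. Second, in the case $v=0$, the identity $dP|_{T\mathcal S}=\Id$ alone does not show $\dot\gamma\in\mathcal H$. To fix this, differentiate $P(\exp_y w)=y$ at $w=0$ to get $\nu_y\mathcal S\subset\mathcal V_y$. By dimension this gives $\mathcal V_y=\nu_y\mathcal S$, hence $\mathcal H_y=T_y\mathcal S$. Alternatively, read this off \eqref{ext:eq:parallel-horizontal} at $t=0$.
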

\noindent\emph{Source.} This is \cite[Theorem~4.1(c)]{Wilking2006}
applied to the soul.
The leaf hypothesis there holds: horizontal curves starting on
$\mathcal S$ remain on it by Theorem~\ref{ext:thm:retraction}, and any two of its points can
be joined horizontally. Moreover, $\mathcal S$ is smooth and
$P|_{\mathcal S}$ is the identity. The statement
there is for broken geodesics. Piecewise smooth paths follow by
piecewise $C^1$ approximation and continuous dependence for the
two transport equations. The horizontal lifts exist on the entire
path: their speeds equal the base speeds, so they remain in a
compact ambient ball.

\begin{lemma}
\label{soul:lem:horizontal-transport}
Under the two-sphere-soul hypotheses of this subsection, let
$\Phi_\gamma:\mathcal N_{\gamma(0)}\to \mathcal N_{\gamma(1)}$ be horizontal transport
along a piecewise smooth path $\gamma:[0,1]\to\mathcal S$.
Put
\begin{align}
&\beta=\sqrt{\tfrac13\max_{\mathcal S}K_h}. \label{soul:eq:transport-constant}
\end{align}
For every piecewise smooth path $\gamma$ in $\mathcal S$ of length
$\ell$, horizontal transport is a smooth diffeomorphism between its
endpoint fibers and satisfies
\begin{align}
&e^{-\beta\ell}|U|\le|d\Phi_\gamma(U)|\le e^{\beta\ell}|U| \qquad(U\text{ tangent to the initial fiber}). \label{soul:eq:transport-differential}
\end{align}
If $\mathcal H$ is integrable, these fiber maps are Riemannian
isometries. Here integrability means
$[X,Y]^{\mathcal V}=0$ for all local horizontal fields $X,Y$.
\end{lemma}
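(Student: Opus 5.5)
The plan is to realize $\Phi_\gamma$ as the time-one map of a nonautonomous horizontal flow and then control its differential by a Gronwall estimate along horizontal lifts.

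\textbf{Diffeomorphism.} For $x\in\mathcal N_{\gamma(0)}$, let $\tilde\gamma_x$ be the horizontal lift of $\gamma$ starting at $x$. Its existence on all of $[0,1]$ is already noted after Theorem~\ref{ext:thm:transport-exponential}, since lifts have base speed and stay in a compact ball. On each smooth piece, $\tilde\gamma_x(t)$ solves an ODE $\dot z=\widetilde{\dot\gamma(t)}(z)$, where $\widetilde{\,\cdot\,}$ denotes the horizontal lift, which is smooth in $(t,z)$. Smooth dependence on initial data then makes $\Phi_\gamma$ smooth. Lifting the reversed path gives a smooth inverse, so $\Phi_\gamma$ is a diffeomorphism. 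The same construction applied to $\gamma|_{[0,t]}$ shows $\Phi_\gamma=\Phi_{\gamma|_{[t,1]}}\circ\Phi_{\gamma|_{[0,t]}}$. Hence it suffices to bound $\tfrac{d}{dt}\log|V(t)|$, where $V(t)=d\Phi_{\gamma|_{[0,t]}}(U)$ is the holonomy-invariant vertical field along the lift $c=\tilde\gamma_x$.

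\textbf{The Gronwall bound.} Locally, write $X$ for the basic horizontal field lifting $\dot\gamma$ near $\gamma(t)$. Because $V$ is the variation field of a family of horizontal lifts of the same base curve, $[X,V]=0$ and $V$ stays vertical. Then
\begin{align}
&\tfrac{d}{dt}|V|^2=2\langle\nabla_XV,V\rangle=2\langle\nabla_VX,V\rangle. \nonumber
\end{align}
The task is to bound the last term by $2\beta|\dot\gamma|\,|V|^2$. Integrating $|\tfrac{d}{dt}\log|V||\le\beta|\dot\gamma|$ over $[0,1]$ then gives \eqref{soul:eq:transport-differential}, with $\int_0^1|\dot\gamma|=\ell$.

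\textbf{Bounding $\mathcal A$.} The constant $\beta$ comes from O'Neill. By Lemma~\ref{nine:lem:oneill}, \eqref{nine:eq:oneill-horizontal}, together with $\sec_g\ge0$, for orthonormal basic $X_1,X_2$,
\begin{align}
&3|\mathcal A_{X_1}X_2|^2=K_h-K_M(X_1,X_2)\le\max_{\mathcal S}K_h,\qquad\text{so}\qquad |\mathcal A_{X_1}X_2|\le\beta. \nonumber
\end{align}
Since $\mathcal H$ has rank $2$, this is the full $\mathcal A$-tensor. By the skew relation \eqref{nine:eq:oneill-bracket}, we also get $|\mathcal A_XU|\le\beta|X|\,|U|$ for vertical $U$.

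\textbf{Main obstacle.} Naively, $\langle\nabla_VX,V\rangle=-\langle X,(\nabla_VV)^{\mathcal H}\rangle$ is a second-fundamental-form ($T$-tensor) term of the fibers, not an $\mathcal A$-term. I therefore expect the key step to be showing that, for this particular submersion, the rate of change of $|V|$ along holonomy is governed by $\mathcal A$ alone.

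My first attempt is to use Theorem~\ref{ext:thm:transport-exponential}. Writing $x=\exp_{y}(v)$ and $U=d(\exp_y|_{\nu_y\mathcal S})_v(w)$, that theorem gives
\begin{align}
&d\Phi_\gamma(U)=d(\exp_{\gamma(1)}|_{\nu\mathcal S})_{A_\gamma v}(A_\gamma w). \nonumber
\end{align}
Here $A_\gamma$ is orthogonal. This reduces the comparison to how normal Jacobi fields with data $(0,w)$ depend on the base point along $\gamma$. The derivative of the normal exponential map in the $\gamma$ direction is the horizontal Jacobi field of \eqref{ext:eq:parallel-horizontal}, which is parallel. Hence its covariant derivative along the vertical direction $V$ should reduce to $\mathcal A_XV$, via the identity $\nabla_{\dot\sigma}J=0$ along radial geodesics $\sigma$ combined with $[X,V]=0$. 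This would give $|\langle\nabla_VX,V\rangle|\le|\mathcal A_XV|\,|V|\le\beta|X|\,|V|^2$.

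If this identification fails, the fallback is a direct comparison of the Jacobi fields in the two fibers, using \eqref{soul:eq:normal-index}.

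\textbf{Integrable case.} If $\mathcal H$ is integrable, then $[X,Y]^{\mathcal V}=0$, so $\mathcal A\equiv0$ by \eqref{nine:eq:oneill-bracket}. The same estimate then holds with $\beta=0$, which gives $|d\Phi_\gamma U|=|U|$. Thus each $\Phi_\gamma$ is a Riemannian isometry between the fibers.
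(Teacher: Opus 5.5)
There is a genuine gap at the step you flag as the main obstacle. Your reduction to bounding $\langle\nabla_VX,V\rangle$ is correct, and so is the O'Neill bound $|\mathcal A_XY|\le\beta$. The proposed resolution, however, does not work.

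The parallelism $\nabla_{\dot\sigma}\widetilde X=0$ along radial geodesics $\sigma$ of the fiber only controls $\nabla_{\dot\sigma}\widetilde X$. That is the fiber's second fundamental form with one argument radial. The relation $[X,V]=0$ only rewrites $\nabla_VX$ as $\nabla_XV$. For angular $V$, the quantity $\langle\nabla_VX,V\rangle=-\langle X,(\nabla_VV)^{\mathcal H}\rangle$ involves the second fundamental form with both arguments angular, and neither fact determines it.

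The target you aim for is also too strong. Since $\mathcal A_XV$ is horizontal, $\nabla_VX=\mathcal A_XV$ would give $\langle\nabla_VX,V\rangle=0$. That means totally geodesic fibers and isometric holonomy for every soul. The lemma claims this only when $\mathcal H$ is integrable, and it fails in general: through Theorem~\ref{ext:thm:transport-exponential}, the fiber metrics $\exp_y^*k_y$ and $\exp_{\gamma(1)}^*k_{\gamma(1)}$ need not be related by $A_\gamma$.

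Your fallback via \eqref{soul:eq:normal-index} does not help either. It gives only the one-sided bound $|J_a(t)|\le t$ inside one fiber, and no comparison between fibers over different base points.

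The same gap affects your integrable case. $\mathcal A\equiv0$ alone does not make holonomy isometric: a warped product $B\times_fF\to B$ has integrable $\mathcal H$ but non-isometric holonomy. Curvature and completeness have to enter.

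The missing idea is a global Riccati argument, which is what the paper uses.
\begin{itemize}
\item Take a horizontal geodesic $\sigma$ lifting a complete unit-speed geodesic of the compact soul, so $\sigma$ is defined on all of $\R$.
\item Define $B(t)$ on $\mathcal V_{\sigma(t)}$ by $D^{\mathcal V}_tJ=BJ$ for holonomy Jacobi fields. Equivalently $\langle BU,W\rangle=\langle\nabla_U\dot\sigma,W\rangle$, which is exactly your term. $B$ is self-adjoint because $[U,W]$ is vertical.
\item The vertical part of the Jacobi equation gives $B'+B^2=-\mathcal R_t+A_t^*A_t$, where $\mathcal R_tU=(\Rm(U,\dot\sigma)\dot\sigma)^{\mathcal V}\ge0$ and $A_tU=(\nabla_{\dot\sigma}U)^{\mathcal H}$ with $\|A_t\|\le\beta$. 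Hence $B'+B^2\le\beta^2\Id$.
\item For a vertically parallel unit field $u$, the function $b=\langle Bu,u\rangle$ satisfies $b'\le\beta^2-b^2$. If $b(t_0)>\beta$, then $b$ blows up in finite backward time; if $b(t_0)<-\beta$, it blows up in finite forward time. Both contradict completeness of $\sigma$, so $-\beta\Id\le B\le\beta\Id$.
\item Every unit horizontal vector is tangent to such a $\sigma$. This gives the pointwise bound $|\langle\nabla_VX,V\rangle|\le\beta|X|\,|V|^2$, which is exactly the input your Gronwall step needs.
\end{itemize}
In the integrable case, $B'+B^2\le0$ on all of $\R$ forces $B\equiv0$, and this is what yields the isometries.
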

\begin{proof}
For a horizontal lift $\widetilde\gamma$ of a finite path $\gamma$,
\begin{align}
&|\dot{\widetilde\gamma}|=|\dot\gamma|,\qquad \widetilde\gamma([0,1])\subset\overline B_{\widetilde\gamma(0)}(\operatorname{Length}(\gamma)),\qquad\Phi_{\gamma^{-1}}=\Phi_\gamma^{-1}. \nonumber
\end{align}
Compactness of the ball and smooth dependence in the lifting equation give a smooth global diffeomorphism. Let $\sigma:\mathbb R\to M$ lift a complete unit-speed base geodesic; put $X=\dot\sigma$. Then
\begin{align}
&(\nabla_XX)^{\mathcal H}=0,\qquad(\nabla_XX)^{\mathcal V}=\mathcal A_XX=0,\qquad\nabla_XX=0. \nonumber
\end{align}
Varying the initial point within its fiber gives vertical Jacobi fields spanning $\mathcal V_{\sigma(t)}$ for every $t$. With $D_t^{\mathcal V}$ the vertical covariant derivative, invertibility of transport defines $B$ by
\begin{align}
&D_t^{\mathcal V}J=BJ. \nonumber
\end{align}
Commutation of variation parameters gives $\nabla_XJ=\nabla_JX$ and, for vertical $U,V$,
\begin{align}
&\langle BU,V\rangle =\langle\nabla_UX,V\rangle =-\langle X,\nabla_UV\rangle \nonumber
\end{align}
Thus
\begin{align}
&\langle BU,V\rangle-\langle BV,U\rangle=-\langle X,[U,V]\rangle=0,\qquad B=B^*. \nonumber
\end{align}
Set $A_tU=(\nabla_XU)^{\mathcal H}$ and $\mathcal R_tU=(\Rm(U,X)X)^{\mathcal V}$; the adjoint uses the induced metrics. Lemma~\ref{nine:lem:oneill}, \eqref{nine:eq:oneill-bracket}, gives
\begin{align}
&(\nabla_X\nabla_XJ)^{\mathcal V} =(B'+B^2-A_t^*A_t)J, \nonumber
\end{align}
where $B'$ is the derivative for $D_t^{\mathcal V}$. The Jacobi equation yields
\begin{align}
&B'+B^2=-\mathcal R_t+A_t^*A_t. \label{soul:eq:horizontal-Riccati}
\end{align}
For unit horizontal $Y\perp X$, $A_tU\in\mathbb RY$. Since $\mathcal R_t\ge0$, Lemma~\ref{nine:lem:oneill}, \eqref{nine:eq:oneill-horizontal}, gives
\begin{align}
&\|A_t\|_{\mathrm{op}}^2=|\mathcal A_XY|^2 \le\tfrac13K_h(dPX,dPY)\le\beta^2. \nonumber
\end{align}
Hence, for a vertically parallel unit vector $u(t)$,
\begin{align}
&B'+B^2\le\beta^2\Id,\qquad b(t)=\langle B(t)u(t),u(t)\rangle,\qquad b'\le\beta^2-|Bu|^2\le\beta^2-b^2. \nonumber
\end{align}
If $b(t_0)>\beta$, then $b(t)\ge b(t_0)>\beta$ for $t\le t_0$, and
\begin{align}
&t_0-t\le\int_{b(t_0)}^{b(t)}\frac{ds}{s^2-\beta^2} \le\int_{b(t_0)}^\infty\frac{ds}{s^2-\beta^2}<\infty. \nonumber
\end{align}
contradicts $t\to-\infty$. Applying the same argument to $-b(-t)$ excludes $b<-\beta$, also when $\beta=0$. Thus
\begin{align}
&{}-\beta\Id\le B(t)\le\beta\Id\qquad(t\in\mathbb R). \nonumber
\end{align}
Every unit horizontal direction extends to such a complete geodesic. Along a lift of any path,
\begin{align}
&\left|\frac{d}{dt}\log|J(t)|\right| \le\beta|\dot\gamma(t)|. \nonumber
\end{align}
so
\begin{align}
&{}-\beta\operatorname{Length}(\gamma)\le\log\frac{|d\Phi_\gamma(U)|}{|U|}\le\beta\operatorname{Length}(\gamma)\qquad(U\ne0). \nonumber
\end{align}
If $\mathcal H$ is integrable, Lemma~\ref{nine:lem:oneill}, \eqref{nine:eq:oneill-bracket}, and \eqref{soul:eq:horizontal-Riccati} give
\begin{align}
&\mathcal A=0\quad\Longrightarrow\quad B'+B^2\le0\quad\Longrightarrow\quad B=0, \nonumber\\
&\frac d{dt}|J(t)|^2=0,\qquad \Phi_\gamma^*k_{\gamma(1)}=k_{\gamma(0)}. \nonumber
\end{align}
\end{proof}

The volume-vanishing conclusion of Lemma~\ref{ext:lem:nontrivial-holonomy}, equation~\eqref{ext:eq:holonomy-vanishing}, also follows, for a two-sphere soul, from Tapp's volume-growth theorem \cite[Main Theorem, pp.~4--5]{Tapp1999}, which extends the work of Schroeder and Strake \cite{SchroederStrake1990}. The bounded vertical Jacobi fields used in such volume estimates are treated in \cite[Proposition~4.1, p.~5]{Tapp1999}. We retain the proof below; its scalar-integral conclusion also uses Lemma~\ref{ext:lem:surface-soul-traces}, equation~\eqref{ext:eq:vertical-mixed-vanishing}.

\begin{lemma}
\label{ext:lem:nontrivial-holonomy}
Retain the two-sphere-soul hypotheses of this subsection and
$\beta$ from Lemma~\ref{soul:lem:horizontal-transport},
equation~\eqref{soul:eq:transport-constant}. Suppose that some
piecewise smooth loop $\gamma$ based at $y_0$ has
$A_\gamma\ne\Id$.
Then
\begin{align}
&\lim_{r\to\infty}r^{-(n-s)}\Vol(\Omega_r)=0,\qquad \lim_{r\to\infty}r^{-(n-s)}\int_{\Omega_r}\Sc_g\,dV_g=0, \qquad L(M,g)=0. \label{ext:eq:holonomy-vanishing}
\end{align}
\end{lemma}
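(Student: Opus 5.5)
The plan is to reduce everything to the volume growth of a single fiber, $N=\mathcal N_{y_0}$, and then get the scalar statement from the curvature decomposition of Lemma~\ref{ext:lem:surface-soul-traces}.

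\textbf{Reduction to one fiber.} By Lemma~\ref{soul:lem:hessian}, \eqref{four:eq:fiber-distance}, we have $\mathcal N_y\cap\Omega_r=B^{\mathcal N_y}_y(r)$. Put $V_y(r)=\Vol_{k_y}B^{\mathcal N_y}_y(r)$. The polar formula in the proof of Lemma~\ref{soul:lem:hessian} gives, for nonnegative continuous $a$ on $\mathcal S$,
\begin{align}
&\int_{\Omega_r}a\circ P\,dV=\int_{\mathcal S}a(y)\,V_y(r)\,dV_h(y). \nonumber
\end{align}
Join $y_0$ to any $y$ by a path of length at most $D$. Theorem~\ref{ext:thm:transport-exponential} shows that $\Phi_\gamma$ maps $B^{N}_{y_0}(r)$ onto $B^{\mathcal N_y}_y(r)$, since it preserves $r_{\mathcal S}$. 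Lemma~\ref{soul:lem:horizontal-transport}, \eqref{soul:eq:transport-differential}, then gives $V_y(r)\le e^{(n-2)\beta D}V_{y_0}(r)$. Hence
\begin{align}
&\int_{\Omega_r}a\circ P\,dV\le C\,V_{y_0}(r)\int_{\mathcal S}a\,dV_h, \nonumber
\end{align}
and all three claims reduce to showing $V_{y_0}(r)=o(r^{n-2})$.

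\textbf{Fiber volume from nontrivial holonomy (main obstacle).} By \eqref{tp:eq:normal-volume-comparison}, $J(t,y_0,\xi)/t^{n-3}$ is nonincreasing in $t$ and bounded by $1$. So
\begin{align}
&\theta=\lim_{r\to\infty}\frac{V_{y_0}(r)}{\omega_{n-2}r^{n-2}}=\frac{1}{(n-2)\omega_{n-2}}\int_{\{\tau_{\mathcal S}=\infty\}}\lim_{t\to\infty}\frac{J(t,y_0,\xi)}{t^{n-3}}\,d\omega(\xi) \nonumber
\end{align}
exists. Suppose $\theta>0$. Then the set $E$ of directions $\xi$ with $\tau_{\mathcal S}(y_0,\xi)=\infty$ and positive limiting density has positive measure. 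By \eqref{ext:eq:transport-exponential}, $\Phi_\gamma(\exp(t\xi))=\exp(tA_\gamma\xi)$ and $\Phi_\gamma$ preserves $r_{\mathcal S}$. So $A_\gamma$ preserves $E$, and $\Phi_\gamma$ maps each normal ray to a normal ray.

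The goal is to show that the rays $\exp(t\xi)$ and $\exp(tA_\gamma\xi)$ separate linearly. This should follow from the Euclidean-type asymptotic cone of $N$ along $E$, which comes from positive density together with $J\le t^{n-3}$. Iterating the loop, $\Phi_{\gamma^k}$ is a map whose distortion is controlled uniformly in $k$. The key point is that $\Phi_{\gamma^k}$ depends only on the endpoint, and two homotopic paths give maps differing by a transport along a null-homotopic loop, whose distortion is bounded by the $\beta$ estimate over area rather than length. One should then contradict the monotone Jacobian comparison. This step is the delicate one.

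If the direct argument stalls, I would instead invoke Tapp's volume-growth theorem \cite[Main Theorem]{Tapp1999}, as indicated before the statement. That theorem gives $\Vol(\Omega_r)=o(r^{n-s})$ directly for nontrivial normal holonomy, and with $a\equiv1$ in the weighted formula it also gives $V_{y_0}(r)=o(r^{n-2})$.

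\textbf{Scalar integral and $L$.} Use frames $(X_1,X_2)$ horizontal and $(U_a)$ vertical. Write $\Sc_g$ as $2K_g(X_1,X_2)$ plus twice the vertical and mixed sectional sums. The latter are $o(r^{n-2})$ over $\Omega_r$ by Lemma~\ref{ext:lem:surface-soul-traces}, \eqref{ext:eq:vertical-mixed-vanishing}. O'Neill's identity \eqref{nine:eq:oneill-horizontal} gives $K_g(X_1,X_2)\le K_h\circ P$. The weighted tube formula with $a=K_h^+$, which is bounded on the compact soul, then bounds the horizontal part by $C\,V_{y_0}(r)=o(r^{n-2})$. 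Finally, $B_p(r)\subset\Omega_r$ by \eqref{ext:eq:soul-ball-inclusions} and $\Sc_g\ge0$, which give $L(M,g)=0$.
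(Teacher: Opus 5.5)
Your reduction to one fiber (coarea with Jacobian one, $\mathcal N_y\cap\Omega_r=B^{\mathcal N_y}_y(r)$, and the distortion bound of Lemma~\ref{soul:lem:horizontal-transport}) is exactly how the paper ends its proof, and so is your scalar step: O'Neill's $K_g(X_1,X_2)\le K_h\circ P$, boundedness of $K_h$, Lemma~\ref{ext:lem:surface-soul-traces}, and $B_p(r)\subset\Omega_r$. The gap is the one load-bearing step, $V_{y_0}(r)=o(r^{n-2})$, which your direct sketch does not establish.

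\begin{itemize}
\item Linear separation of $\exp(t\xi)$ and $\exp(tA_\gamma\xi)$ would need a \emph{lower} bound on distances, in effect a Euclidean asymptotic cone along $E$. Positive density and $J\le t^{n-3}$ only give infinitesimal information along each ray, and comparison under $\sec_g\ge0$ only gives upper distance bounds.
\item The uniform-in-$k$ control of $\Phi_{\gamma^k}$ is unsupported. Lemma~\ref{soul:lem:horizontal-transport} only gives the factor $e^{\beta k\ell}$, and $\Phi_{\gamma^k}$ does not depend only on the endpoint.
\item No area-type bound for transport around null-homotopic loops is proved anywhere. Indeed $\gamma$ itself is null-homotopic in $\mathbb S^2$, yet $\Phi_\gamma\ne\Id$.
\end{itemize}

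The missing idea is to pass to the \emph{infinitesimal} holonomy.
\begin{itemize}
\item Since $\mathcal S\cong\mathbb S^2$ is simply connected, $A_\gamma\ne\Id$ forces $C=R^\perp(X,Y)\ne0$ at some $y_*\in\mathcal S$.
\item By \eqref{ext:eq:transport-exponential}, the horizontal lifts $X^H,Y^H$ are related under the normal exponential map to the connection lifts $\overline X,\overline Y$ on $\nu\mathcal S$. The bracket $[\overline X,\overline Y]$ has vertical part $-(Cv)^{\mathrm{vert}}$.
\item Comparing with $[X^H,Y^H]^{\mathcal V}=2\mathcal A_{X^H}Y^H$ and using $|\mathcal A_{X^H}Y^H|\le\beta$ gives $|d(\exp_{y_*})_v(Cv)|\le2\beta$ for every $v$.
\item So for $C\xi\ne0$ the angular Jacobi field in the direction $C\xi/|C\xi|$ stays bounded by $2\beta/|C\xi|$ instead of growing like $t$. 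With $|J_a|\le t$ for the others, Hadamard's inequality gives $J(t,y_*,\xi)\le(2\beta/|C\xi|)\,t^{n-4}$.
\item Since $\ker C$ is a proper subspace, $J/t^{n-3}\to0$ for almost every $\xi$. Dominated convergence then gives $V_{y_*}(r)=o(r^{n-2})$, and your distortion step spreads this to every fiber.
\end{itemize}
This is the rigorous form of your ``area rather than length'' intuition, but it must be applied to loops shrinking to $y_*$, not to iterates of a fixed $\gamma$.

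If you rely instead on your Tapp fallback, the argument is complete. The paper itself notes that Tapp's theorem yields the volume vanishing for a two-sphere soul, and then $\int_{\Omega_r}K_h\circ P\le\sup K_h\,\Vol(\Omega_r)$ finishes. But then the substance of the lemma is cited rather than proved, and what you prove yourself is only the bookkeeping around it.
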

\begin{proof}
Define
\begin{align}
&R^\perp(X,Y)v =\nabla_X^\perp\nabla_Y^\perp v -\nabla_Y^\perp\nabla_X^\perp v -\nabla_{[X,Y]}^\perp v. \nonumber
\end{align}
The metric connection gives $(R^\perp(X,Y))^*=-R^\perp(X,Y)$. For a smooth based loop homotopy $\gamma:[0,1]^2\to\mathcal S$, let $V(s,t)$ be parallel transport of a fixed $v_0$ along $t\mapsto\gamma(s,t)$, and let $\mathcal P^s_{t,1}$ denote transport from $t$ to $1$. Then
\begin{align}
&\nabla_t^\perp V=0,\qquad \nabla_s^\perp V(s,0)=0, \nonumber
\end{align}
and commutation gives
\begin{align}
&\nabla_t^\perp\nabla_s^\perp V =R^\perp(\partial_t\gamma,\partial_s\gamma)V,\qquad \frac{d}{ds}V(s,1) =\int_0^1\mathcal P^s_{t,1} R^\perp(\partial_t\gamma,\partial_s\gamma)V(s,t)\,dt. \nonumber
\end{align}
Because $\pi_1(\mathcal S)=0$, smooth approximation relative to the base point and subdivision for piecewise smooth loops imply
\begin{align}
&R^\perp=0\quad\Longrightarrow\quad\frac d{ds}V(s,1)=0\quad\Longrightarrow\quad A_\gamma=\Id\quad\text{for every based loop }\gamma. \nonumber
\end{align}
The hypothesis therefore gives $y_*\in\mathcal S$ and orthonormal $X,Y\in T_{y_*}\mathcal S$ with
\begin{align}
&C=R^\perp(X,Y):\nu_{y_*}\mathcal S\to\nu_{y_*}\mathcal S, \qquad C\ne0. \nonumber
\end{align}
In particular $n-s\ge2$. Let $E:\nu\mathcal S\to M$ be the normal exponential map. Theorem~\ref{ext:thm:transport-exponential}, \eqref{ext:eq:transport-exponential}, gives $dE(\overline X)=X^H\circ E$ for the connection lift
\begin{align}
&\overline X=X^i\bigl(\partial_i-(\Gamma_i v)^a\partial_{v^a}\bigr). \nonumber
\end{align}
Direct differentiation gives
\begin{align}
&[\overline X,\overline Y] =\overline{[X,Y]}-(R^\perp(X,Y)v)^{\mathrm{vert}}, \nonumber
\end{align}
Here $\mathrm{vert}$ is the vertical tangent vector in $\nu\mathcal S$. Brackets of $E$-related fields and Lemma~\ref{nine:lem:oneill}, \eqref{nine:eq:oneill-bracket}, give, for every $v\in\nu_{y_*}\mathcal S$,
\begin{align}
&{}-d(\exp_{y_*})_v(Cv) =2\mathcal A_{X^H}Y^H\big|_{\exp_{y_*}v},\qquad |d(\exp_{y_*})_v(Cv)|\le2\beta. \label{soul:eq:bounded-angular-Jacobi}
\end{align}
where $X^H,Y^H$ are unit horizontal lifts; injectivity of $E$ is unnecessary. For $|\xi|=1$ and $C\xi\ne0$, choose an orthonormal basis $Z_1,\ldots,Z_{n-s-1}$ of $\xi^\perp$ with $Z_1=C\xi/|C\xi|$. The angular Jacobi fields are
\begin{align}
&J_a(t)=d(\exp_{y_*})_{t\xi}(tZ_a),\qquad 1\le a\le n-s-1. \nonumber
\end{align}
Lemma~\ref{soul:lem:hessian}, \eqref{soul:eq:normal-index}, and \eqref{soul:eq:bounded-angular-Jacobi} imply, before the cut time,
\begin{align}
&\frac d{dt}|J_a|^2\le\frac2t|J_a|^2,\qquad \lim_{t\downarrow0}\frac{|J_a(t)|}{t}=1,\qquad |J_a(t)|\le t, \nonumber\\
&|J_1(t)|=\frac{|d(\exp_{y_*})_{t\xi}(tC\xi)|}{|C\xi|}\le\frac{2\beta}{|C\xi|}. \nonumber
\end{align}
The horizontal normal Jacobi fields are parallel and orthonormal; the determinant bound gives
\begin{align}
&0\le J(t,y_*,\xi) \le\frac{2\beta}{|C\xi|}t^{n-s-2} \qquad(0<t<\tau_{\mathcal S}(y_*,\xi)). \label{soul:eq:normal-Jacobian-loss}
\end{align}
Extend $J$ by zero after the cut time. Since
\begin{align}
&C\ne0\quad\Longrightarrow\quad\ker C\subsetneq\nu_{y_*}\mathcal S,\qquad\omega_{y_*}(\ker C\cap S(\nu_{y_*}\mathcal S))=0, \nonumber
\end{align}
\eqref{soul:eq:normal-Jacobian-loss} for infinite cut time, and zero extension otherwise, give almost everywhere
\begin{align}
&\lim_{t\to\infty}\frac{J(t,y_*,\xi)}{t^{n-s-1}}=0, \qquad 0\le J(t,y_*,\xi)\le t^{n-s-1}. \nonumber
\end{align}
Polar integration, $t=ru$, and dominated convergence give
\begin{align}
&r^{-(n-s)}\Vol_{n-s}B_{y_*}^{\mathcal N_{y_*}}(r)=\int_{S(\nu_{y_*}\mathcal S)}\int_0^1u^{n-s-1}\frac{J(ru,y_*,\xi)}{(ru)^{n-s-1}}\,du\,d\omega_{y_*}(\xi), \nonumber\\
&0\le u^{n-s-1}\frac{J(ru,y_*,\xi)}{(ru)^{n-s-1}}\le u^{n-s-1}. \nonumber
\end{align}
Hence
\begin{align}
&\lim_{r\to\infty}r^{-(n-s)}\Vol_{n-s} B_{y_*}^{\mathcal N_{y_*}}(r)=0. \nonumber
\end{align}
For a minimizing base geodesic from $y_*$ to $y$, its length is at most $D$. Horizontal transport preserves $r_{\mathcal S}$. Lemma~\ref{soul:lem:horizontal-transport}, \eqref{soul:eq:transport-differential}, gives vertical Jacobian at most $e^{(n-s)\beta D}$; Lemma~\ref{soul:lem:hessian}, \eqref{four:eq:fiber-distance}, and the area formula yield
\begin{align}
&\Vol_{n-s} B_y^{\mathcal N_y}(r) \le e^{(n-s)\beta D}\Vol_{n-s} B_{y_*}^{\mathcal N_{y_*}}(r) \qquad(y\in\mathcal S). \nonumber
\end{align}
Coarea, whose Jacobian is one, gives
\begin{align}
&0\le r^{-(n-s)}\Vol(\Omega_r)\le\Vol_h(\mathcal S)e^{(n-s)\beta D}r^{-(n-s)}\Vol_{n-s} B_{y_*}^{\mathcal N_{y_*}}(r)\longrightarrow0\quad(r\to\infty). \nonumber
\end{align}
Since $\Sc_h$ is bounded,
\begin{align}
&\lim_{r\to\infty}r^{-(n-s)} \int_{\Omega_r}\Sc_h\circ P\,dV=0. \nonumber
\end{align}
For local horizontal and vertical orthonormal frames, Lemma~\ref{nine:lem:oneill}, \eqref{nine:eq:oneill-horizontal}, gives
\begin{align}
&\Sc_g\le\Sc_h\circ P +2\sum_{1\le a<b\le n-s}K_g(U_a,U_b) +2\sum_{i=1}^2\sum_{a=1}^{n-s}K_g(X_i,U_a). \nonumber
\end{align}
Lemma~\ref{ext:lem:surface-soul-traces}, \eqref{ext:eq:vertical-mixed-vanishing}, and Lemma~\ref{ext:lem:horizontal-vanishing}, \eqref{ext:eq:soul-ball-inclusions}, imply
\begin{align}
&0\le L(M,g)=\lim_{r\to\infty}r^{-(n-s)}\int_{\Omega_r}\Sc_g\,dV_g\le0. \nonumber
\end{align}
\end{proof}

\begin{theorem}
\label{ext:thm:positive-soul}
Let $(M^n,g)$ be a smooth, complete, connected, noncompact
Riemannian manifold without boundary, with $n\ge3$ and $\sec_g\ge0$.
Suppose $M$ has a soul of positive dimension. Then
\begin{align}
&L(M,g)\le8\pi\omega_{n-2}. \label{ext:eq:positive-soul-sharp}
\end{align}
Equality holds if and only if there is a smooth metric $h$ on
$\mathbb S^2$ such that
\begin{align}
&(M,g)\cong(\mathbb S^2,h)\times(\R^{n-2},g_{\mathrm{Euc}}), \qquad K_h\ge0. \label{ext:eq:positive-soul-equality}
\end{align}
\end{theorem}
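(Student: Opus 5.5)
We first dispose of the trivial case $L(M,g)=0$. So we assume $L(M,g)>0$. By Lemma~\ref{ext:cor:possible-souls}, the positive-dimensional soul $\mathcal S$ is a surface diffeomorphic to $\mathbb S^2$ or $\mathbb{RP}^2$, so $s=2$.

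In the $\mathbb{RP}^2$ case we pass to the Riemannian double cover $\widetilde M$. Its soul is the lifted $\mathbb S^2$. As in the covering argument of Lemma~\ref{ext:cor:possible-souls}, with two sheets we get $2L(M,g)\le L(\widetilde M)$. Hence the bound for $\widetilde M$ gives the strict inequality $L(M,g)\le4\pi\omega_{n-2}$, so equality never occurs for an $\mathbb{RP}^2$ soul.

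For an $\mathbb S^2$ soul we use the pointwise O'Neill decomposition from the proof of Lemma~\ref{ext:lem:nontrivial-holonomy}:
\begin{align}
&\Sc_g\le\Sc_h\circ P+2\sum_{a<b}K_g(U_a,U_b)+2\sum_{i,a}K_g(X_i,U_a). \nonumber
\end{align}
Integrating over $\Omega_r$ and invoking Lemma~\ref{ext:lem:surface-soul-traces}, the vertical and mixed terms contribute $o(r^{n-2})$. By the tube-volume bound \eqref{tp:eq:tube-volume} with $a=\Sc_h\ge0$, we get
\begin{align}
&\int_{\Omega_r}\Sc_h\circ P\,dV\le\omega_{n-2}r^{n-2}\int_{\mathcal S}\Sc_h\,dV_h=8\pi\omega_{n-2}r^{n-2}, \nonumber
\end{align}
using Gauss--Bonnet with $\chi(\mathbb S^2)=2$ and $\Sc_h=2K_h$. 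Here $\Sc_h\ge0$ because $\mathcal S$ is totally geodesic with $\sec_g\ge0$. Combining these with the identification $L=\lim r^{2-n}\int_{\Omega_r}\Sc_g$ from \eqref{ext:eq:soul-ball-inclusions} proves \eqref{ext:eq:positive-soul-sharp}.

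For rigidity, the converse direction is a direct computation with Lemma~\ref{low:lem:Euclidean-products}: take $X=\mathbb S^2$ and $q=n-2$. Then $\ell_X=\int\Sc_h=8\pi$ and $\omega_0=1$, so $L=8\pi\omega_{n-2}$.

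For the forward direction, assume equality. The previous paragraph excludes an $\mathbb{RP}^2$ soul, so $\mathcal S\cong\mathbb S^2$. Lemma~\ref{ext:lem:nontrivial-holonomy} forces trivial normal holonomy, since otherwise $L=0$. Trivial holonomy makes $R^\perp=0$, and by the bracket formula in that lemma this gives $\mathcal A=0$. Thus $\mathcal H$ is integrable and, by Lemma~\ref{soul:lem:horizontal-transport}, horizontal transport consists of fiber isometries that are independent of the path. This yields a global isometric splitting $M\cong\mathcal S\times N$ with $N=P^{-1}(y_0)$ of dimension $n-2$. Indeed, with $\mathcal A=0$, O'Neill's formulas give a totally geodesic, integrable horizontal distribution; together with the isometric transport, de Rham-type reasoning (or the transport map $\mathcal S\times N\to M$, $(y,x)\mapsto\Phi_\gamma(x)$) gives a product metric.

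It remains to show $N$ is flat Euclidean. Equality in the tube bound then means $\Vol B^N_{y_0}(r)/(\omega_{n-2}r^{n-2})\to1$, so $\AVR(N)=1$. By Bishop--Gromov with $\Ric_N\ge0$, equality forces $N\cong\R^{n-2}$. The delicate point is that equality of limits only gives asymptotic equality, but $\AVR(N)=1$ is exactly the rigid case of Bishop--Gromov, so that step is fine. The main obstacle I expect is making the product splitting rigorous from vanishing holonomy, in particular verifying that $R^\perp=0$ really implies $\mathcal A\equiv0$ everywhere rather than only along $\mathcal S$. For this I would use \eqref{soul:eq:bounded-angular-Jacobi}, which expresses $\mathcal A_{X^H}Y^H$ at every point $\exp_y v$ through $R^\perp(X,Y)v$, so that $\mathcal A$ vanishes on the image of $E$, which is all of $M$.
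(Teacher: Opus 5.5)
Your argument is correct and is essentially the paper's proof. The inequality comes from the O'Neill bound with Lemma~\ref{ext:lem:surface-soul-traces}, the weighted tube estimate \eqref{tp:eq:tube-volume} and Gauss--Bonnet; rigidity comes from Lemma~\ref{ext:lem:nontrivial-holonomy}, the fiber isometries of Lemma~\ref{soul:lem:horizontal-transport}, the product splitting and Bishop--Gromov equality; the converse comes from Lemma~\ref{low:lem:Euclidean-products}.

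Two small remarks. First, the double cover is unnecessary for an $\mathbb{RP}^2$ soul: the same Gauss--Bonnet step gives $L\le4\pi\omega_{n-2}\chi(\mathcal S)=4\pi\omega_{n-2}$ directly. Second, path independence of horizontal transport is not supplied by Lemma~\ref{soul:lem:horizontal-transport}. It comes from Theorem~\ref{ext:thm:transport-exponential} together with $A_{\gamma_1}=A_{\gamma_2}$, or alternatively from flatness and $\pi_1(\mathcal S)=0$. The paper builds the map $\Phi$ this way and then reads off integrability of $\mathcal H$ from it. You instead obtain $\mathcal A=0$ directly from \eqref{soul:eq:bounded-angular-Jacobi}, which is also valid because the normal exponential map is onto.
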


\begin{proof}
For $L(M,g)=0$, \eqref{ext:eq:positive-soul-sharp} is strict.
Assume $L(M,g)>0$ and choose a positive-dimensional soul
$\mathcal S$, with $h=g|_{T\mathcal S}$.
Corollary~\ref{ext:cor:possible-souls} and total geodesy of $\mathcal S$ give
\begin{align}
&\mathcal S\text{ is diffeomorphic to }\mathbb S^2\text{ or }\mathbb {RP}^2,\qquad s=2,\qquad K_h\ge0. \nonumber
\end{align}
For local horizontal and vertical orthonormal frames,
Lemma~\ref{nine:lem:oneill}, equation~\eqref{nine:eq:oneill-horizontal}, gives
\begin{align}
&\Sc_g\le\Sc_h\circ P+2\sum_{1\le a<b\le n-2}K_g(U_a,U_b)+2\sum_{i=1}^2\sum_{a=1}^{n-2}K_g(X_i,U_a). \nonumber
\end{align}
Lemma~\ref{ext:lem:surface-soul-traces},
equation~\eqref{ext:eq:vertical-mixed-vanishing}, the inclusions in
\eqref{ext:eq:soul-ball-inclusions}, and the weighted tube-volume
estimate \eqref{tp:eq:tube-volume}, applied to $a=\Sc_h\ge0$, yield
\begin{align}
&L(M,g)=\lim_{r\to\infty}r^{2-n}\int_{\Omega_r}\Sc_g\,dV_g\le\limsup_{r\to\infty}r^{2-n}\int_{\Omega_r}\Sc_h\circ P\,dV_g \nonumber\\
&\hspace{27pt}\le\omega_{n-2}\int_{\mathcal S}\Sc_h\,dA_h=4\pi\omega_{n-2}\chi(\mathcal S)\le8\pi\omega_{n-2}, \nonumber
\end{align}
where Gauss--Bonnet \cite[Section~4-5]{doCarmo} gives the equality.
This proves \eqref{ext:eq:positive-soul-sharp}. If equality holds,
then $\chi(\mathcal S)=2$, so $\mathcal S$ is diffeomorphic to
$\mathbb S^2$; a projective-plane soul gives at most
$4\pi\omega_{n-2}$.

Assume henceforth that $L(M,g)=8\pi\omega_{n-2}$.
Fix $y_0\in\mathcal S$ and give $N=P^{-1}(y_0)$ its induced metric $k$.
Lemma~\ref{ext:lem:nontrivial-holonomy},
\eqref{ext:eq:holonomy-vanishing}, implies
\begin{align}
&A_\gamma=\Id_{\nu_{y_0}\mathcal S}\quad\text{for every piecewise smooth loop $\gamma$ based at $y_0$}. \nonumber
\end{align}
If $\gamma_1,\gamma_2$ are piecewise smooth paths from $y_0$ to
$y\in\mathcal S$, then
\begin{align}
&A_{\gamma_2}^{-1}A_{\gamma_1}=\Id,\qquad A_{\gamma_1}=A_{\gamma_2}. \nonumber
\end{align}
By Lemma~\ref{soul:lem:hessian}, \eqref{four:eq:fiber-distance},
and Theorem~\ref{ext:thm:transport-exponential},
\eqref{ext:eq:transport-exponential},
\begin{align}
&\mathcal N_y=\exp_y(\nu_y\mathcal S),\qquad A_{\gamma_1}=A_{\gamma_2}\Longrightarrow\Phi_{\gamma_1}=\Phi_{\gamma_2}. \nonumber
\end{align}
Thus the map
\begin{align}
&\Phi:\mathcal S\times N\to M,\qquad \Phi(y,z)=\Phi_\gamma(z),\qquad \gamma(0)=y_0,\quad\gamma(1)=y, \nonumber
\end{align}
is independent of the chosen path. Reverse transport gives its
inverse. Locally smooth endpoint-dependent paths and smooth
dependence of horizontal lifting give smoothness of both maps.
For each fixed $z\in N$ and each smooth curve $\eta$ in
$\mathcal S$, path independence shows that
$t\mapsto\Phi(\eta(t),z)$ is a horizontal lift of $\eta$.
Consequently,
\begin{align}
&d\Phi(T_y\mathcal S\times\{0\})=\mathcal H_{\Phi(y,z)}. \nonumber
\end{align}
The images of $\mathcal S\times\{z\}$ therefore have tangent
spaces $\mathcal H$, so $\mathcal H$ is integrable.
Lemma~\ref{soul:lem:horizontal-transport} now shows that every
fiber map $\Phi(y,\cdot)$ is a Riemannian isometry. Since $P$ is a
Riemannian submersion,
\begin{align}
&\Phi(y,\cdot)^*k_y=k,\qquad g|_{\mathcal H}=P^*h,\qquad\mathcal H\perp\mathcal V,\qquad\Phi^*g=h\oplus k. \nonumber
\end{align}
Lemma~\ref{soul:lem:hessian} gives smoothness, connectedness,
completeness and absence of boundary for $N$. The product
isometry and compactness of $\mathcal S$ imply that $N$ is
noncompact. Its total geodesy in the product gives $\sec_k\ge0$.

The product isometry identifies $\mathcal S$ with
$\mathcal S\times\{y_0\}$ and
$\Omega_r$ with $\mathcal S\times B_{y_0}^N(r)$.
Let $\operatorname{pr}_N:\mathcal S\times N\to N$ be projection. Then
\begin{align}
&\int_{\Omega_r}\Sc_g\,dV_g=8\pi\Vol_k B_{y_0}^N(r)+\Vol_h(\mathcal S)\int_{B_{y_0}^N(r)}\Sc_k\,dV_k, \nonumber\\
&\tr_{\mathcal V}\Ric_g=\Sc_k\circ\operatorname{pr}_N, \nonumber
\end{align}
Corollary~\ref{ext:cor:soul-traces}, equation~\eqref{low:eq:vertical-trace-limit},
with $s=2$, gives
\begin{align}
&\lim_{r\to\infty}r^{2-n}\Vol_h(\mathcal S)\int_{B_{y_0}^N(r)}\Sc_k\,dV_k=0. \nonumber
\end{align}
If $n=3$, completeness, connectedness and noncompactness make $N$
isometric to a line, giving \eqref{ext:eq:positive-soul-equality}.
For $n\ge4$, Bishop--Gromov comparison \cite{CE} gives the center-independent limit
\begin{align}
&\AVR(N,k)=\lim_{r\to\infty}\frac{\Vol_k B_o^N(r)}{\omega_{n-2}r^{n-2}},\qquad o\in N,\qquad 0\le\AVR(N,k)\le1. \nonumber
\end{align}
The preceding integral identities and the equality assumption imply
\begin{align}
&8\pi\omega_{n-2}=L(M,g)=8\pi\omega_{n-2}\AVR(N,k),\qquad\AVR(N,k)=1, \nonumber\\
&1\ge\frac{\Vol_k B_o^N(r)}{\omega_{n-2}r^{n-2}}\ge\AVR(N,k)=1\qquad(r>0). \nonumber
\end{align}
The equality case of Bishop--Gromov comparison \cite{CE} gives
$(N,k)\cong\mathbb R^{n-2}$. Thus, in every dimension, equality
implies \eqref{ext:eq:positive-soul-equality}.
Conversely, Lemma~\ref{low:lem:Euclidean-products},
equation~\eqref{ext:eq:Euclidean-product}, and Gauss--Bonnet
\cite[Section~4-5]{doCarmo} give
\begin{align}
&L((\mathbb S^2,h)\times\mathbb R^{n-2})=\omega_{n-2}\int_{\mathbb S^2}\Sc_h\,dA_h=8\pi\omega_{n-2}. \nonumber
\end{align}
\end{proof}

\section{The analysis of curvature integrals}\label{pt:sec:five}

%\subsection{Definitions and previously established inputs}

Fix
\begin{align}
&\zeta_*(s)= \begin{cases} c_*e^{-1/(1-s^2)},&|s|<1,\\ 0,&|s|\ge1, \end{cases} \qquad c_*^{-1}=\int_{-1}^1e^{-1/(1-s^2)}\,ds, \nonumber\\
&\zeta_\varepsilon(s)=\varepsilon^{-1}\zeta_*(s/\varepsilon), \qquad 0<\varepsilon<1/8. \label{interval:eq:mollifier}
\end{align}
Thus $\zeta_\varepsilon\ge0$, $\int_{\mathbb R}\zeta_\varepsilon=1$,
and $\supp\zeta_\varepsilon\subset[-\varepsilon,\varepsilon]$.

Increase $t_0$ so that $F$ has no critical points on
$\{F\ge t_0\}$. 

For $n\ge3$, use an orthonormal eigenframe $(e_i)$ of $S$,
with eigenvalues $\kappa_i$ and
$K_{ik}=\sec_g(e_i\wedge e_k)$, and define
\begin{align}
&Q_n(S)=\sum_{1\le i<k\le n-1}K_{ik} \prod_{\ell\notin\{i,k\}}\kappa_\ell,\qquad \mathcal A_n(F,t)=\int_{\Sigma_t} \left(\det S+\frac{Q_n(S)}{n-2}\right)dA, \label{uni:eq:common-boundary}
\end{align}
where the empty product is one. 

%\subsection{Curvature contractions and interpolation}
%\label{unified:gen:section}

For a self-adjoint endomorphism $H$ and every
$1\le j\le n-3$, define
\begin{align}
&V_j(H)^a{}_b :=\frac14\delta^{aik\ell_1\cdots\ell_j}_{bpqr_1\cdots r_j} R^{pq}{}_{ik} \prod_{\alpha=1}^j H^{r_\alpha}{}_{\ell_\alpha}. \label{uni:eq:normalized-tensor}
\end{align}
Thus $V_j(H)$ contains $j$ copies of the same $H$ and is homogeneous
of degree $j$ in $H$. The notation $V_j$ is defined here for this
curvature contraction. 

Only when distinct arguments are needed do we use
$V_j(H_1,\ldots,H_j)$, for $j\ge1$,
\begin{align}
&V_j(H_1,\ldots,H_j) =\left.\frac1{j!}\frac{\partial^j}{\partial t_1\cdots\partial t_j} V_j\!\left(\sum_{\alpha=1}^j t_\alpha H_\alpha\right) \right|_{t_1=\cdots=t_j=0}. \nonumber
\end{align}
Equivalently, for a smooth path $H_t$,
\begin{align}
&\frac{d}{dt}V_j(H_t) =jV_j(\dot H_t,\underbrace{H_t,\ldots,H_t}_{j-1\text{ copies}}), \qquad j\ge1. \nonumber
\end{align}

\begin{notation}\label{notation-E2-V0-P2}
{Put
\begin{align}
&V_0=E_1,\qquad (E_2)^a{}_b =\frac1{96}\delta^{aijkl}_{bpqrs} R^{pq}{}_{ij}R^{rs}{}_{kl}, \label{pt:eq:Einstein-tensors}\\
&(\mathbf{P}_2)^{at}{}_{bs} =\frac1{96}\delta^{atijkl}_{bspqru} R^{pq}{}_{ij}R^{ru}{}_{kl}. \label{unified:gen:H-definition}
\end{align}
}
\end{notation}

The last formula defines $\mathbf{P}_2$ in every dimension, with
$\mathbf{P}_2=0$ if $n\le5$. Its value on four vectors means
$\mathbf{P}_2(X,Y,Z,W)=(\mathbf{P}_2)^{at}{}_{bs}X_aY_tZ^bW^s$.

For a self-adjoint $H$, define
\begin{align}
&\mathcal J_H(X,Y)=\sum_{i,j}H_{ij}\mathbf{P}_2(e_i,X,e_j,Y). \label{qc:eq:J-definition}
\end{align}

\begin{lemma}\label{pt:lem:algebra}
Let $(M^n,g)$ be smooth, with $n\ge3$ and $\sec_g\ge0$. For $1\le j\le n-3$, the mixed tensor $V_j(H_1,\ldots,H_j)$ is self-adjoint and is nonnegative whenever all its arguments are nonnegative. For arbitrary self-adjoint arguments,
\begin{align}
&V_j(\Id,H_1,\ldots,H_{j-1})=(n-j-2)V_{j-1}(H_1,\ldots,H_{j-1}), \nonumber\\
&\tr V_j(H_1,\ldots,H_j)=(n-j-2)\tr\bigl(V_{j-1}(H_1,\ldots,H_{j-1})\circ H_j\bigr). \label{qc:eq:general-contractions}
\end{align}
The quadratic curvature tensors satisfy
\begin{align}
&0\le E_2\le P_2\Id,\qquad \tr E_2=(n-4)P_2,\qquad \diver E_2=0. \label{pt:eq:Einstein-properties}
\end{align}
The tensor $\mathbf{P}_2$ has the algebraic curvature symmetries and, for every orthonormal pair $e_1,e_2$,
\begin{align}
&\mathbf{P}_2(e_1,e_2,e_1,e_2)=P_2\bigl(\Rm_g\!\restriction_{\{e_1,e_2\}^{\perp}}\bigr), \nonumber\\
&|\mathbf{P}_2|\le C_nP_2,\qquad \nabla_a(\mathbf{P}_2)^{at}{}_{bs}=0. \nonumber
\end{align}
For self-adjoint $H,K$, the tensors $\mathcal J_H$ are self-adjoint and satisfy
\begin{align}
&\mathcal J_H\ge0\quad(H\ge0),\qquad \tr\mathcal J_H=(n-5)\tr(E_2\circ H), \label{qc:eq:J-positive}\\
&\tr(\mathcal J_H\circ K)=\tr(\mathcal J_K\circ H), \label{qc:eq:J-mixed-pairing}
\end{align}
where the last expression is nonnegative if $H,K\ge0$.

For $f\in C^\infty(M)$ and a constant $\lambda\in\mathbb R$, put $H=\lambda\Id+\Hess f$. Then
\begin{align}
&\diver V_1(H)=-3(P_2\Id-E_2)\nabla f\qquad(n\ge4), \label{pt:eq:B-divergence}\\
&\diver V_2(H)=-6\bigl(\tr(E_2\circ H)\Id-E_2H-\mathcal J_H\bigr)\nabla f\qquad(n\ge5), \label{qc:eq:T-divergence}\\
&\delta^{aijkl}_{bpqrs}R^{pq}{}_{ij}R^r{}_{tlk}=24\bigl(\delta_{bt}(E_2)^a{}_s-\delta_{st}(E_2)^a{}_b+(\mathbf{P}_2)^{at}{}_{bs}\bigr). \label{pt:eq:delta-quadratic}
\end{align}
\end{lemma}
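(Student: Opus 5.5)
The plan is to reduce every assertion to generalized-Kronecker-delta algebra at a single point, together with the four-dimensional positivity in Lemma~\ref{six:lem:P2-positive}. The differential identities will come from the second Bianchi identity and the Ricci commutation formula.

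\emph{Algebraic symmetries and contractions.} Self-adjointness of $V_j(H_1,\ldots,H_j)$, $E_2$ and $\mathcal J_H$ follows from the symmetry of $\delta^{a\cdots}_{b\cdots}$ under simultaneous exchange of the upper and lower index strings. This symmetry is combined with the pair symmetry $R^{pq}{}_{ik}=R_{ik}{}^{pq}$ and the self-adjointness of each $H_\alpha$. The contraction identities \eqref{qc:eq:general-contractions} use the standard rule that contracting one upper with one lower index of $\delta$ of order $k$ in dimension $n$ yields $(n-k+1)$ times the $\delta$ of order $k-1$:
\begin{itemize}
\item inserting $\Id$ contracts an $r_\alpha$ with an $\ell_\alpha$, which gives $V_j(\Id,\ldots)=(n-j-2)V_{j-1}$;
\item tracing $a=b$ instead gives $\tr V_j=(n-j-2)\tr(V_{j-1}\circ H_j)$.
\end{itemize}
The same rule gives $\tr E_2=(n-4)P_2$ from \eqref{pt:eq:P2-definition} and $\tr\mathcal J_H=(n-5)\tr(E_2\circ H)$.

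For $\mathbf P_2$, evaluate on an orthonormal pair $e_1,e_2$ in an adapted frame. Since $a,t,b,s\in\{1,2\}$, the remaining eight indices range over $\{e_1,e_2\}^\perp$, so the expression is exactly the $P_2$ of the restriction. Polarization then shows that $\mathbf P_2$ has curvature symmetries and that $|\mathbf P_2|\le C_nP_2$, once we know that an algebraic curvature tensor with nonnegative sectional curvatures is bounded by its trace up to a dimensional constant.

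\emph{Positivity.} For $V_j$ with nonnegative arguments, diagonalize each $H_\alpha$ separately. By multilinearity, reduce to $H_\alpha=v_\alpha v_\alpha^T$. Then $\langle V_j u,u\rangle$ becomes a sum of sectional curvatures of planes orthogonal to $u,v_1,\ldots,v_j$, weighted by squared Gram determinants, and each term is $\ge0$.

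The same reduction handles $E_2$ and $\mathcal J_H$. The quantity $E_2(u,u)$ is the four-plane sum \eqref{eq:P2-four-plane-sum} taken over $u^\perp$, which is $\ge0$ by Lemma~\ref{six:lem:P2-positive}. Likewise $\mathcal J_{vv^T}(u,u)=\mathbf P_2(v,u,v,u)$, which is $|v\wedge u|^2$ times the restricted $P_2$ on $\{u,v\}^\perp$, and this is $\ge0$. The upper bound $E_2\le P_2\Id$ holds because $P_2-E_2(u,u)$ is the sum of the four-plane contributions that contain $u$, which are nonnegative. For the pairing, $\tr(\mathcal J_H\circ K)=\sum H_{ij}K_{kl}\mathbf P_2(e_i,e_k,e_j,e_l)$ is symmetric by pair symmetry of $\mathbf P_2$. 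When $H,K\ge0$ it reduces to $\mathbf P_2(v,w,v,w)\ge0$.

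\emph{Divergences.} $\diver E_2=0$ and $\nabla_a\mathbf P_2^{at}{}_{bs}=0$ follow by differentiating the Lovelock-type expression. Every derivative term contains $\delta^{a i j\cdots}_{\cdots}\nabla_aR^{pq}{}_{ij}$, which vanishes by the second Bianchi identity, since it is antisymmetrized over three lower slots.

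For \eqref{pt:eq:B-divergence} and \eqref{qc:eq:T-divergence}, write $H^r{}_\ell=\lambda\delta^r_\ell+\nabla^r\nabla_\ell f$. Differentiating $V_j(H)$ produces two kinds of terms:
\begin{itemize}
\item terms with $\nabla R$, which vanish by Bianchi;
\item terms with $\nabla_a\nabla^r\nabla_\ell f$, which are antisymmetrized in $a,\ell$ and so become $\tfrac12R^r{}_{m a\ell}\nabla^m f$ by the Ricci identity.
\end{itemize}
The result is a contraction of $\delta$ against $R\cdot R\cdot\nabla f$, together with $j-1$ copies of $H$ for $V_2$. Identity \eqref{pt:eq:delta-quadratic} is exactly the tool that converts this into $E_2$, $\mathbf P_2$ and the metric. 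It is proved by expanding the order-five $\delta$ along the row $t$, which is the Laplace expansion of the determinant, and recognizing the pieces.

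I expect the main obstacle to be the bookkeeping of the combinatorial constants $-3$, $-6$ and $24$ and the exact placement of terms such as $E_2H$ versus $\mathcal J_H$. To control it, I would verify each identity on the round sphere, where $R^{pq}{}_{ik}=\delta^{pq}_{ik}$, and fix the constants by comparing traces, checking consistency with \eqref{qc:eq:general-contractions}.
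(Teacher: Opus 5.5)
Your plan follows the paper's proof closely. It uses pointwise alternating-delta algebra with the contraction rule for \eqref{qc:eq:general-contractions}, rank-one reduction for the positivity of $V_j$ and $\mathcal J_H$, and the four-plane decomposition \eqref{eq:P2-four-plane-sum} for $0\le E_2\le P_2\Id$ and for the sectional values of $\mathbf P_2$. It then uses the second Bianchi identity for $\diver E_2=0$ and $\nabla_a(\mathbf P_2)^{at}{}_{bs}=0$, and the Ricci identity plus a Laplace expansion for the two divergence formulas.

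\textbf{The gap.} You say that polarization shows $\mathbf P_2$ has the curvature symmetries, but polarization presupposes them. A $4$-tensor that is antisymmetric in each pair and symmetric under exchange of the pairs splits into a part satisfying the first Bianchi identity plus a totally antisymmetric ($4$-form) part. The $4$-form part contributes nothing to any value $\mathbf P_2(X,Y,X,Y)$. Consequently:
\begin{itemize}
\item the first Bianchi identity for $\mathbf P_2$, which is part of the statement, is never established;
\item your bound $|\mathbf P_2|\le C_nP_2$, obtained from nonnegative sectional values and $\Sc_{\mathbf P_2}=(n-5)(n-4)P_2$, is unsupported, because sectional values do not control the $4$-form part.
\end{itemize}
The pair antisymmetries and the pair symmetry do follow from the delta and the pair symmetry of $\Rm_g$, as in your self-adjointness argument. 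The cyclic identity, however, must be checked from the definition using the algebraic Bianchi identity of $\Rm_g$; the paper does this before it polarizes. One clean way is to evaluate the totally antisymmetric part on four distinct frame vectors. The delta then confines every contracted index to the span of these four and two further frame vectors, so it suffices to treat a six-dimensional space, where $\delta^{atijkl}_{bspqru}=\epsilon^{atijkl}\epsilon_{bspqru}$. After contracting with $\epsilon^{atbsxy}$, every term contains $\Rm_g$ alternated over three of its slots and therefore vanishes.

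\textbf{A smaller point on \eqref{pt:eq:delta-quadratic}.} The order-five delta there has no row $t$, since $t$ sits on the second curvature factor. The identity comes instead from the Laplace expansion of the order-six delta defining $\mathbf P_2$ along its upper index $t$. In that expansion, the two terms pairing $t$ with $b$ or $s$ give the $E_2$ terms, and the four terms in which $t$ enters a curvature factor coincide and give the left side. The order-five expansion of $E_2$ along $t$, namely $96(E_2)^t{}_b=96P_2\delta_{bt}+4\delta^{aijk}_{bpqr}R^{pq}{}_{ij}R^r{}_{tak}$, is a separate identity, and it is the one you need for \eqref{pt:eq:B-divergence}.

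Finally, read the constants off these expansions rather than fixing them on the round sphere. In constant curvature the three terms on the right of \eqref{pt:eq:delta-quadratic} collapse to a single multiple of $\delta_{bt}\delta_{as}-\delta_{st}\delta_{ab}$, so that test sees only one combination of the coefficients.
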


\begin{proof}
Work in an orthonormal frame at a fixed point. The polarized definition is
\begin{align}
&V_j(H_1,\ldots,H_j)^a{}_b=\frac14\delta^{aik\ell_1\cdots\ell_j}_{bpqr_1\cdots r_j}R^{pq}{}_{ik}\prod_{\alpha=1}^j(H_\alpha)^{r_\alpha}{}_{\ell_\alpha}. \nonumber
\end{align}
Exchanging two argument slots changes both alternating index lists by the same sign. Thus this expression is symmetric multilinear in its arguments. Transposition, the symmetry of each $H_\alpha$, and the pair symmetry of $\Rm_g$ show that its value is self-adjoint. The elementary contraction
\begin{align}
&\delta^{c i_1\cdots i_{j+2}}_{c k_1\cdots k_{j+2}}=(n-j-2)\delta^{i_1\cdots i_{j+2}}_{k_1\cdots k_{j+2}} \nonumber
\end{align}
gives both identities in \eqref{qc:eq:general-contractions}.

For positivity, fix a unit vector $e$. Alternation shows that $V_j(H_1,\ldots,H_j)(e,e)$ depends only on the compressions of the arguments to $e^\perp$. First take $H_\alpha=u_\alpha\otimes u_\alpha$ with $u_\alpha\in e^\perp$. If these vectors are linearly independent, choose an orthonormal basis $(e_\mu)$ of $\{e,u_1,\ldots,u_j\}^\perp$. Expansion of the alternating contraction gives
\begin{align}
&V_j(u_1\otimes u_1,\ldots,u_j\otimes u_j)(e,e)=\det\bigl(\langle u_\alpha,u_\beta\rangle\bigr)\sum_{\mu<\nu}\sec_g(e_\mu\wedge e_\nu)\ge0. \nonumber
\end{align}
The expression is zero if the vectors are dependent. Decomposing each nonnegative compression into a sum of nonnegative rank-one endomorphisms and using multilinearity proves positivity. The same alternating contraction also gives the useful consequence
\begin{align}
&\tr\bigl(V_1(H)\circ K\bigr)=\frac14\delta^{aijk}_{bpqr}R^{pq}{}_{ij}H^r{}_kK^b{}_a=\tr\bigl(V_1(K)\circ H\bigr)\qquad(n\ge4). \nonumber
\end{align}

For a unit vector $e$, the defining contraction of $E_2$ restricts all curvature indices to $e^\perp$. Equation~\eqref{eq:P2-four-plane-sum} and Lemma~\ref{six:lem:P2-positive} therefore give
\begin{align}
&0\le E_2(e,e)=P_2(\Rm_g\!\restriction_{e^\perp})\le P_2,\qquad \tr E_2=(n-4)P_2. \nonumber
\end{align}
The same argument with two fixed orthonormal vectors proves the stated sectional formula for $\mathbf{P}_2$. Its pair symmetries follow from the alternating delta and the pair symmetry of $\Rm_g$; the algebraic Bianchi identity follows by expanding the cyclic sum and using the algebraic Bianchi identity of $\Rm_g$. Differential Bianchi gives
\begin{align}
&\nabla_aR^{pq}{}_{ij}+\nabla_iR^{pq}{}_{ja}+\nabla_jR^{pq}{}_{ai}=0, \nonumber\\
&\nabla_a(E_2)^a{}_b=\frac1{48}\delta^{aijkl}_{bpqrs}(\nabla_aR^{pq}{}_{ij})R^{rs}{}_{kl}=0, \nonumber\\
&\nabla_a(\mathbf{P}_2)^{at}{}_{bs}=\frac1{48}\delta^{atijkl}_{bspqru}(\nabla_aR^{pq}{}_{ij})R^{ru}{}_{kl}=0. \nonumber
\end{align}
Indeed, alternation makes the three terms of the differential Bianchi identity contribute equally to each displayed contraction.

With $\Ric_{\mathbf{P}_2}(X,Y)=\sum_i\mathbf{P}_2(e_i,X,e_i,Y)$ and $\Sc_{\mathbf{P}_2}=\tr\Ric_{\mathbf{P}_2}$, contracting one and then two pairs of indices gives
\begin{align}
&\Ric_{\mathbf{P}_2}=(n-5)E_2,\qquad \Sc_{\mathbf{P}_2}=(n-5)(n-4)P_2. \nonumber
\end{align}
For $n\ge6$, the sectional formula and Lemma~\ref{six:lem:P2-positive} show that $\mathbf{P}_2$ has nonnegative sectional curvature. Each of its sectional values is at most $\Sc_{\mathbf{P}_2}/2$. Polarization of the sectional values therefore bounds every component, and hence
\begin{align}
&|\mathbf{P}_2|\le C_n\Sc_{\mathbf{P}_2}\le C_nP_2. \nonumber
\end{align}
For $n\le5$, the defining alternating delta gives $\mathbf{P}_2=0$, so the same bound holds.

The pair symmetries show that $\mathcal J_H$ is self-adjoint. If $H e_i=\lambda_i e_i$ and $\lambda_i\ge0$, then
\begin{align}
&\mathcal J_H(X,X)=\sum_i\lambda_i\mathbf{P}_2(e_i,X,e_i,X)\ge0. \nonumber
\end{align}
For arbitrary self-adjoint $H,K$, contraction and the pair symmetries give
\begin{align}
&\tr\mathcal J_H=\tr(\Ric_{\mathbf{P}_2}\circ H)=(n-5)\tr(E_2\circ H), \nonumber\\
&\tr(\mathcal J_H\circ K)=\sum_{i,j,a,b}H_{ij}K_{ab}\mathbf{P}_2(e_i,e_a,e_j,e_b)=\tr(\mathcal J_K\circ H). \nonumber
\end{align}
When $H,K\ge0$, this trace is nonnegative because $\mathcal J_H\ge0$. This proves \eqref{qc:eq:J-positive}--\eqref{qc:eq:J-mixed-pairing}.

To prove the remaining identities, Laplace expansion of the defining deltas along the distinguished index $t$ gives
\begin{align}
&96(E_2)^t{}_b=96P_2\delta_{bt}+4\delta^{aijk}_{bpqr}R^{pq}{}_{ij}R^r{}_{tak}, \nonumber\\
&96(\mathbf{P}_2)^{at}{}_{bs}=-96\delta_{bt}(E_2)^a{}_s+96\delta_{st}(E_2)^a{}_b+4\delta^{aijkl}_{bpqrs}R^{pq}{}_{ij}R^r{}_{tlk}. \nonumber
\end{align}
In each expansion, the four terms in which $t$ enters a curvature factor agree after using curvature antisymmetry and interchanging the two curvature factors. The second identity is exactly \eqref{pt:eq:delta-quadratic}.

For $H=\lambda\Id+\Hess f$ with $\lambda$ constant, the Hessian commutator reads
\begin{align}
&\nabla_aH^r{}_k-\nabla_kH^r{}_a=R^r{}_{tak}(\nabla f)^t. \nonumber
\end{align}
Differentiating $V_1(H)$, differential Bianchi eliminates the derivative of curvature, while alternation in $a,k$ gives
\begin{align}
&(\diver V_1(H))_b=\frac18\delta^{aijk}_{bpqr}R^{pq}{}_{ij}R^r{}_{tak}(\nabla f)^t=-3\bigl(P_2\delta_{bt}-(E_2)_{bt}\bigr)(\nabla f)^t. \nonumber
\end{align}
This proves \eqref{pt:eq:B-divergence}. Similarly, the two differentiated copies of $H$ in $V_2(H)$ contribute equally, so
\begin{align}
&(\diver V_2(H))_b=\frac14\delta^{aijkl}_{bpqrs}R^{pq}{}_{ij}R^r{}_{tak}H^s{}_l(\nabla f)^t \nonumber\\
&\qquad=-\frac14\delta^{aijkl}_{bpqrs}R^{pq}{}_{ij}R^r{}_{tlk}H^s{}_a(\nabla f)^t. \nonumber
\end{align}
The second line exchanges the dummy indices $a,l$. Substitute \eqref{pt:eq:delta-quadratic} and use
\begin{align}
&(\mathbf{P}_2)^{at}{}_{bs}H^s{}_a=-(\mathcal J_H)^t{}_b. \nonumber
\end{align}
The result is
\begin{align}
&\diver V_2(H)=-6\tr(E_2\circ H)\nabla f+6E_2H\nabla f+6\mathcal J_H\nabla f, \nonumber
\end{align}
which proves \eqref{qc:eq:T-divergence}.
\end{proof}

\begin{notation}\label{notation-P3-E3}
{Define
\begin{align}
&P_3=\frac1{5760} \delta^{i_1\cdots i_6}_{j_1\cdots j_6} R^{j_1j_2}{}_{i_1i_2}R^{j_3j_4}{}_{i_3i_4} R^{j_5j_6}{}_{i_5i_6}, \nonumber\\
&(E_3)^a{}_b =\frac1{5760} \delta^{a i_1\cdots i_6}_{b j_1\cdots j_6} R^{j_1j_2}{}_{i_1i_2}R^{j_3j_4}{}_{i_3i_4} R^{j_5j_6}{}_{i_5i_6}. \label{audit:eq:P3-E3}
\end{align}
}
\end{notation}

\begin{lemma}\label{lem-E3-property}
{The tensor $E_3$ is symmetric and
\begin{align}
&\mathcal J_{\Id}=(n-5)E_2. \label{audit:eq:E3-contractions}
\end{align}
For $n=6$, one has $E_3=0$. 
}
\end{lemma}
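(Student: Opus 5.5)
The lemma has three parts, and each reduces to a statement about generalized Kronecker deltas. I would treat them in the order: symmetry of $E_3$, then the identity $\mathcal J_{\Id}=(n-5)E_2$, then the vanishing of $E_3$ in dimension six.

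\emph{Symmetry of $E_3$.} Lower the index, so that $(E_3)_{ab}$ is the contraction of $\delta^{a i_1\cdots i_6}_{b j_1\cdots j_6}$ against three curvature factors. Transposing $a\leftrightarrow b$ amounts to swapping the upper and lower index lists of the delta. The generalized delta is a determinant, and a determinant is unchanged under transposition, so the delta is invariant under this swap. The curvature factors are invariant under the corresponding relabelling $R^{pq}{}_{ij}\mapsto R^{ij}{}_{pq}$, by pair symmetry of $\Rm_g$ in an orthonormal frame. Hence $(E_3)_{ab}=(E_3)_{ba}$. This is the same argument already used for the self-adjointness of $V_j$ in Lemma~\ref{pt:lem:algebra}.

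\emph{The identity $\mathcal J_{\Id}=(n-5)E_2$.} By \eqref{qc:eq:J-definition}, $\mathcal J_{\Id}(X,Y)=\sum_i\mathbf P_2(e_i,X,e_i,Y)$, which is exactly $\Ric_{\mathbf P_2}(X,Y)$. The proof of Lemma~\ref{pt:lem:algebra} records $\Ric_{\mathbf P_2}=(n-5)E_2$. To justify that contraction directly, contract the index pair $(a,b)$ in \eqref{unified:gen:H-definition}. The elementary identity
\begin{align}
\delta^{c\,i_1\cdots i_k}_{c\,j_1\cdots j_k}=(n-k)\,\delta^{i_1\cdots i_k}_{j_1\cdots j_k}, \nonumber
\end{align}
applied with $k=5$, turns $\mathbf P_2$ into $(n-5)$ times the defining expression \eqref{pt:eq:Einstein-tensors} of $E_2$. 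The only point requiring care is the sign convention: one must check that the pairing of the slots $(e_i,X,e_i,Y)$ with the indices $(a,t,b,s)$ matches $(\mathbf P_2)^{at}{}_{bs}X_aY_tZ^bW^s$. The pairing contracts the first slot with the third, which is the $(a,b)$ pair, so no sign appears.

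\emph{Vanishing of $E_3$ for $n=6$.} The delta $\delta^{a i_1\cdots i_6}_{b j_1\cdots j_6}$ alternates over $7$ upper indices. Each upper index ranges over the $6$ values of an orthonormal frame, so two of the seven must coincide, and the delta is identically zero. Hence $E_3\equiv0$. The same reasoning gives $E_3=0$ whenever $n\le6$.

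I expect no real obstacle here. The only delicate step is the index bookkeeping in the contraction for $\mathcal J_{\Id}$, that is, confirming that contracting $(a,b)$ rather than $(t,s)$ yields $+(n-5)E_2$ with the stated normalization $1/96$. The normalization matches because both $\mathbf P_2$ and $E_2$ use the same factor $1/96$, and the contraction identity contributes exactly the factor $n-5$.
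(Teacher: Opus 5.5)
Your proposal is correct and follows the paper's proof step for step. Symmetry of $E_3$ comes from transposition invariance of the generalized delta together with pair symmetry of $\Rm_g$; the identity $\mathcal J_{\Id}=(n-5)E_2$ comes from the contraction $\delta^{c\,i_1\cdots i_5}_{c\,j_1\cdots j_5}=(n-5)\delta^{i_1\cdots i_5}_{j_1\cdots j_5}$; and $E_3=0$ for $n=6$ follows by alternation over seven indices. The only cosmetic difference is that you contract the slot pair $(a,b)$, which matches the definition \eqref{qc:eq:J-definition} directly, whereas the paper contracts the second pair; these agree by the symmetry $(\mathbf P_2)^{at}{}_{bs}=(\mathbf P_2)^{ta}{}_{sb}$.
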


\begin{proof}
At a fixed point, choose an orthonormal frame with $\nabla e_i=0$:
\begin{align}
&\delta^{c a_1\cdots a_k}_{c b_1\cdots b_k}=(n-k)\delta^{a_1\cdots a_k}_{b_1\cdots b_k},\qquad \delta^{a_0\cdots a_k}_{b_0\cdots b_k}=0\quad\text{if }k+1>n. \label{audit:eq:delta-contractions}
\end{align}
Pair symmetry $R^{ij}{}_{kl}=R^{kl}{}_{ij}$:
\begin{align}
&(E_3)_{ba} =\frac1{5760}\delta^{b i_1\cdots i_6}_{a j_1\cdots j_6}R^{j_1j_2}{}_{i_1i_2}R^{j_3j_4}{}_{i_3i_4}R^{j_5j_6}{}_{i_5i_6}=\frac1{5760}\delta^{b j_1\cdots j_6}_{a i_1\cdots i_6}R^{i_1i_2}{}_{j_1j_2}R^{i_3i_4}{}_{j_3j_4}R^{i_5i_6}{}_{j_5j_6} \nonumber\\
&{}=\frac1{5760}\delta^{a i_1\cdots i_6}_{b j_1\cdots j_6}R^{j_1j_2}{}_{i_1i_2}R^{j_3j_4}{}_{i_3i_4}R^{j_5j_6}{}_{i_5i_6}=(E_3)_{ab}. \nonumber
\end{align}
\begin{align}
&(\mathbf{P}_2)^{ai}{}_{bj}=\frac1{96}\delta^{ai k_1k_2k_3k_4}_{bj l_1l_2l_3l_4}R^{l_1l_2}{}_{k_1k_2}R^{l_3l_4}{}_{k_3k_4},\qquad (\mathcal J_H)^a{}_b=(\mathbf{P}_2)^{ai}{}_{bj}H^j{}_i, \nonumber\\
&(\mathcal J_{\Id})^a{}_b =(\mathbf{P}_2)^{ai}{}_{bi}=\frac1{96}\delta^{ai k_1k_2k_3k_4}_{bi l_1l_2l_3l_4}R^{l_1l_2}{}_{k_1k_2}R^{l_3l_4}{}_{k_3k_4}=\frac{n-5}{96}\delta^{a k_1k_2k_3k_4}_{b l_1l_2l_3l_4}R^{l_1l_2}{}_{k_1k_2}R^{l_3l_4}{}_{k_3k_4} \nonumber\\
&{}=(n-5)(E_2)^a{}_b. \nonumber\\
&\mathcal J_{\Id}=(n-5)E_2. \nonumber
\end{align}

For $n=6$,
\begin{align}
&\delta^{a i_1\cdots i_6}_{b j_1\cdots j_6}=0, \nonumber
\end{align}
by alternation in seven upper indices. Thus
\postdisplaypenalty=10000
\begin{align}
&E_3\equiv0\qquad\text{on }M^6. \nonumber
\end{align}
\end{proof}

\begin{notation}\label{notation-W2-S}
{For any $f\in C^\infty(M)$ and every regular value $t$, use
Notation~\ref{notation:level-objects} with $f$ in place of $F$ and choose an orthonormal
eigenframe $(e_i)_{i=1}^{n-1}$ of $S$. Define
\begin{align}
&W_2(S)=\sum_{i=1}^{n-1}\kappa_i P_2\bigl(\Rm_g|_{\{\nu,e_i\}^{\perp}}\bigr),\qquad n\ge6, \label{audit:eq:W2-definition}
\end{align}
using the induced contractions on the $(n-2)$-dimensional
orthogonal space as specified in Notation~\ref{notation:P2}.
}
\end{notation}

\begin{lemma}\label{audit:lem:cubic-divergence}\label{six:lem:divergence}
Let $(M^n,g)$ be smooth, $n\ge6$, $f\in C^\infty(M)$ and $\zeta\in C_c^\infty(M)$,
\begin{align}
&\diver\mathcal J_{\Hess f} =5(E_3-P_3\Id)\nabla f, \label{audit:eq:general-J-divergence}\\
&\int_M\mathcal J_{\Hess f}(\nabla f,\nabla\zeta)\,dV =\int_M\mathcal J_{\Hess\zeta}(\nabla f,\nabla f)\,dV. \label{one:eq:Hessian-transfer}
\end{align}

\end{lemma}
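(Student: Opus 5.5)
The plan is to compute the divergence by the same Laplace-expansion bookkeeping used for \eqref{pt:eq:B-divergence} and \eqref{qc:eq:T-divergence}, and then to obtain \eqref{one:eq:Hessian-transfer} by a single integration by parts, using only the symmetries of $\mathbf{P}_2$ from Lemma~\ref{pt:lem:algebra}.

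For \eqref{audit:eq:general-J-divergence}, write $(\mathcal J_{\Hess f})^a{}_b=(\mathbf{P}_2)^{ai}{}_{bj}\nabla^j\nabla_if$.
\begin{itemize}
\item By Lemma~\ref{pt:lem:algebra}, $\nabla_a(\mathbf{P}_2)^{ai}{}_{bj}=0$, so
\begin{align}
&(\diver\mathcal J_{\Hess f})_b=(\mathbf{P}_2)^{ai}{}_{bj}\nabla_a\nabla^j\nabla_if. \nonumber
\end{align}
\item $\mathbf{P}_2$ has the algebraic curvature symmetries, so it is antisymmetric in $a,i$. Only the $(a,i)$-antisymmetric part of $\nabla_a\nabla_j\nabla_if=\nabla_a\nabla_i\nabla_jf$ survives.
\item The Ricci commutation identity $\nabla_a\nabla_i\nabla_jf-\nabla_i\nabla_a\nabla_jf=-R_{aij}{}^{t}\nabla_tf$ then gives
\begin{align}
&(\diver\mathcal J_{\Hess f})_b=-\tfrac12(\mathbf{P}_2)^{ai}{}_{bj}R_{ai}{}^{j}{}_{t}\nabla^tf. \nonumber
\end{align}
This is a cubic curvature contraction applied to $\nabla f$.
\item Insert the defining alternating delta \eqref{unified:gen:H-definition} for $\mathbf{P}_2$. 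The resulting seven-index expression has the form $\delta^{a i k_1\cdots k_4}_{b j l_1\cdots l_4}RRR$, with one index of the third curvature factor free.
\item Laplace-expand a $7\times7$ delta along the distinguished index $t$, exactly as in the derivation of \eqref{pt:eq:delta-quadratic}. This expresses the contraction as a multiple of $\delta_{bt}P_3$ plus a multiple of $(E_3)_{bt}$.
\item In that expansion, the terms in which $t$ enters a curvature factor coincide after using curvature antisymmetry and permuting the three curvature factors. The multiplicities combine with the normalizations $1/96$ and $1/5760$ to produce the coefficient $5$ and the relative sign, giving $5(E_3-P_3\Id)\nabla f$.
\end{itemize}

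The main obstacle is this combinatorial normalization: fixing the sign and the factor $5$. I would check it first on the round metric, where $P_3$ and $E_3$ are explicit, and also against the trace identity \eqref{audit:eq:E3-contractions}.

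For \eqref{one:eq:Hessian-transfer}, no curvature computation is needed; only $\diver\mathbf{P}_2=0$ and the pair symmetries enter.
\begin{itemize}
\item Write the left-hand integrand as $(\mathbf{P}_2)^{ai}{}_{bj}\nabla_i\nabla^jf\,\nabla_af\,\nabla^b\zeta$. Here $\nabla_i$ sits in a slot that, by pair symmetry, is again a divergence-free slot of $\mathbf{P}_2$.
\item Integrate by parts in $i$. This is legitimate because $\zeta$ has compact support. It produces two terms:
\begin{align}
&-(\mathbf{P}_2)^{ai}{}_{bj}\nabla^jf\,\nabla_i\nabla_af\,\nabla^b\zeta,\qquad -(\mathbf{P}_2)^{ai}{}_{bj}\nabla^jf\,\nabla_af\,\nabla_i\nabla^b\zeta. \nonumber
\end{align}
\item The first term vanishes, since $\nabla_i\nabla_af$ is symmetric while $\mathbf{P}_2$ is antisymmetric in $a,i$.
\item In the second term, swap the roles of $a$ and $i$ using that antisymmetry. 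This removes the sign and leaves
\begin{align}
&(\mathbf{P}_2)^{ia}{}_{bj}\nabla_i\nabla^b\zeta\,\nabla_af\,\nabla^jf. \nonumber
\end{align}
\item By the definition \eqref{qc:eq:J-definition} and the convention $\mathbf{P}_2(X,Y,Z,W)=(\mathbf{P}_2)^{at}{}_{bs}X_aY_tZ^bW^s$, this is exactly $\mathcal J_{\Hess\zeta}(\nabla f,\nabla f)$.
\end{itemize}
All index placements would be checked against the sectional formula $\mathbf{P}_2(e_1,e_2,e_1,e_2)=P_2(\Rm_g\!\restriction_{\{e_1,e_2\}^\perp})$ to confirm the sign.
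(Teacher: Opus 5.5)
Your proposal is correct and follows the paper's proof. The paper also combines the Hessian commutator in the antisymmetric pair of $\mathbf{P}_2$ with Laplace expansion of the $7\times7$ delta defining $E_3$ along the distinguished index. That expansion gives $5760(E_3)^t{}_b=5760P_3\delta^t_b+576(\mathbf{P}_2)^{ai}{}_{bj}R^j{}_{tai}$, hence the factor $5760/(2\cdot576)=5$; since the $P_3$ coefficient is fixed exactly, your round-metric check at $n=6$ (where $E_3=0$, $P_3=1$) suffices to determine the one remaining constant. The transfer identity is the same integration by parts in the divergence-free slot with the symmetric--antisymmetric cancellation; the paper runs it from the $\mathcal J_{\Hess\zeta}$ side with a longer chain of pair symmetries, while your direction needs only antisymmetry in the upper pair.
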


\begin{proof}

\begin{align}
&5760(E_3)^t{}_b =5760P_3\delta^t_b +576(\mathbf{P}_2)^{ai}{}_{bj}R^j{}_{tai},\qquad \frac12(\mathbf{P}_2)^{ai}{}_{bj}R^j{}_{tai} =5\bigl((E_3)^t{}_b-P_3\delta^t_b\bigr). \nonumber
\end{align}
The Hessian commutator gives
\begin{align}
&(\diver\mathcal J_{\Hess f})_b =(\mathbf{P}_2)^{ai}{}_{bj}\nabla_a(\Hess f)^j{}_i=\tfrac12(\mathbf{P}_2)^{ai}{}_{bj}R^j{}_{tai}(\nabla f)^t =5\bigl((E_3)^t{}_b-P_3\delta^t_b\bigr)(\nabla f)^t. \nonumber
\end{align}
This is \eqref{audit:eq:general-J-divergence}. Since $\nabla_i(\mathbf{P}_2)^{ai}{}_{bj}=0$ and $\supp\zeta\Subset M$, integration in $i$ gives
\begin{align}
&\int(\mathbf{P}_2)^{ai}{}_{bj}\zeta_i{}^j f_a f^b\,dV =-\int(\nabla_i(\mathbf{P}_2)^{ai}{}_{bj})\zeta^j f_a f^b\,dV-\int(\mathbf{P}_2)^{ai}{}_{bj}\zeta^j f_{ia}f^b\,dV-\int(\mathbf{P}_2)^{ai}{}_{bj}\zeta^j f_a f_i{}^b\,dV \nonumber\\
&{}=-\int(\mathbf{P}_2)^{ai}{}_{bj}\zeta^j f_{ia}f^b\,dV-\int(\mathbf{P}_2)^{ai}{}_{bj}\zeta^j f_a f_i{}^b\,dV. \nonumber\\
&(\mathbf{P}_2)^{ai}{}_{bj}f_{ia}=-(\mathbf{P}_2)^{ia}{}_{bj}f_{ia}=-(\mathbf{P}_2)^{ai}{}_{bj}f_{ai}=-(\mathbf{P}_2)^{ai}{}_{bj}f_{ia}. \nonumber\\
&\int(\mathbf{P}_2)^{ai}{}_{bj}\zeta_i{}^j f_a f^b\,dV=-\int(\mathbf{P}_2)^{ai}{}_{bj}\zeta^j f_a f_i{}^b\,dV. \nonumber
\end{align}
Using $f_{ij}=f_{ji}$,
\begin{align}
&{}-(\mathbf{P}_2)_{aibj}\zeta_j f_a f_{ib}=-(\mathbf{P}_2)_{bjai}\zeta_j f_a f_{ib}=(\mathbf{P}_2)_{jbai}\zeta_j f_a f_{ib}=(\mathbf{P}_2)_{aibj}\zeta_a f_b f_{ji}=(\mathbf{P}_2)_{aibj}\zeta_a f_b f_{ij}. \nonumber\\
&{}-(\mathbf{P}_2)^{ai}{}_{bj}\zeta^j f_a f_i{}^b=(\mathbf{P}_2)^{ai}{}_{bj}\zeta_a f^b f_i{}^j. \nonumber\\
&\int(\mathbf{P}_2)^{ai}{}_{bj}\zeta_i{}^j f_a f^b\,dV=\int(\mathbf{P}_2)^{ai}{}_{bj}\zeta_a f^b f_i{}^j\,dV, \nonumber
\end{align}
This is \eqref{one:eq:Hessian-transfer}. On a regular level $\Sigma_t=\{f=t\}$, use the corresponding level notation and take an orthonormal frame with
\begin{align}
&e_n=\nu,\qquad S(e_\alpha)=\kappa_\alpha e_\alpha,\qquad 1\le\alpha\le n-1. \nonumber\\
&(\Hess f)_{\alpha\beta}=|\nabla f|\kappa_\alpha\delta_{\alpha\beta}. \nonumber\\
&\mathbf{P}_2(\nu,\nu,e_B,\nu)=0,\qquad \mathbf{P}_2(e_A,\nu,\nu,\nu)=0, \nonumber\\
&\mathcal J_{\Hess f}(\nu,\nu) =\sum_{A,B=1}^n(\Hess f)_{AB}\mathbf{P}_2(e_A,\nu,e_B,\nu)=\sum_{\alpha,\beta=1}^{n-1}(\Hess f)_{\alpha\beta}\mathbf{P}_2(e_\alpha,\nu,e_\beta,\nu) \nonumber\\
&{}=|\nabla f|\sum_{\alpha=1}^{n-1}\kappa_\alpha\mathbf{P}_2(e_\alpha,\nu,e_\alpha,\nu)=|\nabla f|\sum_{\alpha=1}^{n-1}\kappa_\alpha P_2\bigl(\Rm_g|_{\{\nu,e_\alpha\}^{\perp}}\bigr)=|\nabla f|W_2(S). \label{one:eq:J-boundary}\\
&W_2(S)=|\nabla f|^{-1}\mathcal J_{\Hess f}(\nu,\nu), \nonumber
\end{align}
which proves the eigenframe independence in \eqref{audit:eq:W2-definition}.
\end{proof}

\begin{lemma}\label{one:lem:weak-convex}
If $(M^n, g)$ is a complete noncompact Riemannian manifold with $K\geq 0$ and $n\ge6$, then
\begin{align}
&\int_M\mathcal J_{\Hess b}(\nabla \mathfrak{b},\nabla \mathfrak{b})\,dV \le5\int_M b\bigl(P_3-E_3(\nabla \mathfrak{b},\nabla \mathfrak{b})\bigr)\,dV, \quad \forall \in C_c^\infty(M; \overline{\mathbb{R}^+}). \label{audit:eq:general-weak-convex}
\end{align}
\end{lemma}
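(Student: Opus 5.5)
The plan is to prove \eqref{audit:eq:general-weak-convex} first for a smooth almost-convex function $f$, using the divergence identity \eqref{audit:eq:general-J-divergence} and the transfer identity \eqref{one:eq:Hessian-transfer}, and then to pass to $\mathfrak b$ through Greene--Wu smoothings. The statement is read as: for every $b\in C_c^\infty(M)$ with $b\ge0$, the smooth test function $b$ carries the Hessian and $\mathfrak b$ enters only through its gradient.

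\emph{Smooth identity.} Fix $f\in C^\infty(M)$ and $0\le b\in C_c^\infty(M)$, and write $H=\Hess f$. By \eqref{one:eq:Hessian-transfer} with $\zeta=b$, then integration by parts,
\begin{align}
&\int_M\mathcal J_{\Hess b}(\nabla f,\nabla f)\,dV=\int_M\mathcal J_{H}(\nabla f,\nabla b)\,dV=-\int_M b\,\diver\bigl(\mathcal J_H\nabla f\bigr)\,dV. \nonumber
\end{align}
The Leibniz rule and \eqref{audit:eq:general-J-divergence} give
\begin{align}
&\diver(\mathcal J_H\nabla f)=5\bigl(E_3(\nabla f,\nabla f)-P_3|\nabla f|^2\bigr)+\tr(\mathcal J_H\circ H). \nonumber
\end{align}
Hence
\begin{align}
&\int_M\mathcal J_{\Hess b}(\nabla f,\nabla f)\,dV=5\int_M b\bigl(P_3|\nabla f|^2-E_3(\nabla f,\nabla f)\bigr)\,dV-\int_M b\,\tr(\mathcal J_H\circ H)\,dV. \nonumber
\end{align}

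\emph{Sign of the last term.} Suppose $\Hess f\ge-\gamma g$ on $\supp b$ with a constant $0<\gamma\le1$, and put $\widetilde H=H+\gamma\Id\ge0$. Bilinearity and \eqref{qc:eq:J-mixed-pairing} give
\begin{align}
&\tr(\mathcal J_H\circ H)=\tr(\mathcal J_{\widetilde H}\circ\widetilde H)-2\gamma\tr\mathcal J_{\widetilde H}+\gamma^2\tr\mathcal J_{\Id}. \nonumber
\end{align}
The first term is nonnegative by Lemma~\ref{pt:lem:algebra}. By \eqref{qc:eq:J-positive} and $0\le E_2\le P_2\Id$, one has $0\le\tr\mathcal J_{\widetilde H}\le(n-5)P_2\tr\widetilde H$, and \eqref{audit:eq:E3-contractions} gives $\tr\mathcal J_{\Id}\ge0$. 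So $-\tr(\mathcal J_H\circ H)\le C_n\gamma P_2(\tr H+n\gamma)$, which yields
\begin{align}
&\int_M\mathcal J_{\Hess b}(\nabla f,\nabla f)\,dV\le5\int_M b\bigl(P_3|\nabla f|^2-E_3(\nabla f,\nabla f)\bigr)\,dV+C\gamma\sup b\sup_{\supp b}P_2\int_{\supp b}\bigl(\tr\Hess f+n\bigr)\,dV. \nonumber
\end{align}

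\emph{Approximation.} Apply Theorem~\ref{ext:thm:Greene-Wu} to $u=\mathfrak b$, which is convex and $1$-Lipschitz by Lemma~\ref{lem:geometry}, with $e=\gamma=1/k$ and $\delta=1/k$. This gives $f_k\to\mathfrak b$ uniformly, $|\nabla f_k|<1+1/k$ and $\Hess f_k>-k^{-1}g$. Lemma~\ref{one:lem:compact-Hessians}, with $K=\supp b$, bounds $\int_{\supp b}(\tr\Hess f_k+n)\,dV$ independently of $k$, so the error term is $O(1/k)$.

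\emph{Main obstacle: $\nabla f_k\to\nabla\mathfrak b$ almost everywhere.} This step carries the whole passage to the limit, and I would prove it as follows. At a differentiability point $x$ of $\mathfrak b$, work in normal coordinates: semiconvexity gives $f_k(\exp_x v)\ge f_k(x)+\langle\nabla f_k(x),v\rangle-C|v|^2/k$. Let $\xi$ be any subsequential limit of the bounded vectors $\nabla f_k(x)$. Passing to the limit gives $\mathfrak b(\exp_x v)\ge\mathfrak b(x)+\langle\xi,v\rangle$, so $\xi$ is a subgradient of $\mathfrak b$ at $x$, hence $\xi=\nabla\mathfrak b(x)$. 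Rademacher's theorem then gives almost-everywhere convergence. With the uniform bounds $|\nabla f_k|\le2$, $|\Hess b|\le C$ and $|P_3|,|E_3|\le C$ on $\supp b$, dominated convergence passes both main integrals to the limit. Finally $|\nabla\mathfrak b|=1$ almost everywhere by \eqref{six:eq:unit-gradient}, so $P_3|\nabla\mathfrak b|^2=P_3$, and the limit inequality is exactly \eqref{audit:eq:general-weak-convex}.
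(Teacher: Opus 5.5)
Your proposal is correct and follows the paper's proof. Both arguments take Greene--Wu smoothings of $\mathfrak b$ and use the divergence identity with the Hessian transfer to reduce everything to the sign of $\int b\,\tr(\mathcal J_{\Hess f}\circ\Hess f)$; this is controlled by the shift $\Hess f+\gamma\Id\ge0$ and the mixed pairing, giving an $O(1/k)$ error, followed by a.e.\ gradient convergence and dominated convergence. The only differences are cosmetic: you bound the Hessian mass with Lemma~\ref{one:lem:compact-Hessians} where the paper uses a cutoff integration by parts, and you identify the limiting gradient by the subgradient inequality where the paper uses two-sided difference quotients.
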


\begin{proof}
\begin{align}
&\supp b\subset U_0\Subset U_1\Subset M,\qquad j\in\mathbb Z_{\ge2},\qquad e=\gamma=\delta=j^{-1}. \nonumber
\end{align}
 Theorem~\ref{ext:thm:Greene-Wu}, \eqref{eq:smoothing-import}, gives $v_j\in C^\infty(U_1)$ with
\begin{align}
&|v_j-\mathfrak{b}|<j^{-1},\qquad |\nabla v_j|<1+j^{-1},\qquad \Hess v_j\ge-j^{-1}g. \nonumber
\end{align}
For $\mathfrak{b}$ differentiable at $x$, $V\in T_xM$, $|V|=1$, and $s>0$ with $\exp_x([-s,s]V)\subset U_1$,
\begin{align}
&\frac{\mathfrak{b}(x)-\mathfrak{b}(\exp_x(-sV))}{s}-\frac2{js}-\frac{s}{2j} \le dv_j(V)\le\frac{\mathfrak{b}(\exp_x(sV))-\mathfrak{b}(x)}s+\frac2{js}+\frac{s}{2j}. \nonumber\\
&\frac{\mathfrak{b}(x)-\mathfrak{b}(\exp_x(-sV))}{s}\le\liminf_{j\to\infty}dv_j(V)\le\limsup_{j\to\infty}dv_j(V)\le\frac{\mathfrak{b}(\exp_x(sV))-\mathfrak{b}(x)}s, \nonumber\\
&\lim_{s\downarrow0}\frac{\mathfrak{b}(\exp_x(sV))-\mathfrak{b}(x)}s=d\mathfrak{b}(V)=\lim_{s\downarrow0}\frac{\mathfrak{b}(x)-\mathfrak{b}(\exp_x(-sV))}s. \nonumber
\end{align}
 Apply this to an orthonormal basis and use Lemma~\ref{lem:geometry}, \eqref{six:eq:unit-gradient}:
\begin{align}
&\lim_{j\to\infty}\nabla v_j(x)=\nabla \mathfrak{b}(x),\qquad \lim_{j\to\infty}|\nabla v_j(x)|^2=1 \quad\hbox{for almost every }x\in U_1. \nonumber\\
&0\le\xi\in C_c^\infty(U_1),\qquad\xi=1\text{ near }\overline{U_0},\qquad|\Hess v_j|\le\Delta v_j+2n/j. \nonumber
\end{align}
 Integration by parts gives
\begin{align}
&\int_{\supp b}|\Hess v_j|\,dV \le-\int\langle\nabla\xi,\nabla v_j\rangle\,dV +\frac{2n}{j}\int\xi\,dV\le C_b,\qquad j\ge2. \nonumber
\end{align}
Lemma~\ref{pt:lem:algebra}, \eqref{qc:eq:J-mixed-pairing}, and Lemma~\ref{lem-E3-property}, \eqref{audit:eq:E3-contractions}:
\begin{align}
&\tr(\mathcal J_{\Hess v_j}\circ \Hess v_j) =\tr(\mathcal J_{\Hess v_j+ \frac{1}{j}\Id}\circ (\Hess v_j+ \frac{1}{j}\Id)) -\frac{2(n-5)}j\tr(E_2\circ (\Hess v_j+ \frac{1}{j}\Id)) \nonumber\\
&{}+\frac{n-5}{j^2}\tr E_2 \nonumber\\
&{}\ge-\frac{2(n-5)}j\tr(E_2\circ (\Hess v_j+ \frac{1}{j}\Id)). \nonumber\\
&\sup_{\overline{U_0}}|E_2|<\infty,\qquad\int b\tr(\mathcal J_{\Hess v_j}\circ\Hess v_j)\,dV\ge-C_b/j. \nonumber
\end{align}
 The divergence theorem and Lemma~\ref{audit:lem:cubic-divergence}, \eqref{audit:eq:general-J-divergence}, give
\begin{align}
&\int b\tr(\mathcal J_{\Hess v_j}\circ \Hess v_j)\,dV=-\int \mathcal J_{\Hess v_j}(\nabla b,\nabla v_j)\,dV -\int b(\diver\mathcal J_{\Hess v_j})(\nabla v_j)\,dV= \nonumber\\
&{}-\int \mathcal J_{\Hess v_j}(\nabla b,\nabla v_j)\,dV \nonumber\\
&{}+5\int b\bigl(P_3|\nabla v_j|^2-E_3(\nabla v_j,\nabla v_j)\bigr)\,dV. \nonumber
\end{align}
Lemma~\ref{audit:lem:cubic-divergence}, \eqref{one:eq:Hessian-transfer}:
\begin{align}
&\int \mathcal J_{\Hess v_j}(\nabla b,\nabla v_j)\,dV = \int \mathcal J_{\Hess b}(\nabla v_j,\nabla v_j)\,dV. \nonumber\\
&5\int b\bigl(P_3|\nabla v_j|^2 -E_3(\nabla v_j,\nabla v_j)\bigr)\,dV -\int\mathcal J_{\Hess b}(\nabla v_j,\nabla v_j)\,dV=\int b\tr(\mathcal J_{\Hess v_j}\circ \Hess v_j)\,dV \ge \nonumber\\
&{}-\frac{C_b}{j}. \nonumber\\
&|\nabla v_j|\le\tfrac32,\qquad\supp b\Subset U_1, \nonumber\\
&\left|5b(P_3|\nabla v_j|^2-E_3(\nabla v_j,\nabla v_j))-\mathcal J_{\Hess b}(\nabla v_j,\nabla v_j)\right|\le C_b\mathbf1_{\supp b}\in L^1(M). \nonumber
\end{align}
 Dominated convergence yields Lemma~\ref{one:lem:weak-convex}, \eqref{audit:eq:general-weak-convex}.
\end{proof}

\begin{lemma}\label{six:lem:cubic-interpolation}
Let $(M^n,g)$ be smooth, $n=6$.
Let $\phi_0,\phi_1,\chi\in C^\infty(U)$ on an open set $U$,
assume $\nabla\chi$ has compact support in $U$, and for 
$0\le\lambda\le1$ put
\begin{align}
&\phi_\lambda=(1-\lambda)\phi_0+\lambda\phi_1, H_\lambda=\Hess\phi_\lambda,\qquad z=\nabla(\phi_1-\phi_0). \nonumber
\end{align}
Then
\begin{align}
&\frac{d}{d\lambda}\int_U\langle\nabla\chi, V_3(H_\lambda)\nabla\phi_\lambda\rangle\,dV=4\int_U\langle\nabla\chi, V_3(H_\lambda)z\rangle\,dV \nonumber\\
&{}+18\int_U\bigl( \langle z,\nabla\phi_\lambda\rangle \mathcal J_{H_\lambda}(\nabla\chi,\nabla\phi_\lambda) -|\nabla\phi_\lambda|^2 \mathcal J_{H_\lambda}(\nabla\chi,z)\bigr)\,dV \nonumber\\
&{}-18\int_U\mathbf{P}_2(\nabla\chi,H_\lambda \nabla\phi_\lambda, \nabla\phi_\lambda,z)\,dV. \label{six:eq:cubic-interpolation}
\end{align}
\end{lemma}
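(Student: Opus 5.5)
The plan is to differentiate under the integral sign and then integrate by parts the single term carrying a derivative of $z$, the way the divergence identities of Lemma~\ref{pt:lem:algebra} were obtained.

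\textbf{Differentiation.} Since $\partial_\lambda\nabla\phi_\lambda=z$ and $\partial_\lambda H_\lambda=\Hess(\phi_1-\phi_0)=\nabla z$, the polarization rule $\frac{d}{dt}V_j(H_t)=jV_j(\dot H_t,H_t,\ldots,H_t)$ gives
\begin{align}
&\frac{d}{d\lambda}\int_U\langle\nabla\chi,V_3(H_\lambda)\nabla\phi_\lambda\rangle\,dV=\int_U\langle\nabla\chi,V_3(H_\lambda)z\rangle\,dV+3\int_U\langle\nabla\chi,V_3(\nabla z,H_\lambda,H_\lambda)\nabla\phi_\lambda\rangle\,dV. \nonumber
\end{align}
Differentiation under the integral sign is legitimate because $\nabla\chi$ has compact support in $U$.

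\textbf{Integration by parts.} In the second integral I write out
\begin{align}
&V_3(\nabla z,H,H)^a{}_b=\tfrac14\delta^{aik\ell_1\ell_2\ell_3}_{bpqr_1r_2r_3}R^{pq}{}_{ik}\nabla_{\ell_1}z^{r_1}H^{r_2}{}_{\ell_2}H^{r_3}{}_{\ell_3} \nonumber
\end{align}
and move $\nabla_{\ell_1}$ onto the remaining factors. No boundary terms arise, since $\nabla\chi$ is compactly supported. The five resulting contributions are handled as follows.
\begin{enumerate}
\item The derivative falling on $\nabla_a\chi$ gives $\nabla_{\ell_1}\nabla_a\chi$. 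This is symmetric in $(a,\ell_1)$, while the delta is alternating in those indices, so the term vanishes.
\item The derivative falling on $R^{pq}{}_{ik}$ is alternated over $\ell_1,i,k$. By the differential Bianchi identity it vanishes, exactly as in the proof of $\diver E_2=0$.
\item The derivative falling on $\nabla^b\phi$ produces $z^{r_1}H^b{}_{\ell_1}HH$. After relabeling the alternated pairs $(b,r_1)$ and $(\ell_1,a)$, this is $\langle\nabla\chi,V_3(H)z\rangle$. With the factor $3$ it contributes $3\int\langle\nabla\chi,V_3(H_\lambda)z\rangle$, which together with the first integral gives the coefficient $4$ in \eqref{six:eq:cubic-interpolation}.
\item The derivatives falling on the two copies of $H$ contribute equally. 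Since $\lambda$ enters only through $\phi_\lambda$, antisymmetrizing $\ell_1,\ell_2$ turns each into the Hessian commutator $\nabla_{\ell_1}H^{r}{}_{\ell_2}-\nabla_{\ell_2}H^r{}_{\ell_1}=R^r{}_{t\ell_1\ell_2}\nabla^t\phi_\lambda$, as in \eqref{pt:eq:B-divergence}--\eqref{qc:eq:T-divergence}.
\end{enumerate}
What remains is a term
\begin{align}
&-\tfrac{6}{4}\cdot\tfrac12\,\delta^{aik\ell_1\ell_2\ell_3}_{bpqr_1r_2r_3}R^{pq}{}_{ik}R^{r_2}{}_{t\ell_1\ell_2}z^{r_1}H^{r_3}{}_{\ell_3}\nabla_a\chi\,\nabla^t\phi\,\nabla^b\phi, \nonumber
\end{align}
up to sign bookkeeping. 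It is quadratic in curvature and linear in $H$.

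\textbf{Reduction of the quadratic-curvature term (main obstacle).} The remaining step is to reduce this contraction to $\mathbf P_2$, $\mathcal J_H$ and the two scalar pairings in \eqref{six:eq:cubic-interpolation}. I would Laplace-expand the six-index delta along the slot carrying $H^{r_3}{}_{\ell_3}$, and along the index $t$ in the curvature factor, using the identity \eqref{pt:eq:delta-quadratic}. This should produce three kinds of terms.
\begin{enumerate}
\item Terms with $\delta_{bt}$, i.e.\ the pairing $\langle z,\nabla\phi\rangle$ or $|\nabla\phi|^2$, times $(\mathbf P_2)^{\cdot\cdot}{}_{\cdot\cdot}H$. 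These give the two $\mathcal J_{H_\lambda}$ terms, via $(\mathbf P_2)^{at}{}_{bs}H^s{}_a=-(\mathcal J_H)^t{}_b$.
\item A term with $\mathbf P_2$ contracted against $H\nabla\phi$, $\nabla\phi$ and $z$. This is $-\mathbf P_2(\nabla\chi,H\nabla\phi,\nabla\phi,z)$.
\item Terms built on a seven-index alternation. These vanish identically when $n=6$, as for $E_3$ in Lemma~\ref{lem-E3-property}.
\end{enumerate}

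The hardest part is confirming that the $E_2$ contributions cancel in pairs and that the normalization comes out to exactly $18=3\cdot 6$. Here $3$ is the polarization factor and $6$ the factor already seen in \eqref{qc:eq:T-divergence}. I would check the constants first on a frame in which $H$ is diagonal and $z,\nabla\phi$ are coordinate vectors, and then conclude by multilinearity.
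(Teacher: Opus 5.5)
Your differentiation and integration by parts are exactly the paper's, and they are correct. The $\nabla^2\chi$ and $\nabla\Rm$ contributions vanish by symmetry and the second Bianchi identity. The derivative falling on $\nabla\phi_\lambda$ brings the coefficient of $\int_U\langle\nabla\chi,V_3(H_\lambda)z\rangle\,dV$ to $4$. The two Hessian commutators leave $-\tfrac34\int_U\chi_a\,\delta^{aijk\ell m}_{bpqrsu}R^{pq}{}_{ij}R^r{}_{t\ell k}H^u{}_m\phi^t\phi^bz^s\,dV$, which agrees with your term after relabeling.

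The gap is the reduction of this quadratic-curvature term. It is the only place where $n=6$ enters, and the mechanism you describe does not carry it out.
\begin{itemize}
\item The index $t$ is not an index of the six-index delta, so there is nothing to expand along it.
\item If you expand along the row or column of the $H$ slot, only some pairings give five-index contractions of the form \eqref{pt:eq:delta-quadratic}: the pairing with $H$'s own slot (giving $\operatorname{tr}H$) and the pairings with the slots carrying $\nabla\chi$, $\nabla\phi_\lambda$ or $z$. These produce $E_2$ and $\operatorname{tr}H\,\mathbf P_2$ terms.
\item The pairings with the curvature slots instead produce composites such as $H^u{}_pR^{pq}{}_{ij}$ or $H^u{}_rR^r{}_{t\ell k}$ inside a five-index delta. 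The identity \eqref{pt:eq:delta-quadratic} does not cover these, and you never treat them.
\item Expanding a six-index delta cannot create the seven-index alternation your item (3) relies on.
\end{itemize}
More fundamentally, the contraction identity you need is false for $n\ge7$: for $H=\Id$ it is off by a nonzero multiple of $(n-6)(\mathbf P_2)^{at}{}_{bs}$. So it cannot follow from \eqref{pt:eq:delta-quadratic}, which holds in every dimension, plus bookkeeping. Checking constants in a frame where $H$ is diagonal also does not establish a tensor identity that must hold for an arbitrary curvature tensor.

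What is missing is the paper's device: adjoin $t$ to the alternation. Because $n=6$, the seven-index contraction below vanishes, and its Laplace expansion along the upper index $t$ reads
\begin{align*}
0&=\delta^{a\,t\,ijk\ell m}_{b\,s\,pqrwu}R^{pq}{}_{ij}R^{rw}{}_{k\ell}H^u{}_m\\
&=-96\,\delta_{bt}(\mathcal J_H)^a{}_s+96\,\delta_{st}(\mathcal J_H)^a{}_b-96\,(\mathbf P_2)^{am}{}_{bs}H^t{}_m\\
&\quad+4\,\delta^{aijk\ell m}_{bpqrsu}R^{pq}{}_{ij}R^r{}_{t\ell k}H^u{}_m.
\end{align*}
\begin{itemize}
\item Pairing $t$ with $b$ and with $s$ gives the two $\mathcal J_H$ terms.
\item Pairing $t$ with $u$ gives the $(\mathbf P_2)^{am}{}_{bs}H^t{}_m$ term.
\item The four pairings with curvature slots coincide, after using pair antisymmetry and exchanging the two curvature factors.
\end{itemize}
No $E_2$ ever appears. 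This is the construction of \eqref{pt:eq:delta-quadratic} with one more index and an extra factor $H$, except that now the left side is zero. Solving gives
$\delta^{aijk\ell m}_{bpqrsu}R^{pq}{}_{ij}R^r{}_{t\ell k}H^u{}_m=24\bigl(\delta_{bt}(\mathcal J_H)^a{}_s-\delta_{st}(\mathcal J_H)^a{}_b+(\mathbf P_2)^{am}{}_{bs}H^t{}_m\bigr)$.
Contracting with $-\tfrac34\chi_a\phi^t\phi^bz^s$ produces exactly the three terms with coefficients $\pm18$ in \eqref{six:eq:cubic-interpolation}.
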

\begin{proof}
Fix $\lambda$ and write
\begin{align}
&\phi=\phi_\lambda,\qquad H=H_\lambda,\qquad \chi_a=(\nabla\chi)_a,\qquad \phi^a=(\nabla\phi)^a,\qquad \frac{d}{d\lambda}H^a{}_b=(\nabla_bz)^a. \nonumber\\
&n=6\ \Longrightarrow\ \delta^{a_1\cdots a_7}_{b_1\cdots b_7}=0. \nonumber\\
&0={}-96\delta_{bt}\mathcal J_H{}^a{}_s+96\delta_{st}\mathcal J_H{}^a{}_b-96(\mathbf{P}_2)^{am}{}_{bs}H^t{}_m +4\delta^{aijk\ell m}_{bpqrsu}R^{pq}{}_{ij}R^r{}_{t\ell k}H^u{}_m, \nonumber\\
&\delta^{aijk\ell m}_{bpqrsu}R^{pq}{}_{ij}R^r{}_{t\ell k}H^u{}_m=24\bigl(\delta_{bt}\mathcal J_H{}^a{}_s-\delta_{st}\mathcal J_H{}^a{}_b+(\mathbf{P}_2)^{am}{}_{bs}H^t{}_m\bigr). \label{six:eq:cubic-delta}\\
&\frac{d}{d\lambda}\int_U\langle\nabla\chi,V_3(H)\nabla\phi\rangle\,dV=\frac34\int_U\chi_a\delta^{aijk\ell m}_{bpqrsu}R^{pq}{}_{ij}(\nabla_kz)^rH^s{}_\ell H^u{}_m\phi^b\,dV+\int_U\langle\nabla\chi,V_3(H)z\rangle\,dV. \nonumber\\
&\delta^{aijk\ell m}_{bpqrsu}R^{pq}{}_{ij}H^s{}_\ell H^u{}_m\nabla_k\chi_a=0,\qquad \delta^{aijk\ell m}_{bpqrsu}(\nabla_kR^{pq}{}_{ij})H^s{}_\ell H^u{}_m=0. \nonumber\\
&\nabla_kH^s{}_\ell-\nabla_\ell H^s{}_k=R^s{}_{tk\ell}\phi^t, \nonumber
\end{align}
Symmetry of the two Hessian factors and integration by parts give
\begin{align}
&\frac34\int_U\chi_a\delta^{aijk\ell m}_{bpqrsu}R^{pq}{}_{ij}(\nabla_kz)^rH^s{}_\ell H^u{}_m\phi^b\,dV=3\int_U\langle\nabla\chi,V_3(H)z\rangle\,dV \nonumber\\
&{}-\frac34\int_U\chi_a\delta^{aijk\ell m}_{bpqrsu}R^{pq}{}_{ij}R^r{}_{t\ell k}H^u{}_m\phi^t\phi^bz^s\,dV. \nonumber
\end{align}
Equation~\eqref{six:eq:cubic-delta}:
\begin{align}
&{}-\frac34\chi_a\delta^{aijk\ell m}_{bpqrsu}R^{pq}{}_{ij}R^r{}_{t\ell k}H^u{}_m\phi^t\phi^bz^s={}18\langle z,\nabla\phi\rangle\mathcal J_H(\nabla\chi,\nabla\phi)-18|\nabla\phi|^2\mathcal J_H(\nabla\chi,z) \nonumber\\
&{}-18\mathbf{P}_2(\nabla\chi,H\nabla\phi,\nabla\phi,z). \nonumber\\
&\frac{d}{d\lambda}\int_U\langle\nabla\chi,V_3(H)\nabla\phi\rangle\,dV=4\int_U\langle\nabla\chi,V_3(H)z\rangle\,dV \nonumber\\
&{}+18\int_U\bigl(\langle z,\nabla\phi\rangle\mathcal J_H(\nabla\chi,\nabla\phi)-|\nabla\phi|^2\mathcal J_H(\nabla\chi,z)\bigr)\,dV \nonumber\\
&{}-18\int_U\mathbf{P}_2(\nabla\chi,H\nabla\phi,\nabla\phi,z)\,dV, \nonumber
\end{align}
This is \eqref{six:eq:cubic-interpolation}; $\supp\nabla\chi\Subset U$ removes every boundary term.
\end{proof}

On every sufficiently large regular level, use
Notations~\ref{notation:eta} and~\ref{notation:level-objects},
and retain the estimates proved in
Lemma~\ref{lem:independent-level-limit},
equations~\eqref{eq:level-distance-hessian} and~\eqref{eq:level-curvature-limit}.
All constants may depend on the fixed manifold, base point,
exhaustion and initial level, and on $m$, but not on $t$ or $r$.
For a self-adjoint endomorphism $H$ define the algebraic curvature
tensor $\mathcal G(H)$ by
$\mathcal G(H)^{ab}_{ij}=H_i^aH_j^b-H_j^aH_i^b$.
Put $\mathcal R^0=\Rm_g|_{T\Sigma_t}$. In an eigenframe of $S$ set
\begin{align}
&W(H)=\tr\bigl((\tfrac12\Sc_{\mathcal R^0}\Id-\Ric_{\mathcal R^0})\circ H\bigr),\qquad G(t)=\int_{\Sigma_t}\bigl(W(S)+2\sigma_3(S)\bigr)\,dA, \nonumber\\
&J_2(t)=\int_{\Sigma_t}\frac{|\Rm_{\Sigma_t}|^2-4|\Ric_{\Sigma_t}|^2+\Sc_{\Sigma_t}^2}{24}\,dA. \label{pt:eq:curvature-polynomials}
\end{align}
When $H$ is diagonal in the displayed eigenframe,
$W(H)=\sum_iH_i^i\sum_{j<k,\,j,k\ne i}K_{jk}$.
The other expressions are likewise invariant.
The Gauss equation, expanded over coordinate four-planes, is
\begin{align}
&\frac{|\Rm_{\Sigma_t}|^2-4|\Ric_{\Sigma_t}|^2+\Sc_{\Sigma_t}^2}{24}=\frac{|\mathcal R^0|^2-4|\Ric_{\mathcal R^0}|^2+\Sc_{\mathcal R^0}^2}{24} \nonumber\\
&{}+\frac13\sum_{1\le a<b\le n-1}K_{ab}\sigma_{2}\bigl((\kappa_c)_{\substack{1\le c\le n-1\\ c\ne a,b}}\bigr)+\sigma_4(S),\qquad \frac{|\mathcal R^0|^2-4|\Ric_{\mathcal R^0}|^2+\Sc_{\mathcal R^0}^2}{24}\ge0. \label{pt:eq:Gauss-polynomial}
\end{align}
The inequality follows from Lemma~\ref{six:lem:P2-positive},
equation~\eqref{six:eq:P2-positive}, on each
four-plane. The same argument gives $P_2\ge0$.

In an orthonormal eigenframe of $S$, write $\kappa_i$ for the
eigenvalues and $K_{ik}=\sec_g(e_i\wedge e_k)$.
\begin{lemma}
\label{pt:lem:lapse}
Let $m\ge4$, let $I\subset\mathbb R$ be an open interval,
and let $\Sigma$ be a compact smooth $m$-manifold without boundary.
Let $h_t$ be a smooth family of Riemannian metrics on $\Sigma$,
and let $\lambda:\Sigma\times I\to(0,\infty)$ be smooth. Equip this
product with $g=\lambda^{-2}dt^2+h_t$.
Put $\Sigma_t=\Sigma\times\{t\}$,
$S=(\lambda/2)h_t^{-1}\partial_t h_t$ and
$\mathcal R^0=\Rm_g|_{T\Sigma_t}$.
Put
\begin{align}
&N_2(t)=\int_{\Sigma_t}\lambda^{-1}P_2\,dA. \nonumber
\end{align}
Then, for every $t\in I$,
\begin{align}
&\tfrac13G'(t)+N_2(t) =\int_{\Sigma_t}\lambda^{-1}\frac{|\Rm_{\Sigma_t}|^2-4|\Ric_{\Sigma_t}|^2+\Sc_{\Sigma_t}^2}{24}\,dA \nonumber\\
&{}+\frac13\int_{\Sigma_t}\lambda^{-1}\sum_{1\le a<b\le m}K_{ab}\sigma_{2}\bigl((\kappa_c)_{\substack{1\le c\le m\\ c\ne a,b}}\bigr)\,dA +\frac53\int_{\Sigma_t}\lambda^{-1}\sigma_4(S)\,dA. \label{pt:eq:lapse-identity}
\end{align}
For an algebraic curvature tensor $\mathcal R$ on an $m$-dimensional
inner-product space define
\begin{align}
&\mathcal E(\mathcal R)^a{}_b =\tfrac1{96}\delta^{aijkl}_{bpqrs} \mathcal R^{pq}_{ij}\mathcal R^{rs}_{kl}. \label{mass:eq:level-E2}
\end{align}
For every $t\in I$, the first
variation is
\begin{align}
&J_2'(t)=\int_{\Sigma_t} \lambda^{-1}\tr\bigl(\mathcal E(\Rm_{\Sigma_t})\circ S\bigr)\,dA. \label{mass:eq:variation}
\end{align}
\end{lemma}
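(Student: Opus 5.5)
The plan is to prove both identities as first-variation computations along the foliation by the levels $\Sigma_t$. The identity \eqref{mass:eq:variation} is the simpler one and I would do it first. After that, I would treat \eqref{pt:eq:lapse-identity} as the transgressed (Chern) form of the four-dimensional Euler integrand. In that reading, $N_2(t)$ is the $t$-derivative of the bulk integral $\int_{\{t_0<\tau<t\}}P_2\,dV$, since $dV=\lambda^{-1}dt\,dA$.

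\emph{Step 1: the variation of $J_2$.} The metric $g=\lambda^{-2}dt^2+h_t$ has unit normal $\nu=\lambda\partial_t$, so
\begin{align}
&\partial_th_t=2\lambda^{-1}S. \nonumber
\end{align}
Extending Lemma~\ref{var:lem:metric-variation}, \eqref{eq:metric-variation}, to the quadratic Lovelock density gives
\begin{align}
&\partial_t\bigl(P_2(\Rm_{h_t})\,dA\bigr)=\Bigl(\tfrac12\tr\bigl(\mathcal E(\Rm_{h_t})\circ h_t^{-1}\partial_th_t\bigr)+\diver_{h_t}Y\Bigr)dA \nonumber
\end{align}
for some vector field $Y$. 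I would prove this through the alternating-delta form \eqref{pt:eq:P2-definition}. In that form, the variation of $R^{pq}{}_{ij}$ consists of a second-derivative part, which becomes a divergence after using differential Bianchi (alternation kills the derivative of curvature, exactly as in the proof of $\diver E_2=0$ in Lemma~\ref{pt:lem:algebra}), plus an algebraic part $-k^p{}_rR^{rq}{}_{ij}$. Contracting the algebraic part with $\delta^{ijkl}_{pqrs}$ yields $\mathcal E$.

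The normalization can be checked by scaling. Taking $\partial_th=2h$ must give $(m-4)P_2=\tr\mathcal E$, and it does, by \eqref{audit:eq:delta-contractions}. Since $\Sigma_t$ is closed, the divergence integrates to zero, which gives \eqref{mass:eq:variation}.

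\emph{Step 2: differentiating $G$.} Next I would differentiate
\begin{align}
&G=\int\bigl(\langle E^0,S\rangle+2\sigma_3(S)\bigr)dA,\qquad E^0=\tfrac12\Sc_{\mathcal R^0}\Id-\Ric_{\mathcal R^0}. \nonumber
\end{align}
This uses the following evolution equations:
\begin{align}
&\partial_t dA=\lambda^{-1}H\,dA. \nonumber
\end{align}
The lapse Riccati equation, in the form
\begin{align}
&\lambda\,\partial_tS=-\Hess_{\Sigma_t}\lambda^{-1}\cdot\lambda-S^2-\mathcal R_\nu \nonumber
\end{align}
(up to the usual index conventions). The evolution of $\mathcal R^0$, obtained from $\nabla_\nu\Rm$ via differential Bianchi together with the derivative of the frame along the normal flow. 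Here the Codazzi equation $\Rm(X,Y)\nu=(\nabla_XS)Y-(\nabla_YS)X$ converts the mixed curvature components.

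I would organize $3\cdot\frac{d}{dt}$ of the integrand as a sum over alternating contractions of degree three in $\{\mathcal R^0,S\}$. The key bookkeeping is that the boundary form $W(S)+2\sigma_3(S)$ equals a constant multiple of
\begin{align}
&\delta^{ijk}_{pqr}\bigl(\tfrac14\mathcal R^{0\,pq}{}_{ij}S^r{}_k+\tfrac13\mathcal G\text{-type terms}\bigr), \nonumber
\end{align}
which is the same structure as $\Phi_1,\Phi_2$ in Lemma~\ref{pole:lem:transgression}. The Gauss equation \eqref{pt:eq:Gauss-polynomial} then recombines the undifferentiated terms: $P_2$ of the ambient restriction minus the normal curvature gives $N_2$, and the remaining terms give $\lambda^{-1}$ times [level Euler density $+\frac13\sum K_{ab}\sigma_2+\frac53\sigma_4$]. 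The coefficient $\frac53=1+\frac23$ should come out of the $\sigma_4$ term in Gauss together with the $-S^2$ and $H$ contributions from differentiating $2\sigma_3$.

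\emph{Main obstacle: the lapse-derivative terms.} The step I expect to be hardest is disposing of the terms involving $\Hess_{\Sigma_t}\lambda^{-1}$ and the tangential derivatives of $S$. Each $\Hess\lambda^{-1}$ term appears contracted against a tensor of the form $\partial\sigma/\partial S$ or $V_1$-type (cf.~\eqref{uni:eq:normalized-tensor}), built from $\mathcal R^0$ and $S$. My first attempt would be to integrate by parts twice on the closed level and show that the divergence of each such tensor is exactly the Codazzi/mixed-curvature term produced by the evolution of $\mathcal R^0$. This is the alternating-delta argument of \eqref{pt:eq:B-divergence} with the Hessian commutator replaced by Codazzi, so the two cancel identically.

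If the direct bookkeeping becomes unmanageable, the fallback is to pull back the Chern forms $\Phi_j,\Psi_j$ of Lemma~\ref{pole:lem:transgression} by $\nu$, as in Lemma~\ref{pole:lem:monotone}, and apply Stokes on $\Sigma\times[t_0,t]$. With this route, the lapse enters only through $dV=\lambda^{-1}dt\,dA$ and through $N^*\theta_i=\lambda^{-1}$-weighted terms, and \eqref{pt:eq:lapse-identity} follows by differentiating in $t$.
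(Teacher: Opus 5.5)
Your Step~1 is correct. It differs from the paper only in being self-contained: the paper imports \eqref{mass:eq:variation} from Labbi's first-variation formula, whereas your alternating-delta computation proves it directly. In that computation the $\nabla\delta\Gamma$ part is a divergence by the second Bianchi identity, the algebraic part $-k^q{}_rR^{pr}{}_{ij}$ together with $\frac12(\tr k)P_2$ assembles into $\frac12\tr(\mathcal E\circ k)$ by Laplace expansion, and your scaling check fixes the sign.

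Step~2 has a genuine gap. The paper never differentiates $G$ by hand. It quotes the pointwise Gauss--Codazzi (ADM) decomposition of the Gauss--Bonnet density due to Juli\'e--Berti, with lapse $\lambda^{-1}$, extrinsic curvature $S$ and zero shift. That decomposition states: $\lambda^{-1}P_2(\Rm_g)\,dA\,dt$ equals $\lambda^{-1}\bigl[P_2(\Rm_{\Sigma_t})+\frac13\sum_{a<b}K_{ab}\sigma_2+\frac53\sigma_4(S)\bigr]dA\,dt$, minus $\frac13\partial_t\bigl[(W(S)+2\sigma_3(S))\,dA\bigr]dt$, plus a tangential divergence. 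Integrating over $\Sigma\times(t_0,t)$ and differentiating in $t$ then gives \eqref{pt:eq:lapse-identity}.

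Your main route tries to rederive this identity by differentiating $G$. That is possible in principle, but the accounting you describe cannot close. Work in a frame $(\nu,e_a)$, put $A_{abc}=\langle\Rm(e_a,e_b)\nu,e_c\rangle$, $\alpha=\Ric(\nu,\cdot)^T$ and $\mathcal R_\nu=\Rm(\cdot,\nu)\nu$. Then
\[
P_2(\Rm_g)=P_2(\mathcal R^0)+\tfrac13\tr\bigl(E^0\circ\mathcal R_\nu\bigr)+\tfrac16\bigl(|A|^2-2|\alpha|^2\bigr),\qquad E^0=\tfrac12\Sc_{\mathcal R^0}\Id-\Ric_{\mathcal R^0}.
\]
So $N_2$ is not just ``ambient restriction minus normal curvature''. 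It contains a term quadratic in the mixed (Codazzi) components.

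The right side of \eqref{pt:eq:lapse-identity} has no such term. Hence the derivative terms in $G'$ cannot cancel identically as you predict. These are the terms from $\Hess_{\Sigma_t}\lambda^{-1}$, from $\nabla_\nu\Rm$ via Bianchi, and from the frame change. Together they must produce exactly $-\frac12\int_{\Sigma_t}\lambda^{-1}(|A|^2-2|\alpha|^2)\,dA$, the negative of the mixed part of $3N_2$.

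Concretely, the $\Hess_{\Sigma_t}\lambda^{-1}$ terms are paired with $E^0+2T_2(S)=E_{\Sigma_t}+T_2(S)$. Here $T_2(S)=\sigma_2(S)\Id-HS+S^2$, and $E_{\Sigma_t}=\frac12\Sc_{\Sigma_t}\Id-\Ric_{\Sigma_t}$ is divergence free. After two integrations by parts these terms become $\pm\int_{\Sigma_t}\lambda^{-1}\diver\diver T_2(S)\,dA$. By Codazzi this contains $A\otimes\nabla S$ and $\nabla A\otimes S$ terms, which must be matched against the Bianchi terms and against the mixed part of $3N_2$. That matching is the entire content of the ADM/Myers decomposition. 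Your sketch asserts an outcome for it, and the asserted outcome is the wrong one.

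The fallback does not repair this. The forms $\Phi_j,\Psi_j$ of Lemma~\ref{pole:lem:transgression} transgress the \emph{top} Euler density of the $(m+1)$-dimensional ambient space. Their pullbacks by $\nu$ therefore carry products of principal curvatures:
\begin{itemize}[leftmargin=*]
\item $\nu^*\Psi_2$ is the $\kappa$-weighted sum in \eqref{pole:eq:Psi2-pullback}, not $P_2\,dV$;
\item $\nu^*\Phi_1|_{\Sigma_t}=2(m-2)!\,Q_{m+1}(S)\,dA$, not a multiple of $W(S)\,dA$.
\end{itemize}
The correspondence between $W(S)+2\sigma_3(S)$ and Chern's forms, namely $\frac12\nu^*\Phi_1+\frac13\nu^*\Phi_0$ per unit area, holds only for $m=3$, which is excluded. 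In addition, with nonconstant lapse one has $\nabla_\nu\nu=\nabla^{\Sigma_t}\log\lambda\neq0$, so the bulk pullback involves derivatives of $\lambda$, not only $dV=\lambda^{-1}dt\,dA$.

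To close Step~2 you need the ADM decomposition of the quadratic Lovelock density with its Myers boundary term. You can either cite it, as the paper does, or derive it in full with the mixed term accounted for.
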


\begin{proof}
Use the smooth Gauss--Codazzi identity \cite[Eqs.~(II.9), (II.11a), (III.10), pp.~2--5 of version~2]{JulieBerti2020}, for positive definite $h_t$, with source variables
\begin{align}
&w=t,\quad N=\lambda^{-1},\quad K_{ij}=S_{ij},\quad\epsilon=+1,\quad\alpha f=1,\quad\text{shift}=0. \nonumber
\end{align}
 The scalar field is constant; no field equation is used. The boundary contraction is
\begin{align}
&2\delta^{ijk}_{abc}S_i^a \left((\Rm_{\Sigma_t})^{bc}_{jk}-\tfrac23S_j^bS_k^c\right) =8\bigl(W(S)+2\sigma_3(S)\bigr). \nonumber
\end{align}
The bulk expression, divided by $24$, is $P_2(\Rm_{\Sigma_t})$ plus
\begin{align}
&\frac1{24}\delta^{ijkl}_{abcd}S_i^aS_j^b(\Rm_{\Sigma_t})^{cd}_{kl} -\frac1{72}\delta^{ijkl}_{abcd}S_i^aS_j^bS_k^cS_l^d=\frac13\sum_{1\le a<b\le m}K_{ab}\sigma_{2}\bigl((\kappa_c)_{\substack{1\le c\le m\\ c\ne a,b}}\bigr)+\frac53\sigma_4(S). \nonumber
\end{align}
The two unweighted contractions are
\begin{align}
&8\sum_{1\le a<b\le m}K_{ab}\sigma_{2}\bigl((\kappa_c)_{\substack{1\le c\le m\\ c\ne a,b}}\bigr)+48\sigma_4(S),\qquad 24\sigma_4(S). \nonumber\\
&m\ge4,\quad\partial\Sigma_t=\varnothing\ \Longrightarrow\ \int_{\Sigma_t}\diver_{\Sigma_t}X\,dA=0\quad(X\in C^\infty(T\Sigma_t)). \nonumber
\end{align}
 Integrate over a finite regular band and differentiate its upper endpoint to obtain \eqref{pt:eq:lapse-identity}. For a smooth metric family on compact boundaryless $\Sigma^m$, $m\ge4$, \cite[Main Theorem, p.~3 of version~2]{Labbi2008}, in normalization~\eqref{pt:eq:P2-definition}, gives
\begin{align}
&\frac{d}{dt}\int \frac{|\Rm_{h_t}|^2-4|\Ric_{h_t}|^2+\Sc_{h_t}^2}{24}\,dV_{h_t} =\frac12\int\langle\mathcal E(\Rm_{h_t}),\partial_t h_t\rangle_{h_t}\,dV_{h_t}. \nonumber
\end{align}
The source normalization, expanded in an orthonormal frame, is
\begin{align}
&h_4=\frac{|\Rm_{h_t}|^2-4|\Ric_{h_t}|^2+\Sc_{h_t}^2}{4},\qquad T_4=6\mathcal E(\Rm_{h_t}). \nonumber\\
&\partial_t h_t=2S/\lambda\ \Longrightarrow\ J_2^{\prime}(t)=\int_{\Sigma_t}\lambda^{-1}\tr(\mathcal E(\Rm_{\Sigma_t})\circ S)\,dA, \nonumber
\end{align}
which is \eqref{mass:eq:variation}. For nonorientable $\Sigma$, pull back to the orientation double cover and divide both integrals by two.
\end{proof}

\begin{lemma}
\label{mass:lem:shift}
Let $(M^{m+1},g)$ be smooth with $m\ge4$ and $\sec_g\ge0$.
Let $\Sigma$ be a smooth hypersurface with unit normal $\nu$,
induced metric, and $S(X)=\nabla_X\nu$.
Fix a number $\eta>0$ such that $S_\eta=S+\eta\Id\ge0$.
In an orthonormal eigenframe $(e_i)_{i=1}^m$ of $S$, write
$\kappa_i$ for its eigenvalues and $K_{ab}=\sec_g(e_a\wedge e_b)$.
Set $\mathcal R^0=\Rm_g|_{T\Sigma}$,
$\mathcal G(H)^{ab}_{ij}=H_i^aH_j^b-H_j^aH_i^b$,
and $W(H)=\tr((\tfrac12\Sc_{\mathcal R^0}\Id-\Ric_{\mathcal R^0})\circ H)$.
For any algebraic curvature tensor $\mathcal R$ set
$\mathcal E(\mathcal R)^a{}_b =\delta^{aijkl}_{bpqrs}\mathcal R^{pq}_{ij}\mathcal R^{rs}_{kl}/96$.
For a self-adjoint $H$, let $\sigma_k(H)$ be the sum of products
of $k$ distinct eigenvalues. Define
\begin{align}
&Z=W(S_\eta)+2\sigma_3(S_\eta),\qquad V=\frac12\Sc_{\mathcal R^0}+\sigma_2(S_\eta),\qquad \mathcal E_+=\mathcal E(\mathcal R^0+\mathcal G(S_\eta)). \nonumber
\end{align}
The scalars $Z,V$ and the symmetric tensor $\mathcal E_+$ satisfy
\begin{align}
&Z\ge0,\qquad V\ge0,\qquad \mathcal E_+\ge0. \nonumber
\end{align}
Moreover,
\begin{align}
&\frac{|\mathcal R^0+\mathcal G(S_\eta)|^2-4|\Ric_{\mathcal R^0+\mathcal G(S_\eta)}|^2+\Sc_{\mathcal R^0+\mathcal G(S_\eta)}^2}{24}\ge0. \nonumber
\end{align}
The following exact scalar expansions hold:
\begin{align}
&\sum_{1\le a<b\le m}K_{ab}\sigma_{2}\bigl((\kappa_c)_{c\ne a,b}\bigr) =\sum_{1\le a<b\le m}K_{ab}\sigma_{2}\bigl((\kappa_c+\eta)_{c\ne a,b}\bigr) -(m-3)\eta W(S_\eta) \nonumber\\
&{}+\frac12\binom{m-2}{2}\eta^2\Sc_{\mathcal R^0}, \nonumber\\
&\sigma_4(S)=\sigma_4(S_\eta)-(m-3)\eta\sigma_3(S_\eta) +\binom{m-2}{2}\eta^2\sigma_2(S_\eta) -\binom{m-1}{3}\eta^3\operatorname{tr}S_\eta +\binom m4\eta^4. \label{mass:eq:quartic-expansion}
\end{align}
There are constants depending only on $m$ such that
\begin{align}
&\operatorname{tr}\mathcal E_+=(m-4)\frac{|\mathcal R^0+\mathcal G(S_\eta)|^2-4|\Ric_{\mathcal R^0+\mathcal G(S_\eta)}|^2+\Sc_{\mathcal R^0+\mathcal G(S_\eta)}^2}{24}, \nonumber\\
&|\operatorname{div}_{\Sigma}\mathcal E_+| \le C_m\eta\Sc_g\bigl(\Sc_g+\sigma_2(S_\eta)\bigr), \nonumber\\
&\frac{|\mathcal R^0+\mathcal G(S_\eta)|^2-4|\Ric_{\mathcal R^0+\mathcal G(S_\eta)}|^2+\Sc_{\mathcal R^0+\mathcal G(S_\eta)}^2}{24} \label{mass:eq:positive-divergence}\\
&{}\le\frac{|\Rm_{\Sigma}|^2-4|\Ric_{\Sigma}|^2+\Sc_{\Sigma}^2}{24}+C_m\eta Z+C_m\eta^3\operatorname{tr}S_\eta, \nonumber\\
&\left(\frac{|\Rm_{\Sigma}|^2-4|\Ric_{\Sigma}|^2+\Sc_{\Sigma}^2}{24}\right)_- \le C_m\eta Z+C_m\eta^3\operatorname{tr}S_\eta, \label{mass:eq:shift-scalar}\\
&\tr\bigl((\mathcal E(\Rm_{\Sigma})-\mathcal E_+)\circ S\bigr) \le C_m\bigl(\eta^2Z+\eta^4\operatorname{tr}S_\eta\bigr). \label{mass:eq:shift-pairing}
\end{align}
\end{lemma}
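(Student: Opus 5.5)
The whole lemma rests on one structural observation. For a self-adjoint $H\ge0$, the tensor $\mathcal G(H)$ is an algebraic curvature tensor with nonnegative sectional curvature. Indeed, for orthonormal $x,y$ one has $\mathcal G(H)(x,y,x,y)=H(x,x)H(y,y)-H(x,y)^2\ge0$ by Cauchy--Schwarz for the semidefinite form $H$, and the first Bianchi identity is immediate. Since $\mathcal R^0=\Rm_g|_{T\Sigma}$ also has nonnegative sectional curvature, so does $\mathcal R_+:=\mathcal R^0+\mathcal G(S_\eta)$.

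The positivity statements then follow from earlier results. The four-plane decomposition \eqref{eq:P2-four-plane-sum} and Lemma~\ref{six:lem:P2-positive} give the nonnegativity of $\frac{1}{24}\bigl(|\mathcal R_+|^2-4|\Ric_{\mathcal R_+}|^2+\Sc_{\mathcal R_+}^2\bigr)$. For $\mathcal E_+\ge0$ I repeat the argument of Lemma~\ref{pt:lem:algebra}: for a unit $e$, the alternation restricts all curvature indices to $e^\perp$, so $\mathcal E_+(e,e)=P_2(\mathcal R_+|_{e^\perp})\ge0$. The same delta contraction \eqref{audit:eq:delta-contractions} in dimension $m$ gives $\tr\mathcal E_+=(m-4)P_2(\mathcal R_+)$. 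For $Z$ and $V$ I work in the eigenframe. There
\[
Z=\sum_i(\kappa_i+\eta)\sum_{j<k,\ j,k\ne i}K_{jk}+2\sigma_3(S_\eta),
\qquad
V=\sum_{j<k}K_{jk}+\sigma_2(S_\eta),
\]
and every summand is nonnegative.

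The two scalar expansions are purely combinatorial. I substitute $\kappa_c=(\kappa_c+\eta)-\eta$ into $\sigma_2\bigl((\kappa_c)_{c\ne a,b}\bigr)$ and into $\sigma_4(S)$, and count how often each lower-order symmetric function appears. An $(m-2)$-element index set gives the coefficients $m-3$ and $\binom{m-2}2$. The term linear in $\eta$, summed against $K_{ab}$, reorganizes into $W(S_\eta)$ after exchanging the roles of $c$ and $\{a,b\}$. The $\eta^2$ term reorganizes into $\frac12\binom{m-2}{2}\Sc_{\mathcal R^0}$. The expansion \eqref{mass:eq:quartic-expansion} is the standard shift formula for $\sigma_4$.

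For the two $P_2$ inequalities I apply the Gauss expansion \eqref{pt:eq:Gauss-polynomial} twice. Once it is applied to $\Rm_\Sigma=\mathcal R^0+\mathcal G(S)$ with weights $\kappa$, and once to $\mathcal R_+$ with weights $\kappa+\eta$. The common term $P_2(\mathcal R^0)$ cancels. Inserting the expansions just proved, $P_2(\mathcal R_+)-P_2(\Rm_\Sigma)$ becomes a combination of $\eta W(S_\eta)$, $\eta\sigma_3(S_\eta)$, $\eta^2\sigma_2(S_\eta)$, $\eta^2\Sc_{\mathcal R^0}$, $\eta^3\tr S_\eta$ and $\eta^4$. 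I then check signs. The $\eta^2\sigma_2$, $\eta^2\Sc_{\mathcal R^0}$ and $\eta^4$ contributions enter with a nonpositive sign and can be discarded. The remaining terms are bounded by $C_m\eta Z+C_m\eta^3\tr S_\eta$; note $\tr S_\eta\ge m\eta$, so $\eta^4\lesssim\eta^3\tr S_\eta$ in any case. This gives \eqref{mass:eq:positive-divergence}. Combined with $P_2(\mathcal R_+)\ge0$, it yields the negative-part bound \eqref{mass:eq:shift-scalar}.

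For \eqref{mass:eq:shift-pairing} I expand $\mathcal E$ multilinearly. Using $\mathcal G(S_\eta)=\mathcal G(S)+\eta\,(S\owedge\Id)+\eta^2\mathcal G(\Id)$, where $\owedge$ is the symmetric cross term, the difference $\mathcal E(\Rm_\Sigma)-\mathcal E_+$ is a sum of contractions of order $\eta,\eta^2,\eta^3,\eta^4$. I pair with $S=S_\eta-\eta\Id$. The term $\eta\,\tr(\mathcal E_+)$ is nonnegative and is the dangerous one, so I would use \eqref{qc:eq:general-contractions}-type identities: contracting an $\Id$ slot lowers the degree. This should collapse the order-$\eta$ contributions into a nonpositive multiple of the $V_1$-type pairing $\eta\,\tr(V(S_\eta)\circ S_\eta)$, whose nonnegativity follows from Lemma~\ref{pt:lem:algebra}. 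What remains is $O(\eta^2 Z+\eta^4\tr S_\eta)$. I expect the sign bookkeeping here to be the most delicate algebraic step.

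The main obstacle is the divergence bound. Because $\mathcal R^0$ is an ambient restriction, it does not satisfy the intrinsic second Bianchi identity. I instead write $\mathcal R_+=\Rm_\Sigma+\eta\,(S\owedge\Id)+\eta^2\mathcal G(\Id)$. The intrinsic Lovelock tensor $\mathcal E(\Rm_\Sigma)$ is divergence free, and the $\eta^2\mathcal G(\Id)$ and $\eta^4$ pieces are parallel. The surviving terms are alternating contractions containing exactly one factor $\nabla^\Sigma S$. By alternation these appear only through the antisymmetrized derivative, and the Codazzi equation (as in Lemma~\ref{lemma:divergence-nonnegative-combination}) converts it into $\Rm_g(\cdot,\cdot,\nu,\cdot)$, which is bounded by $C\,\Sc_g$ under $\sec_g\ge0$. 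The remaining factors are curvature or $S_\eta$-quadratic terms. Bounding those by $C(\Sc_g+\sigma_2(S_\eta))$ is where I am least sure. The components of $\mathcal R_+$ are controlled by its scalar curvature, since $\mathcal R_+$ has nonnegative sectional curvature and $\Sc_{\mathcal R_+}=2V$. I would therefore organize the expansion so that every surviving factor is a component of $\mathcal R_+$ or of $\mathcal R^0$, rather than a bare $|S_\eta|^2$.
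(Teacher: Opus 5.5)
Apart from \eqref{mass:eq:shift-pairing}, your plan is the paper's proof. The shared steps are: nonnegative sectional curvature of $\mathcal R_+=\mathcal R^0+\mathcal G(S_\eta)$; the identity $\mathcal E_+(e,e)=P_2(\mathcal R_+|_{e^\perp})$; the two shift expansions; the comparison of the two Gauss expansions, where your signs are right. For the divergence the paper likewise uses a single alternating contraction of the cyclically symmetrized derivative of $\mathcal R_+$ against $\mathcal R_+$ itself. Codazzi bounds that derivative by $C_m\eta\Sc_g$, and $|\mathcal R_+|\le C_m\Sc_{\mathcal R_+}=2C_mV$, which is the regrouping you anticipated.

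The gap is in \eqref{mass:eq:shift-pairing}. You single out only the order-$\eta$ term as needing a sign and assert that the rest is $O(\eta^2Z+\eta^4\tr S_\eta)$. That is false. So is your side remark $\tr S_\eta\ge m\eta$: only $\tr S_\eta\ge0$ holds, since the mean curvature may be negative. For $m\ge5$, take $S=-\eta\Id$. Then $S_\eta=0$ and $Z=\tr S_\eta=0$, so the right side of \eqref{mass:eq:shift-pairing} vanishes. Meanwhile $\Rm_\Sigma=\mathcal R^0+\eta^2\mathcal G(\Id)$ and $\mathcal E_+=\mathcal E(\mathcal R^0)$. Using $\tr\mathcal E=(m-4)P_2$ and \eqref{pt:eq:Gauss-polynomial}, the left side equals $-(m-4)\bigl[\tfrac16\binom{m-2}{2}\eta^3\Sc_{\mathcal R^0}+\binom m4\eta^5\bigr]$. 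A second example: take $\mathcal R^0=0$ and $S_\eta=\mathrm{diag}(\mu,\mu,0,\ldots,0)$ with $\mu\gg\eta$. The left side is $-c_m\eta^3\mu^2+O(\eta^4\mu)$ with $c_m>0$, while the right side is $O(\eta^4\mu)$. So the $\eta^3$ terms (carrying $\Sc_{\mathcal R^0}$ and $\sigma_2(S_\eta)$) and the $\eta^5$ term can only be discarded by sign, just like the order-$\eta$ term. An expansion around $\mathcal G(S)$ has coefficients involving the unsigned $S$ and $\Rm_\Sigma$, so it does not exhibit these signs.

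The missing step is to expand around the nonnegative point $S_\eta$. Define $\mathcal V_j$ by $\mathcal E(\mathcal R^0+\mathcal G(S_\eta-s\Id))=\mathcal E_+-s\mathcal V_1+s^2\mathcal V_2-s^3\mathcal V_3+s^4\mathcal V_4$. Show every $\mathcal V_j\ge0$ by computing $\mathcal V_j(e,e)$ from $\mathcal E(\cdot)(e,e)=P_2(\cdot|_{e^\perp})$ and the $(m-1)$-dimensional shift expansions. Let $S_{\eta,e}\ge0$ be the compression of $S_\eta$ to $e^\perp$ and $W_e$ the analogue of $W$ for $\mathcal R^0|_{e^\perp}$. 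One finds:
\begin{itemize}
\item $\mathcal V_1(e,e)=(m-4)\bigl(\tfrac13W_e(S_{\eta,e})+\sigma_3(S_{\eta,e})\bigr)$;
\item $\mathcal V_2(e,e)=\binom{m-3}{2}\bigl(\tfrac16\Sc_{\mathcal R^0|_{e^\perp}}+\sigma_2(S_{\eta,e})\bigr)$;
\item $\mathcal V_3(e,e)=\binom{m-2}{3}\tr S_{\eta,e}$;
\item $\mathcal V_4(e,e)=\binom{m-1}{4}$.
\end{itemize}
Your $V_1$-type tensor is $\mathcal V_1=\tfrac{m-4}{3}V_1(S_\eta)$ formed with $\mathcal R_+$ in place of $\Rm_g$. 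The positivity argument of Lemma~\ref{pt:lem:algebra} applies because $\mathcal R_+$ has nonnegative sectional curvature.

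Since $\mathcal E(\Rm_\Sigma)-\mathcal E_+=\sum_{j=1}^4(-\eta)^j\mathcal V_j$, pair with $S=S_\eta-\eta\Id$. For $1\le k\le5$ the coefficient of $\eta^k$ is $(-1)^k\bigl[\tr(\mathcal V_k\circ S_\eta)+\tr\mathcal V_{k-1}\bigr]$, where the $\tr\mathcal V_0$ term is absent for $k=1$ and $\mathcal V_5=0$. So the order-$\eta$ term is $-\eta\tr(\mathcal V_1\circ S_\eta)$, not $\eta\tr\mathcal E_+$, and every odd order is nonpositive. Summing the diagonal values in an eigenframe of $S_\eta$ gives $\tr(\mathcal V_2\circ S_\eta)+\tr\mathcal V_1\le C_mZ$ and $\tr(\mathcal V_4\circ S_\eta)+\tr\mathcal V_3\le C_m\tr S_\eta$. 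This is \eqref{mass:eq:shift-pairing}.
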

\begin{proof}
\begin{align}
&X\perp Y,\quad|X|=|Y|=1:\quad\sec_{\mathcal R^0+\mathcal G(S_\eta)}(X\wedge Y) \nonumber\\
&{}=\sec_g(X\wedge Y)+\langle S_\eta X,X\rangle\langle S_\eta Y,Y\rangle-\langle S_\eta X,Y\rangle^2\ge0. \nonumber
\end{align}
 For $|e|=1$, delta expansion gives
\begin{align}
&\mathcal E_+(e,e) =\frac1{24}\Bigl(\bigl|(\mathcal R^0+\mathcal G(S_\eta))\!\restriction_{e^\perp}\bigr|^2 -4\bigl|\Ric_{(\mathcal R^0+\mathcal G(S_\eta))\!\restriction_{e^\perp}}\bigr|^2 +\Sc_{(\mathcal R^0+\mathcal G(S_\eta))\!\restriction_{e^\perp}}^2\Bigr). \nonumber
\end{align}
Lemma~\ref{six:lem:P2-positive}, \eqref{six:eq:P2-positive}, on each four-plane gives
\begin{align}
&\mathcal E_+(e,e)\ge0,\qquad\tr\mathcal E_+=(m-4)P_2(\mathcal R^0+\mathcal G(S_\eta)). \nonumber
\end{align}
 With $\nabla=\nabla^\Sigma$, define
\begin{align}
&(\mathcal D\mathcal R)_{aij}{}^{pq} =\nabla_a\mathcal R^{pq}_{ij} +\nabla_i\mathcal R^{pq}_{ja} +\nabla_j\mathcal R^{pq}_{ai}. \nonumber
\end{align}
Differentiate Lemma~\ref{pt:lem:lapse}, \eqref{mass:eq:level-E2}, and antisymmetrize $a,i,j$:
\begin{align}
&(\operatorname{div}\mathcal E(\mathcal R))_b =\tfrac1{144}\delta^{aijkl}_{bpqrs} (\mathcal D\mathcal R)_{aij}{}^{pq}\mathcal R^{rs}_{kl}. \nonumber\\
&\mathcal D\Rm_\Sigma=0,\qquad\nabla\eta=0, \nonumber\\
&(\mathcal G(S+\eta\Id)-\mathcal G(S))^{ab}_{ij}=\eta(S_i^a\delta_j^b+\delta_i^aS_j^b-S_j^a\delta_i^b-\delta_j^aS_i^b)+\eta^2(\delta_i^a\delta_j^b-\delta_j^a\delta_i^b), \nonumber\\
&|\nabla_iS_{jk}-\nabla_jS_{ik}|=|\langle\Rm_g(e_i,e_j)\nu,e_k\rangle|\le|\Rm_g|. \nonumber
\end{align}
 Thus
\begin{align}
&|\mathcal D(\mathcal R^0+\mathcal G(S_\eta))| \le C_m\eta|\Rm_g|\le C_m\eta\Sc_g. \nonumber\\
&|\mathcal R^0+\mathcal G(S_\eta)|\le C_m(\Sc_g+\sigma_2(S_\eta)), \nonumber\\
&|\diver_\Sigma\mathcal E_+|\le C_m\eta\Sc_g(\Sc_g+\sigma_2(S_\eta)). \nonumber
\end{align}
 The scalar expansions~\eqref{mass:eq:quartic-expansion} are
\begin{align}
&\sum_{1\le a<b\le m}K_{ab}\sigma_{2}\bigl((\kappa_c)_{\substack{1\le c\le m\\ c\ne a,b}}\bigr)=\sum_{1\le a<b\le m}K_{ab}\sigma_{2}\bigl(((\kappa_c+\eta))_{\substack{1\le c\le m\\ c\ne a,b}}\bigr) -(m-3)\eta W(S_\eta) \nonumber\\
&{}+\frac12\binom{m-2}{2}\eta^2\Sc_{\mathcal R^0}, \nonumber\\
&\sigma_4(S) =\sigma_4(S_\eta)-(m-3)\eta\sigma_3(S_\eta) +\binom{m-2}{2}\eta^2\sigma_2(S_\eta) -\binom{m-1}{3}\eta^3\operatorname{tr}S_\eta +\binom m4\eta^4. \nonumber
\end{align}
Substitution in \eqref{pt:eq:Gauss-polynomial} gives
\begin{align}
&P_2(\Rm_\Sigma)\ge P_2(\mathcal R^0+\mathcal G(S_\eta))-C_m\eta Z-C_m\eta^3\tr S_\eta\ge-C_m\eta Z-C_m\eta^3\tr S_\eta, \nonumber
\end{align}
proving \eqref{mass:eq:shift-scalar}. Define $\mathcal V_j$ by
\begin{align}
&\mathcal E(\mathcal R^0+\mathcal G(S_\eta-s\Id)) =\mathcal E_+-s\mathcal V_1+s^2\mathcal V_2-s^3\mathcal V_3+s^4\mathcal V_4. \nonumber
\end{align}
For $|e|=1$, set $S_{\eta,e}=\operatorname{pr}_{e^\perp}\circ S_\eta|_{e^\perp}$. For self-adjoint $H:e^\perp\to e^\perp$, set
\begin{align}
&W_e(H)=\tr\Bigl( \bigl(\tfrac12\Sc_{\mathcal R^0\!\restriction_{e^\perp}}\Id -\Ric_{\mathcal R^0\!\restriction_{e^\perp}}\bigr)\circ H\Bigr). \nonumber
\end{align}
Equation~\eqref{mass:eq:quartic-expansion} in dimension $m-1$:
\begin{align}
&\mathcal V_1(e,e)=(m-4)\bigl(W_e(S_{\eta,e})/3+\sigma_3(S_{\eta,e})\bigr),\qquad \mathcal V_2(e,e)=\binom{m-3}{2} \bigl(\Sc_{\mathcal R^0\!\restriction_{e^\perp}}/6+\sigma_2(S_{\eta,e})\bigr), \nonumber\\
&\mathcal V_3(e,e)=\binom{m-2}{3}\operatorname{tr}S_{\eta,e},\qquad \mathcal V_4(e,e)=\binom{m-1}{4}. \nonumber\\
&\binom{k}{\ell}=0\quad(0\le k<\ell),\qquad S_{\eta,e}\ge0,\qquad\sec_{\mathcal R^0|_{e^\perp}}\ge0\ \Longrightarrow\ \mathcal V_j\ge0. \nonumber
\end{align}
 In an eigenframe of $S_\eta$,
\begin{align}
&\tr\bigl(\mathcal V_2\circ S_\eta\bigr)+\operatorname{tr}\mathcal V_1\le C_mZ,\qquad \tr\bigl(\mathcal V_4\circ S_\eta\bigr)+\operatorname{tr}\mathcal V_3\le C_m\operatorname{tr}S_\eta. \nonumber\\
&\tr(\mathcal V_2\circ S_\eta)=\binom{m-3}{2}\bigl(W(S_\eta)/3+3\sigma_3(S_\eta)\bigr), \nonumber\\
&\tr\mathcal V_1=(m-4)(m-3)\bigl(W(S_\eta)/3+\sigma_3(S_\eta)\bigr), \nonumber\\
&\tr(\mathcal V_4\circ S_\eta)=\binom{m-1}{4}\tr S_\eta,\qquad\tr\mathcal V_3=(m-1)\binom{m-2}{3}\tr S_\eta,\qquad S=S_\eta-\eta\Id. \nonumber\\
&\tr\bigl((\mathcal E(\Rm_{\Sigma})-\mathcal E_+)\circ S\bigr)=-\eta \tr\bigl(\mathcal V_1\circ S_\eta\bigr) +\eta^2(\tr\bigl(\mathcal V_2\circ S_\eta\bigr)+\operatorname{tr}\mathcal V_1) -\eta^3(\tr\bigl(\mathcal V_3\circ S_\eta\bigr)+\operatorname{tr}\mathcal V_2) \nonumber\\
&{}+\eta^4(\tr\bigl(\mathcal V_4\circ S_\eta\bigr)+\operatorname{tr}\mathcal V_3) -\eta^5\operatorname{tr}\mathcal V_4. \nonumber\\
&{}-\eta\tr(\mathcal V_1\circ S_\eta)\le0,\quad-\eta^3\bigl(\tr(\mathcal V_3\circ S_\eta)+\tr\mathcal V_2\bigr)\le0,\quad-\eta^5\tr\mathcal V_4\le0, \nonumber\\
&\tr\bigl((\mathcal E(\Rm_\Sigma)-\mathcal E_+)\circ S\bigr)\le C_m(\eta^2Z+\eta^4\tr S_\eta), \nonumber
\end{align}
which is \eqref{mass:eq:shift-pairing}.
\end{proof}

\begin{lemma}\label{pt:lem:fine-approximation}
Assume $(M^n, g)$ is complete non-compact Riemannian manifold with $K_g\geq 0$. On every sufficiently large regular
level $\Sigma_t$, use Notations~\ref{notation:eta}
and~\ref{notation:level-objects}. Then
\begin{align}
&S\ge-\frac{e^{-t}}{64K(t)^2}\Id\ge-\eta(t)\Id. \label{pt:eq:levelwise-shape-lower}
\end{align}
\end{lemma}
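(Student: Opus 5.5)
The plan is to read the bound directly off the Hessian lower bound built into the fixed exhaustion $F$ of Lemma~\ref{one:lem:fixed-F}, using the specific majorant for the coefficient $h$ from Lemma~\ref{one:lem:coefficient}, equation~\eqref{one:eq:h-choice}. No further geometry should be needed: both inequalities are monotonicity bookkeeping.

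\emph{Step 1: gradient.} By Lemma~\ref{one:lem:fixed-F}, equation~\eqref{one:eq:fixed-F}, $\bigl||\nabla F|-1\bigr|\to0$ as $\rho\to\infty$. Since $\{F=t\}\subset\{\rho\ge t/2\}$ for large $t$ (Lemma~\ref{lem:annular-estimates}), we get $|\nabla F|\ge\tfrac12$ on every sufficiently large level.

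\emph{Step 2: shape operator.} On such a regular level, Notation~\ref{notation:level-objects} and $\Hess F>-\tfrac{h}{32}g$ give, for unit tangent $X$,
\begin{align}
&S(X,X)=\frac{\Hess F(X,X)}{|\nabla F|}>-\frac{h}{32|\nabla F|}\ge-\frac{h}{16}. \nonumber
\end{align}

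\emph{Step 3: compare $h$ with a function of $t$ alone.} On $\Sigma_t$ we have $F=t$. Lemma~\ref{one:lem:fixed-F} gives $0\le F-\mathfrak b\le\tfrac14$, so $\mathfrak b\ge t-\tfrac14$. Since $\min_M\mathfrak b\le\mathfrak b(p)=0$, we have $c_{\mathfrak b}=2-2\min_M\mathfrak b\ge2$. Hence
\begin{align}
&2\mathfrak b+c_{\mathfrak b}\ge2t+\tfrac32\ge t\qquad(t\ge0). \nonumber
\end{align}
By Lemma~\ref{one:lem:coefficient}, $K$ is nondecreasing with $K\ge1$, and $x\mapsto e^{-x}$ is decreasing. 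Therefore \eqref{one:eq:h-choice} yields on $\Sigma_t$
\begin{align}
&h\le\frac{e^{-(2\mathfrak b+c_{\mathfrak b})}}{4K(2\mathfrak b+c_{\mathfrak b})^2}\le\frac{e^{-t}}{4K(t)^2}. \nonumber
\end{align}
Dividing by $16$ and combining with Step 2 gives the first inequality in \eqref{pt:eq:levelwise-shape-lower}.

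\emph{Step 4: second inequality.} Since $K(t)\ge1$, it suffices to show $e^{-t}/64\le16(1+t)^{-3}$, that is, $(1+t)^3e^{-t}\le1024$ for $t\ge0$. The function $(1+t)^3e^{-t}$ attains its maximum at $t=2$, where it equals $27e^{-2}<4$, so the inequality holds with a large margin. Then $-\tfrac{e^{-t}}{64K(t)^2}\ge-\eta(t)$.

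The only point needing any care is Step 3. Here $h$ is controlled at the point through $\mathfrak b$, not through $t$ directly, so one must check the direction of the inequality $2\mathfrak b+c_{\mathfrak b}\ge t$ together with the monotonicity of $K$ and of $e^{-x}$. Everything else is immediate.
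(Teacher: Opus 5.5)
Your proof is correct and follows the paper's argument. Both combine $\Hess F>-\tfrac{h}{32}g$ with $|\nabla F|\ge\tfrac12$ and the majorant \eqref{one:eq:h-choice}, and both use $\mathfrak b\ge t-\tfrac14$, $c_{\mathfrak b}\ge2$, $2\mathfrak b+c_{\mathfrak b}\ge t$ and the monotonicity of $K$ and $e^{-x}$; your Step~4 also supplies the elementary check $(1+t)^3e^{-t}\le 27e^{-2}<1024$ for the last inequality, which the paper leaves implicit.
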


\begin{proof}
Lemma~\ref{one:lem:fixed-F}, \eqref{one:eq:fixed-F}, and
Lemma~\ref{one:lem:coefficient}, \eqref{one:eq:h-choice}, give
\begin{align}
&|F-\mathfrak{b}|<1,\qquad |\nabla F|<2,\qquad \Hess F>-\min\left\{(1+\rho)^{-3},\frac{e^{-(2\mathfrak{b}+c_{\mathfrak{b}})}}{4K(2\mathfrak{b}+c_{\mathfrak{b}})^2}\right\}g, \nonumber\\
&|\nabla F|\ge\tfrac12,\qquad \mathfrak{b}\ge t-\tfrac14,\qquad 2\mathfrak{b}+c_{\mathfrak{b}}\ge t\qquad(t\ge t_0). \nonumber
\end{align}
Lemma~\ref{one:lem:coefficient}, \eqref{one:eq:h-choice}, yields
\begin{align}
&S\ge-\frac{h}{32|\nabla F|}\Id\ge-\frac{e^{-(2\mathfrak{b}+c_{\mathfrak{b}})}}{64K(2\mathfrak{b}+c_{\mathfrak{b}})^2}\Id\ge-\frac{e^{-t}}{64K(t)^2}\Id\ge-\eta(t)\Id. \nonumber
\end{align}
\end{proof}

\begin{lemma}
\label{pt:lem:mass}
Let $(M^n,g)$ be a smooth, complete, connected, noncompact
Riemannian manifold and with $\sec_g\ge0$.
Then there is a finite constant $C$,
depending on the fixed manifold and choices but not on $r$, such that
\begin{align}
&\int_{B_p(r)}P_2\,dV\le Cr^{n-4}\qquad(r\ge1). \label{pt:eq:mass-bound}
\end{align}
\end{lemma}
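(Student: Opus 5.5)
The plan is to integrate the lapse identity of Lemma~\ref{pt:lem:lapse} over the level sets of the fixed exhaustion $F$. Every bulk term it produces will then be bounded by a level-set quantity of size $t^{n-5}$, and integrating in $t$ gives $r^{n-4}$.

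\emph{Reduction.} For $n\le3$ the alternating delta in \eqref{pt:eq:P2-definition} vanishes, so $P_2\equiv0$ and there is nothing to prove. Assume $n\ge4$ and put $m=n-1$.

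\emph{Integrated lapse identity.}
\begin{enumerate}[label=(\roman*)]
\item Fix $t_0$ beyond which $F$ has no critical points. On $\{F\ge t_0\}$, write $g=|\nabla F|^{-2}dt^2+g_t$ via the flow of $\nabla F/|\nabla F|^2$, so that $\lambda=|\nabla F|\in[\tfrac12,2]$ for large $t$.
\item By coarea, $\int_{\{t_0\le F\le r\}}P_2\,dV=\int_{t_0}^rN_2(t)\,dt$. Integrating \eqref{pt:eq:lapse-identity} in $t$ then gives
\begin{align}
\int_{\{t_0\le F\le r\}}P_2\,dV=\tfrac13\bigl(G(t_0)-G(r)\bigr)+\int_{t_0}^r\!\!\int_{\Sigma_t}\lambda^{-1}\Bigl(P_2(\Rm_{\Sigma_t})+\tfrac13\textstyle\sum K_{ab}\sigma_2+\tfrac53\sigma_4(S)\Bigr)dA\,dt. \nonumber
\end{align}
\item Since $B_p(r)\subset\{F\le 2r\}$ for large $r$, it suffices to bound the right-hand side by $Cr^{n-4}$. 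This needs three estimates: a lower bound on $G(r)$, an upper bound on the intrinsic term, and upper bounds on the two extrinsic terms.
\end{enumerate}

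\emph{Lower bound on $G(r)$.} Use the shift $S_\eta=S+\eta\Id\ge0$, available by Lemma~\ref{pt:lem:fine-approximation}. Expanding $W(S)+2\sigma_3(S)$ around $S_\eta$ and using $Z\ge0$ from Lemma~\ref{mass:lem:shift} gives
\begin{align}
-G(r)\le C\eta(r)\int_{\Sigma_r}\bigl(\Sc_g+\sigma_2(S_\eta)+\eta\tr S_\eta+\eta^2\bigr)dA. \nonumber
\end{align}
This is $O(r^{n-4})$, indeed much smaller. The justification is $\eta=16(1+r)^{-3}$ together with the already established growth bounds: $A(t)\le Ct^{m}$, $Y(t)\le Ct^{m-1}$, and $J(t)\le Ct^{n-3}$ from Lemma~\ref{lem:independent-level-limit}, plus \eqref{eq:level-ambient-elimination} for $\Sc_g$.

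\emph{Extrinsic terms.} Expand $\sigma_4(S)$ and $\sum K_{ab}\sigma_2(\kappa)$ by \eqref{mass:eq:quartic-expansion}. The leading pieces $\sigma_4(S_\eta)$ and $\sum K_{ab}\sigma_2(\kappa+\eta)$ are nonnegative, and I will bound them on dyadic shells $\{s\le F\le 2s\}$ by $Cs^{n-4}$.
\begin{itemize}[label=--]
\item For $\sigma_4$: apply the Newton-tensor divergence induction from the proof of Lemma~\ref{one:lem:compact-Hessians}, with cutoffs $\zeta(F/s)$ whose gradients are $O(s^{-1})$ and with $z$ replaced by $F^2/2$. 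Each step of the induction lowers the power of $s$ by one, starting from $\Vol\le Cs^n$ in Lemma~\ref{lem:annular-estimates}, which yields $\int\sigma_k(\Hess F+s^{-2}g)\le Cs^{n-k}$.
\item For the curvature-weighted term: pair $V_j(\Hess F+s^{-2}g)$ from \eqref{uni:eq:normalized-tensor} against $\zeta$. The curvature-free divergence parts, together with \eqref{pt:eq:B-divergence}, again reduce the power of $s$ inductively from $\int\zeta\,\Sc\le Cs^{n-2}$ (Theorem~\ref{thm:direct-main}). Any $P_2$ that reappears in \eqref{pt:eq:B-divergence} carries a factor $s^{-1}|\nabla F|$ times a cutoff, and will be absorbed by choosing the dyadic scale large.
\end{itemize}
Summing over dyadic shells gives $Cr^{n-4}$ for both terms.

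\emph{Intrinsic term (main obstacle).} The hardest step is to show $J_2(t)\le Ct^{n-5}$, or at least $\int_{t_0}^rJ_2\le Cr^{n-4}$. I plan to copy the proof of Lemma~\ref{lem-diff-ineq-of-J}, replacing $\mathcal E_t$ by $\mathcal E_+$ from Lemma~\ref{mass:lem:shift}.
\begin{enumerate}[label=(\alph*)]
\item By \eqref{mass:eq:variation} and \eqref{mass:eq:shift-pairing}, $J_2'$ is $\int\lambda^{-1}\tr(\mathcal E_+\circ S)$ up to errors of size $\eta^2Z$.
\item Pair the intrinsic distance-Hessian inequality \eqref{eq:level-distance-hessian} with $\mathcal E_+\ge0$, integrate by parts on $\Sigma_t$, and use $\tr\mathcal E_+=(m-4)P_2(\mathcal R^0+\mathcal G(S_\eta))$ together with the bound on $\diver_\Sigma\mathcal E_+$. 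This should give
\begin{align}
(t-c)J_2'\le(n-5)J_2+\text{integrable errors}, \nonumber
\end{align}
where each error carries a factor $\eta$ times quantities of size $t^{n-3}$.
\item The danger is the error term $\eta\,\Sc_g(\Sc_g+\sigma_2)$, which is quadratic in curvature. I would control it using $\Sc_g\le 1+\sup\Sc_g$ on the relevant ball. The coefficient choice $h\le e^{-t}/(4K(t)^2)$ in Lemma~\ref{one:lem:coefficient} was evidently designed for exactly this absorption.
\item Finally, integrating the differential inequality with an integrating factor, as in Lemma~\ref{lem:independent-level-limit}, and using the lower bound $(J_2)_-\le C\eta Z+C\eta^3\tr S_\eta$ from \eqref{mass:eq:shift-scalar}, gives $J_2(t)\le Ct^{n-5}$.
\end{enumerate}
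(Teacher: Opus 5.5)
There is a genuine gap. Your skeleton matches the paper's proof: integrate \eqref{pt:eq:lapse-identity} over the levels of $F$, bound $-G$ from above by a shifted expansion, and prove $J_2(t)\le Ct^{n-5}$ by pairing $\mathcal E_+\ge0$ with \eqref{eq:level-distance-hessian}. The gap is your separate estimate of the extrinsic terms $\frac13\sum_{a<b}K_{ab}\sigma_2((\kappa_c)_{c\ne a,b})$ and $\frac53\sigma_4(S)$.

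The induction in Lemma~\ref{one:lem:compact-Hessians} uses flat coordinate Hessians $D^2z$ in one fixed chart, so its constants are not uniform on shells of radius $s$. On $(M,g)$ the Newton tensor $T_{k-1}(\Hess F)$ has divergence of the form $\Rm*\nabla F*(\Hess F)^{k-2}$. At the quartic level you therefore reach curvature-weighted quadratic quantities such as $\tr(V_1(H)\circ H)$. For $H=\Hess F+s^{-2}g$, \eqref{pt:eq:B-divergence} gives
\begin{align}
&\int\zeta\tr(V_1(H)\circ H)\,dV=s^{-2}\int\zeta\tr V_1(H)\,dV-\int V_1(H)(\nabla\zeta,\nabla F)\,dV+3\int\zeta\,(P_2\Id-E_2)(\nabla F,\nabla F)\,dV. \nonumber
\end{align}
The $P_2$ term carries the weight $|\nabla F|^2\approx1$, not $s^{-1}$. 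Apart from the $s^{-2}$ shift, this identity is invariant under $g\mapsto\mu^2g$, $F\mapsto\mu F$, $s\mapsto\mu s$. So that term has exactly the order $s^{n-4}$ of the quantity you are estimating, and no choice of dyadic scale makes it small. You would only obtain $\int P_2\le c\int_{\text{larger shell}}P_2+Cr^{n-4}$, with nothing forcing $c<1$; the argument is circular.

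The missing idea is that these terms never need to be estimated: the Gauss equation absorbs them into $J_2$. Put $\epsilon_t=e^{-t}K(t)^{-2}$ and $\widehat S=S+\epsilon_t\Id\ge0$. Then \eqref{pt:eq:Gauss-polynomial} and \eqref{mass:eq:quartic-expansion} give
\begin{align}
&\tfrac83P_2(\Rm_{\Sigma_t})-\Bigl(P_2(\Rm_{\Sigma_t})+\tfrac13\sum_{a<b}K_{ab}\sigma_2\bigl((\kappa_c)_{c\ne a,b}\bigr)+\tfrac53\sigma_4(S)\Bigr) \nonumber\\
&\qquad=\tfrac53P_2(\mathcal R^0)+\tfrac29\sum_{a<b}K_{ab}\sigma_2\bigl((\kappa_c)_{c\ne a,b}\bigr)\ge-C\epsilon_tW(\widehat S). \nonumber
\end{align}
The $\sigma_4$ coefficients cancel exactly. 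Combined with $\overline Z\le G+E$ from \eqref{pt:eq:negative-parts}, the lapse identity becomes $G'+3N_2\le16J_2+C\epsilon_tG+f_G$, with $\int^\infty f_G<\infty$.

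This also exposes a second gap. The errors in your $J_2$ inequality are not of the form $\eta\cdot O(t^{n-3})$. By \eqref{mass:eq:shift-scalar} and \eqref{mass:eq:shift-pairing} they involve $\overline Z=\int_{\Sigma_t}(W(\widehat S)+2\sigma_3(\widehat S))\,dA$, which is cubic in $S$ and not controlled by the bounds on $A,N,Y,J$. The paper couples the $J_2$ and $G$ inequalities through $\overline Z\le G+E$. It then closes with one Gr\"onwall argument on $X(t)=\sup_{s\le t}(s-c)^{-a}J_2(s)$, where $a=n-5$. This gives $J_2\le Ct^{n-5}$ and $G_+\le Ct^{n-4}$ simultaneously, and integrating the $G$ inequality yields $\int_{t_0}^tN_2\le Ct^{n-4}$.

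Use the shift $\epsilon_t$, together with $\Sc_g|_{\Sigma_t}\le K(t)$, throughout rather than $\eta(t)$. With $\eta(t)$, $\int_{\Sigma_t}\Sc_g\,dA$ has no pointwise-in-$t$ bound, because \eqref{eq:level-ambient-elimination} contains $-4N'(t)$. In your lower bound for $G$, replace $\Sc_g$ by $\Sc_{\mathcal R^0}$, which is controlled through $\overline V\le Ct^{m-2}$.
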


\begin{proof}
\textbf{Step (1)}. Retain $A,N,Y,J$ from
Lemma~\ref{lem:independent-level-limit},
\eqref{eq:level-mean-bound} and~\eqref{eq:level-curvature-limit}:
\begin{align}
&A(t)=\Area(\Sigma_t),\qquad N(t)=\int_{\Sigma_t}H\,dA,\qquad Y(t)=N(t)+m\eta(t)A(t),\qquad J(t)=\int_{\Sigma_t}\Sc_{\Sigma_t}\,dA, \nonumber\\
&A(t)\le Ct^m,\qquad 0\le Y(t)\le Ct^{m-1},\qquad |N(t)|\le Ct^{m-1},\qquad |J(t)|\le Ct^{m-2}. \nonumber
\end{align}
Increase $t_0$ so that Lemma~\ref{one:lem:fixed-F},
\eqref{one:eq:fixed-F}--\eqref{eq:level-radial-support}, gives
\begin{align}
&c=F(p)+\tfrac12,\qquad a=m-4\ge0,\qquad t/2\le t-c\le2t,\qquad \tfrac12\le |\nabla F|\le2,\qquad t/2\le\rho\le2t, \nonumber\\
&\Sc_g|_{\Sigma_t}\le K(t),\qquad t\frac{e^{-t}}{K(t)^2}\le1. \nonumber
\end{align}
Retain $K$ from Lemma~\ref{one:lem:coefficient},
\eqref{eq:level-curvature-majorant}. By that lemma,
\eqref{one:eq:h-choice}, and
Lemma~\ref{pt:lem:fine-approximation},
\eqref{pt:eq:levelwise-shape-lower}, on every sufficiently large level set
\begin{align}
&\widehat S=S+\frac{e^{-t}}{K(t)^2}\Id\ge0,\qquad 0<\frac{e^{-t}}{K(t)^2}\le e^{-t}\le(1+t)^{-3}\le\eta(t), \label{mass:eq:exponential-shift}\\
&\nabla^{\Sigma_t}\!\left(\frac{e^{-t}}{K(t)^2}\right)=0. \nonumber
\end{align}
The coefficient is constant on each fixed
level; no derivative of it with respect to $t$ is used. The function
$\eta(t)$ of Notation~\ref{notation:eta} is unchanged.

For each fixed $t$, apply Lemma~\ref{mass:lem:shift} with its constant
parameter equal to $e^{-t}K(t)^{-2}$. Using $\widehat S$ from
\eqref{mass:eq:exponential-shift}, define
\begin{align}
&\widehat{\mathcal R}=\mathcal R^0+\mathcal G(\widehat S),\qquad \mathcal E_+=\mathcal E(\widehat{\mathcal R}),\qquad H_2(t)=\int_{\Sigma_t}P_2(\widehat{\mathcal R})\,dA, \nonumber\\
&Z=W(\widehat S)+2\sigma_3(\widehat S),\qquad V=\tfrac12\Sc_{\mathcal R^0}+\sigma_2(\widehat S),\qquad \overline Z(t)=\int_{\Sigma_t}Z\,dA,\qquad \overline V(t)=\int_{\Sigma_t}V\,dA, \nonumber\\
&E(t)=(m-2)\frac{e^{-t}}{K(t)^2}\int_{\Sigma_t}\left(\tfrac12\Sc_{\mathcal R^0}+2\sigma_2(\widehat S)\right)dA+2\binom m3\frac{e^{-3t}}{K(t)^6}A(t). \nonumber
\end{align}
These quantities satisfy $Z,V,E,H_2\ge0$. In particular,
\begin{align}
&0\le\int_{\Sigma_t}\tr\widehat S\,dA=N(t)+m\frac{e^{-t}}{K(t)^2}A(t)\le Y(t). \nonumber
\end{align}
The cubic expansion and the scalar Gauss equation give
\begin{align}
&G=\overline Z-E+(m-1)(m-2)\frac{e^{-2t}}{K(t)^4}\int_{\Sigma_t}\tr\widehat S\,dA,\qquad \overline Z\le G+E,\qquad G\ge-E, \label{pt:eq:negative-parts}\\
&\overline V=\tfrac12J+(m-1)\frac{e^{-t}}{K(t)^2}N+\binom m2\frac{e^{-2t}}{K(t)^4}A,\qquad 0\le\overline V\le Ct^{m-2}, \nonumber\\
&0\le E(t)\le C\frac{e^{-t}}{K(t)^2}\overline V+C\frac{e^{-3t}}{K(t)^6}A\le C\frac{e^{-t}}{K(t)^2}t^{m-2}. \label{pt:eq:E-bound}
\end{align}
Here $Y$ retains the coefficient $\eta(t)=16(1+t)^{-3}$; it is used only
as an upper bound for $\int_{\Sigma_t}\tr\widehat S\,dA$.

Equation~\eqref{pt:eq:Gauss-polynomial} gives
\begin{align}
&\frac83P_2(\Rm_{\Sigma_t})-\left(P_2(\Rm_{\Sigma_t})+\frac13\sum_{i<j}K_{ij}\sigma_2\bigl((\kappa_k)_{k\ne i,j}\bigr)+\frac53\sigma_4(S)\right) \nonumber\\
&\qquad=\frac53P_2(\mathcal R^0)+\frac29\sum_{i<j}K_{ij}\sigma_2\bigl((\kappa_k)_{k\ne i,j}\bigr)\ge-C_m\frac{e^{-t}}{K(t)^2}W(\widehat S). \nonumber
\end{align}
The last inequality is Lemma~\ref{mass:lem:shift},
\eqref{mass:eq:quartic-expansion}, together with $P_2(\mathcal R^0)\ge0$.
The same lemma, \eqref{mass:eq:shift-scalar}, and $1/2\le |\nabla F|\le2$ give
\begin{align}
&\int_{\Sigma_t}|\nabla F|^{-1}P_2(\Rm_{\Sigma_t})\,dA\le2J_2+\tfrac32\int_{\Sigma_t}\bigl(P_2(\Rm_{\Sigma_t})\bigr)_-\,dA \nonumber\\
&\qquad\le2J_2+C_m\frac{e^{-t}}{K(t)^2}\overline Z+C_m\frac{e^{-3t}}{K(t)^6}Y. \nonumber
\end{align}
Use Lemma~\ref{pt:lem:lapse}, \eqref{pt:eq:lapse-identity}, with $\lambda=|\nabla F|$.
Choose the coefficient of the nonnegative $\overline Z$ term first,
then apply \eqref{pt:eq:negative-parts} to obtain
\begin{align}
&G'+3N_2\le16J_2+C_m\frac{e^{-t}}{K(t)^2}G+f_G,\qquad f_G=C_m\left(\frac{e^{-t}}{K(t)^2}E+\frac{e^{-3t}}{K(t)^6}Y\right)\ge0. \label{pt:eq:G-differential}
\end{align}

By Lemma~\ref{mass:lem:shift}, \eqref{mass:eq:positive-divergence},
\begin{align}
&\mathcal E_+\ge0,\qquad \tr\mathcal E_+=aP_2(\widehat{\mathcal R}),\qquad |\diver_{\Sigma_t}\mathcal E_+|\le C_m\frac{e^{-t}}{K(t)^2}\Sc_g\bigl(\Sc_g+\sigma_2(\widehat S)\bigr), \nonumber\\
&\int_{\Sigma_t}|\diver_{\Sigma_t}\mathcal E_+|\,dA\le C_m\frac{e^{-t}}{K(t)^2}K(t)\bigl(K(t)A+\overline V\bigr). \nonumber
\end{align}
Only tangential derivatives occur here; the coefficient is constant on
$\Sigma_t$. Put $w=\langle\nabla(\rho^2/2),\nu\rangle$.
Lemma~\ref{lem:independent-level-limit},
\eqref{eq:level-distance-hessian} and~\eqref{eq:level-pairing-bounds}, gives,
for almost every $t$,
\begin{align}
&\Hess_{\Sigma_t}(\rho^2/2)\le(g_t-wS)\,dA,\qquad 0\le w-(t-c)/|\nabla F|\le2t,\qquad |\nabla^{\Sigma_t}(\rho^2/2)|\le2t, \nonumber\\
&\tr(\mathcal E_+\circ S)\ge-\frac{e^{-t}}{K(t)^2}\tr\mathcal E_+, \nonumber\\
&\int_{\Sigma_t}w\tr(\mathcal E_+\circ S)\,dA\le\int_{\Sigma_t}\tr\mathcal E_+\,dA+\int_{\Sigma_t}\left\langle\diver_{\Sigma_t}\mathcal E_+,\nabla^{\Sigma_t}(\rho^2/2)\right\rangle dA, \nonumber\\
&(t-c)\int_{\Sigma_t}|\nabla F|^{-1}\tr(\mathcal E_+\circ S)\,dA\le a\left(1+\frac{2te^{-t}}{K(t)^2}\right)H_2+C_mt\frac{e^{-t}}{K(t)^2}K(t)\bigl(K(t)A+\overline V\bigr). \nonumber
\end{align}
The pairing is between tensor-valued measures and the smooth nonnegative
tensor $\mathcal E_+$ on the fixed level. Its singular Hessian part is
nonpositive, the cut-locus intersection is area-null for almost every
level, and $\partial\Sigma_t=\varnothing$; hence no boundary term is omitted.
Lemma~\ref{mass:lem:shift},
\eqref{mass:eq:shift-scalar} and~\eqref{mass:eq:shift-pairing}, gives
\begin{align}
&H_2\le J_2+C_m\frac{e^{-t}}{K(t)^2}\overline Z+C_m\frac{e^{-3t}}{K(t)^6}Y, \nonumber\\
&\int_{\Sigma_t}\bigl(P_2(\Rm_{\Sigma_t})\bigr)_-\,dA\le C_m\frac{e^{-t}}{K(t)^2}\overline Z+C_m\frac{e^{-3t}}{K(t)^6}Y, \nonumber\\
&\int_{\Sigma_t}|\nabla F|^{-1}\tr\bigl((\mathcal E(\Rm_{\Sigma_t})-\mathcal E_+)\circ S\bigr)\,dA\le C_m\frac{e^{-2t}}{K(t)^4}\overline Z+C_m\frac{e^{-4t}}{K(t)^8}Y. \nonumber
\end{align}
Combine these inequalities with Lemma~\ref{pt:lem:lapse},
\eqref{mass:eq:variation}, $te^{-t}K(t)^{-2}\le1$, and
\eqref{pt:eq:negative-parts}:
\begin{align}
&J_2'\le\frac{a}{t-c}J_2+C\frac{e^{-t}}{K(t)^2}(J_2)_++\frac{Ce^{-t}}{K(t)^2(t-c)}G_++f_J, \label{mass:eq:J-differential}\\
&f_J=C\left[\frac{e^{-t}E}{K(t)^2(t-c)}+\frac{e^{-3t}Y}{K(t)^6(t-c)}+e^{-t}A+\frac{e^{-t}}{K(t)}\overline V\right]\ge0. \nonumber
\end{align}
For $m=4$, the same variation identity gives
$\mathcal E(\Rm_{\Sigma_t})=0$ and $J_2'=0$; the inequalities hold with
$a=0$. No derivative of $e^{-t}K(t)^{-2}$ with respect to $t$ is taken.
By \eqref{pt:eq:E-bound},
\begin{align}
&0\le f_G(t)\le C\bigl(e^{-2t}t^{m-2}+e^{-3t}t^{m-1}\bigr), \nonumber\\
&0\le(t-c)^{-a}f_J(t)\le C\left[\frac{e^{-2t}}{K(t)^4}t+\frac{e^{-3t}}{K(t)^6}t^2+e^{-t}t^4+\frac{e^{-t}}{K(t)}t^2\right]\le Ce^{-t}(1+t^4), \nonumber\\
&\int_{t_0}^{\infty}f_G(t)\,dt<\infty,\qquad \int_{t_0}^{\infty}(t-c)^{-a}f_J(t)\,dt<\infty. \label{mass:eq:forcing-bounds}
\end{align}

Set
\begin{align}
&x(t)=(t-c)^{-a}J_2(t),\qquad X(t)=\max\left\{0,\sup_{t_0\le s\le t}x(s)\right\}. \nonumber
\end{align}
The function $J_2$ is smooth on regular bands. Moreover,
Lemma~\ref{six:lem:P2-positive}, \eqref{six:eq:P2-positive}, and
\eqref{eq:P2-four-plane-sum} give $P_2\ge0$ and $N_2\ge0$.
Multiplying \eqref{pt:eq:G-differential} by
$\exp(-C_m\int_{t_0}^t e^{-s}K(s)^{-2}\,ds)$ gives
\begin{align}
&0\le\int_{t_0}^{\infty}\frac{e^{-s}}{K(s)^2}\,ds\le e^{-t_0}, \nonumber\\
&G_+(t)\le C\left(1+\int_{t_0}^t(J_2(s))_+\,ds+\int_{t_0}^tf_G(s)\,ds\right)\le C(t-c)^{a+1}(1+X(t)), \nonumber\\
&\int_{t_0}^t(J_2(s))_+\,ds\le X(t)\int_{t_0}^t(s-c)^a\,ds\le C(t-c)^{a+1}X(t). \nonumber
\end{align}
Equations~\eqref{mass:eq:J-differential} and~\eqref{mass:eq:forcing-bounds}
therefore imply, for almost every $t$,
\begin{align}
&x'(t)\le C\frac{e^{-t}}{K(t)^2}x_+(t)+C\frac{e^{-t}}{K(t)^2}(1+X(t))+(t-c)^{-a}f_J(t), \nonumber\\
&X(t)\le C+C\int_{t_0}^t\frac{e^{-s}}{K(s)^2}(1+X(s))\,ds,\qquad 1+X(t)\le C\exp\left(C\int_{t_0}^t\frac{e^{-s}}{K(s)^2}\,ds\right)\le C, \nonumber\\
&J_2(t)\le Ct^a,\qquad G_+(t)\le Ct^{a+1}. \label{pt:eq:G-upper}
\end{align}

\textbf{Step (2)}. Equations~\eqref{pt:eq:negative-parts} and
\eqref{pt:eq:E-bound} give, for every sufficiently large $t$,
\begin{align}
&G(t)\ge-E(t)\ge-Ce^{-t}t^{m-2},\qquad \int_{t_0}^{\infty}\frac{e^{-t}}{K(t)^2}G_+(t)\,dt\le C\int_{t_0}^{\infty}e^{-t}t^{a+1}\,dt<\infty. \nonumber
\end{align}
Integrating \eqref{pt:eq:G-differential} and using
\eqref{pt:eq:G-upper} and~\eqref{mass:eq:forcing-bounds} gives
\begin{align}
&3\int_{t_0}^tN_2(s)\,ds\le G(t_0)-G(t)+16\int_{t_0}^t(J_2(s))_+\,ds \nonumber\\
&\qquad+C_m\int_{t_0}^t\frac{e^{-s}}{K(s)^2}G_+(s)\,ds+\int_{t_0}^tf_G(s)\,ds\le Ct^{a+1}=Ct^{n-4}. \nonumber
\end{align}
Coarea and Lemmas~\ref{lem:geometry} and~\ref{one:lem:fixed-F},
\eqref{eq:convex-approximation} and~\eqref{one:eq:fixed-F}, give
\begin{align}
&\int_{\{t_0<F<t\}}P_2\,dV=\int_{t_0}^tN_2(s)\,ds\le Ct^{n-4}, \nonumber\\
&F\le\rho+\tfrac14,\qquad B_p(r)\subset\{F<2r\}\quad(r\text{ sufficiently large}),\qquad \{F\le t_0\}\Subset M, \nonumber\\
&\int_{B_p(r)}P_2\,dV\le\int_{\{F\le t_0\}}P_2\,dV+\int_{\{t_0<F<2r\}}P_2\,dV\le Cr^{n-4}. \nonumber
\end{align}
The first integral is finite on the fixed compact set. Increasing $C$ on
bounded $r$-intervals proves \eqref{pt:eq:mass-bound} for every $r\ge1$.
\end{proof}

\begin{lemma}
\label{six:lem:compact}
Let $(M^n,g)$ be a smooth, complete, connected, noncompact
Riemannian manifold and with $\sec_g\ge0$.
For every sufficiently large regular value $t$, use
Notation~\ref{notation:level-objects} and choose an orthonormal
eigenframe of $S$.
Then
\begin{align}
&\lim_{r\to\infty}\frac1r\int_r^{2r}\int_{\Sigma_t}\bigl((\det S)_-+(Q_n(S))_-\bigr)\,dA\,dt=0. \label{six:eq:negative-determinants}
\end{align}
\end{lemma}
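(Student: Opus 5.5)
The plan is to exploit the exponentially small lower bound on $S$ from Lemma~\ref{pt:lem:fine-approximation}. On every sufficiently large regular level put $\varepsilon(t)=e^{-t}K(t)^{-2}$ and $\widehat S=S+\varepsilon(t)\Id\ge0$, exactly as in \eqref{mass:eq:exponential-shift}. In an eigenframe, write $\kappa_i=\widehat\kappa_i-\varepsilon$ with $\widehat\kappa_i\ge0$ and expand multilinearly:
\begin{align}
&\det S=\det\widehat S+\sum_{k\ge1}(-\varepsilon)^k\sigma_{m-k}(\widehat S),\qquad Q_n(S)=Q_n(\widehat S)+\sum_{k\ge1}(-\varepsilon)^k\sum_{i<j}K_{ij}\,\sigma_{m-2-k}\bigl((\widehat\kappa_\ell)_{\ell\ne i,j}\bigr). \nonumber
\end{align}
Every term here is nonnegative except for the sign $(-1)^k$, since $\widehat\kappa_\ell\ge0$ and $K_{ij}\ge0$. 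In particular $\det\widehat S\ge0$ and $Q_n(\widehat S)\ge0$. Hence, pointwise on $\Sigma_t$,
\begin{align}
&(\det S)_-+(Q_n(S))_-\le\sum_{k\ge1}\varepsilon^k\Bigl(\sigma_{m-k}(\widehat S)+\sum_{i<j}K_{ij}\,\sigma_{m-2-k}\bigl((\widehat\kappa_\ell)_{\ell\ne i,j}\bigr)\Bigr). \nonumber
\end{align}
It therefore suffices to show that the level integrals of the right-hand side grow at most polynomially in $t$. The factor $\varepsilon(t)\le e^{-t}$ then makes $\frac1r\int_r^{2r}(\cdots)\,dt$ tend to zero.

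For the polynomial bounds I will reuse the level quantities controlled in Lemma~\ref{pt:lem:mass} and Lemma~\ref{lem:independent-level-limit}, with the shift parameter $\varepsilon(t)$.
\begin{itemize}
\item Degree one: $\int_{\Sigma_t}\tr\widehat S\,dA\le Y(t)\le Ct^{m-1}$.
\item Degree two, and the $K$-weighted degree-zero curvature term: $\overline V(t)=\int(\tfrac12\Sc_{\mathcal R^0}+\sigma_2(\widehat S))\,dA\le Ct^{m-2}$.
\item Degree three, and the $K$-weighted degree-one term $W(\widehat S)$: $\overline Z\le G_++E\le Ct^{a+1}$, by \eqref{pt:eq:negative-parts}, \eqref{pt:eq:E-bound} and \eqref{pt:eq:G-upper}.
\item Degree four, and the $K$-weighted degree-two term: the Gauss expansion \eqref{pt:eq:Gauss-polynomial}, rewritten via \eqref{mass:eq:quartic-expansion} in terms of $\widehat S$, gives
\begin{align}
&\sigma_4(\widehat S)+\tfrac13\sum_{a<b}K_{ab}\sigma_2\bigl((\widehat\kappa_c)_{c\ne a,b}\bigr)\le P_2(\Rm_{\Sigma_t})+C\varepsilon(\overline Z\text{-density})+C\varepsilon^3\tr\widehat S, \nonumber
\end{align}
so its level integral is bounded by $J_2(t)\le Ct^a$ plus terms already controlled.
\end{itemize}
When $n=6$ ($m=5$), these degrees cover every term with $k\ge1$. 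The only remaining terms are the constant ones, $\varepsilon^m A(t)$ and $\varepsilon^{m-2}\int\Sc_{\mathcal R^0}\,dA$. These are bounded by $Ct^m$ and by $C\varepsilon\,\overline V$, respectively. For general $n$, I would handle higher $\sigma_j$ in one of two ways. The first is the analogous Gauss--Bonnet--Chern level variation. The second, which is cruder, uses $K_{ij}\le\Sc_g\le K(t)$ on $\Sigma_t$ and $\sigma_j(\widehat S)\le C(\tr\widehat S)^{j-1}\cdot\tr\widehat S$, with pointwise bounds absorbed by the $K(t)^{-2}$ in $\varepsilon$. I expect the paper to need only $n=6$ here.

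Finally, combine these bounds. We get $\int_{\Sigma_t}((\det S)_-+(Q_n(S))_-)\,dA\le C\varepsilon(t)(1+t)^{m}$ for all large regular $t$. Critical values are excluded because $t_0$ was chosen with no critical points above it. Since $\varepsilon(t)\le e^{-t}$, this gives
\[
\frac1r\int_r^{2r}\int_{\Sigma_t}\bigl((\det S)_-+(Q_n(S))_-\bigr)\,dA\,dt\le Ce^{-r}r^m\to0 .
\]
The main obstacle is the third step: converting the signed identities \eqref{pt:eq:negative-parts} and \eqref{pt:eq:Gauss-polynomial} into \emph{upper} bounds for the nonnegative shifted polynomials $\overline Z$ and $\int\sigma_4(\widehat S)$. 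That conversion depends on the one-sided growth bounds $G_+\le Ct^{a+1}$ and $J_2\le Ct^a$ from Lemma~\ref{pt:lem:mass}. It also requires checking that every lower-order cross term produced by the re-expansion again carries a positive power of $\varepsilon$.
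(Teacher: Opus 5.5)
\textbf{Gap.} Your argument is sound for $3\le n\le 6$. With the level-constant shift $\varepsilon(t)=e^{-t}K(t)^{-2}$, the negative parts are dominated by $\varepsilon^k\sigma_{m-k}(\widehat S)$ and by $\varepsilon^k\sum_{i<j}K_{ij}\,\sigma_{m-2-k}\bigl((\widehat\kappa_\ell)_{\ell\ne i,j}\bigr)$ for $k\ge1$. For $m\le5$ every such term is controlled by $A$, $Y$, $\overline V$, by $\overline Z\le G_++E$, and by $J_2+C\varepsilon\overline Z+C\varepsilon^3Y$, using the one-sided bounds $G_+\le Ct^{a+1}$ and $J_2\le Ct^a$ from the proof of Lemma~\ref{pt:lem:mass}. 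That covers the later six-dimensional application.

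The lemma, however, is stated for every $n\ge3$. For $n\ge7$ you need polynomial bounds on $\int_{\Sigma_t}\sigma_j(\widehat S)\,dA$ with $j\ge5$, and on the curvature-weighted terms of degree $\ge3$. Nothing in the paper supplies these, and neither of your fallbacks works.
\begin{itemize}
\item The Gauss--Bonnet--Chern route needs the Gauss expansion of $P_3(\Rm_{\Sigma_t})$. Its ambient piece $P_3(\mathcal R^0)$ has no sign under $\sec_g\ge0$ (Geroch's example), so it cannot be discarded the way $P_2(\mathcal R^0)\ge0$ was. The route would also need a new chain of differential inequalities.
\item The crude route needs a pointwise upper bound on $\tr\widehat S$, i.e.\ on $\Hess F$. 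But $K(t)$ majorizes only the scalar curvature, and the Greene--Wu smoothing controls $\Hess F$ only from below. So factors like $(\tr\widehat S)^{j-1}$ cannot be absorbed into $\varepsilon(t)$.
\end{itemize}

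\textbf{Missing idea.} The coefficient $h$ was chosen in Lemma~\ref{one:lem:coefficient} precisely so that \eqref{one:eq:weighted-Hessians} holds. The present estimate is purely pointwise and algebraic; no divergence identity on $\Sigma_t$ is involved. So the shift need not be constant on levels, and you may shift by $h$ itself.

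Put $A=\Hess F+h\Id\ge0$ and $B=\Hess F+\Id$, so that $S=|\nabla F|^{-1}(A|_{T\Sigma_t}-h\Id)$ and $0\le\sigma_k(A|_{T\Sigma_t})\le\sigma_k(B)$. Expanding the products and using $h\le1$ and $K_{ab}\le\Sc_g/2$ gives
\begin{itemize}
\item $(\det S)_-\le C_nh\sum_{k\le n-2}\sigma_k(B)$;
\item $(Q_n(S))_-\le C_nh\,\Sc_g\sum_{k\le n-4}\sigma_k(B)$.
\end{itemize}
Coarea together with \eqref{one:eq:weighted-Hessians} then shows that $\int_T^\infty\int_{\Sigma_t}\bigl((\det S)_-+(Q_n(S))_-\bigr)\,dA\,dt$ is finite. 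Hence the averaged limit vanishes in every dimension, with no growth estimates at all.

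\textbf{Comparison.} Where your route works, it gives the stronger levelwise decay $O(e^{-t}t^m)$. It pays for this with the whole $P_2$-mass machinery, and it stops at $n=6$.
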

\begin{proof}
By Lemma~\ref{one:lem:fixed-F}, equation~\eqref{one:eq:fixed-F}, choose $T<\infty$ so that on the regular exterior $\{F>T\}$,
\begin{align}
&|\nabla F|\ge\frac12,\qquad 0<h\le1,\qquad A=\Hess F+h\Id\ge0,\qquad B=\Hess F+\Id\ge A,\qquad S \nonumber\\
&{}=|\nabla F|^{-1}(A|_{T\Sigma_t}-h\Id), \nonumber\\
&\det S=|\nabla F|^{1-n}\sum_{k=0}^{n-1}(-h)^{n-1-k}\sigma_k(A|_{T\Sigma_t}),\qquad 0\le\sigma_k(A|_{T\Sigma_t})\le\sigma_k(A)\le\sigma_k(B), \nonumber\\
&(\det S)_- \le C_nh\sum_{k=0}^{n-2}\sigma_k(B). \nonumber
\end{align}
The first comparison sums nonnegative principal minors; the second is eigenvalue monotonicity. For $n\ge4$, in the same eigenframe,
\begin{align}
&0\le K_{ab}\le\frac{\Sc_g}{2},\qquad \kappa_c=|\nabla F|^{-1}\bigl(\lambda_c(A|_{T\Sigma_t})-h\bigr), \nonumber\\
&\left(\prod_{c\notin\{a,b\}}\kappa_c\right)_-\le C_nh\sum_{k=0}^{n-4}\sigma_k(B), \nonumber\\
&\left(\sum_{a<b}K_{ab}\prod_{c\notin\{a,b\}}\kappa_c\right)_- \le C_nh\Sc_g\sum_{k=0}^{n-4}\sigma_k(B). \nonumber
\end{align}
For $n=3$, $Q_3(S)=K_{12}\ge0$. Coarea, Tonelli, and Lemma~\ref{one:lem:coefficient}, equation~\eqref{one:eq:weighted-Hessians}, give
\begin{align}
&\int_T^\infty\int_{\Sigma_t}\bigl((\det S)_-+(Q_n(S))_-\bigr)\,dA\,dt\le C_n\int_{\{F>T\}}h(1+\Sc_g)\sum_{k=0}^{n-2}\sigma_k(B)|\nabla F|\,dV<\infty. \nonumber
\end{align}
For $r>T$ and $G(t)=\int_{\Sigma_t}\bigl((\det S)_-+(Q_n(S))_-\bigr)\,dA$,
\begin{align}
&0\le\frac1r\int_r^{2r}G(t)\,dt\le\frac1r\int_T^\infty G(t)\,dt,\qquad \lim_{r\to\infty}\frac1r\int_T^\infty G(t)\,dt=0, \nonumber\\
&0\le\eta_r\le C/r,\quad\supp\eta_r\subset(T,\infty)\quad\Longrightarrow\quad 0\le\int\eta_rG\,dt\le\frac Cr\int_T^\infty G(t)\,dt. \nonumber
\end{align}
\end{proof}

\section{Comparison errors}\label{sec:quadratic-comparison-data}\label{sec:uniform-comparison-data}

Let $n\ge3$. Retain $E_1=(\Sc/2)\Id-\Ric$ from
Lemma~\ref{lem:first-curvature-contraction},
equations~\eqref{eq:first-curvature-contraction} and
\eqref{pt:eq:E1-properties}. Lowering its raised index gives the
symmetric tensor $\Sc g/2-\Ric$.

\begin{lemma}\label{ub:lem:smoothing}
Let $U$ be open in a smooth Riemannian manifold, $u:U\to\R$ locally
$1$-Lipschitz, and $a\ge0$. Suppose $\Hess u\le ag$ on $U$ in the
distributional sense. Given $\tau>0$ and a positive continuous
function $e:U\to(0,\infty)$, there is $u_e\in C^\infty(U)$ with
\begin{align}
&|u_e-u|<e,\qquad |\nabla u_e|<2,\qquad \Hess u_e<(a+\tau)g. \label{ub:eq:smoothing}
\end{align}
\end{lemma}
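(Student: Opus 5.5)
The plan is to reduce Lemma~\ref{ub:lem:smoothing} to the Greene--Wu approximation theorem (Theorem~\ref{ext:thm:Greene-Wu}, equation~\eqref{eq:smoothing-import}), applied to $w=-u$, exactly as in the proof of that imported theorem. The hypothesis $\Hess u\le ag$ in the distributional sense reads $\Hess w\ge-ag$. The Greene--Wu proposition \cite[Proposition~2.3, p.~62]{GW} needs only local information:
\begin{itemize}
\item at each $x$, a normal ball $B\Subset U$ on which $w+\frac{\gamma(x)}{4}d_g(x,\cdot)^2$ is convex along geodesics, with $\gamma$ a positive continuous tolerance;
\item a local Lipschitz bound strictly below the target bound.
\end{itemize}
Here the target bound is $2$ and $\operatorname{Lip}_{\mathrm{loc}}w\le1<2$. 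I will take the constant tolerance $\gamma\equiv 2(a+\tau)$, the Lipschitz margin $1$, and the given error $e$. Greene--Wu then yields $v\in C^\infty(U)$ with $|v-w|<e$, $|\nabla v|<2$ and $\Hess v>-(a+\tau)g$. Setting $u_e=-v$ gives \eqref{ub:eq:smoothing}.

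\textbf{Step 1 (local convexity from the distributional bound).} Fix $x\in U$ and work in normal coordinates on a small ball $B=B_x(\varepsilon)$. There the Christoffel symbols are $O(\varepsilon)$, the metric satisfies $|g_{ij}-\delta_{ij}|=O(\varepsilon^2)$, and $\Hess\frac{d_g(x,\cdot)^2}{2}\ge(1-C\varepsilon^2)g$. Mollify $w$ in these coordinates by $w_\delta=w*\varphi_\delta$. Since $\nabla_{ij}w=\partial_{ij}w-\Gamma^k_{ij}\partial_kw$ and $|\partial w|\le C$ (local Lipschitz bound), the distributional inequality gives $\partial_{ij}w\ge-(a+C\varepsilon)\delta_{ij}$ as distributions on the slightly smaller ball. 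This inequality is preserved by convolution, so
\begin{align}
&\Hess w_\delta\ge-\bigl(a+C\varepsilon+o_\delta(1)\bigr)g\quad\text{on }B_x(\varepsilon/2). \nonumber
\end{align}
For $\varepsilon$ small this is at least $-(a+\tau/2)g$, so the smooth functions $w_\delta+\frac{a+\tau}{2}\,d_g(x,\cdot)^2$ have positive semidefinite Hessian. They are therefore convex along every geodesic segment in $B_x(\varepsilon/2)$. Uniform convergence $w_\delta\to w$ transfers this convexity to $w+\frac{\gamma}{4}d_g(x,\cdot)^2$ with $\gamma=2(a+\tau)$. I also shrink $\varepsilon$ so that $\Hess\frac{d_g(x,\cdot)^2}{2}>0$ on the ball, which is the strict-Hessian condition required in \cite[p.~60]{GW}.

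\textbf{Step 2 (apply Greene--Wu and negate).} With the local conditions of Step~1, invoke \cite[Proposition~2.3]{GW} precisely as in the proof of Theorem~\ref{ext:thm:Greene-Wu}, using tolerance $a+\tau$ on the Hessian lower bound, Lipschitz bound $2$, and error $e$. Then set $u_e=-v$; its three properties follow directly from those of $v$.

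I expect Step~1 to be the only delicate point: converting the distributional tensor inequality into geodesic convexity without a global mollifier on $M$. The first thing I will check is that the first-order Christoffel term contributes only $O(\varepsilon)$. This is where the Lipschitz hypothesis enters. Since $a\ge0$ and $\tau>0$ is fixed, that $O(\varepsilon)$ error is absorbed into $\tau/2$ by shrinking $\varepsilon$ depending on $x$. No uniformity in $x$ is needed, because the Greene--Wu hypotheses are pointwise-local.
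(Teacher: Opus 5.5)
Your proposal follows the paper's proof: apply Greene--Wu \cite[Proposition~2.3]{GW} to $-u$ after checking that, on small normal balls, $-u$ plus a multiple of $d_g(x,\cdot)^2/2$ is convex, and then negate; your mollification in normal coordinates makes explicit the passage from the distributional bound $\Hess(-u)\ge-ag$ to geodesic convexity, which the paper asserts directly. One bookkeeping slip: under the normalization you quote from Theorem~\ref{ext:thm:Greene-Wu}, $\gamma\equiv2(a+\tau)$ yields only $\Hess v>-2(a+\tau)g$, and convexity of $w+\frac{a+\tau}{2}d_g(x,\cdot)^2$ in any case leaves no strict margin below $-(a+\tau)$, so keep the $\tau/2$ margin your Step~1 already provides, record convexity of $w+(a+\tau/2)\frac{d_g(x,\cdot)^2}{2}$, and invoke the Greene--Wu condition with Hessian parameter $-(a+\tau)$, as the paper does.
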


\begin{proof}
For every $z\in U$, $\Hess(d_g(z,\cdot)^2/2)|_z=g_z$; hence some normal ball $B\ni z$ satisfies
\begin{align}
&(a+\tau/2)\Hess\frac{d_g(z,\cdot)^2}{2}\ge ag, \nonumber\\
&\Hess\left(-u+(a+\tau/2)\frac{d_g(z,\cdot)^2}{2}\right)\ge0\quad\text{on }B. \nonumber
\end{align}
The local convexity condition of \cite[p.~60]{GW}, with parameter $-a-\tau$, and \cite[Proposition~2.3, p.~62]{GW} yield
\begin{align}
&\exists v\in C^\infty(U):\quad |v+u|<e,\quad |\nabla v|<2,\quad \Hess v>-(a+\tau)g, \nonumber\\
&u_e:=-v,\qquad |u_e-u|<e,\quad |\nabla u_e|<2,\quad \Hess u_e<(a+\tau)g. \nonumber
\end{align}
\end{proof}

\begin{lemma}\label{pt:lem:smooth-comparison}
Let $(M^n,g)$ be a smooth, complete, connected, noncompact
Riemannian manifold without boundary and with $\sec_g\ge0$.
Fix $0<a'<a<b<b'<\infty$, put
\begin{align}
&\Omega'_r=\{a'r<F<b'r\}, \nonumber
\end{align}
and put $\phi=F^2/2$ and
$E_1=(\Sc_g/2)\Id-\Ric_g$.
For every sufficiently large $r$, on $\Omega'_r$ there
are smooth functions $\psi_r^{a,b}$, constants $c_r^{a,b}>0$ and
$\varepsilon_r^{a,b}\ge0$, and symmetric tensors $C_r^{a,b}$ such that
\begin{align}
&C_r^{a,b}=c_r^{a,b}\Id-\Hess\psi_r^{a,b}\ge0,\qquad \Hess\phi\ge-\varepsilon_r^{a,b}\Id, \nonumber\\
&\lim_{r\to\infty}c_r^{a,b}=1,\qquad \lim_{r\to\infty}\varepsilon_r^{a,b}=0,\qquad \lim_{r\to\infty}r^{-1} \sup|\nabla\phi-\nabla\psi_r^{a,b}|=0, \nonumber\\
&\lim_{r\to\infty}r^{2-n} \int_{\{ar<F<br\}}\tr\bigl(E_1\circ C_r^{a,b}\bigr)\,dV=0. \label{pt:eq:weighted-Hessian-small}
\end{align}
\begin{align}
&\lim_{r\to\infty}r^{-2}\sup|\psi_r^{a,b}-\rho^2/2|=0. \label{qc:eq:value-comparison}
\end{align}
\end{lemma}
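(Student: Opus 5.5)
The plan is to build $\psi_r^{a,b}$ by smoothing the distance square with Lemma~\ref{ub:lem:smoothing}, rescaled so that its Lipschitz hypothesis holds. I then obtain gradient closeness from a one-sided Hessian interpolation argument, and prove the integral statement by testing against a cutoff and integrating by parts with the divergence-free tensor $E_1$.

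\emph{Construction.} On a neighbourhood $U_r$ of $\overline{\Omega'_r}$ we have $\rho\le 2b'r$ for large $r$. Set $\Lambda=4b'r$ and $u=\rho^2/(2\Lambda)$.
\begin{itemize}
\item $u$ is locally $1$-Lipschitz on $U_r$.
\item By Theorem~\ref{ext:thm:comparison}, $\Hess u\le\Lambda^{-1}g$ in the distributional sense.
\end{itemize}
Apply Lemma~\ref{ub:lem:smoothing} with $a=\Lambda^{-1}$, $\tau=r^{-2}$ and $e\equiv\Lambda^{-1}$. This gives $u_e$, and we put $\psi_r^{a,b}=\Lambda u_e$. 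Then:
\begin{itemize}
\item $|\psi_r^{a,b}-\rho^2/2|<1$, which gives \eqref{qc:eq:value-comparison}.
\item $|\nabla\psi_r^{a,b}|\le 2\Lambda=O(r)$.
\item $\Hess\psi_r^{a,b}<(1+\Lambda r^{-2})g$, so $c_r^{a,b}:=1+\Lambda r^{-2}\to1$ and $C_r^{a,b}\ge0$.
\end{itemize}
For $\phi=F^2/2$ we have $\Hess\phi=dF\otimes dF+F\Hess F\ge -F\frac{h}{32}g$. Lemma~\ref{one:lem:coefficient}, \eqref{one:eq:h-choice}, gives $h\le(1+\rho)^{-3}$, so we may take $\varepsilon_r^{a,b}=Cr^{-2}\to0$.

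\emph{Gradient comparison (main obstacle).} Let $w=\psi_r^{a,b}-\phi$ on $U_r$. Then $\Hess w\le(c_r+\varepsilon_r)g\le 2g$. For the values, Lemma~\ref{lem:geometry} and Lemma~\ref{one:lem:fixed-F} give $|F-\rho|=o(r)$ on $U_r$. Hence $\sup|w|\le 1+|\rho^2-F^2|/2=o(r^2)$.

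For $x\in\Omega'_r$ and a unit vector $v$, let $\gamma(s)=\exp_x(sv)$. The upper Hessian bound gives
\[
w(\gamma(\ell))\le w(x)+\ell\langle\nabla w,v\rangle+\ell^2 .
\]
Applying this with $\pm v$ yields $|\nabla w(x)|\le 2\sup|w|/\ell+\ell$. Choose $\ell=(\sup|w|)^{1/2}=o(r)$; this is admissible because $\ell$ is small relative to the distance from $\Omega'_r$ to $\partial U_r$, which is of order $r$. Then $\sup_{\Omega'_r}|\nabla w|=o(r)$, which is the required $r^{-1}\sup|\nabla\phi-\nabla\psi_r^{a,b}|\to0$. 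I expect this step to need the most care: the enlarged domain $U_r$ must be chosen so that the geodesic segments stay inside it, and the uniformity of $|F-\rho|=o(r)$ must be tracked.

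\emph{Integral estimate.} Choose $0\le\beta\in C_c^\infty((a',b'))$ with $\beta=1$ on $[a,b]$. Since $E_1\ge0$ and $C_r^{a,b}\ge0$, the integrand is nonnegative and
\[
\int_{\{ar<F<br\}}\tr(E_1\circ C_r^{a,b})\,dV\le\int\beta(F/r)\tr(E_1\circ C_r^{a,b})\,dV .
\]
Decompose $C_r^{a,b}=(c_r-1)\Id+(\Id-\Hess\phi)+\Hess(\phi-\psi_r^{a,b})$ and treat the three terms separately.
\begin{itemize}
\item The first term is bounded by $(c_r-1)\,r^{2-n}\int\beta\Sc$, which tends to $0$ by Lemma~\ref{lem:scalar-radial-tests}.
\item For the second term, use $\diver E_1=0$ and $\nabla\phi=F\nabla F$. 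This is exactly the computation in the proof of Lemma~\ref{ext:lem:horizontal-vanishing}, with $f_{\mathcal S}$ replaced by $\phi$. It shows that $r^{2-n}\langle g\,dV-\Hess\phi,\beta(F/r)E_1\rangle\to\frac{(n-2)L}{2}\int\bigl((n-2)\beta t^{n-3}+\beta' t^{n-2}\bigr)dt=0$.
\item For the third term, integrate by parts once, again using $\diver E_1=0$, to get $-\int E_1(\nabla\beta(F/r),\nabla(\phi-\psi_r^{a,b}))$. This is bounded by $\|\beta'\|_\infty\, r^{-1}\sup|\nabla w|\cdot r^{2-n}\int_{\Omega'_r}\Sc$, which tends to $0$ by the gradient comparison and Lemma~\ref{lem:scalar-radial-tests}.
\end{itemize}
Together these give \eqref{pt:eq:weighted-Hessian-small}.
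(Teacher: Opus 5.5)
Your proposal is correct and has the same skeleton as the paper. You smooth a rescaled $\rho^2/2$ with Lemma~\ref{ub:lem:smoothing}, test $\tr(E_1\circ C_r^{a,b})$ against $\beta(F/r)$, integrate by parts using $\diver E_1=0$, and evaluate the limits with Lemma~\ref{lem:scalar-radial-tests}. Your three-piece splitting of $C_r^{a,b}$ only regroups the paper's computation. The paper writes $\int\beta(F/r)\tr(E_1\circ C_r^{a,b})=\frac{(n-2)c_r^{a,b}}{2}\int\beta(F/r)\Sc+\int E_1(\nabla(\beta(F/r)),\nabla\psi_r^{a,b})$ and then splits $\nabla\psi_r^{a,b}=F\nabla F+(\nabla\psi_r^{a,b}-F\nabla F)$.

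The genuine difference is the gradient comparison.
\begin{itemize}
\item The paper goes through $\rho^2/2$. It compares difference quotients of $\psi_r^{a,b}$ with those of $\rho^2/2$ using the smoothing error. It then compares those of $\rho^2/2$ with those of $\phi$ using the almost-everywhere bound $|\rho\,d\rho-F\,dF|\le\alpha_r r$ from \eqref{eq:gradient-limits}. Finally it applies Taylor's formula to $\phi$ with a two-sided bound $M_r\ge\sup|\Hess\phi|$. Since $M_r$ is not controlled in $r$, the paper must first take $\ell_r$ tiny and then, for that fixed $r$, make the smoothing error smaller still.
\item You work directly with $w=\psi_r^{a,b}-\phi$. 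The two one-sided bounds you already have give the uniform bound $\Hess w\le(c_r+\varepsilon_r)g\le 2g$. The value estimate $\sup|w|=o(r^2)$ then gives $|\nabla w|\le 2\sup|w|/\ell+\ell=o(r)$ with $\ell=(\sup|w|)^{1/2}$. That value estimate needs only $|F-\rho|=o(r)$ and a smoothing error below one.
\end{itemize}
Your route is more economical. It needs neither \eqref{eq:gradient-limits} nor any two-sided control of $\Hess F$, and the smoothing error can be fixed once for all $r$. The same one-sided observation shows that the paper's $M_r$ could be replaced by $\varepsilon_r$.

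When you write it up, do three things:
\begin{itemize}
\item Take $U_r$ to be, for instance, $\{a'r/2<F<3b'r/2\}$. Then $\rho\le 2b'r$ there for large $r$.
\item Check that geodesic segments of length $\ell=o(r)$ starting in $\Omega'_r$ stay in $U_r$. This holds because $|\nabla F|\le 9/8$.
\item State the bounds $\Hess\phi\ge-\varepsilon_r g$ and $|w|=o(r^2)$ on all of $U_r$, not only on $\Omega'_r$, since the Taylor argument uses them along the whole segment.
\end{itemize}
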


\begin{proof}
On the fixed larger compact annuli, Lemma~\ref{one:lem:fixed-F} gives
\begin{align}
&\Hess\phi=dF\otimes dF+F\Hess F\ge-\varepsilon_r^{a,b}g,\qquad \lim_{r\to\infty}\varepsilon_r^{a,b}=0,\qquad \Hess(\rho^2/2)\le g\,dV. \nonumber
\end{align}
The last inequality is distributional Hessian comparison. Apply Lemma~\ref{ub:lem:smoothing}, equation~\eqref{ub:eq:smoothing}, to $\rho^2/2$ divided by a fixed multiple of $r$, and rescale back:
\begin{align}
&\Hess\psi_r^{a,b}\le c_r^{a,b}g,\qquad C_r^{a,b}=c_r^{a,b}\Id-\Hess\psi_r^{a,b}\ge0,\qquad \lim_{r\to\infty}c_r^{a,b}=1. \nonumber
\end{align}
For each fixed $r$, the smoothing error $\sup|\psi_r^{a,b}-\rho^2/2|$ is arbitrarily small. Almost everywhere,
\begin{align}
&|\rho\,d\rho-F\,dF|\le\alpha_r r, \qquad \lim_{r\to\infty}\alpha_r=0. \nonumber
\end{align}
On a still larger compact annulus choose $M_r\ge\sup|\Hess\phi|$ and $\ell_r>0$ so that $\exp_x(sv)$ remains in that annulus for every $x\in\Omega'_r$, $|v|=1$, and $|s|\le\ell_r$. The two difference quotients and Taylor's formula give
\begin{align}
&|d\psi_r^{a,b}-d\phi| \le\alpha_r r+\frac{2\sup|\psi_r^{a,b}-\rho^2/2|}{\ell_r} +\frac{c_r^{a,b}+M_r}{2}\ell_r. \nonumber
\end{align}
Choose $\gamma_r>0$, $\lim_{r\to\infty}\gamma_r=0$, and then, in this order,
\begin{align}
&\frac{c_r^{a,b}+M_r}{2}\ell_r\le r\gamma_r,\qquad \sup|\psi_r^{a,b}-\rho^2/2|\le\min\{r\gamma_r\ell_r/2,r^2\gamma_r\}, \nonumber\\
&r^{-1}\sup|d\psi_r^{a,b}-d\phi|\le\alpha_r+2\gamma_r,\qquad r^{-2}\sup|\psi_r^{a,b}-\rho^2/2|\le\gamma_r. \nonumber
\end{align}
This proves the gradient limit and \eqref{qc:eq:value-comparison}. Fix $L=L(M,g)$ and
\begin{align}
&\beta\in C_c^\infty((0,\infty)),\qquad 0\le\beta\le1,\qquad \beta|_{[a,b]}=1,\qquad \{F/r\in\supp\beta\}\Subset\Omega'_r. \nonumber
\end{align}
Extend the supported integrands by zero. Lemma~\ref{lem:first-curvature-contraction}, equation~\eqref{pt:eq:E1-properties}, and integration by parts give
\begin{align}
&\int_M\beta(F/r)\tr\bigl(E_1\circ C_r^{a,b}\bigr)\,dV =\frac{(n-2)c_r^{a,b}}2\int_M\beta(F/r)\Sc\,dV \nonumber\\
&{}+\int_M E_1\bigl(\nabla(\beta(F/r)),\nabla\psi_r^{a,b}\bigr)\,dV. \label{eq:common-Hessian-identity}
\end{align}
By Lemma~\ref{lem:scalar-radial-tests}, equation~\eqref{eq:common-scalar-F-tests},
\begin{align}
&\lim_{r\to\infty}r^{2-n}\frac{(n-2)c_r^{a,b}}2\int_M\beta(F/r)\Sc\,dV =\frac{(n-2)^2L}2\int_0^\infty\beta(t)t^{n-3}\,dt. \nonumber
\end{align}
Also,
\begin{align}
&\int_M E_1\bigl(\nabla(\beta(F/r)),\nabla\psi_r^{a,b}\bigr)\,dV =\int_M E_1\bigl(\nabla(\beta(F/r)),F\nabla F\bigr)\,dV \nonumber\\
&{}+\int_M E_1\bigl(\nabla(\beta(F/r)),\nabla\psi_r^{a,b}-F\nabla F\bigr)\,dV. \nonumber
\end{align}
Lemma~\ref{lem:scalar-radial-tests}, equation~\eqref{eq:common-scalar-F-tests}, applied to a nonnegative smooth majorant of $\mathbf1_{\supp\beta}$, gives
\begin{align}
&\sup_{r\ge r_0}r^{2-n}\int_{\{F/r\in\supp\beta\}}\Sc\,dV<\infty,\qquad 0\le E_1\le\Sc g/2,\qquad |\nabla F|<2. \nonumber
\end{align}
Hence
\begin{align}
&r^{2-n}\left|\int_M E_1\bigl(\nabla(\beta(F/r)),\nabla\psi_r^{a,b}-F\nabla F\bigr)\,dV\right| \nonumber\\
&{}\le\|\beta'\|_\infty\left(r^{-1}\sup|\nabla\psi_r^{a,b}-F\nabla F|\right)\left(r^{2-n}\int_{\{F/r\in\supp\beta\}}\Sc\,dV\right). \nonumber
\end{align}
Thus
\begin{align}
&\lim_{r\to\infty}r^{2-n}\int_M E_1\bigl(\nabla(\beta(F/r)),\nabla\psi_r^{a,b}-F\nabla F\bigr)\,dV=0. \nonumber
\end{align}
Since $\nabla F=|\nabla F|\nu$,
\begin{align}
&\int_M E_1\bigl(\nabla(\beta(F/r)),F\nabla F\bigr)\,dV =\int_M\beta'(F/r)\frac Fr |\nabla F|^2\left(\frac{\Sc}{2}-\Ric(\nu,\nu)\right)\,dV. \nonumber
\end{align}
By Lemma~\ref{lem:scalar-radial-tests}, equation~\eqref{eq:common-radial-F-tests}, on the fixed support,
\begin{align}
&\lim_{r\to\infty}r^{2-n}\int_M\beta'(F/r)\frac Fr |\nabla F|^2\Ric(\nu,\nu)\,dV=0. \nonumber
\end{align}
Here $\sup|\beta'(F/r)(F/r)|\nabla F|^2|<\infty$ and $\Ric(\nu,\nu)\ge0$. Moreover,
\begin{align}
&r^{2-n}\left|\int_M\beta'(F/r)\frac Fr(|\nabla F|^2-1)\Sc\,dV\right|\le\sup_{t>0}|t\beta'(t)|\sup_{\{F/r\in\supp\beta\}}||\nabla F|^2 \nonumber\\
&{}-1|\left(r^{2-n}\int_{\{F/r\in\supp\beta\}}\Sc\,dV\right), \nonumber\\
&\lim_{r\to\infty}r^{2-n}\int_M\beta'(F/r)\frac Fr(|\nabla F|^2-1)\Sc\,dV=0. \nonumber
\end{align}
Lemma~\ref{lem:scalar-radial-tests},
equation~\eqref{eq:common-scalar-F-tests}, with the test $t\beta'(t)$,
now gives
\begin{align}
&\lim_{r\to\infty}\frac{r^{2-n}}2\int_M\beta'(F/r)\frac Fr\Sc\,dV =\frac{(n-2)L}2\int_0^\infty\beta'(t)t^{n-2}\,dt, \nonumber\\
&\lim_{r\to\infty}r^{2-n}\int_M E_1\bigl(\nabla(\beta(F/r)),\nabla\psi_r^{a,b}\bigr)\,dV =\frac{(n-2)L}2\int_0^\infty\beta'(t)t^{n-2}\,dt. \nonumber
\end{align}
Combining the two limits in \eqref{eq:common-Hessian-identity},
\begin{align}
&\lim_{r\to\infty}r^{2-n}\int_M\beta(F/r)\tr\bigl(E_1\circ C_r^{a,b}\bigr)\,dV \nonumber\\
&{}=\frac{(n-2)L}2\left((n-2)\int_0^\infty\beta(t)t^{n-3}\,dt+\int_0^\infty\beta'(t)t^{n-2}\,dt\right)=0. \nonumber
\end{align}
Indeed, $[\beta(t)t^{n-2}]_0^\infty=0$. Since $E_1,C_r^{a,b}\ge0$ and $\beta|_{[a,b]}=1$,
\begin{align}
&0\le r^{2-n}\int_{\{ar<F<br\}}\tr\bigl(E_1\circ C_r^{a,b}\bigr)\,dV \le r^{2-n}\int_M\beta(F/r)\tr\bigl(E_1\circ C_r^{a,b}\bigr)\,dV. \nonumber
\end{align}

\end{proof}

Fix $a=1/2$ and $b=5/2$. Apply
Lemma~\ref{pt:lem:smooth-comparison} on an annulus containing
$\{ar\le F\le br\}$ and all the finitely many nested cutoff
annuli below. Write $\psi_r=\psi_r^{a,b}$ and suppress these fixed
superscripts on $c_r,\varepsilon_r,C_r$ as well.
The error $\delta_r$ in \eqref{qc:eq:delta-data} is enlarged when
necessary to absorb the error in Lemma~\ref{pt:lem:smooth-comparison},
equation~\eqref{qc:eq:value-comparison}, and the exterior errors of the
fixed $F$, always with $\lim_{r\to\infty}\delta_r=0$.
Only the auxiliary squared-distance comparison depends on $r$.
Constants depend on the fixed metric, $F$, and the fixed annuli and
cutoffs, and are independent of $r$ and the interpolation parameter.

The data below are defined for $n\ge3$; the quadratic error lemma
requires $n\ge5$. Retain $a=1/2$, $b=5/2$ and the single comparison
function $\psi_r$ fixed above. Choose the nested annuli by
\begin{align}
&a_i=2^{-i}a_0,\qquad b_i=2^ib_0,\qquad 0\le i\le8, \nonumber
\end{align}
and choose nonnegative smooth cutoffs $\zeta_i$ equal to one on
$\{a_{i}r<F<b_{i}r\}$, supported in $\{a_{i+1}r<F<b_{i+1}r\}$ ($0\le i\le7$),
with $0\le\zeta_i\le1$ and $|\nabla\zeta_i|\le C(a_0,b_0)/r$.
They are functions of $F/r$.
Choose the comparison on $\{2^{-10}a_0r<F<2^{10}b_0r\}$ and choose the additional outer cutoff to equal one on $\{a_8r<F<b_8r\}$, to be supported in $\{2^{-9}a_0r<F<2^9b_0r\}$, and to satisfy $|\nabla\zeta|\le C(a_0,b_0)/r$.
The proof of Lemma~\ref{pt:lem:smooth-comparison},
equation~\eqref{pt:eq:weighted-Hessian-small}, applied to this cutoff,
gives the same weighted smallness on this outermost annulus for this
same $\psi_r$.
No new comparison is selected for an inner cutoff. Put
\begin{align}
&\phi=F^2/2,\qquad C=C_r=c_r\Id-\Hess\psi_r,\qquad z=z_r=\nabla(\phi-\psi_r), \nonumber\\
&d=d_r=c_r+1+\varepsilon_r,\qquad D=D_r=d_r\Id+\Hess(\phi-\psi_r)=\Id+(\Hess\phi+\varepsilon_r\Id)+C_r\ge\Id \nonumber\\
&{}+C_r. \label{qc:eq:comparison-D}
\end{align}
Choose $0<\delta_r\le1$ for all sufficiently large $r$ such that
\begin{align}
&\lim_{r\to\infty}\delta_r=0,\qquad |c_r-1|+\varepsilon_r+r^{-1}\sup_{\{a_{8}r<F<b_{8}r\}}|z_r| \le\delta_r, \nonumber\\
&r^{2-n}\int_{\{a_{8}r<F<b_{8}r\}}\tr\bigl(E_1\circ C_r\bigr)\,dV\le\delta_r. \label{qc:eq:delta-data}
\end{align}
The constants depend only on the fixed data listed above.
All estimates concern sufficiently large $r$.

\begin{lemma}
\label{qc:lem:quadratic-error}\label{qc:lem:cubic-error}
\textup{(i)} Assume $n\ge5$. Use the tensors in
\eqref{uni:eq:normalized-tensor}--\eqref{unified:gen:H-definition}
and all the comparison data in
Section~\ref{sec:quadratic-comparison-data}, including
\eqref{qc:eq:comparison-D}--\eqref{qc:eq:delta-data}.
Then there is a
constant $C=C(M,g,F,a_0,b_0)$, independent of $r$, such that, for
every sufficiently large $r$,
\begin{align}
&\int_{\{a_{0}r<F<b_{0}r\}}\tr\bigl(V_1(D_r)\circ D_r\bigr)\,dV\le Cr^{n-2}, \label{qc:eq:DD-mass}\\
&\int_{\{a_{0}r<F<b_{0}r\}}|V_2(C_r)|\,dV \le C\delta_r r^{n-2}, \label{qc:eq:CC-small}\\
&\int_{\{a_{0}r<F<b_{0}r\}} \left|V_2(\Hess\psi_r) -(n-3)(n-4)E_1\right|\,dV \le C\delta_r r^{n-2}. \label{qc:eq:quadratic-replacement}
\end{align}
More generally, for each fixed $0\le j\le7$, the same three estimates
hold with $\{a_jr<F<b_jr\}$ in place of $\{a_0r<F<b_0r\}$,
provided the cutoffs use the already fixed annuli with larger indices.
Their constants remain independent of $r$.

\textup{(ii)} Assume $n\ge6$ and retain exactly the comparison
functions, tensors, annuli, cutoffs, and constants from
part~\textup{(i)} and Section~\ref{sec:quadratic-comparison-data}.
Use the function normalization in Lemma~\ref{pt:lem:smooth-comparison},
equation~\eqref{qc:eq:value-comparison}, and enlarge $\delta_r$,
still with $\delta_r\to0$, so that
\begin{align}
&r^{-2}\sup_{\{a_8r<F<b_8r\}}|\psi_r-\rho^2/2| +\sup_{\{a_8r<F<b_8r\}}\bigl(|F/\rho-1|+\bigl||\nabla F|-1\bigr|\bigr) \le\delta_r. \nonumber
\end{align}
Then
\begin{align}
&\int_{\{a_0r<F<b_0r\}} \left[\tr(E_2\circ C_r)+\tr(E_2\circ D_r)\right]\,dV \le Cr^{n-4}, \label{qc:eq:E2-mixed-mass}\\
&\int_{\{a_{0}r<F<b_{0}r\}}|V_3(C_r)|\,dV \le C\delta_r r^{n-2}, \label{qc:eq:CCC-small}\\
&\int_{\{a_{0}r<F<b_{0}r\}} \bigl|V_3(\Hess\psi_r) -(n-3)(n-4)(n-5)E_1\bigr|\,dV \le C\delta_r r^{n-2}. \label{qc:eq:cubic-replacement}
\end{align}
When $n=6$,
\begin{align}
&\lim_{r\to\infty}r^{-4}\int_{\{a_{0}r<F<b_{0}r\}} \left|\frac1{6}V_3(\Hess\psi_r) -E_1\right|\,dV=0. \label{qc:eq:six-dimensional-replacement}
\end{align}
\end{lemma}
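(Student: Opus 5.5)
The strategy is to expand every multilinear curvature contraction around the nonnegative tensor $C_r$, so each estimate reduces to a finite sum of mixed terms $V_j(C_r,\ldots,C_r,\Id,\ldots,\Id)$. The identity terms are handled by \eqref{qc:eq:general-contractions}. Positivity (Lemma~\ref{pt:lem:algebra}) gives $|V_j(H_1,\ldots,H_j)|\le C\tr V_j(H_1,\ldots,H_j)$ whenever all $H_\alpha\ge0$, so absolute values can be replaced by traces.

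\textbf{Part (i).} Write $\Hess\psi_r=c_r\Id-C_r$. Then $V_2(\Hess\psi_r)=c_r^2V_2(\Id)-2c_rV_2(\Id,C_r)+V_2(C_r)$, with $V_2(\Id)=(n-3)(n-4)E_1$ and $V_2(\Id,C_r)=(n-4)V_1(C_r)$. Since $V_1(C_r)\ge0$,
\[
|V_1(C_r)|\le C\tr V_1(C_r)=C(n-3)\tr(E_1\circ C_r),
\]
which by \eqref{qc:eq:delta-data} integrates to at most $C\delta_rr^{n-2}$. Together with $|c_r-1|\le\delta_r$ and the scalar bound of Lemma~\ref{lem:scalar-radial-tests}, this reduces \eqref{qc:eq:quadratic-replacement} to \eqref{qc:eq:CC-small}.

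For \eqref{qc:eq:CC-small} I use $0\le C_r\le D_r$ and monotonicity of $V_2$ in nonnegative arguments, so that
\[
\tr V_2(C_r)\le \tr V_2(C_r,D_r)=(n-4)\tr\bigl(V_1(C_r)\circ D_r\bigr).
\]
To bound $\int\zeta\tr(V_1(C_r)\circ D_r)$, I write $D_r=d_r\Id+\Hess(\phi-\psi_r)$. The identity part is again $\tr(E_1\circ C_r)$. For the Hessian part I integrate by parts using $\diver V_1(H)=-3(P_2\Id-E_2)\nabla f$ from \eqref{pt:eq:B-divergence}, applied to $H=C_r=c_r\Id-\Hess\psi_r$. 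This produces:
\begin{itemize}
\item terms $V_1(C_r)(\nabla\zeta,z_r)$, which are at most $\delta_r r\cdot r^{-1}\int\tr(E_1\circ C_r)$;
\item terms $(P_2-E_2)(\nabla\psi_r,z_r)$, which are at most $r\cdot\delta_r r\int P_2\le C\delta_rr^{n-2}$ by Lemma~\ref{pt:lem:mass}.
\end{itemize}
This gives \eqref{qc:eq:CC-small}. The same integration by parts with $D_r$ in both slots gives \eqref{qc:eq:DD-mass}, since $\tr V_1(D_r)\le C\tr(E_1\circ D_r)$ and $\int\tr(E_1\circ D_r)\le Cr^{n-2}$ by the scalar bound. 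Each integration by parts consumes one annulus index, which is why the nested cutoffs $\zeta_i$ are used.

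\textbf{Part (ii).} The structure is identical one order higher. Expand
\[
V_3(c_r\Id-C_r)=\sum_k\binom3k(-1)^kc_r^{3-k}V_3(C_r^{(k)},\Id^{(3-k)}).
\]
The contraction identities turn the terms with $k\le2$ into $E_1$, $V_1(C_r)$ and $V_2(C_r)$, so \eqref{qc:eq:cubic-replacement} reduces to \eqref{qc:eq:CCC-small} and part (i). For \eqref{qc:eq:CCC-small} I bound $\tr V_3(C_r)\le(n-5)\tr(V_2(C_r)\circ D_r)$. I then integrate the $\Hess(\phi-\psi_r)$ part by parts using \eqref{qc:eq:T-divergence}, whose divergence involves $E_2$ and $\mathcal J$. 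The resulting errors are
\[
\delta_r r^2\int\bigl[\tr(E_2\circ C_r)+\tr(E_2\circ D_r)\bigr]\le C\delta_rr^{n-2}.
\]
This requires \eqref{qc:eq:E2-mixed-mass}, which I prove the same way. Since $\tr(E_2\circ\Id)=(n-4)P_2$, Lemma~\ref{pt:lem:mass} controls the identity part. For the Hessian parts I integrate by parts with $\diver E_2=0$, which gives $r^{-1}\cdot r\int_{\text{annulus}}|E_2|\le Cr^{n-4}$.

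For $n=6$, \eqref{qc:eq:six-dimensional-replacement} follows from \eqref{qc:eq:cubic-replacement} with $(n-3)(n-4)(n-5)=6$ and $\delta_r\to0$.

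\textbf{Main obstacle.} I expect the hardest step to be the mixed-term integration by parts, especially controlling the $\mathcal J_H\nabla f$ term in \eqref{qc:eq:T-divergence} when $H$ is not nonnegative. I would first decompose $H=D_r-\Id-\varepsilon_r\Id$ to reduce to nonnegative arguments, then use \eqref{qc:eq:J-positive} together with $|\mathbf P_2|\le C_nP_2$.
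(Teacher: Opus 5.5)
Your part (i), your proof of \eqref{qc:eq:E2-mixed-mass}, and your multilinear expansions reducing \eqref{qc:eq:quadratic-replacement} and \eqref{qc:eq:cubic-replacement} to \eqref{qc:eq:CC-small} and \eqref{qc:eq:CCC-small} follow the paper's proof. One small point in \eqref{qc:eq:DD-mass}: the bound $\int\zeta\tr(E_1\circ D_r)\le Cr^{n-2}$ is not just "the scalar bound". It needs an integration by parts against $\nabla z_r$ using $\diver E_1=0$, because $D_r$ has no pointwise bound.

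The genuine gap is in \eqref{qc:eq:CCC-small}. You integrate $\int\zeta\tr(V_2(C_r)\circ\nabla z_r)$ by parts using \eqref{qc:eq:T-divergence} with $H=C_r$ and $f=-\psi_r$. This produces three terms paired against $z_r$:
\[
\tr(E_2\circ C_r)\langle\nabla\psi_r,z_r\rangle,\qquad \mathcal J_{C_r}(\nabla\psi_r,z_r),\qquad E_2(C_r\nabla\psi_r,z_r).
\]
The $\mathcal J$ term is harmless. Since $C_r\ge0$, \eqref{qc:eq:J-positive} gives $0\le\mathcal J_{C_r}\le(n-5)\tr(E_2\circ C_r)\Id$, so the decomposition you propose as the "main obstacle" is unnecessary.

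The real problem is the third term, which you fold into $\delta_r r^2\int\tr(E_2\circ C_r)$ without justification. The product $E_2C_r$ of two nonnegative endomorphisms is not dominated by $\tr(E_2\circ C_r)$. For example, take $E_2=\begin{pmatrix}1&0\\0&0\end{pmatrix}$ and $C_r=\begin{pmatrix}\epsilon&\epsilon^{-1}\\ \epsilon^{-1}&\epsilon^{-3}\end{pmatrix}\ge0$. Then $\tr(E_2C_r)=\epsilon$, but $|E_2C_r|\ge\epsilon^{-1}$. Moreover, $C_r$ has only integral control, never pointwise control. Splitting off only $-\tfrac12\nabla|\nabla\psi_r|^2$ from $C_r\nabla\psi_r$ does not rescue this. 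It leaves $\tfrac12\int\zeta|\nabla\psi_r|^2\tr(E_2\circ\nabla z_r)$, which is only $O(r^{n-2})$, not $O(\delta_r r^{n-2})$.

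The missing idea is the exact potential identity $C_r\nabla\psi_r=-\nabla\bigl(\tfrac12|\nabla\psi_r|^2-c_r\psi_r\bigr)$. Combined with $\diver E_2=0$, it gives
\[
\int\zeta E_2(C_r\nabla\psi_r,z_r)\,dV=\int\Bigl(\tfrac12|\nabla\psi_r|^2-c_r\psi_r\Bigr)\bigl(E_2(\nabla\zeta,z_r)+\zeta\tr(E_2\circ(D_r-d_r\Id))\bigr)\,dV.
\]
The potential is $O(\delta_r r^2)$ on the annuli, because $|\nabla\psi_r|^2\approx F^2|\nabla F|^2\approx\rho^2$ and $c_r\psi_r\approx\rho^2/2$. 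This is exactly what the function-value normalization \eqref{qc:eq:value-comparison} and the enlarged $\delta_r$ in part (ii) supply. Your proposal never uses these extra hypotheses, which signals that this step is missing. With the identity, \eqref{qc:eq:E2-mixed-mass} bounds the term by $C\delta_r r^{n-2}$, and \eqref{qc:eq:CCC-small} then follows as you outline.
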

\begin{proof}
\textup{(i)} Write $C=C_r$, $D=D_r$, $z=z_r$, $d=d_r$. For an annular cutoff $\zeta$, $\diver V_1(D)=-3(P_2\Id-E_2)z$ gives
\begin{align}
&\int\zeta\tr V_1(D)\,dV =(n-3)\left(\frac{d(n-2)}2\int\zeta\Sc\,dV -\int E_1(\nabla\zeta,z)\,dV\right), \nonumber\\
&\int\zeta \tr\bigl(V_1(D)\circ D\bigr)\,dV =d\int\zeta\tr V_1(D)\,dV -\int V_1(D)(\nabla\zeta,z)\,dV +3\int\zeta(P_2\Id-E_2)(z,z)\,dV. \nonumber
\end{align}
By Lemma~\ref{pt:lem:mass}, equation~\eqref{pt:eq:mass-bound},
\begin{align}
&0\le\int\zeta\tr V_1(D)\,dV\le C\int_{\supp\zeta}\Sc\,dV+C\|\nabla\zeta\|_\infty\|z\|_\infty\int_{\supp\zeta}\Sc\,dV\le Cr^{n-2}, \nonumber\\
&\left|\int V_1(D)(\nabla\zeta,z)\,dV\right|\le C\delta_r\int_{\supp\nabla\zeta}\tr V_1(D)\,dV\le C\delta_r r^{n-2}, \nonumber\\
&\left|\int\zeta(P_2\Id-E_2)(z,z)\,dV\right|\le C\delta_r^2r^2\int_{\supp\zeta}P_2\,dV\le C\delta_r^2r^{n-2}. \nonumber
\end{align}
The middle estimate uses a larger fixed cutoff and $V_1(D)\ge0$. Substitution proves \eqref{qc:eq:DD-mass}.

Use $\diver V_1(C)=3(P_2\Id-E_2)\nabla\psi_r$ and integrate
against $D=d\Id+\nabla z$ to obtain
\begin{align}
&\int\zeta \tr\bigl(V_1(D)\circ C\bigr)\,dV ={}d(n-3)\int\zeta \tr\bigl(E_1\circ C\bigr)\,dV -\int V_1(C)(\nabla\zeta,z)\,dV \nonumber\\
&{}-3\int\zeta(P_2\Id-E_2)(\nabla\psi_r,z)\,dV. \nonumber
\end{align}
By symmetry and positivity,
\begin{align}
&\tr\bigl(V_1(D)\circ C\bigr)=\tr\bigl(V_1(C)\circ D\bigr)\ge0,\qquad V_1(C)\ge0,\qquad \tr V_1(C)=(n-3)\tr(E_1\circ C), \nonumber\\
&d(n-3)\int\zeta\tr(E_1\circ C)\,dV\le C\delta_r r^{n-2}, \nonumber\\
&\left|\int V_1(C)(\nabla\zeta,z)\,dV\right|\le C\delta_r\int_{\supp\nabla\zeta}\tr V_1(C)\,dV\le C\delta_r^2r^{n-2}, \nonumber\\
&\left|\int\zeta(P_2\Id-E_2)(\nabla\psi_r,z)\,dV\right|\le C\delta_r r^2\int_{\supp\zeta}P_2\,dV\le C\delta_r r^{n-2}. \nonumber
\end{align}
Hence
\begin{align}
&0\le\int\zeta \tr\bigl(V_1(D)\circ C\bigr)\,dV\le C\delta_r r^{n-2}. \label{qc:eq:DC-small}
\end{align}
Since $D-C\ge0$,
\begin{align}
&0\le V_2(C)\le V_2(D,C),\qquad A\ge0\quad\Longrightarrow\quad |A|=\left(\sum_i\lambda_i(A)^2\right)^{1/2}\le\sum_i\lambda_i(A)=\tr A. \nonumber
\end{align}
Thus
\begin{align}
&|V_2(C)|\le\tr V_2(D,C) =(n-4)\tr\bigl(V_1(D)\circ C\bigr). \nonumber
\end{align}
By \eqref{qc:eq:DC-small},
\begin{align}
&\int_{\{a_0r<F<b_0r\}}|V_2(C)|\,dV\le(n-4)\int\zeta_0\tr(V_1(D)\circ C)\,dV\le C\delta_r r^{n-2}. \nonumber
\end{align}

Finally,
\begin{align}
&V_2(\Hess\psi_r) =c_r^2(n-3)(n-4)E_1-2c_rV_2(\Id,C)+V_2(C). \nonumber
\end{align}
On each admitted inner annulus,
\begin{align}
&\int|V_2(\Id,C)|\,dV\le(n-3)(n-4)\int\tr(E_1\circ C)\,dV\le C\delta_r r^{n-2}, \nonumber\\
&\int|V_2(\Hess\psi_r)-(n-3)(n-4)E_1|\,dV\le C|c_r^2-1|\int\Sc\,dV+2c_r\int|V_2(\Id,C)|\,dV \nonumber\\
&{}+\int|V_2(C)|\,dV\le C\delta_r r^{n-2}. \nonumber
\end{align}
\textup{(ii)} For an annular cutoff $\zeta$, $\diver E_2=0$ gives
\begin{align}
&\int\zeta \tr\bigl(E_2\circ C\bigr)\,dV =c_r(n-4)\int\zeta P_2\,dV +\int E_2(\nabla\zeta,\nabla\psi_r)\,dV, \nonumber\\
&\int\zeta \tr\bigl(E_2\circ D\bigr)\,dV =d_r(n-4)\int\zeta P_2\,dV -\int E_2(\nabla\zeta,z_r)\,dV. \nonumber
\end{align}
Therefore, on each admitted inner annulus,
\begin{align}
&0\le\int\zeta\bigl(\tr(E_2\circ C)+\tr(E_2\circ D)\bigr)\,dV\le C\bigl(1+r^{-1}\sup(|\nabla\psi_r|+|z_r|)\bigr)\int_{\supp\zeta}P_2\,dV\le Cr^{n-4}. \nonumber
\end{align}

Apply Lemma~\ref{pt:lem:algebra},
equation~\eqref{qc:eq:T-divergence}, with
$H=C=c_r\Id-\Hess\psi_r$ and $f=-\psi_r$. Since
$D=d_r\Id+\nabla z_r$, integration by parts gives
\begin{align}
&\int\zeta \tr\bigl(V_2(C)\circ D\bigr)\,dV ={}d_r\int\zeta\tr V_2(C)\,dV -\int V_2(C)(\nabla\zeta,z_r)\,dV \nonumber\\
&{}-6\int\zeta(\tr\bigl(E_2\circ C\bigr))\langle\nabla\psi_r,z_r\rangle\,dV +6\int\zeta E_2(C\nabla\psi_r,z_r)\,dV +6\int\zeta\mathcal J_C(\nabla\psi_r,z_r)\,dV. \label{qc:eq:cubic-IBP}
\end{align}
By part~\textup{(i)} and \eqref{qc:eq:E2-mixed-mass},
\begin{align}
&d_r\int\zeta\tr V_2(C)\,dV+\left|\int V_2(C)(\nabla\zeta,z_r)\,dV\right|\le C\delta_r r^{n-2}, \nonumber\\
&0\le\mathcal J_C\le(\tr\mathcal J_C)\Id,\qquad \tr\mathcal J_C=(n-5)\tr(E_2\circ C), \nonumber\\
&\left|\int\zeta\tr(E_2\circ C)\langle\nabla\psi_r,z_r\rangle\,dV\right|+\left|\int\zeta\mathcal J_C(\nabla\psi_r,z_r)\,dV\right|\le C\delta_r r^2\int\zeta\tr(E_2\circ C)\,dV\le C\delta_r r^{n-2}. \nonumber
\end{align}
For the remaining term,
\begin{align}
&\nabla\left(\frac12|\nabla\psi_r|^2-c_r\psi_r\right) =-C_r\nabla\psi_r. \nonumber
\end{align}
The normalization in Lemma~\ref{pt:lem:smooth-comparison},
equation~\eqref{qc:eq:value-comparison}, gives
\begin{align}
&\sup_{\{a_{8}r<F<b_{8}r\}} \left|\frac12|\nabla\psi_r|^2-c_r\psi_r\right| \le C\delta_r r^2. \label{qc:eq:h-bounds}
\end{align}
Indeed, on $\{a_8r<F<b_8r\}$,
\begin{align}
&\bigl||\nabla\psi_r|^2-F^2|\nabla F|^2\bigr|\le|\nabla\psi_r-F|\nabla F|\nu|(|\nabla\psi_r|+F|\nabla F|)\le C\delta_r r^2, \nonumber\\
&|F^2|\nabla F|^2-\rho^2|\le C\delta_r r^2,\qquad |\psi_r-\rho^2/2|\le\delta_r r^2,\qquad |c_r-1|\le\delta_r. \nonumber
\end{align}
The gradient identity and $\diver E_2=0$ give
\begin{align}
&\int\zeta E_2(C\nabla\psi_r,z_r)\,dV=-\int\zeta E_2\left( \nabla\left(\frac12|\nabla\psi_r|^2-c_r\psi_r\right),z_r \right)\,dV \nonumber\\
&{}=\int\left(\frac12|\nabla\psi_r|^2-c_r\psi_r\right) \bigl(E_2(\nabla\zeta,z_r) +\zeta\tr\bigl(E_2\circ(D-d_r\Id)\bigr)\bigr)\,dV. \nonumber
\end{align}
Using \eqref{qc:eq:h-bounds}, \eqref{qc:eq:E2-mixed-mass}, and $\tr E_2=(n-4)P_2$,
\begin{align}
&\left|\int\zeta E_2(C\nabla\psi_r,z_r)\,dV\right|\le C\delta_r r^2\int_{\supp\zeta}\bigl(\tr(E_2\circ D)+(1+\delta_r)P_2\bigr)\,dV\le C\delta_r r^{n-2}. \nonumber
\end{align}
Thus \eqref{qc:eq:cubic-IBP} gives
\begin{align}
&0\le\int\zeta \tr\bigl(V_2(C)\circ D\bigr)\,dV\le C\delta_r r^{n-2}. \label{qc:eq:cubic-trace-small}
\end{align}

Since $C\ge0$ and $D-C\ge0$,
\begin{align}
&V_3(D,C,C)-V_3(C)=V_3(D-C,C,C)\ge0, \nonumber
\end{align}
By Lemma~\ref{pt:lem:algebra}, equation~\eqref{qc:eq:general-contractions},
\begin{align}
&|V_3(C)| \le\tr V_3(C)\le\tr V_3(D,C,C)=(n-5)\tr\bigl(V_2(C)\circ D\bigr), \nonumber
\end{align}
Here the last identity uses symmetry. With $\zeta=\zeta_0$ in \eqref{qc:eq:cubic-trace-small},
\begin{align}
&\int_{\{a_0r<F<b_0r\}}|V_3(C)|\,dV \le(n-5)\int\zeta_0\tr\bigl(V_2(C)\circ D\bigr)\,dV\le C\delta_r r^{n-2}. \nonumber
\end{align}
Finally,
\begin{align}
&V_3(c_r\Id-C) =c_r^3(n-3)(n-4)(n-5)E_1-3c_r^2(n-5)V_2(\Id,C) +3c_r(n-5)V_2(C)-V_3(C). \nonumber
\end{align}
By the weighted $\tr(E_1\circ C)$ bound, \eqref{qc:eq:CC-small}, and \eqref{qc:eq:CCC-small},
\begin{align}
&\int|V_3(\Hess\psi_r)-(n-3)(n-4)(n-5)E_1|\,dV \nonumber\\
&{}\le C\left(|c_r^3-1|\int\Sc\,dV+\int|V_2(\Id,C)|\,dV+\int|V_2(C)|\,dV+\int|V_3(C)|\,dV\right)\le C\delta_r r^{n-2}. \nonumber
\end{align}
For $n=6$, divide by $6r^4$ and let $r\to\infty$.
\end{proof}

\section{Asymptotic identification of Chern's \texorpdfstring{\(\Phi_1\)}{Phi1}-term}
\label{six:sec:boundary}

For the next lemma take $n=6$. Retain $a=1/2$, $b=5/2$ and the comparison function
$\psi_r$ from the preceding subsections.
Use the uniform comparison data above and a fixed finite collection
of nested annuli, adding outer annuli when a cutoff is needed. Put
\begin{align}
&H_0=\Hess\psi_r,H_1=\Hess(F^2/2), H_\lambda=(1-\lambda)H_0+\lambda H_1,\qquad \phi_\lambda=(1-\lambda)\psi_r+\lambda F^2/2, \nonumber\\
&z=\nabla(F^2/2-\psi_r),Z=\nabla z,D=d_r\Id+Z. \nonumber
\end{align}
The comparison data give, with a fixed constant $C$,
\begin{align}
&D\ge\Id+C_r,\qquad -CD\le H_0,H_1,H_\lambda,Z\le CD, \nonumber\\
&\sup|\nabla\phi_\lambda|\le Cr,\qquad \sup|z|\le\delta_r r, \qquad \sup\bigl||\nabla\phi_\lambda|^2-F^2\bigr|\le C\delta_r r^2. \nonumber
\end{align}

\begin{lemma}\label{six:lem:cubic-mass}
Assume $n=6$. Use all the data of
Lemma~\ref{qc:lem:cubic-error}\textup{(ii)}, including its function-value
comparison and the common error $\delta_r$. In particular, $C_r,D_r$
and $\delta_r$ are the quantities defined by
equations~\eqref{qc:eq:comparison-D}--\eqref{qc:eq:delta-data}.
The function-value comparison is
Lemma~\ref{pt:lem:smooth-comparison},
equation~\eqref{qc:eq:value-comparison}.
For $0\le\lambda\le1$ put
$\phi_\lambda=(1-\lambda)\psi_r+\lambda F^2/2$,
$H_\lambda=\Hess\phi_\lambda$ and
$Z=\Hess(F^2/2-\psi_r)$.
Then there are $r_0<\infty$ and
$C=C(M,g,F,a_0,b_0)<\infty$, independent of $r$ and $\lambda$, such
that, for every $r\ge r_0$,
\begin{align}
&\sup_{0\le\lambda\le1}\int_{\{a_0r<F<b_0r\}}|V_3(H_\lambda)|\,dV \le Cr^4, \label{six:eq:cubic-H-mass}\\
&\sup_{0\le\lambda\le1}\int_{\{a_0r<F<b_0r\}} \bigl(|\mathcal J_{H_\lambda}|+|\mathcal J_Z|\bigr)\,dV \le Cr^2. \label{six:eq:cubic-interpolation-mass}
\end{align}
\end{lemma}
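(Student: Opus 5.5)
The plan is to write every $H_\lambda$ and $Z$ in terms of the two nonnegative tensors $C=C_r$ and $D=D_r$ from \eqref{qc:eq:comparison-D}, and then to bound each multilinear contraction by traces that Lemma~\ref{qc:lem:cubic-error} and Lemma~\ref{pt:lem:mass} already control. Concretely,
\begin{align}
&H_0=c_r\Id-C,\qquad Z=D-d_r\Id,\qquad H_1=H_0+Z=(c_r-d_r)\Id-C+D,\nonumber\\
&H_\lambda=(c_r-\lambda d_r)\Id-C+\lambda D.\nonumber
\end{align}
Thus $H_\lambda$ is a combination of $\Id$, $C$ and $D$ with coefficients bounded uniformly in $r$ and $\lambda$, where $C,D\ge0$ and $D\ge C$.

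For \eqref{six:eq:cubic-interpolation-mass}, linearity of $H\mapsto\mathcal J_H$ reduces the estimate to $|\mathcal J_{\Id}|$, $|\mathcal J_C|$ and $|\mathcal J_D|$. Since $C,D\ge0$, Lemma~\ref{pt:lem:algebra}, \eqref{qc:eq:J-positive}, gives $\mathcal J_C\ge0$ and $\mathcal J_D\ge0$. Hence $|\mathcal J_C|\le\tr\mathcal J_C=(n-5)\tr(E_2\circ C)$, and likewise for $D$. Also $\mathcal J_{\Id}=(n-5)E_2$ by \eqref{audit:eq:E3-contractions}, and $|E_2|\le\tr E_2=(n-4)P_2$. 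Integrating over the annulus, \eqref{qc:eq:E2-mixed-mass} and Lemma~\ref{pt:lem:mass} give the bound $Cr^{n-4}=Cr^2$.

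For \eqref{six:eq:cubic-H-mass}, I would expand $V_3(H_\lambda)$ multilinearly into symmetric terms $V_3(X,Y,W)$ with $X,Y,W\in\{\Id,C,D\}$. Any term containing $\Id$ reduces by \eqref{qc:eq:general-contractions} to lower-order tensors.
\begin{itemize}
\item \textbf{Three copies of $\Id$.} These give a multiple of $E_1$, whose integral is bounded by $C\int\Sc\le Cr^4$ using Lemma~\ref{lem:scalar-radial-tests}.
\item \textbf{Two copies of $\Id$.} These give $V_1(C)$ or $V_1(D)$. Both are nonnegative, so their norms are bounded by their traces, $(n-3)\tr(E_1\circ C)$ and $(n-3)\tr(E_1\circ D)$. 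The first is small by \eqref{qc:eq:delta-data}. For the second, use $\tr(E_1\circ D)\le\tr(V_1(D)\circ D)/(n-3)$ (from $D\ge\Id$), which is $O(r^4)$ by \eqref{qc:eq:DD-mass}.
\item \textbf{One copy of $\Id$.} These give $V_2(X,Y)$ with $X,Y\in\{C,D\}$. Each is nonnegative, with $|V_2(X,Y)|\le\tr V_2(X,Y)=(n-4)\tr(V_1(X)\circ Y)\le(n-4)\tr(V_1(D)\circ D)$, again controlled by \eqref{qc:eq:DD-mass}.
\item \textbf{No copy of $\Id$.} These are $V_3(X,Y,W)$ with entries in $\{C,D\}$, all nonnegative, so each is dominated by $\tr V_3(D,D,D)=(n-5)\tr(V_2(D)\circ D)$.
\end{itemize}
The bounds in the first three cases hold uniformly in $\lambda$ because the coefficients in $H_\lambda$ are bounded.

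The main obstacle is the top-order term $\int\zeta\tr(V_2(D)\circ D)\le Cr^4$, which Lemma~\ref{qc:lem:cubic-error} does not state directly. I would mimic \eqref{qc:eq:cubic-IBP} with $D$ in place of $C$. Write $D=d_r\Id+\nabla z$ with $z=\nabla(\phi-\psi_r)$, and use \eqref{qc:eq:T-divergence} with $f=\phi-\psi_r$. Integration by parts then gives
\[
\int\zeta\tr(V_2(D)\circ D)=d_r\int\zeta\tr V_2(D)-\int V_2(D)(\nabla\zeta,z)+6\int\zeta\bigl(\tr(E_2\circ D)|z|^2-E_2(Dz,z)-\mathcal J_D(z,z)\bigr).
\]
The first term is $O(r^4)$ by \eqref{qc:eq:DD-mass}. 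The boundary term is bounded by $\delta_r\int_{\supp\nabla\zeta}\tr V_2(D)$, which is again $O(r^4)$ on a larger annulus. For the curvature terms, $|z|\le\delta_r r$ and $E_2(Dz,z)\le|z|^2\tr(E_2\circ D)$ give a bound of $C\delta_r^2r^2\int\tr(E_2\circ D)\le C\delta_r^2r^{n-2}$ by \eqref{qc:eq:E2-mixed-mass}. Nonnegativity of the left-hand side then closes the estimate, using the nested cutoffs $\zeta_i$ so that each enlargement stays within the fixed annuli.
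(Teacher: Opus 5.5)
Your reduction is sound and is essentially the paper's. The $\mathcal J$-estimate \eqref{six:eq:cubic-interpolation-mass} is correct as you argue it. Your expansion of $H_\lambda$ over $\{\Id,C,D\}$ is an equivalent variant of the paper's domination $-CD\le H_\lambda,Z\le CD$. You also correctly reduce \eqref{six:eq:cubic-H-mass} to $\int\zeta\tr(V_2(D)\circ D)\,dV\le Cr^4$, using the same identity the paper obtains from \eqref{qc:eq:T-divergence}.

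The gap is in the term $-6\int\zeta E_2(Dz,z)\,dV$ of that identity. The pointwise bound $E_2(Dz,z)\le|z|^2\tr(E_2\circ D)$ that you invoke is false. The operator $E_2D$ is not self-adjoint, and positivity of the two factors does not control the quadratic form of its symmetric part by the trace. Take $A=e_1\otimes e_1$ and $B=\Id+M\,u\otimes u$ with $|u|=1$ and $\langle u,e_1\rangle=M^{-1/2}$. Then $\tr(AB)=2$, but the unit vectors $z_\pm=(e_1\pm u)/|e_1\pm u|$ give $\langle ABz_\pm,z_\pm\rangle=\tfrac12(1\pm M^{-1/2})(1\pm M^{1/2})\approx\pm\tfrac12\sqrt M$. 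Since the term carries a minus sign, what you actually need is a lower bound for $E_2(Dz,z)$, and that fails as well. Because $D$ is controlled only in integrated form, no pointwise substitute is available.

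The missing step is one further integration by parts, which is how the paper treats this term. Since $Z=\nabla z$ is a Hessian, $Dz=d_rz+\tfrac12\nabla|z|^2$, and $\diver E_2=0$ then gives
\[
\int\zeta E_2(Dz,z)\,dV=d_r\int\zeta E_2(z,z)\,dV-\frac12\int|z|^2\bigl(\zeta\tr(E_2\circ Z)+E_2(\nabla\zeta,z)\bigr)\,dV .
\]
Now use $Z=D-d_r\Id$, $E_2\ge0$, $\tr E_2=2P_2$, $|z|\le\delta_r r$ and $|\nabla\zeta|\le C/r$. The right side is then at most $C\delta_r^2r^2\int_{\supp\zeta}\bigl(P_2+\tr(E_2\circ D)\bigr)\,dV\le C\delta_r^2r^4$, by Lemma~\ref{pt:lem:mass} and \eqref{qc:eq:E2-mixed-mass}.

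The other two curvature terms are handled correctly in your proposal: $\tr(E_2\circ D)|z|^2$ is bounded directly, and $\mathcal J_D\ge0$ with $\tr\mathcal J_D=\tr(E_2\circ D)$. With this replacement for the $E_2(Dz,z)$ term, your argument closes.
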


\begin{proof}
Let
\begin{align}
&\Omega_0=\{a_0r<F<b_0r\},\qquad \Omega_1=\left\{\frac{a_0}{2}r<F<2b_0r\right\}. \nonumber
\end{align}
Take $\zeta=\zeta_0$. Then $\zeta\in C_c^\infty(\Omega_1)$ and
\begin{align}
&0\le\zeta\le1,\qquad \zeta\equiv1\quad\text{on }\Omega_0,\qquad |\nabla\zeta|\le C(M,g,F,a_0,b_0)r^{-1}. \nonumber
\end{align}
Write
\begin{align}
&D=D_r,\qquad z=z_r,\qquad d=d_r. \nonumber
\end{align}
Since $n=6$ and $D\ge0$, $V_2(D),V_3(D)\ge0$ and
\begin{align}
&\tr V_3(D)=\tr\bigl(V_2(D)\circ D\bigr). \nonumber
\end{align}
Hence
\begin{align}
&\int_{\Omega_0}|V_3(D)|\,dV\le\int_{\Omega_0}\tr V_3(D)\,dV\le\int_{\Omega_1}\zeta\tr\bigl(V_2(D)\circ D\bigr)\,dV. \nonumber
\end{align}

Since
\begin{align}
&D=d\Id+\nabla z \nonumber
\end{align}
and, in dimension six,
\begin{align}
&\tr V_2(D)=2\tr\bigl(V_1(D)\circ D\bigr), \nonumber
\end{align}
we have
\begin{align}
&\int_{\Omega_1}\zeta\tr\bigl(V_2(D)\circ D\bigr)\,dV={}2d\int_{\Omega_1}\zeta\tr\bigl(V_1(D)\circ D\bigr)\,dV +\int_{\Omega_1}\zeta\tr\bigl(V_2(D)\circ\nabla z\bigr)\,dV. \label{six:eq:DDD-split}
\end{align}
Pointwise,
\begin{align}
&\zeta\tr\bigl(V_2(D)\circ\nabla z\bigr)={}\diver\bigl(\zeta V_2(D)z\bigr)-V_2(D)(\nabla\zeta,z) -\zeta\langle\diver V_2(D),z\rangle. \nonumber
\end{align}
Because $\zeta$ is compactly supported in $\Omega_1$, Stokes' theorem gives
\begin{align}
&\int_{\Omega_1}\zeta\tr\bigl(V_2(D)\circ\nabla z\bigr)\,dV={}-\int_{\Omega_1}V_2(D)(\nabla\zeta,z)\,dV -\int_{\Omega_1}\zeta\langle\diver V_2(D),z\rangle\,dV. \label{six:eq:DDD-stokes}
\end{align}
Now Lemma~\ref{pt:lem:algebra},
equation~\eqref{qc:eq:T-divergence}, applied to
$D=d\Id+\nabla z$, gives
\begin{align}
&\diver V_2(D)=-6\Big((\tr(E_2\circ D))\Id-E_2D-\mathcal J_D\Big)z. \nonumber
\end{align}

By Lemma~\ref{qc:lem:cubic-error}\textup{(ii)}, equation~\eqref{qc:eq:E2-mixed-mass}, and Lemma~\ref{pt:lem:algebra}, equation~\eqref{qc:eq:T-divergence},
\begin{align}
&\int_{\Omega_1}\tr(E_2\circ D)\,dV\le Cr^2. \nonumber
\end{align}
Substituting into \eqref{six:eq:DDD-stokes} and then into \eqref{six:eq:DDD-split} gives
\begin{align}
&\int\zeta \tr\bigl(V_2(D)\circ D\bigr)\,dV ={}2d\int\zeta \tr\bigl(V_1(D)\circ D\bigr)\,dV -\int V_2(D)(\nabla\zeta,z)\,dV \nonumber\\
&{}+6\int\zeta\bigl((\tr\bigl(E_2\circ D\bigr))|z|^2 -E_2(Dz,z)-\mathcal J_D(z,z)\bigr)\,dV. \label{six:eq:DDD-identity}
\end{align}

By positivity and $\tr\mathcal J_D=\tr(E_2\circ D)$,
\begin{align}
&\int\zeta\left(\tr(E_2\circ D)|z|^2+|\mathcal J_D(z,z)|\right)\,dV\le C\delta_r^2r^2\int\zeta\tr(E_2\circ D)\,dV\le C\delta_r^2r^4. \nonumber
\end{align}
Since $Dz=dz+\frac12\nabla|z|^2$ and $\diver E_2=0$,
\begin{align}
&\int\zeta E_2(Dz,z)\,dV ={}d\int\zeta E_2(z,z)\,dV -\frac12\int|z|^2\bigl( \zeta \tr\bigl(E_2\circ Z\bigr)+E_2(\nabla\zeta,z)\bigr)\,dV. \nonumber
\end{align}
Since $Z=D-d\Id$ and $E_2\ge0$,
\begin{align}
&\left|\int\zeta E_2(Dz,z)\,dV\right|\le C\delta_r^2r^2\int_{\Omega_1}\bigl(P_2+\tr(E_2\circ D)\bigr)\,dV\le C\delta_r^2r^4, \nonumber\\
&\left|\int V_2(D)(\nabla\zeta,z)\,dV\right|\le C\delta_r\int_{\Omega_1}\tr V_2(D)\,dV=2C\delta_r\int_{\Omega_1}\tr(V_1(D)\circ D)\,dV\le C\delta_r r^4. \nonumber
\end{align}
Here Lemma~\ref{qc:lem:quadratic-error}, equation~\eqref{qc:eq:DD-mass}, is applied with a larger fixed cutoff. Substitution into \eqref{six:eq:DDD-identity} gives
\begin{align}
&0\le\int_{\Omega_0}|V_3(D)|\,dV\le\int\zeta\tr(V_2(D)\circ D)\,dV\le Cr^4. \nonumber
\end{align}
For $0\le\lambda\le1$,
\begin{align}
&0\le H_\lambda+CD\le2CD,\qquad 0\le CD\le2CD,\qquad H_\lambda=(H_\lambda+CD)-CD, \nonumber\\
&|V_3(H_\lambda)|\le C\tr V_3(D),\qquad |\mathcal J_{H_\lambda}|+|\mathcal J_Z|\le C\tr\mathcal J_D=C\tr(E_2\circ D), \nonumber\\
&\sup_{0\le\lambda\le1}\int_{\Omega_0}|V_3(H_\lambda)|\,dV\le Cr^4,\qquad \sup_{0\le\lambda\le1}\int_{\Omega_0}(|\mathcal J_{H_\lambda}|+|\mathcal J_Z|)\,dV\le Cr^2. \nonumber
\end{align}
The middle line follows from multilinearity and positivity. 
\end{proof}

\begin{lemma}\label{unified:lem:normal-contraction}
Let $(M^6,g)$ satisfy the standing hypotheses.
Retain $V_3$ and $E_1$ from
equations~\eqref{uni:eq:normalized-tensor} and
\eqref{pt:eq:Einstein-tensors}.
For a regular value $t$ of $F$, choose an orthonormal eigenframe
$Se_a=\kappa_a e_a$, $1\le a\le5$, and write $K_{ab}=\sec_g(e_a\wedge e_b)$. Then
\begin{align}
&\frac16V_3(\Id)=E_1,\qquad \frac16V_3\bigl(\Hess(F^2/2)\bigr)(\nu,\nu)=F^3|\nabla F|^3Q_6(S). \label{unified:eq:normal-contraction}
\end{align}
The polynomial on the right is independent of the chosen eigenframe.
\end{lemma}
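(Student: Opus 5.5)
Both identities are pointwise algebraic consequences of the alternating definition \eqref{uni:eq:normalized-tensor}, so the plan is a direct frame computation at a fixed point.

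\emph{First identity.} Apply \eqref{qc:eq:general-contractions} three times with all arguments equal to $\Id$:
\begin{align}
&V_3(\Id)=(n-5)V_2(\Id)=(n-5)(n-4)V_1(\Id)=(n-5)(n-4)(n-3)V_0. \nonumber
\end{align}
This uses $V_0=E_1$, i.e.\ $\frac14\delta^{aik}_{bpq}R^{pq}{}_{ik}=\Sc\,\delta^a_b/2-\Ric^a{}_b$, which follows from expanding the $3\times3$ Kronecker determinant. For $n=6$ the factor is $1\cdot2\cdot3=6$.

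\emph{Second identity.} Work in the orthonormal frame $(e_1,\ldots,e_5,e_6=\nu)$ with $Se_a=\kappa_ae_a$. Write $\Hess(F^2/2)=dF\otimes dF+F\Hess F$. Its tangential block is $F|\nabla F|S$. Its $(\nu,\nu)$ entry and its mixed entries $\Hess F(\nu,e_a)$ are, in general, nonzero.

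Now evaluate $V_3(H)(\nu,\nu)$ with $a=b=\nu$. Alternation forces every remaining upper index $i,k,\ell_1,\ell_2,\ell_3$ and every lower index $p,q,r_1,r_2,r_3$ to lie in $\{1,\ldots,5\}$. Hence only the compression of $H$ to $T\Sigma_t=\nu^\perp$ enters; this is the observation already made in the positivity part of Lemma~\ref{pt:lem:algebra}. Consequently the $dF\otimes dF$ term and all mixed entries drop out, and
\begin{align}
&V_3(H)(\nu,\nu)=F^3|\nabla F|^3\,V_3(S)(\nu,\nu). \nonumber
\end{align}
Here the right-hand contraction uses the ambient curvature restricted to the five tangent directions.

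It remains to evaluate $V_3(S)(\nu,\nu)$. Since $S$ is diagonal, choose the upper indices $\{i,k,\ell_1,\ell_2,\ell_3\}=\{1,\ldots,5\}$, with the lower indices a permutation of them. Each $H$ factor forces $r_\alpha=\ell_\alpha$, and then $\{p,q\}=\{i,k\}$. Counting orderings: the pair $(p,q)$ versus $(i,k)$ gives $2$ terms, each contributing $R^{ik}{}_{ik}=K_{ik}$. The assignment of the three $\ell$'s to the complement of $\{i,k\}$ gives $3!$ terms. The order of $(i,k)$ in the sum gives a further factor of $2$. The total is
\begin{align}
&\tfrac14\cdot2\cdot2\cdot3!\sum_{i<k}K_{ik}\prod_{\ell\notin\{i,k\}}\kappa_\ell=6\,Q_6(S). \nonumber
\end{align}
Dividing by $6$ gives the claim. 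The bookkeeping here is the same as in the rank-one formula $V_j(u_1\otimes u_1,\ldots)(e,e)=\det(\cdot)\sum\sec$ in Lemma~\ref{pt:lem:algebra}. An alternative check is to expand $S=\sum_\ell\kappa_\ell e_\ell\otimes e_\ell$ by multilinearity and apply that formula with $j=3$: the term with distinct $\ell_1,\ell_2,\ell_3$ gives $\prod\kappa_{\ell_\alpha}$ times the sectional curvature of the complementary $2$-plane in $\nu^\perp$. There are $3!$ orderings, and the factor $1/6$ cancels them.

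\emph{Frame independence.} The left-hand side is a tensor contraction, hence independent of the frame; this gives frame independence of $Q_6(S)$, including at repeated eigenvalues.

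\emph{Main obstacle.} The only delicate point is the combinatorial constant. I would confirm it on a sectional-curvature-one model with $S=\Id$. There $Q_6=\binom52=10$. On the other side, $V_3(\Id)(\nu,\nu)=6E_1(\nu,\nu)=6\cdot10$, since $E_1(\nu,\nu)=\sum_{a<b\le5}K_{ab}=10$. The two agree.
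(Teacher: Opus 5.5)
Your proposal is correct and follows essentially the same route as the paper. The first identity is the contraction recursion $V_3(\Id)=V_2(\Id)=2V_1(\Id)=6E_1$. The second is the direct alternating-delta count in an eigenframe: the $\frac14$ cancels the $2\cdot2$ orderings of the curvature index pair, and the shape indices contribute $3!=6$. Your explicit remark that only the compression $F|\nabla F|S$ of $\Hess(F^2/2)$ to $\nu^\perp$ enters, so the $dF\otimes dF$ term and the mixed entries drop out, spells out what the paper states briefly as ``the alternating delta eliminates every remaining normal index.''
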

\begin{proof}
By the alternating-delta contractions in \eqref{uni:eq:normalized-tensor},
\begin{align}
&V_1(\Id)=3E_1,\qquad V_2(\Id)=2V_1(\Id)=6E_1,\qquad V_3(\Id)=V_2(\Id)=6E_1. \nonumber
\end{align}
For $H=\Hess(F^2/2)$,
\begin{align}
&H|_{T\Sigma_t}=F|\nabla F|S,\qquad H^a{}_b=F|\nabla F|\kappa_a\delta^a_b\quad(1\le a,b\le5), \nonumber\\
&V_3(H)(\nu,\nu)=\frac14\delta^{\nu ij\ell_1\ell_2\ell_3}_{\nu pqr_1r_2r_3}R^{pq}{}_{ij}\prod_{\alpha=1}^3H^{r_\alpha}{}_{\ell_\alpha} \nonumber\\
&{}=6F^3|\nabla F|^3\sum_{1\le a<b\le5}K_{ab}\prod_{c\in\{1,\ldots,5\}\setminus\{a,b\}}\kappa_c=6F^3|\nabla F|^3Q_6(S). \nonumber
\end{align}
The alternating delta eliminates every remaining normal index; the two orders in each curvature index pair contribute $4$, and the shape indices contribute $3!=6$. Since the left side is a tensor contraction, so is the right side.
\end{proof}

Use the standing geometry with $n=6$.
Put
\begin{align}
&\Omega_r=\left\{\frac r2<F<\frac{5r}{2}\right\},\qquad U_r=\left\{\frac r4<F<3r\right\}. \label{unified:eq:annular-data}
\end{align}
For all sufficiently large $r$, properness of $F$ gives
\begin{align}
&\overline{\Omega_r}\Subset U_r, \qquad \overline{U_r}\ \text{compact}. \nonumber
\end{align}
Use on $U_r$ the fixed comparison data of
Section~\ref{sec:uniform-comparison-data}.  Choose
\begin{align}
&0\le\zeta_r\in C_c^\infty(U_r), \qquad \zeta_r\equiv1\ \text{on }\Omega_r, \qquad |\nabla\zeta_r|\le\frac Cr. \nonumber
\end{align}

For $0\le\lambda\le1$, set
\begin{align}
&\phi_\lambda=(1-\lambda)\psi_r+\lambda\frac{F^2}{2}. \nonumber
\end{align}
The scalar $d_r=c_r+1+\varepsilon_r$ is the one in
\eqref{qc:eq:comparison-D}. The comparison data give
\begin{align}
&d_r\Id+\nabla^2\!\left(\frac{F^2}{2}-\psi_r\right) =\Id+\left(\nabla^2\!\left(\frac{F^2}{2}\right) +\varepsilon_r\Id\right)+C_r \ge\Id+C_r. \nonumber
\end{align}
Every constant
$C$ may depend on the fixed geometric and annular data,
but not on $r$, $\lambda$, or the smoothing parameter below.
For each fixed $r$, all curvature and Hessian coefficients are
bounded on the compact supports in question. Uniformly for
$0\le\lambda\le1$, the comparison estimates give
\begin{align}
&|\nabla\phi_\lambda|\le Cr,\qquad \left|\nabla\!\left(\frac{F^2}{2}-\psi_r\right)\right| \le\delta_r r, -C\left(d_r\Id+\nabla^2\!\left(\frac{F^2}{2}-\psi_r\right)\right) \le\nabla^2\phi_\lambda, \nonumber\\
&\nabla^2\phi_\lambda \le C\left(d_r\Id+\nabla^2\!\left(\frac{F^2}{2}-\psi_r\right)\right),\qquad \left||\nabla\phi_\lambda|^2-F^2\right| \le C\delta_r r^2, \qquad \lim_{r\to\infty}\delta_r=0. \nonumber
\end{align}
Here and below symmetric bilinear forms are identified with
self-adjoint endomorphisms using $g$.

Define the Lipschitz function on $\mathbb R$
\begin{align}
&\chi_0(s)= \begin{cases} \displaystyle\frac7{24},&s\le1,\\[1mm] \displaystyle\frac{s^{-3}}3-\frac1{24},&1<s<2,\\[1mm] 0,&s\ge2, \end{cases} \qquad \chi=\chi_0(F/r). \label{interval:eq:primitive}
\end{align}
Almost everywhere,
\begin{align}
&\nabla\chi=-\frac1r(F/r)^{-4} \mathbf1_{\{r<F<2r\}}\nabla F. \label{interval:eq:primitive-gradient}
\end{align}
The right side is defined as zero outside $\{r<F<2r\}$.
For sufficiently large $r$, the levels $F=r,2r$ are smooth
hypersurfaces and therefore have zero ambient volume.
To justify the smooth interpolation identities, use the functions
$\zeta_\varepsilon$ from \eqref{interval:eq:mollifier} and set
\begin{align}
&\vartheta_\varepsilon(s)=\int_1^2 \zeta_\varepsilon(s-\tau)\,d\tau,\qquad \chi_{0,\varepsilon}(s)=\int_s^\infty \tau^{-4}\vartheta_\varepsilon(\tau)\,d\tau, \qquad \chi_\varepsilon=\chi_{0,\varepsilon}(F/r). \label{unified:eq:smooth-cutoffs}
\end{align}
The integrand is extended by zero near $\tau=0$ and on the
negative half-line. For $0<\varepsilon<1/8$,
$0\le\vartheta_\varepsilon\le1$ and
\begin{align}
&\supp\vartheta_\varepsilon \subset[1-\varepsilon,2+\varepsilon]\subset(1/2,5/2),\qquad |\nabla\chi_\varepsilon|\le C/r, \nonumber\\
&\lim_{\varepsilon\downarrow0}\nabla\chi_\varepsilon =\nabla\chi \quad\text{almost everywhere on }U_r. \nonumber
\end{align}
The gradient bound is independent of $\varepsilon$.
The gradient of $F^2$ is parallel to each
$\nabla\chi_\varepsilon$. Define
\begin{align}
&I_{r,\varepsilon}(\lambda) =\frac16 \int_{U_r}\left\langle\nabla\chi_\varepsilon, V_3(\nabla^2\phi_\lambda)\nabla\phi_\lambda \right\rangle\,dV, \nonumber\\
&I_r(\lambda) =\frac16 \int_{U_r}\left\langle\nabla\chi, V_3(\nabla^2\phi_\lambda)\nabla\phi_\lambda \right\rangle\,dV. \label{unified:eq:flux-definitions}
\end{align}
These integrals are finite for each fixed $r$, since the gradients
have compact support in $\Omega_r$.

\begin{lemma}
\label{six:lem:length-error-IBP}
Assume $n=6$. Let $U\subset M$ be open and let
$\phi_0,\phi_1,f,\chi\in C^\infty(U)$, with
$\supp\nabla\chi\Subset U$. Assume that
$\nabla(f^2)$ is parallel to $\nabla\chi$ on
$\supp\nabla\chi$. For $0\le\lambda\le1$, put
\begin{align}
&\phi_\lambda=(1-\lambda)\phi_0+\lambda\phi_1, H_\lambda=\Hess\phi_\lambda,\qquad z=\nabla(\phi_1-\phi_0), Z=\nabla z,\qquad e_\lambda=|\nabla\phi_\lambda|^2-f^2. \nonumber
\end{align}
Then
\begin{align}
&\int_U \mathbf{P}_2(\nabla\chi, H_\lambda\nabla\phi_\lambda, \nabla\phi_\lambda,z)\,dV= \frac12\int_U e_\lambda \left( \mathcal J_{H_\lambda}(\nabla\chi,z) - \mathcal J_Z(\nabla\chi,\nabla\phi_\lambda) \right)dV. \label{six:eq:length-error-IBP}
\end{align}
\end{lemma}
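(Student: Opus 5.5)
The plan is to rewrite $H_\lambda\nabla\phi_\lambda$ as a gradient, kill the $f^2$ part with the antisymmetry of $\mathbf{P}_2$, and then integrate by parts once to move the derivative off $e_\lambda$. Fix $\lambda$ and write $\phi=\phi_\lambda$, $H=H_\lambda$, $e=e_\lambda$.

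\emph{Step 1: removing the $f^2$ part.} Since $H=\Hess\phi$, we have
\begin{align}
&H\nabla\phi=\tfrac12\nabla|\nabla\phi|^2=\tfrac12\nabla e+\tfrac12\nabla(f^2). \nonumber
\end{align}
By Lemma~\ref{pt:lem:algebra}, $\mathbf{P}_2$ has the algebraic curvature symmetries, so it is antisymmetric in its first two slots. On $\supp\nabla\chi$ the vector $\nabla(f^2)$ is parallel to $\nabla\chi$, hence $\mathbf{P}_2(\nabla\chi,\nabla(f^2),\nabla\phi,z)=0$ there. Off $\supp\nabla\chi$ the integrand vanishes anyway. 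Therefore the left side of \eqref{six:eq:length-error-IBP} equals
\begin{align}
&\tfrac12\int_U(\mathbf{P}_2)^{ai}{}_{bs}\chi_a(\nabla_ie)\phi^bz^s\,dV. \nonumber
\end{align}

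\emph{Step 2: integration by parts in the index $i$.} Since $\supp\nabla\chi\Subset U$ and every factor is smooth on $U$, the divergence theorem applies with no boundary term. The derivative of $\mathbf{P}_2$ drops out: Lemma~\ref{pt:lem:algebra} gives $\nabla_a(\mathbf{P}_2)^{at}{}_{bs}=0$, and pair symmetry together with the first-pair antisymmetry transfers this to the second index. The term containing $\chi_{ia}$ vanishes because a symmetric Hessian is contracted against the index pair $a,i$, in which $\mathbf{P}_2$ is antisymmetric. What remains is
\begin{align}
&-\tfrac12\int_U e\Bigl((\mathbf{P}_2)^{ai}{}_{bs}\chi_aH^b{}_iz^s+(\mathbf{P}_2)^{ai}{}_{bs}\chi_a\phi^bZ^s{}_i\Bigr)\,dV. \nonumber
\end{align}

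\emph{Step 3: identifying the two terms with $\mathcal J$.} Using the definition \eqref{qc:eq:J-definition} in an orthonormal frame:
\begin{align}
&\textstyle\sum_{i,j}H_{ij}\mathbf{P}_2(\nabla\chi,e_i,e_j,z)=-\mathcal J_H(\nabla\chi,z), \nonumber
\end{align}
by antisymmetry in the first pair. Also
\begin{align}
&\textstyle\sum_{i,j}Z_{ij}\mathbf{P}_2(\nabla\chi,e_i,\nabla\phi,e_j)=\sum_{i,j}Z_{ij}\mathbf{P}_2(e_i,\nabla\chi,e_j,\nabla\phi)=\mathcal J_Z(\nabla\chi,\nabla\phi), \nonumber
\end{align}
by applying antisymmetry to both pairs. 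Substituting these gives
\begin{align}
&\tfrac12\int_Ue\bigl(\mathcal J_H(\nabla\chi,z)-\mathcal J_Z(\nabla\chi,\nabla\phi)\bigr)\,dV, \nonumber
\end{align}
which is \eqref{six:eq:length-error-IBP}.

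The main obstacle is the sign and index bookkeeping in Step 3. I would check it against the index convention $\mathbf{P}_2(X,Y,Z,W)=(\mathbf{P}_2)^{at}{}_{bs}X_aY_tZ^bW^s$ and the relation $(\mathbf{P}_2)^{at}{}_{bs}H^s{}_a=-(\mathcal J_H)^t{}_b$ used in the proof of Lemma~\ref{pt:lem:algebra}. The divergence-free property in the second index in Step 2 also needs a short symmetry check. The hypothesis $n=6$ plays no role beyond ensuring that $\mathbf{P}_2$ is defined and nontrivial.
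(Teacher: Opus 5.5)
Your proposal is correct and follows the paper's proof step for step. You write $H_\lambda\nabla\phi_\lambda=\tfrac12\nabla e_\lambda+\tfrac12\nabla(f^2)$, discard the $f^2$ part by first-pair antisymmetry, and integrate by parts using $\nabla_m(\mathbf{P}_2)^{am}{}_{bs}=0$ and $(\mathbf{P}_2)^{am}{}_{bs}\nabla_m\nabla_a\chi=0$. You then identify the two remaining terms via $(\mathbf{P}_2)^{am}{}_{bs}H^b{}_m=-(\mathcal J_H)^a{}_s$ and $(\mathbf{P}_2)^{am}{}_{bs}Z^s{}_m=(\mathcal J_Z)^a{}_b$, and your signs check out.

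One small simplification in Step 2: first-pair antisymmetry alone moves the vanishing divergence from the first index to the second, so pair symmetry is not needed there.
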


\begin{proof}

\begin{align}
&H_\lambda\nabla\phi_\lambda = \frac12\nabla|\nabla\phi_\lambda|^2 = \frac12\nabla e_\lambda + \frac12\nabla(f^2), \nonumber
\end{align}
By alternation, $\mathbf{P}_2(X,X,Y,Z)=0$, and
\begin{align}
&\nabla(f^2)\parallel\nabla\chi \quad\text{on }\supp\nabla\chi \nonumber
\end{align}
gives
\begin{align}
&\int_U \mathbf{P}_2(\nabla\chi, H_\lambda\nabla\phi_\lambda, \nabla\phi_\lambda,z)\,dV= \frac12\int_U \mathbf{P}_2(\nabla\chi,\nabla e_\lambda, \nabla\phi_\lambda,z)\,dV. \nonumber
\end{align}
Using
\begin{align}
&\nabla_m(\mathbf{P}_2)^{am}{}_{bs}=0 \nonumber
\end{align}
and $(\mathbf{P}_2)^{am}{}_{bs}\nabla_m\nabla_a\chi=0$, integration by parts gives
\begin{align}
&\int_U \mathbf{P}_2(\nabla\chi, H_\lambda\nabla\phi_\lambda, \nabla\phi_\lambda,z)\,dV= -\frac12\int_U e_\lambda (\mathbf{P}_2)^{am}{}_{bs} (\nabla_a\chi) (H_\lambda)^b{}_m z^s\,dV \nonumber\\
&{}-\frac12\int_U e_\lambda (\mathbf{P}_2)^{am}{}_{bs} (\nabla_a\chi) (\nabla^b\phi_\lambda)Z^s{}_m\,dV. \nonumber
\end{align}
By
\begin{align}
&(\mathbf{P}_2)^{am}{}_{bs}(H_\lambda)^b{}_m =-\mathcal J_{H_\lambda}{}^a{}_s, (\mathbf{P}_2)^{am}{}_{bs}Z^s{}_m =\mathcal J_Z{}^a{}_b, \nonumber
\end{align}
the right side equals that of \eqref{six:eq:length-error-IBP}; $\supp\nabla\chi\Subset U$ eliminates boundary terms.
\end{proof}

\begin{lemma}
\label{six:lem:flux-comparison}
Let $(M^6,g)$ be a smooth, complete, connected, noncompact
Riemannian manifold and with $\sec_g\ge0$. Retain $\psi_r,C_r,d_r,\delta_r$ from
\eqref{qc:eq:comparison-D}--\eqref{qc:eq:delta-data}, and use
$\Omega_r,U_r,\chi,\chi_\varepsilon,I_{r,\varepsilon},I_r$ from
\eqref{unified:eq:annular-data},
\eqref{interval:eq:primitive}--\eqref{interval:eq:primitive-gradient},
\eqref{unified:eq:smooth-cutoffs}, and~\eqref{unified:eq:flux-definitions}.
For $0\le\lambda\le1$, put $\phi_\lambda=(1-\lambda)\psi_r+\lambda F^2/2$.
There are $r_0,C<\infty$, depending only on the fixed geometric and
annular data, such that, for every $r\ge r_0$,
\begin{align}
&\sup_{0\le\lambda\le1}\int_{\Omega_r}\left|\frac16V_3(\nabla^2\phi_\lambda)\right|\,dV\le Cr^4, \label{six:eq:flux-mass}\\
&\int_{\Omega_r}\left|\frac16V_3(\nabla^2\psi_r)-E_1\right|\,dV\le C\delta_r r^4. \label{six:eq:flux-replacement}
\end{align}
Moreover, for every $0<\varepsilon<1/8$,
\begin{align}
&\sup_{0\le\lambda\le1}|I'_{r,\varepsilon}(\lambda)|\le C\delta_r r^4. \label{six:eq:flux-derivative}
\end{align}
The constant $C$ is independent of $r,\lambda,\varepsilon$.
\end{lemma}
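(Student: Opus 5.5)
The first two estimates follow from Lemma~\ref{six:lem:cubic-mass} and Lemma~\ref{qc:lem:cubic-error}(ii); the real work is the derivative bound~\eqref{six:eq:flux-derivative}.

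\textbf{The two mass estimates.} Since $\Omega_r=\{r/2<F<5r/2\}$ is exactly $\{a_0r<F<b_0r\}$ with $a_0=1/2$, $b_0=5/2$, the mass bound \eqref{six:eq:flux-mass} is \eqref{six:eq:cubic-H-mass} divided by $6$. The replacement bound \eqref{six:eq:flux-replacement} is \eqref{qc:eq:cubic-replacement} with $n=6$, where $(n-3)(n-4)(n-5)=6$, again divided by $6$. Both hold for $r\ge r_0$ with constants independent of $\lambda$.

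\textbf{Setting up the derivative.} I would differentiate $I_{r,\varepsilon}$ using Lemma~\ref{six:lem:cubic-interpolation}, with
\[
\phi_0=\psi_r,\qquad \phi_1=F^2/2,\qquad U=U_r,\qquad \chi=\chi_\varepsilon.
\]
This is legitimate because $\supp\nabla\chi_\varepsilon\subset\{(1-\varepsilon)r\le F\le(2+\varepsilon)r\}\Subset U_r$. Here $z=z_r$ and $Z=\nabla z$. Multiplying \eqref{six:eq:cubic-interpolation} by $1/6$ gives three groups of terms, which I would bound separately, using $|\nabla\chi_\varepsilon|\le C/r$ and the fact that $\supp\nabla\chi_\varepsilon\subset\Omega_r$.

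\emph{First term.} It is bounded by
\[
\tfrac46\int_{\Omega_r}|\nabla\chi_\varepsilon|\,|V_3(H_\lambda)|\,|z|\le C r^{-1}\cdot\delta_r r\cdot r^4=C\delta_r r^4,
\]
using \eqref{six:eq:cubic-H-mass} and $\sup|z|\le\delta_r r$.

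\emph{Second term.} Each summand contains a factor $|z|\le\delta_r r$, together with $|\nabla\phi_\lambda|^2\le Cr^2$ and $|\nabla\chi_\varepsilon|\le C/r$. With \eqref{six:eq:cubic-interpolation-mass} this gives $C\,r^{-1}\cdot\delta_r r\cdot r^2\cdot r^2=C\delta_r r^4$.

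\emph{Third term.} This is the $\mathbf P_2$ term, and I expect it to be the main obstacle. A naive bound $|\mathbf P_2|\,|H_\lambda|\,|\nabla\phi_\lambda|^2|z|$ fails, because $|\mathbf P_2|\,|H_\lambda|$ is not obviously controlled by the available masses. The intended route is Lemma~\ref{six:lem:length-error-IBP} with $f=F$. Its hypothesis holds because $\nabla(F^2)=2F\nabla F$ and $\nabla\chi_\varepsilon=\chi_{0,\varepsilon}'(F/r)\nabla F/r$ are parallel. The lemma rewrites the term as
\[
\tfrac12\int e_\lambda\bigl(\mathcal J_{H_\lambda}(\nabla\chi_\varepsilon,z)-\mathcal J_Z(\nabla\chi_\varepsilon,\nabla\phi_\lambda)\bigr)\,dV .
\]
Now $|e_\lambda|\le C\delta_r r^2$ by the comparison data, so:
\begin{itemize}
\item the $\mathcal J_{H_\lambda}$ part is at most $C\delta_r r^2\cdot r^{-1}\cdot\delta_r r\cdot r^2=C\delta_r^2 r^4$;
\item the $\mathcal J_Z$ part is at most $C\delta_r r^2\cdot r^{-1}\cdot r\cdot r^2=C\delta_r r^4$.
\end{itemize}
Both use \eqref{six:eq:cubic-interpolation-mass}.

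\textbf{Uniformity.} All constants come from the fixed comparison data and are independent of $\varepsilon$, since only $|\nabla\chi_\varepsilon|\le C/r$ and the support inclusion are used. The one point to verify carefully is that the integration by parts in Lemma~\ref{six:lem:cubic-interpolation} and Lemma~\ref{six:lem:length-error-IBP} creates no boundary terms. This holds because $\nabla\chi_\varepsilon$ is smooth with compact support in $U_r$.
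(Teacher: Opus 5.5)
Your proposal is correct and follows the paper's proof essentially step for step. The two mass bounds are read off from Lemma~\ref{six:lem:cubic-mass} and Lemma~\ref{qc:lem:cubic-error}\textup{(ii)}. The derivative bound comes from Lemma~\ref{six:lem:cubic-interpolation} divided by six; its $\mathbf{P}_2$ term is rewritten via Lemma~\ref{six:lem:length-error-IBP} with $f=F$ and $\chi=\chi_\varepsilon$, then bounded using $|e_\lambda|\le C\delta_r r^2$ and \eqref{six:eq:cubic-interpolation-mass}. The paper likewise gets $\varepsilon$-independence from the fact that only $|\nabla\chi_\varepsilon|\le C/r$ and $\supp\nabla\chi_\varepsilon\subset\Omega_r$ enter.
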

\begin{proof}
Lemma~\ref{six:lem:cubic-mass}, equation~\eqref{six:eq:cubic-H-mass}, and Lemma~\ref{qc:lem:cubic-error}\textup{(ii)}, equations~\eqref{qc:eq:cubic-replacement}--\eqref{qc:eq:six-dimensional-replacement}, give
\begin{align}
&\sup_{0\le\lambda\le1}\int_{\Omega_r}\frac{|V_3(\nabla^2\phi_\lambda)|}{6}\,dV\le Cr^4,\qquad \int_{\Omega_r}\left|\frac{V_3(\nabla^2\psi_r)}6-E_1\right|\,dV\le C\delta_r r^4. \nonumber
\end{align}

The interpolation derivatives are
\begin{align}
&\frac{d}{d\lambda}\nabla\phi_\lambda =\nabla\!\left(\frac{F^2}{2}-\psi_r\right),\qquad \frac{d}{d\lambda}\nabla^2\phi_\lambda =\nabla^2\!\left(\frac{F^2}{2}-\psi_r\right). \nonumber
\end{align}
Apply
Lemma~\ref{six:lem:cubic-interpolation},
equation~\eqref{six:eq:cubic-interpolation}, divided by six,
with the same two endpoint functions.

The first term is at most $C\delta_r r^4$ by \eqref{six:eq:flux-mass}. For the two $\mathcal J_{\nabla^2\phi_\lambda}$ terms, Lemma~\ref{six:lem:cubic-mass}, equation~\eqref{six:eq:cubic-interpolation-mass}, gives
\begin{align}
&\int_{\Omega_r}|\mathcal J_{\nabla^2\phi_\lambda}|\,dV \le Cr^2,\qquad \int_{\Omega_r}\left| \mathcal J_{\nabla^2(F^2/2-\psi_r)}\right|\,dV \le Cr^2. \nonumber
\end{align}
Their sum in absolute value is at most
\begin{align}
&C\sup|\nabla\chi_\varepsilon|\sup|\nabla\phi_\lambda|^2\sup|\nabla(F^2/2-\psi_r)|\int_{\Omega_r}|\mathcal J_{\nabla^2\phi_\lambda}|\,dV\le C\delta_r r^4. \nonumber
\end{align}

For the remaining term, apply
Lemma~\ref{six:lem:length-error-IBP},
equation~\eqref{six:eq:length-error-IBP},
with
\begin{align}
&\phi_0=\psi_r,\qquad \phi_1=\frac{F^2}{2},\qquad f=F,\qquad \chi=\chi_\varepsilon. \nonumber
\end{align}
Here
\begin{align}
&\nabla(F^2)\parallel\nabla\chi_\varepsilon. \nonumber
\end{align}

Since
\begin{align}
&\sup_{\Omega_r} \left||\nabla\phi_\lambda|^2-F^2\right| \le C\delta_r r^2, \nonumber
\end{align}
the remaining integral has absolute value at most
\begin{align}
&C\delta_r r^2\Biggl[\, \frac1r\sup_{\Omega_r} \left|\nabla\!\left(\frac{F^2}{2}-\psi_r\right)\right| \int_{\Omega_r}|\mathcal J_{\nabla^2\phi_\lambda}|\,dV\qquad {}+{}\frac1r\sup_{\Omega_r}|\nabla\phi_\lambda| \int_{\Omega_r}\left| \mathcal J_{\nabla^2(F^2/2-\psi_r)}\right|\,dV \Biggr] \nonumber\\
&{}\le C\delta_r r^4. \nonumber
\end{align}

Only $\|\nabla\chi_\varepsilon\|_\infty\le C/r$ enters these bounds. Thus $C$ is independent of $\varepsilon$.
\end{proof}

\begin{proposition}\label{six:prop:boundary-limit}
Let $(M^6,g)$ be a smooth, complete, connected, noncompact
Riemannian manifold and with $\sec_g\ge0$. Then
\begin{align}
&\lim_{r\to\infty}\frac1r\int_r^{2r}\int_{\Sigma_t}Q_6(S)\,dA\,dt=2L(M,g). \label{six:eq:unweighted-boundary-limit}
\end{align}
\end{proposition}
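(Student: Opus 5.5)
The plan is to compare the two endpoints of the interpolation flux $I_r(\lambda)$ from \eqref{unified:eq:flux-definitions}. At $\lambda=1$ the flux becomes, by Lemma~\ref{unified:lem:normal-contraction} and coarea, a weighted level integral of $Q_6(S)$. At $\lambda=0$ it becomes, by the replacement estimate \eqref{six:eq:flux-replacement}, the $E_1$-flux already computed in Lemma~\ref{unified:lem:distance-limit}. Lemma~\ref{six:lem:flux-comparison} shows the two endpoints differ by $o(r^4)$.

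\textbf{Endpoint $\lambda=1$.} Here $\nabla\phi_1=F\nabla F=F|\nabla F|\nu$ and, by \eqref{interval:eq:primitive-gradient}, $\nabla\chi=-r^{-1}(F/r)^{-4}\mathbf 1_{\{r<F<2r\}}|\nabla F|\nu$. Lemma~\ref{unified:lem:normal-contraction} then gives pointwise
\begin{align}
&\tfrac16\langle\nabla\chi,V_3(\Hess(F^2/2))\nabla\phi_1\rangle=-r^3|\nabla F|^5Q_6(S)\,\mathbf 1_{\{r<F<2r\}}. \nonumber
\end{align}
Coarea therefore yields $I_r(1)=-r^3\int_r^{2r}\int_{\Sigma_t}|\nabla F|^4Q_6(S)\,dA\,dt$.

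\textbf{Endpoint $\lambda=0$.} I write $\frac16V_3(\Hess\psi_r)=E_1+(\frac16V_3(\Hess\psi_r)-E_1)$ and $\nabla\psi_r=F\nabla F+(\nabla\psi_r-F\nabla F)$. The two error contributions are controlled as follows.
\begin{itemize}
\item The first is bounded by $\sup|\nabla\chi|\,\sup|\nabla\psi_r|\cdot C\delta_rr^4\le C\delta_rr^4$, using \eqref{six:eq:flux-replacement}.
\item The second is bounded by $Cr^{-1}\cdot\delta_rr\cdot Cr^4$, using \eqref{six:eq:flux-mass} and \eqref{qc:eq:delta-data}.
\end{itemize}
The main term is $-\int_{\{r<F<2r\}}(F/r)^{-3}|\nabla F|^2E_1(\nu,\nu)\,dV$. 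By Lemma~\ref{unified:lem:distance-limit} this equals $-2Lr^4+o(r^4)$. Hence $I_r(0)=-2Lr^4+o(r^4)$.

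\textbf{Connecting the endpoints.} By \eqref{six:eq:flux-derivative}, $|I_{r,\varepsilon}(1)-I_{r,\varepsilon}(0)|\le C\delta_rr^4$ uniformly in $\varepsilon$. For fixed $r$, all integrands are bounded and supported in the compact set $\overline{U_r}$, and $\nabla\chi_\varepsilon\to\nabla\chi$ almost everywhere with $|\nabla\chi_\varepsilon|\le C/r$. Dominated convergence then gives $I_{r,\varepsilon}(\lambda)\to I_r(\lambda)$ at $\lambda=0,1$, so $|I_r(1)-I_r(0)|\le C\delta_rr^4$. Combining the two endpoint computations,
\begin{align}
&\lim_{r\to\infty}\frac1r\int_r^{2r}\int_{\Sigma_t}|\nabla F|^4Q_6(S)\,dA\,dt=2L(M,g). \nonumber
\end{align}

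\textbf{Removing the weight $|\nabla F|^4$ (main obstacle).} This is the delicate step, because the integrand is signed and uniform convergence $|\nabla F|\to1$ alone does not suffice. The plan has three parts.
\begin{itemize}
\item Lemma~\ref{six:lem:compact} gives $\frac1r\int_r^{2r}\int_{\Sigma_t}(Q_6)_-\to0$.
\item Since $|\nabla F|\in[\tfrac12,2]$ on these levels, $(Q_6)_+\le16|\nabla F|^4(Q_6)_+ = 16\bigl(|\nabla F|^4Q_6+|\nabla F|^4(Q_6)_-\bigr)$. Together with the displayed limit and the first bullet, this gives $\sup_r\frac1r\int_r^{2r}\int_{\Sigma_t}|Q_6|\,dA\,dt<\infty$.
\item The difference between the weighted and unweighted averages is then at most $\sup_{\{r\le F\le2r\}}\bigl||\nabla F|^4-1\bigr|$ times this bounded quantity. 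It tends to zero by \eqref{one:eq:fixed-F}, which proves \eqref{six:eq:unweighted-boundary-limit}.
\end{itemize}
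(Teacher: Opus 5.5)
Your proposal is correct and follows the paper's proof essentially step for step. The $\lambda=0$ endpoint is identified via \eqref{six:eq:flux-replacement} and Lemma~\ref{unified:lem:distance-limit}, the $\lambda=1$ endpoint via Lemma~\ref{unified:lem:normal-contraction} and coarea, and the two are joined by \eqref{six:eq:flux-derivative} with $\varepsilon\downarrow0$ taken at fixed $r$. The only deviation is in removing the weight $|\nabla F|^4$. The paper bounds $\frac1r\int_r^{2r}\int_{\Sigma_t}|\nabla F|^4|Q_6(S)|\,dA\,dt$ directly from $F^3|\nabla F|^3|Q_6(S)|\le\bigl|\frac16V_3(\nabla^2(F^2/2))\bigr|$ and \eqref{six:eq:flux-mass}, whereas you derive the absolute bound from the negative-part estimate of Lemma~\ref{six:lem:compact} together with the signed weighted limit, which is equally valid.
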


\begin{proof}
Let $L=L(M,g)$ in the rest argument. Use the preceding comparison data.
At $\lambda=0$,
\begin{align}
&\nabla\psi_r=F|\nabla F|\nu-\nabla(F^2/2-\psi_r). \nonumber
\end{align}
Lemma~\ref{six:lem:flux-comparison}, equation~\eqref{six:eq:flux-replacement}, and
\eqref{interval:eq:primitive-gradient} give
\begin{align}
&\left|I_r(0)+\int_{\{r<F<2r\}}(F/r)^{-3}|\nabla F|^2\left(\frac{\Sc}{2}-\Ric(\nu,\nu)\right)dV\right|\le C\delta_r r^4. \nonumber
\end{align}
By Lemma~\ref{unified:lem:distance-limit}, equation~\eqref{unified:eq:distance-limit},
\begin{align}
&\lim_{r\to\infty}r^{-4}[-I_r(0)]=2L. \nonumber
\end{align}
For fixed $r$ and $\lambda\in\{0,1\}$, all coefficients are bounded on the compact annulus $\overline{U_r}$, and
\begin{align}
&\left|\left\langle\nabla\chi_\varepsilon,V_3(\nabla^2\phi_\lambda)\nabla\phi_\lambda\right\rangle\right|\le\frac Cr\sup_{\overline{\Omega_r}}\bigl(|V_3(\nabla^2\phi_\lambda)||\nabla\phi_\lambda|\bigr)\mathbf1_{\Omega_r}\in L^1(U_r). \nonumber
\end{align}
Dominated convergence and Lemma~\ref{six:lem:flux-comparison}, equation~\eqref{six:eq:flux-derivative}, imply
\begin{align}
&\lim_{\varepsilon\downarrow0}I_{r,\varepsilon}(\lambda)=I_r(\lambda),\qquad |I_{r,\varepsilon}(1)-I_{r,\varepsilon}(0)|\le\int_0^1|I'_{r,\varepsilon}(\lambda)|\,d\lambda\le C\delta_r r^4, \nonumber\\
&|I_r(1)-I_r(0)|\le C\delta_r r^4,\qquad \lim_{r\to\infty}r^{-4}[-I_r(1)]=2L. \label{six:eq:endpoint-comparison}
\end{align}
The limit $\varepsilon\downarrow0$ is taken at fixed $r$.
At $\lambda=1$,
\begin{align}
&\nabla(F^2/2)=F|\nabla F|\nu,\qquad \nabla^2(F^2/2)|_{T\Sigma_t}=F|\nabla F|S. \nonumber
\end{align}
Lemma~\ref{unified:lem:normal-contraction}, equation~\eqref{unified:eq:normal-contraction}, gives
\begin{align}
&\frac16V_3\bigl(\nabla^2(F^2/2)\bigr)(\nu,\nu)=F^3|\nabla F|^3Q_6(S). \nonumber
\end{align}
Substitution in $I_r(1)$ and coarea, with $dV=|\nabla F|^{-1}dA\,dt$, yield
\begin{align}
&r^{-4}[-I_r(1)]=\frac1r\int_r^{2r}\int_{\Sigma_t}|\nabla F|^4Q_6(S)\,dA\,dt, \label{six:eq:weighted-endpoint}\\
&\lim_{r\to\infty}\frac1r\int_r^{2r}\int_{\Sigma_t}|\nabla F|^4Q_6(S)\,dA\,dt=2L. \nonumber
\end{align}
The same contraction and Lemma~\ref{six:lem:flux-comparison}, equation~\eqref{six:eq:flux-mass}, give
\begin{align}
&F^3|\nabla F|^3|Q_6(S)|\le\left|\frac16V_3\bigl(\nabla^2(F^2/2)\bigr)\right|, \nonumber\\
&\frac1r\int_r^{2r}\int_{\Sigma_t}|\nabla F|^4|Q_6(S)|\,dA\,dt=\frac1r\int_{\{r<F<2r\}}|\nabla F|^5|Q_6(S)|\,dV \nonumber\\
&\hspace{12pt}\le Cr^{-4}\int_{\Omega_r}\left|\frac16V_3\bigl(\nabla^2(F^2/2)\bigr)\right|\,dV\le C. \label{six:eq:absolute-boundary}
\end{align}
Here $F\ge r$ and $1/2\le|\nabla F|\le2$ for all sufficiently large $r$.
Put
\begin{align}
&\epsilon_{r,3}=\sup_{\{r\le F\le2r\}}\bigl||\nabla F|^{-4}-1\bigr|\le128\sup_{\{r\le F\le2r\}}\bigl||\nabla F|-1\bigr|,\qquad \lim_{r\to\infty}\epsilon_{r,3}=0. \nonumber
\end{align}
Since $|1-|\nabla F|^4|=|\nabla F|^4\bigl||\nabla F|^{-4}-1\bigr|$, equation~\eqref{six:eq:absolute-boundary} gives
\begin{align}
&\frac1r\left|\int_r^{2r}\int_{\Sigma_t}(1-|\nabla F|^4)Q_6(S)\,dA\,dt\right|\le C\epsilon_{r,3}. \nonumber
\end{align}
Together with \eqref{six:eq:weighted-endpoint}, this proves \eqref{six:eq:unweighted-boundary-limit}.
\end{proof}

\section{Gauss-Bonnet-Chern and the sharp six-dimensional bound}\label{intro:sec:six}

Write $|\mathbb S^{n-1}(1)|=n\omega_n =2\pi\omega_{n-2}$. For a symmetric
endomorphism $S$, its positive part $S_+$ has eigenvalues
$\max\{\kappa_i,0\}$ in an eigenframe of $S$.

\begin{lemma}\label{uni:lem:determinant}
If $S$ is the outward shape
endomorphism of $\partial D$, define $S_+$ by replacing each
eigenvalue $\kappa_i$ with $\max\{\kappa_i,0\}$. For a real number $x$ put $x_-=\max\{-x,0\}$.
Then
\begin{align}
&\int_{\partial D}\det S_+\,dA\ge n\omega_n\AVR(M^n,g), \nonumber\\
&\int_{\partial D}\det S\,dA \ge n\omega_n\AVR(M^n,g)-\int_{\partial D}(\det S)_-\,dA. \label{uni:eq:determinant-volume}
\end{align}
This statement holds in every dimension under consideration and
does not require convexity of $D$.
\end{lemma}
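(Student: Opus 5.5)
We prove the first inequality by a normal-exponential (Heintze--Karcher type) covering argument from the boundary, and then deduce the second one algebraically. Let $D$ be a nonempty compact smooth domain with outward unit normal $\nu$ along $\partial D$. Consider the outward normal map
\begin{align}
&\Phi:\partial D\times[0,\infty)\to M,\qquad \Phi(x,t)=\exp_x(t\nu(x)). \nonumber
\end{align}
\emph{Step 1 (covering).} For $y\notin D$, choose a point $x\in\partial D$ nearest to $y$. The minimizing segment from $x$ to $y$ leaves $\partial D$ orthogonally in the outward direction, so $y=\Phi(x,t)$ with $t=d_g(y,D)$. Along this segment there are no focal points of $\partial D$ before $t$. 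Second variation then gives that the index form on normal Jacobi fields $J$ with $J(0)\in T_x\partial D$ and $J'(0)=S J(0)$ is nonnegative, and this forces $S\ge0$ at $x$ in the relevant sense: in every direction where $\kappa_i<0$, a focal point would occur at a time $\le 1/|\kappa_i|$. Hence every $y\in M\setminus D$ lies in $\Phi(\mathcal U)$, where
\begin{align}
&\mathcal U=\{(x,t):\ t<\tau(x)\}, \nonumber
\end{align}
and $\tau(x)$ is the normal cut time.

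\emph{Step 2 (Jacobian bound).} Fix an eigenframe of $S$ at $x$ and let $J_i$ be the normal Jacobi fields with $J_i(0)=e_i$ and $J_i'(0)=\kappa_i e_i$. Riccati comparison under $\sec_g\ge0$ (the same argument as in the proof of Lemma~\ref{pole:lem:monotone}) bounds the volume Jacobian before focal time by
\begin{align}
&\mathcal J(x,t)\le\prod_i(1+t\kappa_i). \nonumber
\end{align}
If some $\kappa_i<0$, then Step 1 gives $\tau(x)\le 1/|\kappa_i|$, so such points contribute only for bounded $t$. Combining the two cases yields the pointwise bound
\begin{align}
&\mathcal J(x,t)\,\mathbf 1_{t<\tau(x)}\le\prod_i(1+t\,\kappa_{i,+})\qquad\text{for every }t, \nonumber
\end{align}
at least when $t>\max_i 1/|\kappa_{i,-}|$. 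To get the bound uniformly in $t$, the cleanest route is to use the index form directly: test it with the field $(1+t\kappa_{i,+})$ times the parallel frame, which produces the modified determinant bound. This is the delicate point, because the plain Riccati comparison with negative $\kappa_i$ does not give $\prod(1+t\kappa_{i,+})$ for small $t$.

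\emph{Step 3 (area formula and volume growth).} By the area formula,
\begin{align}
&\Vol(B_p(R)\setminus D)\le\int_{\partial D}\int_0^{R+c}\prod_i(1+t\kappa_{i,+})\,dt\,dA, \nonumber
\end{align}
where $c$ is an upper bound for $d_g(p,\partial D)+\operatorname{diam} D$. The integrand is a polynomial in $t$ of degree $n-1$ whose top coefficient is $\det S_+$. Dividing by $\omega_nR^n$ and letting $R\to\infty$, the left side tends to $\AVR(M^n,g)$, because $\Vol(D)$ is finite. The right side tends to
\begin{align}
&\frac{1}{n\omega_n}\int_{\partial D}\det S_+\,dA. \nonumber
\end{align}
This gives the first inequality in \eqref{uni:eq:determinant-volume}.

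\emph{Step 4 (the second inequality).} Pointwise we need
\begin{align}
&\det S\ge\det S_+-(\det S)_-. \nonumber
\end{align}
If all $\kappa_i\ge0$, then $\det S=\det S_+$ and $(\det S)_-=0$, so equality holds. Otherwise some $\kappa_i<0$, so $\det S_+=0$, and the inequality reduces to $\det S\ge-(\det S)_-$, which is always true. Integrating over $\partial D$ and combining with Step 3 gives the second inequality.

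The main obstacle is Step 2: obtaining the pointwise Jacobian bound by $\prod_i(1+t\kappa_{i,+})$ when $S$ has negative eigenvalues. I would handle it through the index-form/Heintze--Karcher argument, using parallel-frame test fields scaled linearly in $t$, rather than through naive Riccati comparison.
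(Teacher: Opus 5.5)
Your plan is correct and is essentially the paper's proof. It covers $M\setminus D$ by outward normal geodesics up to the normal cut time, bounds the Jacobian by $\det(\Id+tS_+)$, applies the area formula, and extracts the top coefficient $\sigma_{n-1}(S_+)=\det S_+$. Step 4 is the same pointwise argument the paper uses.

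The ``delicate point'' in Step 2 is not actually an obstacle. Your Step 1 already gives $\tau(x)\le 1/|\kappa_{\min}|$ whenever $\kappa_{\min}<0$. Hence for $0\le t<\tau(x)$ every factor satisfies $0<1+t\kappa_i\le 1+t\kappa_{i,+}$. The ordinary Riccati bound $\mathcal J\le\prod_i(1+t\kappa_i)$ therefore already gives $\mathcal J\le\prod_i(1+t\kappa_{i,+})$ for all such $t$, small or not.

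The paper avoids even this case split. It compares $A=J'J^{-1}$ (with $J(0)=\Id$, $J'(0)=S$) directly with $B=S_+(\Id+sS_+)^{-1}$. This $B$ solves $B'+B^2=0$ for all $s\ge0$ and satisfies $B(0)=S_+\ge S=A(0)$, so $\mathcal R_\nu\ge0$ yields $A\le B$ on $[0,\tau(x))$. Integrating the traces then gives $\log\det J\le\log\det(\Id+sS_+)$.

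Your index-form variant is an equally valid route to $\tr A\le\tr B$. The test fields $\frac{1+s\kappa_{i,+}}{1+t\kappa_{i,+}}E_i(s)$ give $\langle A(t)E_i,E_i\rangle\le\kappa_{i,+}/(1+t\kappa_{i,+})$; sum over $i$ and integrate.

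Two small corrections to Step 1:
\begin{itemize}
\item What Step 1 yields is $\tau(x)\le 1/|\kappa_{\min}|$, not ``$S\ge0$''.
\item A point $y\notin D$ is $\Phi(x,t)$ with $t\le\tau(x)$, not $t<\tau(x)$. The cut-time endpoints form a null set, as the paper notes, so the area formula is unaffected.
\end{itemize}
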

\begin{proof}
For $x\in\partial D$, put $\gamma_x(s)=\exp_x(s\nu_x)$ and
\begin{align}
&\tau(x)=\sup\{s\ge0:d(D,\gamma_x(s))=s\}. \nonumber
\end{align}
Minimizing segments minimize on every prefix. Hence, for $a>0$,
\begin{align}
&\{x\in\partial D:\tau(x)\ge a\}=\{x\in\partial D:d(D,\gamma_x(a))=a\}\quad\text{is closed}, \nonumber\\
&\tau:\partial D\longrightarrow[0,\infty]\quad\text{is measurable},\qquad \int_{\partial D}\int_0^R\mathbf1_{\{s=\tau(x)<\infty\}}\,ds\,dA=0. \nonumber
\end{align}
The normal exponential map is smooth and Lipschitz on each compact parameter band. Thus
\begin{align}
&\Vol\{\gamma_x(\tau(x)):x\in\partial D,\ \tau(x)<\infty\}=0, \nonumber\\
&M\setminus D\subset\{\gamma_x(s):x\in\partial D,\ 0<s<\tau(x)\}\cup\{\gamma_x(\tau(x)):\tau(x)<\infty\}. \nonumber
\end{align}
The second variation excludes focal points for $0\le s<\tau(x)$. In a parallel frame,
\begin{align}
&J''+\mathcal R_\nu J=0,\qquad J(0)=\Id,\qquad J'(0)=S, \nonumber\\
&\mathcal R_\nu\ge0,\qquad \det J(s)>0,\qquad A=J'J^{-1}=A^T,\qquad A'+A^2+\mathcal R_\nu=0, \nonumber\\
&B=S_+(\Id+sS_+)^{-1},\qquad B'+B^2=0,\qquad A(0)=S\le S_+=B(0), \nonumber\\
&H=A-B,\qquad C=(A+B)/2,\qquad H'=-CH-HC-\mathcal R_\nu,\qquad V'=CV,\qquad V(0)=\Id, \nonumber\\
&\det V(s)=\exp\left(\int_0^s\tr C(t)\,dt\right)>0,\qquad (V^THV)'=-V^T\mathcal R_\nu V\le0, \nonumber\\
&V^THV\le H(0)\le0,\qquad A\le B, \nonumber\\
&\log\det J(s)=\int_0^s\tr A(t)\,dt\le\int_0^s\tr B(t)\,dt=\log\det(\Id+sS_+). \nonumber
\end{align}
For $D_r=\{y:d(D,y)<r\}$, the area formula gives
\begin{align}
&\Vol(D_r)\le\Vol(D)+ \int_{\partial D}\int_0^r\det(\Id+sS_+)\,ds\,dA. \nonumber
\end{align}
Fix $p\in D$ and $A_D=\max_{x\in D}d(p,x)$. Then
\begin{align}
&B_p(r)\subset D_r\subset B_p(r+A_D),\qquad \lim_{r\to\infty}r^{-n}\Vol(D_r)=\omega_n\AVR(M^n,g), \nonumber\\
&\det(\Id+sS_+)=\sum_{j=0}^{n-1}s^j\sigma_j(S_+), \nonumber\\
&\omega_n\AVR(M^n,g)\le\lim_{r\to\infty}r^{-n}\left(\Vol(D)+\sum_{j=0}^{n-1}\frac{r^{j+1}}{j+1}\int_{\partial D}\sigma_j(S_+)\,dA\right) \nonumber\\
&{}=\frac1n\int_{\partial D}\det S_+\,dA. \nonumber
\end{align}
Finally,
\begin{align}
&\min_i\kappa_i<0\ \Longrightarrow\ \det S_+=0\le\det S+(\det S)_-,\qquad \min_i\kappa_i\ge0\ \Longrightarrow\ \det S_+=\det S, \nonumber\\
&\int_{\partial D}\det S\,dA\ge\int_{\partial D}\det S_+\,dA-\int_{\partial D}(\det S)_-\,dA \nonumber\\
&{}\ge n\omega_n\AVR(M^n,g)-\int_{\partial D}(\det S)_-\,dA. \nonumber
\end{align}
\end{proof}

\begin{lemma}
\label{general:lem:CGB-boundary}
Let $m\ge1$ and let $(D^{2m},g)$ be a compact smooth
Riemannian manifold with smooth boundary.  Use the connection
and curvature-form conventions of
\eqref{pole:eq:connection-forms}.  Use the forms
$\Phi_j$ in \eqref{pole:eq:Phi} for every integer
$0\le j\le m-1$, and use $\Psi_m$ from
\eqref{pole:eq:Psi}.  Put
\begin{align}
&c_{m,j}:=\frac{1}{(2m-2j-1)!!\,2^{m+j}j!}, \qquad 0\le j\le m-1,\qquad \Pi_{2m-1}:=\frac1{\pi^m}\sum_{j=0}^{m-1}c_{m,j}\Phi_j, \nonumber\\
&\mathcal E_{2m}:=\frac1{2^{2m}\pi^m m!}\Psi_m. \nonumber
\end{align}
Here
\begin{align}
&(2r-1)!!:=1\cdot3\cdots(2r-1),\qquad 1!!:=1. \nonumber
\end{align}
Then
\begin{align}
&d\Phi_j=\Psi_j-\frac{2m-2j-1}{2(j+1)}\Psi_{j+1}, \qquad 0\le j\le m-1,\qquad d\Pi_{2m-1}=-\mathcal E_{2m}. \nonumber
\end{align}
If $\nu$ denotes the outward unit normal section of the unit
tangent bundle over $\partial D$, then
\begin{align}
&\chi(D) = \int_D\mathcal E_{2m} + \int_{\partial D}\nu^*\Pi_{2m-1}, \nonumber\\
&\pi^m\chi(D) = \frac1{2^{2m}m!}\int_D\Psi_m + \sum_{j=0}^{m-1} \frac1{(2m-2j-1)!!\,2^{m+j}j!} \int_{\partial D}\nu^*\Phi_j. \label{general:eq:CGB-boundary}
\end{align}
No convexity assumption on $\partial D$ and no product-metric
assumption in a neighborhood of $\partial D$ is required.
For a nonorientable $D$, the same scalar identity follows by
applying the oriented formula to the orientation double cover and
dividing by two.
\end{lemma}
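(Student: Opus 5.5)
The statement to prove is Lemma~\ref{general:lem:CGB-boundary}: the exterior-derivative identities and Chern's boundary Gauss--Bonnet formula in even dimension $2m$.

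\textbf{Exterior derivatives.} I would first prove the formula for $d\Phi_j$ for all $0\le j\le m-1$. This is the same computation as in Lemma~\ref{pole:lem:transgression}, now with general $j$ and $n=2m$. Use the structure relations $Du=\theta$, $D\theta=\Omega u$, $D\Omega=0$, and work at a point where $u=e_1$, so that $\theta_1=0$. In the alternating sum \eqref{pole:eq:Phi}, differentiate each factor in turn:
\begin{itemize}
\item differentiating $u_{\sigma(1)}$ gives $\theta_{\sigma(1)}$, and these terms reproduce $\Psi_j$;
\item each of the $2m-2j-1$ factors $\theta$ becomes $\sum_k\Omega_{\cdot k}u_k$, producing a term with $j+1$ curvature factors and $2m-2j-2$ factors $\theta$;
\item the curvature factors contribute nothing, by Bianchi.
\end{itemize}
For the second group, re-alternate over the indices. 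The $2(j+1)$ curvature slots of $\Psi_{j+1}$ occur with the pairing $\Omega_{ik}u_k$ and sign $\Omega_{i1}=-\Omega_{1i}$, so each of these terms equals $-\Psi_{j+1}/(2j+2)$. This gives
\[
d\Phi_j=\Psi_j-\tfrac{2m-2j-1}{2(j+1)}\Psi_{j+1}.
\]
Summing with the coefficients $c_{m,j}$, the sum telescopes, because
\[
c_{m,j+1}=c_{m,j}\,\tfrac{2m-2j-1}{2(j+1)}.
\]
To check this, note that $c_{m,j}/c_{m,j+1}$ equals $2(j+1)$ times $(2m-2j-3)!!/(2m-2j-1)!!$, and this ratio is $2(j+1)/(2m-2j-1)$, as required. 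Since $\Psi_0=0$, only the top term survives, with coefficient $c_{m,m-1}\frac{1}{2m}$:
\[
d\Pi_{2m-1}=-\pi^{-m}c_{m,m-1}\tfrac{1}{2m}\Psi_m,\qquad c_{m,m-1}\tfrac1{2m}=\tfrac{1}{2^{2m-1}(m-1)!\,2m}=\tfrac1{2^{2m}m!}.
\]
This equals $-\mathcal E_{2m}$.

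\textbf{Gauss--Bonnet--Chern with boundary.} Choose a smooth vector field $X$ on $D$ that equals the outward normal $\nu$ along $\partial D$ and has finitely many nondegenerate interior zeros $x_k$. Let $D_\epsilon$ be $D$ with small geodesic balls around the $x_k$ removed. On $D_\epsilon$, pull back by the section $X/|X|$ of $STM$ and apply Stokes:
\[
\int_{D_\epsilon}(X/|X|)^*d\Pi=\int_{\partial D}\nu^*\Pi-\sum_k\int_{\partial B_\epsilon(x_k)}(X/|X|)^*\Pi .
\]
Because $d\Pi=-\mathcal E$ and $\mathcal E=\Psi_m$-term is the pullback of a basic form (it contains no $\theta$), the left side tends to $-\int_D\mathcal E_{2m}$.

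For the small spheres, use Chern's normalization. On each fiber $S^{2m-1}$ only $\Phi_0$ restricts nontrivially, and
\[
\pi^{-m}c_{m,0}\Phi_0=\frac{(2m-1)!}{(2m-1)!!\,2^m\pi^m}\,dA_{S^{2m-1}}.
\]
This has total mass $1$ on the unit sphere, since $|S^{2m-1}|=2\pi^m/(m-1)!$ and $(2m-1)!/((2m-1)!!\,2^{m-1}(m-1)!)=1$; I would verify this constant carefully. The terms with curvature vanish as $\epsilon\to0$. The sphere integrals therefore tend to $\sum_k\operatorname{ind}_{x_k}X$, which equals $\chi(D)$ by Poincaré--Hopf for outward-pointing fields. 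Orientation signs then give
\[
\chi(D)=\int_D\mathcal E_{2m}+\int_{\partial D}\nu^*\Pi_{2m-1}.
\]
Multiplying by $\pi^m$ gives \eqref{general:eq:CGB-boundary}. For nonorientable $D$, pass to the orientation double cover and divide by two. No convexity or product structure near $\partial D$ is used.

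\textbf{Main obstacle.} I expect the main obstacle to be sign and normalization bookkeeping. This concerns the orientation of $STM$ fibers against the boundary orientation, and the exact unit-mass constant of $\Phi_0$. I would check both by testing the formula on the flat unit ball, where $\chi=1$, $\Psi_m=0$ and $\nu^*\Phi_0=(2m-1)!\,dA$.
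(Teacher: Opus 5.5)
Your proposal is correct. The algebraic half is the same as the paper's. The paper also obtains $d\Phi_j=\Psi_j-\frac{2m-2j-1}{2(j+1)}\Psi_{j+1}$, citing Chern's transgression formulas and computing it your way only for $j=0,1$ in Lemma~\ref{pole:lem:transgression}. It then checks the same recursion $c_{m,j+1}=c_{m,j}\frac{2m-2j-1}{2(j+1)}$ and telescopes using $\Psi_0=0$.

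The real difference is the boundary formula. The paper imports Chern's smooth-boundary formula~(19) from \cite{ChernCurvatura1945}, with \cite{GilkeyBoundary1975} covering the absence of a product structure. You re-derive it by Chern's own argument: extend $\nu$ to a field $X$ with nondegenerate interior zeros, pull back by $X/|X|$, apply Stokes on $D$ minus small balls, and close with Poincar\'e--Hopf for outward-pointing fields.

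Your route is self-contained and makes the remark about convexity and product metrics transparent, since only an outward-pointing extension of $\nu$ is used. The price is the orientation and normalization bookkeeping you identify. The paper's route is shorter but depends on its conventions \eqref{pole:eq:connection-forms}--\eqref{pole:eq:Psi} matching Chern's.

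The details you left open do check out:
\begin{itemize}
\item The constant you flagged is right. Since $(2m-1)!=(2m-1)!!\,(2m-2)!!$ and $(2m-2)!!=2^{m-1}(m-1)!$, the form $\pi^{-m}c_{m,0}\Phi_0$ has unit mass on each fiber.
\item On $\partial B_\epsilon(x_k)$ the curvature terms are $O(\epsilon^{2j})$, and the connection-form corrections to the $\Phi_0$ term are $O(\epsilon)$. Both bounds follow from $|\nabla(X/|X|)|\le C/\epsilon$, which nondegeneracy gives.
\item Your flat unit-ball test does fix the overall sign. The relative signs of the $\Phi_j$ ($j\ge1$) and $\Psi_m$ terms are already forced by $d\Pi_{2m-1}=-\mathcal E_{2m}$.
\end{itemize}
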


\begin{proof}
Chern's transgression formula \cite[pp.~675--679, especially formulas~(4), (6)--(11), and~(19)]{ChernCurvatura1945}, with convention~\eqref{pole:eq:connection-forms}, gives
\begin{align}
&d\Phi_j = \Psi_j-\frac{2m-2j-1}{2(j+1)}\Psi_{j+1}. \nonumber
\end{align}
The coefficients satisfy
\begin{align}
&c_{m,j+1} = \frac{2m-2j-1}{2(j+1)}c_{m,j}, \qquad 0\le j\le m-2,\qquad \frac{c_{m,m-1}}{2m} = \frac1{2^{2m}m!}. \nonumber
\end{align}
Since $\Psi_0=0$, telescoping gives
\begin{align}
&d\left(\sum_{j=0}^{m-1}c_{m,j}\Phi_j\right) = -\frac1{2^{2m}m!}\Psi_m. \nonumber
\end{align}
Thus $d\Pi_{2m-1}=-\mathcal E_{2m}$. Chern's smooth-boundary formula \cite[p.~678, formula~(19), and p.~679]{ChernCurvatura1945} gives
\begin{align}
&\chi(D)=\int_D\mathcal E_{2m}+\int_{\partial D}\nu^*\Pi_{2m-1}. \nonumber
\end{align}
For the same boundary correction without a product metric, see \cite{GilkeyBoundary1975}. On the orientation double cover $\widetilde D\to D$,
\begin{align}
&\chi(\widetilde D)=2\chi(D),\qquad \int_{\widetilde D}\mathcal E_{2m}=2\int_D\mathcal E_{2m},\qquad \int_{\partial\widetilde D}\widetilde\nu^*\Pi_{2m-1}=2\int_{\partial D}\nu^*\Pi_{2m-1}. \nonumber
\end{align}
\end{proof}

Use $P_3,E_3,W_2$ from Notations~\ref{notation-P3-E3}
and~\ref{notation-W2-S}, equations~\eqref{audit:eq:P3-E3}
and~\eqref{audit:eq:W2-definition}.
In dimension six put
\begin{align}
&Q(S):=Q_6(S), \label{six:eq:P3-W2}
\end{align}
Then $E_3=0$, and $P_3=1$ at sectional curvature one.
No sign of $P_3$ is assumed. The equation for $F$ is not used.

\begin{lemma}
\label{six:lem:CGB}
Every compact smooth six-dimensional Riemannian manifold $D$
with smooth boundary has outward unit normal $\nu$ and shape operator
$S(X)=\nabla_X\nu$. Use $P_3$ and $W_2$ from
Notations~\ref{notation-P3-E3} and~\ref{notation-W2-S},
equations~\eqref{audit:eq:P3-E3} and~\eqref{audit:eq:W2-definition},
and $Q$ from equation~\eqref{six:eq:P3-W2}. Then
\begin{align}
&\int_{\partial D}\left(\det S+\frac14Q(S)\right)dA +\frac38\int_{\partial D}W_2(S)\,dA +\frac{15}{8}\int_D P_3\,dV =\pi^3\chi(D). \label{six:eq:CGB}
\end{align}
No convexity assumption and no product-metric assumption near
$\partial D$ is imposed.
\end{lemma}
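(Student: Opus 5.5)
This is the case $m=3$ of Lemma~\ref{general:lem:CGB-boundary}, equation~\eqref{general:eq:CGB-boundary}, so the plan is to specialize it and evaluate each term in an adapted frame. With $m=3$ the coefficients are $c_{3,0}=1/(5!!\,2^3)=1/120$, $c_{3,1}=1/(3!!\,2^4)=1/48$, $c_{3,2}=1/(2^5\cdot2)=1/64$, and the bulk coefficient is $1/(2^6\cdot 3!)=1/384$. Both sides will be computed as scalars, so orientability is not needed; the nonorientable case reduces to the double cover as in that lemma.

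The first step is to compute the pullbacks $\nu^*\Phi_j$ on $\partial D$ in an oriented frame $(\nu,e_1,\dots,e_5)$ with $Se_a=\kappa_ae_a$. As in the proof of Lemma~\ref{pole:lem:monotone}, we have $\nu^*\theta_\nu=0$ and $\nu^*\theta_a=\kappa_ae^a$, and only permutations with $\sigma(1)=\nu$ contribute.
\begin{itemize}
\item For $\Phi_0$ this gives $5!\det S\,dA$.
\item For $\Phi_1$ it gives $2\cdot3!\,Q(S)\,dA$, exactly as for $N^*\Phi_1$ in that proof.
\item For $\Phi_2$ one $\theta_a$ remains, together with two curvature forms on the four complementary tangent indices. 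The four-dimensional identity used for \eqref{pole:eq:Psi2-pullback} turns the alternating sum of $\Omega\wedge\Omega$ restricted to $\{\nu,e_a\}^\perp$ into $(|\Rm|^2-4|\Ric|^2+\Sc^2)=24P_2(\Rm_g|_{\{\nu,e_a\}^\perp})$. Summing over $a$ gives $\nu^*\Phi_2=24\,W_2(S)\,dA$.
\end{itemize}
The boundary orientation convention (outward normal first) fixes the signs as positive. Combining these, the boundary integral becomes
\[
\frac1{\pi^3}\int_{\partial D}\Bigl(\tfrac{120}{120}\det S+\tfrac{12}{48}Q(S)+\tfrac{24}{64}W_2(S)\Bigr)dA
=\frac1{\pi^3}\int_{\partial D}\Bigl(\det S+\tfrac14 Q(S)+\tfrac38W_2(S)\Bigr)dA .
\]
This matches the boundary terms of \eqref{six:eq:CGB}.

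The second step is the bulk term. Here one must check that $\Psi_3$, restricted to the zero section direction, equals $c\,P_3\,dV$, with the normalization of Notation~\ref{notation-P3-E3}. The full alternation over $\mathfrak S_6$ of $\Omega_{\sigma1\sigma2}\wedge\Omega_{\sigma3\sigma4}\wedge\Omega_{\sigma5\sigma6}$ equals $\frac{1}{2^3}\delta^{i_1\cdots i_6}_{j_1\cdots j_6}R^{j_1j_2}{}_{i_1i_2}R^{j_3j_4}{}_{i_3i_4}R^{j_5j_6}{}_{i_5i_6}$ up to the frame factor. This yields $\Psi_3=720\,P_3\,dV$, since $5760/8=720$.

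There is one subtlety: $\Psi_3$ lives on $STM$, but integrating $\mathcal E_6$ over $D$ means integrating a pulled-back form through a section. In even dimension with $j=m$, the form $\Psi_m$ contains no $\theta$, so it is basic and pulls back to the Euler density. Then $\frac{1}{384}\cdot720P_3=\frac{15}{8}P_3$, which matches \eqref{six:eq:CGB}. As a sanity check, on the round unit ball $\det S=1$, $Q=10$, $W_2=5$ and $P_3=1$, giving $\Vol(\mathbb S^5)=\pi^3$, $\tfrac{10}{4}\pi^3$, $\tfrac{15}{8}\pi^3$ and $\tfrac{15}{8}\cdot\tfrac{\pi^3}{6}$. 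I would compute this test and use it to calibrate each constant, since the combinatorial factors are where errors hide.

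The main obstacle is getting the combinatorial constants and signs right, especially the factor $24$ in $\nu^*\Phi_2$. Rather than trusting the bookkeeping, I would pin each coefficient on the model ball and on flat domains, for example a Euclidean ball with $\chi=1$. The ball, together with flat cylindrical regions, separates $\det S$, $Q$ and $W_2$ enough to confirm them. Verifying that $P_3$ and $W_2$ are frame-independent invariants (Lemma~\ref{audit:lem:cubic-divergence}, \eqref{one:eq:J-boundary}) then justifies the eigenframe computation.
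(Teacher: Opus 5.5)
Your derivation is the paper's own proof. You specialize Lemma~\ref{general:lem:CGB-boundary} at $m=3$ with $c_{3,0}=1/120$, $c_{3,1}=1/48$, $c_{3,2}=1/64$, and insert $\nu^*\Phi_0=120\det S\,dA$, $\nu^*\Phi_1=12\,Q(S)\,dA$, $\nu^*\Phi_2=24\,W_2(S)\,dA$ and $\Psi_3=720\,P_3\,dV$. Your frame bookkeeping for each of these is correct.

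The calibration you propose in place of that bookkeeping is wrong as stated.
\begin{itemize}
\item Your ``round unit ball'' combines Euclidean data ($\Area=\pi^3$, $\Vol=\pi^3/6$) with unit-curvature pointwise values ($Q=10$, $W_2=5$, $P_3=1$). No domain has all of these. Your four listed numbers sum to $\tfrac{91}{16}\pi^3\ne\pi^3$, so running that test would falsely signal an error in correct constants.
\item Flat domains do not help either. Both $Q$ and $W_2$ vanish identically where $\Rm_g=0$, so such domains test only the coefficient of $\det S$.
\end{itemize}

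The correct model is the geodesic ball $B_r\subset\mathbb S^6$, $0<r\le\pi/2$. There $\kappa\equiv\cot r$, $\Area(\partial B_r)=\pi^3\sin^5r$ and $\Vol(B_r)=\pi^3\int_0^r\sin^5s\,ds$. The left side of \eqref{six:eq:CGB} then equals
\[
\pi^3\Bigl(\cos^5r+\tfrac52\cos^3r\sin^2r+\tfrac{15}8\cos r\sin^4r+\tfrac{15}8\int_0^r\sin^5s\,ds\Bigr)\equiv\pi^3 .
\]
Suppose an identity of this shape is required to be constant in $r$, with value $\pi^3$ as $r\downarrow0$. That forces the coefficients $1,\tfrac14,\tfrac38,\tfrac{15}8$ uniquely. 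So this single family gives the calibration you were after.
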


\begin{proof}
Apply Lemma~\ref{general:lem:CGB-boundary}, equation~\eqref{general:eq:CGB-boundary}, with $m=3$; see \cite[pp.~675--679, especially formulas~(4), (6)--(11), and (19)]{ChernCurvatura1945}. In convention~\eqref{pole:eq:connection-forms},
\begin{align}
&c_{3,0}=\frac1{120},\qquad c_{3,1}=\frac1{48},\qquad c_{3,2}=\frac1{64},\qquad \Pi_5=\frac1{\pi^3}\left(\frac{\Phi_0}{120}+\frac{\Phi_1}{48}+\frac{\Phi_2}{64}\right), \nonumber\\
&\nu^*\Phi_0=120\det S\,dA,\qquad \nu^*\Phi_1=12Q(S)\,dA,\qquad \nu^*\Phi_2=24W_2(S)\,dA, \nonumber\\
&\pi^3\nu^*\Pi_5=\left(\det S+\frac14Q(S)+\frac38W_2(S)\right)dA. \nonumber
\end{align}
By Notation~\ref{notation-P3-E3}, equation~\eqref{audit:eq:P3-E3},
\begin{align}
&\Psi_3=720P_3\,dV,\qquad \mathcal E_6=\frac{\Psi_3}{2^6\pi^3\,3!}=\frac{15}{8\pi^3}P_3\,dV. \nonumber
\end{align}
Chern's boundary formula \cite[p.~678, formula~(19), and p.~679]{ChernCurvatura1945} yields
\begin{align}
&\chi(D)=\int_D\mathcal E_6+\int_{\partial D}\nu^*\Pi_5=\frac{15}{8\pi^3}\int_D P_3\,dV+\frac1{\pi^3}\int_{\partial D}\left(\det S+\frac14Q(S)+\frac38W_2(S)\right)dA. \nonumber
\end{align}
\end{proof}

\begin{lemma}\label{lem-general-ineq-for-curvature-2-3-order-terms}
Let $(M^n,g)$ be a smooth, complete, connected, noncompact
Riemannian manifold and with $\sec_g\ge0$ and $n\geq 6$. Fix $0<\varepsilon<1/8$ and
let $\zeta_\varepsilon$ be the mollifier in
equation~\eqref{interval:eq:mollifier}. For each $r>0$, define
\begin{align}
&\theta_\varepsilon(s)=\int_{1-\varepsilon}^{2+\varepsilon} \zeta_\varepsilon(s-\tau)\,d\tau,\qquad b_{r,\varepsilon}(x)=r^{6-n}\int_{F(x)/r}^{\infty} \theta_\varepsilon(s)\,ds,\qquad \Omega_r=\{r/2<F<5r/2\}, \nonumber\\
&\delta_r=\operatorname*{ess\,sup}_{\Omega_r}|\nabla F-\nabla \mathfrak{b}| +\sup_{\Omega_r}||\nabla F|^2-1|. \nonumber
\end{align}
The cutoff satisfies
\begin{align}
&0\le\mathbf1_{[1,2]}\le\theta_\varepsilon\le1,\qquad \supp\theta_\varepsilon\subset[1-2\varepsilon,2+2\varepsilon], \nonumber\\
&\int_{\mathbb R}\theta_\varepsilon(s)\,ds=1+2\varepsilon,\qquad \|\theta_\varepsilon'\|_\infty\le C/\varepsilon. \label{interval:eq:upper-cutoff}
\end{align}
Then $\delta_r\to0$ as $r\to\infty$, and there are
$r_0<\infty$ and $C_\varepsilon<\infty$, with $C_\varepsilon$
independent of $r$, such that, for every $r\ge r_0$,
\begin{align}
&r^{5-n}\int_0^\infty\theta_\varepsilon(t/r) \int_{\Sigma_t}W_2(S)\,dA\,dt +5\int_M b_{r,\varepsilon} \bigl(P_3-E_3(\nabla \mathfrak{b},\nabla \mathfrak{b})\bigr)\,dV \ge-C_\varepsilon\delta_r. \label{audit:eq:general-combined-lower}
\end{align}
\end{lemma}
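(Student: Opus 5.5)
The plan is to apply Lemma~\ref{one:lem:weak-convex}, equation~\eqref{audit:eq:general-weak-convex}, with the test function $b=b_{r,\varepsilon}$. Then we identify the left side $\int_M\mathcal J_{\Hess b}(\nabla\mathfrak b,\nabla\mathfrak b)\,dV$ with minus the first term of \eqref{audit:eq:general-combined-lower}, up to an error of size $C_\varepsilon\delta_r$.

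First, $b_{r,\varepsilon}$ is a smooth function of $F$: it is constant ($=r^{6-n}(1+2\varepsilon)$) on $\{F\le (1-2\varepsilon)r\}$, vanishes for $F\ge(2+2\varepsilon)r$, and is nonnegative. Since the sublevels of $F$ are compact, $b_{r,\varepsilon}\in C_c^\infty(M;\overline{\R^+})$, so Lemma~\ref{one:lem:weak-convex} applies. The properties \eqref{interval:eq:upper-cutoff} of $\theta_\varepsilon$ are direct from the mollifier definition, and $\delta_r\to0$ is Corollary~\ref{one:cor:gradient-u} together with Lemma~\ref{one:lem:fixed-F}. Next compute
\begin{align}
&\Hess b_{r,\varepsilon}=-r^{5-n}\theta_\varepsilon(F/r)\Hess F-r^{4-n}\theta_\varepsilon'(F/r)\,dF\otimes dF. \nonumber
\end{align}
Alternation gives $\mathbf P_2(X,Y,Z,W)=0$ whenever $X=Y$, so $\mathcal J_{dF\otimes dF}(\nabla F,\nabla F)=\mathbf P_2(\nabla F,\nabla F,\nabla F,\nabla F)=0$. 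Moreover, by \eqref{one:eq:J-boundary}, $\mathcal J_{\Hess F}(\nabla F,\nabla F)=|\nabla F|^3W_2(S)$. Replacing $\nabla\mathfrak b$ by $\nabla F$ and using coarea then yields
\begin{align}
&-\int_M\mathcal J_{\Hess b}(\nabla F,\nabla F)\,dV=r^{5-n}\int_0^\infty\theta_\varepsilon(t/r)\int_{\Sigma_t}|\nabla F|^2W_2(S)\,dA\,dt. \nonumber
\end{align}
Substituting into \eqref{audit:eq:general-weak-convex} gives \eqref{audit:eq:general-combined-lower}, modulo two replacement errors: $\nabla\mathfrak b\to\nabla F$ inside $\mathcal J_{\Hess b}$, and $|\nabla F|^2\to1$ in the boundary term.

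The main obstacle is bounding these errors by $C_\varepsilon\delta_r$. That requires the normalized mass estimates
\begin{align}
&r^{5-n}\int_{\Omega_r}|\mathcal J_{\Hess F}|\,dV\le C,\qquad r^{4-n}\int_{\Omega_r}|\mathcal J_{dF\otimes dF}|\,dV\le C. \nonumber
\end{align}
The errors are at most $|\mathcal J|\cdot|\nabla\mathfrak b-\nabla F|\cdot(|\nabla\mathfrak b|+|\nabla F|)$ and $|\mathcal J|\,\bigl||\nabla F|^2-1\bigr|$, each weighted by $\|\theta_\varepsilon'\|_\infty\le C/\varepsilon$ or $\|\theta_\varepsilon\|_\infty\le1$. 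Since $|\mathcal J_H|\le C\,|H|\,P_2$ fails to integrate well directly, I would use positivity instead. On $\Omega_r$, Lemma~\ref{one:lem:fixed-F} gives $\Hess F+s^{-2}g\ge0$ for $s\sim r$, so $|\mathcal J_{\Hess F}|\le C\tr\mathcal J_{\Hess F+s^{-2}\Id}+Cs^{-2}\tr\mathcal J_{\Id}$. By \eqref{qc:eq:J-positive} and \eqref{audit:eq:E3-contractions} this is at most $C\tr(E_2\circ(\Hess F+s^{-2}\Id))$. For the rank-one term, $\mathcal J_{dF\otimes dF}\le C\,E_2$-type bounds give $|\mathcal J_{dF\otimes dF}|\le CP_2$.

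The two required integrals are then handled as follows. Integrating by parts with $\diver E_2=0$ and a cutoff in $F/r$ bounds $\int\tr(E_2\circ\Hess F)$ by $Cr^{-1}\int_{\Omega'_r}P_2$. Lemma~\ref{pt:lem:mass} bounds $\int_{B_p(3r)}P_2$ by $Cr^{n-4}$. These give exactly $r^{5-n}\int|\mathcal J_{\Hess F}|\le C$ and $r^{4-n}\int P_2\le C$. Combining the estimates, the total error is at most $C\varepsilon^{-1}\delta_r$, which is the claimed $C_\varepsilon\delta_r$ for $r\ge r_0$.
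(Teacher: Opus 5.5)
Your proposal is correct and follows the paper's proof essentially step for step. You apply Lemma~\ref{one:lem:weak-convex} to $b_{r,\varepsilon}$, kill the $\theta_\varepsilon'$ term via $\mathbf P_2(\nabla F,\nabla F,\nabla F,\nabla F)=0$, and identify the remaining term with $W_2$ through \eqref{one:eq:J-boundary} and coarea; you then control both replacement errors by the mass bounds $r^{5-n}\int_{\Omega_r}|\mathcal J_{\Hess F}|\,dV\le C$ and $r^{4-n}\int_{\Omega_r}P_2\,dV\le C$, obtained from positivity of $\Hess F+(\text{small})\Id$, $\diver E_2=0$, and Lemma~\ref{pt:lem:mass}. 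The only cosmetic fixes are to apply the cutoff integration by parts to the nonnegative integrand $\tr(E_2\circ(\Hess F+s^{-2}\Id))$, so that restricting to $\Omega_r$ is legitimate, and to replace $B_p(3r)$ by a slightly larger ball containing $\{F<3r\}$.
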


\begin{proof}
Fix $n\ge6$ and $0<\varepsilon<1/8$. Since $\supp\zeta_\varepsilon\subset[-\varepsilon,\varepsilon]$ and $\int\zeta_\varepsilon=1$,
\begin{align}
&s\in[1,2]\quad\Longrightarrow\quad[s-\varepsilon,s+\varepsilon]\subset[1-\varepsilon,2+\varepsilon]\quad\Longrightarrow\quad\theta_\varepsilon(s)=1, \nonumber\\
&0\le\theta_\varepsilon\le1,\qquad\supp\theta_\varepsilon\subset[1-2\varepsilon,2+2\varepsilon]\Subset(1/2,5/2),\qquad\int_{\mathbb R}\theta_\varepsilon=1+2\varepsilon, \nonumber\\
&\theta_\varepsilon'(s)=\zeta_\varepsilon(s-1+\varepsilon)-\zeta_\varepsilon(s-2-\varepsilon),\qquad\|\theta_\varepsilon'\|_\infty\le2\|\zeta_\varepsilon\|_\infty\le C/\varepsilon. \nonumber
\end{align}
Put $\chi_{r,\varepsilon}=-b_{r,\varepsilon}$. Corollary~\ref{one:cor:gradient-u}, equation~\eqref{one:eq:gradient-u}, and Lemma~\ref{one:lem:fixed-F}, equation~\eqref{eq:gradient-limits}, give $\lim_{r\to\infty}\delta_r=0$. Hold $\varepsilon$ fixed as $r\to\infty$.

For $A=\Hess F+h\Id\ge0$, positivity in
Lemma~\ref{pt:lem:algebra}, equation~\eqref{qc:eq:J-positive}, and
Lemma~\ref{lem-E3-property},
equation~\eqref{audit:eq:E3-contractions}, give
\begin{align}
&|\mathcal J_{\Hess F}| \le\tr\mathcal J_A+(n-5)h\tr E_2=(n-5)\bigl(\tr(E_2\circ\Hess F)+2h\tr E_2\bigr). \nonumber
\end{align}
Choose
\begin{align}
&0\le\xi_0\in C_c^\infty((1/4,3)),\qquad \xi_0\le1,\qquad\xi_0|_{[1/2,5/2]}=1,\qquad\xi_r=\xi_0(F/r),\qquad |\nabla\xi_r|\le C/r. \nonumber
\end{align}
By $\diver E_2=0$, $\tr E_2=(n-4)P_2$, $h\le Cr^{-3}$, and Lemma~\ref{pt:lem:mass}, equation~\eqref{pt:eq:mass-bound},
\begin{align}
&\int_{\Omega_r}|\mathcal J_{\Hess F}|\,dV \le C_n\left(-\int E_2(\nabla\xi_r,\nabla F)\,dV +2\int\xi_rh\tr E_2\,dV\right) \nonumber\\
&{}\le C(r^{-1}+r^{-3})\int_{\supp\xi_r}P_2\,dV \le Cr^{n-5}. \label{one:eq:J-annulus}
\end{align}
Moreover,
\begin{align}
&0\le b_{r,\varepsilon}\in C_c^\infty(D_{5r/2}),\qquad b_{r,\varepsilon}|_{\{F\le(1-2\varepsilon)r\}}=r^{6-n}(1+2\varepsilon). \nonumber
\end{align}
Differentiation gives
\begin{align}
&\nabla\chi_{r,\varepsilon} =r^{5-n}\theta_\varepsilon(F/r)\nabla F,\qquad \Hess\chi_{r,\varepsilon} =r^{5-n}\theta_\varepsilon(F/r)\Hess F +r^{4-n}\theta_\varepsilon'(F/r)dF\otimes dF. \nonumber
\end{align}
Moreover $\mathcal J_{dF\otimes dF}\ge0$ and
\begin{align}
&\tr\mathcal J_{dF\otimes dF} =(n-5)E_2(\nabla F,\nabla F)\le C_nP_2. \nonumber
\end{align}
Together with \eqref{one:eq:J-annulus}, this proves
\begin{align}
&\int_M|\mathcal J_{\Hess\chi_{r,\varepsilon}}|\,dV \le Cr^{5-n}\int_{\Omega_r}|\mathcal J_{\Hess F}|\,dV +C\varepsilon^{-1}r^{4-n}\int_{\Omega_r}P_2\,dV\le C(1+\varepsilon^{-1}). \nonumber
\end{align}
Since $\sup(|\nabla F|+|\nabla \mathfrak{b}|)\le C$,
\begin{align}
&\left|\int\mathcal J_{\Hess\chi_{r,\varepsilon}}(\nabla F,\nabla F)\,dV -\int\mathcal J_{\Hess\chi_{r,\varepsilon}}(\nabla \mathfrak{b},\nabla \mathfrak{b})\,dV\right| \le C_\varepsilon\delta_r. \nonumber
\end{align}
Apply Lemma~\ref{one:lem:weak-convex},
equation~\eqref{audit:eq:general-weak-convex}, to
$b_{r,\varepsilon}$.
Since $\chi_{r,\varepsilon}=-b_{r,\varepsilon}$, it follows that
\begin{align}
&\int\mathcal J_{\Hess\chi_{r,\varepsilon}}(\nabla F,\nabla F)\,dV +5\int b_{r,\varepsilon} \bigl(P_3-E_3(\nabla \mathfrak{b},\nabla \mathfrak{b})\bigr)\,dV \ge-C_\varepsilon\delta_r. \nonumber
\end{align}
The Hessian-transfer identity in
Lemma~\ref{audit:lem:cubic-divergence},
equation~\eqref{one:eq:Hessian-transfer}, and coarea give
\begin{align}
&\int\mathcal J_{\Hess\chi_{r,\varepsilon}}(\nabla F,\nabla F)\,dV =r^{5-n}\int\theta_\varepsilon(F/r)|\nabla F|^3W_2\,dV \nonumber\\
&{}=r^{5-n}\int_0^\infty\theta_\varepsilon(t/r) \int_{\Sigma_t}|\nabla F|^2W_2\,dA\,dt. \nonumber
\end{align}
The $\theta_\varepsilon'$ term vanishes since $\mathcal J_{dF\otimes dF}(\nabla F,\nabla F)=0$. Also $\mathcal J_{\Hess F}(\nu,\nu)=|\nabla F|W_2$, so
\begin{align}
&r^{5-n}\int_0^\infty\theta_\varepsilon(t/r) \int_{\Sigma_t}|W_2|\,dA\,dt \le r^{5-n}\int_{\Omega_r}|\mathcal J_{\Hess F}|\,dV\le C. \nonumber
\end{align}
Consequently,
\begin{align}
&r^{5-n}\left|\int_0^\infty\theta_\varepsilon(t/r)\int_{\Sigma_t}(|\nabla F|^2-1)W_2\,dA\,dt\right|\le C\sup_{\Omega_r}\bigl||\nabla F|^2-1\bigr|\le C\delta_r, \nonumber\\
&r^{5-n}\int_0^\infty\theta_\varepsilon(t/r)\int_{\Sigma_t}W_2\,dA\,dt+5\int_Mb_{r,\varepsilon}\bigl(P_3-E_3(\nabla \mathfrak{b},\nabla \mathfrak{b})\bigr)\,dV\ge-C_\varepsilon\delta_r. \nonumber
\end{align}
\end{proof}

\begin{lemma}\label{audit:lem:combined-boundary}
Let $(M^6,g)$ be a smooth, complete, connected, noncompact
Riemannian manifold and with $\sec_g\ge0$. For every regular exterior level set
 define $Q_6(S)$ and
$\mathcal A_6(F,t)$ by equation~\eqref{uni:eq:common-boundary}. Then
\begin{align}
&\limsup_{r\to\infty}\frac1r\int_r^{2r} \int_{\Sigma_t}\left(\det S+\frac14Q_6(S)\right)dA\,dt \le\pi^3\chi(M). \label{six:eq:averaged-boundary}
\end{align}
\end{lemma}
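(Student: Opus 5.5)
The plan is to average the Chern boundary identity of Lemma~\ref{six:lem:CGB} over levels $t$ against the smooth upper cutoff $\theta_\varepsilon(t/r)$. The combined lower bound of Lemma~\ref{lem-general-ineq-for-curvature-2-3-order-terms} then absorbs the signed $W_2$ and $P_3$ terms. For every sufficiently large regular level, $D_t$ is a compact smooth six-manifold with boundary $\Sigma_t$ and $\chi(D_t)=\chi(M)$ by Lemma~\ref{tp:lem:level-Euler}, so \eqref{six:eq:CGB} gives
\begin{align}
&\mathcal A_6(F,t)=\pi^3\chi(M)-\frac38\left(\int_{\Sigma_t}W_2(S)\,dA+5\int_{D_t}P_3\,dV\right). \nonumber
\end{align}
Multiply by $r^{-1}\theta_\varepsilon(t/r)$ and integrate in $t$. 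With $n=6$, Fubini gives
\begin{align}
&\int_0^\infty r^{-1}\theta_\varepsilon(t/r)\int_{D_t}P_3\,dV\,dt=\int_M\left(\int_{F(x)/r}^\infty\theta_\varepsilon(s)\,ds\right)P_3\,dV=\int_Mb_{r,\varepsilon}P_3\,dV. \nonumber
\end{align}
Since $E_3=0$ in dimension six (Lemma~\ref{lem-E3-property}), the bracketed term becomes exactly the left side of \eqref{audit:eq:general-combined-lower}. One has to check that $b_{r,\varepsilon}$ is constant below the support of $\theta_\varepsilon$; this makes the small-$t$ contributions harmless, because only levels with $t/r\in\supp\theta_\varepsilon$ enter. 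That lemma then yields
\begin{align}
&\frac1r\int_0^\infty\theta_\varepsilon(t/r)\mathcal A_6(F,t)\,dt\le\pi^3\chi(M)\int\theta_\varepsilon+\frac38C_\varepsilon\delta_r=\pi^3\chi(M)(1+2\varepsilon)+\frac38C_\varepsilon\delta_r. \nonumber
\end{align}

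Next, pass from the smooth cutoff to the sharp interval $[r,2r]$. Since $\theta_\varepsilon\ge\mathbf 1_{[1,2]}$, we get
\begin{align}
&\frac1r\int_r^{2r}\mathcal A_6\,dt\le\frac1r\int\theta_\varepsilon(t/r)\mathcal A_6\,dt+\frac1r\int\bigl(\theta_\varepsilon(t/r)-\mathbf 1_{[1,2]}(t/r)\bigr)\bigl(\mathcal A_6\bigr)_-\,dt. \nonumber
\end{align}
The last term is controlled by the negative part of the boundary polynomial. Because $Q_6/4$ carries the coefficient $1/(n-2)$ and $(\mathcal A_6)_-\le\int_{\Sigma_t}\bigl((\det S)_-+(Q_6)_-\bigr)\,dA$, the integral over $t\in[(1-2\varepsilon)r,(2+2\varepsilon)r]$ is at most $\frac1r\int_T^\infty G(t)\,dt\to0$, by the integrability established in the proof of Lemma~\ref{six:lem:compact}.

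The order of limits is the essential point. First let $r\to\infty$ with $\varepsilon$ fixed, using $\delta_r\to0$ from Lemma~\ref{lem-general-ineq-for-curvature-2-3-order-terms}; this gives $\limsup\le\pi^3\chi(M)(1+2\varepsilon)$. Only then let $\varepsilon\downarrow0$. This sequencing is necessary because $C_\varepsilon$ blows up as $\varepsilon\to0$.

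The main obstacle I expect is bookkeeping rather than anything conceptual. The Chern identity must be applied on every level in the $t$-support, which requires regular values. This holds for $t\ge t_0$ since $F$ has no critical points there, and $r(1-2\varepsilon)>t_0$ for large $r$. One also needs the measurability and continuity of $t\mapsto\mathcal A_6(F,t)$ to justify Fubini; smoothness on regular bands supplies this. Finally, one must verify that the normalization $r^{6-n}=1$ in $b_{r,\varepsilon}$ matches the $1/r$ average, so that the combined quantity appears with the exact coefficient $\tfrac38$.
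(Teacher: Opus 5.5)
Your proposal is correct and follows the paper's proof essentially step for step. You average the Chern identity of Lemma~\ref{six:lem:CGB} against $\theta_\varepsilon(t/r)$, use Fubini to produce $\int_M b_{r,\varepsilon}P_3\,dV$, apply the combined lower bound with $E_3=0$, add a negative-part correction to pass to $[r,2r]$, and let $r\to\infty$ before $\varepsilon\downarrow0$. The only cosmetic difference is that you bound the correction directly by $\frac1r\int_T^\infty G(t)\,dt$ from inside the proof of Lemma~\ref{six:lem:compact}, whereas the paper applies that lemma's statement at radii $r/2$ and $2r$; both are valid.
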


\begin{proof}
For $n=6$, $E_3=0$ and $r^{6-n}=1$. Lemma~\ref{lem-general-ineq-for-curvature-2-3-order-terms}, equation~\eqref{audit:eq:general-combined-lower}, gives
\begin{align}
&\frac1r\int_0^\infty\theta_\varepsilon(t/r) \int_{\Sigma_t}W_2\,dA\,dt +5\int_M b_{r,\varepsilon}P_3\,dV\ge-C_\varepsilon\delta_r. \nonumber
\end{align}
For fixed $r$,
\begin{align}
&\frac1r\int_0^\infty\theta_\varepsilon(t/r)\int_{D_t}|P_3|\,dV\,dt\le(1+2\varepsilon)\Vol(D_{5r/2})\sup_{D_{5r/2}}|P_3|<\infty. \nonumber
\end{align}
Thus Fubini gives
\begin{align}
&\frac1r\int_0^\infty\theta_\varepsilon(t/r) \int_{D_t}P_3\,dV\,dt =\int_M b_{r,\varepsilon}P_3\,dV. \nonumber
\end{align}
By Lemma~\ref{six:lem:CGB}, equation~\eqref{six:eq:CGB}, and $\chi(D_t)=\chi(M)$ on $\supp\theta_\varepsilon(t/r)$,
\begin{align}
&\frac1r\int_0^\infty\theta_\varepsilon(t/r)\mathcal A_6(F,t)\,dt=\pi^3\chi(M)(1+2\varepsilon) \nonumber\\
&{}-\frac38\left(\frac1r\int_0^\infty\theta_\varepsilon(t/r)\int_{\Sigma_t}W_2\,dA\,dt+5\int_Mb_{r,\varepsilon}P_3\,dV\right). \nonumber
\end{align}
Hence
\begin{align}
&\frac1r\int_0^\infty\theta_\varepsilon(t/r) \mathcal A_6(F,t)\,dt \le\pi^3\chi(M)(1+2\varepsilon)+C_\varepsilon\delta_r. \label{one:eq:fixed-topological-bound}
\end{align}

By \eqref{uni:eq:common-boundary} and Lemma~\ref{six:lem:compact}, equation~\eqref{six:eq:negative-determinants}, applied at radii $r/2$ and $2r$,
\begin{align}
&\mathcal A_6(F,t)_-\le\int_{\Sigma_t}\bigl((\det S)_-+(Q_6(S))_-/4\bigr)\,dA, \nonumber\\
&\lim_{r\to\infty}\frac1r\left(\int_{r/2}^{r}\mathcal A_6(F,t)_-\,dt+\int_{2r}^{4r}\mathcal A_6(F,t)_-\,dt\right)=0. \nonumber
\end{align}
On regular compact level bands, $\mathcal A_6(F,\cdot)\in L^1_{\mathrm{loc}}$. For large $r$, Lemma~\ref{lem-general-ineq-for-curvature-2-3-order-terms}, equation~\eqref{interval:eq:upper-cutoff}, gives
\begin{align}
&0\le\theta_\varepsilon(t/r)-\mathbf1_{[r,2r]}(t)\le1,\qquad\supp\bigl(\theta_\varepsilon(\cdot/r)-\mathbf1_{[r,2r]}\bigr)\subset[r/2,r]\cup[2r,4r], \nonumber\\
&\frac1r\int_0^\infty\bigl(\theta_\varepsilon(t/r)-\mathbf1_{[r,2r]}(t)\bigr)\mathcal A_6(F,t)\,dt \nonumber\\
&{}\ge-\frac1r\left(\int_{r/2}^{r}\mathcal A_6(F,t)_-\,dt+\int_{2r}^{4r}\mathcal A_6(F,t)_-\,dt\right). \nonumber
\end{align}
Therefore
\begin{align}
&\frac1r\int_r^{2r}\mathcal A_6(F,t)\,dt\le\frac1r\int_0^\infty\theta_\varepsilon(t/r)\mathcal A_6(F,t)\,dt \nonumber\\
&\qquad+\frac1r\left(\int_{r/2}^{r}\mathcal A_6(F,t)_-\,dt+\int_{2r}^{4r}\mathcal A_6(F,t)_-\,dt\right). \nonumber
\end{align}
For each fixed $\varepsilon$, take $r\to\infty$ in \eqref{one:eq:fixed-topological-bound}:
\begin{align}
&\limsup_{r\to\infty}\frac1r\int_r^{2r}\mathcal A_6(F,t)\,dt \le\pi^3\chi(M)(1+2\varepsilon). \nonumber
\end{align}
Then take $\varepsilon\downarrow0$:
\begin{align}
&\limsup_{r\to\infty}\frac1r\int_r^{2r}\mathcal A_6(F,t)\,dt\le\lim_{\varepsilon\downarrow0}\pi^3\chi(M)(1+2\varepsilon)=\pi^3\chi(M). \nonumber
\end{align}
\end{proof}

\begin{proposition}\label{six:prop:Euler-bound}
Let $(M^6,g)$ be a smooth, complete, connected, noncompact
Riemannian manifold with $\sec_g\ge0$. Then
\begin{align}
&L(M,g)\le2\pi^3\bigl(\chi(M)- \AVR(M^6,g)\bigr). \label{six:eq:Euler-bound}
\end{align}
\end{proposition}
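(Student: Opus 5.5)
The plan is to combine the three averaged boundary statements already established for $n=6$: the upper bound of Lemma~\ref{audit:lem:combined-boundary}, the identification of the $Q_6$ term in Proposition~\ref{six:prop:boundary-limit}, and the determinant lower bound of Lemma~\ref{uni:lem:determinant}.

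\emph{Step 1 (determinant term).} For every sufficiently large regular level $t$, the domain $D_t$ is a compact smooth domain with outward shape operator $S$. Lemma~\ref{uni:lem:determinant}, equation~\eqref{uni:eq:determinant-volume}, gives
\[
\int_{\Sigma_t}\det S\,dA\ge 6\omega_6\AVR(M^6,g)-\int_{\Sigma_t}(\det S)_-\,dA,
\]
and $6\omega_6=\pi^3$. Averaging over $t\in[r,2r]$ and applying Lemma~\ref{six:lem:compact}, equation~\eqref{six:eq:negative-determinants}, yields
\[
\liminf_{r\to\infty}\frac1r\int_r^{2r}\int_{\Sigma_t}\det S\,dA\,dt\ge\pi^3\AVR(M^6,g).
\]
Measurability of $t\mapsto\int_{\Sigma_t}\det S\,dA$ holds on the regular exterior band, and the average is finite by the same lemma.

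\emph{Step 2 (the $Q_6$ term).} Proposition~\ref{six:prop:boundary-limit} gives
\[
\lim_{r\to\infty}\frac1r\int_r^{2r}\int_{\Sigma_t}Q_6(S)\,dA\,dt=2L(M,g).
\]
The quarter-weighted average therefore converges to $L/2$. The bound \eqref{six:eq:absolute-boundary} ensures this average is a finite, genuine integral, so it can be split off from the sum.

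\emph{Step 3 (combine).} Write the averaged integral of $\det S+\tfrac14 Q_6(S)$ as the sum of the two averages from Steps 1 and 2. Taking $\limsup$ and using Lemma~\ref{audit:lem:combined-boundary}, equation~\eqref{six:eq:averaged-boundary}, gives
\[
\pi^3\AVR(M^6,g)+\tfrac12 L(M,g)\le\liminf(\det\text{-avg})+\lim(Q\text{-avg})\le\limsup(\text{sum})\le\pi^3\chi(M).
\]
Multiplying by $2$ yields \eqref{six:eq:Euler-bound}.

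The only delicate point is the bookkeeping in Step 3: the $\liminf$ of the first average plus the genuine limit of the second must be bounded by the $\limsup$ of their sum. This works precisely because Step 2 provides a true limit rather than only a $\limsup$. I expect no further obstacle, since all the analytic work sits in the cited results.
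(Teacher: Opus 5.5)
Your proposal is correct and follows the paper's proof essentially step for step. The paper likewise combines the determinant lower bound from Lemma~\ref{uni:lem:determinant} and Lemma~\ref{six:lem:compact}, the limit $2L$ from Proposition~\ref{six:prop:boundary-limit}, and the averaged upper bound of Lemma~\ref{audit:lem:combined-boundary}, obtaining $\pi^3\AVR(M^6,g)+L/2\le\pi^3\chi(M)$ by the same $\liminf$/$\limsup$ bookkeeping.
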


\begin{proof}
Write $L=L(M,g)$. Lemmas~\ref{uni:lem:determinant} and~\ref{six:lem:compact}, equations~\eqref{uni:eq:determinant-volume} and~\eqref{six:eq:negative-determinants}, give
\begin{align}
&\lim_{r\to\infty}\frac1r\int_r^{2r}\int_{\Sigma_t}(\det S)_-\,dA\,dt=0, \nonumber\\
&\liminf_{r\to\infty}\frac1r\int_r^{2r}\int_{\Sigma_t}\det S\,dA\,dt\ge\pi^3\AVR(M^6,g). \label{lower bound of detS}
\end{align}
Proposition~\ref{six:prop:boundary-limit}, equation~\eqref{six:eq:unweighted-boundary-limit}, gives
\begin{align}
&\lim_{r\to\infty}\frac1r\int_r^{2r}\int_{\Sigma_t}Q_6(S)\,dA\,dt=2L. \label{Qs-limit}
\end{align}
Combining \eqref{lower bound of detS} and \eqref{Qs-limit} with Lemma~\ref{audit:lem:combined-boundary}, equation~\eqref{six:eq:averaged-boundary}, yields
\begin{align}
&\pi^3\AVR(M^6,g)+\frac L2\le\liminf_{r\to\infty}\frac1r\int_r^{2r}\int_{\Sigma_t}\left(\det S+\frac14Q_6(S)\right)dA\,dt \nonumber\\
&\hspace{24pt}\le\limsup_{r\to\infty}\frac1r\int_r^{2r}\int_{\Sigma_t}\left(\det S+\frac14Q_6(S)\right)dA\,dt\le\pi^3\chi(M).
\end{align}
This proves \eqref{six:eq:Euler-bound}.
\end{proof}

\begin{theorem}
\label{uni:cor:Euler-bound}\label{low:prop:numerical-bound}\label{low:thm:numerical-bound}
Let $(M^n,g)$ be smooth, complete, connected, noncompact and without
boundary, with $\sec_g\ge0$ and $3\le n\le6$. Then
\begin{align}
&L\le4\pi\omega_{n-2}\bigl(\chi(M)-\AVR(M^n,g)\bigr), \label{uni:eq:Euler-bound}\\
&L\le8\pi\omega_{n-2}(1-\AVR(M^n,g))\le8\pi\omega_{n-2}. \label{low:eq:numerical-bound}
\end{align}
The last constant is attained by
$(\mathbb S^2,h)\times\R^{n-2}$ for every smooth $h$ with $K_h\ge0$.
\end{theorem}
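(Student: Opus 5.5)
The plan is to reduce everything to the six-dimensional Euler estimate of Proposition~\ref{six:prop:Euler-bound} by taking a Euclidean product, and then to get the second bound from the topological information in Lemma~\ref{low:lem:Euler-topology}.

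\textbf{Step 1: the Euler bound \eqref{uni:eq:Euler-bound}.} Given $(M^n,g)$ with $3\le n\le6$, set $q=6-n$ and consider $\widehat M=M\times\R^q$ with the product metric. It is complete, connected, noncompact, without boundary, and has $\sec\ge0$. Proposition~\ref{six:prop:Euler-bound} therefore gives
\[
L(\widehat M)\le2\pi^3\bigl(\chi(\widehat M)-\AVR(\widehat M)\bigr).
\]
Lemma~\ref{low:lem:Euclidean-products}, with $X=M$ and $m=n\ge3$ (so $\ell_X=L(M,g)$ exists by Theorem~\ref{thm:direct-main}), supplies three identities:
\[
\chi(\widehat M)=\chi(M),\qquad \AVR(\widehat M)=\AVR(M^n,g),\qquad L(\widehat M)=\frac{\omega_4}{\omega_{n-2}}L(M,g).
\]
Since $\omega_4=\pi^2/2$, we have $2\pi^3=4\pi\omega_4$. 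Dividing by $\omega_4/\omega_{n-2}$ yields $L\le4\pi\omega_{n-2}(\chi(M)-\AVR)$. For $n=6$ no product is needed.

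\textbf{Step 2: the bound $L\le8\pi\omega_{n-2}(1-\AVR)$.} If $L=0$ this is trivial, because $\AVR\le1$. If $L>0$, Lemma~\ref{low:lem:Euler-topology}, \eqref{low:eq:Euler-volume-comparison}, gives
\[
\chi(M)-\AVR\le2(1-\AVR).
\]
Combining this with Step 1 gives $L\le8\pi\omega_{n-2}(1-\AVR)$. The final inequality $8\pi\omega_{n-2}(1-\AVR)\le8\pi\omega_{n-2}$ follows from $\AVR\ge0$.

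\textbf{Step 3: attainment.} For $(\mathbb S^2,h)\times\R^{n-2}$, apply Lemma~\ref{low:lem:Euclidean-products} with $m=2$ and $q=n-2$. In that case $\ell_X=\int_{\mathbb S^2}\Sc_h\,dA_h=8\pi$ by Gauss--Bonnet, with $\omega_0=1$. Hence $L=8\pi\omega_{n-2}$, and $\AVR=0$ since $\mathbb S^2$ is compact, so $1-\AVR=1$. This is also recorded as the converse part of Theorem~\ref{ext:thm:positive-soul}.

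\textbf{Main obstacle.} The only delicate point is the bookkeeping in Lemma~\ref{low:lem:Euclidean-products}: the $\AVR$ of the product must coincide with that of $M$, and the dimensional ratio $\omega_4/\omega_{n-2}$ must match $2\pi^3$ versus $4\pi\omega_{n-2}$. The analytic content is already contained in Proposition~\ref{six:prop:Euler-bound}.
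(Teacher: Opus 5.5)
Your proposal is correct and follows the paper's proof essentially step for step. The product $M\times\R^{6-n}$, together with Lemma~\ref{low:lem:Euclidean-products} and Proposition~\ref{six:prop:Euler-bound} (using $2\pi^3=4\pi\omega_4$), gives \eqref{uni:eq:Euler-bound}; the split into $L=0$ and $L>0$, with Lemma~\ref{low:lem:Euler-topology} in the second case, gives \eqref{low:eq:numerical-bound}; and Lemma~\ref{low:lem:Euclidean-products} with $m=2$, combined with Gauss--Bonnet, gives the attainment.
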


\begin{proof}
By Lemma~\ref{low:lem:Euclidean-products}, \eqref{ext:eq:Euclidean-product}, the Euler characteristic, volume ratio and scalar limit of $M\times\mathbb R^{6-n}$ are
\begin{align}
&\chi(M),\qquad \AVR(M^n,g),\qquad \frac{\omega_4}{\omega_{n-2}}L. \nonumber
\end{align}
Proposition~\ref{six:prop:Euler-bound}, \eqref{six:eq:Euler-bound}, gives
\begin{align}
&\frac{\omega_4}{\omega_{n-2}}L\le2\pi^3(\chi(M)-\AVR(M^n,g))=4\pi\omega_4(\chi(M)-\AVR(M^n,g)), \nonumber\\
&L\le4\pi\omega_{n-2}(\chi(M)-\AVR(M^n,g)). \nonumber
\end{align}
For $L=0$, $0\le \AVR(M^n,g)\le1$ gives \eqref{low:eq:numerical-bound}. For $L>0$, Lemma~\ref{low:lem:Euler-topology}, \eqref{low:eq:Euler-volume-comparison}, gives
\begin{align}
&\chi(M)-\AVR(M^n,g)\le2(1-\AVR(M^n,g)), \nonumber\\
&L\le8\pi\omega_{n-2}(1-\AVR(M^n,g))\le8\pi\omega_{n-2}. \nonumber
\end{align}
Lemma~\ref{low:lem:Euclidean-products}, \eqref{ext:eq:Euclidean-product}, and Gauss--Bonnet \cite[Section~4-5]{doCarmo}, give
\begin{align}
&L((\mathbb S^2,h)\times\mathbb R^{n-2})=\omega_{n-2}\int_{\mathbb S^2}\Sc_h\,dA_h=8\pi\omega_{n-2}. \nonumber
\end{align}
\end{proof}

\begin{theorem}\label{lowdim:thm:sharp}
Let $(M^n,g)$ be a smooth, complete, connected, noncompact
Riemannian manifold without boundary, with $3\le n\le6$ and
$\sec_g\ge0$. 
Then
\begin{align}
&L(M,g)=8\pi\omega_{n-2}(1-\AVR(M^n,g)) \nonumber\\
&\quad\Longleftrightarrow\quad (M,g)\cong\R^n\ \text{or}\ (M,g)\cong(\mathbb S^2,h)\times\R^{n-2},\qquad K_h\ge0. \label{low:eq:volume-equality}
\end{align}
Here $\cong$ denotes a global Riemannian isometry, and each Euclidean
factor carries its Euclidean metric. 
\end{theorem}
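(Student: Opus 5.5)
The converse direction is immediate. For $(M,g)\cong\R^n$ we have $L=0$ and $\AVR=1$, so both sides vanish. For $(\mathbb S^2,h)\times\R^{n-2}$ with $K_h\ge0$, Lemma~\ref{low:lem:Euclidean-products} and Gauss--Bonnet give $L=8\pi\omega_{n-2}$. The soul is two-dimensional, so Lemma~\ref{soul:lem:hessian} gives $\AVR=0$, and equality holds.

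For the forward direction, assume $L=8\pi\omega_{n-2}(1-\AVR)$ and split according to the dimension of a soul $\mathcal S$.

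\emph{Positive-dimensional soul.} Lemma~\ref{soul:lem:hessian}, \eqref{tp:eq:tube-volume}, gives $\AVR(M^n,g)=0$, so the hypothesis reads $L=8\pi\omega_{n-2}$. This is the equality case of Theorem~\ref{ext:thm:positive-soul}, which yields $(M,g)\cong(\mathbb S^2,h)\times\R^{n-2}$ with $K_h\ge0$.

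\emph{Point soul.} Here $\chi(M)=\chi(\mathcal S)=1$. Theorem~\ref{uni:cor:Euler-bound}, \eqref{uni:eq:Euler-bound}, gives
\begin{align}
&L\le4\pi\omega_{n-2}(1-\AVR). \nonumber
\end{align}
Combining this with the equality assumption gives $8\pi\omega_{n-2}(1-\AVR)\le4\pi\omega_{n-2}(1-\AVR)$. Hence $1-\AVR\le0$, and since $\AVR\le1$ we get $\AVR(M^n,g)=1$.

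Bishop--Gromov monotonicity then gives $\Vol B_p(r)/(\omega_n r^n)\equiv1$ for every $r>0$. The equality case of Bishop--Gromov \cite{CE} forces $B_p(r)$ to be isometric to a Euclidean ball for every $r$. Therefore $(M,g)\cong\R^n$, and in this case $L=0$ is consistent with the equality.

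Once the earlier theorems are in hand, there is no real obstacle; the proof is a short case analysis. The only point needing care is the point-soul case. There one must notice that the Euler bound with $\chi(M)=1$ carries the smaller coefficient $4\pi\omega_{n-2}$, so equality can only survive if $1-\AVR=0$. The rigidity then rests entirely on the equality case of volume comparison. One should also check that Theorem~\ref{ext:thm:positive-soul} applies in every dimension $3\le n\le6$ without the bound on $n$; it does, since that theorem holds for all $n\ge3$.
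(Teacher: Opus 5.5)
Your proposal is correct and follows the paper's proof essentially step for step. The converse uses Lemma~\ref{low:lem:Euclidean-products} with Gauss--Bonnet; the positive-dimensional soul case uses $\AVR(M^n,g)=0$ and the equality case of Theorem~\ref{ext:thm:positive-soul}; and the point-soul case inserts $\chi(M)=1$ into \eqref{uni:eq:Euler-bound} to force $\AVR(M^n,g)=1$, then applies Bishop--Gromov rigidity. The only cosmetic difference is that you derive $\AVR(M^n,g)=1$ directly from the two inequalities, whereas the paper separately rules out $\AVR(M^n,g)<1$.
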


\begin{proof}
Suppose first that $\dim\mathcal S=0$. The soul theorem, gives $\chi(M)=1$. Hence Theorem~\ref{low:thm:numerical-bound}, equation~\eqref{uni:eq:Euler-bound}, yields
\begin{align}
&L\le4\pi\omega_{n-2}(1-\AVR(M^n,g))\le4\pi\omega_{n-2}<8\pi\omega_{n-2}. \nonumber
\end{align}
If $\AVR(M^n,g)<1$, then
\begin{align}
&L\le4\pi\omega_{n-2}(1-\AVR(M^n,g))<8\pi\omega_{n-2}(1-\AVR(M^n,g)). \nonumber
\end{align}
If $\AVR(M^n,g)=1$, Bishop--Gromov volume comparison and its equality case \cite{CE} give
\begin{align}
&1\ge\frac{\Vol_gB_p(r)}{\omega_nr^n}\ge \AVR(M^n,g)=1\quad(r>0),\qquad (M,g)\cong\mathbb R^n,\qquad L=0. \nonumber
\end{align}
This proves the point-soul alternative in \eqref{low:eq:volume-equality}.

Suppose now that $\dim\mathcal S>0$. Lemma~\ref{soul:lem:hessian} gives $\AVR(M^n,g)=0$, so
\begin{align}
&L=8\pi\omega_{n-2}(1-\AVR(M^n,g))\quad\Longleftrightarrow\quad L=8\pi\omega_{n-2}. \nonumber
\end{align}
Theorem~\ref{ext:thm:positive-soul}, equation~\eqref{ext:eq:positive-soul-equality}, classifies both equalities as
\begin{align}
&(M,g)\cong(\mathbb S^2,h)\times\mathbb R^{n-2},\qquad K_h\ge0. \nonumber
\end{align}
Conversely, Euclidean space and these products have
\begin{align}
&(v_{\mathbb R^n},L(\mathbb R^n))=(1,0), \nonumber\\
&(v_{(\mathbb S^2,h)\times\mathbb R^{n-2}},L((\mathbb S^2,h)\times\mathbb R^{n-2}))=(0,8\pi\omega_{n-2}), \nonumber
\end{align}
by Lemma~\ref{low:lem:Euclidean-products}, equation~\eqref{ext:eq:Euclidean-product}, and Theorem~\ref{ext:thm:positive-soul}. This proves both equivalences. 
\end{proof}

\section{Compact sections and the curvature normalization}\label{cmp:sec:compact}

%\subsection{Objects, external inputs, and dependence of the proof}
In this section $m\ge3$ denotes the dimension of a \emph{compact}
manifold; the corresponding noncompact cone has dimension $m+1$.
Let $(N^m,h)$ be smooth, connected, compact and without boundary. Put
\begin{align}
&J(h)=\int_N\Sc_h\,dV_h, V(h)=\Vol_h(N), \nonumber\\
&\Area(\mathbb S^m,g_{\mathbb S^m})=(m+1)\omega_{m+1}, c_m=(m-1)(m-2). \label{cmp:eq:data}
\end{align}
The round metric $g_{\mathbb S^m}$ has sectional curvature one.
The curvature operator $\mathscr R_h$ is the self-adjoint endomorphism
of $\Lambda^2TN$ defined by
\begin{align}
&\langle\mathscr R_h(X\wedge Y),Z\wedge W\rangle =\langle\Rm_h(X,Y)W,Z\rangle. \nonumber
\end{align}
Write $G_h$ for the curvature tensor whose operator is the identity,
so $(G_h)^{ab}_{cd}=\delta^a_c\delta^b_d-\delta^a_d\delta^b_c$
in an orthonormal frame. A curvature tensor $A$ is called
operator-nonnegative here precisely when its operator is positive
semidefinite on \emph{all} of $\Lambda^2TN$.

For $k_0>0$, let $\mathcal C_m^{\mathrm{op}}(k_0)$ be the class of all
pairs $(N^m,h)$ with the smoothness, connectedness, compactness and
empty-boundary conditions above and with
$\mathscr R_h\ge k_0\Id$. Define $\mathcal C_m^{\mathrm{sec}}(k_0)$
by the same conditions, replacing the operator inequality by
$K_h(\Pi)\ge k_0$ for every tangent two-plane $\Pi$ at every point.
All suprema over compact manifolds in this section refer to these
classes; no orientability or simple-connectedness condition is imposed.

For a curvature tensor $A$ on an $m$-dimensional Euclidean space,
put $P_0(A)=1$ and, for $1\le j\le\lfloor m/2\rfloor$, define
\begin{align}
&P_j(A)=\frac{1}{2^j(2j)!} \delta^{i_1\cdots i_{2j}}_{l_1\cdots l_{2j}} \prod_{s=1}^j A^{l_{2s-1}l_{2s}}_{i_{2s-1}i_{2s}}. \label{cmp:eq:Pk-definition}
\end{align}
All indices range from $1$ to $m$. In particular
$P_1(A)=\Sc(A)/2$, and $P_2,P_3$ agree with their earlier definitions
in this manuscript.

An expanding gradient Ricci soliton is a triple $(X,G,f)$ with a
complete smooth metric $G$, a smooth real function $f$, and
$\Hess_Gf=\Ric_G+G/2$. The asymptotic coordinate estimates used for
these triples are stated explicitly in Lemma~\ref{cmp:lem:expander},
equation~\eqref{cmp:eq:AC-estimates}.

The following four cited theorems are used with the stated hypotheses.
\begin{externaltheorem}\label{ext:thm:closed-CGB}
For an oriented closed Riemannian $2k$-manifold $Y$, the Euler
integral formula, with $P_k$ defined in
\eqref{cmp:eq:Pk-definition}, is
\begin{align}
&\int_Y P_k(\Rm_Y)\,dV_Y=\frac{(2k+1)\omega_{2k+1}}2\chi(Y). \label{cmp:eq:closed-Euler}
\end{align}
\end{externaltheorem}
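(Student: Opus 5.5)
The plan is to deduce \eqref{cmp:eq:closed-Euler} from the closed case of Lemma~\ref{general:lem:CGB-boundary} and to fix the constant by a round-sphere computation. Put $m=k$ and $D=Y$ in that lemma. Since $\partial Y=\varnothing$, equation~\eqref{general:eq:CGB-boundary} reduces to
\begin{align}
&\pi^k\chi(Y)=\frac1{2^{2k}k!}\int_Y\Psi_k. \nonumber
\end{align}
In dimension $2k$ the form $\Psi_k$ of \eqref{pole:eq:Psi} contains no $\theta$ factors. It is therefore the pullback from $Y$ of the alternating sum $\sum_{\sigma}\epsilon(\sigma)\Omega_{\sigma(1)\sigma(2)}\wedge\cdots\wedge\Omega_{\sigma(2k-1)\sigma(2k)}$. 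This sum is a fixed multiple of the Pfaffian of the curvature form, and hence $\Psi_k$ descends to a top form on $Y$.

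The first step is the pointwise algebraic identity $\Psi_k=c_k\,P_k(\Rm_Y)\,dV_Y$ for a universal constant $c_k$. In an oriented orthonormal coframe, $\Omega_{ab}=\tfrac12\sum_{c,d}R^{ab}{}_{cd}\,e^c\wedge e^d$. Expanding the wedge product $e^{c_1}\wedge\cdots\wedge e^{c_{2k}}=\epsilon^{c_1\cdots c_{2k}}dV$ and using $\epsilon^{a_1\cdots a_{2k}}\epsilon_{c_1\cdots c_{2k}}=\delta^{a_1\cdots a_{2k}}_{c_1\cdots c_{2k}}$, one obtains the full contraction $\delta^{I}_{L}\prod_s R^{l_{2s-1}l_{2s}}_{i_{2s-1}i_{2s}}$ that defines $P_k$ in \eqref{cmp:eq:Pk-definition}. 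This identifies the expressions up to a combinatorial factor. I will not track that factor by hand. Instead I will determine it from a model, because both sides are universal multiples of the same invariant polynomial.

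The second step is the model computation on the round sphere $\mathbb S^{2k}$, where $\Rm=G$. Each factor $G^{l_{2s-1}l_{2s}}_{i_{2s-1}i_{2s}}=\delta\delta-\delta\delta$ contributes two terms that are equal under the alternation, so
\begin{align}
&\delta^{I}_{L}\prod_{s=1}^kG^{l_{2s-1}l_{2s}}_{i_{2s-1}i_{2s}}=2^k\delta^{i_1\cdots i_{2k}}_{i_1\cdots i_{2k}}=2^k(2k)!. \nonumber
\end{align}
Hence $P_k(G)=1$, and
\begin{align}
&\int_{\mathbb S^{2k}}P_k\,dV=\Area(\mathbb S^{2k})=(2k+1)\omega_{2k+1}. \nonumber
\end{align}
Since $\chi(\mathbb S^{2k})=2$, the identity $\int_YP_k\,dV_Y=\kappa_k\chi(Y)$ on the sphere forces $\kappa_k=(2k+1)\omega_{2k+1}/2$.

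Combining the two steps, the closed Gauss--Bonnet--Chern formula gives $\int_YP_k\,dV_Y=\kappa_k\chi(Y)$ with a universal $\kappa_k$, and the model calculation fixes this constant, which is \eqref{cmp:eq:closed-Euler}. The only delicate point is the orientation and sign convention linking $\Omega_{ij}(X,Y)=\langle\Rm(X,Y)e_j,e_i\rangle$ to $R^{ab}{}_{cd}$. A wrong sign there would make $\kappa_k$ negative, but the sphere check detects such an error. The check works because $\Psi_k$ on a positively oriented frame of the round sphere has positive integral, as required by $\chi=2$ in the boundaryless formula.
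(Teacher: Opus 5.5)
Your proposal is correct and follows essentially the paper's own justification: the closed Chern--Gauss--Bonnet formula makes $\int_YP_k\,dV_Y$ a universal multiple of $\chi(Y)$, and the unit sphere, where $P_k(G)=1$, fixes that multiple. The only difference is that you obtain the closed formula from Lemma~\ref{general:lem:CGB-boundary} with empty boundary, whereas the paper cites Chern directly. Your sign worry is moot: with the paper's conventions $\Omega_{ab}=\tfrac12R^{ab}{}_{cd}\,e^c\wedge e^d$ exactly, so your computation gives $\Psi_k=(2k)!\,P_k\,dV$ (compare $\Psi_3=720P_3\,dV$ in Lemma~\ref{six:lem:CGB}), and in any case your sphere calibration uses only the proportionality, not the sign of the constant.
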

\noindent\emph{Source.} This is the normalization of \cite[equations~(10) and~(18), pp.~677--678]{ChernCurvatura1945}; see also \cite[pp.~747--752]{Chern1944}; its value on the unit
$2k$-sphere fixes the coefficient. No sign of the Euler integrand is
part of this imported statement.

\begin{externaltheorem}\label{ext:thm:expanders}
For smooth simply connected compact $(N^m,h)$ with
$\mathscr R_h>\Id$, Deruelle's existence theorem
\cite[Theorem~1.3, p.~2]{Deruelle2016} supplies a complete expanding gradient
Ricci soliton $(X^{m+1},G,f)$ with positive curvature operator and smooth
asymptotic cone $dr^2+r^2h$. For the compactness statement, fix an integer $n\ge3$, $V_0>0$, and
finite constants $\Lambda_k\ge0$ for every integer $k\ge0$. Consider
complete expanders $(X^n,G,f,p)$ normalized by
\begin{align}
&\Hess_G f=\Ric_G+\frac12G,\qquad \int_X e^{-f}\,dV_G=(4\pi)^{n/2},\qquad \{x\in X:\nabla f(x)=0\}=\{p\}. \nonumber
\end{align}
Assume their curvature operators are nonnegative and
\begin{align}
&\lim_{R\to\infty}R^{-n}\Vol_G B_p(R)\ge V_0,\qquad \limsup_{d_G(p,x)\to\infty}d_G(p,x)^{2+k}|\nabla^k\Rm_G(x)|_G\le\Lambda_k\qquad(k\ge0). \nonumber
\end{align}
Then every sequence has a subsequence converging to an expander in the
same class, smoothly in the pointed sense and in the smooth conical
topology of \cite[Theorem~4.9, p.~51]{Deruelle2016}; in particular the
asymptotic cones converge in that topology. The normalization of the
volume ratio here is the one in that source, without division by
$\omega_n$. Its potential normalization is specified on p.~53.
Remark~4.11 (p.~52) relates the displayed curvature-derivative bounds to
uniform bounds on the covariant curvature derivatives of the links.

The non-strict endpoint used below is deduced in
Lemma~\ref{cmp:lem:expander}, not inferred from the strict theorem alone.
\end{externaltheorem}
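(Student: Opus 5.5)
This statement is imported rather than proved here, so the plan is to check that its two halves are exactly the cited results of \cite{Deruelle2016}, under the normalizations fixed in the statement. No new geometric argument is needed. The work is bookkeeping: each hypothesis and each normalization (potential, volume ratio, curvature decay) must be matched with the source.

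\emph{Existence.} For the existence half, I would take $(N^m,h)$ smooth, compact and simply connected with $\mathscr R_h>\Id$. Then:
\begin{itemize}
\item I would first observe that $\mathscr R_h>\Id$ says the cone curvature operator is positive. On the cone $C(N)=\bigl((0,\infty)\times N,\,dr^2+r^2h\bigr)$ the curvature operator on planes tangent to the link is $r^{-2}(\mathscr R_h-\Id)$, and it vanishes on radial planes. So the cone has nonnegative curvature operator and is nonflat on the link directions.
\item This cone condition is exactly the hypothesis of \cite[Theorem~1.3]{Deruelle2016}.
\item That theorem produces an expanding gradient soliton $(X^{m+1},G,f)$ with $\Hess_Gf=\Ric_G+G/2$, positive curvature operator, and asymptotic cone $C(N)$, smoothly.
\item Simple connectedness of $N$ is the topological hypothesis under which the source's continuity argument runs. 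I would record it as stated and not try to remove it, since the paper's later covering argument handles $d_N>1$.
\end{itemize}

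\emph{Compactness.} For the compactness half, I would verify that the displayed class coincides with the class in \cite[Theorem~4.9]{Deruelle2016}:
\begin{itemize}
\item the normalization $\int_Xe^{-f}dV_G=(4\pi)^{n/2}$, fixing the additive constant of $f$, is the one specified on p.~53 of the source;
\item the unique critical point $p$ gives the base point for pointed convergence;
\item the lower bound $V_0$ on the unnormalized volume ratio gives noncollapsing;
\item the weighted decay bounds $d_G(p,x)^{2+k}|\nabla^k\Rm_G|\le\Lambda_k$ are the hypotheses of the source.
\end{itemize}
By \cite[Remark~4.11]{Deruelle2016}, these decay bounds are equivalent to uniform $C^k$ bounds on the links. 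Those link bounds give smooth subconvergence of the asymptotic cones.

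Theorem~4.9 then yields pointed smooth subconvergence to an expander whose cone is the limit cone. The limit remains in the class because the defining inequalities are closed under these limits: nonnegativity of the curvature operator, the decay constants, the volume bound, and the normalization of $f$, the last by dominated convergence using the Gaussian decay of $e^{-f}$.

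The main obstacle is the normalization match, especially the volume ratio: the source does not divide by $\omega_n$, and a mismatch there would silently change $V_0$. I would check it against the source's explicit formula. As the statement itself stresses, the non-strict endpoint $\mathscr R_h\ge\Id$ is not part of this citation; it is deferred to Lemma~\ref{cmp:lem:expander}.
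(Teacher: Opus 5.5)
Your proposal is correct and follows the paper, which likewise gives no argument beyond matching the hypotheses and normalizations to \cite[Theorem~1.3, Theorem~4.9, Remark~4.11, p.~53]{Deruelle2016}, and defers the non-strict case $\mathscr R_h\ge\Id$ to Lemma~\ref{cmp:lem:expander}. One wording fix: the cone's curvature operator is not positive, since it vanishes on $\partial_r\wedge TN$, and ``nonnegative and nonflat'' is strictly weaker than $\mathscr R_h>\Id$, so you should match the hypothesis of Theorem~1.3 directly on the link rather than through that cone reformulation.
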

\noindent\emph{Source.} All theorem numbers and pages refer to the 55-page author version of
\cite{Deruelle2016} specified in the bibliography.
The compactness assumptions are the bounds in Theorem~4.9, p.~51;
no compactness assertion for arbitrary expanders is being invoked.

\begin{externaltheorem}\label{ext:thm:expander-dichotomy}
A nonflat complete expanding gradient Ricci soliton with
nonnegative curvature operator and a smooth compact asymptotic link
is either positive on all two-forms or is K\"ahler and positive on
real $(1,1)$-forms. This consequence of the curvature-operator strong
maximum principle and holonomy classification is the dichotomy in
\cite[Section~4.3.3, pp.~52--53, proof of Theorems~1.3--1.4]{Deruelle2016}.
Here all curvature derivatives are bounded on compact sets, and the
curvature is globally bounded, with
$\lim_{r\to\infty}\sup_{d_G(p,x)\ge r}|\Rm_G(x)|=0$.
The image-bundle form of Hamilton's strong maximum principle is stated
in \cite[p.~1]{Kotschwar2014}, and the holonomy classification used in that argument is due to
\cite{Berger1955}. We use the dichotomy only for the expanders in Theorem~\ref{ext:thm:expanders}.
\end{externaltheorem}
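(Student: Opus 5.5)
The statement is a cited dichotomy, but I would reprove it along the lines of its source: Hamilton's strong maximum principle for the curvature operator, followed by a holonomy classification.

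\emph{Step 1: set up the flow and the image bundle.} Let $(X,G,f)$ be the expander. Write $G(t)=t\,\phi_t^*G$, $t>0$, where $\phi_t$ is the flow of $\nabla f/t$, so that $G(t)$ solves Ricci flow. It has nonnegative curvature operator, globally bounded curvature with $|\Rm_G|\to0$ at infinity, and all curvature derivatives bounded on compact sets. These are exactly the hypotheses under which the image-bundle form of the strong maximum principle \cite{Kotschwar2014} applies on a complete noncompact manifold. Conclusion: for $t>0$ the image $\operatorname{Im}\mathscr R\subset\Lambda^2TX$ is a smooth subbundle of constant rank, invariant under parallel transport, and closed under the bracket, so it is a Lie subalgebra bundle of $\mathfrak{so}(TX)$. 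By Ambrose--Singer, the holonomy algebra $\mathfrak{hol}$ is contained in $\operatorname{Im}\mathscr R$. Conversely $\operatorname{Im}\mathscr R\subset\mathfrak{hol}$, since curvature values lie in the holonomy algebra. Hence $\operatorname{Im}\mathscr R=\mathfrak{hol}$, and $\mathscr R$ is positive definite on $\mathfrak{hol}$.

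\emph{Step 2: irreducibility.} $X$ is diffeomorphic to $\mathbb R^n$, since $f$ is proper and strictly convex for a nonnegatively curved expander; in particular $X$ is simply connected. If $\mathfrak{hol}$ were reducible, de Rham would give an isometric splitting $X=X_1\times X_2$, with $f$ splitting as a sum, so each factor is an expander. The asymptotic cone would then be $C(L_1)\times C(L_2)$. This is singular along $C(L_1)\times\{o_2\}$ unless $C(L_1)$ is smooth at its vertex, and symmetrically for the other factor. So smoothness of the link forces both cones to be Euclidean. A nonnegatively curved expander with Euclidean asymptotic cone is flat, by the equality case of volume comparison applied to $\AVR=1$. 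This contradicts nonflatness, so $\mathfrak{hol}$ is irreducible.

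\emph{Step 3: Berger's list.} Apply Berger's classification \cite{Berger1955}.
\begin{itemize}
\item If $X$ were locally symmetric, $|\Rm_G|$ would be constant; decay at infinity makes it zero, so $X$ is flat, which is excluded.
\item If $\mathfrak{hol}\subset\mathfrak{sp}(k)\oplus\mathfrak{sp}(1)$ with $n=4k\ge8$, or $\mathfrak{hol}$ is one of the Ricci-flat holonomies ($\mathfrak{su}$, $\mathfrak{sp}$, $\mathfrak g_2$, $\mathfrak{spin}(7)$), then $G$ is Einstein, $\Ric_G=\lambda G$. Curvature decay gives $\lambda=0$. Ricci-flat with nonnegative curvature operator is flat, again excluded.
\item The remaining cases are $\mathfrak{hol}=\mathfrak{so}(n)=\Lambda^2$, so $\mathscr R>0$ on all two-forms; and $\mathfrak{hol}=\mathfrak u(n/2)$, so $X$ is K\"ahler and $\operatorname{Im}\mathscr R=\Lambda^{1,1}_{\mathbb R}$, on which $\mathscr R$ is positive.
\end{itemize}
This is the stated dichotomy.

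\emph{Expected main obstacle.} I expect the hard step to be justifying the strong maximum principle on a noncompact, self-similar flow, where the minimum of a quadratic form need not be attained at infinity. I would first try Kotschwar's version directly, localizing on compact sets via the bounded-curvature hypotheses. If that fails, I would use the soliton identity, which converts the flow statement into the static elliptic equation $\Delta_f\mathscr R+\mathscr R^2+\mathscr R^\#=\mathscr R$ with $\Delta_f=\Delta-\nabla_{\nabla f}$. Since $\nabla f$ is a smooth vector field on compact sets, the classical elliptic strong maximum principle then applies locally. A secondary point to check is the cone-singularity argument in Step 2, namely that the product of two nontrivial cones never has a smooth link.
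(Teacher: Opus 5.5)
Your proposal is correct and follows the route the paper attributes to its cited source. The image-bundle strong maximum principle plus Ambrose--Singer gives $\operatorname{Im}\mathscr R_G=\mathfrak{hol}$, then irreducibility (via de Rham and the smooth link) together with Berger's list leaves only $\mathfrak{so}(n)$ or $\mathfrak u(n/2)$.

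Only cosmetic fixes are needed:
\begin{itemize}
\item For $\Hess f=\Ric_G+\tfrac12G$, the diffeomorphisms $\phi_t$ are generated by $-\nabla f/t$, not $+\nabla f/t$.
\item In Step~2, the smoothness required at points of $C(L_1)\times\{o_2\}$ is that of the vertex of $C(L_2)$, not $C(L_1)$.
\item Berger's 1955 list also contains $\operatorname{Spin}(9)$; that holonomy forces local symmetry, so your first case already covers it.
\item The strong maximum principle is local, so noncompactness is not a genuine obstacle.
\end{itemize}
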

\noindent\emph{Source.} The page numbers for \cite{Deruelle2016} refer to the 55-page author version; the cited statement in \cite{Kotschwar2014} is on p.~1 of arXiv version~1.

\begin{externaltheorem}\label{ext:thm:Bishop-rigidity}
For a smooth, connected, compact $m$-manifold $(Y,k)$ without
boundary, $m\ge3$, the inequality $\Ric_k\ge(m-1)k$ implies
$\Vol_k(Y)\le(m+1)\omega_{m+1}$. Equality holds if and only if $(Y,k)$ is
isometric to the unit round sphere. The volume inequality is Bishop's
theorem, stated in \cite[Theorem~1.1, p.~1]{BrayGuiLiuZhang2019}; the precise
maximal-volume rigidity is recorded in
\cite[Introduction, p.~2, paragraph following Theorem~0.1]{ChenRongXu2019}.
For bounds retaining the covering degree, we apply this result to the
universal cover after proving its compactness and finite degree.
\end{externaltheorem}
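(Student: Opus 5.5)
The statement is classical, so I would reprove it from the Laplacian comparison for the distance function, followed by the standard equality analysis. Fix $p\in Y$, set $\rho=d_k(p,\cdot)$, and work in geodesic polar coordinates $\exp_p(t\xi)$, $\xi\in ST_pY$, with volume density $\mathcal A(t,\xi)$ relative to $dt\,d\omega(\xi)$ before the cut time $c(\xi)$.

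\emph{Inequality.} The Riccati equation for the shape operator of distance spheres, traced and combined with $\Ric_k(\partial_t,\partial_t)\ge m-1$ and Cauchy--Schwarz, gives
\begin{align}
&\partial_t\log\mathcal A\le(m-1)\cot t\qquad(0<t<c(\xi)). \nonumber
\end{align}
Using the initial normalization $\mathcal A(t,\xi)/t^{m-1}\to1$, this yields $\mathcal A(t,\xi)\le\sin^{m-1}t$. The same comparison forces $c(\xi)\le\pi$, which is Myers' theorem, since the mean curvature would otherwise blow down to $-\infty$. The cut locus has measure zero, so
\begin{align}
&\Vol_k(Y)=\int_{ST_pY}\int_0^{c(\xi)}\mathcal A\,dt\,d\omega\le m\omega_m\int_0^\pi\sin^{m-1}t\,dt. \nonumber
\end{align}
The right-hand side is the volume of the unit round $\mathbb S^m$, namely $(m+1)\omega_{m+1}$, which matches the normalization in \eqref{cmp:eq:data}.

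\emph{Rigidity.} If equality holds, every step above must be an equality. First, $c(\xi)=\pi$ for almost every $\xi$, hence for every $\xi$, because the cut time is continuous. Second, $\mathcal A=\sin^{m-1}t$ identically. Equality in the Cauchy--Schwarz step makes the shape operator umbilic, equal to $\cot t\,\Id$. The Riccati equation then gives $\Rm(\cdot,\partial_t)\partial_t=\Id$ on $\partial_t^\perp$, so the Jacobi fields vanishing at $p$ are $\sin t$ times parallel fields. Consequently the metric on $\exp_p(B_\pi(0))$ is $dt^2+\sin^2t\,g_{\mathbb S^{m-1}}$.

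Since $c\equiv\pi$, the map $\exp_p$ is a diffeomorphism from the open $\pi$-ball onto $Y$ minus its cut locus. That cut locus is $\exp_p$ of the sphere of radius $\pi$, and it is a single point, because the Jacobi fields vanish at $t=\pi$ and so $d\exp_p$ kills the entire tangent space of that sphere. This identifies $(Y,k)$ isometrically with the unit sphere, smoothly across the antipodal point. The converse direction is immediate.

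\emph{Main obstacle.} The delicate part is the rigidity bookkeeping. One must justify that the volume equality forces pointwise equalities almost everywhere and then everywhere, which uses continuity of $c$ and smoothness of $\mathcal A$ before the cut time. One must also verify that the antipodal cut locus really collapses to one point and that the resulting isometry extends smoothly there. I would handle this step by the Jacobi-field degeneration at $t=\pi$, following the standard Cheng maximal-diameter argument, rather than trying to read it off from volume counting alone.
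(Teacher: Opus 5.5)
Your argument is correct, but it does not follow the paper, because the paper gives no proof of this statement. It is recorded as an imported result: the volume inequality is taken from Bray--Gui--Liu--Zhang and the maximal-volume rigidity from Chen--Rong--Xu. It enters only through Lemma~\ref{cmp:lem:cover-volume}, where the paper first proves, by a second-variation diameter bound, that the universal cover is compact of finite degree, and then applies the cited result to it.

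Your proof replaces both citations with the classical self-contained argument:
\begin{itemize}
\item The traced Riccati inequality gives $\partial_t\log\mathcal A\le(m-1)\cot t$. This yields both $\mathcal A\le\sin^{m-1}t$ and $c(\xi)\le\pi$, so it also reproves the Myers-type bound that the paper derives separately.
\item In the equality case you force, in turn, $c\equiv\pi$ (using continuity of the cut time), umbilicity $S=\cot t\,\Id$, radial curvature $\Rm(\cdot,\partial_t)\partial_t=\Id$, Jacobi fields of the form $\sin t\,E(t)$, the round polar form of the metric on the $\pi$-ball, and collapse of the $\pi$-sphere to a single point $q$.
\end{itemize}
The citation buys brevity. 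Your route buys independence from the references and shows exactly where each equality is forced.

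The one step you leave informal is smoothness at the antipodal point. It closes in either of two standard ways:
\begin{itemize}
\item Fix $o\in\mathbb S^m$ and a linear isometry $\iota:T_o\mathbb S^m\to T_pY$, and set $\Theta(\exp_o v)=\exp_p(\iota v)$ for $|v|\le\pi$. This is well defined because both $\pi$-spheres collapse to points. It is a bijection, and $\Theta$ and $\Theta^{-1}$ are local isometries off one point each. Since $m\ge2$, curves can be pushed off a point at arbitrarily small cost in length, so $\Theta$ preserves distances, and Myers--Steenrod makes it a smooth isometry.
\item Alternatively, $d(p,q)=\pi$ gives $\operatorname{diam}Y=\pi$, and Cheng's maximal diameter theorem finishes directly.
\end{itemize}
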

\noindent\emph{Source.} Pages refer to arXiv:1903.12317v1 and arXiv:1604.06986v3, respectively, as specified in the bibliography.

For clarity, the K\"ahler alternative in Theorem~\ref{ext:thm:expander-dichotomy} means that there is a
parallel orthogonal complex structure $\mathcal I$ and the curvature
operator is positive definite on the real two-forms satisfying
$\omega(\mathcal I X,\mathcal I Y)=\omega(X,Y)$.
In both alternatives its image contains a two-form whose top exterior
power is nonzero when $\dim X$ is even.

All constants in the asymptotic estimates below depend on the fixed
link, the chosen expander, the derivative order and fixed cutoffs.
They are independent of the parameter $T$ used to exhaust and rescale
that expander. The constants in the final inequalities are explicit
functions of $m$ and, when present, $k_0$; the actual finite covering
degree $d_N$ is displayed separately. The proof has the following order:
\begin{itemize}[leftmargin=*]
\item Theorem~\ref{ext:thm:closed-CGB},
equation~\eqref{cmp:eq:closed-Euler}, and
Lemma~\ref{cmp:lem:Euler-algebra},
equation~\eqref{cmp:eq:positive-wedges}, yield the even-dimensional
estimate.
\item Theorem~\ref{ext:thm:expanders} and
Lemma~\ref{cmp:lem:conformal-cap},
equations~\eqref{cmp:eq:cap-volume}--\eqref{cmp:eq:cap-scalar}, yield
the odd-dimensional estimate.
\item Theorem~\ref{ext:thm:expander-dichotomy},
Proposition~\ref{cmp:prop:even-identity},
equation~\eqref{cmp:eq:even-deficit}, and
Lemma~\ref{cmp:lem:conformal-cap},
equation~\eqref{cmp:eq:core-lower-bound}, yield odd-dimensional round
rigidity.
\item Theorem~\ref{cmp:thm:all-operator},
equation~\eqref{cmp:eq:compact-bound}, and
Theorem~\ref{ext:thm:Bishop-rigidity} yield the maximum of $J$ and
rigidity.
\end{itemize}
None of these arguments assumes the unrestricted sharp bound in
noncompact dimension $m+1$.

\Needspace{6\baselineskip}
%\subsection{Scaling}
\begin{lemma}\label{cmp:lem:scaling}
Let $m\ge3$ and let $(N^m,h)$ be smooth, connected, compact, and
without boundary; define $J,V$ as in \eqref{cmp:eq:data}. Then, for
every $a>0$,
\begin{align}
&J(a^2h)=a^{m-2}J(h),\qquad V(a^2h)=a^mV(h). \label{cmp:eq:scaling}
\end{align}
\end{lemma}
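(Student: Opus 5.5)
This is a direct scaling computation, which I will carry out pointwise and then integrate. Fix $a>0$ and set $\tilde h=a^2h$. Since the Levi-Civita connection is unchanged by multiplying the metric by a positive constant (the Koszul formula is homogeneous of degree one in the metric on both sides), the $(1,3)$ curvature tensor $\Rm$ and the Ricci tensor $\Ric$ are unchanged. Only the contractions that use the inverse metric change.

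The first step is the scalar curvature. Contracting the unchanged Ricci tensor with $\tilde h^{-1}=a^{-2}h^{-1}$ gives
\begin{align}
\Sc_{\tilde h}=a^{-2}\Sc_h. \nonumber
\end{align}

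The second step is the volume form. In local coordinates, $\sqrt{\det\tilde h_{ij}}=a^m\sqrt{\det h_{ij}}$, so
\begin{align}
dV_{\tilde h}=a^m\,dV_h. \nonumber
\end{align}
Integrating this identity over $N$, which is compact, gives $V(a^2h)=a^mV(h)$.

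The third step combines the two. Multiplying the pointwise identities gives $\Sc_{\tilde h}\,dV_{\tilde h}=a^{m-2}\Sc_h\,dV_h$, and integrating over $N$ yields $J(a^2h)=a^{m-2}J(h)$.

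None of these steps is an obstacle. The only point that needs care is the invariance of the connection under constant rescaling. No orientability is needed, because both integrals are taken against the Riemannian density.
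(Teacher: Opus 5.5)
Your proof is correct and follows essentially the same route as the paper, which also uses $\Sc_{a^2h}=a^{-2}\Sc_h$ and $dV_{a^2h}=a^m\,dV_h$ and then integrates over $N$. Your extra justifications (the connection and Ricci tensor are invariant under constant rescaling, and no orientability is needed because the Riemannian density is used) are accurate and only make explicit what the paper leaves implicit.
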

\begin{proof}
Under $h\mapsto a^2h$,
\begin{align}
&\Sc_{a^2h}=a^{-2}\Sc_h,\qquad dV_{a^2h}=a^m dV_h, \nonumber\\
&J(a^2h)=a^{m-2}J(h),\qquad V(a^2h)=a^mV(h). \nonumber
\end{align}
\end{proof}

%\subsection{Euler contractions and a compact even-dimensional identity}
\begin{lemma}
\label{cmp:lem:Euler-algebra}
Let $A$ be an algebraic curvature tensor on an $m$-dimensional
Euclidean space. If $A$ is operator-nonnegative, let
$\lambda_\alpha\ge0$ and $\omega_\alpha$ be its eigenvalues and an
orthonormal basis of eigen-two-forms. Then, for every integer
$1\le j\le\lfloor m/2\rfloor$,
\begin{align}
&P_j(A)=\frac{2^j}{(2j)!} \sum_{\alpha_1,\ldots,\alpha_j} \lambda_{\alpha_1}\cdots\lambda_{\alpha_j} |\omega_{\alpha_1}\wedge\cdots\wedge\omega_{\alpha_j}|^2\ge0. \label{cmp:eq:positive-wedges}
\end{align}
For each such $j$, the polarization of $P_j$ is nonnegative on
operator-nonnegative arguments. Consequently $P_j(A+B)\ge P_j(A)$
when $A,B\ge0$.
Independently of this sign assumption, let $G$ denote the
identity-curvature tensor on this Euclidean space.
If $m=2k$, the following polynomial identity holds for every algebraic
curvature tensor $A$, with no operator or sectional-curvature sign:
\begin{align}
&P_k(G+A) =1+\frac{\Sc(A)}{2(m-1)} +\sum_{j=2}^k b_{k,j}P_j(A), b_{k,j}=\frac{\binom{k}{j}}{\binom{2k}{2j}}>0. \label{cmp:eq:Euler-expansion}
\end{align}
For operator-nonnegative $A$ and $k\ge2$, all the terms with $j\ge2$ vanish if and only if
$P_2(A)=0$; this is equivalent to
\begin{align}
&\omega\wedge\eta=0 \quad\text{for every }\omega,\eta\in\operatorname{im}A. \label{cmp:eq:wedge-equality}
\end{align}
\end{lemma}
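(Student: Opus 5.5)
The proof splits into three parts: the positive-wedge formula \eqref{cmp:eq:positive-wedges}, the identity \eqref{cmp:eq:Euler-expansion}, and the equality characterization \eqref{cmp:eq:wedge-equality}.

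\emph{The positive-wedge formula.} I would first rewrite $P_j$ via exterior algebra. Regard $A$ as an element of $\operatorname{Sym}^2(\Lambda^2)$ and write $A=\sum_\alpha\lambda_\alpha\,\omega_\alpha\otimes\omega_\alpha$ in terms of its eigen-two-forms. With the convention $A^{ab}_{cd}=\langle A(e_c\wedge e_d),e_a\wedge e_b\rangle$, the generalized Kronecker delta contraction in \eqref{cmp:eq:Pk-definition} pairs the lower indices into $e^{i_1}\wedge\cdots\wedge e^{i_{2j}}$ and the upper indices into the matching $2j$-vector. This gives
\begin{align}
&P_j(A)=c_j\sum_{\alpha_1,\ldots,\alpha_j}\lambda_{\alpha_1}\cdots\lambda_{\alpha_j}\,\langle\omega_{\alpha_1}\wedge\cdots\wedge\omega_{\alpha_j},\omega_{\alpha_1}\wedge\cdots\wedge\omega_{\alpha_j}\rangle. \nonumber
\end{align}
The constant $c_j$ is fixed by bookkeeping. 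Each two-form $\omega=\frac12\omega_{ab}e^a\wedge e^b$ contributes factors of $2$, and the determinant contraction $\delta^{I}_{L}$ pairing a $2j$-form with itself gives $(2j)!\,|\cdot|^2$. I would pin down $c_j$ by testing on $A=G$ restricted to a $2j$-plane, where $P_j(G)=\binom{m}{2j}\cdot(\text{per-}2j\text{-plane value})$. The formula then reads $2^j/(2j)!$ as stated.

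The polarized version, with distinct operator-nonnegative arguments $A_1,\ldots,A_j$, expands in the same way into $\sum\lambda^{(1)}_{\alpha_1}\cdots\lambda^{(j)}_{\alpha_j}|\omega^{(1)}_{\alpha_1}\wedge\cdots\wedge\omega^{(j)}_{\alpha_j}|^2\ge0$. The monotonicity $P_j(A+B)\ge P_j(A)$ then follows from multilinear expansion, since every mixed term is a polarization with nonnegative arguments.

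\emph{The identity \eqref{cmp:eq:Euler-expansion}.} For $m=2k$, I would expand $P_k(G+A)$ multilinearly:
\begin{align}
&P_k(G+A)=\sum_{j=0}^k\binom kj P_k(A,\ldots,A,G,\ldots,G), \nonumber
\end{align}
with $j$ copies of $A$. Each copy of $G$ contracts one index pair. Using $\delta^{c\cdots}_{c\cdots}=(m-\ell)\delta$ repeatedly, the mixed term equals $P_j(A)$ times a combinatorial factor. I would compute that factor by evaluating at $A=G$ on $\R^{2k}$, or more directly from the count: inserting $k-j$ copies of $G$ reduces the $2k$-delta to a $2j$-delta, with factor $\frac{2^{k-j}(2j)!}{2^k(2k)!}\cdot 2^{k-j}(2k-2j)!\cdot 2^j$. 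This should simplify to $b_{k,j}=\binom kj/\binom{2k}{2j}$. The cases $j=0$ and $j=1$ are consistency checks: $P_k(G)=1$ (the Euler integrand of the unit sphere, matching Theorem~\ref{ext:thm:closed-CGB} since $\Vol(\mathbb S^{2k})=(2k+1)\omega_{2k+1}$), and $k\,P_1(A)/\binom{2k}{2}=\Sc(A)/(2(2k-1))$. The computation is purely algebraic, so no sign hypothesis is needed.

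\emph{Equality characterization.} Assume $A\ge0$ and $k\ge2$. Since every $b_{k,j}>0$ and every $P_j(A)\ge0$, the terms with $j\ge2$ all vanish exactly when each $P_j(A)=0$. If $P_2(A)=0$, then by \eqref{cmp:eq:positive-wedges} we have $\omega_\alpha\wedge\omega_\beta=0$ whenever $\lambda_\alpha\lambda_\beta>0$. The eigenforms with $\lambda>0$ span $\operatorname{im}A$, so bilinearity gives \eqref{cmp:eq:wedge-equality}. Conversely, \eqref{cmp:eq:wedge-equality} kills every wedge of length $\ge2$ among image eigenforms, so all $P_j(A)=0$ for $j\ge2$.

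The main obstacle is the constant bookkeeping in the first two parts, namely getting exactly $2^j/(2j)!$ and $b_{k,j}$. I would control it by checking both formulas on $G$ restricted to $\R^{2j}$.
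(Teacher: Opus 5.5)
Your proposal follows the paper's proof essentially step for step. It uses the spectral substitution into the alternating contraction, with polarization handled the same way, the multilinear expansion of $P_k(G+A)$ in which each $G$ factor contributes $2$ and the contractions contribute $(2k-2j)!$, and the characterization of $P_2(A)=0$ through the span of the positive eigenforms.

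There is one bookkeeping slip. Your displayed factor has a spurious leading $2^{k-j}$ where the binomial $\binom{k}{j}$ from the multilinear expansion belongs; the correct count is $\binom{k}{j}\,2^{k-j}(2k-2j)!\,2^j(2j)!/(2^k(2k)!)=b_{k,j}$. Your fallback still gives the right coefficient: at $A=G$ each mixed polarized term reduces to $P_k(G)=1$, while $P_j(G)=\binom{2k}{2j}$.
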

\begin{proof}
Substitute the spectral decomposition in \eqref{cmp:eq:Pk-definition} and alternate the upper and lower indices:
\begin{align}
&A^{ab}_{cd}=\sum_\alpha\lambda_\alpha(\omega_\alpha)^{ab}(\omega_\alpha)_{cd}, \nonumber\\
&P_j(A)=\frac{2^j}{(2j)!}\sum_{\alpha_1,\ldots,\alpha_j}\lambda_{\alpha_1}\cdots\lambda_{\alpha_j}|\omega_{\alpha_1}\wedge\cdots\wedge\omega_{\alpha_j}|^2\ge0. \nonumber
\end{align}
Separate spectral decompositions in the $j$ arguments give the same sum with nonnegative products of their eigenvalues; thus the polarization is nonnegative. Consequently
\begin{align}
&A,B\ge0\ \Longrightarrow\ P_j(A+B)\ge P_j(A). \nonumber
\end{align}
For $m=2k$, contraction of $k-j$ identity factors gives $2^{k-j}(m-2j)!$ times the remaining alternating delta. Therefore
\begin{align}
&\frac{\binom{k}{j}}{2^k m!}\,2^{k-j}(m-2j)!\,2^j(2j)!=\frac{\binom{k}{j}}{\binom{2k}{2j}}=b_{k,j}, \nonumber\\
&P_k(G+A)=\sum_{j=0}^k\frac{\binom{k}{j}}{\binom{2k}{2j}}P_j(A)=1+\frac{P_1(A)}{m-1}+\sum_{j=2}^kb_{k,j}P_j(A), \nonumber\\
&P_1(A)=\frac{\Sc(A)}2,\qquad P_j(zG)=\binom m{2j}z^j. \nonumber
\end{align}
This proves \eqref{cmp:eq:Euler-expansion}. For $A\ge0$ and $k\ge2$, \eqref{cmp:eq:positive-wedges} gives
\begin{align}
&P_2(A)=0\ \Longleftrightarrow\ \omega_\alpha\wedge\omega_\beta=0\quad\text{whenever }\lambda_\alpha\lambda_\beta>0, \nonumber\\
&\operatorname{im}A=\operatorname{span}\{\omega_\alpha:\lambda_\alpha>0\}, \nonumber\\
&P_2(A)=0\ \Longleftrightarrow\ \omega\wedge\eta=0\quad(\omega,\eta\in\operatorname{im}A) \nonumber\\
&\ \Longleftrightarrow\ P_j(A)=0\quad(2\le j\le k). \nonumber
\end{align}
The middle equivalence is bilinearity; the last follows by factoring every exterior product through its first two factors.
\end{proof}

\begin{proposition}
\label{cmp:prop:even-identity}
Let $m=2k\ge4$ and let $(N^m,h)$ be smooth, connected, compact, and
without boundary. Use the notation in \eqref{cmp:eq:data} and put
$A_h=\Rm_h-G_h$. Without a curvature assumption,
\begin{align}
&J(h)-c_mV(h) =(m-1)(m+1)\omega_{m+1}\chi(N) -2(m-1)\sum_{j=2}^k b_{k,j}\int_NP_j(A_h)\,dV_h. \label{cmp:eq:even-deficit}
\end{align}
If $\mathscr R_h\ge\Id$, the last sum is nonnegative. Its vanishing
is equivalent to $P_2(A_h)\equiv0$.
\end{proposition}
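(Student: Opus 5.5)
The plan is to integrate the algebraic Euler expansion of Lemma~\ref{cmp:lem:Euler-algebra}, equation~\eqref{cmp:eq:Euler-expansion}, against the Gauss--Bonnet--Chern formula of Theorem~\ref{ext:thm:closed-CGB}. First handle orientation: if $N$ is nonorientable, pass to the orientation double cover $\widetilde N$. There $J$, $V$, $\chi$ and every integral $\int P_j(A_h)\,dV_h$ all double, so the identity for $N$ follows from the identity for $\widetilde N$ after dividing by two. Hence we may assume $N$ is oriented.

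Apply \eqref{cmp:eq:Euler-expansion} pointwise with $A=A_h=\Rm_h-G_h$. This gives
\begin{align}
&P_k(\Rm_h)=1+\frac{\Sc(A_h)}{2(m-1)}+\sum_{j=2}^kb_{k,j}P_j(A_h). \nonumber
\end{align}
Since $\Sc(G_h)=m(m-1)$, we have $\Sc(A_h)=\Sc_h-m(m-1)$.

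Integrate over $N$ and use \eqref{cmp:eq:closed-Euler}, which says $\int_N P_k(\Rm_h)\,dV_h=\tfrac{(m+1)\omega_{m+1}}2\chi(N)$. Multiplying the result by $2(m-1)$ gives
\begin{align}
&(m-1)(m+1)\omega_{m+1}\chi(N)=2(m-1)V+J-m(m-1)V+2(m-1)\sum_{j\ge2}b_{k,j}\int_NP_j(A_h)\,dV_h. \nonumber
\end{align}
Since $2(m-1)-m(m-1)=-(m-1)(m-2)=-c_m$, this rearranges exactly to \eqref{cmp:eq:even-deficit}.

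For the sign statement, note that $\mathscr R_h\ge\Id$ means precisely that $A_h$ is operator-nonnegative at every point. Then \eqref{cmp:eq:positive-wedges} gives $P_j(A_h)\ge0$ for each $j$, and $b_{k,j}>0$, so the sum is nonnegative. If the sum vanishes, then by continuity each $P_j(A_h)$ vanishes identically, and in particular $P_2(A_h)\equiv0$. Conversely, the last part of Lemma~\ref{cmp:lem:Euler-algebra} shows that $P_2(A_h)=0$ forces $P_j(A_h)=0$ for all $2\le j\le k$, so the sum vanishes.

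The only delicate point is bookkeeping. One must check that the normalization of $P_k$ in \eqref{cmp:eq:Pk-definition} matches the constant in \eqref{cmp:eq:closed-Euler}. On the round sphere, where $A_h=0$, the identity reads $J-c_mV=m(m-1)V-(m-2)(m-1)V=2(m-1)V$, with $V=(m+1)\omega_{m+1}$ and $\chi=2$. Both sides then equal $2(m-1)(m+1)\omega_{m+1}$, which confirms the constants.
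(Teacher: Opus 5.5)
Your proposal is correct and follows the paper's proof essentially step for step: you apply \eqref{cmp:eq:Euler-expansion} pointwise to $A_h=\Rm_h-G_h$ with $\Sc(A_h)=\Sc_h-m(m-1)$, integrate against \eqref{cmp:eq:closed-Euler}, use the orientation double cover for nonorientable $N$, and invoke Lemma~\ref{cmp:lem:Euler-algebra} (\eqref{cmp:eq:positive-wedges}, \eqref{cmp:eq:wedge-equality}) with continuity for the sign and equality criterion. Your round-sphere check of the constants is a consistency test the paper does not include.
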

\begin{proof}
For oriented $N$, Lemma~\ref{cmp:lem:Euler-algebra}, \eqref{cmp:eq:Euler-expansion}, and Theorem~\ref{ext:thm:closed-CGB}, \eqref{cmp:eq:closed-Euler}, give
\begin{align}
&\Sc(A_h)=\Sc_h-m(m-1), \nonumber\\
&\frac{(m+1)\omega_{m+1}}2\chi(N)=V(h)+\frac{J(h)-m(m-1)V(h)}{2(m-1)}+\sum_{j=2}^kb_{k,j}\int_NP_j(A_h)\,dV_h, \nonumber\\
&J(h)-c_mV(h)=(m-1)(m+1)\omega_{m+1}\chi(N)-2(m-1)\sum_{j=2}^kb_{k,j}\int_NP_j(A_h)\,dV_h. \nonumber
\end{align}
For the orientation double cover $\pi:\widetilde N\to N$,
\begin{align}
&\chi(\widetilde N)=2\chi(N),\qquad \int_{\widetilde N}\pi^*(u\,dV_h)=2\int_Nu\,dV_h, \nonumber
\end{align}
and division by two gives the same identity. If $\mathscr R_h\ge\Id$, Lemma~\ref{cmp:lem:Euler-algebra}, \eqref{cmp:eq:positive-wedges} and \eqref{cmp:eq:wedge-equality}, and continuity imply
\begin{align}
&P_j(A_h)\ge0,\qquad \sum_{j=2}^kb_{k,j}\int_NP_j(A_h)\,dV_h=0\ \Longleftrightarrow\ P_2(A_h)\equiv0. \nonumber
\end{align}
\end{proof}

%\subsection{A smooth realization and compact metrics in one higher dimension}
\begin{lemma}
\label{cmp:lem:expander}
Let $m\ge3$ and let $(N^m,h)$ be smooth, connected, compact, without
boundary, and simply connected. If $\mathscr R_h\ge\Id$, there is a
complete smooth expanding gradient Ricci soliton $(X^{m+1},G,f)$ with
\begin{align}
&\Hess_Gf=\Ric_G+\tfrac12G,\qquad \mathscr R_G\ge0,\qquad f-|\nabla f|_G^2=\Sc_G. \label{cmp:eq:expander}
\end{align}
The function $f$ is proper, has exactly one critical point, and every
sufficiently large sublevel $\{f\le T\}$ is a smooth closed disk.
There exist $r_0>0$ and coordinates
$(r,y)\in(r_0,\infty)\times N$ on its end with $f=r^2/4$ such that,
for every integer $j\ge0$, there is $C_j<\infty$ with
\begin{align}
&\sup_{\{r=s\}}|\nabla_{G_C}^{\,j}(G-G_C)|_{G_C} \le C_j s^{-2-j}\quad(s\ge r_0), \qquad G_C=dr^2+r^2h. \label{cmp:eq:AC-estimates}
\end{align}
In particular $N$ is diffeomorphic to $\mathbb S^m$.
\end{lemma}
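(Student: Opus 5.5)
The plan is to obtain the expander from Theorem~\ref{ext:thm:expanders} by a limiting argument, since Deruelle's existence theorem requires the strict inequality $\mathscr R_h>\Id$ while the lemma assumes only $\mathscr R_h\ge\Id$.

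\textbf{Approximation.} First I would perturb the link. For small $\epsilon>0$ put $h_\epsilon=(1-\epsilon)h$. Under this scaling the curvature operator satisfies $\mathscr R_{h_\epsilon}=(1-\epsilon)^{-1}\mathscr R_h\ge(1-\epsilon)^{-1}\Id>\Id$. The link $N$ is smooth, compact and simply connected, so Deruelle's theorem gives complete expanders $(X_\epsilon,G_\epsilon,f_\epsilon)$. Each has positive curvature operator and smooth asymptotic cone $dr^2+r^2h_\epsilon$.

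\textbf{Compactness.} Next I would check the hypotheses of the compactness part of Theorem~\ref{ext:thm:expanders}, normalizing $f_\epsilon$ by the stated integral condition. The unique critical point follows because $\Hess f_\epsilon=\Ric+G/2>0$, so $f_\epsilon$ is strictly convex.
\begin{itemize}
\item The volume ratio equals $\Vol_{h_\epsilon}(N)/(m+1)$ in Deruelle's normalization, which is bounded below uniformly in $\epsilon$.
\item The curvature-derivative decay constants $\Lambda_k$ are controlled by uniform bounds on $\nabla^k\Rm_{h_\epsilon}$. This is Remark~4.11 cited in Theorem~\ref{ext:thm:expanders}, and those bounds hold because $h_\epsilon\to h$ smoothly.
\end{itemize}
A subsequence then converges, smoothly in the pointed sense and in the conical topology, to an expander $(X,G,f,p)$. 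Its cone is $dr^2+r^2h$, and $\mathscr R_G\ge0$ holds as a closed condition.

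\textbf{Soliton identity.} The identity $f-|\nabla f|^2=\Sc_G$ is the standard first integral of an expander. Tracing the soliton equation and using the contracted Bianchi identity shows that $\Sc+|\nabla f|^2-f$ is constant. The integral normalization, as on p.~53 of Deruelle, fixes that constant to be zero. Both the identity and the normalization pass to the limit.

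\textbf{Coordinates and decay.} The estimates \eqref{cmp:eq:AC-estimates} come from the smooth conical convergence together with the uniform constants $\Lambda_k$. They give $|\nabla^j\Rm_G|\le C r^{-2-j}$ on the end. To build the coordinates with $f=r^2/4$, I would use the gradient flow of $\nabla f/|\nabla f|^2$ on $\{f\ge T\}$, which is regular for large $T$. The relation $|\nabla f|^2=f-\Sc$ with $\Sc=O(r^{-2})$ shows that $2\sqrt f$ is asymptotically the cone radius. The standard expander asymptotics then give the metric comparison with decay $s^{-2}$, with derivatives by induction.

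\textbf{Topology.} Properness of $f$ follows from the quadratic growth. Since $f$ is strictly convex with a single critical point, a Morse-theory argument shows every large sublevel is diffeomorphic to a closed disk. Its boundary, $\{f=T\}\cong N$ via the flow, is therefore a homotopy sphere bounding a disk. I would conclude that it is diffeomorphic to $\mathbb S^m$, either directly from the flow or via Böhm--Wilking applied to $\mathscr R_h\ge\Id$.

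\textbf{Main obstacle.} I expect the hardest step to be making the decay constants and volume bound in the compactness theorem genuinely uniform in $\epsilon$. I would handle this through Remark~4.11. A second delicate point is that the limit stays nonflat and keeps the same cone. The convergence of cones in the smooth conical topology should secure both.
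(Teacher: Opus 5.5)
Your route is the paper's: rescale the link so that $\mathscr R_{h_\epsilon}>\Id$, apply Deruelle's existence theorem, then apply the conical compactness theorem (volume ratio $\Vol_{h_\epsilon}(N)/(m+1)$, derivative bounds via Remark~4.11). After that you obtain the first integral from the contracted Bianchi identity. The topology comes from strict convexity, the flow of $\nabla f/|\nabla f|^2$, and the Morse lemma at the nondegenerate minimum. Two points need correcting, though neither changes the strategy.

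\textbf{Normalization.} The integral normalization $\int_X e^{-f}\,dV_G=(4\pi)^{(m+1)/2}$ does not make the constant in $\Sc_G+|\nabla f|^2-f$ vanish. The two normalizations coincide for the Gaussian soliton, but in general they differ by an expander-dependent constant. The reason is that $\int_X e^{-f}\,dV_G$ is governed by the core, where $e^{-f}$ concentrates. A two-dimensional example shows this. For the rotationally symmetric expanders $dr^2+\phi(r)^2d\theta^2$, the soliton equation gives $f'=c\phi$ for a constant $c$. The normalization $\Sc+|\nabla f|^2=f$ then forces $f(0)=2c-1$, whence $\int e^{-f}\,dA=(2\pi/c)e^{1-2c}$. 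This equals $4\pi$ only for $c=1/2$.

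So you cannot keep both normalizations in the limit. Use the integral one only to invoke compactness, and afterwards shift $f$ by a constant so that $f-|\nabla f|^2=\Sc_G$, exactly as the paper does. The shift does not affect the soliton equation, convexity, or the critical point. Since you build the end coordinates from $f=r^2/4$ only afterwards, you simply build them from the shifted $f$. The paper remarks that the induced change $r\mapsto\sqrt{r^2+\mathrm{const}}$ preserves the decay estimates.

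\textbf{Nonflatness.} Nonflatness of the limit is neither needed nor true in general. If $h$ is the round metric, the cone is flat. The limit then has maximal asymptotic volume ratio and $\Ric_G\ge0$, so it is $\R^{m+1}$ with $f=|x-p|^2/4$. The paper treats this as a separate Ricci-flat-cone case with vanishing conical errors. Your flow-based derivation of the estimates covers it, so just drop that requirement.

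\textbf{Sphere conclusion.} The sphere conclusion needs neither a homotopy-sphere argument nor B\"ohm--Wilking. A diffeomorphism $\{f\le T\}\cong\overline B^{m+1}$ of manifolds with boundary restricts to $\{f=T\}\cong\mathbb S^m$. The end coordinates identify $\{f=T\}$ with $N$.
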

\begin{proof}
For $i\ge2$, set $h_i=(1-i^{-1})^2h$. Then
\begin{align}
&\mathscr R_{h_i}=(1-i^{-1})^{-2}\mathscr R_h>\Id,\qquad \lim_{i\to\infty}h_i=h\quad\text{in }C^\infty, \nonumber\\
&\inf_{i\ge2}V(h_i)>0,\qquad \sup_{i\ge2}\sup_N|\nabla_{h_i}^{\,j}\Rm_{h_i}|_{h_i}<\infty\quad(j\ge0). \nonumber
\end{align}
Theorem~\ref{ext:thm:expanders} supplies expanders with these links. The displayed bounds give its uniform asymptotic-volume and curvature-derivative bounds; its conical compactness yields a subsequential limit with
\begin{align}
&\Hess_Gf=\Ric_G+G/2,\qquad \mathscr R_G\ge0,\qquad \text{asymptotic cone }dr^2+r^2h. \nonumber
\end{align}
The estimates \eqref{cmp:eq:AC-estimates} are those following Definition~1.2 of \cite{Deruelle2016}. In the Ricci-flat cone case,
\begin{align}
&\Ric_h=(m-1)h,\qquad \mathscr R_h-\Id\ge0,\qquad 2\tr(\mathscr R_h-\Id)=\Sc_h-m(m-1)=0, \nonumber\\
&\mathscr R_h=\Id,\qquad (N,h)\cong(\mathbb S^m,g_{\mathbb S^m}),\qquad (X,G)=(\R^{m+1},g_{\R^{m+1}}), \nonumber
\end{align}
and the conical errors are zero.
Divergence, trace, and contracted Bianchi give
\begin{align}
&\Delta f=\Sc_G+\frac{m+1}{2},\qquad \nabla\Sc_G=-2\Ric_G(\nabla f,\cdot), \nonumber\\
&d(f-|\nabla f|^2-\Sc_G)=df-2\Hess f(\nabla f,\cdot)-d\Sc_G=0. \nonumber
\end{align}
Shift $f$ by a constant so that $f-|\nabla f|^2=\Sc_G$. The induced change $r\mapsto\sqrt{r^2+\text{constant}}$ preserves \eqref{cmp:eq:AC-estimates}.
For a unit-speed minimizing geodesic $\gamma$ from a fixed $o$,
\begin{align}
&(f\circ\gamma)''\ge\tfrac12,\qquad f(x)\ge f(o)-|\nabla f(o)|d_G(o,x)+\tfrac14d_G(o,x)^2, \nonumber\\
&f^{-1}(( -\infty,T])\ \text{compact},\qquad \{\nabla f=0\}=\{p\},\qquad \Hess f(p)\ge G(p)/2. \nonumber
\end{align}
Properness gives the minimum $p$; strict convexity gives uniqueness. For the flow $\Phi_s$ of $\nabla f/|\nabla f|^2$ on regular compact level intervals,
\begin{align}
&\frac{d}{ds}f(\Phi_s(x))=1,\qquad \Phi_{b-a}:\{f=a\}\longrightarrow\{f=b\}\ \text{a diffeomorphism}\quad(\min f<a<b). \nonumber
\end{align}
Morse coordinates at the nondegenerate minimum therefore give
\begin{align}
&\{f\le T\}\cong\overline B^{m+1}\quad(T>\min f),\qquad N\cong\partial\{f\le T\}\cong\mathbb S^m\quad(T\text{ large}). \nonumber
\end{align}
\end{proof}

\begin{lemma}
\label{cmp:lem:conformal-cap}
Assume the hypotheses of Lemma~\ref{cmp:lem:expander}, let
$(X^{m+1},G,f)$ be the expander supplied there with asymptotic link
$(N^m,h)$, and put $d=m+1\ge4$. For all sufficiently large $T$ define
on $\{f\le T\}$
\begin{align}
&\phi_T=\frac{T+f}{\sqrt T},\qquad \widehat G_T=\phi_T^{-2}G. \nonumber
\end{align}
Then $\mathscr R_{\widehat G_T}\ge\Id$. There are smooth metrics
$g_T$ on $\mathbb S^d$ satisfying $\mathscr R_{g_T}\ge\Id$ such that,
with $B_j=\int_0^\pi\sin^j\theta\,d\theta$,
\begin{align}
&\lim_{T\to\infty}\Vol(\mathbb S^d,g_T)=B_mV(h), \label{cmp:eq:cap-volume}\\
&\lim_{T\to\infty}J(g_T) =m(m+1)B_mV(h) +B_{m-2}\bigl[J(h)-m(m-1)V(h)\bigr]. \label{cmp:eq:cap-scalar}
\end{align}
Moreover, there is $C<\infty$, independent of $T$ and $\varepsilon$,
such that for every $0<\varepsilon<1/10$,
\begin{align}
&\limsup_{T\to\infty}\Vol_{\widehat G_T}\{f\le\varepsilon T\} \le C\varepsilon^{d/2},\qquad \limsup_{T\to\infty}\int_{\{f\le\varepsilon T\}} \Sc_{\widehat G_T}\,dV_{\widehat G_T} \nonumber\\
&{}\le C\bigl(\varepsilon^{d/2}+\varepsilon^{(d-2)/2}\bigr). \label{cmp:eq:tip-integrability}
\end{align}
The metrics $g_T$ may be chosen by doubling $\{f\le T\}$. There are
numbers $\gamma_T$ with $0<\gamma_T\le1$ and
$\lim_{T\to\infty}\gamma_T=1$ such that, for every fixed compact
$K\subset X$ and all sufficiently large $T$, the restriction of
$g_T$ to either embedded copy of $K$ equals
$\gamma_T\phi_T^{-2}G$. If $d=2k$, on each such copy of $K$ one has
\begin{align}
&P_k(\Rm_{g_T}-G_{g_T})\,dV_{g_T} \ge P_k(\Rm_G)\,dV_G. \label{cmp:eq:core-lower-bound}
\end{align}
\end{lemma}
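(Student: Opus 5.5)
The plan is to treat $\widehat G_T$ by the conformal-change formula for the curvature operator. Write the curvature operator of $\widehat G_T=\phi_T^{-2}G$ in a $\widehat G_T$-orthonormal frame. Using $\Hess_G\phi_T=T^{-1/2}(\Ric_G+\tfrac12G)$ and $|\nabla\phi_T|^2_G=T^{-1}|\nabla f|^2=T^{-1}(f-\Sc_G)$ from \eqref{cmp:eq:expander}, it should reduce to
\begin{align}
&\mathscr R_{\widehat G_T}=\Id+\phi_T^2\,\mathscr R_G+\frac{T+f}{T}\,\Ric_G\wedge G+\frac{\Sc_G}{T}\Id, \nonumber
\end{align}
where $\Ric_G\wedge G$ acts on $X\wedge Y$ by $\Ric_G X\wedge Y+X\wedge\Ric_G Y$. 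The normalization check is the flat case $f=|x|^2/4$: there $\phi_T^{-2}\delta=16T(4T+|x|^2)^{-2}\delta$ is the unit round metric after the substitution $x=2\sqrt T y$, so the $G\wedge G$ and $|\nabla\phi_T|^2$ contributions combine exactly to $\Id$. The three extra terms are nonnegative operators because $\mathscr R_G\ge0$ (hence $\Ric_G\ge0$ and $\Sc_G\ge0$). This gives $\mathscr R_{\widehat G_T}\ge\Id$ on $\{f\le T\}$.

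Next I would identify the limiting geometry on the end. In the coordinates of Lemma~\ref{cmp:lem:expander}, put $r=2\sqrt T s$ and then $s=\tan(\theta/2)$. Then $\widehat G_T$ converges, with errors controlled by \eqref{cmp:eq:AC-estimates} on $\{\varepsilon T\le f\le T\}$, to the sine-cone $d\theta^2+\sin^2\theta\,h$ for $\theta\in[\theta_\varepsilon,\pi/2]$. The level set $\{f=T\}$ becomes the equator $\theta=\pi/2$, which is totally geodesic in the model. For this model,
\begin{align}
&dV=\sin^m\theta\,d\theta\,dV_h,\qquad \Sc=m(m+1)+\frac{\Sc_h-m(m-1)}{\sin^2\theta}. \nonumber
\end{align}
Doubling across $\theta=\pi/2$ and integrating over $[0,\pi]$ yields exactly the right sides of \eqref{cmp:eq:cap-volume} and \eqref{cmp:eq:cap-scalar}, provided the tip is negligible.

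For the tip estimate \eqref{cmp:eq:tip-integrability}, I would use $\widehat G_T\le T^{-1}G$ on $\{f\le T\}$, which gives
\begin{align}
&\Vol_{\widehat G_T}\{f\le\varepsilon T\}\le T^{-d/2}\Vol_G\{f\le\varepsilon T\}\le C\varepsilon^{d/2}, \nonumber
\end{align}
by Bishop--Gromov and $f\ge d_G(p,\cdot)^2/4-C$. For the scalar part, trace the displayed operator to get $\Sc_{\widehat G_T}\le d(d-1)+C\phi_T^2\Sc_G+C$. Then
\begin{align}
&\int_{\{f\le\varepsilon T\}}\phi_T^{2-d}\Sc_G\,dV_G\le CT^{1-d/2}\int_{\{f\le\varepsilon T\}}\Sc_G\,dV_G\le C\varepsilon^{(d-2)/2}, \nonumber
\end{align}
since $\Sc_G\le C(1+d_G(p,\cdot))^{-2}$ by \eqref{cmp:eq:AC-estimates} and boundedness on compacts.

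The main obstacle is constructing $g_T$ on $\mathbb S^d$ by doubling while keeping $\mathscr R_{g_T}\ge\Id$. The boundary $\{f=T\}$ is only asymptotically totally geodesic: its second fundamental form in $\widehat G_T$ is $O(T^{-1})$ by \eqref{cmp:eq:AC-estimates}. I would first modify the conformal factor $\phi_T$ in a collar $\{(1-\eta)T\le f\le T\}$ so the boundary becomes exactly totally geodesic with vanishing odd normal derivatives, making the reflected metric smooth. Since the operator has the definite slack $\Ric_G\wedge G+\Sc_G\Id/T$ only at order $T^{-1}$, which is too weak, the perturbation would be compensated instead by a global rescaling $\gamma_T\le1$, $\gamma_T\to1$: $\mathscr R_{\gamma_T\widehat G}=\gamma_T^{-1}\mathscr R_{\widehat G}$ supplies slack $(\gamma_T^{-1}-1)\Id$ to absorb the $O(T^{-1})$ collar error. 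This keeps $g_T=\gamma_T\phi_T^{-2}G$ on any fixed compact $K$ for $T$ large, and the volume and scalar limits are unaffected.

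Finally, \eqref{cmp:eq:core-lower-bound} follows on copies of $K$. There $\Rm_{g_T}-G_{g_T}=\gamma_T^{-1}\mathscr R_{\widehat G_T}-\Id\ge\gamma_T^{-1}(\mathscr R_{\widehat G_T}-\Id)\ge\gamma_T^{-1}\phi_T^2\mathscr R_G$, all as operators in a $g_T$-orthonormal frame. Monotonicity of $P_k$ on operator-nonnegative tensors (Lemma~\ref{cmp:lem:Euler-algebra}) applies. Homogeneity then gives $P_k(\gamma_T^{-1}\phi_T^2\mathscr R_G)\,dV_{g_T}=P_k(\Rm_G)\,dV_G$ in dimension $d=2k$, because the factors $\gamma_T^{\pm k}$ and $\phi_T^{\pm 2k}$ cancel.
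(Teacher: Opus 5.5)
Most of your proposal follows the paper's proof step for step: the conformal identity $\mathscr R_{\widehat G_T}-\Id=\phi_T^2\mathscr R_G+\frac{T+f}{T}(\text{Ricci term})+\frac{\Sc_G}{T}\Id$, the half-angle coordinates $r=2\sqrt T\tan(\theta/2)$ producing $d\theta^2+\sin^2\theta\,h$, the tip bounds from $\phi_T\ge\sqrt T$ and Bishop--Gromov, the rescaling by $\gamma_T$, and the cancellation of $\gamma_T^{\pm k}\phi_T^{\pm 2k}$ in $P_k$. The doubling step, which you flag as the main obstacle, fails as you describe it. You cannot make $\{f=T\}$ totally geodesic by changing only the conformal factor. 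Under $g\mapsto e^{2u}g$, the trace-free part of the second fundamental form is only multiplied by $e^{u}$. For the level set $\{f=T\}$ of $G$, the second fundamental form is $(\Ric_G+\tfrac12 G)|_{T\Sigma}/|\nabla f|$, so its trace-free part is that of $\Ric_G|_{T\Sigma}$. Asymptotically this is a rescaling of the trace-free part of $\Ric_h-(m-1)h$, which is nonzero whenever $h$ is not Einstein, and that is the generic case under $\mathscr R_h\ge\Id$. In $\widehat G_T$ it is $O(T^{-1})$, as you say, but it is not zero, and no scalar factor removes it. A smooth reflection needs even more: the tangential metrics in Fermi coordinates must be even in the normal variable to all orders. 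That is a tensorial condition which a conformal factor cannot arrange.

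The missing idea is to modify the tensor itself by interpolating to the exact reflection-symmetric model. Work on a fixed collar $\theta\in[\pi/3,\pi/2]$, which lies in the end and has $\sin\theta$ bounded below. There \eqref{cmp:eq:AC-estimates} gives $\|\widehat G_T-g_s\|_{C^j}\le C_j/T$ for $g_s=d\theta^2+\sin^2\theta\,h$. Take a fixed cutoff $\zeta(\theta)$ with $\zeta=0$ for $\theta\le\pi/3$ and $\zeta=1$ near $\pi/2$, and use $(1-\zeta)\widehat G_T+\zeta g_s$ on the collar. This metric differs from $\widehat G_T$ by $O(T^{-1})$ in $C^2$, so its curvature operator is at least $(1-C/T)\Id$. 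Directly, $\mathscr R_{g_s}$ equals $\Id$ on $\partial_\theta\wedge TN$ and $\Id+\sin^{-2}\theta(\mathscr R_h-\Id)\ge\Id$ on $\Lambda^2TN$. The metric coincides with $g_s$ near the equator, and there $\sin^2(\pi/2+s)=\sin^2(\pi/2-s)$, so the double is smooth. Your rescaling by $\gamma_T=1-C/T$ then gives $\mathscr R_{g_T}\ge\Id$. The modification misses every fixed compact $K$ and lies where $\widehat G_T\to g_s$ smoothly. So the volume and scalar limits \eqref{cmp:eq:cap-volume}--\eqref{cmp:eq:cap-scalar} and the core bound \eqref{cmp:eq:core-lower-bound} go through exactly as in the rest of your proposal.
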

\begin{proof}
For a symmetric covariant tensor $B$, define $B\mathbin{\odot}G$ on $\Lambda^2TX$ by
\begin{align}
&\langle(B\mathbin{\odot}G)(X\wedge Y),Z\wedge W\rangle=B(X,Z)G(Y,W)+B(Y,W)G(X,Z) \nonumber\\
&{}-B(X,W)G(Y,Z)-B(Y,Z)G(X,W). \nonumber
\end{align}
In a $G$-orthonormal eigenbasis of $B$,
\begin{align}
&(B\mathbin{\odot}G)(e_i\wedge e_j)=(B_{ii}+B_{jj})e_i\wedge e_j,\qquad B\ge0\ \Longrightarrow\ B\mathbin{\odot}G\ge0,\qquad G\mathbin{\odot}G=2\Id. \nonumber
\end{align}
The conformal connection and its curvature are
\begin{align}
&\widehat\nabla_XY=\nabla_XY-\phi^{-1}\bigl[d\phi(X)Y+d\phi(Y)X-G(X,Y)\nabla\phi\bigr], \nonumber\\
&\mathscr R_{\phi^{-2}G}=\phi^2\mathscr R_G+\phi(\Hess\phi\mathbin{\odot}G)-|\nabla\phi|^2\Id, \nonumber
\end{align}
with operators identified by rescaled orthonormal frames. Lemma~\ref{cmp:lem:expander}, \eqref{cmp:eq:expander}, gives
\begin{align}
&f=|\nabla f|^2+\Sc_G\ge0,\qquad \Hess\phi_T=T^{-1/2}(\Ric_G+G/2), \nonumber\\
&\mathscr R_{\widehat G_T}-\Id=\phi_T^2\mathscr R_G+\frac{T+f}{T}(\Ric_G\mathbin{\odot}G)+\frac{\Sc_G}{T}\Id\ge0. \label{cmp:eq:conformal-curvature}
\end{align}
On the end,
\begin{align}
&r=2\sqrt T\tan(\theta/2),\qquad 0<\theta\le\pi/2,\qquad f/T=\tan^2(\theta/2), \nonumber\\
&\phi_T^{-2}G_C=d\theta^2+\sin^2\theta\,h=:g_s. \label{cmp:eq:suspension-metric}
\end{align}
On every fixed compact cylinder in $(0,\pi/2]\times N$, Lemma~\ref{cmp:lem:expander}, \eqref{cmp:eq:AC-estimates}, gives
\begin{align}
&\|\widehat G_T-g_s\|_{C^j}\le C_j/T\qquad(j\ge0), \nonumber
\end{align}
where the norms use a fixed finite coordinate cover and $g_s$. Choose a smooth cutoff $\zeta$ with $0\le\zeta\le1$, $\zeta=0$ on $\theta\le\pi/3$, and $\zeta=1$ near $\pi/2$. On the collar use $(1-\zeta)\widehat G_T+\zeta g_s$. It is positive definite for large $T$ and
\begin{align}
&\mathscr R_{g_s}|_{\partial_\theta\wedge TN}=\Id,\qquad \mathscr R_{g_s}|_{\Lambda^2TN}=\Id+\sin^{-2}\theta(\mathscr R_h-\Id)\ge\Id, \nonumber\\
&\|(1-\zeta)\widehat G_T+\zeta g_s-g_s\|_{C^2}\le C/T,\qquad \mathscr R_{(1-\zeta)\widehat G_T+\zeta g_s}\ge(1-C/T)\Id. \nonumber
\end{align}
Outside the collar use \eqref{cmp:eq:conformal-curvature}. Double across $\theta=\pi/2$; smoothness follows from
\begin{align}
&\sin^2(\pi/2+s)=\sin^2(\pi/2-s),\qquad \{f\le T\}\cong\overline B^d,\qquad \operatorname{Double}\{f\le T\}\cong\mathbb S^d. \nonumber
\end{align}
Multiply the doubled metric by $\gamma_T=1-C/T>0$. Then
\begin{align}
&\mathscr R_{g_T}\ge\gamma_T^{-1}(1-C/T)\Id=\Id,\qquad \lim_{T\to\infty}\gamma_T=1. \nonumber
\end{align}
The scalar trace of \eqref{cmp:eq:conformal-curvature} is
\begin{align}
&\Sc_{\widehat G_T}=d(d-1)+\left[\phi_T^2+2(d-1)\frac{T+f}{T}+\frac{d(d-1)}T\right]\Sc_G. \label{cmp:eq:conformal-scalar}
\end{align}
For a fixed $p$, convexity, Lemma~\ref{cmp:lem:expander}, \eqref{cmp:eq:expander}, and the end estimates imply
\begin{align}
&c(1+d_G(p,x)^2)\le1+f(x)\le C(1+d_G(p,x)^2),\qquad 0\le\Sc_G(x)\le\frac C{1+d_G(p,x)^2}. \nonumber
\end{align}
Bishop--Gromov volume comparison \cite{CE}, and dyadic integration give, for $u\ge1$,
\begin{align}
&\Vol_G\{f\le u\}\le Cu^{d/2},\qquad \int_{\{f\le u\}}\Sc_G\,dV_G\le C(1+u^{(d-2)/2}). \nonumber
\end{align}
By \eqref{cmp:eq:conformal-scalar}, for $0<\varepsilon<1/10$ and $\varepsilon T\ge1$,
\begin{align}
&\sqrt T\le\phi_T\le(1+\varepsilon)\sqrt T\quad\text{on }\{f\le\varepsilon T\},\qquad dV_{\widehat G_T}=\phi_T^{-d}dV_G, \nonumber\\
&\Vol_{\widehat G_T}\{f\le\varepsilon T\}\le T^{-d/2}C(\varepsilon T)^{d/2}=C\varepsilon^{d/2}, \nonumber\\
&\int_{\{f\le\varepsilon T\}}\Sc_{\widehat G_T}\,dV_{\widehat G_T}\le CT^{-d/2}(\varepsilon T)^{d/2}+CT^{1-d/2}\bigl(1+(\varepsilon T)^{(d-2)/2}\bigr), \nonumber\\
&\limsup_{T\to\infty}\Vol_{\widehat G_T}\{f\le\varepsilon T\}\le C\varepsilon^{d/2}, \nonumber\\
&\limsup_{T\to\infty}\int_{\{f\le\varepsilon T\}}\Sc_{\widehat G_T}\,dV_{\widehat G_T}\le C\bigl(\varepsilon^{d/2}+\varepsilon^{(d-2)/2}\bigr). \nonumber
\end{align}
This proves \eqref{cmp:eq:tip-integrability}. These sets miss the collar; the same bounds hold on both copies after multiplication by $\gamma_T$. On their complement the convergence is smooth. For $g_s$ in \eqref{cmp:eq:suspension-metric},
\begin{align}
&\Sc_{g_s}=m(m+1)+\sin^{-2}\theta(\Sc_h-m(m-1)),\qquad dV_{g_s}=\sin^m\theta\,d\theta\,dV_h, \nonumber\\
&\int_{(0,\pi)\times N}dV_{g_s}=B_mV(h), \nonumber\\
&\int_{(0,\pi)\times N}\Sc_{g_s}\,dV_{g_s}=m(m+1)B_mV(h)+B_{m-2}\bigl[J(h)-m(m-1)V(h)\bigr]. \nonumber
\end{align}
First take $T\to\infty$ on the complement, then $\varepsilon\downarrow0$; the displayed bounds give
\begin{align}
&\lim_{T\to\infty}\Vol(\mathbb S^d,g_T)=\int_{(0,\pi)\times N}dV_{g_s},\qquad \lim_{T\to\infty}J(g_T)=\int_{(0,\pi)\times N}\Sc_{g_s}\,dV_{g_s}. \nonumber
\end{align}
Thus \eqref{cmp:eq:cap-volume}--\eqref{cmp:eq:cap-scalar} hold.
For $K\subset X$ compact and $T$ sufficiently large, the interpolation misses $K$. By \eqref{cmp:eq:conformal-curvature} and Lemma~\ref{cmp:lem:Euler-algebra}, \eqref{cmp:eq:positive-wedges},
\begin{align}
&g_T|_K=\gamma_T\phi_T^{-2}G,\qquad \mathscr R_{g_T}-\Id\ge\gamma_T^{-1}\phi_T^2\mathscr R_G\ge0, \nonumber\\
&P_k(\Rm_{g_T}-G_{g_T})\ge\gamma_T^{-k}\phi_T^{2k}P_k(\Rm_G),\qquad dV_{g_T}=\gamma_T^k\phi_T^{-2k}dV_G, \nonumber\\
&P_k(\Rm_{g_T}-G_{g_T})\,dV_{g_T}\ge P_k(\Rm_G)\,dV_G. \nonumber
\end{align}
This is \eqref{cmp:eq:core-lower-bound}.
\end{proof}

\begin{lemma}
\label{cmp:lem:cover-volume}
Let $m\ge3$ and let $(N^m,h)$ be smooth, connected, compact and
without boundary. Suppose $\Ric_h\ge(m-1)h$.
Then $d_N:=|\pi_1(N)|<\infty$ and
\begin{align}
&V(h)\le\frac{(m+1)\omega_{m+1}}{d_N}. \label{cmp:eq:cover-volume}
\end{align}
Equality holds if and only if the universal cover with its lifted
metric is the unit round sphere.
\end{lemma}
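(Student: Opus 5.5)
The plan is to pass to the universal Riemannian cover and apply Theorem~\ref{ext:thm:Bishop-rigidity} there.

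\emph{Finiteness of the fundamental group.} Since $\Ric_h\ge(m-1)h$ with $m\ge3$, Myers' theorem bounds the diameter of every complete manifold locally isometric to $(N,h)$. Let $\pi:(\widetilde N,\widetilde h)\to(N,h)$ be the universal Riemannian covering. Then $\widetilde h=\pi^*h$ is complete and satisfies $\Ric_{\widetilde h}\ge(m-1)\widetilde h$. Myers' theorem therefore gives $\operatorname{diam}\widetilde N\le\pi$, so $\widetilde N$ is compact. A covering of a compact manifold by a compact manifold has finite fibres, so the number of sheets is finite. This number equals $|\pi_1(N)|=d_N$, since deck transformations act simply transitively on the fibres of the universal cover. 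Hence $d_N<\infty$.

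\emph{Volume identity.} The cover is a local isometry with $d_N$ sheets, so the area formula gives
\begin{align}
&\Vol_{\widetilde h}(\widetilde N)=d_N\,V(h). \nonumber
\end{align}
Now apply Theorem~\ref{ext:thm:Bishop-rigidity} to $(\widetilde N,\widetilde h)$. This manifold is smooth, connected, compact, boundaryless, has dimension $m\ge3$, and satisfies $\Ric_{\widetilde h}\ge(m-1)\widetilde h$. The theorem gives $\Vol_{\widetilde h}(\widetilde N)\le(m+1)\omega_{m+1}$. Dividing by $d_N$ proves \eqref{cmp:eq:cover-volume}.

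\emph{Equality case.} Equality in \eqref{cmp:eq:cover-volume} holds exactly when $\Vol_{\widetilde h}(\widetilde N)=(m+1)\omega_{m+1}$. By the rigidity clause of Theorem~\ref{ext:thm:Bishop-rigidity}, this happens exactly when $(\widetilde N,\widetilde h)$ is isometric to the unit round sphere. Conversely, if the universal cover is the unit round sphere, its volume is $(m+1)\omega_{m+1}$, and the volume identity above gives equality.

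I do not expect a real obstacle. The only delicate point is justifying compactness of the cover before invoking Theorem~\ref{ext:thm:Bishop-rigidity}, which is stated only for compact manifolds; Myers' diameter bound applied to the complete lifted metric handles this. The sheet count equals $|\pi_1(N)|$ for the universal cover, with no orientability assumption needed.
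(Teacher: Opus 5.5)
Your proof is correct and takes the same approach as the paper: lift to the universal cover, bound its diameter by $\pi$ to get compactness and finitely many sheets, use $\Vol_{\widetilde h}(\widetilde N)=d_N V(h)$, and apply Theorem~\ref{ext:thm:Bishop-rigidity} for both the bound and the equality case. The only difference is that the paper re-derives Myers' diameter bound by a second-variation computation with the fields $\sin(\pi s/\ell)E_\alpha$ and then invokes Hopf--Rinow, whereas you cite Myers' theorem directly.
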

\begin{proof}
Let $\pi:(\widetilde N,\widetilde h)\to(N,h)$ be the universal cover, with $\widetilde h=\pi^*h$. Then $\widetilde h$ is complete and $\Ric_{\widetilde h}\ge(m-1)\widetilde h$. For a unit-speed minimizing $\gamma:[0,\ell]\to\widetilde N$, $\ell>0$, choose parallel orthonormal normal fields $E_\alpha$ and set $W_\alpha(s)=\sin(\pi s/\ell)E_\alpha(s)$. Theorem~\ref{ext:thm:comparison} gives
\begin{align}
&0\le\sum_{\alpha=1}^{m-1}\int_0^\ell\left(|\nabla_{\dot\gamma}W_\alpha|^2-\langle\Rm_{\widetilde h}(W_\alpha,\dot\gamma)\dot\gamma,W_\alpha\rangle\right)\,ds \nonumber\\
&{}=\frac{(m-1)\pi^2}{2\ell}-\int_0^\ell\sin^2(\pi s/\ell)\Ric_{\widetilde h}(\dot\gamma,\dot\gamma)\,ds\le\frac{m-1}{2\ell}(\pi^2-\ell^2), \nonumber\\
&\ell\le\pi,\qquad \operatorname{diam}_{\widetilde h}\widetilde N\le\pi. \nonumber
\end{align}
Hopf--Rinow gives compactness. Each fiber is closed and discrete, so
\begin{align}
&d_N=|\pi_1(N)|=\#\pi^{-1}(x)<\infty,\qquad \Vol_{\widetilde h}(\widetilde N)=d_NV(h). \nonumber
\end{align}
Theorem~\ref{ext:thm:Bishop-rigidity} gives
\begin{align}
&d_NV(h)\le(m+1)\omega_{m+1},\qquad d_NV(h)=(m+1)\omega_{m+1}\ \Longleftrightarrow\ (\widetilde N,\widetilde h)\cong(\mathbb S^m,g_{\mathbb S^m}). \nonumber
\end{align}
\end{proof}

%\subsection{The compact bound in every dimension}
The numerical inequality in Theorem~\ref{cmp:thm:all-operator}, equation~\eqref{cmp:eq:compact-bound}, and the total scalar-integral maximum in Corollary~\ref{cmp:cor:total-supremum} are also proved by Ge, Li and Li \cite[Theorem~1.1, p.~2; Section~3.3, p.~9]{GeLiLi2026}, including the finite-cover normalization. We retain the derivation here to record the even-dimensional remainder and the equality criteria for the volume-corrected bound.

\begin{theorem}
\label{cmp:thm:all-operator}
Let $m\ge3$ and let $(N^m,h)$ be smooth, connected, compact and without
boundary. Suppose
\begin{align}
&\mathscr R_h\ge\Id\quad\text{on }\Lambda^2TN. \nonumber
\end{align}
Then $d_N:=|\pi_1(N)|$ is finite and
\begin{align}
&\boxed{\displaystyle J(h)\le\frac{2(m-1)(m+1)\omega_{m+1}}{d_N}+c_mV(h).} \label{cmp:eq:compact-bound}
\end{align}
For odd $m$, equality in \eqref{cmp:eq:compact-bound} holds if and only
if $h$ has constant sectional curvature one. For even $m\ge4$, equality
holds if and only if $P_2(\Rm_h-G_h)\equiv0$.
In every dimension the constants in \eqref{cmp:eq:compact-bound} are
attained by unit-curvature spherical space forms; in particular
\begin{align}
&\sup_{\mathscr R_h\ge\Id}\bigl(J(h)-c_mV(h)\bigr) =2(m-1)(m+1)\omega_{m+1}. \label{cmp:eq:sharp-sup}
\end{align}
For odd $m$, equality in the last universal bound occurs only on the
unit round sphere.
\end{theorem}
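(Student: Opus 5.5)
The plan is to reduce to the simply connected case and then split by the parity of $m$. First, $\mathscr R_h\ge\Id$ implies $\Ric_h\ge(m-1)h$, so Lemma~\ref{cmp:lem:cover-volume} gives $d_N<\infty$. Pull $h$ back to the universal cover $\widetilde N$. There $\widetilde h$ still satisfies $\mathscr R\ge\Id$, and
\begin{align}
&J(\widetilde h)=d_NJ(h),\qquad V(\widetilde h)=d_NV(h). \nonumber
\end{align}
It therefore suffices to prove $J-c_mV\le2(m-1)(m+1)\omega_{m+1}$ for simply connected $N$; dividing by $d_N$ then gives \eqref{cmp:eq:compact-bound}. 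For attainment, note that on a unit space form $J-c_mV=(m(m-1)-c_m)V=2(m-1)V$, and $V=(m+1)\omega_{m+1}/d_N$. This also yields \eqref{cmp:eq:sharp-sup}.

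\emph{Even $m=2k$.} Apply Proposition~\ref{cmp:prop:even-identity}, equation~\eqref{cmp:eq:even-deficit}, on the simply connected $\widetilde N$. Since $\widetilde N$ has positive Ricci curvature and is simply connected and even-dimensional, I would use $\chi(\widetilde N)\le2$. The cheapest route is Lemma~\ref{cmp:lem:expander}, which gives $\widetilde N\cong\mathbb S^m$ and hence $\chi=2$. The sum in \eqref{cmp:eq:even-deficit} is nonnegative, which yields the bound. Equality holds exactly when $P_2(A_h)\equiv0$, and this condition descends from the cover.

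\emph{Odd $m$.} Let $(X^{m+1},G,f)$ be the expander of Lemma~\ref{cmp:lem:expander}, and let $g_T$ be the metrics on $\mathbb S^{m+1}$ from Lemma~\ref{cmp:lem:conformal-cap}, with $d=m+1=2k$ even and $\mathscr R_{g_T}\ge\Id$. Apply \eqref{cmp:eq:even-deficit} in dimension $m+1$ with $\chi=2$:
\begin{align}
&J(g_T)-m(m-1)\Vol(g_T)\le2m(m+2)\omega_{m+2}. \nonumber
\end{align}
Pass to the limit $T\to\infty$ using \eqref{cmp:eq:cap-volume}--\eqref{cmp:eq:cap-scalar}. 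The $B_mV(h)$ terms combine to $2mB_mV(h)$, so
\begin{align}
&B_{m-2}\bigl[J(h)-m(m-1)V(h)\bigr]+2mB_mV(h)\le2m(m+2)\omega_{m+2}. \nonumber
\end{align}
Next use $B_m=\frac{m-1}{m}B_{m-2}$ and the identity $(m+2)\omega_{m+2}=B_m(m+1)\omega_{m+1}$ (slicing the sphere). Dividing by $B_{m-2}$ then gives exactly $J-c_mV\le2(m-1)(m+1)\omega_{m+1}$. This arithmetic check is routine.

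\emph{Odd-dimensional rigidity; this is the main obstacle.} Suppose equality holds. Then the deficit term in \eqref{cmp:eq:even-deficit} for $g_T$ must tend to zero. By \eqref{cmp:eq:core-lower-bound} applied on compact $K\subset X$, and by the positivity of all $P_j$ from Lemma~\ref{cmp:lem:Euler-algebra}, this forces $\int_KP_k(\Rm_G)\,dV_G=0$ for every $K$, so $P_k(\Rm_G)\equiv0$. Now invoke the dichotomy of Theorem~\ref{ext:thm:expander-dichotomy}: if $G$ were nonflat, its curvature-operator image would contain a two-form with nonzero top power. By \eqref{cmp:eq:positive-wedges}, that would make $P_k(\Rm_G)>0$ somewhere, which is a contradiction. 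Hence $G$ is flat. Its asymptotic cone $dr^2+r^2h$ is then flat, so $h$ has constant curvature one. On the universal cover it is the round sphere, and on $N$ it is a unit space form.

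The delicate point is making sure the equality of the limits forces the deficit for $g_T$ to vanish in the limit. I expect this to need a uniform-in-$T$ control of the deficit, obtained by subtracting the limits of both sides. I also expect it to need care that the equality case on $N$ (with $d_N$) transfers to equality for $\widetilde h$, which holds by the multiplicativity noted above. The final clause then follows because $d_N=1$ on the round sphere, and any $d_N>1$ gives a strictly smaller value.
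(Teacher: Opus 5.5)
Your proposal is correct and follows the paper's proof essentially step for step. The shared steps are: finiteness of $d_N$ and reduction to the universal cover via Lemma~\ref{cmp:lem:cover-volume}; the even case from Proposition~\ref{cmp:prop:even-identity}, with $\chi=2$ supplied by Lemma~\ref{cmp:lem:expander}; the odd case by applying the even-dimensional bound to the caps $g_T$ of Lemma~\ref{cmp:lem:conformal-cap} and passing to the limit; and odd rigidity via \eqref{cmp:eq:core-lower-bound} and the expander dichotomy. Your way of reading off $\mathscr R_h=\Id$ from flatness of $G$ through the decay \eqref{cmp:eq:AC-estimates} is an equivalent substitute for the paper's identification $(X,G)\cong\R^{m+1}$, $f=\tfrac14|x-p|^2$.

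Two minor remarks. First, positive Ricci curvature plus simple connectivity does not give $\chi\le2$ in even dimensions ($\mathbb{CP}^2$ or $\mathbb S^2\times\mathbb S^2$ are counterexamples), so the curvature-operator input of Lemma~\ref{cmp:lem:expander} is genuinely needed there. Second, your ``delicate point'' is immediate: the deficit for $g_T$ is exactly $2m(m+2)\omega_{m+2}-\bigl[J(g_T)-m(m-1)V(g_T)\bigr]$, and under equality its limit is zero by \eqref{cmp:eq:cap-volume}--\eqref{cmp:eq:cap-scalar}.
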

\begin{proof}
Suppose first $\pi_1(N)=0$. Lemma~\ref{cmp:lem:expander} gives $N\cong\mathbb S^m$. For $m=2k$, Proposition~\ref{cmp:prop:even-identity}, \eqref{cmp:eq:even-deficit}, gives
\begin{align}
&\chi(N)=2,\qquad J(h)-c_mV(h)=2(m-1)(m+1)\omega_{m+1}-2(m-1)\sum_{j=2}^kb_{k,j}\int_NP_j(\Rm_h-G_h)\,dV_h, \nonumber\\
&J(h)-c_mV(h)\le2(m-1)(m+1)\omega_{m+1},\qquad J(h)-c_mV(h)=2(m-1)(m+1)\omega_{m+1}\ \Longleftrightarrow\ P_2(\Rm_h-G_h)\equiv0. \nonumber
\end{align}
For odd $m$, put $d=m+1=2k$ and apply the even-dimensional bound to $g_T$ from Lemma~\ref{cmp:lem:conformal-cap}, \eqref{cmp:eq:cap-volume}--\eqref{cmp:eq:cap-scalar}. Integration by parts gives
\begin{align}
&B_m=\frac{m-1}{m}B_{m-2},\qquad (m+2)\omega_{m+2}=(m+1)\omega_{m+1}B_m. \nonumber
\end{align}
Thus
\begin{align}
&\lim_{T\to\infty}\bigl[J(g_T)-m(m-1)V(g_T)\bigr]=B_{m-2}\bigl[J(h)-c_mV(h)\bigr]\le2m(m+2)\omega_{m+2}, \label{cmp:eq:odd-transfer}\\
&J(h)-c_mV(h)\le\frac{2m(m+2)\omega_{m+2}}{B_{m-2}}=2(m-1)(m+1)\omega_{m+1}. \nonumber
\end{align}
Suppose equality holds and $m$ is odd. By \eqref{cmp:eq:odd-transfer},
\begin{align}
&\Delta_T:=2m(m+2)\omega_{m+2}-J(g_T)+m(m-1)V(g_T) \nonumber\\
&{}=2m\sum_{j=2}^kb_{k,j}\int_{\mathbb S^d}P_j(\Rm_{g_T}-G_{g_T})\,dV_{g_T}\ge0,\qquad \lim_{T\to\infty}\Delta_T=0. \nonumber
\end{align}
Since $b_{k,k}=1$, Lemma~\ref{cmp:lem:conformal-cap}, \eqref{cmp:eq:core-lower-bound}, yields
\begin{align}
&0\le\int_KP_k(\Rm_G)\,dV_G\le\frac{\Delta_T}{2m}\qquad(K\subset X\text{ compact},\ T\text{ sufficiently large}), \nonumber\\
&P_k(\Rm_G)\equiv0. \nonumber
\end{align}
If $G$ is nonflat, Theorem~\ref{ext:thm:expander-dichotomy} gives a form $\omega\in\operatorname{im}\mathscr R_G$ with $\omega^{\wedge k}\ne0$: take $\sum_{i=1}^ke^{2i-1}\wedge e^{2i}$ in the positive-operator case, or the K\"ahler form in the K\"ahler case. In a spectral eigenbasis,
\begin{align}
&\omega=\sum_{\lambda_\alpha>0}a_\alpha\omega_\alpha,\qquad \omega^{\wedge k}\ne0 \nonumber\\
&{}\Longrightarrow\quad\exists\alpha_1,\ldots,\alpha_k:\quad\lambda_{\alpha_1}\cdots\lambda_{\alpha_k}>0,\quad\omega_{\alpha_1}\wedge\cdots\wedge\omega_{\alpha_k}\ne0 \nonumber\\
&{}\Longrightarrow\quad P_k(\Rm_G)>0\qquad\text{by Lemma~\ref{cmp:lem:Euler-algebra}, \eqref{cmp:eq:positive-wedges}}, \nonumber
\end{align}
a contradiction. Therefore
\begin{align}
&\Rm_G=0,\qquad (X,G)\cong(\R^{m+1},g_{\R^{m+1}}),\qquad \Hess f=G/2,\qquad f-|\nabla f|^2=0, \nonumber\\
&f(x)=\tfrac14|x-p|^2,\qquad (N,h)\cong(\mathbb S^m,g_{\mathbb S^m}). \nonumber
\end{align}
Here completeness and the disk exhaustion give the simply connected Euclidean realization. The round metric attains equality by substitution.
For general $N$, Lemma~\ref{cmp:lem:cover-volume} gives a compact universal cover $\pi:(\widetilde N,\widetilde h)\to(N,h)$ with
\begin{align}
&\Ric_h\ge(m-1)h,\qquad d_N<\infty,\qquad \mathscr R_{\widetilde h}\ge\Id, \nonumber\\
&V(\widetilde h)=d_NV(h),\qquad J(\widetilde h)=d_NJ(h), \nonumber\\
&d_N\bigl[J(h)-c_mV(h)\bigr]\le2(m-1)(m+1)\omega_{m+1}. \nonumber
\end{align}
The equality criteria pass through the local isometry; for odd $m$, equality is equivalent to $(N,h)$ being a unit-curvature spherical quotient. Finally,
\begin{align}
&J(h)-c_mV(h)=2(m-1)(m+1)\omega_{m+1}\ \Longrightarrow\ d_N=1, \nonumber\\
&J(g_{\mathbb S^m})-c_mV(g_{\mathbb S^m})=2(m-1)(m+1)\omega_{m+1}. \nonumber
\end{align}
This proves \eqref{cmp:eq:sharp-sup} and its odd-dimensional equality assertion.
\end{proof}

%\subsection{The total scalar integral and its maximizers}
Combining Theorem~\ref{cmp:thm:all-operator},
equation~\eqref{cmp:eq:compact-bound}, with
Lemma~\ref{cmp:lem:cover-volume}, equation~\eqref{cmp:eq:cover-volume},
determines the sharp bound for $J$.

\begin{corollary}
\label{cmp:cor:total-supremum}
For every $(N^m,h)\in\mathcal C_m^{\mathrm{op}}(1)$,
\begin{align}
&\boxed{\displaystyle J(h)\le\frac{m(m-1)(m+1)\omega_{m+1}}{d_N}.} \label{cmp:eq:total-degree-bound}
\end{align}
Equality holds if and only if $(N,h)$ is isometric to
$\mathbb S^m(1)/\Gamma$ with its quotient metric, where $\Gamma$
is a finite group of isometries of order $d_N$ acting freely:
$\gamma(x)=x$ implies $\gamma=\Id$. Consequently
\begin{align}
&\boxed{\displaystyle \sup_{(N^m,h)\in\mathcal C_m^{\mathrm{op}}(1)}J(h) =m(m-1)(m+1)\omega_{m+1},\qquad m\ge3.} \label{cmp:eq:total-supremum}
\end{align}
The unit round sphere is the unique maximizer up to isometry,
in both even and odd dimensions.
\end{corollary}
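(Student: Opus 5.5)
The plan is to combine the volume-corrected bound of Theorem~\ref{cmp:thm:all-operator}, equation~\eqref{cmp:eq:compact-bound}, with the covering volume bound of Lemma~\ref{cmp:lem:cover-volume}, equation~\eqref{cmp:eq:cover-volume}. First, $\mathscr R_h\ge\Id$ implies $\Ric_h\ge(m-1)h$: for a unit vector $e$, $\Ric_h(e,e)$ is a sum of $m-1$ sectional curvatures, each at least one. Hence Lemma~\ref{cmp:lem:cover-volume} applies, so $d_N<\infty$ and $V(h)\le(m+1)\omega_{m+1}/d_N$. Substituting this into \eqref{cmp:eq:compact-bound} and using $c_m\ge0$ gives
\begin{align}
&J(h)\le\frac{2(m-1)(m+1)\omega_{m+1}}{d_N}+(m-1)(m-2)\frac{(m+1)\omega_{m+1}}{d_N}=\frac{m(m-1)(m+1)\omega_{m+1}}{d_N}, \nonumber
\end{align}
which is \eqref{cmp:eq:total-degree-bound}.

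For the equality case, suppose equality holds in \eqref{cmp:eq:total-degree-bound}. Since $c_m=(m-1)(m-2)>0$ for $m\ge3$, both inequalities used must then be equalities. In particular $V(h)=(m+1)\omega_{m+1}/d_N$. The rigidity clause of Lemma~\ref{cmp:lem:cover-volume} then says the universal cover $(\widetilde N,\widetilde h)$ is isometric to the unit round sphere. The deck group $\Gamma\cong\pi_1(N)$ acts freely by isometries and has order $d_N$, so $(N,h)\cong\mathbb S^m(1)/\Gamma$.

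This route avoids the even-dimensional $P_2$ criterion and the odd-dimensional expander rigidity: volume rigidity alone forces the round metric in both parities. Conversely, a unit-curvature space form $\mathbb S^m(1)/\Gamma$ has $\Sc\equiv m(m-1)$ and volume $(m+1)\omega_{m+1}/|\Gamma|$, so it attains equality. This needs $|\Gamma|=d_N$, which holds because $\mathbb S^m$ is simply connected, so $\pi_1(N)\cong\Gamma$.

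For \eqref{cmp:eq:total-supremum}, the bound $J(h)\le m(m-1)(m+1)\omega_{m+1}/d_N$ with $d_N\ge1$ gives the upper bound, and the round sphere attains it. At a maximizer we have equality with $d_N=1$, so $\Gamma$ is trivial and $(N,h)$ is the unit round sphere, in either parity. The only delicate point is the observation that equality in the combined bound forces equality in the volume bound; this is immediate here because the coefficient $c_m$ multiplying $V(h)$ is strictly positive once $m\ge3$. I do not expect a genuine obstacle, since all analytic content is contained in Theorem~\ref{cmp:thm:all-operator} and Lemma~\ref{cmp:lem:cover-volume}.
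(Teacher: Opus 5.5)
Your proposal is correct and follows the paper's proof essentially step for step. You deduce $\Ric_h\ge(m-1)h$ from $\mathscr R_h\ge\Id$ and insert the covering volume bound of Lemma~\ref{cmp:lem:cover-volume} into \eqref{cmp:eq:compact-bound}. You then use $c_m>0$ to force volume equality, which gives a round universal cover, and you check the converse and the supremum via $d_N\ge1$; as in the paper, the rigidity comes entirely from Bishop volume rigidity, so neither the $P_2$ criterion nor the expander argument is needed here.
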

\begin{proof}
For every orthonormal pair,
\begin{align}
&K_h(e_i,e_j)=\langle\mathscr R_h(e_i\wedge e_j),e_i\wedge e_j\rangle\ge1,\qquad \Ric_h(e_i,e_i)=\sum_{j\ne i}K_h(e_i,e_j)\ge m-1. \nonumber
\end{align}
Theorem~\ref{cmp:thm:all-operator}, \eqref{cmp:eq:compact-bound}, and Lemma~\ref{cmp:lem:cover-volume}, \eqref{cmp:eq:cover-volume}, give
\begin{align}
&J(h)\le\frac{2(m-1)(m+1)\omega_{m+1}}{d_N}+c_mV(h)\le\frac{\bigl(2(m-1)+c_m\bigr)(m+1)\omega_{m+1}}{d_N}=\frac{m(m-1)(m+1)\omega_{m+1}}{d_N}. \nonumber
\end{align}
Since $c_m=(m-1)(m-2)>0$, equality implies
\begin{align}
&V(h)=\frac{(m+1)\omega_{m+1}}{d_N},\qquad (\widetilde N,\widetilde h)\cong(\mathbb S^m,g_{\mathbb S^m})\qquad\text{by Lemma~\ref{cmp:lem:cover-volume}}, \nonumber\\
&(N,h)\cong\mathbb S^m(1)/\Gamma,\qquad |\Gamma|=d_N,\qquad \gamma(x)=x\ \Longrightarrow\ \gamma=\Id. \nonumber
\end{align}
Here $\Gamma$ is the deck group, acting isometrically. Conversely, each such quotient satisfies
\begin{align}
&\Sc_h=m(m-1),\qquad V(h)=(m+1)\omega_{m+1}/d_N,\qquad J(h)=\frac{m(m-1)(m+1)\omega_{m+1}}{d_N}. \nonumber
\end{align}
Thus \eqref{cmp:eq:total-degree-bound} is attained precisely as stated. Finally,
\begin{align}
&d_N\ge1,\qquad J(g_{\mathbb S^m})=m(m-1)(m+1)\omega_{m+1}, \nonumber\\
&J(h)=m(m-1)(m+1)\omega_{m+1}\ \Longrightarrow\ d_N=1\ \Longrightarrow\ (N,h)\cong(\mathbb S^m,g_{\mathbb S^m}), \nonumber
\end{align}
which proves \eqref{cmp:eq:total-supremum} and uniqueness.
\end{proof}

%\subsection{Arbitrary curvature scale and explicit maxima}
\begin{corollary}
\label{cmp:cor:dimensions}\label{cmp:cor:scaled-total}
Let $m\ge3$, $k_0>0$, and
$(N^m,h)\in\mathcal C_m^{\mathrm{op}}(k_0)$. Then
\begin{align}
&J(h)\le \frac{2(m-1)(m+1)\omega_{m+1}}{d_N\,k_0^{(m-2)/2}}+c_mk_0V(h), \label{cmp:eq:scaled-bound}\\
&J(h)\le\frac{m(m-1)(m+1)\omega_{m+1}} {d_N\,k_0^{(m-2)/2}}, \label{cmp:eq:scaled-total-degree}\\
&\sup_{(N^m,h)\in\mathcal C_m^{\mathrm{op}}(k_0)}J(h) =m(m-1)(m+1)\omega_{m+1}\,k_0^{-(m-2)/2}. \label{cmp:eq:scaled-total-supremum}
\end{align}
Equality in \eqref{cmp:eq:scaled-total-degree} holds precisely for
quotients of the round sphere of sectional curvature $k_0$ by finite
free isometric actions. The round sphere is the unique universal
maximizer in \eqref{cmp:eq:scaled-total-supremum}.
At $k_0=1$, \eqref{cmp:eq:scaled-bound} gives
\begin{align}
&m=3:\quad J(h)\le\frac{8\pi^2}{d_N}+2V(h),\qquad m=4:\quad J(h)\le\frac{16\pi^2}{d_N}+6V(h), \nonumber\\
&m=5:\quad J(h)\le\frac{8\pi^3}{d_N}+12V(h),\qquad m=6:\quad J(h)\le\frac{32\pi^3}{3d_N}+20V(h). \label{cmp:eq:dimension-table}
\end{align}
In dimension three, $\mathscr R_h\ge k_0\Id$ is equivalent to
$\sec_h\ge k_0$, and
\begin{align}
&\sup_{(N^3,h)\in\mathcal C_3^{\mathrm{sec}}(k_0)}J(h) =12\pi^2k_0^{-1/2}. \label{cmp:eq:sectional-three-total}
\end{align}
Equality in \eqref{cmp:eq:scaled-bound} for $m=3$ is precisely
constant sectional curvature $k_0$; with $d_N$ replaced by one,
it is precisely the round sphere of curvature $k_0$.
\end{corollary}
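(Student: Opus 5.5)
The whole corollary will follow by rescaling from Theorem~\ref{cmp:thm:all-operator}, Corollary~\ref{cmp:cor:total-supremum} and Lemma~\ref{cmp:lem:scaling}. Given $(N,h)\in\mathcal C_m^{\mathrm{op}}(k_0)$, I set $\bar h=k_0h$. The operator $\mathscr R$ is computed with $\bar h$-orthonormal frames, so $\mathscr R_{\bar h}=k_0^{-1}\mathscr R_h\ge\Id$ and $(N,\bar h)\in\mathcal C_m^{\mathrm{op}}(1)$. The number $d_N$ is topological and does not change. Lemma~\ref{cmp:lem:scaling} with $a=k_0^{1/2}$ gives
\[
J(\bar h)=k_0^{(m-2)/2}J(h),\qquad V(\bar h)=k_0^{m/2}V(h).
\]
I substitute these into \eqref{cmp:eq:compact-bound} for $\bar h$ and divide by $k_0^{(m-2)/2}$. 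This gives \eqref{cmp:eq:scaled-bound}, since $c_mk_0^{m/2}/k_0^{(m-2)/2}=c_mk_0$. In the same way, \eqref{cmp:eq:total-degree-bound} gives \eqref{cmp:eq:scaled-total-degree}.

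For the equality cases I pull back the characterization in Corollary~\ref{cmp:cor:total-supremum}. The metric $\bar h$ is a unit-curvature spherical space form of order $d_N$ exactly when $h=k_0^{-1}\bar h$ is a quotient of the round sphere of curvature $k_0$. The supremum \eqref{cmp:eq:scaled-total-supremum} is attained by the round sphere of curvature $k_0$, and it is the unique maximizer because equality forces $d_N=1$. For $m=3$, the equality case of \eqref{cmp:eq:scaled-bound} rescales from the odd-dimensional statement of Theorem~\ref{cmp:thm:all-operator}.

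The table \eqref{cmp:eq:dimension-table} is arithmetic with $k_0=1$. It uses $\omega_4=\pi^2/2$, $\omega_5=8\pi^2/15$, $\omega_6=\pi^3/6$, $\omega_7=16\pi^3/105$ and $c_m=2,6,12,20$. For example, at $m=3$ one gets $2\cdot2\cdot4\omega_4=8\pi^2$, and at $m=6$ one gets $2\cdot5\cdot7\omega_7=32\pi^3/3$.

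In dimension three I will check that $\Lambda^2T_xN\cong T_xN$ via the Hodge star, which turns $\mathscr R_h$ into the Einstein-type tensor $\frac{\Sc}{2}h-\Ric$. Its eigenvalues in a principal frame are the sectional curvatures of the coordinate planes. Any unit two-form is decomposable, so its Rayleigh quotient is a sectional curvature. Together these show that $\sec_h\ge k_0$ is equivalent to $\mathscr R_h\ge k_0\Id$. Then \eqref{cmp:eq:sectional-three-total} is \eqref{cmp:eq:scaled-total-supremum} with $m=3$, where $3\cdot2\cdot4\omega_4=12\pi^2$.

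The only step that is not bookkeeping is this three-dimensional equivalence between the operator and sectional conditions, and it is standard.
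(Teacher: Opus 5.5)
Your proposal is correct and follows essentially the same route as the paper. You rescale to $k_0h$, apply Theorem~\ref{cmp:thm:all-operator} and Corollary~\ref{cmp:cor:total-supremum}, divide by $k_0^{(m-2)/2}$, transfer the equality cases, and in dimension three use that every two-form is decomposable, so the operator and sectional conditions coincide; your arithmetic for the table and for $12\pi^2k_0^{-1/2}$ also checks out.
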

\begin{proof}
Set $\widehat h=k_0h$. In the respective orthonormal frames,
\begin{align}
&\mathscr R_{\widehat h}=k_0^{-1}\mathscr R_h\ge\Id,\qquad J(\widehat h)=k_0^{(m-2)/2}J(h),\qquad V(\widehat h)=k_0^{m/2}V(h). \nonumber
\end{align}
The covering degree is unchanged. Theorem~\ref{cmp:thm:all-operator}, \eqref{cmp:eq:compact-bound}, and Corollary~\ref{cmp:cor:total-supremum}, \eqref{cmp:eq:total-degree-bound}--\eqref{cmp:eq:total-supremum}, give
\begin{align}
&k_0^{(m-2)/2}J(h)\le\frac{2(m-1)(m+1)\omega_{m+1}}{d_N}+c_mk_0^{m/2}V(h), \nonumber\\
&k_0^{(m-2)/2}J(h)\le\frac{m(m-1)(m+1)\omega_{m+1}}{d_N},\qquad \sec_{\widehat h}\equiv1\ \Longleftrightarrow\ \sec_h\equiv k_0. \nonumber
\end{align}
Dividing by $k_0^{(m-2)/2}>0$ gives the bounds and transfers all equality statements. The numerical values are
\begin{align}
&4\omega_4=2\pi^2,\qquad 5\omega_5=8\pi^2/3,\qquad 6\omega_6=\pi^3,\qquad 7\omega_7=16\pi^3/15, \nonumber
\end{align}
which give \eqref{cmp:eq:dimension-table}. In dimension three choose an oriented orthonormal coframe with $*\omega=|\omega|e^3$ for each nonzero two-form. Then
\begin{align}
&\omega=|\omega|e^1\wedge e^2,\qquad \mathscr R_h\ge k_0\Id\ \Longleftrightarrow\ \sec_h\ge k_0, \nonumber\\
&\sup_{\mathcal C_3^{\mathrm{sec}}(k_0)}J=3\cdot2\cdot4\omega_4k_0^{-1/2}=12\pi^2k_0^{-1/2}. \nonumber
\end{align}
Thus \eqref{cmp:eq:sectional-three-total} and its equality statements follow.
\end{proof}

At curvature scale one, the maxima in Corollary~\ref{cmp:cor:total-supremum},
equation~\eqref{cmp:eq:total-supremum}, are
\begingroup
\renewcommand{\arraystretch}{1.35}
\begin{align}
&\begin{array}{c|rrrrrr} m&3&4&5&6&7&8\\ \hline \displaystyle\sup_{\mathcal C_m^{\mathrm{op}}(1)}J &12\pi^2&32\pi^2&20\pi^3&32\pi^3&14\pi^4& \dfrac{256}{15}\pi^4 \end{array} \nonumber
\end{align}
\endgroup
Indeed $2\omega_2=2\pi$, $3\omega_3=4\pi$, and
$(m+1)\omega_{m+1}=2\pi\omega_{m-1}$ for $m\ge3$; substitution into
$m(m-1)(m+1)\omega_{m+1}$ gives every entry.

%\subsection{Sectional curvature and the quadratic contraction}
\label{cmp:sec:sectional-data}
The hypotheses $\mathscr R_h\ge\Id$ and $\sec_h\ge1$ are different
when $m\ge4$. With $A_h=\Rm_h-G_h$, the second hypothesis means
\begin{align}
&\langle(\mathscr R_h-\Id)(X\wedge Y),X\wedge Y\rangle\ge0 \quad(X,Y\in T_xN,\ x\in N), \nonumber
\end{align}
whereas the first imposes the inequality on every element of
$\Lambda^2T_xN$. For example,
$(e_1\wedge e_2+e_3\wedge e_4)\wedge (e_1\wedge e_2+e_3\wedge e_4)\ne0$ in an orthonormal four-frame,
so this two-form is not decomposable. Theorem~\ref{cmp:thm:all-operator} therefore does not supply the
sectional-curvature estimate in dimensions $m\ge4$.

The following calculations use Theorem~\ref{ext:thm:closed-CGB},
equation~\eqref{cmp:eq:closed-Euler},
Theorem~\ref{ext:thm:Bishop-rigidity},
Lemma~\ref{six:lem:P2-positive}, equation~\eqref{six:eq:P2-positive},
and the displayed metrics.
The constants are numerical; the curvature-scale parameter is $k_0>0$.

%\subsection{The direct four-dimensional sectional-curvature estimate}
\label{cmp:sec:sectional-four}
\begin{proposition}
\label{cmp:prop:sectional-four}
Let $(N^4,h)$ be smooth, connected, compact and without boundary.
Without a curvature assumption,
\begin{align}
&J(h) =8\pi^2\chi(N)+6V(h) -6\int_NP_2(A_h)\,dV_h. \label{cmp:eq:sectional-four}
\end{align}
If $\sec_h\ge1$, then
\begin{align}
&J(h)\le8\pi^2\chi(N)+6V(h). \label{cmp:eq:four-sectional-Euler-bound}
\end{align}
Equality in \eqref{cmp:eq:four-sectional-Euler-bound} holds precisely
when $P_2(A_h)\equiv0$. More generally, $\sec_h\ge k_0>0$ gives
\begin{align}
&J(h)\le\frac{8\pi^2}{k_0}\chi(N)+6k_0V(h), \nonumber
\end{align}
with equality precisely when $P_2(\Rm_h-k_0G_h)\equiv0$.
\end{proposition}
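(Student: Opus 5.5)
The identity \eqref{cmp:eq:sectional-four} is the case $m=4$, $k=2$ of Proposition~\ref{cmp:prop:even-identity}, equation~\eqref{cmp:eq:even-deficit}. That proposition was stated without any curvature assumption, so I would specialize it directly. With $m=4$ one has $c_4=(m-1)(m-2)=6$ and $(m-1)(m+1)\omega_{m+1}=3\cdot5\omega_5=3\cdot\tfrac{8\pi^2}{3}=8\pi^2$. The only remaining coefficient is $b_{2,2}=\binom22/\binom44=1$, so $2(m-1)b_{2,2}=6$. Substituting these values gives \eqref{cmp:eq:sectional-four} verbatim. As a check, I would also verify it directly from Lemma~\ref{cmp:lem:Euler-algebra}, equation~\eqref{cmp:eq:Euler-expansion}, in the form
\begin{align}
&P_2(G+A)=1+\frac{\Sc(A)}{6}+P_2(A), \nonumber
\end{align}
together with Theorem~\ref{ext:thm:closed-CGB}, equation~\eqref{cmp:eq:closed-Euler}, which reads $\int P_2=4\pi^2\chi$. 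The nonorientable case is handled by the double cover, as in that proposition.

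For the inequality I need $P_2(A_h)\ge0$ pointwise when $\sec_h\ge1$. The tensor $A_h=\Rm_h-G_h$ is an algebraic curvature tensor on a four-dimensional space, with sectional curvatures $\sec_h-1\ge0$. Lemma~\ref{six:lem:P2-positive}, equation~\eqref{six:eq:P2-positive}, therefore applies directly to $A_h$ and gives $P_2(A_h)\ge0$. Here the normalization $P_2=(|\cdot|^2-4|\Ric|^2+\Sc^2)/24$ agrees with \eqref{cmp:eq:Pk-definition}, as the paper notes. Hence $J(h)\le8\pi^2\chi(N)+6V(h)$. Equality forces $\int P_2(A_h)=0$, and continuity together with nonnegativity gives $P_2(A_h)\equiv0$. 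Conversely, $P_2(A_h)\equiv0$ gives equality by \eqref{cmp:eq:sectional-four}.

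For general $k_0$, set $\widehat h=k_0h$, so that $\sec_{\widehat h}\ge1$. Lemma~\ref{cmp:lem:scaling} with $m=4$ gives $J(\widehat h)=k_0J(h)$ and $V(\widehat h)=k_0^2V(h)$. Applying the bound to $\widehat h$ and dividing by $k_0$ yields
\begin{align}
&J(h)\le\frac{8\pi^2}{k_0}\chi(N)+6k_0V(h). \nonumber
\end{align}
For the equality case, in $\widehat h$-orthonormal frames $\Rm_{\widehat h}-G_{\widehat h}$ corresponds to $k_0^{-1}(\Rm_h-k_0G_h)$, and $P_2$ is homogeneous of degree two. So $P_2(\Rm_{\widehat h}-G_{\widehat h})\equiv0$ holds exactly when $P_2(\Rm_h-k_0G_h)\equiv0$.

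There is no genuine obstacle here. The only points to watch are the numerical constants ($5\omega_5=8\pi^2/3$) and the fact that the positivity step uses sectional rather than operator nonnegativity, which is exactly what Lemma~\ref{six:lem:P2-positive} provides in dimension four.
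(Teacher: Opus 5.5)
Your proposal is correct and follows the paper's own proof: specialize \eqref{cmp:eq:even-deficit} to $m=4$, apply Lemma~\ref{six:lem:P2-positive} to $A_h$ (whose sectional curvatures are $\sec_h-1\ge0$), and rescale by $k_0$ using Lemma~\ref{cmp:lem:scaling}. One slip is in your side check: Theorem~\ref{ext:thm:closed-CGB} gives $\int_NP_2(\Rm_h)\,dV_h=\tfrac{5\omega_5}{2}\chi(N)=\tfrac{4\pi^2}{3}\chi(N)$, not $4\pi^2\chi(N)$; with your value the direct check would return $24\pi^2\chi(N)$ instead of $8\pi^2\chi(N)$.
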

\begin{proof}
Proposition~\ref{cmp:prop:even-identity}, \eqref{cmp:eq:even-deficit}, in dimension four reads
\begin{align}
&P_2(G_h+A_h)=1+\frac{\Sc(A_h)}6+P_2(A_h)=\frac{\Sc_h}6-1+P_2(A_h), \nonumber\\
&\int_NP_2(\Rm_h)\,dV_h=\frac{4\pi^2}{3}\chi(N)=\frac{J(h)}6-V(h)+\int_NP_2(A_h)\,dV_h, \nonumber\\
&J(h)=8\pi^2\chi(N)+6V(h)-6\int_NP_2(A_h)\,dV_h. \nonumber
\end{align}
For nonorientable $N$, all integrals and $\chi$ on the orientation double cover are twice those on $N$, so division by two gives the same identity. Lemma~\ref{six:lem:P2-positive}, \eqref{eq:P2-four-plane-sum}, and continuity yield
\begin{align}
&\sec_h\ge1\ \Longrightarrow\ P_2(A_h)\ge0,\qquad \int_NP_2(A_h)\,dV_h=0\ \Longleftrightarrow\ P_2(A_h)\equiv0. \nonumber
\end{align}
For $\sec_h\ge k_0>0$, apply this to $k_0h$. Lemma~\ref{cmp:lem:scaling}, \eqref{cmp:eq:scaling}, gives
\begin{align}
&J(k_0h)=k_0J(h),\qquad V(k_0h)=k_0^2V(h), \nonumber\\
&\Rm_{k_0h}-G_{k_0h}=k_0^{-1}(\Rm_h-k_0G_h)\quad\text{in the respective orthonormal frames}, \nonumber\\
&J(h)\le\frac{8\pi^2}{k_0}\chi(N)+6k_0V(h),\qquad P_2(\Rm_{k_0h}-G_{k_0h})=k_0^{-2}P_2(\Rm_h-k_0G_h). \nonumber
\end{align}
The last identity gives the scaled equality criterion.
\end{proof}

For the whole class $\mathcal C_4^{\mathrm{sec}}(1)$ the exact
remaining distinction is
\begin{align}
&\frac{16\pi^2}{d_N}+6V(h)-J(h) =6\int_NP_2(A_h)\,dV_h -8\pi^2\left(\chi(N)-\frac2{d_N}\right), \nonumber\\
&J(h)\le\frac{16\pi^2}{d_N}+6V(h) \quad\Longleftrightarrow\quad \int_NP_2(A_h)\,dV_h\ge \frac{4\pi^2}{3}\left(\chi(N)-\frac2{d_N}\right). \label{cmp:eq:four-topological-threshold}
\end{align}
The lower bound in \eqref{cmp:eq:four-topological-threshold} remains
unproved for arbitrary topology. Similarly,
\begin{align}
&\frac{32\pi^2}{d_N}-J(h) ={}6\left(\frac{8\pi^2}{3d_N}-V(h)\right) +6\int_NP_2(A_h)\,dV_h +8\pi^2\left(\frac2{d_N}-\chi(N)\right). \nonumber
\end{align}
The first two terms are nonnegative under $\sec_h\ge1$. No sign
for their sum with the last term is deduced here.

Under the operator hypothesis the required topology follows already
from Lemma~\ref{cmp:lem:expander} on the universal cover, which is a
sphere and has Euler characteristic two. An independent cited
justification is the theorem of B\"ohm--Wilking: on a closed smooth
manifold of dimension at least three, a positive curvature operator
is evolved by normalized Ricci flow to a constant-positive-curvature
metric \cite[Main Theorem]{BohmWilking2008}. It is the topology of
the underlying manifold, not constant curvature of the original
metric, that is used here. Euler characteristic multiplies by the
finite covering degree, giving $d_N\chi(N)=2$.

%\subsection{Five-dimensional sections and a six-dimensional identity}
\label{cmp:sec:sectional-five}
For $(N^5,h)\in\mathcal C_5^{\mathrm{sec}}(1)$ the two assertions
under consideration are
\begin{align}
&J(h)\le\frac{8\pi^3}{d_N}+12V(h), \label{cmp:eq:five-sectional-target}\\
&J(h)\le\frac{20\pi^3}{d_N}. \label{cmp:eq:five-sectional-total-target}
\end{align}
Both hold under the operator hypothesis. For the sectional class,
the first remains unproved and would imply the second by
Lemma~\ref{cmp:lem:cover-volume}. Passing to the finite universal
cover multiplies $J$ and $V$ by $d_N$; hence it suffices to prove the
first with $d_N=1$. The unit sphere, for which $\Sc=20$,
$V=\pi^3$ and $J=20\pi^3$, attains both candidate constants.

By Poincar\'e duality over $\mathbb F_2$, every closed five-manifold
has $\chi=0$: its Betti numbers satisfy $b_i=b_{5-i}$ and cancel
in the alternating sum. The sign $P_2(A_h)\ge0$ therefore does not
produce a five-dimensional counterpart of
Proposition~\ref{cmp:prop:sectional-four},
equation~\eqref{cmp:eq:sectional-four}. The relevant higher-dimensional
identity is the following.

\begin{proposition}
\label{cmp:prop:sectional-six}
Let $(Y^6,k)$ be smooth, connected, compact and without boundary,
and put $A_k=\Rm_k-G_k$. With no curvature hypothesis,
\begin{align}
&J(k) =\frac{16\pi^3}{3}\chi(Y)+20V(k) -2\int_YP_2(A_k)\,dV_k-10\int_YP_3(A_k)\,dV_k. \nonumber
\end{align}
In particular, if $\chi(Y)=2$, then
\begin{align}
&J(k)\le\frac{32\pi^3}{3}+20V(k) \nonumber
\end{align}
is equivalent to
\begin{align}
&\int_Y\left(\frac15P_2(A_k)+P_3(A_k)\right)dV_k\ge0. \label{cmp:eq:six-combined-sectional}
\end{align}
The hypothesis $\sec_k\ge1$ guarantees $P_2(A_k)\ge0$, but no sign
of $P_3(A_k)$ is asserted under that hypothesis.
\end{proposition}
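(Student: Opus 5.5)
The plan mirrors the four-dimensional computation in Proposition~\ref{cmp:prop:sectional-four}, using the expansion \eqref{cmp:eq:Euler-expansion} of Lemma~\ref{cmp:lem:Euler-algebra} with $m=6$, $k=3$. That identity needs no curvature sign, so it applies to $A_k=\Rm_k-G_k$:
\begin{align}
&P_3(\Rm_k)=P_3(G_k+A_k)=1+\frac{\Sc(A_k)}{10}+b_{3,2}P_2(A_k)+b_{3,3}P_3(A_k), \nonumber\\
&b_{3,2}=\frac{\binom32}{\binom64}=\frac15,\qquad b_{3,3}=1,\qquad \Sc(A_k)=\Sc_k-30. \nonumber
\end{align}

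First I would integrate this identity over $Y$. If $Y$ is oriented, Theorem~\ref{ext:thm:closed-CGB}, equation~\eqref{cmp:eq:closed-Euler}, with $k=3$, gives
\begin{align}
\int_YP_3(\Rm_k)\,dV_k=\frac{7\omega_7}{2}\chi(Y)=\frac{8\pi^3}{15}\chi(Y). \nonumber
\end{align}
The right side integrates to
\begin{align}
V(k)+\frac{J(k)-30V(k)}{10}+\frac15\int_YP_2(A_k)\,dV_k+\int_YP_3(A_k)\,dV_k. \nonumber
\end{align}
Multiplying by $10$ gives
\begin{align}
J(k)=\frac{16\pi^3}{3}\chi(Y)+20V(k)-2\int_YP_2(A_k)\,dV_k-10\int_YP_3(A_k)\,dV_k, \nonumber
\end{align}
which is the claimed identity. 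If $Y$ is not orientable, I would pass to the orientation double cover as in Proposition~\ref{cmp:prop:even-identity}. There $\chi$ and every integral double, so dividing by two recovers the same identity. Equivalently, the identity is \eqref{cmp:eq:even-deficit} with $m=6$, $c_6=20$ and $(m-1)(m+1)\omega_{m+1}=5\cdot7\omega_7=\frac{16\pi^3}{3}$.

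Next, take $\chi(Y)=2$. The identity then reads
\begin{align}
\frac{32\pi^3}{3}+20V(k)-J(k)=10\int_Y\left(\frac15P_2(A_k)+P_3(A_k)\right)dV_k. \nonumber
\end{align}
The inequality $J(k)\le\frac{32\pi^3}{3}+20V(k)$ therefore holds exactly when \eqref{cmp:eq:six-combined-sectional} holds.

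Finally, for the remark on signs, note that $\sec_k\ge1$ means $A_k$ has nonnegative sectional curvature. The four-plane decomposition \eqref{eq:P2-four-plane-sum} together with Lemma~\ref{six:lem:P2-positive} then gives $P_2(A_k)\ge0$ pointwise. No sign for $P_3(A_k)$ is claimed; the Geroch example cited earlier shows why none should be expected.

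The only step needing care is the numerical normalization: checking $b_{3,2}=1/5$ and $7\omega_7=16\pi^3/15$, so that the constants $16\pi^3/3$, $20$, $2$ and $10$ come out exactly. I expect no conceptual obstacle.
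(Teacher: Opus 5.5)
Your proposal is correct and matches the paper's proof essentially step for step. Both integrate the $k=3$ Euler expansion (with $b_{3,2}=1/5$, $b_{3,3}=1$, $\Sc(A_k)=\Sc_k-30$) against Chern's formula $\int_Y P_3(\Rm_k)\,dV_k=\frac{8\pi^3}{15}\chi(Y)$, handle nonorientable $Y$ via the orientation double cover, and obtain $P_2(A_k)\ge0$ from the four-plane decomposition together with Lemma~\ref{six:lem:P2-positive}.
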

\begin{proof}
Lemma~\ref{cmp:lem:Euler-algebra}, \eqref{cmp:eq:Euler-expansion}, with $k=3$, and Theorem~\ref{ext:thm:closed-CGB}, \eqref{cmp:eq:closed-Euler}, give
\begin{align}
&P_3(G+A)=1+\frac{\Sc(A)}{10}+\frac15P_2(A)+P_3(A),\qquad \Sc(A_k)=\Sc_k-30, \nonumber\\
&\int_YP_3(\Rm_k)\,dV_k=\frac{7\omega_7}{2}\chi(Y)=\frac{8\pi^3}{15}\chi(Y), \nonumber\\
&\frac{8\pi^3}{15}\chi(Y)=\frac{J(k)}{10}-2V(k)+\frac15\int_YP_2(A_k)\,dV_k+\int_YP_3(A_k)\,dV_k, \nonumber\\
&J(k)=\frac{16\pi^3}{3}\chi(Y)+20V(k)-2\int_YP_2(A_k)\,dV_k-10\int_YP_3(A_k)\,dV_k. \nonumber
\end{align}
The orientation double cover gives the same identity when $Y$ is nonorientable. For $\chi(Y)=2$,
\begin{align}
&\frac{32\pi^3}{3}+20V(k)-J(k)=10\int_Y\left(\frac15P_2(A_k)+P_3(A_k)\right)dV_k. \nonumber
\end{align}
Lemma~\ref{six:lem:P2-positive} and \eqref{eq:P2-four-plane-sum} give $\sec_k\ge1\Longrightarrow P_2(A_k)\ge0$.
\end{proof}

The lack of a pointwise cubic sign is a precise algebraic statement.
Geroch constructs a six-dimensional algebraic curvature tensor $B$
with positive sectional curvatures and $P_3(B)<0$
\cite[pp.~267--270]{Geroch1976}. For all sufficiently large
$\lambda>0$,
\begin{align}
&\frac15P_2(\lambda B)+P_3(\lambda B) =\frac{\lambda^2}{5}P_2(B)+\lambda^3P_3(B)<0, \nonumber
\end{align}
although $G+\lambda B$ has sectional curvature greater than one.
This is not a closed-manifold example and does not settle the
integral in Proposition~\ref{cmp:prop:sectional-six},
equation~\eqref{cmp:eq:six-combined-sectional}.

%\subsection{Scope of the compact bounds}
\label{cmp:sec:sectional-realization}
The conical derivative bounds in Lemma~\ref{cmp:lem:expander},
equation~\eqref{cmp:eq:AC-estimates}, necessarily imply
\begin{align}
&\mathscr R_h-\Id\ge0 \nonumber
\end{align}
whenever the realizing metric has nonnegative curvature operator:
the rescaled tangential curvature block converges uniformly to
$\mathscr R_h-\Id$, and nonnegativity passes to this limit.
The unit round sphere gives
\begin{align}
&\sup_{(N^m,h)\in\mathcal C_m^{\mathrm{sec}}(1)}J(h) \ge m(m-1)(m+1)\omega_{m+1}. \nonumber
\end{align}
The reverse inequality for $m\ge4$ remains unproved here; the operator realization does not prove it.

\end{document}